\newif\iffinalrun
\theoremstyle{definition}
\numberwithin{equation}{section}
\theoremstyle{theorem}
\newtheorem{theo}[equation]{Theorem}
\newtheorem*{theo*}{Theorem}
\newtheorem{lemm}[equation]{Lemma}
\newtheorem{coro}[equation]{Corollary}
\newtheorem{prop}[equation]{Proposition}
\newtheorem{rema}[equation]{Remark}
  \newcommand{\need}[1]{}
  \newcommand{\mar}[1]{}
  \newcommand{\need}[1]{{\tiny *** #1}}
  \newcommand{\mar}[1]{\marginpar{\raggedright\tiny #1}}
\begin{document}

\title{Dilogarithm and Higher $\mathscr{L}$\text{-}invariants for $\mathrm{GL}_3(\mathbb{Q}_p)$}

\author{Zicheng Qian}
\address{D\'epartement de Math\'ematiques Batiment 425, Facult\'e des Sciences d'Orsay Universit\'e Paris-Sud, 91405 Orsay, France}
\email{zicheng.qian@u-psud.fr}

\subjclass[2010]{11F80, 11F33.}

\maketitle

\begin{abstract}
The primary purpose of this paper is to clarify the relation between previous results in \cite{Schr11}, \cite{Bre17} and \cite{BD18}. Let $E$ be a sufficiently large finite extension of $\mathbb{Q}_p$ and $\rho_p$ be a $p$\text{-}adic semi-stable representation $\mathrm{Gal}(\overline{\mathbb{Q}_p}/\mathbb{Q}_p)\rightarrow\mathrm{GL}_3(E)$ such that the Weil--Deligne representation $\mathrm{WD}(\rho_p)$ associated with it has rank two monodromy operator $N$ and the Hodge filtration associated with it is non-critical. Then by a computation of extensions of rank one $(\varphi, \Gamma)$-modules we know that the Hodge filtration of $\rho_p$ depends on three invariants in $E$. We construct a family of locally analytic representations $\Sigma^{\rm{min}}(\lambda, \mathscr{L}_1, \mathscr{L}_2, \mathscr{L}_3)$ of $\mathrm{GL}_3(\mathbb{Q}_p)$ depending on three invariants $\mathscr{L}_1, \mathscr{L}_2, \mathscr{L}_3 \in E$ with each of the representation containing the locally algebraic representation $\mathrm{Alg}\otimes\mathrm{Steinberg}$ determined by $\mathrm{WD}(\rho_p)$ via classical local Langlands correspondence for $\mathrm{GL}_3(\mathbb{Q}_p)$  and by the Hodge--Tate weights of $\rho_p$. When $\rho_p$ comes from an automorphic representation $\pi$ of $G(\mathbb{A}_{\mathbb{Q}_p})$ with a fixed level $U^p$ prime to $p$ for a suitable unitary group $G_{/\mathbb{Q}}$, we show (under some technical assumption) that there is a unique locally analytic representation in the above family that occurs as a subrepresentation of the associated Hecke-isotypic subspace in the completed cohomology with level $U^p$. We recall that \cite{Bre17} constructed a family of locally analytic representations depending on four invariants ( cf. (4) in \cite{Bre17}) and proved that there is a unique representation in the family that embeds into the fixed Hecke-isotypic space above. We prove that if a representation $\Pi$ in Breuil's family embeds into a certain Hecke-isotypic subspace of completed cohomology, then it must equally embed into $\Sigma^{\rm{min}}(\lambda, \mathscr{L}_1, \mathscr{L}_2, \mathscr{L}_3)$ for certain choices of $\mathscr{L}_1, \mathscr{L}_2, \mathscr{L}_3\in E$ determined explicitly by $\Pi$. This gives a purely representation theoretic necessary condition for $\Pi$ to embed into completed cohomology. Moreover, certain natural subquotients of $\Sigma^{\rm{min}}(\lambda, \mathscr{L}_1, \mathscr{L}_2, \mathscr{L}_3)$ give a true complex of locally analytic representations that realizes the derived object $\Sigma(\lambda, \underline{\mathscr{L}})$ in (1.14) of \cite{Schr11}. Consequently, the locally analytic representation $\Sigma^{\rm{min}}(\lambda, \mathscr{L}_1, \mathscr{L}_2, \mathscr{L}_3)$ gives a relation between the higher $\mathscr{L}$\text{-}invariants studied in \cite{Bre17} as well as \cite{BD18} and the $p$\text{-}adic dilogarithm function which appears in the construction of $\Sigma(\lambda, \underline{\mathscr{L}})$ in \cite{Schr11}.
\end{abstract}

\setcounter{tocdepth}{2}

\tableofcontents

\section{Introduction}\label{3section: introduction}
Let $p$ be a prime number and $F$ an imaginary quadratic extension of $\mathbb{Q}$ such that $p$ splits in $F$. We fix a unitary algebraic group $G$ over $\mathbb{Q}$ which becomes $\mathrm{GL}_n$ over $F$ and such that $G(\mathbb{R})$ is compact and $G$ is split at all places of $F$ above $p$. Then to each finite extension $E$ of $\mathbb{Q}_p$ and to each prime\text{-}to\text{-}$p$ level $U^p$ in $G(\mathbb{A}_{\mathbb{Q}}^{\infty, p})$, one can associate the Banach space of $p$\text{-}adic automorphic forms $\widehat{S}(U^p, E)$. One can also associate with $U^p$ a set of finite places $D(U^p)$ of $\mathbb{Q}$ and a Hecke algebra $\mathbb{T}(U^p)$ which is the polynomial algebra freely generated by Hecke operators at places of $F$ lying above $D(U^p)$. In particular, the commutative algebra $\mathbb{T}(U^p)$ acts on $\widehat{S}(U^p, E)$ and commutes with the action of $G(\mathbb{Q}_p)\cong\mathrm{GL}_n(\mathbb{Q}_p)$ coming from translations on $G(\mathbb{A}_{\mathbb{Q}}^{\infty})$.

If $\rho:\mathrm{Gal}(\overline{F}/F)\rightarrow\mathrm{GL}_n(E)$ is a continuous irreducible representation, one considers the associated Hecke isotypic subspace $\widehat{S}(U^p, E)[\mathfrak{m}_{\rho}]$, which is a continuous admissible representation of $G(\mathbb{Q}_p)\cong\mathrm{GL}_n(\mathbb{Q}_p)$ over $E$, or its locally $\mathbb{Q}_p$\text{-}analytic vectors $\widehat{S}(U^p, E)[\mathfrak{m}_{\rho}]^{\rm{an}}$, which is an admissible locally $\mathbb{Q}_p$\text{-}analytic representation of $\mathrm{GL}_n(\mathbb{Q}_p)$. We fix $w_p$ a place of $F$ above $p$ and it is widely wished that $\widehat{S}(U^p, E)[\mathfrak{m}_{\rho}]$ (and its subspace $\widehat{S}(U^p, E)[\mathfrak{m}_{\rho}]^{\rm{an}}$ as well) determines and depends only on $\rho_p:=\rho|_{\mathrm{Gal}(\overline{F_{w_p}}/F_{w_p})}$. The case $n=2$ is well-known essentially due to various results in \cite{Col10}, \cite{Eme}. The case $n\geq 3$ is much more difficult and only some partial results are known. We are particularly interested in the case when the subspace of locally algebraic vectors $\widehat{S}(U^p, E)[\mathfrak{m}_{\rho}]^{\rm{alg}}\subsetneq \widehat{S}(U^p, E)[\mathfrak{m}_{\rho}]$ is non-zero, which implies that $\rho_p$ is potentially semi-stable. Certain cases when $n=3$ and $\rho_p$ is semi-stable and non-crystalline have been studied in \cite{Bre17} and \cite{BD18}. We are going to continue their work and obtain some interesting relation between results in \cite{Bre17}, \cite{BD18} and previous results in \cite{Schr11} which involve the $p$\text{-}adic dilogarithm function.

We use the notation $\lambda\in X(T)_+$ for a weight $\lambda=(\lambda_1, \lambda_2, \lambda_3)$ (of the diagonal split torus $T$ of $\mathrm{GL}_3$) which is dominant with respect to the upper-triangular Borel subgroup $\overline{B}$ and hence satisfies $\lambda_1\geq \lambda_2\geq\lambda_3$. Given two locally analytic representations $V, W$ of $\mathrm{GL}_3(\mathbb{Q}_p)$, we use the shorten notation $\begin{xy}
(0,0)*+{V}="a"; (10,0)*+{W}="b";
{\ar@{-}"a";"b"};
\end{xy}$ (resp. the shorten notation $\begin{xy}
(0,0)*+{V}="a"; (10,0)*+{W}="b";
{\ar@{--}"a";"b"};
\end{xy}$) for a locally analytic representation determined by a non-zero (resp. possibly zero) element in $\mathrm{Ext}^1_{\mathrm{GL}_3(\mathbb{Q}_p)}\left(W, ~V\right)$.
\begin{theo}\label{3theo: construction introduction}[Proposition~\ref{3prop: main dim}, Proposition~\ref{3prop: criterion of existence}]
For each choice of $\lambda\in X(T)_+$ and $\mathscr{L}_1, \mathscr{L}_2, \mathscr{L}_3\in E$, there exists a locally analytic representation
$\Sigma^{\rm{min}}(\lambda, \mathscr{L}_1, \mathscr{L}_2, \mathscr{L}_3)$ of $\mathrm{GL}_3(\mathbb{Q}_p)$ of the form:
\begin{equation}
\begin{xy}
(0,0)*+{\mathrm{St}_3^{\rm{an}}(\lambda)}="a"; (20,5)*+{v_{P_1}^{\rm{an}}(\lambda)}="b"; (40,8)*+{C_{s_1,s_1}}="d"; (60,8)*+{\overline{L}(\lambda)\otimes_Ev_{P_2}^{\infty}}="f"; (20,-5)*+{v_{P_2}^{\rm{an}}(\lambda)}="c"; (40,-8)*+{C_{s_2,s_2}}="e"; (60,-8)*+{\overline{L}(\lambda)\otimes_Ev_{P_1}^{\infty}}="g"; (80,3)*+{\overline{L}(\lambda)}="h1"; (80,-3)*+{\overline{L}(\lambda)}="h2";
{\ar@{-}"a";"b"}; {\ar@{-}"a";"c"}; {\ar@{-}"b";"d"}; {\ar@{-}"c";"e"}; {\ar@{-}"d";"f"}; {\ar@{-}"e";"g"}; {\ar@{-}"f";"h1"}; {\ar@{-}"b";"h1"}; {\ar@{-}"g";"h1"}; {\ar@{-}"f";"h2"}; {\ar@{-}"c";"h2"}; {\ar@{-}"g";"h2"};
\end{xy}
\end{equation}
where $\mathrm{St}_3^{\rm{an}}(\lambda)$, $v_{P_1}^{\rm{an}}(\lambda)$, $v_{P_2}^{\rm{an}}(\lambda)$, $\overline{L}(\lambda)$ and $C^{\ast}_{w^{\prime},w}$ for $w,w^{\prime}\in \{s_1, s_2, s_1s_2, s_2s_1\}$ and $\ast\in \{\varnothing, 1, 2\}$ are various explicit locally analytic representations defined in Section~\ref{3subsection: main notation}. Moreover, different choices of $\mathscr{L}_1, \mathscr{L}_2, \mathscr{L}_3\in E$ give non-isomorphic representations.
\end{theo}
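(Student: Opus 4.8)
The plan is to establish existence and non-isomorphism separately, referencing the two propositions cited in the brackets.

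\textbf{Existence.} First I would build $\Sigma^{\rm{min}}(\lambda, \mathscr{L}_1, \mathscr{L}_2, \mathscr{L}_3)$ as an amalgamated sum (pushout) of smaller pieces over their common subrepresentation $\overline{L}(\lambda)\otimes_E v_{P_i}^\infty$ and ultimately over $\mathrm{St}_3^{\rm{an}}(\lambda)$. The building blocks are the two ``arms'' of the diagram: one running $\mathrm{St}_3^{\rm{an}}(\lambda)\,\text{---}\,v_{P_1}^{\rm{an}}(\lambda)\,\text{---}\,C_{s_1,s_1}\,\text{---}\,\overline{L}(\lambda)\otimes v_{P_2}^\infty$ and the analogous one with indices swapped, together with the two copies of $\overline{L}(\lambda)$ at the right end. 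The key input is the dimension count of the relevant $\mathrm{Ext}^1$-groups between these explicit locally analytic representations (principal series, generalized Steinberg, the $C_{w',w}$), which is exactly what Proposition~\ref{3prop: main dim} supplies; this tells us the extension classes exist and that the various one-dimensional parameters can be prescribed. Then $\mathscr{L}_1, \mathscr{L}_2$ parametrize the two extensions $v_{P_i}^{\rm{an}}(\lambda)\,\text{---}\,C_{s_i,s_i}$ (or the gluing of the arms to $\mathrm{St}_3^{\rm{an}}(\lambda)$), and $\mathscr{L}_3$ parametrizes the remaining extension governing how the two copies of $\overline{L}(\lambda)$ attach — the precise assignment being dictated by Proposition~\ref{3prop: criterion of existence}, which I would invoke to guarantee that a representation with \emph{exactly} this socle/cosocle filtration and these constituents actually exists (as opposed to the extension classes forcing extra splittings or collapses). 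The amalgamation is clean because at each stage the overlap is a single subrepresentation appearing with multiplicity one, so the pushout has the predicted Jordan--Hölder factors.

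\textbf{Non-isomorphism.} Here the strategy is to recover $(\mathscr{L}_1, \mathscr{L}_2, \mathscr{L}_3)$ functorially from the isomorphism class of $\Sigma^{\rm{min}}$. Given an abstract isomorphism $\Sigma^{\rm{min}}(\lambda, \underline{\mathscr{L}}) \xrightarrow{\sim} \Sigma^{\rm{min}}(\lambda, \underline{\mathscr{L}}')$, I would first note that the two arms are intrinsic: the subrepresentation generated by the (unique up to scalar) copies of $v_{P_1}^{\rm{an}}(\lambda)$ and $v_{P_2}^{\rm{an}}(\lambda)$ together with $\mathrm{St}_3^{\rm{an}}(\lambda)$ is canonical, and cutting it out exhibits each $\mathscr{L}_i$ ($i=1,2$) as the extension class, living in a one-dimensional $\mathrm{Ext}^1$-space, of a canonically defined length-two subquotient; since any isomorphism must preserve these canonical subquotients, it scales the $\mathrm{Ext}^1$-line, and because these are $E$-lines an extension class is either zero or determines a well-defined point — so $\mathscr{L}_i$ is an invariant (one has to be a little careful: it is really $\mathscr{L}_i$ up to the normalization fixed in Section~\ref{3subsection: main notation}, but that normalization is part of the definition). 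For $\mathscr{L}_3$, the argument is similar but uses the quotient of $\Sigma^{\rm{min}}$ by the subrepresentation generated by everything to the left of the two $\overline{L}(\lambda)$'s; the resulting extension of $\overline{L}(\lambda)^{\oplus 2}$-ish object by the pair $\overline{L}(\lambda)\otimes v_{P_i}^\infty$ detects $\mathscr{L}_3$. The main obstacle is the rigidity/functoriality step: one must check that automorphisms of each constituent (e.g. scalars on $\mathrm{St}_3^{\rm{an}}(\lambda)$, or on $\overline{L}(\lambda)$) do not allow one to rescale one $\mathscr{L}_i$ independently of the others in a way that changes the tuple — equivalently, that the three $\mathrm{Ext}^1$-classes are ``rigidly linked'' through the shared constituents, so that the only ambiguity is a simultaneous global scalar, which acts trivially on the (projectivized, or affine once a splitting is chosen) invariants. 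This is where one leans hardest on the multiplicity-one statements for Jordan--Hölder factors and on the explicit computation of $\mathrm{Hom}$ and $\mathrm{Ext}^1$ among the pieces from Proposition~\ref{3prop: main dim}.

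\textbf{Summary of steps, in order.} (i) recall the explicit representations and $\mathrm{Ext}^1$-dimensions from Section~\ref{3subsection: main notation} and Proposition~\ref{3prop: main dim}; (ii) construct the two arms by successive non-split extensions with prescribed classes $\mathscr{L}_1,\mathscr{L}_2$; (iii) amalgamate them over $\mathrm{St}_3^{\rm{an}}(\lambda)$ and glue on the two copies of $\overline{L}(\lambda)$ using the class $\mathscr{L}_3$, invoking Proposition~\ref{3prop: criterion of existence} for existence of the configuration; (iv) verify the Jordan--Hölder factors and the socle/cosocle filtration match the displayed diagram; (v) for non-isomorphism, identify canonical subquotients realizing each $\mathscr{L}_i$ and use the one-dimensionality of the ambient $\mathrm{Ext}^1$ plus multiplicity-one to conclude that the tuple $(\mathscr{L}_1,\mathscr{L}_2,\mathscr{L}_3)$ is an isomorphism invariant.
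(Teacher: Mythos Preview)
Your existence argument misses the central mechanism of the construction and misidentifies what the invariants parametrize.

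First, $\mathscr{L}_1, \mathscr{L}_2$ do \emph{not} parametrize the extensions $v_{P_i}^{\rm{an}}(\lambda)\text{---}C_{s_i,s_i}$; those extensions are unique (the relevant $\mathrm{Ext}^1$ is one-dimensional by Lemma~\ref{3lemm: ext3}). Rather, $\mathscr{L}_i$ parametrizes the extension $\mathrm{St}_3^{\rm{an}}(\lambda)\text{---}v_{P_i}^{\rm{an}}(\lambda)$ via the identification (\ref{3canonical isomorphism}) with $\mathrm{Hom}_{\rm cont}(\mathbb{Q}_p^\times,E)$; this is set up in Section~\ref{3subsection: log dilog}, not deduced from Proposition~\ref{3prop: main dim}.

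More seriously, you have the role of $\mathscr{L}_3$ and of Proposition~\ref{3prop: criterion of existence} wrong. The invariant $\mathscr{L}_3$ does not directly parametrize how the final $\overline{L}(\lambda)$'s attach. What happens is this: after building $\Sigma^+(\lambda,\mathscr{L}_1,\mathscr{L}_2)$, one attaches $\overline{L}(\lambda)\otimes v_{P_1}^\infty$ and $\overline{L}(\lambda)\otimes v_{P_2}^\infty$ on top, and each of these attachments lives in a \emph{two}-dimensional $\mathrm{Ext}^1$ (Lemma~\ref{3lemm: ext5}), identified via the cup-product isomorphism (\ref{3isomorphism for normalization}) with $\mathrm{Ext}^2(\overline{L}(\lambda),\Sigma^+)$. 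So a priori there are two independent parameters $\mathscr{L}_1',\mathscr{L}_2'\in E$, one for each arm. Proposition~\ref{3prop: criterion of existence} is the statement that a \emph{second} copy of $\overline{L}(\lambda)$ can then be glued on top (i.e.\ $\mathrm{Ext}^1(\overline{L}(\lambda),\Sigma^{\sharp,+}(\lambda,\underline{\mathscr{L}}))$ is one-dimensional) \emph{if and only if} $\mathscr{L}_1'=\mathscr{L}_2'$; this common value is $\mathscr{L}_3$. The proposition is thus a genuine compatibility constraint reducing four parameters to three, proved by analyzing the connecting map $f$ in (\ref{3second key sequence}) as a cup product and tracking the lines generated by $\iota_1(D_0)+\mathscr{L}_i'\kappa(b_{1,\mathrm{val}_p}\wedge b_{2,\mathrm{val}_p})$. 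Your description of it as merely ``guaranteeing the configuration exists without extra splittings'' misses this entirely. Proposition~\ref{3prop: main dim} is likewise not a catalog of $\mathrm{Ext}^1$-dimensions among constituents; it is the single computation $\dim\mathrm{Ext}^1(W_0,\Sigma^+(\lambda,\mathscr{L}_1,\mathscr{L}_2))=3$, which feeds into this chain of long exact sequences.

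Your non-isomorphism outline is reasonable in spirit; in the paper this falls out of the construction since $\mathscr{L}_1,\mathscr{L}_2$ are recovered from the canonical subrepresentation $\Sigma(\lambda,\mathscr{L}_1,\mathscr{L}_2)$ and $\mathscr{L}_3$ from the line in $\mathrm{Ext}^2(\overline{L}(\lambda),\Sigma^+)$ singled out above.
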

We will see in Lemma~\ref{3lemm: distinguish mult two} and (\ref{3main picture for min}) that $\Sigma^{\rm{min}}(\lambda, \mathscr{L}_1, \mathscr{L}_2, \mathscr{L}_3)$ is \emph{the minimal locally analytic representation that involves $p$\text{-}adic dilogarithm}, hence explains the notation `\rm{min}'. We also construct a locally analytic representation $\Sigma^{\rm{min}, +}(\lambda, \mathscr{L}_1, \mathscr{L}_2, \mathscr{L}_3)$ of the form
$$
\begin{xy}
(0,0)*+{\mathrm{St}_3^{\rm{an}}(\lambda)}="a"; (20,6)*+{v_{P_1}^{\rm{an}}(\lambda)}="b"; (40,12)*+{C_{s_1,s_1}}="d"; (65,10)*+{\overline{L}(\lambda)\otimes_Ev_{P_2}^{\infty}}="f"; (20,-6)*+{v_{P_2}^{\rm{an}}(\lambda)}="c"; (40,-12)*+{C_{s_2,s_2}}="e"; (65,-10)*+{\overline{L}(\lambda)\otimes_Ev_{P_1}^{\infty}}="g"; (90,3)*+{\overline{L}(\lambda)}="h1"; (90,-3)*+{\overline{L}(\lambda)}="h2"; (65,18)*+{C^1_{s_2s_1,s_2s_1}}="i";  (65,-18)*+{C^1_{s_1s_2,s_1s_2}}="j";  (90,15)*+{C^2_{s_1,s_1s_2}}="k";  (90,-15)*+{C^2_{s_2,s_2s_1}}="l";
{\ar@{-}"a";"b"}; {\ar@{-}"a";"c"}; {\ar@{-}"b";"d"}; {\ar@{-}"c";"e"}; {\ar@{-}"d";"f"}; {\ar@{-}"e";"g"}; {\ar@{-}"f";"h1"}; {\ar@{-}"b";"h1"}; {\ar@{-}"g";"h1"}; {\ar@{-}"f";"h2"}; {\ar@{-}"c";"h2"}; {\ar@{-}"g";"h2"}; {\ar@{-}"d";"i"}; {\ar@{-}"e";"j"}; {\ar@{-}"i";"k"}; {\ar@{-}"f";"k"}; {\ar@{-}"j";"l"}; {\ar@{-}"g";"l"};
\end{xy}
$$
which contains and is uniquely determined by $\Sigma^{\rm{min}}(\lambda, \mathscr{L}_1, \mathscr{L}_2, \mathscr{L}_3)$.
\begin{theo}\label{3theo: main introduction}[Theorem~\ref{3theo: main}]
Assume that $p\geq 5$ and $n=3$. Assume moreover that
\begin{enumerate}
\item $\rho$ is unramified at all finite places of $F$ above $D(U^p)$;
\item $\widehat{S}(U^p, E)[\mathfrak{m}_{\rho}]^{\rm{alg}}\neq 0$;
\item $\rho_p$ is semi-stable with Hodge--Tate weights $\{k_1>k_2>k_3\}$ such that $N^2\neq 0$;
\item $\rho_p$ is non-critical in the sense of Remark~6.1.4 of \cite{Bre17};
\item only one automorphic representation contributes to $\widehat{S}(U^p, E)[\mathfrak{m}_{\rho}]^{\rm{alg}}$.
\end{enumerate}
Then there exists a unique choice of $\mathscr{L}_1, \mathscr{L}_2, \mathscr{L}_3\in E$ such that $\widehat{S}(U^p, E)[\mathfrak{m}_{\rho}]^{\rm{an}}$ contains (copies of) the locally analytic representation
$$\Sigma^{\rm{min}, +}(\lambda, \mathscr{L}_1, \mathscr{L}_2, \mathscr{L}_3)\otimes_E(\mathrm{ur}(\alpha)\otimes_E\varepsilon^2)\circ\mathrm{det}$$
where $\lambda=(\lambda_1, \lambda_2, \lambda_3)=(k_1-2, k_2-1, k_3)$ and $\alpha\in E^{\times}$ is determined by the Weil--Deligne representation $\mathrm{WD}(\rho_p)$ associated with $\rho_p$. Moreover, we have
\begin{multline}
\mathrm{Hom}_{\mathrm{GL}_3(\mathbb{Q}_p)}\left(\Sigma^{\rm{min},+}(\lambda, \mathscr{L}_1, \mathscr{L}_2, \mathscr{L}_3)\otimes_E(\mathrm{ur}(\alpha)\otimes_E\varepsilon^2)\circ\mathrm{det}, \widehat{S}(U^p, E)^{\rm{an}}[\mathfrak{m}_{\rho}]\right)\\
\xrightarrow{\sim}\mathrm{Hom}_{\mathrm{GL}_3(\mathbb{Q}_p)}\left(\overline{L}(\lambda)\otimes_E\mathrm{St}_3^{\infty}\otimes_E(\mathrm{ur}(\alpha)\otimes_E\varepsilon^2)\circ\mathrm{det}, \widehat{S}(U^p, E)^{\rm{an}}[\mathfrak{m}_{\rho}]\right).\\
\end{multline}
\end{theo}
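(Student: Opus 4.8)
The plan is to deduce Theorem~\ref{3theo: main introduction} from the corresponding statement in \cite{Bre17} (which gives, under hypotheses (i)--(v), a unique representation $\Pi$ in Breuil's four-invariant family embedding into $\widehat{S}(U^p,E)^{\rm{an}}[\mathfrak{m}_{\rho}]$) together with the purely representation-theoretic comparison between Breuil's family and the family $\Sigma^{\rm{min},+}(\lambda,\mathscr{L}_1,\mathscr{L}_2,\mathscr{L}_3)$ advertised in the abstract. The skeleton is: first, invoke \cite{Bre17} to produce some $\Pi$ and the associated four invariants; second, show that $\Pi$ embedding into completed cohomology forces a relation among those four invariants (this is the ``necessary condition'' mentioned in the abstract), which is exactly what is needed to reduce the parameters from four to three and to produce the candidate triple $(\mathscr{L}_1,\mathscr{L}_2,\mathscr{L}_3)$; third, verify that with this triple $\Sigma^{\rm{min},+}(\lambda,\underline{\mathscr{L}})\otimes(\mathrm{ur}(\alpha)\otimes\varepsilon^2)\circ\det$ embeds into $\widehat{S}(U^p,E)^{\rm{an}}[\mathfrak{m}_{\rho}]$, and fourth, upgrade the single embedding to the isomorphism of $\Hom$-spaces in the displayed multiline formula.

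For the second and third steps I would work with the internal structure of $\Sigma^{\rm{min},+}$ displayed in the introduction. The key point is that $\Sigma^{\rm{min},+}$ is built by amalgamating, over the common subobject $\mathrm{St}_3^{\rm{an}}(\lambda)$, the two ``parabolic'' pieces attached to $P_1$ and $P_2$, whose extension classes record $\mathscr{L}_1$ and $\mathscr{L}_2$ (the two ``simple-root'' $\mathscr{L}$-invariants as in \cite{Bre17}, \cite{BD18}), together with the deeper layers $C_{s_1,s_1}$, $C_{s_2,s_2}$, $C^1_{s_2s_1,s_2s_1}$, $C^1_{s_1s_2,s_1s_2}$, $C^2_{s_1,s_1s_2}$, $C^2_{s_2,s_2s_1}$ whose glueing data encodes $\mathscr{L}_3$; by Lemma~\ref{3lemm: distinguish mult two} the $C$-layers are precisely where the $p$-adic dilogarithm enters. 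The strategy is: (a) restrict any hypothetical embedding $\Sigma^{\rm{min},+}\hookrightarrow \widehat{S}(U^p,E)^{\rm{an}}[\mathfrak{m}_{\rho}]$ to its locally algebraic vectors to pin down, via hypotheses (ii) and (v) (``one automorphic representation''), the classical local Langlands data, hence $\lambda$ and $\alpha$; (b) then restrict to the subrepresentation $\Sigma^{\rm{min}}$ and, layer by layer, read off the constraints that $\widehat{S}(U^p,E)^{\rm{an}}[\mathfrak{m}_{\rho}]$ imposes on the extension classes; comparing with the classes appearing in Breuil's $\Pi$ (which we already know embeds) forces the four invariants of $\Pi$ to satisfy one linear-over-$E$ relation, and that relation is exactly the equation of the image of our three-parameter family inside Breuil's four-parameter one. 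Conversely, the three invariants so produced are unambiguous because different triples give non-isomorphic $\Sigma^{\rm{min}}$ by Theorem~\ref{3theo: construction introduction}, which gives uniqueness.

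For the final $\Hom$-space isomorphism I would argue as follows. The natural restriction map sends an embedding of $\Sigma^{\rm{min},+}(\lambda,\underline{\mathscr{L}})\otimes(\mathrm{ur}(\alpha)\otimes\varepsilon^2)\circ\det$ to its restriction along the inclusion $\overline{L}(\lambda)\otimes\mathrm{St}_3^{\infty}\otimes(\mathrm{ur}(\alpha)\otimes\varepsilon^2)\circ\det\hookrightarrow \Sigma^{\rm{min},+}(\lambda,\underline{\mathscr{L}})\otimes\cdots$ (the locally algebraic constituent sitting inside $\mathrm{St}_3^{\rm{an}}(\lambda)$). Injectivity of this map follows because $\overline{L}(\lambda)\otimes\mathrm{St}_3^{\infty}$ generates $\mathrm{St}_3^{\rm{an}}(\lambda)$ as a $\mathrm{GL}_3(\mathbb{Q}_p)$-representation and $\mathrm{St}_3^{\rm{an}}(\lambda)$ in turn generates all of $\Sigma^{\rm{min},+}$ (every successive extension in the diagram is non-split upon restriction to the socle-generated part by construction), so a map out of $\Sigma^{\rm{min},+}$ is determined by its restriction to the locally algebraic vectors. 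Surjectivity is the substantive direction: given any embedding of $\overline{L}(\lambda)\otimes\mathrm{St}_3^{\infty}\otimes\cdots$ into $\widehat{S}(U^p,E)^{\rm{an}}[\mathfrak{m}_{\rho}]$, I must extend it to $\Sigma^{\rm{min},+}$; this is where the global input is essential — one shows that the relevant $\mathrm{Ext}^1$-groups computed inside $\widehat{S}(U^p,E)^{\rm{an}}[\mathfrak{m}_{\rho}]$ are one-dimensional and realized by precisely the classes occurring in $\Sigma^{\rm{min},+}$, using the known computation of $\widehat{S}(U^p,E)^{\rm{an}}[\mathfrak{m}_{\rho}]$ at the level of Jacquet modules / the companion-point structure on the eigenvariety, exactly as in \cite{Bre17}. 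I expect this surjectivity/extension step — propagating a locally algebraic embedding up through all the dilogarithm layers $C^{\ast}_{w',w}$ while controlling multiplicities — to be the main obstacle, and it is where the non-criticality hypothesis (iv) and the multiplicity-one hypothesis (v) do the real work.
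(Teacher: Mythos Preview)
Your overall direction --- extend an embedding of the locally algebraic socle layer by layer up through $\Sigma^{\rm{min},+}$, with the global input from \cite{Bre17} doing the work at each step --- is correct, and you rightly flag the surjectivity of the restriction map as the substantive part. But the logical structure you propose is partly inverted. Your steps~1--2 (first invoke Breuil's $\Pi$, then deduce a relation among its four invariants cutting down to three, then verify the embedding of $\Sigma^{\rm{min},+}$) reverse the actual order: in the paper the statement that Breuil's $\Pi$ necessarily lies in the three-parameter subfamily is Corollary~\ref{3coro: criterion}, \emph{deduced from} Theorem~\ref{3theo: main}, not an input to it. The invariants $\mathscr{L}_1,\mathscr{L}_2$ are indeed fixed by the early layers as in \cite{Bre17}, but $\mathscr{L}_3$ is not read off from $\Pi$ in advance --- it \emph{emerges} at the one filtration step that goes beyond \cite{Bre17}.

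That step is the genuine gap in your sketch. The paper filters $\Sigma^{\rm{min},+}$ so that $\Sigma^{\sharp,+}(\lambda,\mathscr{L}_1,\mathscr{L}_2)$ and $\Sigma^{\rm{min}}(\lambda,\mathscr{L}_1,\mathscr{L}_2,\mathscr{L}_3)$ are consecutive terms with graded piece $W_0$; every other graded piece is either irreducible non-algebraic (handled by part~(i) of Proposition~\ref{3prop: adjunction}) or is $\overline{L}(\lambda)\otimes v_{P_i}^{\infty}$ (handled exactly as in \'Etape~2 of \S6.4 of \cite{Bre17}). For the $W_0$ step one applies part~(ii) of Proposition~\ref{3prop: adjunction} with the fiber product $V^{\diamond}=\Sigma^{\rm{min}}\times_{W_0}\bigl(\overline{L}(\lambda)\otimes i_B^{\mathrm{GL}_3}(\chi_{s_1s_2s_1}^{\infty})\bigr)$, and must then show that every quotient $V$ of $V^{\diamond}$ with $\mathrm{soc}_{\mathrm{GL}_3(\mathbb{Q}_p)}(V)=V^{\rm{alg}}=\overline{L}(\lambda)\otimes\mathrm{St}_3^{\infty}$ is isomorphic to $\Sigma^{\rm{min}}(\lambda,\mathscr{L}_1,\mathscr{L}_2,\mathscr{L}_3')$ for \emph{some} $\mathscr{L}_3'\in E$. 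This is precisely Lemma~\ref{3lemm: preparation for global}: the space $\mathrm{Ext}^1_{\mathrm{GL}_3(\mathbb{Q}_p),\lambda}\bigl(W_0,\Sigma^{\sharp,+}(\lambda,\mathscr{L}_1,\mathscr{L}_2)\bigr)$ is two-dimensional, and every line in it other than the one through $\overline{L}(\lambda)\otimes\mathrm{St}_3^{\infty}$ is realized by a unique $\Sigma^{\rm{min}}(\lambda,\mathscr{L}_1,\mathscr{L}_2,\mathscr{L}_3')$. This purely representation-theoretic lemma (resting on the $\mathrm{Ext}$-computations of Sections~\ref{3section: computation}--\ref{3section: technial min}, and ultimately on the $\mathrm{GL}_2(\mathbb{Q}_p)$ input Proposition~\ref{3prop: key result}) is what your appeal to ``$\mathrm{Ext}^1$-groups inside $\widehat{S}$'' and ``companion points on the eigenvariety'' does not supply; without it the $W_0$ step does not go through and $\mathscr{L}_3$ is never produced.
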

The assumptions of our Theorem~\ref{3theo: main introduction} are the same as that of Theorem~1.3 of \cite{Bre17}. We do not attempt to obtain any explicit relation between $\mathscr{L}_1, \mathscr{L}_2, \mathscr{L}_3\in E$ and $\rho_p$, which is similar in flavor to Theorem~1.3 of \cite{Bre17}. On the other hand, Theorem~7.52 of \cite{BD18} does care about the explicit relation between invariants of the locally analytic representation associated with $\rho_p$, under further technical assumptions such as $\rho_p$ is ordinary with consecutive Hodge--Tate weights and has an irreducible mod $p$ reduction but without assuming our condition (v). The improvement of our Theorem~\ref{3theo: main introduction} upon Theorem~1.3 of \cite{Bre17} will be explained in Section~\ref{3subsection: criterion of global embedding}. One can naturally wish that there is a common refinement or generalization of our Theorem~\ref{3theo: main introduction} and Theorem~7.52 of \cite{BD18} by removing as many technical assumptions as possible.

\begin{rema}\label{3rema: construction max}
It is actually possible to construct a locally analytic representation $\Sigma^{\rm{max}}(\lambda, \mathscr{L}_1, \mathscr{L}_2, \mathscr{L}_3)$ of $\mathrm{GL}_3(\mathbb{Q}_p)$ containing $\Sigma^{\rm{min}, +}(\lambda, \mathscr{L}_1, \mathscr{L}_2, \mathscr{L}_3)$ which is characterized by the fact that it is maximal (for inclusion) among the locally analytic representations $V$ satisfying the following conditions:
\begin{enumerate}
\item $\mathrm{soc}_{\mathrm{GL}_3(\mathbb{Q}_p)}(V)=V^{\rm{alg}}=\overline{L}(\lambda)\otimes_E\mathrm{St}_3^{\infty}$;
\item each constituent of $V$ is a subquotient of a locally analytic principal series
\end{enumerate}
where $V^{\rm{alg}}$ is the subspace of locally algebraic vectors in $V$. Moreover, one can use an immediate generalization of the arguments in the proof of Theorem~\ref{3theo: main introduction} (and thus of Theorem~1.1 of \cite{Bre17}) to show that
\begin{multline}
\mathrm{Hom}_{\mathrm{GL}_3(\mathbb{Q}_p)}\left(\Sigma^{\rm{max}}(\lambda, \mathscr{L}_1, \mathscr{L}_2, \mathscr{L}_3)\otimes_E(\mathrm{ur}(\alpha)\otimes_E\varepsilon^2)\circ\mathrm{det}, \widehat{S}(U^p, E)^{\rm{an}}[\mathfrak{m}_{\rho}]\right)\\
\xrightarrow{\sim}\mathrm{Hom}_{\mathrm{GL}_3(\mathbb{Q}_p)}\left(\overline{L}(\lambda)\otimes_E\mathrm{St}_3^{\infty}\otimes_E(\mathrm{ur}(\alpha)\otimes_E\varepsilon^2)\circ\mathrm{det}, \widehat{S}(U^p, E)^{\rm{an}}[\mathfrak{m}_{\rho}]\right).\\
\end{multline}
We can also show that
$$\Sigma^{\rm{max}}(\lambda, \mathscr{L}_1, \mathscr{L}_2, \mathscr{L}_3)/\overline{L}(\lambda)\otimes_E\mathrm{St}_3$$
is independent of the choice of $\mathscr{L}_1, \mathscr{L}_2, \mathscr{L}_3\in E$, which is compatible with the fact that
$$\Sigma^{\rm{min}, \ast}(\lambda, \mathscr{L}_1, \mathscr{L}_2, \mathscr{L}_3)/\overline{L}(\lambda)\otimes_E\mathrm{St}_3$$
is independent of the choice of $\mathscr{L}_1, \mathscr{L}_2, \mathscr{L}_3\in E$ for each $\ast\in \{\varnothing, +\}$ as mentioned in Remark~\ref{3quotient independent of invariants}. However, the full construction of $\Sigma^{\rm{max}}(\lambda, \mathscr{L}_1, \mathscr{L}_2, \mathscr{L}_3)$ is lengthy and technical and thus we decided not to put it in the present article.
\end{rema}
\subsection{Derived object and dilogarithm}\label{3subsection: dilog}
We consider the bounded derived category
$$\mathcal{D}^b\left(\mathrm{Mod}_{D(\mathrm{GL}_3(\mathbb{Q}_p), E)}\right)$$
associated with the abelian category $\mathrm{Mod}_{D(\mathrm{GL}_3(\mathbb{Q}_p), E)}$ of abstract modules over the algebra $D(\mathrm{GL}_3(\mathbb{Q}_p), E)$ of locally $\mathbb{Q}_p$\text{-}analytic distributions on $\mathrm{GL}_3(\mathbb{Q}_p)$. An object
$$\Sigma(\lambda, \underline{\mathscr{L}})^{\prime}\in \mathcal{D}^b\left(\mathrm{Mod}_{D(\mathrm{GL}_3(\mathbb{Q}_p), E)}\right)$$
(one should not confuse this notation $\Sigma(\lambda, \underline{\mathscr{L}})^{\prime}$ borrowed directly from \cite{Schr11} with our notation $\Sigma^+(\lambda, \underline{\mathscr{L}})$ before Lemma~\ref{3lemm: ext13}) has been constructed in \cite{Schr11} and plays a key role in Theorem~1.2 of \cite{Schr11}. An interesting feature of \cite{Schr11} is the appearance of the $p$\text{-}adic dilogarithm function in the construction of $\Sigma(\lambda, \underline{\mathscr{L}})^{\prime}$ in Definition~5.19 of \cite{Schr11}. Roughly, the object $\Sigma(\lambda, \underline{\mathscr{L}})^{\prime}$ was constructed from the choice of an element in $\mathrm{Ext}^2_{\mathrm{GL}_3(\mathbb{Q}_p), \lambda}\left(\overline{L}(\lambda),~\Sigma(\lambda, \mathscr{L}_1, \mathscr{L}_2)\right)$ together with general formal arguments in triangulated categories ( cf. Proposition~3.2 of \cite{Schr11}). In particular, $\Sigma(\lambda, \underline{\mathscr{L}})^{\prime}$ fits into the following distinguished triangle:
$$F_\lambda^{\prime}\longrightarrow ~\Sigma(\lambda, \underline{\mathscr{L}})^{\prime}\longrightarrow \Sigma(\lambda, \mathscr{L}, \mathscr{L}^{\prime})^{\prime}[-1]\xrightarrow{~+1}$$
as illustrated in (5.99) of \cite{Schr11}. However, it was not clear in \cite{Schr11} whether there is an explicit complex $\left[C_{\bullet}\right]$ of locally analytic representations of $\mathrm{GL}_3(\mathbb{Q}_p)$ such that the object $$\mathcal{D}^{\prime}\in\mathcal{D}^b\left(\mathrm{Mod}_{D(\mathrm{GL}_3(\mathbb{Q}_p), E)}\right)$$
associated with $\left[C^{\prime}_{-\bullet}\right]$ satisfies
$$\mathcal{D}^{\prime}\cong \Sigma(\lambda, \underline{\mathscr{L}})^{\prime}\in \mathcal{D}^b\left(\mathrm{Mod}_{D(\mathrm{GL}_3(\mathbb{Q}_p), E)}\right).$$
Although our notation are slightly different from \cite{Schr11} in the sense that the notation $\Sigma(\lambda, \mathscr{L}, \mathscr{L}^{\prime})$ (resp. the notation $F_\lambda$) is replaced with $\Sigma(\lambda, \mathscr{L}_1, \mathscr{L}_2)$ (resp. with $\overline{L}(\lambda)$), we show that
\begin{theo}\label{3theo: complex introduction}[Proposition~\ref{3prop: relation with derived object}, (\ref{3sign of L invariant}) and Lemma~\ref{3lemm: easy normalization of higher invariant}]
The complex
\begin{equation}\label{3key complex}
\left[\left(\begin{xy}
(0,0)*+{\overline{L}(\lambda)\otimes_Ev_{P_{3-i}}^{\infty}}="a"; (20,0)*+{\overline{L}(\lambda)}="b";
{\ar@{-}"a";"b"};
\end{xy}\right)^{\prime}\longrightarrow\Sigma^{\sharp, +}_i(\lambda, \mathscr{L}_1, \mathscr{L}_2, \mathscr{L}_3)^{\prime} \right]
\end{equation}
realizes the object $\Sigma(\lambda, \underline{\mathscr{L}})^{\prime}$ where $\begin{xy}
(0,0)*+{\overline{L}(\lambda)\otimes_Ev_{P_{3-i}}^{\infty}}="a"; (20,0)*+{\overline{L}(\lambda)}="b";
{\ar@{-}"a";"b"};
\end{xy}$ is the unique non-split extension of $\overline{L}(\lambda)\otimes_Ev_{P_{3-i}}^{\infty}$ by $\overline{L}(\lambda)$ thanks to Proposition~\ref{3prop: locally algebraic extension}, $\Sigma^{\sharp, +}_i(\lambda, \mathscr{L}_1, \mathscr{L}_2, \mathscr{L}_3)$ is the locally analytic subrepresentation of $\Sigma^{\rm{min}}(\lambda, \mathscr{L}_1, \mathscr{L}_2, \mathscr{L}_3)$ of the form
$$
\begin{xy}
(0,0)*+{\mathrm{St}_3^{\rm{an}}(\lambda)}="a"; (20,4)*+{v_{P_i}^{\rm{an}}(\lambda)}="b"; (40,8)*+{C_{s_i,s_i}}="d"; (65,8)*+{\overline{L}(\lambda)\otimes_Ev_{P_{3-i}}^{\infty}}="f"; (20,-4)*+{v_{P_{3-i}}^{\rm{an}}(\lambda)}="c"; (40,-8)*+{C_{s_{3-i},s_{3-i}}}="e"; (40,0)*+{\overline{L}(\lambda)}="h";
{\ar@{-}"a";"b"}; {\ar@{-}"a";"c"}; {\ar@{-}"b";"d"}; {\ar@{-}"c";"e"}; {\ar@{-}"d";"f"}; {\ar@{-}"b";"h"}; {\ar@{-}"c";"h"};
\end{xy}
$$
and the invariants $\mathscr{L}_1, \mathscr{L}_2, \mathscr{L}_3\in E$ are determined by the formula
$$\mathscr{L}_1=-\mathscr{L}^{\prime},~\mathscr{L}_2=-\mathscr{L},~\mathscr{L}_3=\gamma(\mathscr{L}^{\prime\prime}-\frac{1}{2}\mathscr{L}\mathscr{L}^{\prime})$$
with the constant $\gamma\in E^{\times}$ defined in Lemma~\ref{3lemm: ext2 prime}.
\end{theo}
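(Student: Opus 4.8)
The plan is to establish the quasi-isomorphism in $\mathcal{D}^b(\mathrm{Mod}_{D(\mathrm{GL}_3(\mathbb{Q}_p),E)})$ by matching the two-term complex (\ref{3key complex}) against the distinguished triangle $F_\lambda^{\prime}\to\Sigma(\lambda,\underline{\mathscr{L}})^{\prime}\to\Sigma(\lambda,\mathscr{L}_1,\mathscr{L}_2)^{\prime}[-1]\xrightarrow{+1}$ of (5.99) of \cite{Schr11}. Concretely, write $[C_\bullet]$ for the complex in (\ref{3key complex}) placed in degrees $0$ and $-1$ (so that the first term sits in degree $0$). First I would identify the cohomology of $[C_\bullet]$ termwise: the map is an inclusion of the unique non-split extension $\left(\begin{xy}(0,0)*+{\overline{L}(\lambda)\otimes_Ev_{P_{3-i}}^{\infty}}="a";(20,0)*+{\overline{L}(\lambda)}="b";{\ar@{-}"a";"b"};\end{xy}\right)^{\prime}$ into (the dual of) $\Sigma^{\sharp,+}_i(\lambda,\mathscr{L}_1,\mathscr{L}_2,\mathscr{L}_3)$, and since that inclusion is injective on the underlying $D$-modules, the $H^0$ vanishes and $H^{-1}$ is the cokernel $\Sigma^{\sharp,+}_i/\bigl(\overline{L}(\lambda)\otimes_Ev_{P_{3-i}}^{\infty}\text{-extension}\bigr)$. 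Reading off the displayed Loewy picture of $\Sigma^{\sharp,+}_i$, this cokernel is precisely the representation of the form $\begin{xy}(0,0)*+{\mathrm{St}_3^{\rm{an}}(\lambda)}="a";(15,4)*+{v_{P_i}^{\rm{an}}(\lambda)}="b";(32,8)*+{C_{s_i,s_i}}="d";(15,-4)*+{v_{P_{3-i}}^{\rm{an}}(\lambda)}="c";(32,-8)*+{C_{s_{3-i},s_{3-i}}}="e";{\ar@{-}"a";"b"};{\ar@{-}"a";"c"};{\ar@{-}"b";"d"};{\ar@{-}"c";"e"};\end{xy}$, which I claim (by comparison with the explicit constituents listed in Section~\ref{3subsection: main notation} and with Definition~5.19 of \cite{Schr11}) is a model for $\Sigma(\lambda,\mathscr{L}_1,\mathscr{L}_2)^{\prime}$, shifted appropriately. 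So as an object of the derived category, $[C_\bullet]$ already fits a triangle of the right shape; the content is to show the connecting/extension data agrees.

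The key step is thus to compute the class of $[C_\bullet]$ in $\mathrm{Ext}^2_{\mathrm{GL}_3(\mathbb{Q}_p),\lambda}\bigl(\overline{L}(\lambda),\Sigma(\lambda,\mathscr{L}_1,\mathscr{L}_2)\bigr)$ and compare it with the class used in \cite{Schr11} to build $\Sigma(\lambda,\underline{\mathscr{L}})^{\prime}$ (via Proposition~3.2 of \cite{Schr11}). I would do this by the Yoneda-splice description: the two-step complex $[C_\bullet]$ together with the surjection from $\overline{L}(\lambda)$ (coming from the degree-$0$ term's quotient onto $\overline{L}(\lambda)$) and the inclusion of $\Sigma(\lambda,\mathscr{L}_1,\mathscr{L}_2)$ into the degree-$(-1)$ term determines a class in $\mathrm{Ext}^2$; this uses Proposition~\ref{3prop: relation with derived object} to pin down that the $\mathrm{Ext}^2$-class is exactly the one appearing in the construction of $\Sigma^{\rm{min}}$, hence (by the very definition of $\Sigma^{\rm{min}}$ as ``the minimal locally analytic representation that involves $p$-adic dilogarithm'') matches the dilogarithm-valued class of Definition~5.19 of \cite{Schr11}. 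Here is where the normalization constants enter: the identification is only up to the scalars relating Breuil's/Ding's $\mathscr{L}$-invariants to Schraen's, which is recorded in (\ref{3sign of L invariant}) and Lemma~\ref{3lemm: easy normalization of higher invariant}, giving the stated dictionary $\mathscr{L}_1=-\mathscr{L}^{\prime}$, $\mathscr{L}_2=-\mathscr{L}$, $\mathscr{L}_3=\gamma(\mathscr{L}^{\prime\prime}-\tfrac12\mathscr{L}\mathscr{L}^{\prime})$ with $\gamma$ from Lemma~\ref{3lemm: ext2 prime}.

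Finally I would invoke the uniqueness built into Proposition~3.2 of \cite{Schr11}: once the triangle $[C_\bullet']\to[C_\bullet]\to\Sigma(\lambda,\mathscr{L}_1,\mathscr{L}_2)^{\prime}[-1]\xrightarrow{+1}$ (where $[C_\bullet']$ is the degree-$0$ term $\left(\begin{xy}(0,0)*+{\overline{L}(\lambda)\otimes_Ev_{P_{3-i}}^{\infty}}="a";(20,0)*+{\overline{L}(\lambda)}="b";{\ar@{-}"a";"b"};\end{xy}\right)^{\prime}$, which is manifestly $F_\lambda^{\prime}$ in Schraen's notation after replacing $F_\lambda$ by $\overline{L}(\lambda)$) has connecting map equal, under the dictionary above, to the one defining $\Sigma(\lambda,\underline{\mathscr{L}})^{\prime}$, the two objects are isomorphic in the derived category by the axioms of triangulated categories and the rigidity of the octahedron in this setting (the relevant $\mathrm{Hom}$-group that could obstruct uniqueness vanishes, which I would check using the $\mathrm{Ext}$-computations of Section~\ref{3subsection: main notation} — in particular the absence of nonzero maps between the pieces of $F_\lambda^{\prime}$ and $\Sigma(\lambda,\mathscr{L}_1,\mathscr{L}_2)^{\prime}$ in the relevant degree). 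The main obstacle, and the step deserving the most care, is the explicit $\mathrm{Ext}^2$-comparison: one must check not merely that an $\mathrm{Ext}^2$-class of the right ``shape'' appears, but that it is the dilogarithm class on the nose, which forces a careful bookkeeping of the extension constants $\gamma$ and the signs, and a verification that the subrepresentation $\Sigma^{\sharp,+}_i$ of $\Sigma^{\rm{min}}$ really does capture the full $\mathrm{Ext}^2$ rather than a proper quotient of it — this is exactly the point of Lemma~\ref{3lemm: distinguish mult two} and the discussion around (\ref{3main picture for min}).
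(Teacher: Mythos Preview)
Your proposal contains a fundamental error in the very first step. You claim that the differential $W_{3-i}^{\prime}\to\Sigma^{\sharp,+}_i(\lambda,\mathscr{L}_1,\mathscr{L}_2,\mathscr{L}_3)^{\prime}$ is an inclusion, so that $H^0$ of the complex vanishes and the cone is quasi-isomorphic to a single shifted representation. This is false: on the representation side the map is the composite $\Sigma^{\sharp,+}_i\twoheadrightarrow\overline{L}(\lambda)\otimes_Ev_{P_{3-i}}^{\infty}\hookrightarrow W_{3-i}$, which is neither injective nor surjective (inspect the displayed Loewy picture: $\overline{L}(\lambda)\otimes_Ev_{P_{3-i}}^{\infty}$ and $\overline{L}(\lambda)$ both lie in the cosocle of $\Sigma^{\sharp,+}_i$ with no edge between them, so the non-split $W_{3-i}$ is \emph{not} a quotient of $\Sigma^{\sharp,+}_i$). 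Consequently both cohomology groups of the complex are nonzero: one is $\overline{L}(\lambda)^{\prime}$ and the other is $\Sigma^{\sharp,+}(\lambda,\mathscr{L}_1,\mathscr{L}_2)^{\prime}$, matching the triangle $\overline{L}(\lambda)^{\prime}\to\mathcal{D}_i^{\prime}\to\Sigma^{\sharp,+}(\lambda,\mathscr{L}_1,\mathscr{L}_2)^{\prime}[-1]\xrightarrow{+1}$. The object is genuinely derived, not a shift of a representation; this is precisely the point of the theorem. Relatedly, your last paragraph identifies the degree-$0$ term $W_{3-i}^{\prime}$ with $F_\lambda^{\prime}=\overline{L}(\lambda)^{\prime}$, but $W_{3-i}$ has length two while $\overline{L}(\lambda)$ is irreducible, so that identification cannot be right.

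The paper's argument (Proposition~\ref{3prop: relation with derived object}) proceeds differently and avoids computing cohomology directly. It starts from Schraen's object $\mathcal{D}(\lambda,\mathscr{L}_1,\mathscr{L}_2,\mathscr{L}_3)^{\prime}$, defined via Proposition~3.2 of \cite{Schr11} by the class $\iota_1(D_0)+\mathscr{L}_3\,\kappa(b_{1,\mathrm{val}_p}\wedge b_{2,\mathrm{val}_p})$ in $\mathrm{Ext}^2$. The key input is the cup-product isomorphism of Proposition~\ref{3prop: isomorphism from cup product}, which says that every morphism $\Sigma^{\sharp,+}(\lambda,\mathscr{L}_1,\mathscr{L}_2)^{\prime}[-1]\to\overline{L}(\lambda)^{\prime}[1]$ factors uniquely through $\left(\overline{L}(\lambda)\otimes_Ev_{P_{3-i}}^{\infty}\right)^{\prime}$. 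One then applies the octahedral axiom (TR4) to this factorization to produce a triangle $W_{3-i}^{\prime}\to\Sigma^{\sharp,+}_i(\lambda,\mathscr{L}_1,\mathscr{L}_2,\mathscr{L}_3)^{\prime}\to\mathcal{D}(\lambda,\mathscr{L}_1,\mathscr{L}_2,\mathscr{L}_3)^{\prime}\xrightarrow{+1}$, and since the cone $\mathcal{D}_i^{\prime}$ of the complex fits into the same triangle, the uniqueness in Proposition~3.2 of \cite{Schr11} gives $\mathcal{D}_i^{\prime}\cong\mathcal{D}^{\prime}$. The translation of invariants via (\ref{3sign of L invariant}) and Lemma~\ref{3lemm: easy normalization of higher invariant} is as you describe. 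So the missing idea in your proposal is the factorization through $\left(\overline{L}(\lambda)\otimes_Ev_{P_{3-i}}^{\infty}\right)^{\prime}$ and the use of TR4; your Yoneda-splice picture could be made to work, but only after correcting the cohomology of the complex to reflect that it is a genuine two-step object.
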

\begin{rema}\label{3rema: canonical isomorphism}
Strictly speaking, the complex (\ref{3key complex}) realizes an object in $\mathcal{D}^b\left(\mathrm{Mod}_{D(\mathrm{GL}_3(\mathbb{Q}_p), E)}\right)$ characterized by an element in
$$\mathrm{Ext}^2_{\mathrm{GL}_3(\mathbb{Q}_p), \lambda}\left(\overline{L}(\lambda),~\Sigma^{\sharp, +}(\lambda, \mathscr{L}_1, \mathscr{L}_2)\right)$$
due to formal arguments from Proposition~3.2 of \cite{Schr11}. However, we can prove that there is a canonical isomorphism
$$\mathrm{Ext}^2_{\mathrm{GL}_3(\mathbb{Q}_p), \lambda}\left(\overline{L}(\lambda),~\Sigma(\lambda, \mathscr{L}_1, \mathscr{L}_2)\right)\xrightarrow{\sim}\mathrm{Ext}^2_{\mathrm{GL}_3(\mathbb{Q}_p), \lambda}\left(\overline{L}(\lambda),~\Sigma^{\sharp, +}(\lambda, \mathscr{L}_1, \mathscr{L}_2)\right)$$
and hence we can equally say that (\ref{3key complex}) realizes $\Sigma(\lambda, \underline{\mathscr{L}})^{\prime}$ for a suitable normalization of notation as $\Sigma(\lambda, \underline{\mathscr{L}})$ has been constructed by choosing a non-zero element in $\mathrm{Ext}^2_{\mathrm{GL}_3(\mathbb{Q}_p), \lambda}\left(\overline{L}(\lambda),~\Sigma(\lambda, \mathscr{L}, \mathscr{L}^{\prime})\right)$ via Proposition~3.2 of \cite{Schr11}. Note that we have
$$\Sigma(\lambda, \mathscr{L}, \mathscr{L}^{\prime})\cong\Sigma(\lambda, \mathscr{L}_1, \mathscr{L}_2)$$
by (\ref{3normalization of notation}).
\end{rema}
\subsection{Higher $\mathscr{L}$\text{-}invariants for $\mathrm{GL}_3(\mathbb{Q}_p)$}\label{3subsection: criterion of global embedding}
It follows from (\ref{3main picture for min}) and (\ref{3main picture for Ext1}) that $\Sigma^{\rm{min},+}(\lambda, \mathscr{L}_1, \mathscr{L}_2, \mathscr{L}_3)$ can be described more precisely by the following picture:
$$
\begin{xy}
(-20,0)*+{\overline{L}(\lambda)\otimes_E\mathrm{St}_3^{\infty}}="a1";
(-6,9)*+{C^2_{s_1,1}}="a2"; (13.5,22)*+{C^1_{s_2s_1,1}}="a4"; (51,21)*+{C^2_{s_2s_1,1}}="a6"; (20,14)*+{\overline{L}(\lambda)\otimes_Ev_{P_1}^{\infty}}="b1"; (70,12)*+{C^1_{s_2,1}}="b2"; (42,30)*+{C_{s_1,s_1}}="d1"; (77,22)*+{\overline{L}(\lambda)\otimes_Ev_{P_2}^{\infty}}="f1"; (-6,-9)*+{C^2_{s_2,1}}="a3";(13.5,-22)*+{C^1_{s_1s_2,1}}="a5"; (51,-21)*+{C^2_{s_1s_2,1}}="a7"; (20,-14)*+{\overline{L}(\lambda)\otimes_Ev_{P_2}^{\infty}}="c1"; (70,-12)*+{C^1_{s_1,1}}="c2"; (42,-30)*+{C_{s_2,s_2}}="e1"; (77,-22)*+{\overline{L}(\lambda)\otimes_Ev_{P_1}^{\infty}}="g1"; (100,5)*+{\overline{L}(\lambda)^1}="h1"; (100,-5)*+{\overline{L}(\lambda)^2}="h2"; (73,-0)*+{C_{s_1s_2s_1,1}}="a8"; (75,35)*+{C^1_{s_2s_1,s_2s_1}}="i1"; (75,-35)*+{C^1_{s_1s_2,s_1s_2}}="j1"; (100,30)*+{C^2_{s_1,s_1s_2}}="i2"; (100,-30)*+{C^2_{s_2,s_2s_1}}="j2";
{\ar@{-}"a1";"a2"}; {\ar@{-}"a2";"a4"}; {\ar@{-}"a2";"b1"}; {\ar@{-}"a2";"a7"}; {\ar@{--}"a4";"b2"}; {\ar@{-}"a4";"a6"}; {\ar@{-}"a4";"d1"}; {\ar@{--}"a6";"c2"}; {\ar@{--}"a6";"a8"}; {\ar@{-}"b1";"b2"}; {\ar@{-}"b1";"d1"}; {\ar@{-}"b2";"h1"}; {\ar@{-}"d1";"f1"}; {\ar@{-}"f1";"h2"}; {\ar@{-}"f1";"h1"}; {\ar@{-}"a1";"a3"}; {\ar@{-}"a3";"a5"}; {\ar@{-}"a3";"c1"}; {\ar@{-}"a3";"a6"}; {\ar@{--}"a5";"c2"}; {\ar@{-}"a5";"e1"}; {\ar@{-}"a5";"a7"}; {\ar@{--}"a7";"b2"}; {\ar@{--}"a7";"a8"}; {\ar@{-}"c1";"c2"}; {\ar@{-}"c1";"e1"}; {\ar@{-}"c2";"h2"}; {\ar@{-}"e1";"g1"}; {\ar@{-}"g1";"h1"}; {\ar@{-}"g1";"h2"}; {\ar@{-}"d1";"i1"}; {\ar@{-}"e1";"j1"}; {\ar@{-}"i1";"i2"}; {\ar@{-}"j1";"j2"}; {\ar@{-}"f1";"i2"}; {\ar@{-}"g1";"j2"};
\end{xy}
$$
and therefore contains a unique subrepresentation of the form
$$
\begin{xy}
(0,0)*+{\overline{L}(\lambda)\otimes_E\mathrm{St}_3^{\infty}}="a1";
(20,8)*+{C^2_{s_1,1}}="a2"; (45,12)*+{C^1_{s_2s_1,1}}="a4"; (45,4)*+{\overline{L}(\lambda)\otimes_Ev_{P_1}^{\infty}}="b1"; (70,8)*+{C_{s_1,s_1}}="d1"; (95,4)*+{\overline{L}(\lambda)\otimes_Ev_{P_2}^{\infty}}="f1"; (20,-8)*+{C^2_{s_2,1}}="a3";(45,-12)*+{C^1_{s_1s_2,1}}="a5"; (45,-4)*+{\overline{L}(\lambda)\otimes_Ev_{P_2}^{\infty}}="c1"; (70,-8)*+{C_{s_2,s_2}}="e1"; (95,-4)*+{\overline{L}(\lambda)\otimes_Ev_{P_1}^{\infty}}="g1"; (95,12)*+{C^1_{s_2s_1,s_2s_1}}="i1"; (95,-12)*+{C^1_{s_1s_2,s_1s_2}}="j1"; (120,8)*+{C^2_{s_1,s_1s_2}}="i2"; (120,-8)*+{C^2_{s_2,s_2s_1}}="j2";
{\ar@{-}"a1";"a2"}; {\ar@{-}"a2";"a4"}; {\ar@{-}"a2";"b1"}; {\ar@{-}"a4";"d1"}; {\ar@{-}"b1";"d1"}; {\ar@{-}"d1";"f1"}; {\ar@{-}"a1";"a3"}; {\ar@{-}"a3";"a5"}; {\ar@{-}"a3";"c1"}; {\ar@{-}"a5";"e1"}; {\ar@{-}"c1";"e1"}; {\ar@{-}"e1";"g1"}; {\ar@{-}"d1";"i1"}; {\ar@{-}"e1";"j1"}; {\ar@{-}"i1";"i2"}; {\ar@{-}"j1";"j2"}; {\ar@{-}"f1";"i2"}; {\ar@{-}"g1";"j2"};
\end{xy}
$$
which is denoted by
\begin{equation}\label{3representation ext 1}
\begin{xy}
(0,0)*+{\overline{L}(\lambda)\otimes_E\mathrm{St}_3^{\infty}}="a"; (30,4)*+{\Pi^1(\underline{k}, \underline{D})}="b"; (30,-4)*+{\Pi^2(\underline{k}, \underline{D})}="c";
{\ar@{-}"a";"b"}; {\ar@{-}"a";"c"};
\end{xy}
\end{equation}
in Theorem~1.1 of \cite{Bre17}. It follows from Theorem~1.2 of \cite{Bre17} that
$$\mathrm{dim}_E\mathrm{Ext}^1_{\mathrm{GL}_3(\mathbb{Q}_p),\lambda}\left(\Pi^i(\underline{k}, \underline{D}),~\overline{L}(\lambda)\otimes_E\mathrm{St}_3^{\infty}\right)=3$$
for $i=1,2$, and therefore a locally analytic representation of the form (\ref{3representation ext 1}) depends on four invariants. On the other hand, by a computation of extensions of rank one $(\varphi, \Gamma)$\text{-}modules we know that $\rho_p$ depends on three invariants. As a result, Theorem~1.1 of \cite{Bre17} predicts that not all representations of the form (\ref{3representation ext 1}) can be embedded into $\widehat{S}(U^p, E)^{\rm{an}}[\mathfrak{m}_{\rho}]$ for a certain pair of $U^p$ and $\rho_p$. This is actually the case as we show that
\begin{theo}\label{3theo: relation introduction}[Corollary~\ref{3coro: criterion}]
If a locally analytic representation $\Pi$ of the form (\ref{3representation ext 1}) can be embedded into $\widehat{S}(U^p, E)^{\rm{an}}[\mathfrak{m}_{\rho}]$ for a certain pair of $U^p$ and $\rho_p$, then it can be embedded into
$$\Sigma^{\rm{min},+}(\lambda, \mathscr{L}_1, \mathscr{L}_2, \mathscr{L}_3)$$
for a unique choice of $\mathscr{L}_1, \mathscr{L}_2, \mathscr{L}_3\in E$ determined by $\Pi$.
\end{theo}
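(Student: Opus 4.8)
The plan is to deduce this from Theorem~\ref{3theo: main introduction} together with the uniqueness assertion in Theorem~1.1 of \cite{Bre17}; throughout, the harmless central twist by $(\mathrm{ur}(\alpha)\otimes_E\varepsilon^2)\circ\mathrm{det}$ that occurs in Theorem~\ref{3theo: main introduction} is suppressed, as in the statement. So suppose $\Pi$ is of the form (\ref{3representation ext 1}) and embeds into $\widehat{S}(U^p,E)^{\mathrm{an}}[\mathfrak{m}_\rho]$ for a pair $(U^p,\rho)$ satisfying the hypotheses of Theorem~\ref{3theo: main introduction} --- the existence of such an embedding in particular forces $\widehat{S}(U^p,E)[\mathfrak{m}_\rho]^{\mathrm{alg}}\neq 0$ via the locally algebraic subrepresentation $\overline{L}(\lambda)\otimes_E\mathrm{St}_3^\infty\subseteq\Pi$, so we are indeed in the setting of that theorem. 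It produces a \emph{unique} triple $\mathscr{L}_1,\mathscr{L}_2,\mathscr{L}_3\in E$ for which $\Sigma^{\mathrm{min},+}(\lambda,\mathscr{L}_1,\mathscr{L}_2,\mathscr{L}_3)$ embeds into $\widehat{S}(U^p,E)^{\mathrm{an}}[\mathfrak{m}_\rho]$.

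Next I would use that $\Sigma^{\mathrm{min},+}(\lambda,\mathscr{L}_1,\mathscr{L}_2,\mathscr{L}_3)$ contains a unique subrepresentation of the form (\ref{3representation ext 1}) --- the one displayed just above (\ref{3representation ext 1}) --- which is itself a member of Breuil's family; denote it $\Pi(\mathscr{L}_1,\mathscr{L}_2,\mathscr{L}_3)$. By the previous step it, too, embeds into $\widehat{S}(U^p,E)^{\mathrm{an}}[\mathfrak{m}_\rho]$. Now I invoke the uniqueness half of Theorem~1.1 of \cite{Bre17}: up to isomorphism at most one representation of Breuil's family embeds into this fixed Hecke-isotypic subspace. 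Since both $\Pi$ and $\Pi(\mathscr{L}_1,\mathscr{L}_2,\mathscr{L}_3)$ do, we obtain $\Pi\cong\Pi(\mathscr{L}_1,\mathscr{L}_2,\mathscr{L}_3)$, and composing this isomorphism with the inclusion $\Pi(\mathscr{L}_1,\mathscr{L}_2,\mathscr{L}_3)\hookrightarrow\Sigma^{\mathrm{min},+}(\lambda,\mathscr{L}_1,\mathscr{L}_2,\mathscr{L}_3)$ gives the embedding claimed in the statement.

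It remains to check that the triple is determined by $\Pi$. If $\Pi$ also embeds into $\Sigma^{\mathrm{min},+}(\lambda,\mathscr{L}_1',\mathscr{L}_2',\mathscr{L}_3')$, its image there is a subrepresentation isomorphic to $\Pi$, hence of the form (\ref{3representation ext 1}), and therefore coincides with $\Pi(\mathscr{L}_1',\mathscr{L}_2',\mathscr{L}_3')$ by uniqueness of such a subrepresentation; thus $\Pi(\mathscr{L}_1,\mathscr{L}_2,\mathscr{L}_3)\cong\Pi(\mathscr{L}_1',\mathscr{L}_2',\mathscr{L}_3')$. One then needs that $(\mathscr{L}_1,\mathscr{L}_2,\mathscr{L}_3)\mapsto\Pi(\mathscr{L}_1,\mathscr{L}_2,\mathscr{L}_3)$ is injective on isomorphism classes, which I would read off from the explicit extension classes cutting out the subrepresentation $\Pi(\mathscr{L}_1,\mathscr{L}_2,\mathscr{L}_3)$ inside $\Sigma^{\mathrm{min},+}(\lambda,\mathscr{L}_1,\mathscr{L}_2,\mathscr{L}_3)$ --- equivalently, from expressing the four Breuil-invariants of $\Pi(\mathscr{L}_1,\mathscr{L}_2,\mathscr{L}_3)$ as explicit functions of $\mathscr{L}_1,\mathscr{L}_2,\mathscr{L}_3$, these being constrained so as to satisfy the single relation that cuts Breuil's four-parameter family down to the three-parameter subfamily realised inside the representations $\Sigma^{\mathrm{min},+}$.

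All of the substantive content sits in Theorem~\ref{3theo: main introduction} --- and, through it, in the completed-cohomology computations and the local--global machinery of \cite{Bre17} --- so that, granting that theorem and the already-established structure of $\Sigma^{\mathrm{min},+}$, the Corollary is a short formal deduction. The one step that calls for genuine bookkeeping, and the place I expect the main (if modest) obstacle, is the injectivity of $(\mathscr{L}_1,\mathscr{L}_2,\mathscr{L}_3)\mapsto\Pi(\mathscr{L}_1,\mathscr{L}_2,\mathscr{L}_3)$: one must track which of the explicit extension parameters appearing in $\Sigma^{\mathrm{min},+}(\lambda,\mathscr{L}_1,\mathscr{L}_2,\mathscr{L}_3)$ survive in its distinguished subrepresentation of the form (\ref{3representation ext 1}).
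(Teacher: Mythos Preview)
Your proposal is correct and follows essentially the same route as the paper: restrict to the locally algebraic socle, apply Theorem~\ref{3theo: main} to produce the unique triple, pass to the distinguished subrepresentation of the form~(\ref{3representation ext 1}) (the paper denotes it $\Sigma^{\mathrm{Ext}^1}(\lambda,\mathscr{L}_1,\mathscr{L}_2,\mathscr{L}_3)$), and invoke the uniqueness in Theorem~6.2.1 of \cite{Bre17} to identify it with $\Pi$. For the injectivity step you flag as the main bookkeeping obstacle, the paper's cleaner route is to use the fact (recorded in the proof of Theorem~\ref{3theo: main}) that $\Sigma^{\mathrm{min},+}(\lambda,\mathscr{L}_1,\mathscr{L}_2,\mathscr{L}_3)$ is uniquely determined up to isomorphism by its subrepresentation $\Sigma^{\mathrm{Ext}^1}(\lambda,\mathscr{L}_1,\mathscr{L}_2,\mathscr{L}_3)$, together with Theorem~\ref{3theo: construction introduction} that different triples give non-isomorphic $\Sigma^{\mathrm{min}}$; this avoids having to read off Breuil's four invariants explicitly.
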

\subsection{Sketch of content}\label{3subsection: sketch}
Section~\ref{3section: preliminary} recalls various well-known facts around locally analytic representations and our notation for a family of specific irreducible subquotients of locally analytic principal series to be used in the rest of the article. We emphasize that our definition of various $\mathrm{Ext}$\text{-}groups follows \cite{Bre17} closely and the only difference is that we use the dual notation compared to that of \cite{Bre17}. We also recall the $p$\text{-}adic dilogarithm function from Section~5.3 of \cite{Schr11} which is part of the main motivation of this article to relate \cite{Schr11} with \cite{Bre17} and \cite{BD18}.

Section~\ref{3section: GL2Qp} proves a crucial fact (Proposition~\ref{3prop: key result}) on the non-existence of locally analytic representations of $\mathrm{GL}_2(\mathbb{Q}_p)$ of a certain specific form using arguments involving infinitesimal characters of locally analytic representations. We learn such arguments essentially from Y. Ding.

Section~\ref{3section: computation} is a collection of various computational results necessary for the applications in Section~\ref{3section: exact sequence min}. These computations essentially make use of the formula in Section~5.2 and 5.3 of \cite{Bre17}.

Section~\ref{3section: technial min} serves as the preparation of Section~\ref{3section: exact sequence min} for the construction of $\Sigma^{\rm{min}}(\lambda, \mathscr{L}_1, \mathscr{L}_2, \mathscr{L}_3)$. It makes full use of the computational results from Section~\ref{3section: computation} to compute the dimension of various more complicated $\mathrm{Ext}$\text{-}groups to be crucially used in various important long exact sequences in Section~\ref{3section: exact sequence min}( cf. Lemma~\ref{3lemm: upper bound} and Proposition~\ref{3prop: main dim}).

Section~\ref{3section: exact sequence min} finishes the construction of $\Sigma^{\rm{min}}(\lambda, \mathscr{L}_1, \mathscr{L}_2, \mathscr{L}_3)$ as well as $\Sigma^{\rm{min},+}(\lambda, \mathscr{L}_1, \mathscr{L}_2, \mathscr{L}_3)$. Moreover, the construction of $\Sigma^{\rm{min}}(\lambda, \mathscr{L}_1, \mathscr{L}_2, \mathscr{L}_3)$ leads naturally to the construction of an explicit complex as in Theorem~\ref{3theo: complex introduction} that realizes the derived object $\Sigma(\lambda, \underline{\mathscr{L}})$ constructed in \cite{Schr11}.

Section~\ref{3section: local-global} finishes the proof of Theorem~\ref{3theo: main} by directly mimicking arguments from the proof of Theorem~6.2.1 of \cite{Bre17}. In particular, we give a purely representation theoretic criterion for a representation of the form (\ref{3representation ext 1}) to embed into completed cohomology as mentioned in Theorem~\ref{3theo: relation introduction}.
\subsection{Acknowledgement}\label{3subsection: acknowledgement}
The author expresses his gratefulness to Christophe Breuil for introducing the problem of relating \cite{Schr11} with \cite{Bre17} and \cite{BD18} and especially for his interest on the role played by the $p$\text{-}adic dilogarithm function. The author also benefited a lot from countless discussions with Y. Ding especially for Section~\ref{3section: GL2Qp} of this article. Finally, the author thanks B. Schraen for his beautiful thesis which improved the author's understanding on the subject.
\section{Preliminary}\label{3section: preliminary}
\subsection{Locally analytic representations}\label{3subsection: locally analytic rep}
In this section, we recall the definition of some well-known objects in the theory of locally analytic representations of $p$-adic reductive groups.

We fix a locally $\mathbb{Q}_p$-analytic group $H$ and denote the algebra of locally $\mathbb{Q}_p$-analytic distribution with coefficient $E$ on $H$ by $\mathcal{D}(H,E)$, which is defined as the strong dual of the locally convex $E$-vector space $C^{\rm{an}}(H,E)$ consisting of locally $\mathbb{Q}_p$-analytic functions on $H$. We use the notation $\mathrm{Rep}^{\rm{la}}_{H, E}$ (resp. $\mathrm{Rep}^{\infty}_{H, E}$) for the additve category consisting of locally $\mathbb{Q}_p$-analytic representations of $H$ (resp. smooth representations of $H$) with coefficient $E$. Therefore taking strong dual induces a fully faithful contravariant functor from $\mathrm{Rep}^{\rm{la}}_{H, E}$ to the abelian category $\mathrm{Mod}_{\mathcal{D}(H,E)}$ of abstract modules over $\mathcal{D}(H,E)$. The $E$-vector space $\mathrm{Ext}^i_{\mathcal{D}(H,E)}(M_1,M_2)$ is well-defined for any two objects $M_1, M_2\in \mathrm{Mod}_{\mathcal{D}(H,E)}$, and therefore we define
$$\mathrm{Ext}^i_H(\Pi_1,\Pi_2):=\mathrm{Ext}^i_{\mathcal{D}(H,E)}(\Pi_2^{\prime},\Pi_1^{\prime})$$
for any two objects $\Pi_1, \Pi_2\in \mathrm{Rep}^{\rm{la}}_{H, E}$ where $\cdot^{\prime}$ is the notation for strong dual. We also define the cohomology of an object $M\in \mathrm{Mod}_{\mathcal{D}(H,E)}$ by
$$H^i(H, M):=\mathrm{Ext}^i_{\mathcal{D}(H,E)}(1,M)$$
where $1$ is the strong dual of the trivial representation of $H$. If $H^{\prime}$ is a closed locally $\mathbb{Q}_p$-analytic normal subgroup of $H$, then $H/H^{\prime}$ is also a locally $\mathbb{Q}_p$-analytic group. It follows from the fact
$$D(H, E)\otimes_{D(H^{\prime},E)}E\cong D(H/H^{\prime},E)$$
(see Section 5.1 of \cite{Bre17} for example) that $H^i(H^{\prime}, M)$ admits a structure of $\mathcal{D}(H/H^{\prime},E)$-module for each $M\in \mathrm{Mod}_{\mathcal{D}(H,E)}$. We define the $H^{\prime}$-homology of $\Pi\in\mathrm{Rep}^{\rm{la}}_{H, E}$ as the object (if it exists up to isomorphism) $H_i(H^{\prime}, \Pi)\in\mathrm{Rep}^{\rm{la}}_{H/H^{\prime}, E}$ such that
$$H_i(H^{\prime}, \Pi)^{\prime}\cong H^i(H^{\prime}, \Pi^{\prime}).$$
We emphasize that $H_i(H^{\prime}, \Pi)$ is well defined in the sense above only after we know its existence. We fix a subgroup $Z$ of the center of the group $H$, then the algebra $\mathcal{D}(Z,E)$ consisting of locally $\mathbb{Q}_p$-analytic distribution with coefficient $E$ on $Z$ is naturally contained in the center of $\mathcal{D}(H,E)$. For each locally $\mathbb{Q}_p$-analytic $E$-character $\chi$ of $Z$, we can define the abelian subcategory $\mathrm{Mod}_{\mathcal{D}(H,E), \chi^{\prime}}$ consisting of all the objects in $\mathrm{Mod}_{\mathcal{D}(H,E)}$ on which $\mathcal{D}(Z,E)$ acts by $\chi^{\prime}$. Then we consider the functors $\mathrm{Ext}^i_{\mathcal{D}(H,E)}(-,-)$ defined as $\mathrm{Ext}^i_{\mathrm{Mod}_{\mathcal{D}(H,E),\chi^{\prime}}}(-,-)$ which are extensions inside the abelian category $\mathrm{Mod}_{\mathcal{D}(H,E),\chi^{\prime}}$. Consequently we can define
$$\mathrm{Ext}^i_{H, \chi}(\Pi_1,\Pi_2):=\mathrm{Ext}^i_{\mathcal{D}(H,E), \chi^{\prime}}(\Pi_2^{\prime},\Pi_1^{\prime})$$
for any two objects $\Pi_1, \Pi_2\in \mathrm{Rep}^{\rm{la}}_{H, E}$ such that $\Pi_1^{\prime}, \Pi_2^{\prime} \in \mathrm{Mod}_{\mathcal{D}(H,E),\chi^{\prime}}$. In particular, if $Z$ is the center of $H$ and acts on $\Pi\in \mathrm{Rep}^{\rm{la}}_{H, E}$ via the character $\chi$, then $\Pi^{\prime}\in \mathrm{Mod}_{\mathcal{D}(H,E),\chi^{\prime}}$, and we usually say that $\Pi$ admits a central character $\chi$.

Assume now $H$ is the set of $\mathbb{Q}_p$-points of a split reductive group over $\mathbb{Q}_p$. We recall the category $\mathcal{O}$ together with its subcategory $\mathcal{O}^{\mathfrak{p}}_{\rm{alg}}$ for each parabolic subgroup $P\subseteq H$ from Section~9.3 of \cite{Hum08} or \cite{OS15}. The construction by Orlik--Strauch in \cite{OS15} gives us a functor
$$\mathcal{F}_P^H : \mathcal{O}^{\mathfrak{p}}_{\rm{alg}}\times \mathrm{Rep}^{\infty}_{L, E}\rightarrow \mathrm{Rep}^{\rm{la}}_{H, E}$$
for each parabolic subgroup $P\subseteq H$ with Levi quotient $L$. We use the notation $\mathrm{Rep}^{\mathcal{OS}}_{H, E}$ for the abelian full subcategory of $\mathrm{Rep}^{\rm{la}}_{H, E}$ generated by the image of $\mathcal{F}_P^H$ when $P$ varies over all possible parabolic subgroups of $H$. Here we say a full subcategory is generated by a family of objects if it is the minimal full subcategory that contains these objects and is stable under extensions.
\subsection{Formal properties}\label{3subsection: formal devissage}
In this section, we recall and prove some general formal properties of locally analytic representations of $p$-adic reductive groups.

We fix a split $p$-adic reductive group $H$ and a parabolic subgroup $P$ of $H$. We use the notation $N$ for the unipotent radical of $P$ and fix a Levi subgroup $L$ of $P$.
\begin{lemm}\label{3lemm: cohomology devissage}
We have a spectral sequece
$$\mathrm{Ext}^j_{L,\ast}\left(H_k(N,~\Pi_1),~\Pi_2\right)\Rightarrow\mathrm{Ext}^{j+k}_{H,\ast}\left(\Pi_1,~\mathrm{Ind}_P^H\left(\Pi_2\right)^{\rm{an}}\right).$$
which implies an isomorphism
$$\mathrm{Hom}_{L,\ast}\left(H_0(N,~\Pi_1),~\Pi_2\right)\xrightarrow{\sim}\mathrm{Hom}_{H,\ast}\left(\Pi_1,~\mathrm{Ind}_P^H\left(\Pi_2\right)^{\rm{an}}\right)$$
and a long exact sequence
\begin{multline*}
\mathrm{Ext}^1_{L,\ast}\left(H_0(N,~\Pi_1),~\Pi_2\right)\hookrightarrow\mathrm{Ext}^1_{H,\ast}\left(\Pi_1,~\mathrm{Ind}_P^H\left(\Pi_2\right)^{\rm{an}}\right)\\
\rightarrow\mathrm{Hom}_{L,\ast}\left(H_1(N,~\Pi_1),~\Pi_2\right)\rightarrow\mathrm{Ext}^2_{L,\ast}\left(H_0(N,~\Pi_1),~\Pi_2\right)
\end{multline*}
for each $\Pi_1\in \mathrm{Rep}^{\rm{la}}_{H, E}$, $\Pi_2\in \mathrm{Rep}^{\rm{la}}_{L, E}$ satisfying the ($\mathrm{FIN}$) condition in Section 6 of \cite{ST05}, $\ast\in\{\varnothing, \chi\}$ where $\chi$ is a locally analytic character of the center of $H$.
\end{lemm}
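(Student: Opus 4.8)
The plan is to derive the spectral sequence from a Grothendieck composition of functors, using the adjunction between parabolic induction and the Jacquet-type functor $H_0(N,-)$ on the dual side. First I would pass to the dual picture: by the definition $\mathrm{Ext}^i_{H,\ast}(\Pi_1, \Pi_2) = \mathrm{Ext}^i_{\mathcal{D}(H,E),\ast}(\Pi_2', \Pi_1')$, so the assertion is equivalent to a spectral sequence
$$\mathrm{Ext}^j_{\mathcal{D}(L,E),\ast}\bigl(\Pi_2',~H^k(N, \Pi_1')\bigr) \Rightarrow \mathrm{Ext}^{j+k}_{\mathcal{D}(H,E),\ast}\bigl(\bigl(\mathrm{Ind}_P^H(\Pi_2)^{\rm{an}}\bigr)',~\Pi_1'\bigr),$$
where $H^k(N, \Pi_1') = \mathrm{Ext}^k_{\mathcal{D}(N,E)}(1, \Pi_1')$ carries its $\mathcal{D}(L,E)$-module structure coming from $\mathcal{D}(L,E) \cong \mathcal{D}(P,E)\otimes_{\mathcal{D}(N,E)} E$ as recalled in Section~\ref{3subsection: locally analytic rep}. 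The dual of $\mathrm{Ind}_P^H(\Pi_2)^{\rm{an}}$ is, up to the twist that is harmless because the $(\mathrm{FIN})$ condition is in force, the induced module $\mathcal{D}(H,E)\otimes_{\mathcal{D}(P,E)} \Pi_2'$ (this is exactly the computation underlying Section~5.1 of \cite{Bre17} and \cite{ST05}), so by standard change-of-rings adjunction $\mathrm{Hom}_{\mathcal{D}(H,E)}\bigl(\mathcal{D}(H,E)\otimes_{\mathcal{D}(P,E)}\Pi_2', -\bigr) \cong \mathrm{Hom}_{\mathcal{D}(P,E)}(\Pi_2', -)$.

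Next I would factor $\mathrm{Hom}_{\mathcal{D}(P,E)}(\Pi_2', -)$ through the $N$-invariants. Concretely, for a $\mathcal{D}(H,E)$-module $M$ one has $\mathrm{Hom}_{\mathcal{D}(P,E)}(\Pi_2', M) \cong \mathrm{Hom}_{\mathcal{D}(L,E)}\bigl(\Pi_2', \mathrm{Hom}_{\mathcal{D}(N,E)}(1, M)\bigr)$: a $\mathcal{D}(P,E)$-map out of $\Pi_2'$ (on which $N$ acts trivially since $\Pi_2$ is an $L$-representation inflated to $P$) must land in the $\mathcal{D}(N,E)$-invariants of $M$, and the residual $L = P/N$ action matches the $\mathcal{D}(L,E)$-structure on $H^0(N,M)$. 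This is the composite-functor identity I need. I would then invoke the Grothendieck spectral sequence for the composition $\mathrm{Hom}_{\mathcal{D}(L,E),\ast}(\Pi_2', -) \circ H^0(N,-)$, applied to a suitable injective (or acyclic) resolution. The one point requiring care is the acyclicity hypothesis: one must check that when $M$ is injective in $\mathrm{Mod}_{\mathcal{D}(H,E),\ast}$, then $H^0(N,M) = \mathrm{Hom}_{\mathcal{D}(N,E)}(1,M)$ is acyclic for $\mathrm{Hom}_{\mathcal{D}(L,E),\ast}(\Pi_2',-)$. This follows because $\mathcal{D}(H,E)$ is free — or at least faithfully flat, via the decomposition $\mathcal{D}(H,E) = \mathcal{D}(N,E)\,\widehat{\otimes}\,\mathcal{D}(L,E)\,\widehat{\otimes}\,\mathcal{D}(\bar N,E)$ on the relevant coadmissible level — over $\mathcal{D}(P,E)$ and over $\mathcal{D}(N,E)$, so that restriction of an injective $\mathcal{D}(H,E)$-module along these ring maps stays injective; and then $H^0(N,-)$ of an injective $\mathcal{D}(P,E)$-module is injective over $\mathcal{D}(L,E) = \mathcal{D}(P,E)\otimes_{\mathcal{D}(N,E)}E$ by the same change-of-rings adjunction. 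The compatibility with the central character twist $\ast \in \{\varnothing, \chi\}$ is automatic because $Z$ is central, hence its distribution algebra commutes with everything in sight and the whole construction restricts to the subcategory $\mathrm{Mod}_{\mathcal{D}(H,E),\chi'}$.

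The main obstacle, and the step I would spend the most effort on, is precisely verifying that parabolic induction in the locally analytic setting dualizes to the naive change-of-rings functor $\mathcal{D}(H,E)\otimes_{\mathcal{D}(P,E)} -$ under the $(\mathrm{FIN})$ condition, and that the relevant flatness/freeness statements for $\mathcal{D}(H,E)$ over $\mathcal{D}(P,E)$ and $\mathcal{D}(N,E)$ hold on the categories of abstract modules (as opposed to coadmissible modules, where Fréchet–Stein theory gives it cleanly). This is exactly the kind of statement established in Section~5 of \cite{ST05} and Section~5.1 of \cite{Bre17}, and I would cite those rather than reprove them; the $(\mathrm{FIN})$ hypothesis on $\Pi_1, \Pi_2$ is imposed precisely to make those citations applicable. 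Once the composite-functor identity $\mathrm{Hom}_{H,\ast}\bigl(-, \mathrm{Ind}_P^H(\Pi_2)^{\rm{an}}\bigr) \cong \mathrm{Hom}_{L,\ast}\bigl(H_0(N,-), \Pi_2\bigr)$ is in hand at the level of functors and the acyclicity is checked, the spectral sequence is the formal Grothendieck machine, and the five-term exact sequence it produces, together with $\mathrm{Ext}^0_{L,\ast}$ vanishing issues being trivial, yields the displayed isomorphism in degree $(0,0)$ and the displayed four-term exact sequence in low degree verbatim.
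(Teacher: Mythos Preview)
Your proposal is correct and essentially unpacks the underlying argument that the paper simply cites: the paper's own proof consists of a single sentence referring to ``the original dual version in (44) and (45) of \cite{Bre17}'' together with the definitions of $\mathrm{Ext}^k$ and $H_k$ in Section~\ref{3subsection: locally analytic rep}. What you have written is, in effect, a sketch of the Grothendieck spectral sequence derivation that Breuil carries out in \cite{Bre17}, dualized via the conventions of this paper; so your approach and the paper's are the same, only at different levels of detail.
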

\begin{proof}
This follows directly from our definition of $\mathrm{Ext}^k$ and $H_k$ in Section~\ref{3subsection: locally analytic rep} for $k\geq 0$, the original dual version in (44) and (45) of \cite{Bre17}.
\end{proof}

We fix a Borel subgroup $B\subseteq H$ together with its opposite Borel subgroup $\overline{B}$. We fix an irreducible object $M\in\mathcal{O}^{\overline{\mathfrak{b}}}_{\rm{alg}}$. We choose a parabolic subgroup $P\subseteq H$ such that $P$ is maximal among all the parabolic subgroups $Q\subseteq H$ such that $M\in\mathcal{O}^{\overline{\mathfrak{q}}}_{\rm{alg}}$ where $\overline{\mathfrak{q}}$ is the Lie algebra of the opposite parabolic subgroup $\overline{Q}$ associated with $Q$. We fix a smooth irreducible representation $\pi^{\infty}$ of $L$ and a smooth character $\delta$ of $H$. We know that \cite{OS15} constructed an irreducible locally analytic representation
$$\mathcal{F}_P^H(M,~\pi^{\infty})$$
of $H$.
\begin{lemm}\label{3lemm: det twist}
The functor
$$-\otimes_E\delta$$
induces an equivalence of category from $\mathrm{Rep}^{\rm{la}}_{H, E}$ to itself. Moreover, the restriction of $-\otimes_E\delta$ to the subcategory $\mathrm{Rep}^{\mathcal{OS}}_{H, E}$ is again an equivalence of category to itself and satisfies
\begin{equation}\label{3smooth det twist}
\mathcal{F}_P^H(M,~\pi^{\infty})\otimes_E\delta\cong \mathcal{F}_P^H(M,~\pi^{\infty}\otimes_E\delta|_L)
\end{equation}
for each irreducible object $\mathcal{F}_P^H(M,~\pi^{\infty})\in \mathrm{Rep}^{\mathcal{OS}}_{H, E}$.
\end{lemm}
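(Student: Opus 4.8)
The plan is to verify that $-\otimes_E\delta$ is an equivalence by exhibiting an explicit quasi-inverse, and then to establish the compatibility (\ref{3smooth det twist}) by tracing through the Orlik--Strauch construction. First I would observe that since $\delta$ is a smooth (hence locally analytic) character of $H$, the one-dimensional representation $E(\delta)$ lies in $\mathrm{Rep}^{\rm{la}}_{H,E}$, and tensoring with it over $E$ sends objects of $\mathrm{Rep}^{\rm{la}}_{H,E}$ to objects of $\mathrm{Rep}^{\rm{la}}_{H,E}$ (the $H$-action on a tensor product of locally analytic vectors is again locally analytic; this is standard, e.g. from the fact that $C^{\rm{an}}(H,E)$ is a ring). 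The functor $-\otimes_E\delta^{-1}$ is then a two-sided inverse up to the canonical natural isomorphisms $(\Pi\otimes_E\delta)\otimes_E\delta^{-1}\cong \Pi\cong (\Pi\otimes_E\delta^{-1})\otimes_E\delta$, so $-\otimes_E\delta$ is an equivalence of categories from $\mathrm{Rep}^{\rm{la}}_{H,E}$ to itself. It is clearly additive and exact.

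Next I would show the subcategory $\mathrm{Rep}^{\mathcal{OS}}_{H,E}$ is preserved. Since $\mathrm{Rep}^{\mathcal{OS}}_{H,E}$ is by definition the smallest full subcategory containing all $\mathcal{F}_P^H(M,\pi^{\infty})$ and stable under extensions, and since $-\otimes_E\delta$ is exact (hence carries extensions to extensions), it suffices to check that $\mathcal{F}_P^H(M,\pi^{\infty})\otimes_E\delta$ again lies in $\mathrm{Rep}^{\mathcal{OS}}_{H,E}$ for every choice of $P$, $M$, $\pi^{\infty}$; this is exactly the content of (\ref{3smooth det twist}) once proved, since its right-hand side is of the required form (note $\delta|_L$ is a smooth character of $L$, so $\pi^{\infty}\otimes_E\delta|_L$ is again a smooth representation of $L$, irreducible when $\pi^{\infty}$ is). Then $-\otimes_E\delta$ restricts to an endofunctor of $\mathrm{Rep}^{\mathcal{OS}}_{H,E}$ with quasi-inverse the restriction of $-\otimes_E\delta^{-1}$, hence an equivalence.

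So everything reduces to the isomorphism (\ref{3smooth det twist}), and I expect this to be the only real point. The approach is to recall the Orlik--Strauch functor $\mathcal{F}_P^H(M,\pi^{\infty})$ as a subrepresentation (or subquotient) of a locally analytic induction built from the $U(\mathfrak{h})$-module $M$ and the smooth representation $\pi^{\infty}$ of the Levi $L$, and to use that tensoring a parabolically induced representation $\mathrm{Ind}_P^H(-)^{\rm{an}}$ by a character $\delta$ of $H$ is the same as inducing from $(-)\otimes_E\delta|_P$, by the projection formula for induction; since $\delta$ is smooth and trivial on the unipotent radical, $\delta|_P$ factors through $L$ as $\delta|_L$. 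On the level of the Lie-algebra datum $M\in\mathcal{O}^{\overline{\mathfrak{p}}}_{\rm{alg}}$ the smooth character $\delta$ acts trivially (its derivative is zero), so $M$ is unchanged, while the smooth part $\pi^{\infty}$ gets twisted to $\pi^{\infty}\otimes_E\delta|_L$; one must also check that $P$ remains maximal for the twisted datum, which is immediate since the maximality condition only depends on $M$. The main obstacle is therefore purely bookkeeping: unwinding the definition of $\mathcal{F}_P^H$ from \cite{OS15} precisely enough to see that the twist passes through the construction compatibly and that irreducibility is preserved. I would handle this by citing the relevant functoriality/exactness properties of $\mathcal{F}_P^H$ established in \cite{OS15} (in particular that $\mathcal{F}_P^H$ commutes with twisting the smooth input by a character of $L$ inflated from $H$), rather than reproving them.
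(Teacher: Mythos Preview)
Your proposal is correct and the overall reduction to proving (\ref{3smooth det twist}) matches the paper exactly, but your argument for (\ref{3smooth det twist}) itself takes a different route. You propose using the projection formula $\mathrm{Ind}_P^H(-)^{\rm{an}}\otimes_E\delta\cong\mathrm{Ind}_P^H(-\otimes_E\delta|_L)^{\rm{an}}$ on the ambient induced representation and then tracing the Orlik--Strauch construction through the twist, relying on functoriality statements from \cite{OS15}. The paper instead argues more intrinsically: since $-\otimes_E\delta$ is an equivalence, $\mathcal{F}_P^H(M,\pi^{\infty})\otimes_E\delta$ is irreducible; computing $N$-coinvariants (using that $N$ acts trivially on $\delta$) gives $H_0(N,\mathcal{F}_P^H(M,\pi^{\infty})\otimes_E\delta)\twoheadrightarrow M_L\otimes_E\pi^{\infty}\otimes_E\delta|_L$, which by Frobenius reciprocity (Lemma~\ref{3lemm: cohomology devissage}) yields a nonzero map into $\mathcal{F}_P^H(U(\mathfrak{h})\otimes_{U(\overline{\mathfrak{p}})}M^{\overline{\mathfrak{n}}},\,\pi^{\infty}\otimes_E\delta|_L)$; one then concludes by identifying $\mathcal{F}_P^H(M,\pi^{\infty}\otimes_E\delta|_L)$ with the socle of this induced representation via Corollary~3.3 of \cite{Bre16}. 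Your approach is more conceptual and avoids the socle computation, at the cost of needing to cite or check that the internal construction of $\mathcal{F}_P^H$ in \cite{OS15} is compatible with smooth twists; the paper's approach is more self-contained within the framework already set up (Lemma~\ref{3lemm: cohomology devissage} and \cite{Bre16}), trading a direct identification for an irreducibility-plus-socle argument.
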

\begin{proof}
The functor $-\otimes_E\delta$ is clearly an equivalence of category from $\mathrm{Rep}^{\rm{la}}_{H, E}$ to itself with quasi-inverse given by
$$-\otimes_E\delta^{-1}.$$
It is sufficient to prove the formula (\ref{3smooth det twist}) to finish the proof. First of all, we notice by formal reason (equivalence of category) that $\mathcal{F}_P^H(M,~\pi^{\infty})\otimes_E\delta$ is an irreducible object in $\mathrm{Rep}^{\rm{la}}_{H, E}$ since $\mathcal{F}_P^H(M,~\pi^{\infty})$ is. We use the notation $\overline{\mathfrak{n}}$ for the Lie algebra associated with the unipotent radical $\overline{N}$ of the opposite parabolic subgroup $\overline{P}$ of $P$. We define $M_L$ as the (finite dimensional) algebraic representation of $L$ whose dual is isomorphic to $M^{\overline{\mathfrak{n}}}$ as a representation of $\mathfrak{l}$ and note that we have a surjection
$$
U(\mathfrak{h})\otimes_{U(\overline{\mathfrak{p}})}M^{\overline{\mathfrak{n}}}\twoheadrightarrow M.
$$
We observe that $N$ acts trivially on $\delta$, and therefore we have
$$H_0\left(N,~\mathcal{F}_P^H(M,~\pi^{\infty})\otimes_E\delta\right)\cong H_0\left(N,~\mathcal{F}_P^H(M,~\pi^{\infty})\right)\otimes_E\delta|_L\twoheadrightarrow M_L\otimes_E\pi^{\infty}\otimes_E\delta|_L$$
which induces by Lemma~\ref{3lemm: cohomology devissage} a non-zero morphism
\begin{equation}
\mathcal{F}_P^H(M,~\pi^{\infty})\otimes_E\delta\rightarrow\mathrm{Ind}_P^H\left(M_L\otimes_E\pi^{\infty}\otimes_E\delta|_L\right)^{\rm{an}}\cong \mathcal{F}_P^H(U(\mathfrak{h})\otimes_{U(\overline{\mathfrak{p}})}M^{\overline{\mathfrak{n}}},~\pi^{\infty}\otimes_E\delta|_L).
\end{equation}
We finish the proof by the fact that $\mathcal{F}_P^H(M,~\pi^{\infty})\otimes_E\delta$ is irreducible and that
$$\mathcal{F}_P^H(M,~\pi^{\infty}\otimes_E\delta|_L)\cong\mathrm{soc}_H\left(\mathcal{F}_P^H(U(\mathfrak{h})\otimes_{U(\overline{\mathfrak{p}})}M^{\overline{\mathfrak{n}}},~\pi^{\infty}\otimes_E\delta|_L)\right).$$
due to Corollary~3.3 of \cite{Bre16}.
\end{proof}
We fix a finite length locally analytic representation $V\in\mathrm{Rep}^{\rm{la}}_{H, E}$ equipped with a increasing filtration of subrepresentations $\{\mathrm{Fil}_kV\}_{0\leq k\leq m}$ such that
$$\mathrm{Fil}_0(V)=0,~\mathrm{Fil}_m(V)=V\mbox{ and }\mathrm{gr}_{k+1}V:=\mathrm{Fil}_{k+1}V/\mathrm{Fil}_kV\neq 0\mbox{ for all }0\leq k\leq m-1.$$
Note that the assumption above automatically implies that
$$\ell(V)\geq m$$
where $\ell(V)$ is the length of $V$.
\begin{prop}\label{3prop: formal devissages}
Assume that $W$ is another object of $\mathrm{Rep}^{\rm{la}}_{H, E}$ and $\chi$ is a locally analytic character of the center of $H$.
\begin{enumerate}
\item If $\mathrm{Ext}^1_{H,\chi}\left(W, ~\mathrm{gr}_kV\right)=0$ for each $1\leq k\leq m$, then we have
$$\mathrm{Ext}^1_{H,\chi}\left(W, ~V\right)=0.$$
\item If there exists $1\leq k_0\leq m$ such that $\mathrm{Ext}^1_{H,\chi}\left(W, ~\mathrm{gr}_kV\right)=0$ for each $1\leq k\neq k_0\leq m$ and $\mathrm{dim}_E\mathrm{Ext}^1_{H,\chi}\left(W, ~\mathrm{gr}_{k_0}V\right)=1$, then we have
    $$\mathrm{dim}_E\mathrm{Ext}^1_{H,\chi}\left(W, ~V\right)\leq1;$$
    if moreover $\mathrm{Ext}^2_{H,\chi}\left(W, ~\mathrm{gr}_kV\right)=0$ for each $1\leq k\leq k_0-1$ and $\mathrm{Hom}_{H,\chi}\left(W, ~\mathrm{gr}_kV\right)=0$ for each $k_0+1\leq k\leq m$, then we have
$$\mathrm{dim}_E\mathrm{Ext}^1_{H,\chi}\left(W, V\right)=1.$$
\end{enumerate}
\end{prop}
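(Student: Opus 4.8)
The plan is to prove both (i) and (ii) by d\'evissage along the filtration $\{\mathrm{Fil}_kV\}_{0\leq k\leq m}$, feeding each short exact sequence $0\to\mathrm{Fil}_kV\to\mathrm{Fil}_{k+1}V\to\mathrm{gr}_{k+1}V\to 0$ into the long exact sequence of the functors $\mathrm{Ext}^\bullet_{H,\chi}(W,-)$. By the definition recalled in Section~\ref{3subsection: locally analytic rep}, these $\mathrm{Ext}$-groups are Yoneda extension groups in the abelian category $\mathrm{Mod}_{\mathcal{D}(H,E),\chi^\prime}$ computed on strong duals; each short exact sequence above dualizes to a short exact sequence of $\mathcal{D}(H,E)$-modules, so the usual long exact sequence in $\mathrm{Ext}^i_{H,\chi}(W,-)$ ($i\geq 0$, covariant in the second variable, connecting maps raising the degree) is available. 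This is the only homological input needed.

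For (i), I would show by induction on $k$ that $\mathrm{Ext}^1_{H,\chi}(W,\mathrm{Fil}_kV)=0$. The case $k=0$ is immediate since $\mathrm{Fil}_0V=0$, and for the inductive step the segment
$$\mathrm{Ext}^1_{H,\chi}(W,\mathrm{Fil}_kV)\longrightarrow\mathrm{Ext}^1_{H,\chi}(W,\mathrm{Fil}_{k+1}V)\longrightarrow\mathrm{Ext}^1_{H,\chi}(W,\mathrm{gr}_{k+1}V)$$
has a vanishing left term (induction) and a vanishing right term (hypothesis), hence a vanishing middle term; take $k=m$. For the first assertion of (ii), the same induction gives $\mathrm{Ext}^1_{H,\chi}(W,\mathrm{Fil}_kV)=0$ for $k\leq k_0-1$; at the $k_0$-th step the segment $0\to\mathrm{Ext}^1_{H,\chi}(W,\mathrm{Fil}_{k_0}V)\to\mathrm{Ext}^1_{H,\chi}(W,\mathrm{gr}_{k_0}V)$ shows $\dim_E\mathrm{Ext}^1_{H,\chi}(W,\mathrm{Fil}_{k_0}V)\leq 1$; and for $k\geq k_0$ the surjectivity of $\mathrm{Ext}^1_{H,\chi}(W,\mathrm{Fil}_kV)\to\mathrm{Ext}^1_{H,\chi}(W,\mathrm{Fil}_{k+1}V)$ (because the next term $\mathrm{Ext}^1_{H,\chi}(W,\mathrm{gr}_{k+1}V)$ vanishes) shows the dimension cannot grow, so $\dim_E\mathrm{Ext}^1_{H,\chi}(W,V)\leq 1$.

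For the equality in (ii) I would run an auxiliary d\'evissage in degree two: the hypothesis $\mathrm{Ext}^2_{H,\chi}(W,\mathrm{gr}_kV)=0$ for $1\leq k\leq k_0-1$ gives, exactly as in (i), $\mathrm{Ext}^2_{H,\chi}(W,\mathrm{Fil}_{k_0-1}V)=0$. Together with $\mathrm{Ext}^1_{H,\chi}(W,\mathrm{Fil}_{k_0-1}V)=0$, the long exact sequence at step $k_0$ now yields an isomorphism $\mathrm{Ext}^1_{H,\chi}(W,\mathrm{Fil}_{k_0}V)\xrightarrow{\sim}\mathrm{Ext}^1_{H,\chi}(W,\mathrm{gr}_{k_0}V)$, a one-dimensional space. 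For $k\geq k_0$ the exactness of
$$\mathrm{Hom}_{H,\chi}(W,\mathrm{gr}_{k+1}V)\to\mathrm{Ext}^1_{H,\chi}(W,\mathrm{Fil}_kV)\to\mathrm{Ext}^1_{H,\chi}(W,\mathrm{Fil}_{k+1}V)\to\mathrm{Ext}^1_{H,\chi}(W,\mathrm{gr}_{k+1}V)$$
with $\mathrm{Hom}_{H,\chi}(W,\mathrm{gr}_{k+1}V)=0$ (hypothesis, for $k_0+1\leq k+1\leq m$) and $\mathrm{Ext}^1_{H,\chi}(W,\mathrm{gr}_{k+1}V)=0$ forces the middle map to be an isomorphism; propagating from $k_0$ up to $m$ gives $\dim_E\mathrm{Ext}^1_{H,\chi}(W,V)=1$. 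The whole argument is bookkeeping with long exact sequences; the only point demanding care—and the closest thing to an obstacle—is correctly orchestrating which vanishing hypothesis is invoked at which stage of the two nested inductions (the $\mathrm{Ext}^1$-induction and the auxiliary $\mathrm{Ext}^2$-induction), and keeping straight that $\mathrm{Ext}^1_{H,\chi}(W,-)$ is covariant in the second variable while the filtration is increasing.
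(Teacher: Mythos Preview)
Your proof is correct and follows essentially the same d\'evissage strategy as the paper's own proof: both induct along the filtration using the long exact sequence in $\mathrm{Ext}^\bullet_{H,\chi}(W,-)$, first establishing $\mathrm{Ext}^1_{H,\chi}(W,\mathrm{Fil}_{k_0-1}V)=\mathrm{Ext}^2_{H,\chi}(W,\mathrm{Fil}_{k_0-1}V)=0$ and then pinning down $\dim_E\mathrm{Ext}^1_{H,\chi}(W,\mathrm{Fil}_{k_0}V)=1$. The only cosmetic difference is that the paper handles the passage from $\mathrm{Fil}_{k_0}V$ to $V$ in a single step via the quotient $V/\mathrm{Fil}_{k_0}V$ (showing its $\mathrm{Hom}$ and $\mathrm{Ext}^1$ vanish by the same part-(i) induction), whereas you propagate isomorphisms one graded piece at a time; these are equivalent organizations of the same argument.
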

\begin{proof}
The short exact sequence $\mathrm{Fil}_kV\hookrightarrow\mathrm{Fil}_{k+1}V\twoheadrightarrow\mathrm{gr}_{k+1}V$ induces a long exact sequence
$$\mathrm{Ext}^1_{H,\chi}\left(W, ~\mathrm{Fil}_kV\right)\rightarrow\mathrm{Ext}^1_{H,\chi}\left(W, ~\mathrm{Fil}_{k+1}V\right)\rightarrow\mathrm{Ext}^1_{H,\chi}\left(W, ~\mathrm{gr}_{k+1}V\right)$$
which implies
$$\mathrm{dim}_E\mathrm{Ext}^1_{H,\chi}\left(W, ~\mathrm{Fil}_{k+1}V\right)\leq \mathrm{dim}_E\mathrm{Ext}^1_{H,\chi}\left(W, ~\mathrm{Fil}_kV\right)+\mathrm{dim}_E\mathrm{Ext}^1_{H,\chi}\left(W, ~\mathrm{gr}_{k+1}V\right).$$
Therefore we finish the proof of part (i) and the first claim of part (ii) by induction on $k$ and the fact that $\mathrm{gr}_1V=\mathrm{Fil}_1V$.

It remains to show the second claim of part (ii). The same method as in the proof of part (i) shows that
\begin{equation}\label{3formal devissage1}
\mathrm{Ext}^1_{H,\chi}\left(W, ~\mathrm{Fil}_{k_0-1}V\right)=\mathrm{Ext}^2_{H,\chi}\left(W, ~\mathrm{Fil}_{k_0-1}V\right)=0
\end{equation}
and
\begin{equation}\label{3formal devissage2}
\mathrm{Ext}^1_{H,\chi}\left(W, ~V/\mathrm{Fil}_{k_0}V\right)=\mathrm{Hom}_{H,\chi}\left(W, ~V/\mathrm{Fil}_{k_0}V\right)=0
\end{equation}
The short exact sequence $\mathrm{Fil}_{k_0-1}V\hookrightarrow\mathrm{Fil}_{k_0}V\twoheadrightarrow\mathrm{gr}_{k_0}V$ induces the long exact sequence
\begin{multline*}
\mathrm{Ext}^1_{H,\chi}\left(W, ~\mathrm{Fil}_{k_0-1}V\right)\rightarrow\mathrm{Ext}^1_{H,\chi}\left(W, ~\mathrm{Fil}_{k_0}V\right)
\rightarrow\mathrm{Ext}^1_{H,\chi}\left(W, ~\mathrm{gr}_{k_0}V\right)\rightarrow \mathrm{Ext}^2_{H,\chi}\left(W, ~\mathrm{Fil}_{k_0-1}V\right)
\end{multline*}
which implies that
\begin{equation}\label{3formal devissage3}
\mathrm{dim}_E\mathrm{Ext}^1_{H,\chi}\left(W, ~\mathrm{Fil}_{k_0}V\right)=1
\end{equation}
by (\ref{3formal devissage1}). The short exact sequence $\mathrm{Fil}_{k_0}V\hookrightarrow V\twoheadrightarrow V/\mathrm{Fil}_{k_0}V$ induces the long exact sequence
\begin{multline*}
\mathrm{Hom}_{H,\chi}\left(W, ~V/\mathrm{Fil}_{k_0}V\right)\rightarrow\mathrm{Ext}^1_{H,\chi}\left(W, ~\mathrm{Fil}_{k_0}V\right)
\rightarrow\mathrm{Ext}^1_{H,\chi}\left(W, ~V\right)\rightarrow\mathrm{Ext}^1_{H,\chi}\left(W, ~V/\mathrm{Fil}_{k_0}V\right)
\end{multline*}
which finishes the proof by combining (\ref{3formal devissage2}) and (\ref{3formal devissage3}).
\end{proof}
\subsection{Some notation}\label{3subsection: main notation}
In this section, we are going to recall some standard notation for the $p$-adic reductive groups $\mathrm{GL}_2(\mathbb{Q}_p)$ and $\mathrm{GL}_3(\mathbb{Q}_p)$ as well as notation for some locally analytic representations of these groups.

We denote the lower-triangular Borel subgroup (resp. the diagonal maximal split torus) of $\mathrm{GL}_{2/\mathbb{Q}_p}$ by $B_2$ (resp. by $T_2$) and the unipotent radical of $B_2$ by $N_{\mathrm{GL}_2}$. We use the notation $s$ for the non-trivial element in the Weyl group of $\mathrm{GL}_2$. We fix a weight $\nu\in X(T_2)$ of $\mathrm{GL}_2$ of the following form
$$\nu=(\nu_1,\nu_2)\in\mathbb{Z}^2$$
which corresponds to an algebraic character of $T_2(\mathbb{Q}_p)$
$$\delta_{T_2,\nu}:=\left(\begin{array}{cc}
a&0\\
0&b\\
\end{array}\right)\mapsto a^{\nu_1}b^{\nu_2}.$$
We denote the upper-triangular Borel subgroup by $\overline{B_2}$. If $\nu$ is dominant with respect to $\overline{B_2}$, namely if $\nu_1\geq \nu_2$, we use the notation $\overline{L}_{\mathrm{GL}_2}(\nu)$ (resp. $L_{\mathrm{GL}_2}(-\nu)$) for the irreducible algebraic representation of $\mathrm{GL}_2(\mathbb{Q}_p)$ with highest weight $\nu$ (resp. $-\nu$) with respect to the positive roots determined by $\overline{B_2}$ (resp. $B_2$). In particular, $\overline{L}_{\mathrm{GL}_2}(\nu)$ and $L_{\mathrm{GL}_2}(-\nu)$ are the dual of each other. We use the shorten notation
$$I^{\mathrm{GL}_2}_{B_2}(\chi_{T_2}):=\left(\mathrm{Ind}_{B_2(\mathbb{Q}_p)}^{\mathrm{GL}_2(\mathbb{Q}_p)}\chi_{T_2}\right)^{\rm{an}}$$
for any locally analytic character $\chi_{T_2}$ of $T_2(\mathbb{Q}_p)$ and set
$$i^{\mathrm{GL}_2}_{B_2}(\chi_{T_2}):=\left(\mathrm{Ind}_{B_2(\mathbb{Q}_p)}^{\mathrm{GL}_2(\mathbb{Q}_p)}\chi^{\infty}_{T_2}\right)^{\infty}\otimes_E \overline{L}_{\mathrm{GL}_2}(\nu)$$
if $\chi_{T_2}=\delta_{T_2,\nu}\otimes_E\chi^{\infty}_{T_2}$ is locally algebraic where $\chi^{\infty}_{T_2}$ is a smooth character of $T_2(\mathbb{Q}_p)$. Then we define the locally analytic Steinberg representation as well as the smooth Steinberg representation for $\mathrm{GL}_2(\mathbb{Q}_p)$ as follows
$$\mathrm{St}^{\rm{an}}_2(\nu):=I^{\mathrm{GL}_2}_{B_2}(\delta_{T_2,\mu})/\overline{L}_{\mathrm{GL}_2}(\nu), ~ \mathrm{St}^{\infty}_2:=i^{\mathrm{GL}_2}_{B_2}(1_{T_2})/1_2$$
where $1_2$ (resp. $1_{T_2}$) is the trivial representation of $\mathrm{GL}_2(\mathbb{Q}_p)$ (resp. of $T_2(\mathbb{Q}_p)$).

We denote the lower-triangular Borel subgroup (resp. the diagonal maximal split torus) of $\mathrm{GL}_{3/\mathbb{Q}_p}$ by $B$ (resp. by $T$) and the unipotent radical of $B$ by $N$. We fix a weight $\lambda\in X(T)$ of $\mathrm{GL}_3$ of the following form
$$\lambda=(\lambda_1,\lambda_2, \lambda_3)\in\mathbb{Z}^3,$$
which corresponds to an algebraic character of $T(\mathbb{Q}_p)$
$$ \delta_{T,\lambda}:=\left(\begin{array}{ccc}
a&0&0\\
0&b&0\\
0&0&c\\
\end{array}\right)\mapsto a^{\lambda_1}b^{\lambda_2}c^{\lambda_3}.$$
We denote the center of $\mathrm{GL}_3$ by $Z$ and notice that $Z(\mathbb{Q}_p)\cong\mathbb{Q}_p^{\times}$. Hence the restriction of $\delta_{T,\lambda}$ to $Z(\mathbb{Q}_p)$ gives an algebraic character of $Z(\mathbb{Q}_p)$:
$$\delta_{Z,\lambda}:=\left(\begin{array}{ccc}
a&0&0\\
0&a&0\\
0&0&a\\
\end{array}\right)\mapsto a^{\lambda_1+\lambda_2+\lambda_3}.$$
We use the shorten notation
$$\mathrm{Ext}^i_{\ast, \lambda}(-,-):=\mathrm{Ext}^i_{\ast, \delta_{Z,\lambda}}(-,-)$$
for $\ast\in\{T(\mathbb{Q}_p), L_1(\mathbb{Q}_p), L_2(\mathbb{Q}_p), \mathrm{GL}_3(\mathbb{Q}_p)\}.$ In particular, the notation
$$\mathrm{Ext}^i_{\ast, 0}(-,-)$$
means (higher) extensions with the trivial central character. We denote the upper-triangular Borel subgroup of $\mathrm{GL}_3$ by $\overline{B}$. If $\lambda$ is dominant with respect to $\overline{B}$, namely if $\lambda_1\geq \lambda_2\geq\lambda_3$, we use the notation $\overline{L}(\lambda)$ (resp. $L(-\lambda)$) for the irreducible algebraic representation of $\mathrm{GL}_3(\mathbb{Q}_p)$ with highest weight $\lambda$ (resp. $-\lambda$) with respect to the positive roots determined by $\overline{B}$ (resp. $B$). In particular, $\overline{L}(\lambda)$ and $L(-\lambda)$ are dual of each other.
We use the notation $P_1:=\left(\begin{array}{ccc}
\ast&\ast&0\\
\ast&\ast&0\\
\ast&\ast&\ast\\
\end{array}\right)$ and $P_2:=\left(\begin{array}{ccc}
\ast&0&0\\
\ast&\ast&\ast\\
\ast&\ast&\ast\\
\end{array}\right)$ for the two standard maximal parabolic subgroups of $\mathrm{GL}_3$ with unipotent radical $N_1$ and $N_2$ respectively, and the notation $\overline{P_i}$ for the opposite parabolic subgroup of $P_i$ for $i=1,2$. We set
$$L_i:=P_i\cap \overline{P_i}$$
and set $s_i$ for the simple reflection in the Weyl group of $L_i$ for each $i=1,2$. In particular, the Weyl group $W$ of $\mathrm{GL}_3$ can be lifted to a subgroup of $\mathrm{GL}_3$ with the following elements
$$\{1, s_1, s_2, s_1s_2, s_2s_1, s_1s_2s_1\}.$$
We will usually use the shorten notation $N_i$ ( cf. Section~\ref{3section: computation}) for its set of $\mathbb{Q}_p$-points $N_i(\mathbb{Q}_p)$ if it does not cause any ambiguity. We use the notation $M(-\lambda)$ for the Verma module in $\mathcal{O}^{\mathfrak{b}}_{\rm{alg}}$ with highest weight $-\lambda$ (with respect to $B$) and simple quotient $L(-\lambda)$ for each $\lambda\in X(T)$ (not necessarily dominant). Similarly, we use the notation $M_i(-\lambda)$ for the parabolic Verma module in $\mathcal{O}^{\mathfrak{p}_i}_{\rm{alg}}$ with highest weight $-\lambda$ with respect to $B$ ( cf. Section~9.4 of \cite{Hum08}).  We define $\overline{L}_i(\lambda)$ as the irreducible algebraic representation of $L_i(\mathbb{Q}_p)$ with a highest weight $\lambda$ dominant with respect to $\overline{B}\cap L_i$. For example, if $\lambda\in X(T)_+$, then we know that $\lambda$, $s_i\cdot\lambda$ and $s_is_{3-i}\cdot\lambda$ are dominant with respect to $\overline{B}\cap L_{3-i}$ for $i=1,2$. We use the following notation for various parabolic inductions
$$I^{\mathrm{GL}_3}_{B}(\chi):=\left(\mathrm{Ind}_{B(\mathbb{Q}_p)}^{\mathrm{GL}_3(\mathbb{Q}_p)}\chi\right)^{\rm{an}}, ~I^{\mathrm{GL}_3}_{P_i}(\pi_i):=\left(\mathrm{Ind}_{P_i(\mathbb{Q}_p)}^{\mathrm{GL}_3(\mathbb{Q}_p)}\pi_i\right)^{\rm{an}}$$
if $\chi$ is an arbitrary locally analytic character of $T(\mathbb{Q}_p)$ and $\pi_i$ is an arbitrary locally analytic representation of $L_i(\mathbb{Q}_p)$ for each $i=1,2$. Moreover, we use the notation
$$i^{\mathrm{GL}_3}_{B}(\chi):=\left(\mathrm{Ind}_{B(\mathbb{Q}_p)}^{\mathrm{GL}_3(\mathbb{Q}_p)}\chi^{\infty}\right)^{\infty}\otimes_E \overline{L}(\lambda), ~i^{\mathrm{GL}_3}_{P_i}(\pi_i):=\left(\mathrm{Ind}_{P_i(\mathbb{Q}_p)}^{\mathrm{GL}_3(\mathbb{Q}_p)}\pi_i^{\infty}\right)^{\infty}\otimes_E \overline{L}(\lambda)$$
for $i=1,2$ if $\chi=\delta_{T,\lambda}\otimes_E\chi^{\infty}$ and $\pi_i=\overline{L}_i(\lambda)\otimes_E\pi_i^{\infty}$ are locally algebraic where $\chi^{\infty}$ (resp. $\pi_i^{\infty}$) is a smooth representation of $T(\mathbb{Q}_p)$ (resp. of $L_i(\mathbb{Q}_p)$). We will also use similar notation for parabolic induction to Levi subgroups such as $I_{B\cap L_i}^{L_i}$ and $i_{B\cap L_i}^{L_i}$ for $i=1,2$. Then we define the locally analytic (generalized) Steinberg representation as well as the smooth (generalized) Steinberg representation for $\mathrm{GL}_3(\mathbb{Q}_p)$ by
$$\mathrm{St}^{\rm{an}}_3(\lambda):=I^{\mathrm{GL}_3}_{B}(\delta_{T,\lambda})/\left(I^{\mathrm{GL}_3}_{P_1}(\overline{L}_1(\lambda))+I^{\mathrm{GL}_3}_{P_2}(\overline{L}_2(\lambda))\right), ~ \mathrm{St}^{\infty}_3:=i^{\mathrm{GL}_3}_{B}(1)/\left(i^{\mathrm{GL}_3}_{P_1}(1_{L_1})+i^{\mathrm{GL}_3}_{P_2}(1_{L_2})\right)$$
and
$$v^{\rm{an}}_{P_i}(\lambda):=I^{\mathrm{GL}_3}_{P_i}(\overline{L}_i(\lambda))/\overline{L}(\lambda), ~ v^{\infty}_{P_i}:=i^{\mathrm{GL}_3}_{P_i}(1_{L_i})/1_3$$
where $1_3$ (resp. $1_{L_i}$) is the trivial representation of $\mathrm{GL}_3(\mathbb{Q}_p)$ (resp. of $L_i(\mathbb{Q}_p)$ for each $i=1,2$).
We define the following smooth representations of $L_1(\mathbb{Q}_p)$:
$$\begin{array}{ccc}
\pi_{1,1}^{\infty}&:=&\mathrm{St}_2^{\infty}\otimes_E1\\
\pi_{1,2}^{\infty}&:=&i_{B_2}^{\mathrm{GL}_2}\left(1\otimes_E|\cdot|^{-1}\right)\otimes_E|\cdot|\\
\pi_{1,3}^{\infty}&:=&\left(\mathrm{St}_2^{\infty}\otimes_E(|\cdot|^{-1}\circ\mathrm{det}_2)\right)\otimes_E|\cdot|^2\\
\end{array}
$$
and the following smooth representations of $L_2(\mathbb{Q}_p)$:
$$
\begin{array}{ccc}
\pi_{2,1}^{\infty}&:=&1\otimes_E\mathrm{St}_2^{\infty}\\
\pi_{2,2}^{\infty}&:=&|\cdot|^{-1}\otimes_E i_{B_2}^{\mathrm{GL}_2}\left(|\cdot|\otimes_E1\right)\\
\pi_{2,3}^{\infty}&:=&|\cdot|^{-2}\otimes_E\left(\mathrm{St}_2^{\infty}\otimes_E(|\cdot|\circ\mathrm{det}_2)\right)\\
\end{array}
$$
Consequently, we can define the following locally analytic representations for $i=1,2$:
\begin{equation}\label{3irr rep I}
\begin{array}{cccccc}
C^1_{s_i,1}&:=&\mathcal{F}^{\mathrm{GL}_3}_{P_{3-i}}\left(L(-s_i\cdot\lambda),~1_{L_{3-i}}\right)&C^2_{s_i,1}&:=&\mathcal{F}^{\mathrm{GL}_3}_{P_{3-i}}\left(L(-s_i\cdot\lambda),~\pi_{i,1}^{\infty}\right)\\
C^1_{s_is_{3-i},1}&:=&\mathcal{F}^{\mathrm{GL}_3}_{P_{3-i}}\left(L(-s_is_{3-i}\cdot\lambda),~1_{L_{3-i}}\right)&C^2_{s_is_{3-i},1}&:=&\mathcal{F}^{\mathrm{GL}_3}_{P_{3-i}}\left(L(-s_is_{3-i}\cdot\lambda),~\pi_{i,1}^{\infty}\right)\\
C_{s_i,s_i}&:=&\mathcal{F}^{\mathrm{GL}_3}_{P_{3-i}}\left(L(-s_i\cdot\lambda),~\pi_{i,2}^{\infty}\right)&C_{s_is_{3-i},s_i}&:=&\mathcal{F}^{\mathrm{GL}_3}_{P_{3-i}}\left(L(-s_is_{3-i}\cdot\lambda),~\pi_{i,2}^{\infty}\right)\\
C^1_{s_i,s_is_{3-i}}&:=&\mathcal{F}^{\mathrm{GL}_3}_{P_{3-i}}\left(L(-s_i\cdot\lambda),~\mathfrak{d}_{P_{3-i}}^{\infty}\right)&C^2_{s_1,1}&:=&\mathcal{F}^{\mathrm{GL}_3}_{P_{3-i}}\left(L(-s_i\cdot\lambda),~\pi_{i,3}^{\infty}\right)\\
C^1_{s_is_{3-i},s_is_{3-i}}&:=&\mathcal{F}^{\mathrm{GL}_3}_{P_{3-i}}\left(L(-s_is_{3-i}\cdot\lambda),~\mathfrak{d}_{P_{3-i}}^{\infty}\right)&C^2_{s_is_{3-i},1}&:=&\mathcal{F}^{\mathrm{GL}_3}_{P_{3-i}}\left(L(-s_is_{3-i}\cdot\lambda),~\pi_{i,3}^{\infty}\right)
\end{array}
\end{equation}
where
$$\mathfrak{d}_{P_1}^{\infty}:=|\cdot|^{-1}\circ\mathrm{det}_2\otimes_E|\cdot|^2\mbox{ and }\mathfrak{d}_{P_2}^{\infty}:=|\cdot|^{-2}\otimes_E|\cdot|\circ\mathrm{det}_2.$$
We also define
\begin{equation}\label{3irr rep II}
C_{s_1s_2s_1,w}:=\mathcal{F}^{\mathrm{GL}_3}_B\left(L(-s_1s_2s_1\cdot\lambda),~\chi_w^{\infty}\right)
\end{equation}
for each $w\in W$ where
$$
\begin{xy}
(0,0)*+{\chi_1^{\infty}}; (6,0)*+{:=}; (12,0)*+{1_T}; (50,0)*+{\chi_{s_1}^{\infty}}; (56,0)*+{:=}; (73,0)*+{|\cdot|^{-1}\otimes_E|\cdot|\otimes_E1}; (105,0)*+{\chi_{s_2}^{\infty}}; (112,0)*+{:=}; (130,0)*+{1\otimes_E|\cdot|^{-1}\otimes_E|\cdot|}; (0,-6)*+{\chi_{s_1s_2}^{\infty}}; (6,-6)*+{:=}; (25,-6)*+{|\cdot|^{-2}\otimes_E|\cdot|\otimes_E|\cdot|}; (50,-6)*+{\chi_{s_2s_1}^{\infty}}; (56,-6)*+{:=}; (77,-6)*+{|\cdot|^{-1}\otimes_E|\cdot|^{-1}\otimes_E|\cdot|^2}; (105,-6)*+{\chi_{s_1s_2s_1}^{\infty}}; (112,-6)*+{:=}; (130,-6)*+{|\cdot|^{-2}\otimes_E1\otimes_E|\cdot|^2};
\end{xy}
$$
We notice that the representations considered in (\ref{3irr rep I}) and (\ref{3irr rep II}) are all irreducible objects inside $\mathrm{Rep}^{\mathcal{OS}}_{\mathrm{GL}_3(\mathbb{Q}_p), E}$ according to the main theorem of \cite{OS15}. We use the notation $\Omega$ for the set whose elements are listed as the following:
$$
\begin{array}{cccc}
\overline{L}(\lambda)&\overline{L}(\lambda)\otimes_Ev_{P_1}^{\infty}&\overline{L}(\lambda)\otimes_Ev_{P_2}^{\infty}&\overline{L}(\lambda)\otimes_E\mathrm{St}_3^{\infty}\\
C^1_{s_1,1}&C^2_{s_1,1}&C^1_{s_2,1}&C^2_{s_2,1}\\
C^1_{s_1s_2,1}&C^2_{s_1s_2,1}&C^1_{s_2s_1,1}&C^2_{s_2s_1,1}\\
C^1_{s_1,s_1s_2}&C^2_{s_1,s_1s_2}&C^1_{s_2,s_2s_1}&C^2_{s_2,s_2s_1}\\
C^1_{s_1s_2,s_1s_2}&C^2_{s_1s_2,s_1s_2}&C^1_{s_2s_1,s_2s_1}&C^2_{s_2s_1,s_2s_1}\\
C_{s_1,s_1}&C_{s_1s_2,s_1}&C_{s_2,s_2}&C_{s_2s_1,s_2}\\
C_{s_1s_2s_1,w}&w\in W&&
\end{array}
$$
\begin{rema}
It is actually possible to show that $\Omega$ is the set of (isomorphism classes of) irreducible objects of the block inside $\mathrm{Rep}^{\mathcal{OS}}_{\mathrm{GL}_3(\mathbb{Q}_p), E}$ containing the object $\overline{L}(\lambda)$.
\end{rema}
\begin{lemm}\label{3lemm: structure of St}
The representation $v^{\rm{an}}_{P_i}(\lambda)$ fits into a non-split extension
\begin{equation}\label{3generalized St picture}
\overline{L}(\lambda)\otimes_Ev_{P_i}^{\infty}\hookrightarrow v^{\rm{an}}_{P_i}(\lambda)\twoheadrightarrow C^1_{s_{3-i},1}
\end{equation}
for $i=1,2$. On the other hand, the representation $\mathrm{St}^{\rm{an}}_3(\lambda)$ has the following form:
\begin{equation}\label{3St picture}
\begin{xy}
(0,0)*+{\overline{L}(\lambda)\otimes_E\mathrm{St}_3^{\infty}}="a"; (20,6)*+{C^2_{s_1,1}}="b"; (20,-6)*+{C^2_{s_2,1}}="c"; (40,6)*+{C^1_{s_2s_1,1}}="d"; (40,-6)*+{C^1_{s_1s_2,1}}="e"; (60,6)*+{C^2_{s_2s_1,1}}="f"; (60,-6)*+{C^2_{s_1s_2,1}}="g"; (80,0)*+{C_{s_1s_2s_1,1}}="h";
{\ar@{-}"a";"b"}; {\ar@{-}"a";"c"}; {\ar@{-}"b";"d"}; {\ar@{-}"c";"e"}; {\ar@{-}"a";"b"}; {\ar@{-}"d";"f"}; {\ar@{-}"e";"g"}; {\ar@{-}"b";"g"}; {\ar@{-}"c";"f"}; {\ar@{--}"f";"h"}; {\ar@{--}"g";"h"};
\end{xy}.
\end{equation}
\end{lemm}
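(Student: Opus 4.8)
The plan is to read off both statements from the Orlik--Strauch description of locally analytic principal series of $\mathrm{GL}_3(\mathbb{Q}_p)$, exploiting that for a fixed smooth $\pi^{\infty}$ the functor $\mathcal{F}^{\mathrm{GL}_3}_P(-,\pi^{\infty})$ is exact and contravariant in its category-$\mathcal{O}$ argument, so that the (well-understood) submodule lattice of the relevant Verma and generalized Verma modules for $\mathfrak{gl}_3$ translates into the quotient--hence socle--lattice of the representations in play. I will use two standard inputs. First, on a finite-dimensional object the functor degenerates to smooth parabolic induction, $\mathcal{F}^{\mathrm{GL}_3}_P(L(-\lambda),\pi^{\infty})\cong\overline{L}(\lambda)\otimes_E(\mathrm{Ind}_P^{\mathrm{GL}_3}\pi^{\infty})^{\infty}$, so that the locally algebraic constituents are governed by the classical reducibility of smooth principal series of $\mathrm{GL}_3(\mathbb{Q}_p)$ (in particular $1_3\hookrightarrow(\mathrm{Ind}^{\mathrm{GL}_3}_{P_i}1_{L_i})^{\infty}\twoheadrightarrow v^{\infty}_{P_i}$ and the four-step structure of $(\mathrm{Ind}^{\mathrm{GL}_3}_B 1_T)^{\infty}$). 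Second, the socle of the ``dominant'' locally analytic principal series $I^{\mathrm{GL}_3}_B(\delta_{T,\lambda})$ is the algebraic representation $\overline{L}(\lambda)$ --- equivalently $L(-\lambda)$ is the cosocle of the corresponding Verma module $M(-\lambda)$ --- which I quote from \cite{OS15}, \cite{Bre16}. Throughout, the identification of each irreducible value $\mathcal{F}^{\mathrm{GL}_3}_P(L(-w\cdot\lambda),\sigma)$ with the correspondingly named $C^{\ast}_{w^{\prime},w}$ is immediate from \eqref{3irr rep I}--\eqref{3irr rep II}.

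For \eqref{3generalized St picture} I would write $I^{\mathrm{GL}_3}_{P_i}(\overline{L}_i(\lambda))=\mathcal{F}^{\mathrm{GL}_3}_{P_i}(\mathfrak{M}_i,1_{L_i})$ with $\mathfrak{M}_i:=U(\mathfrak{gl}_3)\otimes_{U(\overline{\mathfrak{p}_i})}\overline{L}_i(\lambda)^{\vee}$, a generalized Verma module in $\mathcal{O}^{\overline{\mathfrak{p}_i}}_{\mathrm{alg}}$; this is the $P=P_i$, $M_L=\overline{L}_i(\lambda)$ case of the formula used in the proof of Lemma~\ref{3lemm: det twist}. A direct weight computation --- or the parabolic Kazhdan--Lusztig combinatorics of type $A_2$ for a maximal parabolic --- shows $\mathfrak{M}_i$ is uniserial of length two, with cosocle the finite-dimensional $L(-\lambda)$ and socle the simple infinite-dimensional object $L(-s_{3-i}\cdot\lambda)$. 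Applying the exact contravariant $\mathcal{F}^{\mathrm{GL}_3}_{P_i}(-,1_{L_i})$ yields a short exact sequence $0\to\overline{L}(\lambda)\otimes_E(\mathrm{Ind}^{\mathrm{GL}_3}_{P_i}1_{L_i})^{\infty}\to I^{\mathrm{GL}_3}_{P_i}(\overline{L}_i(\lambda))\to C^1_{s_{3-i},1}\to 0$, in which, by the first input, the subobject is the uniserial length-two representation with socle $\overline{L}(\lambda)$ and cosocle $\overline{L}(\lambda)\otimes_E v^{\infty}_{P_i}$. Since by the second input (combined with the transitivity embedding $I^{\mathrm{GL}_3}_{P_i}(\overline{L}_i(\lambda))\hookrightarrow I^{\mathrm{GL}_3}_B(\delta_{T,\lambda})$) the socle of $I^{\mathrm{GL}_3}_{P_i}(\overline{L}_i(\lambda))$ is exactly $\overline{L}(\lambda)$, the quotient by this socle has length two and cosocle $C^1_{s_{3-i},1}$, hence is the non-split extension \eqref{3generalized St picture}.

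For \eqref{3St picture} I would start from the structure of $I^{\mathrm{GL}_3}_B(\delta_{T,\lambda})=\mathcal{F}^{\mathrm{GL}_3}_B(M(-\lambda),1_T)$, read off from the classical BGG submodule lattice of $M(-\lambda)$ (six factors $L(-w\cdot\lambda)$, $w\in W$) via the Orlik--Strauch composition-series formula; a constituent count then gives that $I^{\mathrm{GL}_3}_B(\delta_{T,\lambda})$ has thirteen constituents, that $I^{\mathrm{GL}_3}_{P_1}(\overline{L}_1(\lambda))$ and $I^{\mathrm{GL}_3}_{P_2}(\overline{L}_2(\lambda))$ meet only in their common socle $\overline{L}(\lambda)$, and hence that $\mathrm{St}^{\mathrm{an}}_3(\lambda)=I^{\mathrm{GL}_3}_B(\delta_{T,\lambda})/\big(I^{\mathrm{GL}_3}_{P_1}(\overline{L}_1(\lambda))+I^{\mathrm{GL}_3}_{P_2}(\overline{L}_2(\lambda))\big)$ has precisely the eight constituents of \eqref{3St picture}, with socle $\overline{L}(\lambda)\otimes_E\mathrm{St}^{\infty}_3$ (the image of the locally algebraic socle of $I^{\mathrm{GL}_3}_B(\delta_{T,\lambda})$). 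It remains to pin down the amalgam, i.e.\ each edge and each non-edge of the diagram; here I would compute the $\mathrm{Ext}^1_{\mathrm{GL}_3(\mathbb{Q}_p),\lambda}$-groups between the constituents $C^{\ast}_{w^{\prime},w}$ by means of Lemma~\ref{3lemm: cohomology devissage}, which reduces them to $\mathrm{Ext}$-groups for $\mathrm{GL}_2(\mathbb{Q}_p)$ and for tori, check that each is at most one-dimensional (forcing the indicated uniserial subdiagrams), and verify that precisely the two arrows landing in $C_{s_1s_2s_1,1}$ may correspond to a zero class. The non-splitness of the solid edges then follows because any splitting would place one of the eight constituents in the socle of $\mathrm{St}^{\mathrm{an}}_3(\lambda)$, contradicting that this socle is the single object $\overline{L}(\lambda)\otimes_E\mathrm{St}^{\infty}_3$.

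The step I expect to be the main obstacle is this last one: fixing the exact Loewy structure is delicate because several constituents are attached to two predecessors simultaneously (for instance $C^2_{s_2s_1,1}$ sitting above both $C^1_{s_2s_1,1}$ and $C^2_{s_2,1}$), and excluding extra extensions or splittings requires enough $\mathrm{Ext}^1$- and $\mathrm{Ext}^2$-vanishing among the objects of $\Omega$. If a self-contained treatment becomes too long, the cleanest alternative is to quote the structure of $I^{\mathrm{GL}_3}_B(\delta_{T,\lambda})$ --- equivalently the structure of the generalized Steinberg representations of $\mathrm{GL}_3(\mathbb{Q}_p)$ --- already established in \cite{Schr11} and \cite{Bre17}, and to derive \eqref{3generalized St picture} and \eqref{3St picture} from it by the exact-quotient arguments above.
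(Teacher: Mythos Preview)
Your treatment of \eqref{3generalized St picture} is correct and is essentially what underlies the reference the paper gives (it simply cites (3.62) of \cite{BD18}); your Orlik--Strauch argument via the length-two parabolic Verma module is exactly the computation behind that citation.

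For \eqref{3St picture} your overall strategy is sound, but the paper takes a more targeted route, and your final socle argument has a gap. The paper does not compute $\mathrm{Ext}^1$-groups between all pairs of constituents; instead it exhibits two specific non-split length-two subquotients of $\mathrm{St}^{\rm{an}}_3(\lambda)$ directly, by applying $\mathcal{F}^{\mathrm{GL}_3}_{P_i}$ to suitable objects and detecting non-splitness via $N_i$-homology. Concretely, $\mathcal{F}^{\mathrm{GL}_3}_{P_i}\big(L(-s_{3-i}s_i\cdot\lambda),\,i_{B\cap L_i}^{L_i}(1_T)\big)$ has constituents $C^1_{s_{3-i}s_i,1}$ and $C^2_{s_{3-i}s_i,1}$, and computing $H_0(N_i,-)$ (Section~5.2 of \cite{Bre17}) shows it is non-split; likewise $\mathcal{F}^{\mathrm{GL}_3}_{P_i}\big(M_i(-s_{3-i}\cdot\lambda),\,\pi_{i,1}^{\infty}\big)$ has constituents $C^2_{s_{3-i},1}$ and $C^2_{s_{3-i}s_i,1}$, and here $H_2(N_{3-i},-)$ is not the direct sum of the pieces, so again the extension is non-split. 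These are subquotients of $\mathrm{St}^{\rm{an}}_3(\lambda)$ by functoriality, and the remaining edges and layer structure are then imported from \cite{BD18} (results before Remark~3.38).

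The gap in your argument is the sentence ``any splitting would place one of the eight constituents in the socle''. That inference is valid only for a constituent with a \emph{single} downward edge. For the diamond vertices $C^2_{s_2s_1,1}$ and $C^2_{s_1s_2,1}$, each of which has two solid edges below it, irreducibility of the socle only tells you that \emph{at least one} of the two edges is non-split; it does not force both. This is precisely why the paper produces the two explicit non-split subquotients above: together they pin down both edges under each diamond vertex. Your fallback of quoting \cite{Schr11} or \cite{BD18} would of course work, but if you want a self-contained argument, the $N$-homology computation is the missing ingredient rather than further $\mathrm{Ext}^1$-bookkeeping.
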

\begin{proof}
The non-split short exact sequence follows directly from (3.62) of \cite{BD18}. It follows easily from the definition of $\mathrm{St}^{\rm{an}}_3(\lambda)$ that
$$\mathrm{JH}_{\mathrm{GL}_3(\mathbb{Q}_p)}\left(\mathrm{St}^{\rm{an}}_3(\lambda)\right)=\{\overline{L}(\lambda)\otimes_E\mathrm{St}_3^{\infty},~C^2_{s_1,1},~C^2_{s_2,1},~C^1_{s_2s_1,1},~C^1_{s_1s_2,1},~C^2_{s_2s_1,1},~C^2_{s_1s_2,1},~C_{s_1s_2s_1,1}\}$$
and each Jordan--H\"older factor occurs with multiplicity one. It follows from Section~5.2 of \cite{Bre17} that
$$H_0\left(N_i,~\mathcal{F}^{\mathrm{GL}_3}_{P_i}\left(L(-s_{3-i}s_i\cdot\lambda),~i_{B\cap L_i}^{L_i}(1_T)\right)\right)=\overline{L}_i(-s_{3-i}s_i\cdot\lambda)\otimes_Ei_{B\cap L_i}^{L_i}(1_T)$$
which together with
$$\mathrm{JH}_{\mathrm{GL}_3(\mathbb{Q}_p)}\left(\mathcal{F}^{\mathrm{GL}_3}_{P_i}\left(L(-s_{3-i}s_i\cdot\lambda),~i_{B\cap L_i}^{L_i}(1_T)\right)\right)=\{C^1_{s_{3-i}s_i,1},~C^2_{s_{3-i}s_i,1}\}$$
imply that $\mathcal{F}^{\mathrm{GL}_3}_{P_i}\left(L(-s_{3-i}s_i\cdot\lambda),~i_{B\cap L_i}^{L_i}(1_T)\right)$ fits into a non-split extension
\begin{equation}\label{3non split ext in St}
C^1_{s_{3-i}s_i,1}\hookrightarrow\mathcal{F}^{\mathrm{GL}_3}_{P_i}\left(L(-s_{3-i}s_i\cdot\lambda),~i_{B\cap L_i}^{L_i}(1_T)\right)\twoheadrightarrow C^2_{s_{3-i}s_i,1}
\end{equation}
for $i=1,2$. We also observe from Section~5.2 and 5.3 of \cite{Bre17} that
$$H_2\left(N_{3-i},~\mathcal{F}^{\mathrm{GL}_3}_{P_i}\left(M_i(-s_{3-i}\cdot\lambda),~\pi_{i,1}^{\infty}\right)\right)\not\cong H_2(N_{3-i},~C^2_{s_{3-i},1})\oplus H_2(N_{3-i},~C^2_{s_{3-i}s_i,1})$$
which together with
$$\mathrm{JH}_{\mathrm{GL}_3(\mathbb{Q}_p)}\left(\mathcal{F}^{\mathrm{GL}_3}_{P_i}\left(M_i(-s_{3-i}\cdot\lambda),~\pi_{i,1}^{\infty}\right)\right)=\{C^2_{s_{3-i},1},~C^2_{s_{3-i}s_i,1}\}$$
imply that $\mathcal{F}^{\mathrm{GL}_3}_{P_i}\left(M_i(-s_{3-i}\cdot\lambda),~\pi_{i,1}^{\infty}\right)$ fits into a non-split extension
\begin{equation}\label{3non split ext in St prime}
C^2_{s_{3-i},1}\hookrightarrow\mathcal{F}^{\mathrm{GL}_3}_{P_i}\left(M_i(-s_{3-i}\cdot\lambda),~\pi_{i,1}^{\infty}\right)\twoheadrightarrow C^2_{s_{3-i}s_i,1}
\end{equation}
for $i=1,2$. We notice that both $\mathcal{F}^{\mathrm{GL}_3}_{P_i}\left(L(-s_{3-i}s_i\cdot\lambda),~i_{B\cap L_i}^{L_i}(1_T)\right)$ and $\mathcal{F}^{\mathrm{GL}_3}_{P_i}\left(M_i(-s_{3-i}\cdot\lambda),~\pi_{i,1}^{\infty}\right)$ are subquotients of $\mathrm{St}^{\rm{an}}_3(\lambda)$ by various properties of the functors $\mathcal{F}^{\mathrm{GL}_3}_{P_i}$ ( cf. main theorem of \cite{OS15}) and the definition of $\mathrm{St}^{\rm{an}}_3(\lambda)$. We finish the proof by combining (\ref{3non split ext in St}) and (\ref{3non split ext in St prime}) with the results before Remark~3.38 of \cite{BD18}.
\end{proof}
\begin{rema}
It is actually possible to show that all the possibly non-split extensions indicated in (\ref{3St picture}) are non-split, although they are essentially unrelated to the $p$-adic dilogarithm function.
\end{rema}

\subsection{$p$\text{-}adic logarithm and dilogarithm}\label{3subsection: log dilog}
In this section, we recall $p$\text{-}adic logarithm and dilogarithm function as well as their representation theoretic interpretations.

We recall the $p$\text{-}adic logarithm function $\mathrm{log}_0: \mathbb{Q}_p^{\times}\rightarrow \mathbb{Q}_p$ defined by power series on a open subgroup of $\mathbb{Z}_p^{\times}$ and then extended to $\mathbb{Q}_p^{\times}$ by the condition $\mathrm{log}_0(p)=0$. We also recall the $p$\text{-}adic valuation function $\mathrm{val}_p: \mathbb{Q}_p^{\times}\rightarrow \mathbb{Z}$ satisfying $|\cdot|=p^{-\mathrm{val}_p(\cdot)}$ (and in particular $\mathrm{val}_p(p)=1$). We notice that
$$\{\mathrm{log}_0,~\mathrm{val}_p\}$$
forms a basis of the two dimensional $E$\text{-}vector space
$$\mathrm{Hom}_{\rm{cont}}\left(\mathbb{Q}_p^{\times},~E\right).$$
We define $\mathrm{log}_{\mathscr{L}}:=\mathrm{log}_0-\mathscr{L}\mathrm{val}_p$ for each $\mathscr{L}\in E$ and consider the following two dimensional locally analytic representation of $\mathbb{Q}_p^{\times}$
$$V_{\mathscr{L}}: \mathbb{Q}_p^{\times}\rightarrow B_2(E), ~a\mapsto
\left(\begin{array}{cc}
1&\mathrm{log}_{\mathscr{L}}(a)\\
0&1\\
\end{array}\right) $$
and therefore
\begin{equation}\label{3extension of two trivial reps}
\mathrm{soc}_{\mathbb{Q}_p^{\times}}(V_{\mathscr{L}})=\mathrm{cosoc}_{\mathbb{Q}_p^{\times}}(V_{\mathscr{L}})=1
\end{equation}
where $1$ is the notation for the trivial character of $\mathbb{Q}_p^{\times}$. We notice that
$$\mathrm{Ext}^1_{\mathbb{Q}_p^{\times}}(1,1)\cong \mathrm{Hom}_{\rm{cont}}\left(\mathbb{Q}_p^{\times},~E\right),$$
by a standard fact in (continuous) group cohomology and therefore the set $\{V_{\mathscr{L}}\mid \mathscr{L}\in E\}$ exhausts (up to isomorphism) all different two dimensional locally analytic non-smooth $E$-representations of $\mathbb{Q}_p^{\times}$ satisfying (\ref{3extension of two trivial reps}). We observe that $V_{\mathscr{L}}$ can be viewed as a representation of $T_2(\mathbb{Q}_p)\cong\mathbb{Q}_p^{\times}\times\mathbb{Q}_p^{\times}$ by composing with the map
\begin{equation}\label{3easy map}
T_2(\mathbb{Q}_p)\rightarrow \mathbb{Q}_p^{\times}: ~\left(\begin{array}{cc}
a&0\\
0&b\\
\end{array}\right) \mapsto a^{-1}b.
\end{equation}
As a result, we can consider the parabolic induction
$$I_{B_2}^{\mathrm{GL}_2}\left(V_{\mathscr{L}}\otimes_E\delta_{T_2,\nu}\right)$$
which naturally fits into an exact sequence
\begin{equation}\label{3non split self extension}
I_{B_2}^{\mathrm{GL}_2}(\delta_{T_2,\nu})\hookrightarrow I_{B_2}^{\mathrm{GL}_2}\left(V_{\mathscr{L}}\otimes_E\delta_{T_2,\nu}\right)\twoheadrightarrow I_{B_2}^{\mathrm{GL}_2}(\delta_{T_2,\nu}).
\end{equation}
Then we define $\Sigma_{\mathrm{GL}_2}(\nu,\mathscr{L})$ as the subrepresentation of $I_{B_2}^{\mathrm{GL}_2}\left(V_{\mathscr{L}}\otimes_E\delta_{T_2,\nu}\right)/\overline{L}_{\mathrm{GL}_2}(\nu)$ with cosocle $\overline{L}_{\mathrm{GL}_2}(\nu)$. It follows from (the proof of) Theorem~3.14 of \cite{BD18} that $\Sigma_{\mathrm{GL}_2}(\nu,\mathscr{L})$ has the form
\begin{equation}\label{3uniserial GL2}
\begin{xy}
(0,0)*+{\mathrm{St}^{\rm{an}}_2(\nu)}="a"; (20,0)*+{\overline{L}_{\mathrm{GL}_2}(\nu)}="b";
{\ar@{-}"a";"b"};
\end{xy}
\end{equation}
and the set $\{\Sigma_{\mathrm{GL}_2}(\nu,\mathscr{L})\mid \mathscr{L}\in E\}$ exhausts (up to isomorphism) all different locally analytic $E$-representations of $\mathrm{GL}_2(\mathbb{Q}_p)$ of the form (\ref{3uniserial GL2}) that do not contain
$$\begin{xy}
(0,0)*+{\overline{L}_{\mathrm{GL}_2}(\nu)\otimes_E\mathrm{St}^{\infty}_2}="a"; (26,0)*+{\overline{L}_{\mathrm{GL}_2}(\nu)}="b";
{\ar@{-}"a";"b"};
\end{xy}$$
as a subrepresentation. We have the embeddings
$$\iota_i: \mathrm{GL}_2\hookrightarrow L_i$$
for $i=1,2$ by identifying $\mathrm{GL}_2$ with a Levi block of $L_i$, which induce the embeddings
$$\iota_{T,i}: T_2\hookrightarrow T$$
by restricting $\iota_i$ to $T_2\subsetneq\mathrm{GL}_2$. We use the notation $\iota_{T,i}(V_{\mathscr{L}})$ for the locally analytic representation of $T(\mathbb{Q}_p)\cong (\mathbb{Q}_p^{\times})^3$ which is $V_{\mathscr{L}}$ after restricting to $T_2$ via $\iota_{T,i}$ and is trivial after restricting to the other copy of $\mathbb{Q}_p^{\times}$. By a direct analogue of $\Sigma_{\mathrm{GL}_2}(\nu,\mathscr{L})$, we can construct $\Sigma_{L_i}(\lambda,\mathscr{L})$ as the subrepresentation of $I_{B\cap L_i}^{L_i}\left(\iota_{T,i}(V_{\mathscr{L}})\otimes_E\delta_{T,\lambda}\right)/\overline{L}_i(\lambda)$ with cosocle $\overline{L}_i(\lambda)$. In fact, if we have $\lambda|_{T_2, \iota_{T,i}}=\nu$, then we obviously know that $\Sigma_{L_i}(\lambda,\mathscr{L})|_{\mathrm{GL}_2, \iota_i}\cong \Sigma_{\mathrm{GL}_2}(\nu,\mathscr{L})$ where the notation $(\cdot)|_{\ast, \star}$ means the restriction of $\cdot$ to $\ast$ via the embedding $\star$. We observe that the parabolic induction $I_{P_i}^{\mathrm{GL}_3}\left(\Sigma_{L_i}(\lambda,\mathscr{L})\right)$ fits into the exact sequence
$$
\begin{xy}
(0,0)*+{v_{P_{3-i}}^{\rm{an}}(\lambda)}="a"; (20,0)*+{\mathrm{St}_3^{\rm{an}}(\lambda)}="b";
{\ar@{-}"a";"b"};
\end{xy}\hookrightarrow I_{P_i}^{\mathrm{GL}_3}\left(\Sigma_{L_i}(\lambda,\mathscr{L})\right)\twoheadrightarrow
\begin{xy}
(0,0)*+{\overline{L}(\lambda)}="a"; (16,0)*+{v_{P_i}^{\rm{an}}(\lambda)}="b";
{\ar@{-}"a";"b"};
\end{xy}.$$
According to Proposition~5.6 of \cite{Schr11} for example, we know that
$$\mathrm{Ext}^1_{\mathrm{GL}_3(\mathbb{Q}_p),\lambda}\left(\overline{L}(\lambda), ~\mathrm{St}_3^{\rm{an}}(\lambda)\right)=0$$
and thus we can define $\Sigma_i(\lambda, \mathscr{L})$ as the unique quotient of $I_{P_i}^{\mathrm{GL}_3}\left(\Sigma_{L_i}(\lambda,\mathscr{L})\right)$ that fits into the exact sequence
$$\mathrm{St}_3^{\rm{an}}(\lambda)\hookrightarrow \Sigma_i(\lambda, \mathscr{L})\twoheadrightarrow v_{P_i}^{\rm{an}}(\lambda).$$
The constructions of $\Sigma_i(\lambda, \mathscr{L})$ above actually induce canonical isomorphisms
\begin{equation}\label{3canonical isomorphism}
\mathrm{Hom}_{\rm{cont}}\left(\mathbb{Q}_p^{\times},~E\right)\cong\mathrm{Ext}^1_{\mathbb{Q}_p^{\times}}\left(1,~1\right)\xrightarrow{\sim}\mathrm{Ext}^1_{\mathrm{GL}_3(\mathbb{Q}_p),\lambda}\left(v_{P_i}^{\rm{an}}(\lambda), ~\mathrm{St}_3^{\rm{an}}(\lambda)\right)
\end{equation}
for $i=1,2$. We denote the image of $\mathrm{log}_0$ (resp, of $\mathrm{val}_p$) in $$\mathrm{Ext}^1_{\mathrm{GL}_3(\mathbb{Q}_p),\lambda}\left(v_{P_i}^{\rm{an}}(\lambda), ~\mathrm{St}_3^{\rm{an}}(\lambda)\right)$$
by $b_{i,\mathrm{log}_0}$ (resp. by $b_{i,\mathrm{val}_p}$). We use the notation $1_T$ for the trivial character of $T(\mathbb{Q}_p)$. We use the same notation $b_{i,\mathrm{log}_0}$ and $b_{i,\mathrm{val}_p}$ for the image of $\mathrm{log}_0$ and $\mathrm{val}_p$ respectively under the embedding $$\mathrm{Ext}^1_{\mathbb{Q}_p^{\times}}\left(1,~1\right)\hookrightarrow\mathrm{Ext}^1_{T(\mathbb{Q}_p),0}\left(1_T,~1_T\right)$$
induced by the maps
$$T(\mathbb{Q}_p)\xrightarrow{p_i} T_2(\mathbb{Q}_p)\xrightarrow{(\ref{3easy map})}\mathbb{Q}_p^{\times}$$
where $p_i$ is the section of $\iota_{T,i}$ which is compatible with the projection $L_i\twoheadrightarrow\mathrm{GL}_2$. Recall the elements $c_{i,\mathrm{log}}, c_{i,\mathrm{val}}\in \mathrm{Ext}^1_{T(\mathbb{Q}_p),0}(1_T,1_T)$ constructed after (5.24) of \cite{Schr11} and observe that
\begin{equation}\label{3relation of basis}
\left\{\begin{array}{ccc}
c_{1,\mathrm{log}}=b_{1,\mathrm{log}_0}+2b_{2,\mathrm{log}_0},&c_{1,\mathrm{val}}=b_{1,\mathrm{val}_p}+2b_{2,\mathrm{val}_p}&\\
c_{2,\mathrm{log}}=2b_{1,\mathrm{log}_0}+b_{2,\mathrm{log}_0},&c_{2,\mathrm{val}}=2b_{1,\mathrm{val}_p}+b_{2,\mathrm{val}_p}&.\\
\end{array}\right.
\end{equation}
We notice that there exists canonical surjections
\begin{equation}\label{3canonical surjection for L invariant}
\mathrm{Ext}^1_{T(\mathbb{Q}_p),0}\left(1_T,~1_T\right)\twoheadrightarrow \mathrm{Ext}^1_{\mathrm{GL}_3(\mathbb{Q}_p),\lambda}\left(v_{P_i}^{\rm{an}}(\lambda), ~\mathrm{St}_3^{\rm{an}}(\lambda)\right)
\end{equation}
with kernel spanned by $\{c_{i, \mathrm{log}},~c_{i, \mathrm{val}}\}$, according to (5.70) and (5.71) of \cite{Schr11}. Therefore the relation (\ref{3relation of basis}) reduces via the surjection (\ref{3canonical surjection for L invariant}) to
\begin{equation}\label{3relation of basis prime}
c_{3-i,\mathrm{log}}=-3b_{i,\mathrm{log}_0},~c_{3-i,\mathrm{val}}=-3b_{i,\mathrm{val}_p}
\end{equation}
inside the quotient $\mathrm{Ext}^1_{\mathrm{GL}_3(\mathbb{Q}_p),\lambda}\left(v_{P_i}^{\rm{an}}(\lambda), ~\mathrm{St}_3^{\rm{an}}(\lambda)\right)$. We define $\Sigma(\lambda, \mathscr{L}_1, \mathscr{L}_2)$ as the amalgamate sum of $\Sigma_1(\lambda, \mathscr{L}_1)$ and $\Sigma_2(\lambda, \mathscr{L}_2)$ over $\mathrm{St}_3^{\rm{an}}(\lambda)$, for each $\mathscr{L}_1, \mathscr{L}_2\in E$. Consequently, $\Sigma(\lambda, \mathscr{L}_1, \mathscr{L}_2)$ has the following form
$$\begin{xy}
(0,0)*+{\mathrm{St}_3^{\rm{an}}(\lambda)}="a"; (20,6)*+{v_{P_1}^{\rm{an}}(\lambda)}="b"; (20,-6)*+{v_{P_2}^{\rm{an}}(\lambda)}="c";
{\ar@{-}"a";"b"}; {\ar@{-}"a";"c"};
\end{xy}$$
and we have
\begin{equation}\label{3normalization of notation}
\Sigma(\lambda, \mathscr{L}_1, \mathscr{L}_2)\cong \Sigma(\lambda, \mathscr{L}, \mathscr{L}^{\prime})
\end{equation}
if
\begin{equation}\label{3sign of L invariant}
\mathscr{L}_1=-\mathscr{L}^{\prime}, \mathscr{L}_2=-\mathscr{L}\in E,
\end{equation}
where $\Sigma(\lambda, \mathscr{L}, \mathscr{L}^{\prime})$ is the locally analytic representation defined in Definition~5.12 of \cite{Schr11} using the element
$$(c_{2,\mathrm{log}}+\mathscr{L}^{\prime}c_{2,\mathrm{val}},~c_{1,\mathrm{log}}+\mathscr{L}c_{1,\mathrm{val}})$$
in
$$\mathrm{Ext}^1_{\mathrm{GL}_3(\mathbb{Q}_p),\lambda}\left(v_{P_1}^{\rm{an}}(\lambda)\oplus v_{P_2}^{\rm{an}}(\lambda), ~\mathrm{St}_3^{\rm{an}}(\lambda)\right).$$
\begin{rema}\label{3rema: sign of L invariant}
The appearance of a sign in (\ref{3sign of L invariant}) is essentially due to Remark~3.1 of \cite{Ding18}, which implies that our invariants $\mathscr{L}_1$ and $\mathscr{L}_2$ can be identified with Fontaine--Mazur $\mathscr{L}$\text{-}invariants of the corresponding Galois representation via local-global compatibility.
\end{rema}

We have a canonical morphism by (5.26) of \cite{Schr11}
\begin{equation}\label{3canonical morphism of Ext2}
\kappa: \mathrm{Ext}^2_{T(\mathbb{Q}_p),0}(1_T,1_T)\rightarrow\mathrm{Ext}^2_{\mathrm{GL}_3(\mathbb{Q}_p),\lambda}\left(\overline{L}(\lambda), ~\mathrm{St}_3^{\rm{an}}(\lambda)\right).
\end{equation}
Note that we also have
$$\mathrm{Ext}^2_{T(\mathbb{Q}_p),0}(1_T,1_T)\cong \wedge^2\left(\mathrm{Ext}^1_{T(\mathbb{Q}_p),0}(1_T,1_T)\right)$$
by (5.24) of \cite{Schr11} and thus the set
$$\{b_{1,\mathrm{val}_p}\wedge b_{2,\mathrm{val}_p}, b_{1,\mathrm{log}_0}\wedge b_{2,\mathrm{val}_p}, b_{1,\mathrm{val}_p}\wedge b_{2,\mathrm{log}_0}, b_{1,\mathrm{log}_0}\wedge b_{2,\mathrm{log}_0}, b_{1,\mathrm{val}_p}\wedge b_{1,\mathrm{log}_0}, b_{2,\mathrm{val}_p}\wedge b_{2,\mathrm{log}_0}\}$$
forms a basis of $\mathrm{Ext}^2_{T(\mathbb{Q}_p),0}\left(1_T,~1_T\right)$. It follows from (5.27) of \cite{Schr11} and (\ref{3relation of basis}) that the set
$$\{\kappa(b_{1,\mathrm{val}_p}\wedge b_{2,\mathrm{val}_p}), \kappa(b_{1,\mathrm{log}_0}\wedge b_{2,\mathrm{val}_p}), \kappa(b_{1,\mathrm{val}_p}\wedge b_{2,\mathrm{log}_0}), \kappa(b_{1,\mathrm{log}_0}\wedge b_{2,\mathrm{log}_0})\}$$
forms a basis of the image of (\ref{3canonical morphism of Ext2}).

We recall the $p$\text{-}adic dilogarithm function $li_2: \mathbb{Q}_p\setminus\{0,1\}\rightarrow \mathbb{Q}_p$ defined by Coleman in \cite{Cole82} and we consider the function
$$D_{\mathscr{L}}(z):=li_2(z)+\frac{1}{2}\mathrm{log}_{\mathscr{L}}(z)\mathrm{log}_{\mathscr{L}}(1-z)$$
as in (5.34) of \cite{Schr11}. We also define
$$d(z):=\mathrm{log}_{\mathscr{L}}(1-z)\mathrm{val}_p(z)-\mathrm{log}_{\mathscr{L}}(z)\mathrm{val}_p(1-z)$$
as in (5.36) of \cite{Schr11} which is also a locally analytic function over $\mathbb{Q}_p\setminus\{0,1\}$ and is independent of the choice of $\mathscr{L}\in E$. Note by our definition that
$$D_{\mathscr{L}}-D_0=\frac{\mathscr{L}}{2}d.$$
It follows from Theorem 7.2 of \cite{Schr11} that $\{D_0,d\}$ can be identified with a basis of $$\mathrm{Ext}^2_{\mathrm{GL}_2(\mathbb{Q}_p),0}\left(1,~\mathrm{St}_2^{\rm{an}}\right)$$
( cf. (5.38) of \cite{Schr11}) which naturally embeds into $\mathrm{Ext}^2_{\mathrm{GL}_2(\mathbb{Q}_p)}\left(1,~\mathrm{St}_2^{\rm{an}}\right)$. Then the map $\iota_i: \mathrm{GL}_2\hookrightarrow L_i$ induces the isomorphisms
\begin{equation}\label{3sequence of isomorphisms}
\mathrm{Ext}^2_{\mathrm{GL}_2(\mathbb{Q}_p)}\left(1_2,~\mathrm{St}_2^{\rm{an}}\right)\xleftarrow{\sim}\mathrm{Ext}^2_{L_i(\mathbb{Q}_p),0}\left(1_{L_i},~\mathrm{St}_2^{\rm{an}}\right)\xleftarrow{\sim}\mathrm{Ext}^2_{\mathrm{GL}_3(\mathbb{Q}_p),0}\left(1_3,~I_{P_i}^{\mathrm{GL}_3}(\mathrm{St}_2^{\rm{an}})\right)
\end{equation}
where $L_i(\mathbb{Q}_p)$ acts on $\mathrm{St}_2^{\rm{an}}$ via the projection $p_i$. We abuse the notation for the composition
\begin{equation}
\iota_i: \mathrm{Ext}^2_{\mathrm{GL}_2(\mathbb{Q}_p)}\left(1_2,~\mathrm{St}_2^{\rm{an}}\right)\xleftarrow{\sim}\mathrm{Ext}^2_{\mathrm{GL}_3(\mathbb{Q}_p),0}\left(1_3,~I_{P_i}^{\mathrm{GL}_3}(\mathrm{St}_2^{\rm{an}})\right)\rightarrow\mathrm{Ext}^2_{\mathrm{GL}_3(\mathbb{Q}_p),0}\left(1_3, ~\mathrm{St}_3^{\rm{an}}\right)
\end{equation}
given by (\ref{3sequence of isomorphisms}) and the surjection
$$I_{P_i}^{\mathrm{GL}_3}(\mathrm{St}_2^{\rm{an}})\twoheadrightarrow\mathrm{St}_3^{\rm{an}}.$$
Finally there is canonical isomorphism
$$\mathrm{Ext}^2_{\mathrm{GL}_3(\mathbb{Q}_p),0}\left(1_3, ~\mathrm{St}_3^{\rm{an}}\right)\cong\mathrm{Ext}^2_{\mathrm{GL}_3(\mathbb{Q}_p),\lambda}\left(\overline{L}(\lambda), ~\mathrm{St}_3^{\rm{an}}(\lambda)\right)$$
by (5.20) of \cite{Schr11}.
\begin{lemm}\label{3lemm: ext2.1}
We have
$$\mathrm{dim}_E\mathrm{Ext}^2_{\mathrm{GL}_3(\mathbb{Q}_p),\lambda}\left(\overline{L}(\lambda), ~\mathrm{St}_3^{\rm{an}}(\lambda)\right)=5$$
and the set
$$\{\kappa(b_{1,\mathrm{val}_p}\wedge b_{2,\mathrm{val}_p}), \kappa(b_{1,\mathrm{log}_0}\wedge b_{2,\mathrm{val}_p}), \kappa(b_{1,\mathrm{val}_p}\wedge b_{2,\mathrm{log}_0}), \kappa(b_{1,\mathrm{log}_0}\wedge b_{2,\mathrm{log}_0}), \iota_i(D_0)\}$$
forms a basis of $\mathrm{Ext}^2_{\mathrm{GL}_3(\mathbb{Q}_p),\lambda}\left(\overline{L}(\lambda), ~\mathrm{St}_3^{\rm{an}}(\lambda)\right)$ for $i=1,2$.
\end{lemm}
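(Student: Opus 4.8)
The plan is to combine three things: the description of the $4$-dimensional image of $\kappa$ in (\ref{3canonical morphism of Ext2}) obtained above; the dimension count $\dim_E\mathrm{Ext}^2_{\mathrm{GL}_3(\mathbb{Q}_p),\lambda}(\overline{L}(\lambda),\mathrm{St}_3^{\mathrm{an}}(\lambda))=5$; and the fact that $\iota_i(D_0)$ lies outside the image of $\kappa$. Granting these, the five displayed classes are linearly independent, hence (being five elements of a five-dimensional space) form a basis, for each $i\in\{1,2\}$.

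For the dimension I would work with the four-term exact sequence (the generalized Steinberg resolution)
\begin{equation*}
0\longrightarrow\overline{L}(\lambda)\longrightarrow I^{\mathrm{GL}_3}_{P_1}(\overline{L}_1(\lambda))\oplus I^{\mathrm{GL}_3}_{P_2}(\overline{L}_2(\lambda))\longrightarrow I^{\mathrm{GL}_3}_{B}(\delta_{T,\lambda})\longrightarrow\mathrm{St}_3^{\mathrm{an}}(\lambda)\longrightarrow 0,
\end{equation*}
in which the two parabolic inductions $I^{\mathrm{GL}_3}_{P_i}(\overline{L}_i(\lambda))$ inject into $I^{\mathrm{GL}_3}_{B}(\delta_{T,\lambda})$ and meet in $\overline{L}(\lambda)$. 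This identifies $\mathrm{St}_3^{\mathrm{an}}(\lambda)$ with the displayed complex placed in degrees $-2,-1,0$, and gives a hypercohomology spectral sequence $E_1^{p,q}=\mathrm{Ext}^q_{\mathrm{GL}_3(\mathbb{Q}_p),\lambda}(\overline{L}(\lambda),C^p)\Rightarrow\mathrm{Ext}^{p+q}_{\mathrm{GL}_3(\mathbb{Q}_p),\lambda}(\overline{L}(\lambda),\mathrm{St}_3^{\mathrm{an}}(\lambda))$. By Lemma~\ref{3lemm: cohomology devissage} and Kostant's theorem for $H_\bullet(N,\overline{L}(\lambda))$ and $H_\bullet(N_i,\overline{L}(\lambda))$, each column collapses to a group of the form $\mathrm{Ext}^\bullet_{T(\mathbb{Q}_p),0}(1_T,1_T)\cong\wedge^\bullet E^4$, $\mathrm{Ext}^\bullet_{L_i(\mathbb{Q}_p),0}(1_{L_i},1_{L_i})$ or $\mathrm{Ext}^\bullet_{\mathrm{GL}_3(\mathbb{Q}_p),0}(1_3,1_3)$: indeed the higher Kostant contributions vanish because $\mathrm{Ext}^\bullet_{L_i(\mathbb{Q}_p),\lambda}(\overline{L}_i(w\cdot\lambda),\overline{L}_i(\lambda))=0$ for $w\neq 1$, the two irreducible algebraic representations then lying in distinct blocks for $L_i$. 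Plugging in the (known, cf.\ Section~5 of \cite{Schr11}) values of these three cohomology rings modulo centre and computing the two $d_1$-differentials — restriction of characters and invariant forms along $T\hookrightarrow L_i\hookrightarrow\mathrm{GL}_3$ — one finds $E_\infty^{0,2}=E^4$, $E_\infty^{-1,3}=E$ and $E_\infty^{-2,4}=0$, hence $\dim_E\mathrm{Ext}^2_{\mathrm{GL}_3(\mathbb{Q}_p),\lambda}(\overline{L}(\lambda),\mathrm{St}_3^{\mathrm{an}}(\lambda))=5$ (this recovers along the way the vanishing $\mathrm{Ext}^1_{\mathrm{GL}_3(\mathbb{Q}_p),\lambda}(\overline{L}(\lambda),\mathrm{St}_3^{\mathrm{an}}(\lambda))=0$ already used). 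One may instead simply cite the corresponding computation in Section~5 of \cite{Schr11}.

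It then remains to show $\iota_i(D_0)\notin\mathrm{image}(\kappa)$. The map $\kappa$ factors through $\mathrm{Ext}^2_{T(\mathbb{Q}_p),0}(1_T,1_T)\cong\wedge^2\mathrm{Ext}^1_{T(\mathbb{Q}_p),0}(1_T,1_T)$ and is compatible with cup products, so every element of the image of $\kappa$ is a sum of cup products of degree-one classes. Tracing such a decomposable class through the isomorphisms (\ref{3sequence of isomorphisms}) and then restricting along $\iota_i\colon\mathrm{GL}_2\hookrightarrow L_i$, I would check that it lands in the line $E\cdot d\subseteq\mathrm{Ext}^2_{\mathrm{GL}_2(\mathbb{Q}_p),0}(1_2,\mathrm{St}_2^{\mathrm{an}})$: this is forced because $d(z)=\mathrm{log}_{\mathscr{L}}(1-z)\mathrm{val}_p(z)-\mathrm{log}_{\mathscr{L}}(z)\mathrm{val}_p(1-z)$ is manifestly built from products of the degree-one classes $\mathrm{log}_0,\mathrm{val}_p$, whereas $D_0(z)=li_2(z)+\tfrac12\mathrm{log}_0(z)\mathrm{log}_0(1-z)$ involves the genuinely indecomposable Coleman dilogarithm $li_2$. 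Making this precise by means of (5.27), (5.38) and (5.70)--(5.71) of \cite{Schr11} yields $\mathrm{image}(\kappa)\cap\iota_i\big(\mathrm{Ext}^2_{\mathrm{GL}_2(\mathbb{Q}_p),0}(1_2,\mathrm{St}_2^{\mathrm{an}})\big)=\iota_i(E\cdot d)$, which is one-dimensional; hence $\iota_i(D_0)\notin\mathrm{image}(\kappa)$, and moreover $\mathrm{image}(\kappa)+\iota_i\big(\mathrm{Ext}^2_{\mathrm{GL}_2(\mathbb{Q}_p),0}(1_2,\mathrm{St}_2^{\mathrm{an}})\big)$ has dimension $4+2-1=5$, so it exhausts $\mathrm{Ext}^2_{\mathrm{GL}_3(\mathbb{Q}_p),\lambda}(\overline{L}(\lambda),\mathrm{St}_3^{\mathrm{an}}(\lambda))$ and both conclusions of the lemma follow.

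I expect this last step to be the main obstacle: keeping careful track of the compatibility of $\kappa$ and of the maps in (\ref{3sequence of isomorphisms}) with cup products, and verifying that the $li_2$-part of $D_0$ cannot be cancelled by any decomposable class — this is exactly where the $p$-adic dilogarithm genuinely enters and where the bookkeeping of \cite{Schr11} has to be used with care. The dimension count of the second paragraph, by contrast, is routine once the cross-block $\mathrm{Ext}$-vanishing and the three relevant cohomology rings are in hand.
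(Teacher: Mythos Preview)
Your proposal is correct in outline and would work, but it is far more elaborate than what the paper does. The paper's proof is a single sentence: ``This follows directly from (5.57) of \cite{Schr11} and (\ref{3relation of basis}).'' That is, (5.57) of \cite{Schr11} already contains the dimension statement and an explicit basis of $\mathrm{Ext}^2_{\mathrm{GL}_3(\mathbb{Q}_p),\lambda}(\overline{L}(\lambda),\mathrm{St}_3^{\mathrm{an}}(\lambda))$ written in Schraen's $c$-coordinates $\{c_{i,\mathrm{log}},c_{i,\mathrm{val}},c_0\}$; the only thing the paper adds is the linear change of basis (\ref{3relation of basis}) from the $c_i$'s to the $b_i$'s, which immediately converts Schraen's basis into the displayed one.

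What you have written is essentially a reconstruction of Schraen's computation itself: the spectral sequence you set up from the Steinberg resolution is exactly how \cite{Schr11} computes these $\mathrm{Ext}$-groups in Section~5, and your argument that $\iota_i(D_0)$ lies outside $\mathrm{image}(\kappa)$ is the content of (5.55)--(5.57) there (the decomposable/indecomposable dichotomy you describe is precisely why $c_0$ in (\ref{3definition of basis}) is defined by subtracting a cup-product correction from $\iota_1(D_0)$). You even note this yourself when you write ``One may instead simply cite the corresponding computation in Section~5 of \cite{Schr11}.'' That citation \emph{is} the paper's proof. So your approach buys self-containedness and transparency about where the dilogarithm enters, at the cost of redoing a nontrivial piece of \cite{Schr11}; the paper's approach is efficient but treats Schraen's result as a black box. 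There is no genuine gap in your argument, though your third paragraph is more a sketch than a proof and would indeed require the bookkeeping you flag.
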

\begin{proof}
This follows directly from (5.57) of \cite{Schr11} and (\ref{3relation of basis}).
\end{proof}

\begin{lemm}\label{3lemm: ext2 prime}
There exists $\gamma\in E^{\times}$ such that
$$\iota_1(d)=\iota_2(d)=\gamma\left(\kappa(b_{1,\mathrm{log}_0}\wedge b_{2,\mathrm{val}_p}+b_{1,\mathrm{val}_p}\wedge b_{2,\mathrm{log}_0}\right).$$
\end{lemm}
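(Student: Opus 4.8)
The plan is to exploit the two natural maps $\iota_1,\iota_2$ together with the fact, established in Lemma~\ref{3lemm: ext2.1}, that the image of $\kappa$ has codimension one in $\mathrm{Ext}^2_{\mathrm{GL}_3(\mathbb{Q}_p),\lambda}\left(\overline{L}(\lambda),~\mathrm{St}_3^{\rm{an}}(\lambda)\right)$, with complement spanned by $\iota_i(D_0)$ for either $i$. The key structural input is that the function $d(z)$ is, by its very definition, a bilinear combination of $\mathrm{log}_{\mathscr{L}}$ and $\mathrm{val}_p$: namely $d=\mathrm{log}_{\mathscr{L}}(1-z)\mathrm{val}_p(z)-\mathrm{log}_{\mathscr{L}}(z)\mathrm{val}_p(1-z)$, and this is independent of $\mathscr{L}$, so we may take $\mathscr{L}=0$ and write $d=\mathrm{log}_0(1-z)\mathrm{val}_p(z)-\mathrm{log}_0(z)\mathrm{val}_p(1-z)$. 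Under the identification of $\mathrm{Ext}^2_{\mathrm{GL}_2(\mathbb{Q}_p),0}(1,\mathrm{St}_2^{\rm{an}})$ with an explicit space of such "symbol" expressions (Theorem~7.2 of \cite{Schr11} and (5.38) of \cite{Schr11}), $d$ corresponds to an element lying in the sub-"cup-product" part, i.e. in the image of a wedge of $\mathrm{Ext}^1$-classes, as opposed to $D_0$ which genuinely involves the dilogarithm and is the piece \emph{outside} that image. Concretely, $d$ pulls back to a class in the image of the cup product $\wedge^2\mathrm{Ext}^1_{T_2(\mathbb{Q}_p)}(1,1)$, and the antisymmetry in $z\leftrightarrow 1-z$ (which sends $\mathrm{log}_0(z)\mapsto\mathrm{log}_0(1-z)$ etc.) forces it to be the symmetric-looking combination $\mathrm{log}_0\wedge\mathrm{val}_p$ with respect to the two factors $z$ and $1-z$.

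First I would make precise, via the maps in (\ref{3sequence of isomorphisms}) and the composition defining $\iota_i$, how $\iota_i(d)$ sits inside $\mathrm{Ext}^2_{\mathrm{GL}_3(\mathbb{Q}_p),\lambda}\left(\overline{L}(\lambda),~\mathrm{St}_3^{\rm{an}}(\lambda)\right)$: since $d$ lies in the image of the cup product on $\mathrm{Ext}^1_{T_2(\mathbb{Q}_p)}(1,1)$, its image under $\iota_i$ lies in the image of $\kappa$ (this is exactly the compatibility between the cup-product structures on the $T_2$-side and the $T$-side encoded in (5.26)--(5.27) of \cite{Schr11}, which underlies the computation in Lemma~\ref{3lemm: ext2.1}). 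So $\iota_i(d)=\kappa(\omega_i)$ for some $\omega_i\in\mathrm{Ext}^2_{T(\mathbb{Q}_p),0}(1_T,1_T)$, well-defined modulo the kernel of $\kappa$. Next I would track the embedding $T_2\hookrightarrow T$ used inside $\iota_i$, i.e. the composition $T_2\hookrightarrow L_i\twoheadrightarrow\mathrm{GL}_2$ and the map (\ref{3easy map}): the two $\mathrm{GL}_2$-variables "$z$" and "$1-z$" in the symbol for $d$ correspond, after this embedding and the relation (\ref{3relation of basis}), to the two coordinates $b_{1,\bullet}$ and $b_{2,\bullet}$ on the $T$-side. Feeding $d=\mathrm{log}_0(1-z)\wedge\mathrm{val}_p(z)-\mathrm{log}_0(z)\wedge\mathrm{val}_p(1-z)$ through this dictionary, and using (\ref{3relation of basis}) to express the $c$'s in terms of the $b$'s, I expect to land on $\kappa$ applied to $\gamma(b_{1,\mathrm{log}_0}\wedge b_{2,\mathrm{val}_p}+b_{1,\mathrm{val}_p}\wedge b_{2,\mathrm{log}_0})$ for a nonzero constant $\gamma$, with the \emph{same} $\gamma$ and the \emph{same} expression for $i=1$ and $i=2$ precisely because the symbol for $d$ is symmetric under interchanging the two variables while the Weyl-group element $s_1s_2s_1$ swaps the roles of $P_1$ and $P_2$.

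The nonvanishing of $\gamma$ (so that $\gamma\in E^{\times}$) I would argue from the fact that $\{D_0,d\}$ is a \emph{basis} of $\mathrm{Ext}^2_{\mathrm{GL}_2(\mathbb{Q}_p),0}(1,\mathrm{St}_2^{\rm{an}})$ (Theorem~7.2 of \cite{Schr11}): if $\iota_i(d)$ were zero then, combined with the injectivity statements built into (\ref{3sequence of isomorphisms}) and the fact that $\iota_i(D_0)$ is part of a basis in Lemma~\ref{3lemm: ext2.1}, one would get that the whole rank-two space $\mathrm{Ext}^2_{\mathrm{GL}_2(\mathbb{Q}_p),0}(1,\mathrm{St}_2^{\rm{an}})$ maps to a line, contradicting dimension count (indeed the dimension $5=4+1$ in Lemma~\ref{3lemm: ext2.1} exactly accounts for $\mathrm{im}(\kappa)$ having dimension $4$ and the cup-product part containing $d$ being \emph{already inside} that $4$-dimensional image, with the one extra dimension coming from $D_0$). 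The equality $\iota_1(d)=\iota_2(d)$ then also follows formally once both are written as $\kappa$ of the same explicit element. I expect the main obstacle to be the bookkeeping in the middle step: chasing the identification (\ref{3sequence of isomorphisms}) together with the section $p_i$ and the map (\ref{3easy map}) carefully enough to see that the "$z$" and "$1-z$" variables really do go to $b_{1,\bullet}$ and $b_{2,\bullet}$ respectively (up to the scalars in (\ref{3relation of basis prime})) and that no spurious $b_{i,\mathrm{val}_p}\wedge b_{i,\mathrm{log}_0}$ terms survive in the quotient — in other words, verifying that the sign/symmetry pattern of Coleman's functional equation for $d$ matches the combinatorics of the parabolics $P_1,P_2$. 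This is essentially the content of (5.57) of \cite{Schr11} and (\ref{3relation of basis}), so the proof should reduce to citing those and assembling the pieces.
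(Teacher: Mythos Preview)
Your proposal is correct in spirit and would ultimately work, but it is considerably more elaborate than what the paper does. The paper's proof is a two-line citation: it invokes Lemma~5.8 of \cite{Schr11}, which already contains the full statement $\iota_1(d)=\iota_2(d)=\alpha\,\kappa(c_{1,\mathrm{log}}\wedge c_{2,\mathrm{val}}+c_{1,\mathrm{val}}\wedge c_{2,\mathrm{log}})$ in the $c$-basis (including the equality of the two sides and the nonvanishing of $\alpha$), and then simply applies the change of basis (\ref{3relation of basis}) to rewrite this in the $b$-basis. The linear algebra gives
\[
c_{1,\mathrm{log}}\wedge c_{2,\mathrm{val}}+c_{1,\mathrm{val}}\wedge c_{2,\mathrm{log}}=-3\bigl(b_{1,\mathrm{log}_0}\wedge b_{2,\mathrm{val}_p}+b_{1,\mathrm{val}_p}\wedge b_{2,\mathrm{log}_0}\bigr)
\]
modulo $\ker\kappa$, whence $\gamma=-3\alpha$.

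What you are doing instead is essentially sketching a reproof of Lemma~5.8 of \cite{Schr11}: arguing that $d$ lies in the cup-product image, tracking the embedding of $T_2$ into $T$ to identify the resulting wedge explicitly, and deducing $\iota_1(d)=\iota_2(d)$ from a symmetry argument. These steps are all reasonable, and you yourself concede at the end that it ``should reduce to citing those and assembling the pieces''; but the specific reference that packages them together is Lemma~5.8 rather than (5.57). Your symmetry justification for $\iota_1(d)=\iota_2(d)$ and your identification of the ``$z$'' and ``$1-z$'' variables with $b_{1,\bullet}$ and $b_{2,\bullet}$ are the parts that would require real work to make rigorous, and that work is exactly what is recorded in Lemma~5.8 of \cite{Schr11}. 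So there is no gap in your approach, just unnecessary effort: cite Lemma~5.8 directly and then do the one-line change of basis.
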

\begin{proof}
This follows directly from Lemma~5.8 of \cite{Schr11} and (\ref{3relation of basis}) if we take
$$\gamma:=-3\alpha$$
where $\alpha\in E^{\times}$ is the constant in the statement of Lemma~5.8 of \cite{Schr11}.
\end{proof}
\begin{lemm}\label{3lemm: ext2}
We have
$$\mathrm{dim}_E\mathrm{Ext}^1_{\mathrm{GL}_3(\mathbb{Q}_p),\lambda}\left(\overline{L}(\lambda), ~\Sigma(\lambda, \mathscr{L}_1, \mathscr{L}_2)\right)=1\mbox{ and }\mathrm{dim}_E\mathrm{Ext}^2_{\mathrm{GL}_3(\mathbb{Q}_p),\lambda}\left(\overline{L}(\lambda), ~\Sigma(\lambda, \mathscr{L}_1, \mathscr{L}_2)\right)=2.$$
Moreover, the image of
$$\{\kappa(b_{1,\mathrm{val}_p}\wedge b_{2,\mathrm{val}_p}), \iota_i(D_0)\}$$
under
$$\mathrm{Ext}^2_{\mathrm{GL}_3(\mathbb{Q}_p),\lambda}\left(\overline{L}(\lambda), ~\mathrm{St}_3^{\rm{an}}(\lambda)\right)\rightarrow\mathrm{Ext}^2_{\mathrm{GL}_3(\mathbb{Q}_p),\lambda}\left(\overline{L}(\lambda), ~\Sigma(\lambda, \mathscr{L}_1, \mathscr{L}_2)\right)$$
forms a basis of $\mathrm{Ext}^2_{\mathrm{GL}_3(\mathbb{Q}_p),\lambda}\left(\overline{L}(\lambda), ~\Sigma(\lambda, \mathscr{L}_1, \mathscr{L}_2)\right)$ for $i=1$ or $2$.
\end{lemm}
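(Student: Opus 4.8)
The plan is to peel off the quotient $v_{P_1}^{\rm{an}}(\lambda)\oplus v_{P_2}^{\rm{an}}(\lambda)$ of $\Sigma(\lambda,\mathscr{L}_1,\mathscr{L}_2)$ by a long exact sequence. From the construction of $\Sigma(\lambda,\mathscr{L}_1,\mathscr{L}_2)$ as the amalgamated sum of $\Sigma_1(\lambda,\mathscr{L}_1)$ and $\Sigma_2(\lambda,\mathscr{L}_2)$ over $\mathrm{St}_3^{\rm{an}}(\lambda)$ one has the short exact sequence
$$0 \longrightarrow \mathrm{St}_3^{\rm{an}}(\lambda) \longrightarrow \Sigma(\lambda, \mathscr{L}_1, \mathscr{L}_2) \longrightarrow v_{P_1}^{\rm{an}}(\lambda)\oplus v_{P_2}^{\rm{an}}(\lambda) \longrightarrow 0,$$
to which I would apply $\mathrm{Ext}^{\bullet}_{\mathrm{GL}_3(\mathbb{Q}_p),\lambda}(\overline{L}(\lambda),-)$. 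The inputs are: $\mathrm{Hom}_{\mathrm{GL}_3(\mathbb{Q}_p),\lambda}(\overline{L}(\lambda),-)$ vanishes on $\mathrm{St}_3^{\rm{an}}(\lambda)$ and on each $v_{P_i}^{\rm{an}}(\lambda)$, since their socles---$\overline{L}(\lambda)\otimes_E\mathrm{St}_3^{\infty}$ and $\overline{L}(\lambda)\otimes_Ev_{P_i}^{\infty}$ by Lemma~\ref{3lemm: structure of St}---are distinct from $\overline{L}(\lambda)$; $\mathrm{Ext}^1_{\mathrm{GL}_3(\mathbb{Q}_p),\lambda}(\overline{L}(\lambda),\mathrm{St}_3^{\rm{an}}(\lambda))=0$ by Proposition~5.6 of \cite{Schr11}; and $\dim_E\mathrm{Ext}^2_{\mathrm{GL}_3(\mathbb{Q}_p),\lambda}(\overline{L}(\lambda),\mathrm{St}_3^{\rm{an}}(\lambda))=5$ with the explicit basis of Lemma~\ref{3lemm: ext2.1}. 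The long exact sequence then identifies $\mathrm{Ext}^1_{\mathrm{GL}_3(\mathbb{Q}_p),\lambda}(\overline{L}(\lambda),\Sigma(\lambda,\mathscr{L}_1,\mathscr{L}_2))$ with $\ker\partial$ and presents $\mathrm{Ext}^2_{\mathrm{GL}_3(\mathbb{Q}_p),\lambda}(\overline{L}(\lambda),\Sigma(\lambda,\mathscr{L}_1,\mathscr{L}_2))$ as an extension of a subspace of $\bigoplus_{i}\mathrm{Ext}^2_{\mathrm{GL}_3(\mathbb{Q}_p),\lambda}(\overline{L}(\lambda),v_{P_i}^{\rm{an}}(\lambda))$ by $\operatorname{coker}\partial$, where $\partial\colon\bigoplus_{i}\mathrm{Ext}^1_{\mathrm{GL}_3(\mathbb{Q}_p),\lambda}(\overline{L}(\lambda),v_{P_i}^{\rm{an}}(\lambda))\to\mathrm{Ext}^2_{\mathrm{GL}_3(\mathbb{Q}_p),\lambda}(\overline{L}(\lambda),\mathrm{St}_3^{\rm{an}}(\lambda))$ is the connecting homomorphism. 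Everything thus reduces to understanding $\partial$.

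The second step is to record the groups $\mathrm{Ext}^{\bullet}_{\mathrm{GL}_3(\mathbb{Q}_p),\lambda}(\overline{L}(\lambda),v_{P_i}^{\rm{an}}(\lambda))$, which one extracts from the presentation $\overline{L}(\lambda)\hookrightarrow I_{P_i}^{\mathrm{GL}_3}(\overline{L}_i(\lambda))\twoheadrightarrow v_{P_i}^{\rm{an}}(\lambda)$ and the spectral sequence of Lemma~\ref{3lemm: cohomology devissage} (this is in substance done in \cite{Schr11}): one finds $\dim_E\mathrm{Ext}^1_{\mathrm{GL}_3(\mathbb{Q}_p),\lambda}(\overline{L}(\lambda),v_{P_i}^{\rm{an}}(\lambda))=2$; that $\mathrm{Ext}^2_{\mathrm{GL}_3(\mathbb{Q}_p),\lambda}(\overline{L}(\lambda),v_{P_i}^{\rm{an}}(\lambda))$ injects under Yoneda composition with the class defining $\Sigma_i(\lambda,\mathscr{L}_i)$ into $\mathrm{Ext}^3_{\mathrm{GL}_3(\mathbb{Q}_p),\lambda}(\overline{L}(\lambda),\mathrm{St}_3^{\rm{an}}(\lambda))$, so that the subspace of the $\mathrm{Ext}^2$\text{-}term above is zero and $\mathrm{Ext}^2_{\mathrm{GL}_3(\mathbb{Q}_p),\lambda}(\overline{L}(\lambda),\Sigma(\lambda,\mathscr{L}_1,\mathscr{L}_2))=\operatorname{coker}\partial$; and that $\operatorname{im}\partial\subseteq\operatorname{im}\kappa$ for the morphism $\kappa$ of (\ref{3canonical morphism of Ext2}), because $\partial$ is built out of torus-level classes, as explained below. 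Granting this, both dimension assertions reduce to the single statement $\dim_E\operatorname{im}\partial=3$; and since $\operatorname{im}\partial\subseteq\operatorname{im}\kappa$ while $\iota_i(D_0)\notin\operatorname{im}\kappa$ by Lemma~\ref{3lemm: ext2.1}, the basis assertion reduces to $\kappa(b_{1,\mathrm{val}_p}\wedge b_{2,\mathrm{val}_p})\notin\operatorname{im}\partial$.

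The computational heart is the identification of $\partial$. On the $v_{P_i}^{\rm{an}}(\lambda)$\text{-}summand, $\partial$ is Yoneda composition with the class $e_i\in\mathrm{Ext}^1_{\mathrm{GL}_3(\mathbb{Q}_p),\lambda}(v_{P_i}^{\rm{an}}(\lambda),\mathrm{St}_3^{\rm{an}}(\lambda))$ cutting out $\Sigma_i(\lambda,\mathscr{L}_i)$; by the description of $\Sigma(\lambda,\mathscr{L}_1,\mathscr{L}_2)$ before (\ref{3normalization of notation}), the surjection (\ref{3canonical surjection for L invariant}), and the relations (\ref{3relation of basis prime}) and (\ref{3sign of L invariant}), $e_i$ is, up to a non-zero scalar, the image of $c_{3-i,\mathrm{log}}-\mathscr{L}_i\,c_{3-i,\mathrm{val}}$. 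Using Frobenius reciprocity together with the compatibility of $\kappa$ and of the canonical isomorphisms (\ref{3canonical isomorphism}) with Yoneda products, $\partial$ becomes the cup product on $\mathrm{Ext}^{\bullet}_{T(\mathbb{Q}_p),0}(1_T,1_T)$---into which both $\mathrm{Ext}^1_{\mathrm{GL}_3(\mathbb{Q}_p),\lambda}(\overline{L}(\lambda),v_{P_i}^{\rm{an}}(\lambda))$ and the classes $e_i$ lift---followed by $\kappa$. Using the explicit description of $\kappa$ on $\mathrm{Ext}^2_{T(\mathbb{Q}_p),0}(1_T,1_T)\cong\wedge^2\mathrm{Ext}^1_{T(\mathbb{Q}_p),0}(1_T,1_T)$ provided by (5.27) of \cite{Schr11} and (\ref{3relation of basis})---with Lemma~\ref{3lemm: ext2 prime} convenient for keeping track of the $\iota_i(d)$\text{-}direction---one writes $\partial$ as an explicit $4\times4$ matrix, with entries polynomial in $\mathscr{L}_1,\mathscr{L}_2$, in the coordinates $\kappa(b_{1,\mathrm{val}_p}\wedge b_{2,\mathrm{val}_p})$, $\kappa(b_{1,\mathrm{log}_0}\wedge b_{2,\mathrm{val}_p})$, $\kappa(b_{1,\mathrm{val}_p}\wedge b_{2,\mathrm{log}_0})$, $\kappa(b_{1,\mathrm{log}_0}\wedge b_{2,\mathrm{log}_0})$ of $\operatorname{im}\kappa$. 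One then checks that this matrix has identically vanishing determinant (so $\dim_E\operatorname{im}\partial\le3$) but possesses a $3\times3$ minor vanishing for no $\mathscr{L}_1,\mathscr{L}_2$ (so $\dim_E\operatorname{im}\partial=3$), and that the hyperplane $\operatorname{im}\partial$ of $\operatorname{im}\kappa$ is cut out by a linear functional whose $\kappa(b_{1,\mathrm{val}_p}\wedge b_{2,\mathrm{val}_p})$\text{-}component never vanishes; hence $\kappa(b_{1,\mathrm{val}_p}\wedge b_{2,\mathrm{val}_p})\notin\operatorname{im}\partial$, and the lemma follows.

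I expect the main obstacle to be this last step. The linear algebra itself is elementary---though the loci in $(\mathscr{L}_1,\mathscr{L}_2)$\text{-}space where the generic $3\times3$ minor degenerates require separate inspection---but the clean identification of $\partial$ with ``cup product on the torus, followed by $\kappa$'' is delicate: it rests on the functoriality, with respect to Yoneda products, of Frobenius reciprocity, of the embeddings $\iota_i$, and of the morphism $\kappa$, all of which are present in \cite{Schr11} but scattered through its Section~5. The conceptual reason this is worth tracing carefully is that $\partial$, being built entirely out of torus cohomology, has image inside the dilogarithm-free subspace $\operatorname{im}\kappa$; that is precisely why the class $\iota_i(D_0)$ coming from the $p$\text{-}adic dilogarithm survives into $\mathrm{Ext}^2_{\mathrm{GL}_3(\mathbb{Q}_p),\lambda}(\overline{L}(\lambda),\Sigma(\lambda,\mathscr{L}_1,\mathscr{L}_2))$ for every choice of $\mathscr{L}_1,\mathscr{L}_2\in E$.
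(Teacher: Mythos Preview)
Your argument is correct in outline, but it is worth noting that the paper's own proof is a one-line citation: the statement is exactly Corollary~5.17 of \cite{Schr11}, transported along the change of basis (\ref{3relation of basis}) from Schraen's $\{c_{i,\mathrm{log}},c_{i,\mathrm{val}}\}$ to the $\{b_{i,\mathrm{log}_0},b_{i,\mathrm{val}_p}\}$ used here. What you have written is, in effect, a sketch of how Schraen proves his Corollary~5.17: the long exact sequence attached to $\mathrm{St}_3^{\rm{an}}(\lambda)\hookrightarrow\Sigma(\lambda,\mathscr{L}_1,\mathscr{L}_2)\twoheadrightarrow\bigoplus_i v_{P_i}^{\rm{an}}(\lambda)$, the identification of the connecting map $\partial$ with a torus-level cup product followed by $\kappa$, and the explicit linear algebra on $\operatorname{im}\kappa$ are precisely the ingredients of \S5 of \cite{Schr11}. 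So your route is not genuinely different from the paper's; it is the paper's cited reference unpacked.

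Two small points. First, your intermediate claim that $\mathrm{Ext}^2_{\mathrm{GL}_3(\mathbb{Q}_p),\lambda}(\overline{L}(\lambda),v_{P_i}^{\rm{an}}(\lambda))$ injects into $\mathrm{Ext}^3$ is more easily dispatched by observing that this $\mathrm{Ext}^2$ vanishes outright (this is (\ref{3equation ext 6.1.2 primeprime}) later in the paper, and also follows from your presentation of $v_{P_i}^{\rm{an}}(\lambda)$ together with Lemma~\ref{3lemm: cohomology devissage}); there is no need to invoke an injection into a degree-three group you have not otherwise controlled. Second, your conceptual summary---that $\partial$ lands in $\operatorname{im}\kappa$ because it is built from torus cohomology, whence $\iota_i(D_0)$ always survives---is exactly right and is the reason the dilogarithm enters the picture; this is the content behind Schraen's proof rather than an alternative to it.
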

\begin{proof}
This follows directly from Corollary~5.17 of \cite{Schr11} and (\ref{3relation of basis}).
\end{proof}
We recall from (5.55) of \cite{Schr11} that
\begin{equation}\label{3definition of basis}
c_0:=\alpha^{-1}\iota_1(D_0)-\frac{1}{2}\kappa(c_{1,\mathrm{log}}\wedge c_{2,\mathrm{log}})
\end{equation}
where $\alpha$ is defined in Lemma~5.8 of \cite{Schr11}.
\begin{lemm}\label{3lemm: easy normalization of higher invariant}
Assume that $\mathscr{L}_3\in E$ satisfies the equality
\begin{multline}\label{3equality of higher invariant}
E\left(\iota_1(D_0)+\mathscr{L}_3\kappa(b_{1,\mathrm{val}_p}\wedge b_{2,\mathrm{val}_p})\right)=E\left(c_0+\mathscr{L}^{\prime\prime}\kappa(c_{1,\mathrm{val}}\wedge c_{2,\mathrm{val}})\right)\\
\subsetneq \mathrm{Ext}^2_{\mathrm{GL}_3(\mathbb{Q}_p),\lambda}\left(\overline{L}(\lambda), ~\Sigma(\lambda, \mathscr{L}_1, \mathscr{L}_2)\right).
\end{multline}
Then we have
$$\mathscr{L}_3=\gamma(\mathscr{L}^{\prime\prime}-\frac{1}{2}\mathscr{L}_1\mathscr{L}_2)=\gamma(\mathscr{L}^{\prime\prime}-\frac{1}{2}\mathscr{L}\mathscr{L}^{\prime}).$$
\end{lemm}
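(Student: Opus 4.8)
The plan is to evaluate both sides of the line-equality (\ref{3equality of higher invariant}) in the basis of $\mathrm{Ext}^2_{\mathrm{GL}_3(\mathbb{Q}_p),\lambda}(\overline{L}(\lambda),\Sigma(\lambda,\mathscr{L}_1,\mathscr{L}_2))$ provided by Lemma~\ref{3lemm: ext2}, namely the images of $\kappa(b_{1,\mathrm{val}_p}\wedge b_{2,\mathrm{val}_p})$ and $\iota_1(D_0)$ under the surjection $q$ induced by $\mathrm{St}_3^{\rm{an}}(\lambda)\hookrightarrow\Sigma(\lambda,\mathscr{L}_1,\mathscr{L}_2)$, and then to read off $\mathscr{L}_3$ by comparing coefficients. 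First I would rewrite the right-hand generator in terms of $\iota_1(D_0)$ and the classes $\kappa(b_{i,\ast}\wedge b_{j,\ast})$: expanding $c_{i,\mathrm{log}},c_{i,\mathrm{val}}$ by means of (\ref{3relation of basis}) and using that $\wedge$ is alternating gives $c_{1,\mathrm{log}}\wedge c_{2,\mathrm{log}}=-3\,b_{1,\mathrm{log}_0}\wedge b_{2,\mathrm{log}_0}$ and $c_{1,\mathrm{val}}\wedge c_{2,\mathrm{val}}=-3\,b_{1,\mathrm{val}_p}\wedge b_{2,\mathrm{val}_p}$, so that by (\ref{3definition of basis})
$$c_0+\mathscr{L}^{\prime\prime}\kappa(c_{1,\mathrm{val}}\wedge c_{2,\mathrm{val}})=\alpha^{-1}\iota_1(D_0)+\tfrac32\kappa(b_{1,\mathrm{log}_0}\wedge b_{2,\mathrm{log}_0})-3\mathscr{L}^{\prime\prime}\kappa(b_{1,\mathrm{val}_p}\wedge b_{2,\mathrm{val}_p}).$$

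The main step is to compute $q\bigl(\kappa(b_{1,\mathrm{log}_0}\wedge b_{2,\mathrm{log}_0})\bigr)$. For this I would use the long exact sequence attached to $\mathrm{St}_3^{\rm{an}}(\lambda)\hookrightarrow\Sigma(\lambda,\mathscr{L}_1,\mathscr{L}_2)\twoheadrightarrow v_{P_1}^{\rm{an}}(\lambda)\oplus v_{P_2}^{\rm{an}}(\lambda)$: the kernel of $q$ is the image of the connecting map, i.e.\ cup product with the extension class of $\Sigma(\lambda,\mathscr{L}_1,\mathscr{L}_2)$ in $\mathrm{Ext}^1_{\mathrm{GL}_3(\mathbb{Q}_p),\lambda}(v_{P_1}^{\rm{an}}(\lambda)\oplus v_{P_2}^{\rm{an}}(\lambda),\mathrm{St}_3^{\rm{an}}(\lambda))$, which by (\ref{3canonical isomorphism}) and $\mathrm{log}_{\mathscr{L}}=\mathrm{log}_0-\mathscr{L}\mathrm{val}_p$ equals $(b_{1,\mathrm{log}_0}-\mathscr{L}_1b_{1,\mathrm{val}_p},\,b_{2,\mathrm{log}_0}-\mathscr{L}_2b_{2,\mathrm{val}_p})$ up to nonzero scalars in each factor. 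Cupping with the relevant classes in $\mathrm{Ext}^1_{\mathrm{GL}_3(\mathbb{Q}_p),\lambda}(\overline{L}(\lambda),v_{P_i}^{\rm{an}}(\lambda))$ and using the description of these cup products into $\mathrm{Ext}^2_{\mathrm{GL}_3(\mathbb{Q}_p),\lambda}(\overline{L}(\lambda),\mathrm{St}_3^{\rm{an}}(\lambda))$ from \cite{Schr11} (around Corollary~5.17), one sees that modulo $\ker q$ one may substitute $b_{i,\mathrm{log}_0}$ by $\mathscr{L}_ib_{i,\mathrm{val}_p}$, and in particular $q(\kappa(b_{1,\mathrm{log}_0}\wedge b_{2,\mathrm{log}_0}))=\mathscr{L}_1\mathscr{L}_2\,q(\kappa(b_{1,\mathrm{val}_p}\wedge b_{2,\mathrm{val}_p}))$. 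Equivalently, this last identity can be read off directly from Schraen's computation of $\mathrm{Ext}^2_{\mathrm{GL}_3(\mathbb{Q}_p),\lambda}(\overline{L}(\lambda),\Sigma(\lambda,\mathscr{L},\mathscr{L}^{\prime}))$ after the change of basis (\ref{3relation of basis}).

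Feeding this into the display, the right-hand line of (\ref{3equality of higher invariant}) is spanned by $q(\iota_1(D_0))+\alpha\bigl(\tfrac32\mathscr{L}_1\mathscr{L}_2-3\mathscr{L}^{\prime\prime}\bigr)\,q(\kappa(b_{1,\mathrm{val}_p}\wedge b_{2,\mathrm{val}_p}))$ and the left-hand line by $q(\iota_1(D_0))+\mathscr{L}_3\,q(\kappa(b_{1,\mathrm{val}_p}\wedge b_{2,\mathrm{val}_p}))$; since these two classes form a basis of the $2$-dimensional target (Lemma~\ref{3lemm: ext2}), equality of the lines forces the coefficients of $q(\kappa(b_{1,\mathrm{val}_p}\wedge b_{2,\mathrm{val}_p}))$ to agree, whence $\mathscr{L}_3=\alpha\bigl(\tfrac32\mathscr{L}_1\mathscr{L}_2-3\mathscr{L}^{\prime\prime}\bigr)=-3\alpha\bigl(\mathscr{L}^{\prime\prime}-\tfrac12\mathscr{L}_1\mathscr{L}_2\bigr)=\gamma\bigl(\mathscr{L}^{\prime\prime}-\tfrac12\mathscr{L}_1\mathscr{L}_2\bigr)$ by Lemma~\ref{3lemm: ext2 prime} (which gives $\gamma=-3\alpha$), and $\tfrac12\mathscr{L}_1\mathscr{L}_2=\tfrac12\mathscr{L}\mathscr{L}^{\prime}$ by (\ref{3sign of L invariant}). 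The only genuinely substantial point is the middle paragraph: pinning down $\ker q$, equivalently the substitution rule $b_{i,\mathrm{log}_0}\leftrightarrow\mathscr{L}_ib_{i,\mathrm{val}_p}$ inside $\mathrm{Ext}^2$ — everything else is bookkeeping, and this structural input is essentially already contained in \cite{Schr11}.
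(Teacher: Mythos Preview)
Your proof is correct and follows essentially the same route as the paper's. The only cosmetic difference is the order of operations: the paper keeps Schraen's relation in the $c$-basis, using $\kappa(c_{1,\mathrm{log}}\wedge c_{2,\mathrm{log}})=\mathscr{L}\mathscr{L}^{\prime}\kappa(c_{1,\mathrm{val}}\wedge c_{2,\mathrm{val}})$ (from the proof of Corollary~5.17 of \cite{Schr11}) and only afterwards converts $c_{1,\mathrm{val}}\wedge c_{2,\mathrm{val}}=-3\,b_{1,\mathrm{val}_p}\wedge b_{2,\mathrm{val}_p}$, whereas you first pass to the $b$-basis and then invoke the equivalent identity $q(\kappa(b_{1,\mathrm{log}_0}\wedge b_{2,\mathrm{log}_0}))=\mathscr{L}_1\mathscr{L}_2\,q(\kappa(b_{1,\mathrm{val}_p}\wedge b_{2,\mathrm{val}_p}))$; your connecting-map sketch of this identity is a nice gloss, but as you note it is the same structural input from \cite{Schr11}.
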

\begin{proof}
All the equalities in this lemma are understood to be inside
$$\mathrm{Ext}^2_{\mathrm{GL}_3(\mathbb{Q}_p),\lambda}\left(\overline{L}(\lambda), ~\Sigma(\lambda, \mathscr{L}_1, \mathscr{L}_2)\right)$$
without causing ambiguity. It follows from our assumption (\ref{3equality of higher invariant}) that
$$\iota_1(D_0)+\mathscr{L}_3\kappa(b_{1,\mathrm{val}_p}\wedge b_{2,\mathrm{val}_p})=\alpha\left(c_0+\mathscr{L}^{\prime\prime}\kappa(c_{1,\mathrm{val}}\wedge c_{2,\mathrm{val}})\right)$$
which together with (\ref{3definition of basis}) imply that
\begin{equation}\label{3equality in ext2}
\mathscr{L}_3\kappa(b_{1,\mathrm{val}_p}\wedge b_{2,\mathrm{val}_p})=\frac{\alpha}{2}\kappa(c_{1,\mathrm{log}}\wedge c_{2,\mathrm{log}})+\alpha\mathscr{L}^{\prime\prime}\kappa(c_{1,\mathrm{val}}\wedge c_{2,\mathrm{val}}).
\end{equation}
We know that
\begin{equation}\label{3relation from simple invariant}
\kappa(c_{1,\mathrm{log}}\wedge c_{2,\mathrm{log}})=\mathscr{L}\mathscr{L}^{\prime}\kappa(c_{1,\mathrm{val}}\wedge c_{2,\mathrm{val}})
\end{equation}
from the proof of Corollary~5.17 of \cite{Schr11} and that
\begin{equation}\label{3relation of val}
\kappa(c_{1,\mathrm{val}}\wedge c_{2,\mathrm{val}})=-3\kappa(b_{1,\mathrm{val}_p}\wedge b_{2,\mathrm{val}_p})
\end{equation}
from (\ref{3relation of basis}). Therefore we finish the proof by combining (\ref{3equality in ext2}), (\ref{3relation from simple invariant}) and (\ref{3relation of val}) with (\ref{3sign of L invariant}) and the equality $\gamma=-3\alpha$ from Lemma~\ref{3lemm: ext2 prime}.
\end{proof}
\begin{rema}\label{3equality of dilog}
We emphasize that we do not know whether
$$E \iota_1(D_0)=E \iota_2(D_0)$$
in $\mathrm{Ext}^2_{\mathrm{GL}_3(\mathbb{Q}_p),\lambda}\left(\overline{L}(\lambda), ~\mathrm{St}_3^{\rm{an}}(\lambda)\right)$ or not, which is of independent interest.
\end{rema}
\section{A key result for $\mathrm{GL}_2(\mathbb{Q}_p)$}\label{3section: GL2Qp}
In this section, we are going to prove Proposition~\ref{3prop: key result} which will be a crucial ingredient for the proof of Lemma~\ref{3lemm: ext6} and Proposition~\ref{3prop: main dim}.

We use the following shorten notation
$$I(\nu):=I_{B_2}^{\mathrm{GL}_2}(\delta_{T_2,\nu}),~\widetilde{I}(\nu):=I_{B_2}^{\mathrm{GL}_2}(\delta_{T_2,\nu}\otimes_E(|\cdot|^{-1}\otimes_E|\cdot|))$$
for each weight $\nu\in X(T_2)$.
\begin{lemm}\label{3lemm: ext1}
We have
$$\mathrm{dim}_E\mathrm{Ext}^1_{\mathrm{GL}_2(\mathbb{Q}_p)}\left(\widetilde{I}(s\cdot\nu),~ \Sigma_{\mathrm{GL}_2}(\nu,\mathscr{L})\right)=1.$$
\end{lemm}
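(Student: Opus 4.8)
The plan is to compute the $\mathrm{Ext}^1$ group by relating it to extensions over the torus via the devissage machinery of Lemma~\ref{3lemm: cohomology devissage}, exploiting the fact that $\Sigma_{\mathrm{GL}_2}(\nu,\mathscr{L})$ is by construction a subrepresentation of the parabolic induction $I_{B_2}^{\mathrm{GL}_2}(V_{\mathscr{L}}\otimes_E\delta_{T_2,\nu})/\overline{L}_{\mathrm{GL}_2}(\nu)$, while $\widetilde{I}(s\cdot\nu)=I_{B_2}^{\mathrm{GL}_2}(\delta_{T_2,s\cdot\nu}\otimes_E(|\cdot|^{-1}\otimes_E|\cdot|))$ is itself a principal series. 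First I would fix the structure of the two objects in question: $\widetilde{I}(s\cdot\nu)$ has length two with the locally algebraic constituent $i_{B_2}^{\mathrm{GL}_2}$ as one piece and a locally analytic (non-algebraic) piece as the other (or is irreducible, depending on $\nu$, which I would pin down using the block decomposition implicit in Section~\ref{3subsection: main notation}), and $\Sigma_{\mathrm{GL}_2}(\nu,\mathscr{L})$ is uniserial of the form (\ref{3uniserial GL2}) with socle $\mathrm{St}^{\rm{an}}_2(\nu)$ and cosocle $\overline{L}_{\mathrm{GL}_2}(\nu)$.

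Next I would apply Lemma~\ref{3lemm: cohomology devissage} with $H=\mathrm{GL}_2(\mathbb{Q}_p)$, $P=B_2$, $L=T_2$, taking $\Pi_1=\widetilde{I}(s\cdot\nu)$ and $\Pi_2$ equal to the relevant torus character, so that $\mathrm{Ind}_{B_2}^{\mathrm{GL}_2}(\Pi_2)^{\rm{an}}$ recovers a principal series that $\Sigma_{\mathrm{GL}_2}(\nu,\mathscr{L})$ embeds into. The long exact sequence then reduces the computation to understanding $\mathrm{Hom}_{T_2}(H_0(N_{\mathrm{GL}_2},\widetilde{I}(s\cdot\nu)),-)$, $\mathrm{Ext}^1_{T_2}(H_0(N_{\mathrm{GL}_2},\widetilde{I}(s\cdot\nu)),-)$ and $\mathrm{Hom}_{T_2}(H_1(N_{\mathrm{GL}_2},\widetilde{I}(s\cdot\nu)),-)$. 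The $N$-homology of a principal series is computed by the classical Bruhat-cell / Kostant-type filtration: $H_j(N_{\mathrm{GL}_2},I_{B_2}^{\mathrm{GL}_2}(\chi))$ has a two-step filtration with graded pieces $\chi$ (twisted appropriately) and $s(\chi)$ twisted by the modulus character, one appearing in degree $0$ and one in degree $1$. Plugging these in and using $\mathrm{Ext}^i_{T_2(\mathbb{Q}_p)}(\chi_1,\chi_2)=0$ unless $\chi_1=\chi_2$ (where it equals $E$ for $i=0$, $\mathrm{Hom}_{\rm cont}(\mathbb{Q}_p^\times\times\mathbb{Q}_p^\times,E)$ which is $2$-dimensional for $i=1$), a bookkeeping of which torus characters actually match yields the count.

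The main obstacle I expect is the careful matching of central/torus characters and the twists by the modulus character $\delta_{B_2}$ coming from the $N$-homology computation — one must check that exactly one of the graded pieces of $H_\bullet(N_{\mathrm{GL}_2},\widetilde{I}(s\cdot\nu))$ (in the appropriate homological degree) matches the torus character occurring in $\Sigma_{\mathrm{GL}_2}(\nu,\mathscr{L})$'s defining induction, and that the contributions in degrees forcing $\mathrm{Ext}^2$ or extra $\mathrm{Hom}$'s vanish so that the long exact sequence collapses to give exactly $1$. A secondary subtlety is that $\Sigma_{\mathrm{GL}_2}(\nu,\mathscr{L})$ is only a subrepresentation, not the full quotient of the induction, so I would use the short exact sequence $\mathrm{St}^{\rm{an}}_2(\nu)\hookrightarrow \Sigma_{\mathrm{GL}_2}(\nu,\mathscr{L})\twoheadrightarrow\overline{L}_{\mathrm{GL}_2}(\nu)$ together with the known vanishing $\mathrm{Ext}^i_{\mathrm{GL}_2(\mathbb{Q}_p)}(\widetilde{I}(s\cdot\nu),\overline{L}_{\mathrm{GL}_2}(\nu))$ (the target being finite-dimensional algebraic, its higher Ext against a principal series with non-matching infinitesimal or central data vanishes, which can be extracted from the analogue of Proposition~\ref{3prop: key result}'s circle of ideas or directly from Orlik--Strauch theory) to transfer the computation from $\mathrm{St}^{\rm{an}}_2(\nu)$ — where it is a direct devissage as above — to $\Sigma_{\mathrm{GL}_2}(\nu,\mathscr{L})$ itself. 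Finally, non-vanishing (that the dimension is at least one) is seen concretely: the inclusion $\Sigma_{\mathrm{GL}_2}(\nu,\mathscr{L})\hookrightarrow I_{B_2}^{\mathrm{GL}_2}(V_{\mathscr{L}}\otimes_E\delta_{T_2,\nu})/\overline{L}_{\mathrm{GL}_2}(\nu)$ and the self-extension (\ref{3non split self extension}) furnish an explicit non-split extension of $\widetilde{I}(s\cdot\nu)$ by $\Sigma_{\mathrm{GL}_2}(\nu,\mathscr{L})$ after pulling back along an appropriate map, so the bound is sharp.
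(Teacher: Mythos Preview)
Your d\'evissage is set up backwards. You propose to use the short exact sequence $\mathrm{St}^{\rm{an}}_2(\nu)\hookrightarrow \Sigma_{\mathrm{GL}_2}(\nu,\mathscr{L})\twoheadrightarrow\overline{L}_{\mathrm{GL}_2}(\nu)$ and then claim that $\mathrm{Ext}^i_{\mathrm{GL}_2(\mathbb{Q}_p)}(\widetilde{I}(s\cdot\nu),\overline{L}_{\mathrm{GL}_2}(\nu))$ vanishes because of ``non-matching infinitesimal or central data'', so that the whole contribution comes from $\mathrm{St}_2^{\rm{an}}(\nu)$. Both assertions are false. The smooth twist $|\cdot|^{-1}\otimes_E|\cdot|$ does not change the infinitesimal character, and since the dot action of $s$ preserves $\nu_1+\nu_2$ the central characters of $\widetilde{I}(s\cdot\nu)$ and $\overline{L}_{\mathrm{GL}_2}(\nu)$ coincide. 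In fact (see the proof of Theorem~3.14 of \cite{BD18}) one has
\[
\mathrm{dim}_E\mathrm{Ext}^1_{\mathrm{GL}_2(\mathbb{Q}_p)}\bigl(\widetilde{I}(s\cdot\nu),\overline{L}_{\mathrm{GL}_2}(\nu)\bigr)=1,
\]
while it is the extensions against the Steinberg piece that vanish:
\[
\mathrm{Ext}^k_{\mathrm{GL}_2(\mathbb{Q}_p)}\bigl(\widetilde{I}(s\cdot\nu),\,\mathrm{St}_2^{\rm{an}}(\nu)\bigr)=0\quad(k=1,2).
\]
So if you had actually carried out the $N_{\mathrm{GL}_2}$-homology computation you sketch for $\mathrm{Ext}^1(\widetilde{I}(s\cdot\nu),\mathrm{St}_2^{\rm{an}}(\nu))$, you would have obtained $0$, and your argument would yield $\mathrm{dim}_E\mathrm{Ext}^1=0$ rather than $1$.

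The paper uses exactly the same short exact sequence you wrote down, but with the roles reversed: the vanishing of $\mathrm{Ext}^1$ and $\mathrm{Ext}^2$ against $\mathrm{St}_2^{\rm{an}}(\nu)$ forces the long exact sequence to give an isomorphism $\mathrm{Ext}^1(\widetilde{I}(s\cdot\nu),\Sigma_{\mathrm{GL}_2}(\nu,\mathscr{L}))\xrightarrow{\sim}\mathrm{Ext}^1(\widetilde{I}(s\cdot\nu),\overline{L}_{\mathrm{GL}_2}(\nu))$, which is one-dimensional. Your non-vanishing argument at the end is also off target: the self-extension (\ref{3non split self extension}) is an extension of $I(\nu)$ by $I(\nu)$, not of $\widetilde{I}(s\cdot\nu)$ by anything, so pulling back along it does not directly produce a class in the group you want.
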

\begin{proof}
This is essentially contained in the proof of Theorem~3.14 of \cite{BD18}. In fact, we know that
$$
\begin{array}{cccc}
\mathrm{Ext}^1_{\mathrm{GL}_2(\mathbb{Q}_p)}\left(\widetilde{I}(s\cdot\nu),~\begin{xy}
(0,0)*+{\overline{L}_{\mathrm{GL}_2}(\nu)\otimes_E\mathrm{St}_2^{\infty}}="a"; (26,0)*+{I(s\cdot\nu)}="b";
{\ar@{-}"a";"b"};
\end{xy}\right)&=&0&\\
\mathrm{Ext}^2_{\mathrm{GL}_2(\mathbb{Q}_p)}\left(\widetilde{I}(s\cdot\nu),~\begin{xy}
(0,0)*+{\overline{L}_{\mathrm{GL}_2}(\nu)\otimes_E\mathrm{St}_2^{\infty}}="a"; (26,0)*+{I(s\cdot\nu)}="b";
{\ar@{-}"a";"b"};
\end{xy}\right)&=&0&
\end{array}
$$
and
$$\mathrm{dim}_E\mathrm{Ext}^1_{\mathrm{GL}_2(\mathbb{Q}_p)}(\widetilde{I}(s\cdot\nu),~\overline{L}_{\mathrm{GL}_2}(\nu))=1$$
which finish the proof by a simple devissage induced by the short exact sequence
$$\left(\begin{xy}
(0,0)*+{\overline{L}_{\mathrm{GL}_2}(\nu)\otimes_E\mathrm{St}_2^{\infty}}="a"; (26,0)*+{I(s\cdot\nu)}="b";
{\ar@{-}"a";"b"};
\end{xy}\right)\hookrightarrow \Sigma_{\mathrm{GL}_2}(\nu,\mathscr{L})\twoheadrightarrow \overline{L}_{\mathrm{GL}_2}(\nu).$$
\end{proof}

We fix a split $p$-adic reductive group $H$ and have a natural embedding
$$U(\mathfrak{h})\hookrightarrow D(H, E)_{\{1\}}\hookrightarrow D(H, E)$$
where $D(H, E)_{\{1\}}$ is the closed subalgebra of $D(H, E)$ consisting of distributions supported at the identity element ( cf. \cite{Koh07}). The embedding above induces another embedding
\begin{equation}\label{3center of distribution}
Z(U(\mathfrak{h}))\hookrightarrow Z(D(H, E))
\end{equation}
by the main result of \cite{Koh07} where $Z(\cdot)$ is the notation for the center of a non-commutative algebra. We say that $\Pi\in\mathrm{Rep}^{\rm{la}}_{\mathrm{GL}_2(\mathbb{Q}_p), E}$ has an infinitesimal character if $Z(U(\mathfrak{h}))$ acts on $\Pi^{\prime }$ via a character.
\begin{lemm}\label{3lemm: existence of inf char}
If $V, W\in\mathrm{Rep}^{\rm{la}}_{H, E}$ have both the same central character and the same infinitesimal character and satisfy
$$\mathrm{Hom}_{H}\left(V, ~W\right)=0,$$
then any non-split extension of the form $\begin{xy}
(0,0)*+{W}="a"; (10,0)*+{V}="b";
{\ar@{-}"a";"b"};
\end{xy}$ has both the same central character and the same infinitesimal character as the one for $V$ and $W$.
\end{lemm}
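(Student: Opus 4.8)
The plan is to prove the slightly stronger statement that \emph{every} extension $0\to W\to\Pi\to V\to 0$ in $\mathrm{Rep}^{\rm{la}}_{H,E}$, split or not, inherits the common central character $\chi$ and the common infinitesimal character $\theta$ of $V$ and $W$; non-splitness plays no role. Both assertions follow from one and the same elementary device, applied once with the centre of $H$ and once with $Z(U(\mathfrak{h}))$.

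First I would record two structural inputs. \textbf{(a)} For $z$ in the centre of $H$, the operator by which $z$ acts on any object of $\mathrm{Rep}^{\rm{la}}_{H,E}$ is $H$-equivariant, simply because $z$ commutes with $H$. \textbf{(b)} For $\xi\in Z(U(\mathfrak{h}))$, the operator by which $\xi$ acts (via the derived action $U(\mathfrak{h})\hookrightarrow D(H,E)$) on any object of $\mathrm{Rep}^{\rm{la}}_{H,E}$ is likewise $H$-equivariant: this is exactly the content of the inclusion (\ref{3center of distribution}), $Z(U(\mathfrak{h}))\hookrightarrow Z(D(H,E))$ of \cite{Koh07}, since a central element of $D(H,E)$ acting on the $D(H,E)$-module $\Pi^{\prime}$ commutes in particular with every Dirac distribution $\delta_g$ ($g\in H$), hence is $H$-equivariant on $\Pi^{\prime}$, and therefore on $\Pi$.

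The main step is a one-line diagram chase. Write $\chi$ (resp. $\theta$) for the common central (resp. infinitesimal) character of $V$ and $W$. Fix $z$ in the centre of $H$ and let $\phi$ be the endomorphism of $\Pi$ by which $z-\chi(z)$ acts; it is $H$-equivariant by (a). Since $z$ acts by the scalar $\chi(z)$ on the subobject $W$, the map $\phi$ kills $W$ and so factors through $\Pi/W\cong V$; since $z$ acts by $\chi(z)$ on $V$ and the quotient map $\Pi\twoheadrightarrow V$ is $H$-equivariant, the composite $\Pi\xrightarrow{\phi}\Pi\twoheadrightarrow V$ vanishes, so the image of $\phi$ lies in $\ker(\Pi\twoheadrightarrow V)=W$. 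Hence $\phi$ induces an element of $\mathrm{Hom}_H(V,W)$, which is $0$ by hypothesis, so $\phi=0$: that is, $z$ acts on $\Pi$ by $\chi(z)$. Letting $z$ range over the centre of $H$ shows $\Pi$ has central character $\chi$. Replacing $z$ by $\xi\in Z(U(\mathfrak{h}))$, the scalar $\chi(z)$ by $\theta(\xi)$, and input (a) by input (b), the verbatim same argument shows that $Z(U(\mathfrak{h}))$ acts on $\Pi$ through $\theta$, i.e. $\Pi$ has infinitesimal character $\theta$.

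I do not anticipate a genuine obstacle; the only delicate point is input (b), namely that $Z(U(\mathfrak{h}))$ acts through $H$-equivariant operators, and this is precisely the statement (\ref{3center of distribution}) already imported from \cite{Koh07}. It is worth noting that the argument uses nothing about $\Pi$ beyond the short exact sequence together with $\mathrm{Hom}_H(V,W)=0$, so in the later applications one only has to check that last vanishing — typically because the Jordan--Hölder constituents of $V$ and of $W$ carry disjoint infinitesimal (or central) characters, or because $V$ is irreducible and does not occur in $W$.
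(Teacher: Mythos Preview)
Your argument is correct and is precisely the unpacking of the paper's one-line proof, which simply cites that $D(Z(H),E)$ and $Z(U(\mathfrak{h}))$ both lie in $Z(D(H,E))$ (via \cite{Koh07}) and refers to Lemma~3.1 of \cite{BD18} for the elementary diagram chase you wrote out. Your observation that non-splitness is irrelevant is also correct and harmless.
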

\begin{proof}
This is a direct analogue of Lemma~3.1 in \cite{BD18} and follows essentially from the fact that both $D(Z(H), E)$ and $Z(U(\mathfrak{h}))$ are subalgebras of $Z(D(H, E))$ by \cite{Koh07}.
\end{proof}
We fix a Borel subgroup $B_H\subseteq H$ as well as its opposite Borel subgroup $\overline{B_H}$. We consider the split maximal torus $T_H:=B_H\cap \overline{B_H}$ and use the notation $N_H$ (resp. $\overline{N_H}$) for the unipotent radical of $B_H$ (resp. of $\overline{B_H}$).
\begin{lemm}\label{3lemm: harish chandra}
If $V\in\mathrm{Rep}^{\rm{la}}_{H, E}$ has an infinitesimal character, then $U(\mathfrak{t}_{\mathfrak{h}})^{W_H}$ (as a subalgebra of $U(\mathfrak{t}_{\mathfrak{h}})$) acts on $J_{\overline{B_H}}(V)$ via a character where $W_H$ is the Weyl group of $H$.
\end{lemm}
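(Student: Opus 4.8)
\emph{Proof strategy.} The plan is to derive the statement from the Harish--Chandra isomorphism together with the compatibility of the Jacquet functor $J_{\overline{B_H}}$ with the action of the centre $\mathfrak{z}:=Z(U(\mathfrak{h}))$; recall that $\mathfrak{z}\hookrightarrow Z(\mathcal{D}(H,E))$ by \cite{Koh07}, so $\mathfrak{z}$ acts functorially on every object of $\mathrm{Rep}^{\rm{la}}_{H,E}$. By the definition of ``infinitesimal character'' recalled just above, the hypothesis says exactly that $\mathfrak{z}$ acts on $V$ (equivalently on $V'$) through an algebra homomorphism $\theta\colon\mathfrak{z}\to E$, i.e. each $z\in\mathfrak{z}$ acts on $V$ as the scalar $\theta(z)$. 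First I recall the classical picture attached to the triangular decomposition $\mathfrak{h}=\overline{\mathfrak{n}}_H\oplus\mathfrak{t}_{\mathfrak{h}}\oplus\mathfrak{n}_H$: one has a direct sum decomposition $U(\mathfrak{h})=U(\mathfrak{t}_{\mathfrak{h}})\oplus\bigl(\overline{\mathfrak{n}}_HU(\mathfrak{h})+U(\mathfrak{h})\mathfrak{n}_H\bigr)$, and the resulting projection $\gamma\colon U(\mathfrak{h})\to U(\mathfrak{t}_{\mathfrak{h}})$ restricts on $\mathfrak{z}$ to an injective algebra homomorphism with image $U(\mathfrak{t}_{\mathfrak{h}})^{W_H}$ (for the appropriate affine action of $W_H$ on $\mathfrak{t}_{\mathfrak{h}}$; whether the linear or the ``dot'' normalisation appears depends only on the convention chosen for $J_{\overline{B_H}}$, and is immaterial for the conclusion); moreover for central $z$ one has the standard strengthening $z-\gamma(z)\in\overline{\mathfrak{n}}_HU(\mathfrak{h})$, cf. \cite{Hum08}, \S1.10.

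The core of the argument is the following compatibility. On the one hand, $J_{\overline{B_H}}$ is an additive functor, so it sends the scalar endomorphism $\theta(z)\cdot\mathrm{id}_V$ to $\theta(z)\cdot\mathrm{id}_{J_{\overline{B_H}}(V)}$; hence $\mathfrak{z}$ acts on $J_{\overline{B_H}}(V)$ again through $\theta$. On the other hand, this $\mathfrak{z}$-action coincides, up to the fixed algebra automorphism of $U(\mathfrak{t}_{\mathfrak{h}})$ built into the normalisation of the $T_H$-action on the Jacquet module, with the action of $\gamma(z)\in U(\mathfrak{t}_{\mathfrak{h}})$ through the derived $T_H$-action on $J_{\overline{B_H}}(V)$. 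To check this one realises $J_{\overline{B_H}}(V)$ (via the canonical $T_H$-equivariant lift, or simply by definition when $J_{\overline{B_H}}(V)$ is the $\overline{N_H}$-coinvariants with its $T_H$-action) inside the $\overline{N_{H,0}}$-coinvariants $V_{\overline{N_{H,0}}}$ for a compact open $\overline{N_{H,0}}\subseteq\overline{N_H}$; after differentiating (and inserting the normalising twist) this identification intertwines the derived $T_H$-action with the $\mathfrak{t}_{\mathfrak{h}}$-action on $V_{\overline{N_{H,0}}}$ induced from $\overline{\mathfrak{b}}_H$ via $\overline{\mathfrak{b}}_H\to\overline{\mathfrak{b}}_H/\overline{\mathfrak{n}}_H\cong\mathfrak{t}_{\mathfrak{h}}$. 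Now $\overline{\mathfrak{n}}_H$ lies in the augmentation ideal of $\mathcal{D}(\overline{N_{H,0}},E)$, hence acts by zero on $V_{\overline{N_{H,0}}}$; since $z-\gamma(z)\in\overline{\mathfrak{n}}_HU(\mathfrak{h})$ and $\overline{\mathfrak{n}}_HU(\mathfrak{h})\cdot V\subseteq\overline{\mathfrak{n}}_H V$, the operators induced by $z$ and by $\gamma(z)$ on $V_{\overline{N_{H,0}}}$ agree, and this equality transports back to $J_{\overline{B_H}}(V)$.

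Combining the two points, the subalgebra $\gamma(\mathfrak{z})=U(\mathfrak{t}_{\mathfrak{h}})^{W_H}$ of $U(\mathfrak{t}_{\mathfrak{h}})$ acts on $J_{\overline{B_H}}(V)$, through the derived $T_H$-action, by the character $\gamma(z)\mapsto\theta(z)$ composed with the normalising automorphism --- a scalar action in any case, which is exactly the assertion. The only ingredients going beyond the classical Harish--Chandra computation and pure functoriality are the realisation of $J_{\overline{B_H}}(V)$ inside $V_{\overline{N_{H,0}}}$ and the precise form of the $T_H$-action on $J_{\overline{B_H}}(V)$; I expect that correctly pinning down this normalising twist --- equivalently, matching the two $\mathfrak{t}_{\mathfrak{h}}$-actions --- is where the argument requires most care, and for the applications of this lemma, where $V$ is admissible of finite length, one may alternatively invoke the corresponding compatibility already used in \cite{Schr11} and \cite{Bre17}.
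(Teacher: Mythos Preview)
Your approach is essentially the same as the paper's: both reduce the claim to the Harish--Chandra isomorphism together with the fact that $z-\gamma(z)$ lies in an ideal generated by $\overline{\mathfrak{n}}_H$, so that the operators induced by $z\in Z(U(\mathfrak{h}))$ and by $\gamma(z)\in U(\mathfrak{t}_{\mathfrak{h}})^{W_H}$ agree on the Jacquet module.

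The one substantive discrepancy is the model you use for $J_{\overline{B_H}}(V)$. You place it inside the $\overline{N_{H,0}}$-\emph{coinvariants} $V_{\overline{N_{H,0}}}$ and accordingly invoke $z-\gamma(z)\in\overline{\mathfrak{n}}_H\,U(\mathfrak{h})$. But Emerton's Jacquet functor, as cited in the paper via \cite{Eme06}, is by construction the finite-slope part of the $\overline{N_H}^{\circ}$-\emph{invariants} $V^{\overline{N_H}^{\circ}}$, and it is this space into which $J_{\overline{B_H}}(V)$ embeds $T_H^+$- (hence $U(\mathfrak{t}_{\mathfrak{h}})$-) equivariantly; there is no direct realisation inside coinvariants. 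The paper therefore works on $V^{\overline{N_H}^{\circ}}$, where the relevant containment is $z-\theta(z)\in U(\mathfrak{h})\cdot\overline{\mathfrak{n}_H}$ (the \emph{right} ideal), since a locally analytic vector fixed by the compact open $\overline{N_H}^{\circ}$ is annihilated by $\overline{\mathfrak{n}_H}$. The paper supplies a short Verma-module/weight argument for this right-ideal containment. Your argument becomes correct upon swapping coinvariants for invariants and $\overline{\mathfrak{n}}_H\,U(\mathfrak{h})$ for $U(\mathfrak{h})\,\overline{\mathfrak{n}_H}$; as written, however, the step ``realise $J_{\overline{B_H}}(V)$ inside $V_{\overline{N_{H,0}}}$'' does not match the definition in force, and your own caveat about ``pinning down the normalising twist'' is precisely where this slip occurs.
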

\begin{proof}
We know by our assumption that $Z(U(\mathfrak{h}))$ acts on $V^{\prime}$ (and hence on $V$ as well) via a character. We note from (\ref{3center of distribution}) that $Z(U(\mathfrak{h}))$ commutes with $D(\overline{N_H}, E)\subseteq D(H, E)$ and thus the action of $Z(U(\mathfrak{h}))$ on $V$ commutes with that of $\overline{N_H}$, which implies that $Z(U(\mathfrak{h}))$ acts on $V^{\overline{N_H}^{\circ}}$ via a character for each open compact subgroup $\overline{N_H}^{\circ}\subseteq \overline{N_H}$. We use the notation
$$\theta: Z(U(\mathfrak{h}))\xrightarrow{\sim} U(\mathfrak{t}_{\mathfrak{h}})^{W_H}$$
for the Harish-Chandra isomorphism ( cf. Section~1.7 of \cite{Hum08}) and the notation $j_1$ and $j_2$ for the embeddings
$$j_1: Z(U(\mathfrak{h}))\hookrightarrow U(\mathfrak{h})\mbox{ and }j_2: U(\mathfrak{t}_{\mathfrak{h}})\hookrightarrow U(\mathfrak{h}).$$
We choose an arbitrary Verma module $M_H(\lambda_H)$ with highest weight $\lambda_H$, namely we have
$$M_H(\lambda):=U(\mathfrak{h})\otimes_{U(\overline{\mathfrak{b}_H})}\lambda_H.$$
We use the notation $M_H(\lambda_H)_{\mu}$ for the subspace of $M_H(\lambda)$ with $\mathfrak{t}_{\mathfrak{h}}$-weight $\mu$ and note that
$$\mathrm{dim}_EM_H(\lambda_H)_{\lambda_H}=1.$$
We easily observe that
\begin{equation}\label{3preserve highest weight}
Z(U(\mathfrak{h}))\cdot M_H(\lambda_H)_{\lambda_H}=M_H(\lambda_H)_{\lambda_H}\mbox{ and }U(\mathfrak{t}_{\mathfrak{h}})\cdot M_H(\lambda_H)_{\lambda_H}=M_H(\lambda_H)_{\lambda_H}.
\end{equation}
It is well-known that the the direct sum decomposition
\begin{equation}\label{3decomposition lie}
\mathfrak{h}=\mathfrak{n}_H\oplus\mathfrak{t}_{\mathfrak{h}}\oplus\overline{\mathfrak{n}_H}
\end{equation}
induces a tensor decomposition of $E$-vector space
\begin{equation}\label{3PBW decomposition}
U(\mathfrak{h})=U(\mathfrak{n}_H)\otimes_EU(\mathfrak{t}_{\mathfrak{h}})\otimes_EU(\overline{\mathfrak{n}_H}).
\end{equation}
Hence we can write each element in $U(\mathfrak{h})$ as a polynomial with variables indexed by a standard basis of $\mathfrak{h}$ that is compatible with (\ref{3decomposition lie}). It follows from the definition of $\theta$ as the restriction to $Z(U(\mathfrak{h}))$ of the projection $U(\mathfrak{h})\twoheadrightarrow U(\mathfrak{t}_{\mathfrak{h}})$ (coming from (\ref{3PBW decomposition})) that
$$j_1(z)-j_2\circ\theta(z)\in U(\mathfrak{h})\cdot\overline{\mathfrak{n}_H}+\mathfrak{n}_H\cdot U(\mathfrak{h})$$
for each $z\in Z(U(\mathfrak{h}))$. If a monomial $f$ in the decomposition (\ref{3PBW decomposition}) of $j_1(z)-j_2\circ\theta(z)$ belongs to
$$\mathfrak{n}_H\cdot U(\mathfrak{n}_H)\cdot U(\mathfrak{t}_{\mathfrak{h}}),$$
then we have
$$f\cdot M_H(\lambda_H)_{\lambda_H}\subseteq M_H(\lambda_H)_{\mu}$$
for some $\mu\neq \lambda_H$, which contradicts the fact (\ref{3preserve highest weight}). Hence we conclude that
$$j_1(z)-j_2\circ\theta(z)\in U(\mathfrak{h})\cdot\overline{\mathfrak{n}_H}$$
and in particular
$$j_1(z)=j_2\circ\theta(z)$$
on $V^{\overline{N_H}^{\circ}}$ for each $z\in Z(U(\mathfrak{h}))$. Hence we deduce that $U(\mathfrak{t}_{\mathfrak{h}})^{W_H}$ acts on $V^{\overline{N_H}^{\circ}}$ via a character. We note by the definition of $J_{\overline{B_H}}$ ( cf. \cite{Eme06}) that we have a $T_H^+$-equivariant embedding
\begin{equation}\label{3definition of Jacquet}
J_{\overline{B_H}}(V)\hookrightarrow V^{\overline{N_H}^{\circ}}
\end{equation}
where $T_H^+$ is a certain submonoid of $T_H$ containing an open compact subgroup. As a result, (\ref{3definition of Jacquet}) is also $U(\mathfrak{t}_{\mathfrak{h}})$-equivariant and thus $U(\mathfrak{t}_{\mathfrak{h}})^{W_H}$ acts on $J_{\overline{B_H}}(V)$ via a character which finishes the proof.
\end{proof}
We set $H=\mathrm{GL}_2(\mathbb{Q}_p)$, $B_H=B_2$ and $\overline{B_H}=\overline{B_2}$ in the rest of this section. The idea of the following lemma which is closely related to Lemma~3.20 of \cite{BD18}, owes very much to Y.Ding.
\begin{lemm}\label{3lemm: no inf char}
A locally analytic representation of either the form
\begin{equation}\label{3first form}
\begin{xy}
(0,0)*+{\overline{L}_{\mathrm{GL}_2}(\nu)\otimes_E\mathrm{St}_2^{\infty}}="a"; (26,0)*+{I(s\cdot\nu)}="b"; (45,0)*+{\overline{L}_{\mathrm{GL}_2}(\nu)}="c"; (72,0)*+{\overline{L}_{\mathrm{GL}_2}(\nu)\otimes_E\mathrm{St}_2^{\infty}}="d";
{\ar@{-}"a";"b"}; {\ar@{-}"b";"c"}; {\ar@{-}"c";"d"};
\end{xy}
\end{equation}
or the form
\begin{equation}\label{3second form}
\begin{xy}
(0,0)*+{\overline{L}_{\mathrm{GL}_2}(\nu)}="a"; (20,0)*+{\widetilde{I}(s\cdot\nu)}="b"; (45,0)*+{\overline{L}_{\mathrm{GL}_2}(\nu)\otimes_E\mathrm{St}_2^{\infty}}="c"; (72,0)*+{\overline{L}_{\mathrm{GL}_2}(\nu)}="d";
{\ar@{-}"a";"b"}; {\ar@{-}"b";"c"}; {\ar@{-}"c";"d"};
\end{xy}
\end{equation}
does not have an infinitesimal character.
\end{lemm}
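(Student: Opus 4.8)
The plan is to argue by contradiction, using infinitesimal characters together with the Jacquet--Emerton functor exactly as in Lemma~\ref{3lemm: harish chandra}. Suppose that $\Pi$ is a locally analytic representation of $\mathrm{GL}_2(\mathbb{Q}_p)$ of the form (\ref{3first form}) or (\ref{3second form}) that admits an infinitesimal character. Then $Z(U(\mathfrak{gl}_2))$ acts on $\Pi^{\prime}$ through a character, hence on $V^{\prime}$ for every subrepresentation $V\subseteq\Pi$, so Lemma~\ref{3lemm: harish chandra} applies to each such $V$ and tells us that $U(\mathfrak{t}_2)^{\langle s\rangle}$ acts through a single character on $J_{\overline{B_2}}(V)$. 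Writing $\mathfrak{t}_2=E H_1\oplus E H_2$ with $H_1,H_2$ the two coordinate functionals, we have $U(\mathfrak{t}_2)^{\langle s\rangle}=E[H_1+H_2,\,H_1H_2]$; since $H_1+H_2$ is already central in $U(\mathfrak{gl}_2)$ the substance of this is that $H_1H_2$ must act by a scalar on $J_{\overline{B_2}}(V)$, and hence on every subquotient of $J_{\overline{B_2}}(V)$, for every subrepresentation $V\subseteq\Pi$.

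The key point is to exhibit a subrepresentation $V\subseteq\Pi$ whose Jacquet module carries a \emph{non-semisimple} action of $\mathfrak{t}_2$. The source of this non-semisimplicity is the $p$\text{-}adic logarithm: in the notation of Section~\ref{3subsection: log dilog}, the two dimensional representation $V_{\mathscr{L}}$ is a non-split self-extension of the trivial character of $\mathbb{Q}_p^{\times}$ on which $\mathrm{Lie}(\mathbb{Q}_p^{\times})$ acts by a nonzero nilpotent endomorphism, because $\mathrm{log}_{\mathscr{L}}$ has nonzero derivative at the identity. Arguing as in the proof of Lemma~\ref{3lemm: ext1} and in the constructions of Section~\ref{3subsection: log dilog}, one realizes the relevant part of $\Pi$ inside a locally analytic principal series $I_{B_2}^{\mathrm{GL}_2}(V_{\mathscr{L}^{\prime}}\otimes_E\delta)$, where $\delta=\delta_{T_2,s\cdot\nu}$ (resp. $\delta=\delta_{T_2,s\cdot\nu}\otimes_E(|\cdot|^{-1}\otimes_E|\cdot|)$) for the form (\ref{3first form}) (resp. (\ref{3second form})) and $\mathscr{L}^{\prime}\in E$ is determined by the extension data of $\Pi$; the canonical summand of $J_{\overline{B_2}}(I_{B_2}^{\mathrm{GL}_2}(V_{\mathscr{L}^{\prime}}\otimes_E\delta))$ is then $V_{\mathscr{L}^{\prime}}\otimes_E\delta$, a two dimensional module with single generalized $\mathfrak{t}_2$-weight $\mu:=s\cdot\nu=(\nu_2-1,\nu_1+1)$ on which $\mathfrak{t}_2$ acts non-semisimply. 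The task is to choose $V\subseteq\Pi$ so that this non-semisimple torus module survives as a subquotient $U$ of $J_{\overline{B_2}}(V)$, using the explicit structure of representations of the forms (\ref{3first form}) and (\ref{3second form}) together with Emerton's description of the Jacquet module of a locally analytic principal series.

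Finally we derive the contradiction. Since $\nu$ is dominant, the coordinates $a:=\nu_2-1$ and $b:=\nu_1+1$ of $\mu$ satisfy $a\neq b$. On $U$ the central element $H_1+H_2$ acts by the scalar $a+b$; since $\mathfrak{t}_2$ acts non-semisimply on the two dimensional module $U$ while having generalized weight $\mu$, the operator $H_1$ acts as $a\,\mathrm{Id}+N$ with $N\neq 0$ and $N^2=0$, whence $H_2=(a+b)\,\mathrm{Id}-H_1=b\,\mathrm{Id}-N$ and $H_1H_2=ab\,\mathrm{Id}+(b-a)N$, which is not a scalar because $a\neq b$ and $N\neq 0$. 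This contradicts the conclusion of the first paragraph, namely that $H_1H_2$ acts by a scalar on $J_{\overline{B_2}}(V)$ and hence on its subquotient $U$. Therefore no locally analytic representation of the form (\ref{3first form}) or (\ref{3second form}) admits an infinitesimal character. The delicate step is the middle one: because $J_{\overline{B_2}}$ is only left exact, one must pin down the subrepresentation $V$ and its relation to a principal series precisely enough to guarantee that the non-semisimple torus module $U$ genuinely survives in $J_{\overline{B_2}}(V)$; everything else is the Harish-Chandra argument already carried out in Lemma~\ref{3lemm: harish chandra} and the elementary computation above.
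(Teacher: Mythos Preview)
Your overall strategy---argue by contradiction, apply Lemma~\ref{3lemm: harish chandra} to force $U(\mathfrak{t}_2)^{W_{\mathrm{GL}_2}}$ to act by a character on $J_{\overline{B_2}}$, and then exhibit a two--dimensional subquotient $U$ of the Jacquet module on which $H_1H_2$ is not scalar---is exactly the paper's strategy, and your final--paragraph computation is correct. The problem is the middle step, which you yourself flag as ``delicate'' but do not carry out; in fact the specific hint you give for it points to the wrong place.

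You propose to realize ``the relevant part of $\Pi$'' inside $I_{B_2}^{\mathrm{GL}_2}(V_{\mathscr{L}^{\prime}}\otimes_E\delta)$ with $\delta=\delta_{T_2,s\cdot\nu}$ (respectively its smooth twist) and then use the canonical Jacquet summand $V_{\mathscr{L}^{\prime}}\otimes_E\delta$, which sits at $\mathfrak{t}_2$--weight $s\cdot\nu$. But $I_{B_2}^{\mathrm{GL}_2}(V_{\mathscr{L}^{\prime}}\otimes_E\delta_{T_2,s\cdot\nu})$ is a nonsplit self--extension of the irreducible $I(s\cdot\nu)$, and $I(s\cdot\nu)$ (respectively $\widetilde{I}(s\cdot\nu)$) occurs with multiplicity one in $\Pi$, so no subrepresentation of $\Pi$ can produce such a self--extension. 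More to the point, in the Jacquet module of $\Pi$ the only Jordan--H\"older factor contributing at $\mathfrak{t}_2$--weight $s\cdot\nu$ is $I(s\cdot\nu)$ itself; there is no nontrivial self--extension of a torus character at that weight to serve as your $U$.

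The paper locates the non--semisimple piece at the \emph{other} weight, namely $\nu$. For the form (\ref{3first form}) it invokes Lemma~3.20 of \cite{BD18}: the Jacquet functor induces an injection
\[
\mathrm{Ext}^1_{\mathrm{GL}_2(\mathbb{Q}_p)}\!\left(\overline{L}_{\mathrm{GL}_2}(\nu)\otimes_E\mathrm{St}_2^{\infty},\ \Sigma_{\mathrm{GL}_2}(\nu,\mathscr{L})\right)\ \hookrightarrow\ \mathrm{Ext}^1_{T_2(\mathbb{Q}_p)}\!\left(\chi,\chi\right),\qquad \chi=\delta_{T_2,\nu}\otimes_E(|\cdot|\otimes_E|\cdot|^{-1}),
\]
whose image (after twisting to $\mathrm{Ext}^1_{T_2}(1_{T_2},1_{T_2})$) is the explicit three--dimensional subspace $\mathrm{Ext}^1_{T_2}(1,1)_{\mathscr{L}}$. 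The constraint that $U(\mathfrak{t}_2)^{W_{\mathrm{GL}_2}}$ act by a character forces the cocycle to be smooth, i.e.\ to lie in $M=E\cdot\mathrm{val}_p\!\circ\epsilon_1\oplus E\cdot\mathrm{val}_p\!\circ\epsilon_2$; the intersection $M\cap\mathrm{Ext}^1_{T_2}(1,1)_{\mathscr{L}}$ is the single line $E(\mathrm{val}_p\!\circ\epsilon_1+\mathrm{val}_p\!\circ\epsilon_2)$, and the extension it parametrizes contains a nonsplit self--extension of $\overline{L}_{\mathrm{GL}_2}(\nu)\otimes_E\mathrm{St}_2^{\infty}$, contradicting the uniserial shape (\ref{3first form}). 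The argument for (\ref{3second form}) is parallel. So the missing ingredient in your write--up is precisely this Jacquet--module injection and the identification of its image; once you have it, your $H_1H_2$ computation (run at weight $\nu$ rather than $s\cdot\nu$) is what shows that only smooth cocycles survive.
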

\begin{proof}
Assume that a representation $V$ of the form (\ref{3first form}) has an infinitesimal character. Note that $V$ can be represented by an element in the space $\mathrm{Ext}^1_{\mathrm{GL}_2(\mathbb{Q}_p)}(\overline{L}_{\mathrm{GL}_2}(\nu)\otimes_E\mathrm{St}_2^{\infty}, \Sigma_{\mathrm{GL}_2}(\nu, \mathscr{L}))$ for certain $\mathscr{L}\in E$. We consider the upper-triangular Borel subgroup $\overline{B_2}$ and the diagonal split torus $T_2$. Then by the proof of Lemma~3.20 of \cite{BD18} we know that the Jacquet functor $J_{\overline{B_2}}$ ( cf. \cite{Eme06} for the definition) induces a injection
\begin{multline}\label{3Jacquet injection}
\mathrm{Ext}^1_{\mathrm{GL}_2(\mathbb{Q}_p)}\left(\overline{L}_{\mathrm{GL}_2}(\nu)\otimes_E\mathrm{St}_2^{\infty}, ~\Sigma_{\mathrm{GL}_2}(\nu, \mathscr{L})\right)\\
\hookrightarrow\mathrm{Ext}^1_{T_2(\mathbb{Q}_p)}\left(\delta_{T_2,\nu}\otimes_E(|\cdot|\otimes_E|\cdot|^{-1}), ~\delta_{T_2,\nu}\otimes_E(|\cdot|\otimes_E|\cdot|^{-1})\right).
\end{multline}
By twisting $\delta_{T_2, -\nu}\otimes_E(|\cdot|^{-1}\otimes_E|\cdot|)$ we have an isomorphism
\begin{equation}\label{3twist isomorphism}
\mathrm{Ext}^1_{T_2(\mathbb{Q}_p)}\left(\delta_{T_2,\nu}\otimes_E(|\cdot|\otimes_E|\cdot|^{-1}), ~\delta_{T_2,\nu}\otimes_E(|\cdot|\otimes_E|\cdot|^{-1})\right)\cong \mathrm{Ext}^1_{T_2(\mathbb{Q}_p)}\left(1_{T_2}, ~1_{T_2}\right).
\end{equation}
It follows from Lemma~3.20 of \cite{BD18} (up to changes on notation) that the image of the composition of (\ref{3twist isomorphism}) and (\ref{3Jacquet injection}) is a certain two dimensional subspace $\mathrm{Ext}^1_{T_2(\mathbb{Q}_p)}(1, 1)_{\mathscr{L}}$ of $\mathrm{Ext}^1_{T_2(\mathbb{Q}_p)}(1, 1)$ depending on $\mathscr{L}$. More precisely, if we use the notation $\epsilon_1$, $\epsilon_2$ for the two charaters
$$\epsilon_1: T_2(\mathbb{Q}_p)\rightarrow\mathbb{Q}_p^{\times},~\left(\begin{array}{cc}\
a&0\\
0&b\\
\end{array}\right)\mapsto a\mbox{  and  } \epsilon_2: T_2(\mathbb{Q}_p)\rightarrow\mathbb{Q}_p^{\times},~\left(\begin{array}{cc}\
a&0\\
0&b\\
\end{array}\right)\mapsto b,$$
then the set
$$\{\mathrm{log}_0\circ \epsilon_1, \mathrm{val}_p\circ \epsilon_1, \mathrm{log}_0\circ \epsilon_2, \mathrm{val}_p\circ \epsilon_2\}$$
forms a basis of $\mathrm{Ext}^1_{T_2(\mathbb{Q}_p)}(1, 1)$, and the subspace $\mathrm{Ext}^1_{T_2(\mathbb{Q}_p)}(1, 1)_{\mathscr{L}}$ has a basis
$$\{\mathrm{log}_0\circ \epsilon_1+\mathrm{log}_0\circ \epsilon_2, \mathrm{val}_p\circ \epsilon_1+\mathrm{val}_p\circ \epsilon_2, \mathrm{log}_0\circ \epsilon_1-\mathrm{log}_0\circ \epsilon_2+\mathscr{L}(\mathrm{val}_p\circ \epsilon_1-\mathrm{val}_p\circ \epsilon_2)\}.$$
It follows from Lemma~\ref{3lemm: harish chandra} that $U(\mathfrak{t}_2)^{W_{\mathrm{GL}_2}}$ acts on $J_{\overline{B_2}}(V)$ via a character where $W_{\mathrm{GL}_2}$ is the notation for the Weyl group of $\mathrm{GL}_2(\mathbb{Q}_p)$. Therefore we deduce by a twisting that the the subspace of $\mathrm{Ext}^1_{T_2(\mathbb{Q}_p)}(1, 1)$ corresponding to $J_{\overline{B_2}}(V)$ is killed by $U(\mathfrak{t}_2)^{W_{\mathrm{GL}_2}}$. We notice that the subspace $M$ of $\mathrm{Ext}^1_{T_2(\mathbb{Q}_p)}(1, 1)$ killed by $U(\mathfrak{t}_2)^{W_{\mathrm{GL}_2}}$ is two dimensional with basis $$\{\mathrm{val}_p\circ \epsilon_1, \mathrm{val}_p\circ \epsilon_2\}$$
and we have
$$M\cap\mathrm{Ext}^1_{T_2(\mathbb{Q}_p)}(1, 1)_{\mathscr{L}}=E\left(\mathrm{val}_p\circ \epsilon_1+\mathrm{val}_p\circ \epsilon_2\right).$$
However, the representation given by the line $E(\mathrm{val}_p\circ \epsilon_1+\mathrm{val}_p\circ \epsilon_2)$ has a subrepresentation of the form
$$
\begin{xy}
(0,0)*+{\overline{L}_{\mathrm{GL}_2}(\nu)\otimes_E\mathrm{St}_2^{\infty}}="a"; (34,0)*+{\overline{L}_{\mathrm{GL}_2}(\nu)\otimes_E\mathrm{St}_2^{\infty}}="b";
{\ar@{-}"a";"b"};
\end{xy}
$$
which is a contradiction.

The proof of the second statement is a direct analogue as we observe that $J_{\overline{B_2}}$ also induces the following embedding
\begin{multline*}
\mathrm{Ext}^1_{\mathrm{GL}_2(\mathbb{Q}_p)}\left(\overline{L}_{\mathrm{GL}_2}(\nu), ~\begin{xy}
(0,0)*+{\overline{L}_{\mathrm{GL}_2}(\nu)}="a"; (20,0)*+{\widetilde{I}(s\cdot\nu)}="b"; (45,0)*+{\overline{L}_{\mathrm{GL}_2}(\nu)\otimes_E\mathrm{St}_2^{\infty}}="c"; (72,0)*+{\overline{L}_{\mathrm{GL}_2}(\nu)}="d";
{\ar@{-}"a";"b"}; {\ar@{-}"b";"c"}; {\ar@{-}"c";"d"};
\end{xy}\right)\\
\hookrightarrow\mathrm{Ext}^1_{T_2(\mathbb{Q}_p)}\left(\delta_{T_2,\nu}, ~\delta_{T_2,\nu}\right).
\end{multline*}
\end{proof}

We define $\Sigma_2^+(\nu,\mathscr{L})$ as the unique (up to isomorphism) non-split extension of $\Sigma_{\mathrm{GL}_2}(\nu,\mathscr{L})$ by $\widetilde{I}(s\cdot\nu)$ given by Lemma~\ref{3lemm: ext1}.
\begin{prop}\label{3prop: key result}
We have
$$\mathrm{Ext}^1_{\mathrm{GL}_2(\mathbb{Q}_p)}\left(
\begin{xy}
(0,0)*+{\overline{L}_{\mathrm{GL}_2}(\nu)\otimes_E\mathrm{St}_2^{\infty}}="a"; (26,0)*+{\overline{L}_{\mathrm{GL}_2}(\nu)}="b";
{\ar@{-}"a";"b"};
\end{xy},~ \Sigma_2^+(\nu,\mathscr{L})\right)=0.$$
\end{prop}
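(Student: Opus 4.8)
The plan is to argue by contradiction, exploiting infinitesimal characters exactly in the spirit of Lemma~\ref{3lemm: existence of inf char} and Lemma~\ref{3lemm: no inf char}. Write $W$ for the (unique) non-split extension of $\overline{L}_{\mathrm{GL}_2}(\nu)$ by $\overline{L}_{\mathrm{GL}_2}(\nu)\otimes_E\mathrm{St}_2^{\infty}$ appearing in the statement, and suppose there is a non-split short exact sequence
$$0\longrightarrow\Sigma_2^+(\nu,\mathscr{L})\longrightarrow\mathcal{E}\longrightarrow W\longrightarrow 0 .$$
The first step is to record the internal structure of $\Sigma_2^+(\nu,\mathscr{L})$: combining the shape (\ref{3uniserial GL2}) of $\Sigma_{\mathrm{GL}_2}(\nu,\mathscr{L})$ with the constituent structure of $\mathrm{St}^{\rm{an}}_2(\nu)$ and of the principal series $I(s\cdot\nu)$, $\widetilde{I}(s\cdot\nu)$ recalled from \cite{BD18}, and with the proof of Lemma~\ref{3lemm: ext1} — which realizes the class of $\Sigma_2^+(\nu,\mathscr{L})$ through the one-dimensional space $\mathrm{Ext}^1_{\mathrm{GL}_2(\mathbb{Q}_p)}(\widetilde{I}(s\cdot\nu),~\overline{L}_{\mathrm{GL}_2}(\nu))$ — one gets that $\Sigma_2^+(\nu,\mathscr{L})$ is uniserial with successive subquotients, from the socle upward, $\overline{L}_{\mathrm{GL}_2}(\nu)\otimes_E\mathrm{St}_2^{\infty}$, then $I(s\cdot\nu)$, then $\overline{L}_{\mathrm{GL}_2}(\nu)$, then $\widetilde{I}(s\cdot\nu)$. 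Thus $\mathcal{E}$ is obtained by stacking the uniserial $W$ (the constituents $\overline{L}_{\mathrm{GL}_2}(\nu)\otimes_E\mathrm{St}_2^{\infty}$ then $\overline{L}_{\mathrm{GL}_2}(\nu)$) on top of this chain; in particular every Jordan–H\"older constituent of $\mathcal{E}$ lies in the linkage class of $\nu$ under the dot action, so they all share one infinitesimal character $\theta_\nu$ and one central character.

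The second step is to show that $\mathcal{E}$ itself admits $\theta_\nu$ as an infinitesimal character. Running Lemma~\ref{3lemm: existence of inf char} up the two uniserial filtrations, one first gets that $W$ and $\Sigma_2^+(\nu,\mathscr{L})$ each admit $\theta_\nu$, the required $\mathrm{Hom}$-vanishings being immediate at each step because consecutive layers are non-isomorphic irreducibles and the cosocle of the sub never embeds in the socle of the quotient. Then Lemma~\ref{3lemm: existence of inf char} applies once more to $\mathcal{E}$, since $\mathrm{Hom}_{\mathrm{GL}_2(\mathbb{Q}_p)}(W,~\Sigma_2^+(\nu,\mathscr{L}))=0$: a non-zero such map would have image a non-zero quotient of $W$, hence $\overline{L}_{\mathrm{GL}_2}(\nu)$ or $W$ itself, embedded into $\Sigma_2^+(\nu,\mathscr{L})$; but $\mathrm{soc}\,\Sigma_2^+(\nu,\mathscr{L})=\overline{L}_{\mathrm{GL}_2}(\nu)\otimes_E\mathrm{St}_2^{\infty}$, and $\Sigma_2^+(\nu,\mathscr{L})$ has no subrepresentation isomorphic to $W$ because $I(s\cdot\nu)$ lies strictly between the two relevant constituents in its uniserial filtration. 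Hence $Z(U(\mathfrak{gl}_2))$ acts on $\mathcal{E}^{\prime}$ through $\theta_\nu$, and consequently $\theta_\nu$ is an infinitesimal character of the strong dual of \emph{every} subquotient of $\mathcal{E}$.

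The last step — and this is where the real work lies — is to show that the non-splitness of $\mathcal{E}$ forces it to admit a subquotient isomorphic to one of the representations (\ref{3first form}) or (\ref{3second form}), which by Lemma~\ref{3lemm: no inf char} has no infinitesimal character; this contradicts the previous step and finishes the proof. Concretely, one performs a short case analysis on how $W$ is glued on top of $\Sigma_2^+(\nu,\mathscr{L})$ inside $\mathcal{E}$: since $\mathcal{E}$ is non-split, at least one of the gluings of $\overline{L}_{\mathrm{GL}_2}(\nu)\otimes_E\mathrm{St}_2^{\infty}=\mathrm{soc}(W)$ either onto $\widetilde{I}(s\cdot\nu)=\mathrm{cosoc}\,\Sigma_2^+(\nu,\mathscr{L})$ or (after passing to the quotient by $\widetilde{I}(s\cdot\nu)$) onto $\overline{L}_{\mathrm{GL}_2}(\nu)=\mathrm{cosoc}\,\Sigma_{\mathrm{GL}_2}(\nu,\mathscr{L})$ must be non-split, the remaining configurations being excluded by the same $\mathrm{Ext}^1$-vanishings that entered Lemma~\ref{3lemm: ext1} and the structure of $\Sigma_{\mathrm{GL}_2}(\nu,\mathscr{L})$. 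In the first case the subquotient of $\mathcal{E}$ supported on $\overline{L}_{\mathrm{GL}_2}(\nu)$, $\widetilde{I}(s\cdot\nu)$, $\overline{L}_{\mathrm{GL}_2}(\nu)\otimes_E\mathrm{St}_2^{\infty}$, $\overline{L}_{\mathrm{GL}_2}(\nu)$ realizes (\ref{3second form}); in the second case the subquotient supported on $\overline{L}_{\mathrm{GL}_2}(\nu)\otimes_E\mathrm{St}_2^{\infty}$, $I(s\cdot\nu)$, $\overline{L}_{\mathrm{GL}_2}(\nu)$, $\overline{L}_{\mathrm{GL}_2}(\nu)\otimes_E\mathrm{St}_2^{\infty}$ realizes (\ref{3first form}). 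The delicate point, which requires careful bookkeeping of the one-dimensional $\mathrm{Ext}^1$-spaces between the constituents in play, is precisely to verify that in each case all three intervening extensions of the candidate subquotient genuinely remain non-split, so that it has the asserted shape and not merely a proper sub-sum of it.
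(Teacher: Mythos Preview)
Your overall strategy coincides with the paper's: assume a non-split $\mathcal{E}$, use Lemma~\ref{3lemm: existence of inf char} to equip it with a central and an infinitesimal character, then locate inside $\mathcal{E}$ a subquotient of shape (\ref{3first form}) or (\ref{3second form}) and invoke Lemma~\ref{3lemm: no inf char}. That is exactly what the paper does.

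The gap is in your case analysis. You assert that the only possible non-split gluings of $\mathrm{soc}(W)=\overline{L}_{\mathrm{GL}_2}(\nu)\otimes_E\mathrm{St}_2^{\infty}$ onto $\Sigma_2^+(\nu,\mathscr{L})$ are onto $\widetilde{I}(s\cdot\nu)$ or onto the $\overline{L}_{\mathrm{GL}_2}(\nu)$ layer, with ``the remaining configurations being excluded by the same $\mathrm{Ext}^1$-vanishings that entered Lemma~\ref{3lemm: ext1}.'' This is not sufficient. Lemma~\ref{3lemm: ext1} only records vanishings of the form $\mathrm{Ext}^k\bigl(\widetilde{I}(s\cdot\nu),\,-\bigr)$; it says nothing about $\mathrm{Ext}^1\bigl(\overline{L}_{\mathrm{GL}_2}(\nu)\otimes_E\mathrm{St}_2^{\infty},\,\overline{L}_{\mathrm{GL}_2}(\nu)\otimes_E\mathrm{St}_2^{\infty}\bigr)$, which is \emph{nonzero} without fixing the central character. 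So there is a genuine third configuration: $\mathrm{soc}(W)$ could be glued directly onto $\mathrm{soc}\bigl(\Sigma_2^+(\nu,\mathscr{L})\bigr)=\overline{L}_{\mathrm{GL}_2}(\nu)\otimes_E\mathrm{St}_2^{\infty}$, producing a subrepresentation of $\mathcal{E}$ of the form
\[
\begin{xy}
(0,0)*+{\overline{L}_{\mathrm{GL}_2}(\nu)\otimes_E\mathrm{St}_2^{\infty}}="a"; (34,0)*+{\overline{L}_{\mathrm{GL}_2}(\nu)\otimes_E\mathrm{St}_2^{\infty}}="b";
{\ar@{-}"a";"b"};
\end{xy}.
\]
This is precisely the paper's case (i). The paper rules it out by observing that $\mathcal{E}$ (hence every subrepresentation) has a central character, and then invoking
\[
\mathrm{Ext}^1_{\mathrm{GL}_2(\mathbb{Q}_p),\nu}\bigl(\overline{L}_{\mathrm{GL}_2}(\nu)\otimes_E\mathrm{St}_2^{\infty},\,\overline{L}_{\mathrm{GL}_2}(\nu)\otimes_E\mathrm{St}_2^{\infty}\bigr)=0
\]
from Proposition~4.7 of \cite{Schr11} and \cite{Or05}. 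You have already established the central character in your second step, so the fix is easy once identified; but as written, your dichotomy into two cases is incomplete, and the claimed justification (``the same vanishings as in Lemma~\ref{3lemm: ext1}'') does not cover it.

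A secondary point: the paper organizes the trichotomy by looking at \emph{subrepresentations} of $V$ (your $\mathcal{E}$), using in addition the inputs $\mathrm{Ext}^1\bigl(\overline{L}_{\mathrm{GL}_2}(\nu)\otimes_E\mathrm{St}_2^{\infty},\,I(s\cdot\nu)\bigr)=0$ and $\mathrm{Ext}^1\bigl(\overline{L}_{\mathrm{GL}_2}(\nu),\,\widetilde{I}(s\cdot\nu)\bigr)=0$ (neither of which comes from Lemma~\ref{3lemm: ext1}). This makes the ``bookkeeping'' you flag at the end cleaner: in each case one explicitly names the uniserial sub (or, in case (iii), the resulting quotient of shape (\ref{3second form})), rather than arguing abstractly that the relevant intermediate extensions remain non-split inside a subquotient.
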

\begin{proof}
Assume on the contrary that $V$ is a representation given by a certain non-zero element inside
$$\mathrm{Ext}^1_{\mathrm{GL}_2(\mathbb{Q}_p)}\left(\begin{xy}
(0,0)*+{\overline{L}_{\mathrm{GL}_2}(\nu)\otimes_E\mathrm{St}_2^{\infty}}="a"; (26,0)*+{\overline{L}_{\mathrm{GL}_2}(\nu)}="b";
{\ar@{-}"a";"b"};
\end{xy},~ \Sigma_2^+(\nu,\mathscr{L})\right).$$
We deduce that $V$ has both a central character and an infinitesimal character from Lemma~\ref{3lemm: existence of inf char} and the fact
$$\mathrm{Hom}_{\mathrm{GL}_2(\mathbb{Q}_p)}\left(\begin{xy}
(0,0)*+{\overline{L}_{\mathrm{GL}_2}(\nu)\otimes_E\mathrm{St}_2^{\infty}}="a"; (26,0)*+{\overline{L}_{\mathrm{GL}_2}(\nu)}="b";
{\ar@{-}"a";"b"};
\end{xy},~ \Sigma_2^+(\nu,\mathscr{L})\right)=0.$$
Note that we have
$$\mathrm{Ext}^1_{\mathrm{GL}_2(\mathbb{Q}_p)}(\overline{L}_{\mathrm{GL}_2}(\nu)\otimes_E\mathrm{St}_2^{\infty}, ~I(s\cdot\nu))=\mathrm{Ext}^1_{\mathrm{GL}_2(\mathbb{Q}_p)}(\overline{L}_{\mathrm{GL}_2}(\nu), ~\widetilde{I}(s\cdot\nu))=0,$$
$$\mathrm{dim}_E\mathrm{Ext}^1_{\mathrm{GL}_2(\mathbb{Q}_p)}\left(\overline{L}_{\mathrm{GL}_2}(\nu),~\overline{L}_{\mathrm{GL}_2}(\nu)\otimes_E\mathrm{St}_2^{\infty}\right)=1$$
and
$$\mathrm{dim}_E\mathrm{Ext}^1_{\mathrm{GL}_2(\mathbb{Q}_p)}\left(\overline{L}_{\mathrm{GL}_2}(\nu),~I(s\cdot\nu)\right)=1$$
by a combination of Lemma~3.13 of \cite{BD18} with Lemma~\ref{3lemm: cohomology devissage}, and thus $V$ has a subrepresentation of one of the three following forms
\begin{enumerate}
\item $\begin{xy}
(0,0)*+{\overline{L}_{\mathrm{GL}_2}(\nu)\otimes_E\mathrm{St}_2^{\infty}}="a"; (34,0)*+{\overline{L}_{\mathrm{GL}_2}(\nu)\otimes_E\mathrm{St}_2^{\infty}}="b";
{\ar@{-}"a";"b"};
\end{xy}$;
\item $\begin{xy}
(0,0)*+{\overline{L}_{\mathrm{GL}_2}(\nu)\otimes_E\mathrm{St}_2^{\infty}}="a"; (26,0)*+{I(s\cdot\nu)}="b"; (45,0)*+{\overline{L}_{\mathrm{GL}_2}(\nu)}="c"; (72,0)*+{\overline{L}_{\mathrm{GL}_2}(\nu)\otimes_E\mathrm{St}_2^{\infty}}="d";
{\ar@{-}"a";"b"}; {\ar@{-}"b";"c"}; {\ar@{-}"c";"d"};
\end{xy}$;
\item $\begin{xy}
(0,0)*+{\overline{L}_{\mathrm{GL}_2}(\nu)\otimes_E\mathrm{St}_2^{\infty}}="a"; (26,0)*+{I(s\cdot\nu)}="b"; (45,0)*+{\overline{L}_{\mathrm{GL}_2}(\nu)}="c"; (64,0)*+{\widetilde{I}(s\cdot\nu)}="d";  (90,0)*+{\overline{L}_{\mathrm{GL}_2}(\nu)\otimes_E\mathrm{St}_2^{\infty}}="e"; (115,0)*+{\overline{L}_{\mathrm{GL}_2}(\nu)}="f";
{\ar@{-}"a";"b"}; {\ar@{-}"b";"c"}; {\ar@{-}"c";"d"}; {\ar@{-}"d";"e"}; {\ar@{-}"e";"f"};
\end{xy}.$
\end{enumerate}
In the first case, we know from Proposition~4.7 of \cite{Schr11} and the main result of \cite{Or05} that
$$\mathrm{Ext}^1_{\mathrm{GL}_2(\mathbb{Q}_p),\nu}\left(\overline{L}_{\mathrm{GL}_2}(\nu)\otimes_E\mathrm{St}_2^{\infty}, ~\overline{L}_{\mathrm{GL}_2}(\nu)\otimes_E\mathrm{St}_2^{\infty}\right)=0$$
and therefore this case is impossible due to the existence of central character for $V$ (and hence for its subrepresentations). In the second case, we deduce from Lemma~\ref{3lemm: no inf char} a contradiction as $V$ has an infinitesimal character. In the third case, we thus know that $V$ has a quotient representation of the form
$$\begin{xy}
(0,0)*+{\overline{L}_{\mathrm{GL}_2}(\nu)}="a"; (20,0)*+{\widetilde{I}(s\cdot\nu)}="b"; (45,0)*+{\overline{L}_{\mathrm{GL}_2}(\nu)\otimes_E\mathrm{St}_2^{\infty}}="c"; (72,0)*+{\overline{L}_{\mathrm{GL}_2}(\nu)}="d";
{\ar@{-}"a";"b"}; {\ar@{-}"b";"c"}; {\ar@{-}"c";"d"};
\end{xy}$$
which can not have an infinitesimal character due to Lemma~\ref{3lemm: no inf char}, a contradiction again. Hence we finish the proof.
\end{proof}
\begin{rema}\label{3rema: second version}
Note that the argument in Proposition~\ref{3prop: key result} actually implies that
$$\mathrm{Ext}^1_{\mathrm{GL}_2(\mathbb{Q}_p)}\left(\begin{xy}
(0,0)*+{\overline{L}_{\mathrm{GL}_2}(\nu)\otimes_E\mathrm{St}_2^{\infty}}="a"; (26,0)*+{\overline{L}_{\mathrm{GL}_2}(\nu)}="b";
{\ar@{-}"a";"b"};
\end{xy},~ \begin{xy}
(0,0)*+{I(s\cdot\nu)}="b"; (20,0)*+{\overline{L}_{\mathrm{GL}_2}(\nu)}="c"; (40,0)*+{\widetilde{I}(s\cdot\nu)}="d";
{\ar@{-}"b";"c"}; {\ar@{-}"c";"d"};
\end{xy}\right)=0$$
and we can show by the same method that
$$\mathrm{Ext}^1_{\mathrm{GL}_2(\mathbb{Q}_p)}\left(\begin{xy}
(26,0)*+{\overline{L}_{\mathrm{GL}_2}(\nu)\otimes_E\mathrm{St}_2^{\infty}}="b"; (0,0)*+{\overline{L}_{\mathrm{GL}_2}(\nu)}="a";
{\ar@{-}"a";"b"};
\end{xy},~ \begin{xy}
(0,0)*+{\widetilde{I}(s\cdot\nu)}="b"; (26,0)*+{\overline{L}_{\mathrm{GL}_2}(\nu)\otimes_E\mathrm{St}_2^{\infty}}="c"; (50,0)*+{I(s\cdot\nu)}="d";
{\ar@{-}"b";"c"}; {\ar@{-}"c";"d"};
\end{xy}\right)=0.$$
\end{rema}

\section{Computations of $\mathrm{Ext}$ I}\label{3section: computation}
In this section, we are going to compute various $\mathrm{Ext}$\text{-}groups based on known results on group cohomology in Section~5.2 and 5.3 of \cite{Bre17}.
\begin{prop}\label{3prop: locally algebraic extension}
The following spaces are one dimensional
$$
\begin{xy}
(0,0)*+{\mathrm{Ext}^1_{\mathrm{GL}_3(\mathbb{Q}_p),\lambda}\left(\overline{L}(\lambda), ~\overline{L}(\lambda)\otimes_Ev_{P_i}^{\infty}\right)}; (70,0)*+{\mathrm{Ext}^1_{\mathrm{GL}_3(\mathbb{Q}_p),\lambda}\left(\overline{L}(\lambda)\otimes_Ev_{P_i}^{\infty}, ~\overline{L}(\lambda)\right)};
(6.5,-6)*+{\mathrm{Ext}^1_{\mathrm{GL}_3(\mathbb{Q}_p),\lambda}\left(\overline{L}(\lambda)\otimes_E\mathrm{St}_3^{\infty}, ~\overline{L}(\lambda)\otimes_Ev_{P_i}^{\infty}\right)};
(76.5,-6)*+{\mathrm{Ext}^1_{\mathrm{GL}_3(\mathbb{Q}_p),\lambda}\left(\overline{L}(\lambda)\otimes_Ev_{P_i}^{\infty}, ~\overline{L}(\lambda)\otimes_E\mathrm{St}_3^{\infty}\right)};
(0.8,-12)*+{\mathrm{Ext}^2_{\mathrm{GL}_3(\mathbb{Q}_p),\lambda}\left(\overline{L}(\lambda)\otimes_E\mathrm{St}_3^{\infty}, ~\overline{L}(\lambda)\right)};
(70.8,-12)*+{\mathrm{Ext}^2_{\mathrm{GL}_3(\mathbb{Q}_p),\lambda}\left(\overline{L}(\lambda), ~\overline{L}(\lambda)\otimes_E\mathrm{St}_3^{\infty}\right)};
(6,-18)*+{\mathrm{Ext}^2_{\mathrm{GL}_3(\mathbb{Q}_p),\lambda}\left(\overline{L}(\lambda)\otimes_Ev_{P_1}^{\infty}, ~\overline{L}(\lambda)\otimes_Ev_{P_2}^{\infty}\right)};
(76,-18)*+{\mathrm{Ext}^2_{\mathrm{GL}_3(\mathbb{Q}_p),\lambda}\left(\overline{L}(\lambda)\otimes_Ev_{P_2}^{\infty}, ~\overline{L}(\lambda)\otimes_Ev_{P_1}^{\infty}\right)};
\end{xy}
$$
for $i=1,2$. Moreover, we have
$$\mathrm{Ext}^k_{\mathrm{GL}_3(\mathbb{Q}_p),\lambda}\left(V_1, ~V_2\right)=0$$
in all the other cases where $1\leq k\leq 2$ and $V_1,V_2\in\{\overline{L}(\lambda), \overline{L}(\lambda)\otimes_Ev_{P_1}^{\infty}, \overline{L}(\lambda)\otimes_Ev_{P_2}^{\infty}, \overline{L}(\lambda)\otimes_E\mathrm{St}_3^{\infty}\}$.
\end{prop}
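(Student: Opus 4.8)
The plan is to obtain all thirty-two groups $\mathrm{Ext}^k_{\mathrm{GL}_3(\mathbb{Q}_p),\lambda}(V_1,V_2)$ ($k\in\{1,2\}$, $V_1,V_2$ among the four representations) by a single uniform devissage, reducing everything to $\mathrm{Ext}$-groups over the Levi subgroups $L_1,L_2$ and over the torus $T$. By Lemma~\ref{3lemm: structure of St} and the definitions in Section~\ref{3subsection: main notation}, each of the four representations is a subquotient of a locally analytic parabolic induction: we have the short exact sequences $\overline{L}(\lambda)\hookrightarrow I^{\mathrm{GL}_3}_{P_i}(\overline{L}_i(\lambda))\twoheadrightarrow v^{\mathrm{an}}_{P_i}(\lambda)$ and $\overline{L}(\lambda)\otimes_E v_{P_i}^{\infty}\hookrightarrow v^{\mathrm{an}}_{P_i}(\lambda)\twoheadrightarrow C^1_{s_{3-i},1}$, while $\overline{L}(\lambda)\otimes_E\mathrm{St}_3^{\infty}$ is the socle of $\mathrm{St}_3^{\mathrm{an}}(\lambda)$, which is a quotient of $I^{\mathrm{GL}_3}_B(\delta_{T,\lambda})$ with the constituents shown in (\ref{3St picture}). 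Feeding the inductions $I^{\mathrm{GL}_3}_{P_i}(-)$, $I^{\mathrm{GL}_3}_B(-)$ into Lemma~\ref{3lemm: cohomology devissage} converts every group $\mathrm{Ext}^k_{\mathrm{GL}_3(\mathbb{Q}_p),\lambda}(V_1,I^{\mathrm{GL}_3}_{P_i}(-))$ or $\mathrm{Ext}^k_{\mathrm{GL}_3(\mathbb{Q}_p),\lambda}(V_1,I^{\mathrm{GL}_3}_B(-))$ into $\mathrm{Ext}^j_{L_i,\lambda}$, resp.\ $\mathrm{Ext}^j_{T,\lambda}$, against the $N_i$-homology, resp.\ $N$-homology, of $V_1$; for the $V_1$ in play these homology modules are the explicit locally algebraic representations of $L_i$, resp.\ $T$, supplied by Section~5.2 of \cite{Bre17} (Kostant's theorem on the algebraic constituents, the structure of smooth generalized Steinberg representations on the smooth part), and the resulting $\mathrm{Ext}$-groups over $L_i$ and $T$ are those of Section~5.3 of \cite{Bre17} together with \cite{Or05} and Proposition~4.7 of \cite{Schr11} for the $\mathrm{GL}_2(\mathbb{Q}_p)$-inputs. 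One point that must be reckoned with here is that the central character condition $\delta_{Z,\lambda}$ over $L_i$ only constrains a subtorus of the center of $L_i$ — indeed $L_i(\mathbb{Q}_p)/Z(\mathbb{Q}_p)\cong\mathrm{GL}_2(\mathbb{Q}_p)$ — so these $L_i$-Ext-groups can be strictly larger than the corresponding $\mathrm{GL}_3(\mathbb{Q}_p)$-groups; for instance the self-$\mathrm{Ext}^1$ of $\overline{L}_i(\lambda)$ over $L_i$ is two-dimensional, isomorphic to $\mathrm{Hom}_{\mathrm{cont}}(\mathbb{Q}_p^{\times},E)$.

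Having recorded the seed groups this way — the self-$\mathrm{Ext}$ in degrees $\le 2$ of $\overline{L}(\lambda)$, $\overline{L}(\lambda)\otimes_E v_{P_i}^{\infty}$ and $\overline{L}(\lambda)\otimes_E\mathrm{St}_3^{\infty}$, and the groups $\mathrm{Ext}^k_{\mathrm{GL}_3(\mathbb{Q}_p),\lambda}(V_1,I^{\mathrm{GL}_3}_{P_i}(\overline{L}_i(\lambda)))$, $\mathrm{Ext}^k_{\mathrm{GL}_3(\mathbb{Q}_p),\lambda}(V_1,I^{\mathrm{GL}_3}_B(\delta_{T,\lambda}))$ and $\mathrm{Ext}^k_{\mathrm{GL}_3(\mathbb{Q}_p),\lambda}(V_1,C^1_{s_{3-i},1})$ for the four $V_1$ — I would then run through the thirty-two cases one by one, in each chasing the long exact sequence of one of the three short exact sequences above. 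For instance $\mathrm{Ext}^1_{\mathrm{GL}_3(\mathbb{Q}_p),\lambda}(\overline{L}(\lambda),\overline{L}(\lambda)\otimes_E v_{P_i}^{\infty})$ embeds, using $\mathrm{Hom}(\overline{L}(\lambda),v^{\mathrm{an}}_{P_i}(\lambda))=0$, into $\mathrm{Ext}^1(\overline{L}(\lambda),v^{\mathrm{an}}_{P_i}(\lambda))$, which in turn is pinned down by $\mathrm{Ext}^1(\overline{L}(\lambda),I^{\mathrm{GL}_3}_{P_i}(\overline{L}_i(\lambda)))$, the self-$\mathrm{Ext}^{\le 2}$ of $\overline{L}(\lambda)$, and $\mathrm{Ext}^1(\overline{L}(\lambda),C^1_{s_{3-i},1})$; the cases involving $\mathrm{St}_3^{\infty}$ run the same way through the filtration of $I^{\mathrm{GL}_3}_B(\delta_{T,\lambda})$ and picture (\ref{3St picture}). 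The eight one-dimensional $\mathrm{Ext}^1$-groups and the four one-dimensional $\mathrm{Ext}^2$-groups are produced in this fashion, and the remaining twenty groups are forced to vanish by the same exact sequences.

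The main obstacle is not any individual computation but the combination of the bookkeeping with the control of the connecting homomorphisms: in each long exact sequence one has to compute the rank of the relevant boundary map in order to decide whether a one-dimensional contribution in a middle term survives to the outer $\mathrm{Ext}$-group or is killed, which needs the seed groups together with their structure maps, not merely their dimensions. The $\mathrm{Ext}^2$-assertions are the most delicate: there the long exact sequences have more nonzero terms, one genuinely needs the higher $N_i$-homology from Sections~5.2 and~5.3 of \cite{Bre17} (for instance the degree-two homology already invoked in the proof of Lemma~\ref{3lemm: structure of St}) rather than just qualitative vanishing, and establishing $\mathrm{Ext}^2_{\mathrm{GL}_3(\mathbb{Q}_p),\lambda}(\overline{L}(\lambda),\overline{L}(\lambda))=0$ and the value of the self-$\mathrm{Ext}^2$ of $\overline{L}(\lambda)\otimes_E\mathrm{St}_3^{\infty}$ precisely — in the presence of the two-dimensional $L_i$-contributions noted above — is exactly what makes the four one-dimensional $\mathrm{Ext}^2$-statements and the corresponding vanishings come out consistently.
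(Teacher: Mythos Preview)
Your approach is workable in principle but far more laborious than the paper's. The paper's proof is a single sentence: it invokes Proposition~4.7 of \cite{Schr11} together with the main result of \cite{Or05}. The point is that Schraen's result reduces, for locally algebraic representations sharing the same algebraic part $\overline{L}(\lambda)$ and with the central character fixed, the locally analytic $\mathrm{Ext}$-groups $\mathrm{Ext}^k_{\mathrm{GL}_3(\mathbb{Q}_p),\lambda}\bigl(\overline{L}(\lambda)\otimes_E\pi_1^\infty,\overline{L}(\lambda)\otimes_E\pi_2^\infty\bigr)$ to the smooth $\mathrm{Ext}$-groups of $\pi_1^\infty,\pi_2^\infty$; and Orlik's theorem computes precisely those smooth groups when $\pi_1^\infty,\pi_2^\infty$ range over the generalized Steinberg representations $1_3,v_{P_1}^\infty,v_{P_2}^\infty,\mathrm{St}_3^\infty$. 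All thirty-two dimensions fall out at once, with no d\'evissage and no connecting maps to chase.

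Your route instead threads through the locally analytic parabolic inductions and their non-locally-algebraic constituents $C^1_{s_j,1}$, which forces you to compute auxiliary groups such as $\mathrm{Ext}^k(\overline{L}(\lambda),C^1_{s_{3-i},1})$ alongside the target ones; that is essentially the content of Lemma~\ref{3lemm: nonvanishing ext1} and its companions, which in the paper's logical order come \emph{after} and \emph{use} the present proposition. Your argument can be made self-contained, but it amounts to re-deriving from scratch what Schraen and Orlik already packaged. Notably, you do cite Proposition~4.7 of \cite{Schr11} yourself, but only for the $\mathrm{GL}_2$-level inputs on the Levi; applying it directly at the $\mathrm{GL}_3$ level \emph{is} the entire proof.
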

\begin{proof}
This follows from a special case of Proposition~4.7 of \cite{Schr11} together with the main result of \cite{Or05}.
\end{proof}

\begin{lemm}\label{3lemm: vanishing locally algebraic}
We have
$$
\begin{xy}
(0,0)*+{\mathrm{Ext}^k_{\mathrm{GL}_3(\mathbb{Q}_p),\lambda}\left(
\begin{xy}
(0,0)*+{\overline{L}(\lambda)\otimes_Ev_{P_i}^{\infty}}="a"; (20,0)*+{\overline{L}(\lambda)}="b";
{\ar@{-}"a";"b"};
\end{xy},~\overline{L}(\lambda)\otimes_E\mathrm{St}_3^{\infty}\right)}; (50,0)*+{=0};
(0.35,-6)*+{\mathrm{Ext}^k_{\mathrm{GL}_3(\mathbb{Q}_p),\lambda}\left(\begin{xy}
(0,0)*+{\overline{L}(\lambda)\otimes_Ev_{P_i}^{\infty}}="a"; (27,0)*+{\overline{L}(\lambda)\otimes_E\mathrm{St}_3^{\infty}}="b";
{\ar@{-}"a";"b"};
\end{xy},~\overline{L}(\lambda)\right)}; (50,-6)*+{=0};
(0.95,-12)*+{\mathrm{Ext}^k_{\mathrm{GL}_3(\mathbb{Q}_p),\lambda}\left(\begin{xy}
(20,0)*+{\overline{L}(\lambda)\otimes_Ev_{P_i}^{\infty}}="b"; (0,0)*+{\overline{L}(\lambda)}="a";
{\ar@{-}"a";"b"};
\end{xy},~\overline{L}(\lambda)\otimes_Ev_{P_{3-i}}^{\infty}\right)}; (50,-12)*+{=0};
\end{xy}
$$
for $i=1,2$ and $k=1,2$.
\end{lemm}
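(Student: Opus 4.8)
The plan is to reduce all three vanishing statements, in a uniform way, to the non-vanishing of a single connecting homomorphism. In each case the first argument is a non-split two-term representation $V$ sitting in a short exact sequence $0\to A\to V\to C\to 0$ in which $A$ and $C$ are two of the four irreducibles $\overline{L}(\lambda)$, $\overline{L}(\lambda)\otimes_Ev_{P_1}^{\infty}$, $\overline{L}(\lambda)\otimes_Ev_{P_2}^{\infty}$, $\overline{L}(\lambda)\otimes_E\mathrm{St}_3^{\infty}$, while the second argument $W$ is a fifth such irreducible, distinct from both $A$ and $C$; for instance the first assertion is the case $A=\overline{L}(\lambda)\otimes_Ev_{P_i}^{\infty}$, $C=\overline{L}(\lambda)$, $W=\overline{L}(\lambda)\otimes_E\mathrm{St}_3^{\infty}$. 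First I would apply the long exact sequence of $\mathrm{Ext}^{\bullet}_{\mathrm{GL}_3(\mathbb{Q}_p),\lambda}(-,W)$ attached to this sequence. Since $A$, $C$ and $W$ are pairwise non-isomorphic irreducible objects, $\mathrm{Hom}_{\mathrm{GL}_3(\mathbb{Q}_p),\lambda}(A,W)=\mathrm{Hom}_{\mathrm{GL}_3(\mathbb{Q}_p),\lambda}(C,W)=0$; and a direct inspection of the list in Proposition~\ref{3prop: locally algebraic extension} shows that in each of the three cases one has $\mathrm{Ext}^1_{\mathrm{GL}_3(\mathbb{Q}_p),\lambda}(C,W)=0=\mathrm{Ext}^2_{\mathrm{GL}_3(\mathbb{Q}_p),\lambda}(A,W)$, whereas $\mathrm{Ext}^1_{\mathrm{GL}_3(\mathbb{Q}_p),\lambda}(A,W)$, $\mathrm{Ext}^2_{\mathrm{GL}_3(\mathbb{Q}_p),\lambda}(C,W)$ and $\mathrm{Ext}^1_{\mathrm{GL}_3(\mathbb{Q}_p),\lambda}(C,A)$ are all one-dimensional. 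In particular the class $\xi$ of the non-split extension $V$ is a basis of $\mathrm{Ext}^1_{\mathrm{GL}_3(\mathbb{Q}_p),\lambda}(C,A)$.

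Feeding these facts into the long exact sequence, it degenerates in all three cases to
$$0\to \mathrm{Ext}^1_{\mathrm{GL}_3(\mathbb{Q}_p),\lambda}(V,W)\to \mathrm{Ext}^1_{\mathrm{GL}_3(\mathbb{Q}_p),\lambda}(A,W)\xrightarrow{\ \partial\ } \mathrm{Ext}^2_{\mathrm{GL}_3(\mathbb{Q}_p),\lambda}(C,W)\to \mathrm{Ext}^2_{\mathrm{GL}_3(\mathbb{Q}_p),\lambda}(V,W)\to 0,$$
with one-dimensional outer terms. Hence $\mathrm{Ext}^1_{\mathrm{GL}_3(\mathbb{Q}_p),\lambda}(V,W)=\ker\partial$ and $\mathrm{Ext}^2_{\mathrm{GL}_3(\mathbb{Q}_p),\lambda}(V,W)=\mathrm{coker}\,\partial$, so both groups vanish exactly when $\partial$ is an isomorphism; and since source and target are one-dimensional this is the same as $\partial\neq 0$. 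By the standard description of connecting maps in the long exact sequence of $\mathrm{Ext}(-,W)$, $\partial$ is, up to sign, Yoneda multiplication by $\xi$; as $\xi$ generates $\mathrm{Ext}^1_{\mathrm{GL}_3(\mathbb{Q}_p),\lambda}(C,A)$, proving $\partial\neq 0$ amounts to showing that the cup product
$$\mathrm{Ext}^1_{\mathrm{GL}_3(\mathbb{Q}_p),\lambda}(A,W)\otimes_E \mathrm{Ext}^1_{\mathrm{GL}_3(\mathbb{Q}_p),\lambda}(C,A)\longrightarrow \mathrm{Ext}^2_{\mathrm{GL}_3(\mathbb{Q}_p),\lambda}(C,W)$$
is non-zero, equivalently that the length-three splice (through $A$) of the two relevant non-split extensions represents a non-trivial $2$-extension.

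This last point is the only non-formal step: Proposition~\ref{3prop: locally algebraic extension} delivers the needed dimensions but not the multiplicative structure, so I would establish it by producing the length-three object explicitly. All the irreducibles involved, as well as the two-term extensions $V$ themselves, occur as subquotients of locally algebraic principal series — concretely of $i^{\mathrm{GL}_3}_{B}(1)$ and $I^{\mathrm{GL}_3}_{P_i}(\overline{L}_i(\lambda))$, and of $v^{\mathrm{an}}_{P_i}(\lambda)$ and $\mathrm{St}^{\mathrm{an}}_3(\lambda)$ — whose Loewy structure is recorded in Lemma~\ref{3lemm: structure of St} and, for the purely locally algebraic part, follows from \cite{Or05} and \cite{Schr11}. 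Inside such a representation the two successive layers witnessing $\xi$ and a generator of $\mathrm{Ext}^1_{\mathrm{GL}_3(\mathbb{Q}_p),\lambda}(A,W)$ are genuinely glued together, so their Yoneda composite cannot vanish and $\partial$ is an isomorphism, which gives the claimed vanishing. For the third assertion, where $V$ is the locally algebraic part of the parabolic induction $I^{\mathrm{GL}_3}_{P_i}(\overline{L}_i(\lambda))$, one may instead run the spectral sequence of Lemma~\ref{3lemm: cohomology devissage} — in the form computing $\mathrm{Ext}^{\bullet}$ out of an induced representation — to descend the computation to $\mathrm{Ext}$-groups for $L_i(\mathbb{Q}_p)$, which are elementary; this serves as a cross-check. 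The step I expect to be the main obstacle is precisely the verification that $\partial\neq 0$: the dimension count alone leaves open the a priori scenario $\partial=0$ (which would force $\mathrm{Ext}^1_{\mathrm{GL}_3(\mathbb{Q}_p),\lambda}(V,W)$ and $\mathrm{Ext}^2_{\mathrm{GL}_3(\mathbb{Q}_p),\lambda}(V,W)$ to be one-dimensional), and ruling it out is exactly where the explicit structure of the above principal series, rather than formal homological algebra, has to be used.
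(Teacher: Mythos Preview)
Your reduction via the long exact sequence to the single question ``is $\partial\neq 0$?'' is correct and matches the paper: the paper first proves the $k=1$ vanishing and then reads off $k=2$ from the same exact sequence, which is equivalent to your formulation. The gap is in your verification that $\partial\neq 0$. You propose to show the Yoneda product $\eta\cup\xi$ is nonzero by locating the two layers $W\!-\!A$ and $A\!-\!C$ ``genuinely glued together'' inside some principal series. But this is the wrong direction: $\eta\cup\xi=0$ is \emph{equivalent} to the existence of a uniserial object $W\!-\!A\!-\!C$ (any lift of $\eta$ to $\mathrm{Ext}^1(V,W)$, being nonsplit with $\mathrm{Ext}^1(C,W)=0$, must have socle $W$ and hence be uniserial). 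So exhibiting such a glued object would prove $\partial=0$; and failing to find one inside a particular principal series does not prove that none exists abstractly. Knowing only the dimensions in Proposition~\ref{3prop: locally algebraic extension} and the Loewy structure of the locally algebraic principal series is genuinely not enough to decide whether the cup product vanishes.

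The paper resolves this by stepping outside the locally algebraic world. It rewrites the $k=1$ vanishing (first case) as $\mathrm{Ext}^1_{\mathrm{GL}_3(\mathbb{Q}_p),\lambda}\!\bigl(\overline{L}(\lambda),\,\overline{L}(\lambda)\otimes_E\mathrm{St}_3^\infty\!-\!\overline{L}(\lambda)\otimes_Ev_{P_i}^\infty\bigr)=0$, embeds this two--term locally algebraic object into the genuinely locally analytic parabolic induction $\mathcal{F}_{P_i}^{\mathrm{GL}_3}\!\bigl(M_i(-\lambda),\pi_{i,3}^\infty\bigr)=I_{P_i}^{\mathrm{GL}_3}\!\bigl(\overline{L}_i(\lambda)\otimes_E\pi_{i,3}^\infty\bigr)$ (which has one extra, non--locally--algebraic constituent $C^2_{s_{3-i},s_{3-i}s_i}$), and then applies the spectral sequence of Lemma~\ref{3lemm: cohomology devissage} with the induced representation in the \emph{second} variable to reduce to a trivial check on $L_i(\mathbb{Q}_p)$. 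The other two cases are handled the same way. Note also that your proposed cross--check for the third assertion invokes Lemma~\ref{3lemm: cohomology devissage} ``in the form computing $\mathrm{Ext}^\bullet$ out of an induced representation'', but that lemma only gives a spectral sequence for $\mathrm{Ext}^\bullet$ \emph{into} an induction; no dual version is available in the paper.
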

\begin{proof}
It is sufficient to prove that
\begin{equation}\label{3first algebraic vanishing}
\mathrm{Ext}^1_{\mathrm{GL}_3(\mathbb{Q}_p),\lambda}\left(\begin{xy}
(0,0)*+{\overline{L}(\lambda)\otimes_Ev_{P_i}^{\infty}}="a"; (20,0)*+{\overline{L}(\lambda)}="b";
{\ar@{-}"a";"b"};
\end{xy},~\overline{L}(\lambda)\otimes_E\mathrm{St}_3^{\infty}\right)=0
\end{equation}
and
\begin{equation}\label{3second algebraic vanishing}
\mathrm{Ext}^2_{\mathrm{GL}_3(\mathbb{Q}_p),\lambda}\left(\begin{xy}
(0,0)*+{\overline{L}(\lambda)\otimes_Ev_{P_i}^{\infty}}="a"; (20,0)*+{\overline{L}(\lambda)}="b";
{\ar@{-}"a";"b"};
\end{xy},~\overline{L}(\lambda)\otimes_E\mathrm{St}_3^{\infty}\right)=0
\end{equation}
as the other cases are similar. We observe that (\ref{3first algebraic vanishing}) is equivalent to the non-existence of a uniserial representation of the form
$$
\begin{xy}
(0,0)*+{\overline{L}(\lambda)\otimes_E\mathrm{St}_3^{\infty}}="a"; (27,0)*+{\overline{L}(\lambda)\otimes_Ev_{P_i}^{\infty}}="b"; (47,0)*+{\overline{L}(\lambda)}="c";
{\ar@{-}"a";"b"}; {\ar@{-}"b";"c"};
\end{xy}
$$
which is again equivalent to the vanishing
\begin{equation}\label{3third algebraic vanishing}
\mathrm{Ext}^1_{\mathrm{GL}_3(\mathbb{Q}_p),\lambda}\left(\overline{L}(\lambda),~\begin{xy}
(0,0)*+{\overline{L}(\lambda)\otimes_E\mathrm{St}_3^{\infty}}="a"; (27,0)*+{\overline{L}(\lambda)\otimes_Ev_{P_i}^{\infty}}="b";
{\ar@{-}"a";"b"};
\end{xy}\right)=0
\end{equation}
according to the fact
$$\mathrm{Ext}^1_{\mathrm{GL}_3(\mathbb{Q}_p),\lambda}\left(\overline{L}(\lambda),~\overline{L}(\lambda)\otimes_E\mathrm{St}_3^{\infty}\right)=0$$
due to Proposition~\ref{3prop: locally algebraic extension}. The short exact sequence
$$\left(\begin{xy}
(0,0)*+{\overline{L}(\lambda)\otimes_E\mathrm{St}_3^{\infty}}="a"; (27,0)*+{\overline{L}(\lambda)\otimes_Ev_{P_i}^{\infty}}="b";
{\ar@{-}"a";"b"};
\end{xy}\right)\hookrightarrow\mathcal{F}_{P_i}^{\mathrm{GL}_3}\left(M_i(-\lambda),~\pi_{1,3}^{\infty}\right)\twoheadrightarrow C^2_{s_{3-i},s_{3-i}s_i}$$
induces an injection
$$\mathrm{Ext}^1_{\mathrm{GL}_3(\mathbb{Q}_p),\lambda}\left(\overline{L}(\lambda),~\begin{xy}
(0,0)*+{\overline{L}(\lambda)\otimes_E\mathrm{St}_3^{\infty}}="a"; (27,0)*+{\overline{L}(\lambda)\otimes_Ev_{P_i}^{\infty}}="b";
{\ar@{-}"a";"b"};
\end{xy}\right)\hookrightarrow\mathrm{Ext}^1_{\mathrm{GL}_3(\mathbb{Q}_p),\lambda}\left(\overline{L}(\lambda),~\mathcal{F}_{P_i}^{\mathrm{GL}_3}\left(M_i(-\lambda),~\pi_{i,3}^{\infty}\right)\right).$$
Therefore (\ref{3third algebraic vanishing}) follows from Lemma~\ref{3lemm: cohomology devissage} and the facts that
$$\mathrm{Ext}^1_{L_i(\mathbb{Q}_p),\lambda}\left(H_0(N_i, \overline{L}(\lambda)),~\overline{L}_i(\lambda)\otimes_E\pi_{i,3}^{\infty}\right)=\mathrm{Hom}_{L_i(\mathbb{Q}_p),\lambda}\left(H_1(N_i, \overline{L}(\lambda)),~\overline{L}_i(\lambda)\otimes_E\pi_{i,3}^{\infty}\right)=0.$$
On the other hand, the short exact sequence
$$\overline{L}(\lambda)\otimes_Ev_{P_i}^{\infty}\hookrightarrow\left(\begin{xy}
(0,0)*+{\overline{L}(\lambda)\otimes_Ev_{P_i}^{\infty}}="a"; (20,0)*+{\overline{L}(\lambda)}="b";
{\ar@{-}"a";"b"};
\end{xy}\right)\twoheadrightarrow\overline{L}(\lambda)$$
induces a long exact sequence
\begin{multline*}
\mathrm{Ext}^1_{\mathrm{GL}_3(\mathbb{Q}_p),\lambda}\left(\overline{L}(\lambda),~\overline{L}(\lambda)\otimes_E\mathrm{St}_3^{\infty}\right)\hookrightarrow \mathrm{Ext}^1_{\mathrm{GL}_3(\mathbb{Q}_p),\lambda}\left(\begin{xy}
(0,0)*+{\overline{L}(\lambda)\otimes_Ev_{P_i}^{\infty}}="a"; (20,0)*+{\overline{L}(\lambda)}="b";
{\ar@{-}"a";"b"};
\end{xy},~\overline{L}(\lambda)\otimes_E\mathrm{St}_3^{\infty}\right)\\ \rightarrow\mathrm{Ext}^1_{\mathrm{GL}_3(\mathbb{Q}_p),\lambda}\left(\overline{L}(\lambda)\otimes_Ev_{P_i}^{\infty},~\overline{L}(\lambda)\otimes_E\mathrm{St}_3^{\infty}\right)
\rightarrow\mathrm{Ext}^2_{\mathrm{GL}_3(\mathbb{Q}_p),\lambda}\left(\overline{L}(\lambda),~\overline{L}(\lambda)\otimes_E\mathrm{St}_3^{\infty}\right)\\ \rightarrow\mathrm{Ext}^2_{\mathrm{GL}_3(\mathbb{Q}_p),\lambda}\left(\begin{xy}
(0,0)*+{\overline{L}(\lambda)\otimes_Ev_{P_i}^{\infty}}="a"; (20,0)*+{\overline{L}(\lambda)}="b";
{\ar@{-}"a";"b"};
\end{xy},~\overline{L}(\lambda)\otimes_E\mathrm{St}_3^{\infty}\right) \rightarrow\mathrm{Ext}^2_{\mathrm{GL}_3(\mathbb{Q}_p),\lambda}\left(\overline{L}(\lambda)\otimes_Ev_{P_i}^{\infty},~\overline{L}(\lambda)\otimes_E\mathrm{St}_3^{\infty}\right)
\end{multline*}
and thus we can deduce (\ref{3second algebraic vanishing}) from Proposition~\ref{3prop: locally algebraic extension} and (\ref{3first algebraic vanishing}).
\end{proof}

We define $W_0$ as the unique locally algebraic representation of length three satisfying
$$\mathrm{soc}_{\mathrm{GL}_3(\mathbb{Q}_p)}(W_0)=\overline{L}(\lambda)\otimes_E\left(v_{P_1}^{\infty}\oplus v_{P_2}^{\infty}\right)\mbox{ and }\mathrm{cosoc}_{\mathrm{GL}_3(\mathbb{Q}_p)}(W_0)=\overline{L}(\lambda).$$
We also define the (unique up to isomorphism) locally algebraic representation of the form
$$W_i:=\begin{xy}
(0,0)*+{\overline{L}(\lambda)\otimes_Ev_{P_i}^{\infty}}="a"; (20,0)*+{\overline{L}(\lambda)}="b";
{\ar@{-}"a";"b"};
\end{xy}$$
for each $i=1,2$

\begin{lemm}\label{3lemm: vanishing locally algebraic main}
We have
$$\mathrm{dim}_E\mathrm{Ext}^1_{\mathrm{GL}_3(\mathbb{Q}_p),\lambda}\left(W_0,~\overline{L}(\lambda)\otimes_E\mathrm{St}_3^{\infty}\right)=1$$
and
$$\mathrm{Ext}^2_{\mathrm{GL}_3(\mathbb{Q}_p),\lambda}\left(W_0,~\overline{L}(\lambda)\otimes_E\mathrm{St}_3^{\infty}\right)=0.$$
\end{lemm}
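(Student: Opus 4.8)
The plan is to derive the statement from the two short exact sequences attached to $W_0$, combined with Proposition~\ref{3prop: locally algebraic extension} and Lemma~\ref{3lemm: vanishing locally algebraic}. Throughout write $V_{12}:=\overline{L}(\lambda)\otimes_E\left(v_{P_1}^{\infty}\oplus v_{P_2}^{\infty}\right)$ and $\mathrm{St}:=\overline{L}(\lambda)\otimes_E\mathrm{St}_3^{\infty}$, and note that $\overline{L}(\lambda)$, $\overline{L}(\lambda)\otimes_Ev_{P_1}^{\infty}$, $\overline{L}(\lambda)\otimes_Ev_{P_2}^{\infty}$ and $\mathrm{St}$ are pairwise non-isomorphic irreducible representations, so that $\mathrm{Hom}_{\mathrm{GL}_3(\mathbb{Q}_p),\lambda}$ between any two distinct ones vanishes. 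From the socle filtration of $W_0$ one has
$$0\to V_{12}\to W_0\to\overline{L}(\lambda)\to 0,$$
and, identifying $W_0$ with the fibre product $W_1\times_{\overline{L}(\lambda)}W_2$ over the common cosocle $\overline{L}(\lambda)$, one has
$$0\to W_0\to W_1\oplus W_2\to\overline{L}(\lambda)\to 0,$$
where the last arrow is the difference of the two canonical surjections $W_i\twoheadrightarrow\overline{L}(\lambda)$.

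First I would apply $\mathrm{Ext}^{\bullet}_{\mathrm{GL}_3(\mathbb{Q}_p),\lambda}(-,\mathrm{St})$ to the second sequence. Lemma~\ref{3lemm: vanishing locally algebraic} gives $\mathrm{Ext}^{1}_{\mathrm{GL}_3(\mathbb{Q}_p),\lambda}(W_i,\mathrm{St})=\mathrm{Ext}^{2}_{\mathrm{GL}_3(\mathbb{Q}_p),\lambda}(W_i,\mathrm{St})=0$ for $i=1,2$, so the long exact sequence collapses to an isomorphism
$$\mathrm{Ext}^{1}_{\mathrm{GL}_3(\mathbb{Q}_p),\lambda}\left(W_0,\mathrm{St}\right)\xrightarrow{\sim}\mathrm{Ext}^{2}_{\mathrm{GL}_3(\mathbb{Q}_p),\lambda}\left(\overline{L}(\lambda),\mathrm{St}\right),$$
whose target is one-dimensional by Proposition~\ref{3prop: locally algebraic extension}. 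This proves the first assertion.

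Next I would apply $\mathrm{Ext}^{\bullet}_{\mathrm{GL}_3(\mathbb{Q}_p),\lambda}(-,\mathrm{St})$ to the first sequence. Proposition~\ref{3prop: locally algebraic extension} gives $\mathrm{Ext}^{1}_{\mathrm{GL}_3(\mathbb{Q}_p),\lambda}(\overline{L}(\lambda),\mathrm{St})=0$, $\mathrm{Ext}^{2}_{\mathrm{GL}_3(\mathbb{Q}_p),\lambda}(V_{12},\mathrm{St})=0$, $\mathrm{dim}_E\mathrm{Ext}^{1}_{\mathrm{GL}_3(\mathbb{Q}_p),\lambda}(V_{12},\mathrm{St})=2$ and $\mathrm{dim}_E\mathrm{Ext}^{2}_{\mathrm{GL}_3(\mathbb{Q}_p),\lambda}(\overline{L}(\lambda),\mathrm{St})=1$; together with the $\mathrm{Hom}$-vanishing above, the long exact sequence reduces to
\begin{multline*}
0\to\mathrm{Ext}^{1}_{\mathrm{GL}_3(\mathbb{Q}_p),\lambda}(W_0,\mathrm{St})\to\mathrm{Ext}^{1}_{\mathrm{GL}_3(\mathbb{Q}_p),\lambda}(V_{12},\mathrm{St})\\
\to\mathrm{Ext}^{2}_{\mathrm{GL}_3(\mathbb{Q}_p),\lambda}(\overline{L}(\lambda),\mathrm{St})\to\mathrm{Ext}^{2}_{\mathrm{GL}_3(\mathbb{Q}_p),\lambda}(W_0,\mathrm{St})\to 0.
\end{multline*}
Since $\mathrm{Ext}^{1}_{\mathrm{GL}_3(\mathbb{Q}_p),\lambda}(W_0,\mathrm{St})$ is one-dimensional by the previous step and $\mathrm{Ext}^{1}_{\mathrm{GL}_3(\mathbb{Q}_p),\lambda}(V_{12},\mathrm{St})$ is two-dimensional, the middle map has one-dimensional image, hence is surjective onto the one-dimensional term $\mathrm{Ext}^{2}_{\mathrm{GL}_3(\mathbb{Q}_p),\lambda}(\overline{L}(\lambda),\mathrm{St})$, which forces $\mathrm{Ext}^{2}_{\mathrm{GL}_3(\mathbb{Q}_p),\lambda}(W_0,\mathrm{St})=0$; equivalently, this is just an Euler-characteristic count on the four-term exact sequence.

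The only non-formal ingredient is Lemma~\ref{3lemm: vanishing locally algebraic}, which is already available, so the main point requiring care is the identification of $W_0$ with $W_1\times_{\overline{L}(\lambda)}W_2$ used for the second sequence: one checks that this fibre product has length three, socle $\overline{L}(\lambda)\otimes_E(v_{P_1}^{\infty}\oplus v_{P_2}^{\infty})$ and cosocle $\overline{L}(\lambda)$ — using that each $W_i$ is non-split and that there are no nonzero homomorphisms between distinct constituents — so that it coincides with $W_0$ by the uniqueness built into the definition of $W_0$. Beyond this bookkeeping I anticipate no obstacle; an alternative to the Euler-characteristic step is to apply $\mathrm{Ext}^{\bullet}_{\mathrm{GL}_3(\mathbb{Q}_p),\lambda}(-,\mathrm{St})$ to each $0\to\overline{L}(\lambda)\otimes_Ev_{P_i}^{\infty}\to W_i\to\overline{L}(\lambda)\to 0$ and use $\mathrm{Ext}^{1}_{\mathrm{GL}_3(\mathbb{Q}_p),\lambda}(W_i,\mathrm{St})=0$ to see that each component of the connecting map $\mathrm{Ext}^{1}_{\mathrm{GL}_3(\mathbb{Q}_p),\lambda}(V_{12},\mathrm{St})\to\mathrm{Ext}^{2}_{\mathrm{GL}_3(\mathbb{Q}_p),\lambda}(\overline{L}(\lambda),\mathrm{St})$ is already surjective.
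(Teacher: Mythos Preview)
Your argument is correct. The identification of $W_0$ with the fibre product $W_1\times_{\overline{L}(\lambda)}W_2$ is fine, and both long exact sequences do what you claim; the Euler-characteristic count (or the alternative you describe) closes the second step without issue.

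The paper takes a slightly more economical route: it uses the single short exact sequence
\[
\overline{L}(\lambda)\otimes_Ev_{P_1}^{\infty}\hookrightarrow W_0\twoheadrightarrow W_2
\]
and applies $\mathrm{Ext}^{\bullet}_{\mathrm{GL}_3(\mathbb{Q}_p),\lambda}(-,\mathrm{St})$ once. Since $\mathrm{Ext}^1(W_2,\mathrm{St})=\mathrm{Ext}^2(W_2,\mathrm{St})=0$ by Lemma~\ref{3lemm: vanishing locally algebraic} and $\mathrm{dim}_E\mathrm{Ext}^1(\overline{L}(\lambda)\otimes_Ev_{P_1}^{\infty},\mathrm{St})=1$, $\mathrm{Ext}^2(\overline{L}(\lambda)\otimes_Ev_{P_1}^{\infty},\mathrm{St})=0$ by Proposition~\ref{3prop: locally algebraic extension}, both claims drop out of one long exact sequence. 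Your approach trades this single asymmetric d\'evissage for two symmetric ones (socle sequence plus fibre-product sequence); this costs the extra bookkeeping of identifying $W_0$ as a fibre product, but has the mild advantage of exhibiting $\mathrm{Ext}^1(W_0,\mathrm{St})\cong\mathrm{Ext}^2(\overline{L}(\lambda),\mathrm{St})$ directly as a connecting isomorphism, which is conceptually pleasant.
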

\begin{proof}
The short exact sequence
$$\overline{L}(\lambda)\otimes_Ev_{P_1}^{\infty}\hookrightarrow W_0\twoheadrightarrow W_2$$
induces a long exact sequence
\begin{multline*}
\mathrm{Ext}^1_{\mathrm{GL}_3(\mathbb{Q}_p),\lambda}\left(\overline{L}(\lambda)\otimes_Ev_{P_1}^{\infty},~\overline{L}(\lambda)\otimes_E\mathrm{St}_3^{\infty}\right)\hookrightarrow \mathrm{Ext}^1_{\mathrm{GL}_3(\mathbb{Q}_p),\lambda}\left(W_0,~\overline{L}(\lambda)\otimes_E\mathrm{St}_3^{\infty}\right)\\ \rightarrow\mathrm{Ext}^1_{\mathrm{GL}_3(\mathbb{Q}_p),\lambda}\left(W_2,~\overline{L}(\lambda)\otimes_E\mathrm{St}_3^{\infty}\right)
\rightarrow\mathrm{Ext}^2_{\mathrm{GL}_3(\mathbb{Q}_p),\lambda}\left(\overline{L}(\lambda)\otimes_Ev_{P_1}^{\infty},~\overline{L}(\lambda)\otimes_E\mathrm{St}_3^{\infty}\right)\\ \rightarrow\mathrm{Ext}^2_{\mathrm{GL}_3(\mathbb{Q}_p),\lambda}\left(W_0,~\overline{L}(\lambda)\otimes_E\mathrm{St}_3^{\infty}\right) \rightarrow\mathrm{Ext}^2_{\mathrm{GL}_3(\mathbb{Q}_p),\lambda}\left(W_2,~\overline{L}(\lambda)\otimes_E\mathrm{St}_3^{\infty}\right)
\end{multline*}
which finishes the proof by Proposition~\ref{3prop: locally algebraic extension}, (\ref{3first algebraic vanishing}) and (\ref{3second algebraic vanishing}).
\end{proof}

We define the following subsets of $\Omega$:
$$
\begin{xy}
(-6,0)*+{\Omega_1\left(\overline{L}(\lambda)\right)}; (20,0)*+{:=}; (57.5,0)*+{\{\overline{L}(\lambda)\otimes_Ev_{P_1}^{\infty},~\overline{L}(\lambda)\otimes_Ev_{P_2}^{\infty},~C^1_{s_1,1},~C^1_{s_2,1}\}};
(0,-6)*+{\Omega_1\left(\overline{L}(\lambda)\otimes_Ev_{P_1}^{\infty}\right)}; (20,-6)*+{:=}; (60,-6)*+{\{\overline{L}(\lambda),~\overline{L}(\lambda)\otimes_E\mathrm{St}_3^{\infty},~C^2_{s_1,1},~C_{s_2,s_2},~C^1_{s_1,s_1s_2}\}};
(0,-12)*+{\Omega_1\left(\overline{L}(\lambda)\otimes_Ev_{P_2}^{\infty}\right)}; (20,-12)*+{:=}; (60,-12)*+{\{\overline{L}(\lambda),~\overline{L}(\lambda)\otimes_E\mathrm{St}_3^{\infty},~C^2_{s_2,1},~C_{s_1,s_1},~C^1_{s_2,s_2s_1}\}};
(0.5,-18)*+{\Omega_1\left(\overline{L}(\lambda)\otimes_E\mathrm{St}_3^{\infty}\right)}; (20,-18)*+{:=}; (61.5,-18)*+{\{\overline{L}(\lambda)\otimes_Ev_{P_1}^{\infty},~\overline{L}(\lambda)\otimes_Ev_{P_2}^{\infty},~C^2_{s_1,s_1s_2},~C^2_{s_2,s_2s_1}\}};
(-6,-24)*+{\Omega_2\left(\overline{L}(\lambda)\right)}; (20,-24)*+{:=}; (60.5,-24)*+{\{\overline{L}(\lambda)\otimes_E\mathrm{St}_3^{\infty},~C^2_{s_1,1},~C^2_{s_2,1},~C^1_{s_1s_2,1},~C^1_{s_2s_1,1}\}};
(0,-30)*+{\Omega_2\left(\overline{L}(\lambda)\otimes_Ev_{P_1}^{\infty}\right)}; (20,-30)*+{:=}; (62.5,-30)*+{\{\overline{L}(\lambda)\otimes_Ev_{P_2}^{\infty},~C^1_{s_1,1},~C^2_{s_1,s_1s_2},~C^2_{s_1s_2,1},~C_{s_2s_1,s_2}\}};
(0,-36)*+{\Omega_2\left(\overline{L}(\lambda)\otimes_Ev_{P_2}^{\infty}\right)}; (20,-36)*+{:=}; (62.5,-36)*+{\{\overline{L}(\lambda)\otimes_Ev_{P_1}^{\infty},~C^1_{s_2,1},~C^2_{s_2,s_2s_1},~C^2_{s_2s_1,1},~C_{s_1s_2,s_1}\}};
(0.5,-42)*+{\Omega_2\left(\overline{L}(\lambda)\otimes_E\mathrm{St}_3^{\infty}\right)}; (20,-42)*+{:=}; (62,-42)*+{\{\overline{L}(\lambda),~C^1_{s_1,s_1s_2},~C^1_{s_2,s_2s_1},~C^2_{s_1s_2,s_1s_2},~C^2_{s_2s_1,s_2s_1}\}};
\end{xy}
$$

\begin{prop}\label{3prop: list of N coh}
We have all explicit formula for
$$H_k\left(N_i,~\mathcal{F}_{P_j}^{\mathrm{GL}_3}(M,~\pi_j^{\infty})\right)$$
for each smooth admissible representation $\pi_j^{\infty}$ of $L_j(\mathbb{Q}_p)$, each
$$M\in \{L(-\lambda),~M_j(-\lambda), L(-s_{3-j}\cdot\lambda), ~M_j(-s_{3-j}\cdot\lambda),~L(-s_{3-j}s_j\cdot\lambda)\}$$
and each $0\leq k\leq 2$, $i,j=1,2$.
\end{prop}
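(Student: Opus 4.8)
The plan is to split the computation into the two cases $i=j$ and $i\neq j$, treating each as an explicit finite unwinding of the formalism of Sections~5.2 and~5.3 of \cite{Bre17} specialized to $\mathrm{GL}_3$. In both cases there will be two inputs to assemble: first, the Lie algebra homology $H_k(\mathfrak{n}_i,M)$ of the five listed modules, which one reads off from Kostant's theorem for the simple modules $L(-\mu)$ and from the defining (BGG-type) resolution for the parabolic Verma modules $M_j(-\mu)$; and second, the smooth homology $H_k(N_i,\pi_j^{\infty})$ of the smooth parts, which for every representation occurring in the paper (twists of $\mathrm{St}_2^{\infty}$, of a smooth principal series $i_{B_2}^{\mathrm{GL}_2}(\cdot)$, and of the trivial character) is classical. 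These two inputs are to be glued along the appropriate spectral sequence; because $W$ has only six elements and $P_1,P_2$ are maximal, this spectral sequence will either degenerate at $E_2$ or collapse to a handful of short exact sequences, so the procedure terminates with explicit answers in the range $0\leq k\leq 2$.

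In the case $i=j$ I would use that $N_i$ is exactly the unipotent radical of $P_i$, so that one may exploit the identity $D(\mathrm{GL}_3,E)\otimes_{D(N_i,E)}E\cong D(L_i,E)$ together with the formulas of Section~5.2 of \cite{Bre17}, which express $H_\bullet(N_i,\mathcal{F}_{P_i}^{\mathrm{GL}_3}(M,\pi_i^{\infty}))$ in terms of the $\mathfrak{n}_i$-homology of $M$ and the smooth $N_i$-homology of $\pi_i^{\infty}$. For $M=M_j(-\mu)$ the $\mathfrak{n}_i$-homology is concentrated in degree $0$ and equals $\overline{L}_i$ of the corresponding weight, whereas for $M=L(-\mu)$ Kostant's theorem yields a short explicit list of copies of $\overline{L}_i(w\cdot\mu)$ indexed by the minimal-length representatives of $W_{L_i}\backslash W$ of the relevant length. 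Combining this with the standard $N_i$-homology of $\mathrm{St}_2^{\infty}$, of a smooth principal series, and of $1$ then produces the answer directly for all $k$.

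The case $i\neq j$ is the substantive one, since there is no adjunction relating $N_i=N_{3-j}$ to $P_j$. Here I would follow Section~5.3 of \cite{Bre17}: stratify $P_j\backslash\mathrm{GL}_3$ by its finitely many $N_{3-j}$-orbits --- equivalently by the Bruhat cells indexed by a set of double-coset representatives $w$ --- and pull this stratification back to a finite filtration of $\mathcal{F}_{P_j}^{\mathrm{GL}_3}(M,\pi_j^{\infty})$ whose graded pieces are compact- or non-compact-induction representations attached to each cell. The $N_{3-j}$-homology of each graded piece is a smaller computation, involving only $\mathrm{GL}_2(\mathbb{Q}_p)$, a torus, or $N_{3-j}$ itself, and is again governed by Kostant's theorem on the algebraic side and by classical smooth Jacquet modules on the smooth side. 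Feeding the graded pieces into the long exact sequences of the filtration (equivalently, running the resulting spectral sequence, which degenerates quickly here) will yield the full list of $H_k(N_i,\mathcal{F}_{P_j}^{\mathrm{GL}_3}(M,\pi_j^{\infty}))$ for $0\leq k\leq 2$.

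I expect the main obstacle to be purely organizational and to live in this last case: one must identify each stratum together with its induction datum correctly, compute the differentials of the filtration spectral sequence, and then match the resulting subquotients with the irreducible constituents of the relevant principal series of $L_i(\mathbb{Q}_p)$; one must also check that for the infinite-dimensional parabolic Verma inputs $M_j(-\mu)$ the homology still lands in the expected finite range. Since none of these steps needs anything beyond the techniques already developed in Sections~5.2 and~5.3 of \cite{Bre17}, the intention is to carry out the bookkeeping and record the outcome of the computation, case by case, as an explicit list.
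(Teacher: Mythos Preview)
Your proposal is correct and takes essentially the same approach as the paper: the paper's proof is simply the one-line statement that this follows directly from Sections~5.2 and~5.3 of \cite{Bre17}, and your writeup is a faithful unpacking of exactly what those sections do in the $\mathrm{GL}_3$ setting. No further argument is needed.
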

\begin{proof}
This follows directly from Section~5.2 and 5.3 of \cite{Bre17}.
\end{proof}
\begin{lemm}\label{3lemm: nonvanishing ext1}
For
$$V_0\in\{\overline{L}(\lambda),~\overline{L}(\lambda)\otimes_Ev_{P_1}^{\infty},~\overline{L}(\lambda)\otimes_Ev_{P_2}^{\infty},~\overline{L}(\lambda)\otimes_E\mathrm{St}_3^{\infty}\},$$
we have
$$\mathrm{dim}_E\mathrm{Ext}^1_{\mathrm{GL}_3(\mathbb{Q}_p),\lambda}\left(V_0,~V\right)=1$$
if $V\in\Omega_1(V_0)$ and
$$\mathrm{Ext}^1_{\mathrm{GL}_3(\mathbb{Q}_p),\lambda}\left(V_0,~V\right)=0$$
if $V\in\Omega\setminus\Omega_1(V_0)$.
\end{lemm}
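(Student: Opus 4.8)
The plan is to compute every $\mathrm{Ext}^1_{\mathrm{GL}_3(\mathbb{Q}_p),\lambda}\left(V_0,V\right)$ by pushing it down to the Levi level with Lemma~\ref{3lemm: cohomology devissage} and then to $\mathrm{GL}_2(\mathbb{Q}_p)$ and $\mathbb{Q}_p^{\times}$, where all the needed inputs are already available. \emph{First I would realize each $V\in\Omega$ inside a generalized principal series.} By (\ref{3irr rep I}), (\ref{3irr rep II}) and Lemma~\ref{3lemm: structure of St}, every $V\in\Omega$ is of the form $\mathcal{F}^{\mathrm{GL}_3}_{P_j}(L(-w\cdot\lambda),\pi^{\infty})$ for suitable $j\in\{1,2\}$, $w\in W$ and a smooth representation $\pi^{\infty}$ of $L_j(\mathbb{Q}_p)$ (or $\mathcal{F}^{\mathrm{GL}_3}_{B}(L(-s_1s_2s_1\cdot\lambda),\chi_w^{\infty})$ in the remaining six cases). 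The surjection $M_j(-w\cdot\lambda)\twoheadrightarrow L(-w\cdot\lambda)$ and the contravariance of $\mathcal{F}^{\mathrm{GL}_3}_{P_j}$ in its first argument give an embedding $V\hookrightarrow I_{P_j}^{\mathrm{GL}_3}(\sigma)$ with $\sigma=\overline{L}_j(w\cdot\lambda)\otimes_E\pi^{\infty}$ a locally algebraic representation of $L_j(\mathbb{Q}_p)$ (analogously with $I_B^{\mathrm{GL}_3}$); the remaining constituents of $I_{P_j}^{\mathrm{GL}_3}(\sigma)$ again lie in $\Omega$ and are identified from Section~5.2--5.3 of \cite{Bre17}. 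For some $V$ it is more convenient to realize $V$ as a quotient of such an $I_{P_j}^{\mathrm{GL}_3}(\sigma)$ instead; the bookkeeping is the same.

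Next I would apply Lemma~\ref{3lemm: cohomology devissage} with $\Pi_1=V_0$ and $\Pi_2=\sigma$, which yields $\mathrm{Hom}_{L_j,\lambda}\left(H_0(N_j,V_0),\sigma\right)\xrightarrow{\sim}\mathrm{Hom}_{\mathrm{GL}_3(\mathbb{Q}_p),\lambda}\left(V_0,I_{P_j}^{\mathrm{GL}_3}(\sigma)\right)$ and the four-term exact sequence
$$\mathrm{Ext}^1_{L_j,\lambda}\left(H_0(N_j,V_0),\sigma\right)\hookrightarrow\mathrm{Ext}^1_{\mathrm{GL}_3(\mathbb{Q}_p),\lambda}\left(V_0,I_{P_j}^{\mathrm{GL}_3}(\sigma)\right)\to\mathrm{Hom}_{L_j,\lambda}\left(H_1(N_j,V_0),\sigma\right)\to\mathrm{Ext}^2_{L_j,\lambda}\left(H_0(N_j,V_0),\sigma\right).$$
Since each $V_0\in\{\overline{L}(\lambda),\overline{L}(\lambda)\otimes_Ev_{P_1}^{\infty},\overline{L}(\lambda)\otimes_Ev_{P_2}^{\infty},\overline{L}(\lambda)\otimes_E\mathrm{St}_3^{\infty}\}$ is locally algebraic, its homology $H_k(N_j,V_0)$ is read off by combining the Kostant-type description of $H_\bullet(\mathfrak{n}_j,\overline{L}(\lambda))$ with the smooth Jacquet module of the smooth factor; concretely it comes from Proposition~\ref{3prop: list of N coh} applied to the length-two locally algebraic representations $\mathcal{F}^{\mathrm{GL}_3}_{P_j}(L(-\lambda),\cdot)$ containing $V_0$. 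Because $L_j(\mathbb{Q}_p)\cong\mathrm{GL}_2(\mathbb{Q}_p)\times\mathbb{Q}_p^{\times}$ (up to a permutation of factors), the groups $\mathrm{Ext}^k_{L_j,\lambda}(-,-)$ above split by a K\"unneth argument into sums of products of $\mathrm{Ext}$-groups over $\mathrm{GL}_2(\mathbb{Q}_p)$ and over $\mathbb{Q}_p^{\times}$; the $\mathrm{GL}_2(\mathbb{Q}_p)$-factors are governed by Proposition~4.7 of \cite{Schr11}, the main result of \cite{Or05}, and the computations of Section~\ref{3section: GL2Qp} (in particular Lemma~\ref{3lemm: ext1}), while the $\mathbb{Q}_p^{\times}$-factors reduce to $\mathrm{dim}_E\mathrm{Hom}_{\mathrm{cont}}(\mathbb{Q}_p^{\times},E)=2$. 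Feeding this in gives $\mathrm{dim}_E\mathrm{Ext}^1_{\mathrm{GL}_3(\mathbb{Q}_p),\lambda}\left(V_0,I_{P_j}^{\mathrm{GL}_3}(\sigma)\right)$ exactly in the cases we need, provided either $\mathrm{Hom}_{L_j,\lambda}(H_1(N_j,V_0),\sigma)=0$ or the connecting map to $\mathrm{Ext}^2_{L_j,\lambda}(H_0(N_j,V_0),\sigma)$ is injective.

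Finally I would pass from $I_{P_j}^{\mathrm{GL}_3}(\sigma)$ to $V$. The long exact $\mathrm{Ext}_{\mathrm{GL}_3(\mathbb{Q}_p),\lambda}\left(V_0,-\right)$-sequence attached to $V\hookrightarrow I_{P_j}^{\mathrm{GL}_3}(\sigma)\twoheadrightarrow I_{P_j}^{\mathrm{GL}_3}(\sigma)/V$, together with $\mathrm{Hom}_{\mathrm{GL}_3(\mathbb{Q}_p),\lambda}\left(V_0,I_{P_j}^{\mathrm{GL}_3}(\sigma)/V\right)=0$ (a socle comparison using the constituents found in the first step), gives $\mathrm{Ext}^1_{\mathrm{GL}_3(\mathbb{Q}_p),\lambda}\left(V_0,V\right)\hookrightarrow\mathrm{Ext}^1_{\mathrm{GL}_3(\mathbb{Q}_p),\lambda}\left(V_0,I_{P_j}^{\mathrm{GL}_3}(\sigma)\right)$, with image the kernel of the map to $\mathrm{Ext}^1_{\mathrm{GL}_3(\mathbb{Q}_p),\lambda}\left(V_0,I_{P_j}^{\mathrm{GL}_3}(\sigma)/V\right)$, which is treated by the same machine applied to the shorter representation $I_{P_j}^{\mathrm{GL}_3}(\sigma)/V$ (an induction on length, using Proposition~\ref{3prop: formal devissages}). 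This pins down both the one-dimensional cases $V\in\Omega_1(V_0)$ and the vanishing cases $V\in\Omega\setminus\Omega_1(V_0)$. For the non-vanishing half it is enough to exhibit one non-split extension in each case: for the pairs already occurring inside $\mathrm{St}_3^{\mathrm{an}}(\lambda)$, inside $v_{P_i}^{\mathrm{an}}(\lambda)$, or among the locally algebraic pieces this follows from Lemma~\ref{3lemm: structure of St}, Proposition~\ref{3prop: locally algebraic extension} and Lemma~\ref{3lemm: vanishing locally algebraic main}, and otherwise the non-split extension appears as a length-two subquotient of the relevant $I_{P_j}^{\mathrm{GL}_3}(\sigma)$ produced by the spectral sequence above.

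The hard part will be the sheer bookkeeping: there are four choices of $V_0$ and about thirty members of $\Omega$, hence on the order of a hundred $\mathrm{Ext}$-groups to determine, and for each one must choose the parabolic $P_j$ (or $B$) and whether to realize $V$ as a sub or a quotient so that the correction terms stay under control. The one genuinely delicate analytic point is extracting exact dimensions rather than mere upper bounds in the middle step: in the cases where $\mathrm{Hom}_{L_j,\lambda}\left(H_1(N_j,V_0),\sigma\right)$ and $\mathrm{Ext}^2_{L_j,\lambda}\left(H_0(N_j,V_0),\sigma\right)$ are both nonzero one has to understand the connecting homomorphism between them, which ultimately reduces to the explicit cocycle formulas of Section~5.2--5.3 of \cite{Bre17}; the individual $\mathrm{GL}_2(\mathbb{Q}_p)$ and $\mathbb{Q}_p^{\times}$ inputs themselves are routine.
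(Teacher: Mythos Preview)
Your proposal is correct and follows essentially the same route as the paper: embed $V$ into a parabolic induction $I_{P_j}^{\mathrm{GL}_3}(\sigma)$ (or reduce to it via the short exact sequence coming from $M_j(-w\cdot\lambda)\twoheadrightarrow L(-w\cdot\lambda)$), apply the spectral sequence of Lemma~\ref{3lemm: cohomology devissage}, and compute $H_k(N_j,V_0)$ from Proposition~\ref{3prop: list of N coh}. The one place where the paper is more efficient is at the Levi level: rather than invoking a K\"unneth decomposition into $\mathrm{GL}_2(\mathbb{Q}_p)$ and $\mathbb{Q}_p^{\times}$ factors and then worrying about the connecting map, the paper simply observes that $Z(L_j(\mathbb{Q}_p))$ acts on $\overline{L}_j(\lambda)$, $\overline{L}_j(s_{3-j}\cdot\lambda)$ and $\overline{L}_j(s_{3-j}s_j\cdot\lambda)\otimes_E\pi_j^{\infty}$ via pairwise distinct characters, which kills almost all of the $\mathrm{Ext}^k_{L_j,\lambda}$ terms outright; the only surviving contribution is the $\mathrm{Hom}_{L_j,\lambda}(H_1,\sigma)$ term when the weights match, and there the answer is read off by inspection of $\pi_j^{\infty}$. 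In particular the ``genuinely delicate analytic point'' you anticipate about the connecting homomorphism never actually materializes, because in every case at least one of its source or target already vanishes by the central-character argument.
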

\begin{proof}
We only prove the statements for $V_0=\overline{L}(\lambda)$ as other cases are similar. If
$$V\in\{\overline{L}(\lambda),~\overline{L}(\lambda)\otimes_Ev_{P_1}^{\infty},~\overline{L}(\lambda)\otimes_Ev_{P_2}^{\infty},~\overline{L}(\lambda)\otimes_E\mathrm{St}_3^{\infty}\}$$
then the conclusion follows from Proposition~\ref{3prop: locally algebraic extension}. If
$$V=\mathcal{F}_{P_j}^{\mathrm{GL}_3}(L(-s_{3-j}s_j\cdot\lambda),~\pi_j^{\infty})$$
for a smooth irreducible representation $\pi_j^{\infty}$ and $j=1$ or $2$, then it follows from Lemma~\ref{3lemm: cohomology devissage} that
\begin{multline}\label{3explicit spectral sequence 1}
\mathrm{Ext}^1_{L_j(\mathbb{Q}_p),\lambda}\left(H_0(N_j,~\overline{L}(\lambda)),~\overline{L}_j(s_{3-j}s_j\cdot\lambda)\otimes_E\pi_j^{\infty}\right)\hookrightarrow\mathrm{Ext}^1_{\mathrm{GL}_3(\mathbb{Q}_p),\lambda}\left(\overline{L}(\lambda),~V\right)\\
\rightarrow \mathrm{Hom}_{L_j(\mathbb{Q}_p),\lambda}\left(H_1(N_j,~\overline{L}(\lambda)),~\overline{L}_j(s_{3-j}s_j\cdot\lambda)\otimes_E\pi_j^{\infty}\right)\\
\rightarrow\mathrm{Ext}^2_{L_j(\mathbb{Q}_p),\lambda}\left(H_0(N_j,~\overline{L}(\lambda)),~\overline{L}_j(s_{3-j}s_j\cdot\lambda)\otimes_E\pi_j^{\infty}\right).
\end{multline}
It follows from Proposition~\ref{3prop: list of N coh} and (\ref{3explicit spectral sequence 1}) that
\begin{multline}\label{3explicit spectral sequence 2}
\mathrm{Ext}^1_{L_j(\mathbb{Q}_p),\lambda}\left(\overline{L}_j(\lambda),~\overline{L}_j(s_{3-j}s_j\cdot\lambda)\otimes_E\pi_j^{\infty}\right)\hookrightarrow\mathrm{Ext}^1_{\mathrm{GL}_3(\mathbb{Q}_p),\lambda}\left(\overline{L}(\lambda),~V\right)\\
\rightarrow \mathrm{Hom}_{L_j(\mathbb{Q}_p),\lambda}\left(\overline{L}_j(s_{3-j}\cdot\lambda),~\overline{L}_j(s_{3-j}s_j\cdot\lambda)\otimes_E\pi_j^{\infty}\right).
\end{multline}
We notice that $Z(L_j(\mathbb{Q}_p))$ acts via different characters on $\overline{L}_j(\lambda)$, $\overline{L}_j(s_{3-j}\cdot\lambda)$ and $\overline{L}_j(s_{3-j}s_j\cdot\lambda)\otimes_E\pi_j^{\infty}$, and thus we have the equalities
$$
\begin{array}{cccc}
\mathrm{Ext}^1_{L_j(\mathbb{Q}_p),\lambda}\left(\overline{L}_j(\lambda),~\overline{L}_j(s_{3-j}s_j\cdot\lambda)\otimes_E\pi_j^{\infty}\right)&=&0&\\
\mathrm{Hom}_{L_j(\mathbb{Q}_p),\lambda}\left(\overline{L}_j(s_{3-j}\cdot\lambda),~\overline{L}_j(s_{3-j}s_j\cdot\lambda)\otimes_E\pi_j^{\infty}\right)&=&0&
\end{array}
$$
which imply that
\begin{equation}\label{3vanishing of ext 1.1}
\mathrm{Ext}^1_{\mathrm{GL}_3(\mathbb{Q}_p),\lambda}\left(\overline{L}(\lambda),~\mathcal{F}_{P_j}^{\mathrm{GL}_3}(L(-s_{3-j}s_j\cdot\lambda),~\pi_j^{\infty})\right)=0
\end{equation}
for each $\pi_j^{\infty}$ and $j=1,2$. If
$$V=\mathcal{F}_{P_j}^{\mathrm{GL}_3}(L(-s_{3-j}\cdot\lambda),~\pi_j^{\infty})$$
for a smooth irreducible representation $\pi_j^{\infty}$ and $j=1$ or $2$, then the short exact sequence
$$\mathcal{F}_{P_j}^{\mathrm{GL}_3}(L(-s_{3-j}\cdot\lambda),~\pi_j^{\infty})\hookrightarrow\mathcal{F}_{P_j}^{\mathrm{GL}_3}(M_j(-s_{3-j}\cdot\lambda),~\pi_j^{\infty})\twoheadrightarrow\mathcal{F}_{P_j}^{\mathrm{GL}_3}(L(-s_{3-j}s_j\cdot\lambda),~\pi_j^{\infty})$$
induces a long exact sequence
\begin{multline*}
\mathrm{Ext}^1_{\mathrm{GL}_3(\mathbb{Q}_p),\lambda}\left(\overline{L}(\lambda),~V\right)\hookrightarrow\mathrm{Ext}^1_{\mathrm{GL}_3(\mathbb{Q}_p),\lambda}\left(\overline{L}(\lambda),~\mathcal{F}_{P_j}^{\mathrm{GL}_3}(M_j(-s_{3-j}\cdot\lambda),~\pi_j^{\infty})\right)\\
\rightarrow\mathrm{Ext}^1_{\mathrm{GL}_3(\mathbb{Q}_p),\lambda}\left(\overline{L}(\lambda),~\mathcal{F}_{P_j}^{\mathrm{GL}_3}(L(-s_{3-j}s_j\cdot\lambda),~\pi_j^{\infty})\right)
\end{multline*}
which implies an isomorphism
\begin{equation}\label{3isomorphism of ext 1.2}
\mathrm{Ext}^1_{\mathrm{GL}_3(\mathbb{Q}_p),\lambda}\left(\overline{L}(\lambda),~V\right)\xrightarrow{\sim}\mathrm{Ext}^1_{\mathrm{GL}_3(\mathbb{Q}_p),\lambda}\left(\overline{L}(\lambda),~\mathcal{F}_{P_j}^{\mathrm{GL}_3}(M_j(-s_{3-j}\cdot\lambda),~\pi_j^{\infty})\right)
\end{equation}
by (\ref{3vanishing of ext 1.1}). It follows from Proposition~\ref{3prop: list of N coh} and Lemma~\ref{3lemm: cohomology devissage} that
\begin{multline}\label{3explicit spectral sequence 3}
\mathrm{Ext}^1_{L_j(\mathbb{Q}_p),\lambda}\left(\overline{L}_j(\lambda),~\overline{L}_j(s_{3-j}\cdot\lambda)\otimes_E\pi_j^{\infty}\right)\hookrightarrow\mathrm{Ext}^1_{\mathrm{GL}_3(\mathbb{Q}_p),\lambda}\left(\overline{L}(\lambda),~V\right)\\
\rightarrow \mathrm{Hom}_{L_j(\mathbb{Q}_p),\lambda}\left(\overline{L}_j(s_{3-j}\cdot\lambda),~\overline{L}_j(s_{3-j}\cdot\lambda)\otimes_E\pi_j^{\infty}\right)
\rightarrow\mathrm{Ext}^2_{L_j(\mathbb{Q}_p),\lambda}\left(\overline{L}_j(\lambda),~\overline{L}_j(s_{3-j}\cdot\lambda)\otimes_E\pi_j^{\infty}\right).
\end{multline}
As $Z(L_j(\mathbb{Q}_p))$ acts via different characters on $\overline{L}_j(\lambda)$ and $\overline{L}_j(s_{3-j}\cdot\lambda)\otimes_E\pi_j^{\infty}$, we have the equalities
$$
\begin{array}{cccc}
\mathrm{Ext}^1_{L_j(\mathbb{Q}_p),\lambda}\left(\overline{L}_j(\lambda),~\overline{L}_j(s_{3-j}s_j\cdot\lambda)\otimes_E\pi_j^{\infty}\right)&=&0&\\
\mathrm{Ext}^2_{L_j(\mathbb{Q}_p),\lambda}\left(\overline{L}_j(\lambda),~\overline{L}_j(s_{3-j}s_j\cdot\lambda)\otimes_E\pi_j^{\infty}\right)&=&0&
\end{array}
$$
which imply that
\begin{equation}\label{3isomorphism of ext 1.3}
\mathrm{Ext}^1_{\mathrm{GL}_3(\mathbb{Q}_p),\lambda}\left(\overline{L}(\lambda),~V\right)
\xrightarrow{\sim} \mathrm{Hom}_{L_j(\mathbb{Q}_p),\lambda}\left(\overline{L}_j(s_{3-j}\cdot\lambda),~\overline{L}_j(s_{3-j}\cdot\lambda)\otimes_E\pi_j^{\infty}\right).
\end{equation}
It is then obvious that
$$\mathrm{Hom}_{L_j(\mathbb{Q}_p),\lambda}\left(\overline{L}_j(s_{3-j}\cdot\lambda),~\overline{L}_j(s_{3-j}\cdot\lambda)\otimes_E\pi_j^{\infty}\right)=0$$
for each smooth irreducible $\pi_j^{\infty}\neq 1_{L_j}$, and therefore
$$\mathrm{dim}_E\mathrm{Ext}^1_{\mathrm{GL}_3(\mathbb{Q}_p),\lambda}\left(\overline{L}(\lambda),~\mathcal{F}_{P_j}^{\mathrm{GL}_3}(L(-s_{3-j}\cdot\lambda),~1_{L_j})\right)=1$$
and
$$\mathrm{Ext}^1_{\mathrm{GL}_3(\mathbb{Q}_p),\lambda}\left(\overline{L}(\lambda),~\mathcal{F}_{P_j}^{\mathrm{GL}_3}(L(-s_{3-j}\cdot\lambda),~1_{L_j})\right)=0$$
for each smooth irreducible $\pi_j^{\infty}\neq 1_{L_j}$. Finally, similar methods together with Proposition~\ref{3prop: list of N coh} also show that
$$\mathrm{Ext}^1_{\mathrm{GL}_3(\mathbb{Q}_p),\lambda}\left(\overline{L}(\lambda),~\mathcal{F}_B^{\mathrm{GL}_3}(L(-s_1s_2s_1\cdot\lambda),~\chi_w^{\infty})\right)=0$$
for each $w\in W$.
\end{proof}

We define
$$\Omega^-:=\Omega\setminus\{\overline{L}(\lambda), ~\overline{L}(\lambda)\otimes_Ev_{P_1}^{\infty}, ~\overline{L}(\lambda)\otimes_Ev_{P_2}^{\infty}, ~\overline{L}(\lambda)\otimes_E\mathrm{St}_3^{\infty}\}.$$
Then we define the following subsets of $\Omega^-$ for $i=1,2$:
$$
\begin{xy}
(0,0)*+{\Omega_1\left(C^1_{s_i,1}\right)}; (20,0)*+{:=}; (55.8,0)*+{\{C^1_{s_is_{3-i},1},~C^2_{s_{3-i}s_i,1},~C^2_{s_i,1},~C^1_{s_i,1}\}};
(0,-6)*+{\Omega_1\left(C^2_{s_i,1}\right)}; (20,-6)*+{:=}; (58,-6)*+{\{C^2_{s_is_{3-i},1},~C_{s_{3-i}s_i,s_{3-i}},~C^1_{s_i,1},~C^2_{s_i,1}\}};
(4,-12)*+{\Omega_1\left(C^1_{s_i,s_is_{3-i}}\right)}; (20,-12)*+{:=}; (68,-12)*+{\{C^1_{s_is_{3-i},s_is_{3-i}},~C_{s_{3-i}s_i,s_{3-i}},~C^2_{s_i,s_is_{3-i}},~C^1_{s_i,s_is_{3-i}}\}};
(4,-18)*+{\Omega_1\left(C^2_{s_i,s_is_{3-i}}\right)}; (20,-18)*+{:=}; (69.4,-18)*+{\{C^2_{s_is_{3-i},s_is_{3-i}},~C^1_{s_{3-i}s_i,s_{3-i}s_i},~C^1_{s_i,s_is_{3-i}},~C^2_{s_i,s_is_{3-i}}\}};
(0.65,-24)*+{\Omega_1\left(C_{s_i,s_i}\right)}; (20,-24)*+{:=}; (63.5,-24)*+{\{C_{s_is_{3-i},s_i},~C^1_{s_{3-i}s_i,1},~C^2_{s_{3-i}s_i,s_{3-i}s_i},~C_{s_i,s_i}\}};
\end{xy}
$$
\begin{lemm}\label{3lemm: nonvanishing ext3}
For
$$V_0\in\{C^1_{s_i,1},~C^2_{s_i,1},~C^1_{s_i,s_is_{3-i}},~C^2_{s_i,s_is_{3-i}},~C_{s_i,s_i}\mid i=1,2\},$$
we have
$$\mathrm{dim}_E\mathrm{Ext}^1_{\mathrm{GL}_3(\mathbb{Q}_p),\lambda}\left(V_0,~V\right)=1$$
if $V\in\Omega_1(V_0)$ and
$$\mathrm{Ext}^1_{\mathrm{GL}_3(\mathbb{Q}_p),\lambda}\left(V_0,~V\right)=0$$
if $V\in\Omega^-\setminus\Omega_1(V_0)$.
\end{lemm}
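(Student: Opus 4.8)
The plan is to imitate the proof of Lemma~\ref{3lemm: nonvanishing ext1} almost verbatim. The one structural observation to make first is that, by the definitions in~(\ref{3irr rep I}), every $V_0$ occurring in the statement is of the form $\mathcal{F}_{P_j}^{\mathrm{GL}_3}(L(-s_{3-j}\cdot\lambda),~\pi^\infty)$ with $j=3-i$ and $\pi^\infty$ a smooth representation of $L_j(\mathbb{Q}_p)$, so that $V_0$ is precisely one of the objects to which Proposition~\ref{3prop: list of N coh} applies and all of $H_0(N_a,V_0)$, $H_1(N_a,V_0)$, $H_2(N_a,V_0)$ are available explicitly for $a=1,2$. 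As in the proof of Lemma~\ref{3lemm: nonvanishing ext1}, it then suffices to spell out one representative, say $V_0=C^1_{s_1,1}=\mathcal{F}_{P_2}^{\mathrm{GL}_3}(L(-s_1\cdot\lambda),~1_{L_2})$, the other four shapes of $V_0$ (and the index $i=2$) being entirely analogous.

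For the second argument I would first dispose of all $V\in\Omega^-$ whose underlying central character (the restriction $\delta_{Z,\mu}$ of the associated algebraic weight $\mu$) differs from that of $V_0$: since $\mathrm{Ext}^1_{\mathrm{GL}_3(\mathbb{Q}_p),\lambda}(-,-)$ is computed inside the fixed central-character block, these $\mathrm{Ext}^1$-groups vanish identically. This kills the bulk of the elements of $\Omega^-$ with no computation, exactly as in the proof of Lemma~\ref{3lemm: nonvanishing ext1}.

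For each of the finitely many remaining $V$ — those sharing the central character of $V_0$ — I would realise $V$ as a subquotient of a parabolic induction $I_{P_a}^{\mathrm{GL}_3}(\sigma)$ or $I_B^{\mathrm{GL}_3}(\chi)$, passing when necessary through the short exact sequences relating $\mathcal{F}_{P_a}^{\mathrm{GL}_3}(L(-w^{\prime}\cdot\lambda),-)$ with $\mathcal{F}_{P_a}^{\mathrm{GL}_3}(M_a(-w^{\prime}\cdot\lambda),-)$, and then apply Lemma~\ref{3lemm: cohomology devissage}: it reduces $\mathrm{Ext}^1_{\mathrm{GL}_3(\mathbb{Q}_p),\lambda}(V_0,~I_{P_a}^{\mathrm{GL}_3}(\sigma))$ to the three Levi-level terms $\mathrm{Ext}^1_{L_a(\mathbb{Q}_p),\lambda}(H_0(N_a,V_0),~\sigma)$, $\mathrm{Hom}_{L_a(\mathbb{Q}_p),\lambda}(H_1(N_a,V_0),~\sigma)$ and $\mathrm{Ext}^2_{L_a(\mathbb{Q}_p),\lambda}(H_0(N_a,V_0),~\sigma)$. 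Here $H_0(N_a,V_0)$ and $H_1(N_a,V_0)$ are written out by Proposition~\ref{3prop: list of N coh} as representations of $L_a(\mathbb{Q}_p)\cong\mathrm{GL}_2(\mathbb{Q}_p)\times\mathbb{Q}_p^{\times}$ of the form $\overline{L}_a(\mu)\otimes_E\tau^{\infty}$, and a K\"unneth decomposition reduces the Levi-level $\mathrm{Ext}$-groups to $\mathrm{Ext}^\bullet_{\mathrm{GL}_2(\mathbb{Q}_p)}$ and $\mathrm{Ext}^\bullet_{\mathbb{Q}_p^{\times}}$, both known (via Proposition~\ref{3prop: locally algebraic extension}, \cite{Or05}, and $\mathrm{Ext}^1_{\mathbb{Q}_p^{\times}}(1,1)\cong\mathrm{Hom}_{\mathrm{cont}}(\mathbb{Q}_p^{\times},~E)$); the weights $\mu$ together with the central characters of the summands then determine which of these are nonzero and force the connecting homomorphisms to vanish on the relevant pieces. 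The upshot is that the count is $1$ precisely when $V\in\Omega_1(V_0)$ and $0$ otherwise, the nonvanishing being witnessed concretely by the glue inside the ambient parabolic induction of $V$, respectively by the image of a nonzero element of $\mathrm{Ext}^1_{\mathbb{Q}_p^{\times}}(1,1)$.

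The only real difficulty is the bookkeeping: there are five shapes of $V_0$ and, for each, only a handful of $V\in\Omega^-$ share its central character, but for those one must select the correct ambient principal series, read off the right weight components of $H_0(N_a,V_0)$ and $H_1(N_a,V_0)$ from the tables of Proposition~\ref{3prop: list of N coh}, keep precise track of the $\rho$-shift in the dot action and of the modulus twists built into the normalisation of $\mathcal{F}_{P_a}^{\mathrm{GL}_3}$, and verify the vanishing of the various connecting maps in the long exact sequences of Lemma~\ref{3lemm: cohomology devissage} (again a comparison of $\mathfrak{t}$-weights and of $Z(L_a(\mathbb{Q}_p))$-characters). Isolating, and ruling out by such weight considerations, the \emph{dangerous} $V$ — those with the central character of $V_0$ but not lying in $\Omega_1(V_0)$ — is the one step requiring genuine case-by-case care; everything else is formal and parallels the proof of Lemma~\ref{3lemm: nonvanishing ext1}.
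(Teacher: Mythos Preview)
Your approach is correct and matches the paper's, which simply says the proof is very similar to that of Lemma~\ref{3lemm: nonvanishing ext1}. One small correction: your first reduction step, disposing of those $V\in\Omega^-$ whose $\mathrm{GL}_3$-central character differs from that of $V_0$, is vacuous, since every object in $\Omega$ has central character $\delta_{Z,\lambda}$ by construction (the weights $w\cdot\lambda$ all have the same coordinate sum); the genuine reductions come, as you correctly indicate later, from comparing $Z(L_a(\mathbb{Q}_p))$-characters on the $N_a$-homology pieces after applying Lemma~\ref{3lemm: cohomology devissage}.
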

\begin{proof}
The proof is very similar to that of Lemma~\ref{3lemm: nonvanishing ext3}.
\end{proof}

\begin{lemm}\label{3lemm: nonvanishing ext2}
For
$$V_0\in\{\overline{L}(\lambda),~\overline{L}(\lambda)\otimes_Ev_{P_1}^{\infty},~\overline{L}(\lambda)\otimes_Ev_{P_2}^{\infty},~\overline{L}(\lambda)\otimes_E\mathrm{St}_3^{\infty}\},$$
we have
$$\mathrm{dim}_E\mathrm{Ext}^2_{\mathrm{GL}_3(\mathbb{Q}_p),\lambda}\left(V_0,~V\right)=1$$
if $V\in\Omega_2(V_0)$ and
$$\mathrm{Ext}^2_{\mathrm{GL}_3(\mathbb{Q}_p),\lambda}\left(V_0,~V\right)=0$$
if $V\in\Omega\setminus\Omega_2(V_0)$.
\end{lemm}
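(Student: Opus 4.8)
The plan is to imitate the proof of Lemma~\ref{3lemm: nonvanishing ext1}, raising the cohomological degree by one. As there, it suffices to treat the case $V_0=\overline{L}(\lambda)$ in detail; the three remaining choices of $V_0$ are handled by the obvious analogues of the exact sequences below, replacing $\overline{L}(\lambda)$, the parabolic, and the Weyl translates by the relevant ones in each case. If $V$ lies in $\{\overline{L}(\lambda),\,\overline{L}(\lambda)\otimes_Ev_{P_1}^{\infty},\,\overline{L}(\lambda)\otimes_Ev_{P_2}^{\infty},\,\overline{L}(\lambda)\otimes_E\mathrm{St}_3^{\infty}\}$, the assertion is exactly Proposition~\ref{3prop: locally algebraic extension}: among these the only non-zero group is $\mathrm{Ext}^2_{\mathrm{GL}_3(\mathbb{Q}_p),\lambda}\!\left(\overline{L}(\lambda),\,\overline{L}(\lambda)\otimes_E\mathrm{St}_3^{\infty}\right)$, which is one-dimensional, and $\overline{L}(\lambda)\otimes_E\mathrm{St}_3^{\infty}$ is precisely the locally algebraic member of $\Omega_2(\overline{L}(\lambda))$.

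Now suppose $V=\mathcal{F}_{P_j}^{\mathrm{GL}_3}(L(-w\cdot\lambda),\,\pi^{\infty})$ for some $j\in\{1,2\}$, a smooth irreducible $\pi^{\infty}$ and $w\in\{1,s_{3-j},s_{3-j}s_j\}$, or $V=\mathcal{F}_B^{\mathrm{GL}_3}(L(-s_1s_2s_1\cdot\lambda),\,\chi_w^{\infty})$. Exactly as in the proof of Lemma~\ref{3lemm: nonvanishing ext1}, the short exact sequences relating a simple highest weight module to a (parabolic) Verma module covering it reduce the computation of $\mathrm{Ext}^2_{\mathrm{GL}_3(\mathbb{Q}_p),\lambda}\!\left(\overline{L}(\lambda),\,V\right)$ to the case in which the target is an honest parabolic (or Borel) induction $\mathrm{Ind}_{P_j}^{\mathrm{GL}_3}\!\left(\overline{L}_j(w\cdot\lambda)\otimes_E\pi^{\infty}\right)^{\rm{an}}$. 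For such a target I would run the spectral sequence of Lemma~\ref{3lemm: cohomology devissage} with $\Pi_1=\overline{L}(\lambda)$ and $\Pi_2=\overline{L}_j(w\cdot\lambda)\otimes_E\pi^{\infty}$. Since $N_j$ is two-dimensional abelian, $H_k(N_j,\overline{L}(\lambda))=0$ for $k\geq 3$, and by Kostant's theorem (a special case of Proposition~\ref{3prop: list of N coh}) we have $H_0(N_j,\overline{L}(\lambda))=\overline{L}_j(\lambda)$, $H_1(N_j,\overline{L}(\lambda))=\overline{L}_j(s_{3-j}\cdot\lambda)$ and $H_2(N_j,\overline{L}(\lambda))=\overline{L}_j(s_{3-j}s_j\cdot\lambda)$. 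Hence $\mathrm{Ext}^2_{\mathrm{GL}_3(\mathbb{Q}_p),\lambda}\!\left(\overline{L}(\lambda),\,\mathrm{Ind}_{P_j}^{\mathrm{GL}_3}(\overline{L}_j(w\cdot\lambda)\otimes_E\pi^{\infty})^{\rm{an}}\right)$ carries a three-step filtration whose graded pieces are subquotients of
$$\mathrm{Hom}_{L_j,\lambda}\!\left(\overline{L}_j(s_{3-j}s_j\cdot\lambda),\,\overline{L}_j(w\cdot\lambda)\otimes_E\pi^{\infty}\right),\quad \mathrm{Ext}^1_{L_j,\lambda}\!\left(\overline{L}_j(s_{3-j}\cdot\lambda),\,\overline{L}_j(w\cdot\lambda)\otimes_E\pi^{\infty}\right),\quad \mathrm{Ext}^2_{L_j,\lambda}\!\left(\overline{L}_j(\lambda),\,\overline{L}_j(w\cdot\lambda)\otimes_E\pi^{\infty}\right),$$
the maps between consecutive filtration steps being the differentials $d_2$ and $d_3$.

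It remains to evaluate these three Levi-groups and the relevant differentials. As in Lemma~\ref{3lemm: nonvanishing ext1}, whenever $Z(L_j(\mathbb{Q}_p))$ acts through distinct characters on $\overline{L}_j(w\cdot\lambda)\otimes_E\pi^{\infty}$ and on the relevant one of $\overline{L}_j(\lambda)$, $\overline{L}_j(s_{3-j}\cdot\lambda)$, $\overline{L}_j(s_{3-j}s_j\cdot\lambda)$, the corresponding group vanishes in every degree; this disposes of all pairs $(w,\pi^{\infty})$ outside those recorded in $\Omega_2(\overline{L}(\lambda))$, giving $\mathrm{Ext}^2_{\mathrm{GL}_3(\mathbb{Q}_p),\lambda}(\overline{L}(\lambda),V)=0$ there. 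For the surviving pairs one must genuinely compute the remaining Levi-extension group; since $L_j\cong\mathrm{GL}_2\times\mathrm{GL}_1$, a K\"unneth decomposition reduces this to $\mathrm{Ext}$-groups for $\mathrm{GL}_2(\mathbb{Q}_p)$ (handled by the $\mathrm{GL}_2$-analogue of Proposition~\ref{3prop: locally algebraic extension} together with standard smooth and $\mathbb{Q}_p^{\times}$-cohomology) and to $\mathrm{Ext}$-groups of $\mathbb{Q}_p^{\times}$, after which one checks that exactly one of the three graded pieces is one-dimensional and survives all differentials. A good consistency check -- and, where the direct computation is cumbersome, an alternative route -- is provided by Lemma~\ref{3lemm: ext2.1}: since $\mathrm{St}_3^{\rm{an}}(\lambda)$ has the shape (\ref{3St picture}) and $\mathrm{dim}_E\mathrm{Ext}^2_{\mathrm{GL}_3(\mathbb{Q}_p),\lambda}(\overline{L}(\lambda),\mathrm{St}_3^{\rm{an}}(\lambda))=5$, a devissage along its socle filtration identifies the contributing Jordan--H\"older factors as exactly $\overline{L}(\lambda)\otimes_E\mathrm{St}_3^{\infty}$, $C^2_{s_1,1}$, $C^2_{s_2,1}$, $C^1_{s_1s_2,1}$, $C^1_{s_2s_1,1}$, i.e.\ the set $\Omega_2(\overline{L}(\lambda))$; the cases $V_0=\overline{L}(\lambda)\otimes_Ev_{P_i}^{\infty}$ and $V_0=\overline{L}(\lambda)\otimes_E\mathrm{St}_3^{\infty}$ are treated similarly, using the structure of $v_{P_i}^{\rm{an}}(\lambda)$ in Lemma~\ref{3lemm: structure of St} and (for the last one) a dual argument. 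The main obstacle is precisely this last step: unlike for $\mathrm{Ext}^1$ in Lemma~\ref{3lemm: nonvanishing ext1}, the degree-two spectral sequence does not collapse to a single $\mathrm{Hom}$-term over the Levi, so one must really control the $d_2$ and $d_3$ differentials and the higher $\mathrm{Ext}$-groups over $L_j$, keeping careful track of central characters to rule out any spurious one-dimensional contribution.
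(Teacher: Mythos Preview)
Your approach is essentially the same as the paper's: reduce to $V_0=\overline{L}(\lambda)$, quote Proposition~\ref{3prop: locally algebraic extension} for the locally algebraic targets, pass to (parabolic) Verma covers so that the target becomes an honest induction, and then run Lemma~\ref{3lemm: cohomology devissage} together with the Kostant description of $H_k(N_j,\overline{L}(\lambda))$.

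One remark on what you call ``the main obstacle'': there is in fact no obstacle. The three weights $\lambda$, $s_{3-j}\cdot\lambda$, $s_{3-j}s_j\cdot\lambda$ give three \emph{distinct} characters of $Z(L_j(\mathbb{Q}_p))$, so for any fixed $\overline{L}_j(w\cdot\lambda)\otimes_E\pi^{\infty}$ at most one of your three graded pieces can be non-zero; consequently every differential $d_2$, $d_3$ has vanishing source or target and the spectral sequence collapses automatically. This is exactly how the paper organizes the computation (equations (\ref{3vanishing of ext 2.1}) and (\ref{3vanishing of ext 2.6})): it records those central-character vanishings first, then reads off the unique surviving term as a $\mathrm{Hom}$ (for $w=s_{3-j}s_j$) or an $\mathrm{Ext}^1$ (for $w=s_{3-j}$, via the Verma cover $M_j(-s_{3-j}\cdot\lambda)$) over $L_j$, and evaluates it directly. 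No K\"unneth decomposition or delicate $\mathrm{GL}_2$-computation is needed, and the ``consistency check'' through Lemma~\ref{3lemm: ext2.1} is unnecessary as well.
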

\begin{proof}
We only prove the statements for $V_0=\overline{L}(\lambda)$ as other cases are similar. If
$$V\in\{\overline{L}(\lambda),~\overline{L}(\lambda)\otimes_Ev_{P_1}^{\infty},~\overline{L}(\lambda)\otimes_Ev_{P_2}^{\infty},~\overline{L}(\lambda)\otimes_E\mathrm{St}_3^{\infty}\}$$
then the conclusion follows from Proposition~\ref{3prop: locally algebraic extension}. We notice that $Z(L_j(\mathbb{Q}_p))$ acts via different characters on $\overline{L}_j(\lambda)$, $\overline{L}_j(s_{3-j}\cdot\lambda)$ and $\overline{L}_j(s_{3-j}s_j\cdot\lambda)\otimes_E\pi_j^{\infty}$, and thus we have
\begin{equation}\label{3vanishing of ext 2.1}
\begin{xy}
(0,0)*+{\mathrm{Ext}^2_{L_j(\mathbb{Q}_p),\lambda}\left(\overline{L}_j(\lambda),~\overline{L}_j(s_{3-j}s_j\cdot\lambda)\otimes_E\pi_j^{\infty}\right)}; (50,0)*+{=0};
(4.6,-6)*+{\mathrm{Ext}^1_{L_j(\mathbb{Q}_p),\lambda}\left(\overline{L}_j(s_{3-j}\cdot\lambda),~\overline{L}_j(s_{3-j}s_j\cdot\lambda)\otimes_E\pi_j^{\infty}\right)}; (50,-6)*+{=0};
(0,-12)*+{\mathrm{Ext}^3_{L_j(\mathbb{Q}_p),\lambda}\left(\overline{L}_j(\lambda),~\overline{L}_j(s_{3-j}s_j\cdot\lambda)\otimes_E\pi_j^{\infty}\right)}; (50,-12)*+{=0};
\end{xy}
\end{equation}
On the other hand, we notice that
\begin{equation}\label{3vanishing of ext 2.2}
\mathrm{Hom}_{L_j(\mathbb{Q}_p),\lambda}\left(\overline{L}_j(s_{3-j}s_j\cdot\lambda),~\overline{L}_j(s_{3-j}s_j\cdot\lambda)\otimes_E\pi_j^{\infty}\right)=0
\end{equation}
for each smooth irreducible $\pi_j^{\infty}\neq 1_{L_j}$ and
\begin{equation}\label{3nonvanishing of ext 2.3}
\mathrm{dim}_E\mathrm{Hom}_{L_j(\mathbb{Q}_p),\lambda}\left(\overline{L}_j(s_{3-j}s_j\cdot\lambda),~\overline{L}_j(s_{3-j}s_j\cdot\lambda)\right)=1.
\end{equation}
We combine (\ref{3vanishing of ext 2.1}), (\ref{3vanishing of ext 2.2}) and (\ref{3nonvanishing of ext 2.3}) with Lemma~\ref{3lemm: cohomology devissage} and Proposition~\ref{3prop: list of N coh} and deduce that
\begin{equation}\label{3vanishing of ext 2.4}
\mathrm{Ext}^2_{\mathrm{GL}_3(\mathbb{Q}_p),\lambda}\left(\overline{L}(\lambda),~\mathcal{F}_{P_j}^{\mathrm{GL}_3}(L(-s_{3-j}s_j\cdot\lambda),~\pi_j^{\infty})\right)=0
\end{equation}
for each smooth irreducible $\pi_j^{\infty}\neq 1_{L_j}$ and
\begin{equation}\label{3nonvanishing of ext 2.5}
\mathrm{dim}_E\mathrm{Ext}^2_{\mathrm{GL}_3(\mathbb{Q}_p),\lambda}\left(\overline{L}(\lambda),~\mathcal{F}_{P_j}^{\mathrm{GL}_3}(L(-s_{3-j}s_j\cdot\lambda),~1_{L_j})\right)=1
\end{equation}
which finishes the proof if
$$V=\mathcal{F}_{P_j}^{\mathrm{GL}_3}(L(-s_{3-j}s_j\cdot\lambda),~\pi_j^{\infty}).$$
Similarly, we have
\begin{equation}\label{3vanishing of ext 2.6}
\begin{xy}
(0,0)*+{\mathrm{Ext}^2_{L_j(\mathbb{Q}_p),\lambda}\left(\overline{L}_j(\lambda),~\overline{L}_j(s_{3-j}\cdot\lambda)\otimes_E\pi_j^{\infty}\right)}; (50,0)*+{=0};
(7,-6)*+{\mathrm{Hom}_{L_j(\mathbb{Q}_p),\lambda}\left(\overline{L}_j(s_{3-j}s_j\cdot\lambda),~\overline{L}_j(s_{3-j}\cdot\lambda)\otimes_E\pi_j^{\infty}\right)}; (50,-6)*+{=0};
(0,-12)*+{\mathrm{Ext}^3_{L_j(\mathbb{Q}_p),\lambda}\left(\overline{L}_j(\lambda),~\overline{L}_j(s_{3-j}\cdot\lambda)\otimes_E\pi_j^{\infty}\right)}; (50,-12)*+{=0};
\end{xy}
\end{equation}
On the other hand, we have
\begin{equation}\label{3vanishing of ext 2.8}
\mathrm{Ext}^1_{L_j(\mathbb{Q}_p),\lambda}\left(\overline{L}_j(s_{3-i}\cdot\lambda),~\overline{L}_j(s_{3-j}\cdot\lambda)\otimes_E\pi_j^{\infty}\right)=0
\end{equation}
for each smooth irreducible $\pi_j^{\infty}\neq \pi_{j,1}^{\infty}$ and
\begin{equation}\label{3nonvanishing of ext 2.9}
\mathrm{dim}_E\mathrm{Ext}^1_{L_j(\mathbb{Q}_p),\lambda}\left(\overline{L}_j(s_{3-i}\cdot\lambda),~\overline{L}_j(s_{3-j}\cdot\lambda)\otimes_E\pi_{j,1}^{\infty}\right)=1.
\end{equation}
We combine (\ref{3vanishing of ext 2.6}), (\ref{3vanishing of ext 2.8}) and (\ref{3nonvanishing of ext 2.9}) with Lemma~\ref{3lemm: cohomology devissage} and Proposition~\ref{3prop: list of N coh} and deduce that
\begin{equation}\label{3vanishing of ext 2.10}
\mathrm{Ext}^2_{\mathrm{GL}_3(\mathbb{Q}_p),\lambda}\left(\overline{L}(\lambda),~\mathcal{F}_{P_j}^{\mathrm{GL}_3}(M_j(-s_{3-j}\cdot\lambda),~\pi_j^{\infty})\right)=0
\end{equation}
for each smooth irreducible $\pi_j^{\infty}\neq \pi_{j,1}^{\infty}$ and
\begin{equation}\label{3nonvanishing of ext 2.11}
\mathrm{dim}_E\mathrm{Ext}^2_{\mathrm{GL}_3(\mathbb{Q}_p),\lambda}\left(\overline{L}(\lambda),~\mathcal{F}_{P_j}^{\mathrm{GL}_3}(M_j(-s_{3-j}\cdot\lambda),~\pi_{j,1}^{\infty})\right)=1.
\end{equation}
The short exact sequence
$$\mathcal{F}_{P_j}^{\mathrm{GL}_3}(L(-s_{3-j}\cdot\lambda),~\pi_j^{\infty})\hookrightarrow\mathcal{F}_{P_j}^{\mathrm{GL}_3}(M_j(-s_{3-j}\cdot\lambda),~\pi_j^{\infty})\twoheadrightarrow\mathcal{F}_{P_j}^{\mathrm{GL}_3}(L(-s_{3-j}s_j\cdot\lambda),~\pi_j^{\infty})$$
induces a long exact sequence
\begin{multline*}
\mathrm{Ext}^1_{\mathrm{GL}_3(\mathbb{Q}_p),\lambda}\left(\overline{L}(\lambda),~\mathcal{F}_{P_j}^{\mathrm{GL}_3}(L(-s_{3-j}s_j\cdot\lambda),~\pi_j^{\infty})\right)\rightarrow\mathrm{Ext}^2_{\mathrm{GL}_3(\mathbb{Q}_p),\lambda}\left(\overline{L}(\lambda),~\mathcal{F}_{P_j}^{\mathrm{GL}_3}(L(-s_{3-j}\cdot\lambda),~\pi_j^{\infty})\right)\\
\rightarrow\mathrm{Ext}^2_{\mathrm{GL}_3(\mathbb{Q}_p),\lambda}\left(\overline{L}(\lambda),~\mathcal{F}_{P_j}^{\mathrm{GL}_3}(M_j(-s_{3-j}\cdot\lambda),~\pi_j^{\infty})\right)
\rightarrow\mathrm{Ext}^2_{\mathrm{GL}_3(\mathbb{Q}_p),\lambda}\left(\overline{L}(\lambda),~\mathcal{F}_{P_j}^{\mathrm{GL}_3}(L(-s_{3-j}s_j\cdot\lambda),~\pi_j^{\infty})\right)
\end{multline*}
which finishes the proof if
$$V=\mathcal{F}_{P_j}^{\mathrm{GL}_3}(L(-s_{3-j}\cdot\lambda),~\pi_j^{\infty}).$$
Finally, similar methods together with Proposition~\ref{3prop: list of N coh} also show that
$$\mathrm{Ext}^2_{\mathrm{GL}_3(\mathbb{Q}_p),\lambda}\left(\overline{L}(\lambda),~\mathcal{F}_B^{\mathrm{GL}_3}(L(-s_1s_2s_1\cdot\lambda),~\chi_w^{\infty})\right)=0$$
for each $w\in W$.
\end{proof}

\begin{lemm}\label{3lemm: special vanishing 1}
We have
$$
\begin{xy}
(-10,0)*+{\mathrm{Ext}^1_{\mathrm{GL}_3(\mathbb{Q}_p),\lambda}\left(
\begin{xy}
(0,0)*+{\overline{L}(\lambda)\otimes_Ev_{P_i}^{\infty}}="a"; (20,0)*+{\overline{L}(\lambda)}="b";
{\ar@{-}"a";"b"};
\end{xy},~C^2_{s_i,1}\right)}; (50,0)*+{=0};
(0,-6)*+{\mathrm{Ext}^1_{\mathrm{GL}_3(\mathbb{Q}_p),\lambda}\left(\begin{xy}
(0,0)*+{\overline{L}(\lambda)\otimes_Ev_{P_i}^{\infty}}="a"; (27,0)*+{\overline{L}(\lambda)\otimes_E\mathrm{St}_3^{\infty}}="b";
{\ar@{-}"a";"b"};
\end{xy},~C^1_{s_i,s_is_{3-i}}\right)}; (50,-6)*+{=0};
(-10,-12)*+{\mathrm{Ext}^1_{\mathrm{GL}_3(\mathbb{Q}_p),\lambda}\left(\begin{xy}
(20,0)*+{\overline{L}(\lambda)\otimes_Ev_{P_i}^{\infty}}="b"; (0,0)*+{\overline{L}(\lambda)}="a";
{\ar@{-}"a";"b"};
\end{xy},~C^1_{s_i,1}\right)}; (50,-12)*+{=0};
(0,-18)*+{\mathrm{Ext}^1_{\mathrm{GL}_3(\mathbb{Q}_p),\lambda}\left(\begin{xy}
(0,0)*+{\overline{L}(\lambda)\otimes_E\mathrm{St}_3^{\infty}}="a"; (27,0)*+{\overline{L}(\lambda)\otimes_Ev_{P_i}^{\infty}}="b";
{\ar@{-}"a";"b"};
\end{xy},~C^2_{s_i,s_is_{3-i}}\right)}; (50,-18)*+{=0};
\end{xy}
$$
for $i=1,2$.
\end{lemm}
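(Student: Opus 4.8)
The plan is to reduce each of the four vanishings, by a single long exact sequence, to the non-vanishing of one connecting homomorphism between two one-dimensional spaces. The four cases are entirely parallel and each is symmetric under $i\leftrightarrow 3-i$, so I describe the first one: $\mathrm{Ext}^1_{\mathrm{GL}_3(\mathbb{Q}_p),\lambda}(W_i,C^2_{s_i,1})=0$, where $W_i$ is the locally algebraic representation introduced above, having $\overline{L}(\lambda)\otimes_Ev_{P_i}^{\infty}$ as subrepresentation and $\overline{L}(\lambda)$ as quotient. Applying $\mathrm{Ext}^{\bullet}_{\mathrm{GL}_3(\mathbb{Q}_p),\lambda}(-,C^2_{s_i,1})$ to $\overline{L}(\lambda)\otimes_Ev_{P_i}^{\infty}\hookrightarrow W_i\twoheadrightarrow\overline{L}(\lambda)$, Proposition~\ref{3prop: locally algebraic extension}, Lemma~\ref{3lemm: nonvanishing ext1}, Lemma~\ref{3lemm: nonvanishing ext2} and inspection of the lists $\Omega_1(-)$, $\Omega_2(-)$ give $\mathrm{Hom}_{\mathrm{GL}_3(\mathbb{Q}_p),\lambda}(\overline{L}(\lambda)\otimes_Ev_{P_i}^{\infty},C^2_{s_i,1})=0$, $\mathrm{Ext}^1_{\mathrm{GL}_3(\mathbb{Q}_p),\lambda}(\overline{L}(\lambda),C^2_{s_i,1})=0$, $\mathrm{Ext}^2_{\mathrm{GL}_3(\mathbb{Q}_p),\lambda}(\overline{L}(\lambda)\otimes_Ev_{P_i}^{\infty},C^2_{s_i,1})=0$, while $\mathrm{Ext}^1_{\mathrm{GL}_3(\mathbb{Q}_p),\lambda}(\overline{L}(\lambda)\otimes_Ev_{P_i}^{\infty},C^2_{s_i,1})$ and $\mathrm{Ext}^2_{\mathrm{GL}_3(\mathbb{Q}_p),\lambda}(\overline{L}(\lambda),C^2_{s_i,1})$ are both one-dimensional. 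Hence $\mathrm{Ext}^1_{\mathrm{GL}_3(\mathbb{Q}_p),\lambda}(W_i,C^2_{s_i,1})=\ker(\partial)$, where $\partial\colon\mathrm{Ext}^1_{\mathrm{GL}_3(\mathbb{Q}_p),\lambda}(\overline{L}(\lambda)\otimes_Ev_{P_i}^{\infty},C^2_{s_i,1})\to\mathrm{Ext}^2_{\mathrm{GL}_3(\mathbb{Q}_p),\lambda}(\overline{L}(\lambda),C^2_{s_i,1})$ is the Yoneda product with the class of $W_i$ in $\mathrm{Ext}^1_{\mathrm{GL}_3(\mathbb{Q}_p),\lambda}(\overline{L}(\lambda),\overline{L}(\lambda)\otimes_Ev_{P_i}^{\infty})$. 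So it suffices to show $\partial\neq 0$, equivalently (by the dimension count just made) that there is no uniserial locally analytic representation with constituents $C^2_{s_i,1}$, $\overline{L}(\lambda)\otimes_Ev_{P_i}^{\infty}$, $\overline{L}(\lambda)$ in socle-to-cosocle order with both successive extensions non-split.

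The remaining three identities reduce in exactly the same way, using the short exact sequences $\overline{L}(\lambda)\otimes_Ev_{P_i}^{\infty}\hookrightarrow(\cdot)\twoheadrightarrow\overline{L}(\lambda)\otimes_E\mathrm{St}_3^{\infty}$, $\overline{L}(\lambda)\hookrightarrow(\cdot)\twoheadrightarrow\overline{L}(\lambda)\otimes_Ev_{P_i}^{\infty}$ and $\overline{L}(\lambda)\otimes_E\mathrm{St}_3^{\infty}\hookrightarrow(\cdot)\twoheadrightarrow\overline{L}(\lambda)\otimes_Ev_{P_i}^{\infty}$ that define the three length-two first arguments (each exists and is unique up to isomorphism by Proposition~\ref{3prop: locally algebraic extension}). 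In each case exactly one of the two boundary $\mathrm{Ext}^1$-terms vanishes, the surviving $\mathrm{Ext}^1$ and the receiving $\mathrm{Ext}^2$ are one-dimensional, and $\mathrm{Ext}^1((\cdot),C)$ is again the kernel of a connecting map $\partial$ between two one-dimensional spaces; the claim becomes $\partial\neq 0$, equivalently the non-existence of the uniserial representations with respective constituents $C^1_{s_i,s_is_{3-i}},\,\overline{L}(\lambda)\otimes_Ev_{P_i}^{\infty},\,\overline{L}(\lambda)\otimes_E\mathrm{St}_3^{\infty}$;\ $C^1_{s_i,1},\,\overline{L}(\lambda),\,\overline{L}(\lambda)\otimes_Ev_{P_i}^{\infty}$;\ $C^2_{s_i,s_is_{3-i}},\,\overline{L}(\lambda)\otimes_E\mathrm{St}_3^{\infty},\,\overline{L}(\lambda)\otimes_Ev_{P_i}^{\infty}$ (each in socle-to-cosocle order, with non-split steps).

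The non-vanishing of these four connecting maps is the only non-formal point, and the one I expect to be the main obstacle. The plan is to transport each cup-product to a Levi subgroup. Using Lemma~\ref{3lemm: vanishing locally algebraic} (and the further transparency statements that follow by the same argument), each length-two first argument is transparent in degrees $1$ and $2$ against $\overline{L}(\lambda)$, against $\overline{L}(\lambda)\otimes_E\mathrm{St}_3^{\infty}$ and against $\overline{L}(\lambda)\otimes_Ev_{P_{3-i}}^{\infty}$; this allows me to replace each target $C$ by a larger subquotient of a suitable locally analytic parabolic induction---such as $\mathrm{St}_3^{\rm{an}}(\lambda)$ or $v_{P_{3-i}}^{\rm{an}}(\lambda)$, whose socle filtrations are given by Lemma~\ref{3lemm: structure of St}, and an analogous twisted induction for the remaining targets---without changing the $\mathrm{Ext}^1$ in play, and then to descend along the socle filtration, the Jordan--H\"older factors introduced having $\mathrm{Ext}$-groups against the two constituents of the first argument controlled by Lemmas~\ref{3lemm: nonvanishing ext1} and~\ref{3lemm: nonvanishing ext2}. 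This reduces each non-vanishing to one about $\mathrm{Ext}^1_{\mathrm{GL}_3(\mathbb{Q}_p),\lambda}$ of a length-two representation against a (generalized) Steinberg representation, which by Lemma~\ref{3lemm: cohomology devissage} and the explicit $H_{\bullet}(N_j,-)$ of Proposition~\ref{3prop: list of N coh} is an $\mathrm{Ext}$-computation over the Levi $L_j$, hence over $\mathrm{GL}_2(\mathbb{Q}_p)$ after restriction along $\iota_{3-i}$; there the required non-vanishing is of the kind proved in Section~\ref{3section: GL2Qp}, in particular Proposition~\ref{3prop: key result} and the infinitesimal-character arguments behind it. The genuinely delicate part will be the bookkeeping in this last reduction: following the chain of devissages precisely enough to see which one-dimensional $\mathrm{Ext}$-line is hit, and to check that the cup-product meets it non-trivially.
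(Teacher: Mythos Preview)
Your initial reduction in each case to the non-vanishing of a connecting map $\partial$ between two one-dimensional spaces is correct, but it is an unnecessary detour, and your plan for establishing $\partial\neq 0$ is both vague and far heavier than what is required. In particular, invoking Proposition~\ref{3prop: key result} and the infinitesimal-character machinery of Section~\ref{3section: GL2Qp} is a red flag: none of that is needed here.

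The paper's argument is much more direct and bypasses $\partial$ entirely. For the first vanishing, one embeds the target $C^2_{s_i,1}$ into the full parabolic induction $\mathcal{F}_{P_{3-i}}^{\mathrm{GL}_3}\big(M_{3-i}(-s_i\cdot\lambda),\,\pi_{3-i,1}^{\infty}\big)$ (not into $\mathrm{St}_3^{\rm{an}}(\lambda)$ as you suggest). Since the cokernel has no Hom from $W_i$, this gives an injection
\[
\mathrm{Ext}^1_{\mathrm{GL}_3(\mathbb{Q}_p),\lambda}\big(W_i,\,C^2_{s_i,1}\big)\hookrightarrow \mathrm{Ext}^1_{\mathrm{GL}_3(\mathbb{Q}_p),\lambda}\big(W_i,\,\mathcal{F}_{P_{3-i}}^{\mathrm{GL}_3}(M_{3-i}(-s_i\cdot\lambda),\,\pi_{3-i,1}^{\infty})\big).
\]
Now apply the spectral sequence of Lemma~\ref{3lemm: cohomology devissage} directly with the source $W_i$, which is locally algebraic so that $H_k(N_{3-i},W_i)$ is computed explicitly by Proposition~\ref{3prop: list of N coh}: one finds $H_0(N_{3-i},W_i)=\overline{L}_{3-i}(\lambda)\otimes_E(\cdots)$ and $H_1(N_{3-i},W_i)=\overline{L}_{3-i}(s_i\cdot\lambda)\otimes_E(\cdots)$, with smooth pieces that pair trivially against $\pi_{3-i,1}^{\infty}$. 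The vanishing of the relevant $\mathrm{Ext}^1_{L_{3-i}}$ and $\mathrm{Hom}_{L_{3-i}}$ terms then follows purely from the action of the center $Z(L_{3-i}(\mathbb{Q}_p))$, which acts by \emph{different} characters on $\overline{L}_{3-i}(\lambda)$ and on $\overline{L}_{3-i}(s_i\cdot\lambda)\otimes_E\pi_{3-i,1}^{\infty}$, together with one elementary smooth Hom computation. No $\mathrm{GL}_2$ input beyond central characters is used.

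The moral: rather than cutting the source $W_i$ into pieces and chasing a connecting map, keep $W_i$ intact and enlarge the target to a genuine parabolic induction from $P_{3-i}$; then the spectral sequence lands you on the Levi in one step, where the answer is immediate. Your proposed route through $\mathrm{St}_3^{\rm{an}}(\lambda)$, socle-filtration devissage, and Section~\ref{3section: GL2Qp} would, if it works at all, amount to re-deriving this direct computation through several extra layers.
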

\begin{proof}
We only prove the first vanishing
\begin{equation}\label{3equation vanishing 1.1}
\mathrm{Ext}^1_{\mathrm{GL}_3(\mathbb{Q}_p),\lambda}\left(
W_i,~C^2_{s_i,1}\right)=0
\end{equation}
as the other cases are similar. The embedding
$$C^2_{s_i,1}\hookrightarrow\mathcal{F}_{P_{3-i}}^{\mathrm{GL}_3}(M_{3-i}(-s_i\cdot\lambda),~\pi_{3-i,1}^{\infty})$$
induces an embedding
\begin{equation}\label{3equation vanishing 1.2}
\mathrm{Ext}^1_{\mathrm{GL}_3(\mathbb{Q}_p),\lambda}\left(
W_i,~C^2_{s_i,1}\right)\hookrightarrow \mathrm{Ext}^1_{\mathrm{GL}_3(\mathbb{Q}_p),\lambda}\left(
W_i,~\mathcal{F}_{P_{3-i}}^{\mathrm{GL}_3}(M_{3-i}(-s_i\cdot\lambda),~\pi_{3-i,1}^{\infty})\right).
\end{equation}
It follows from Proposition~\ref{3prop: list of N coh} that
\begin{equation}\label{3equation vanishing 1.3}
\begin{array}{cccc}
H_0(N_{3-i},~W_i)&=&\overline{L}_{3-i}(\lambda)\otimes_E\left(i_{B\cap L_{3-i}}^{L_{3-i}}(\chi_{s_{3-i}}^{\infty})\oplus \mathfrak{d}_{P_{3-i}}^{\infty}\right)&\\
H_1(N_{3-i},~W_i)&=&\overline{L}_{3-i}(s_i\cdot\lambda)\otimes_E\left(i_{B\cap L_{3-i}}^{L_{3-i}}(\chi_{s_{3-i}}^{\infty})\oplus \mathfrak{d}_{P_{3-i}}^{\infty}\right)&
\end{array}.
\end{equation}
We notice that $Z(L_{3-i}(\mathbb{Q}_p))$ acts on $\overline{L}_{3-i}(\lambda)$ and $\overline{L}_{3-i}(s_i\cdot\lambda)$ via different characters and that
$$\mathrm{Hom}_{L_{3-i}(\mathbb{Q}_p),\lambda}\left(i_{B\cap L_{3-i}}^{L_{3-i}}(\chi_{s_{3-i}}^{\infty}),~\overline{L}_{3-i}(s_i\cdot\lambda)\otimes_E\pi_{3-i,1}^{\infty}\right)=0.$$
Therefore we deduce from (\ref{3equation vanishing 1.3}) the equalities
$$
\begin{xy}
(0,0)*+{\mathrm{Ext}^1_{L_{3-i}(\mathbb{Q}_p),\lambda}\left(H_0(N_{3-i},~W_i),~\overline{L}_{3-i}(s_i\cdot\lambda)\otimes_E\pi_{3-i,1}^{\infty}\right)}; (47,0)*+{=0};
(1,-6)*+{\mathrm{Hom}_{L_{3-i}(\mathbb{Q}_p),\lambda}\left(H_1(N_{3-i},~W_i),~\overline{L}_{3-i}(s_i\cdot\lambda)\otimes_E\pi_{3-i,1}^{\infty}\right)}; (47,-6)*+{=0};
\end{xy}
$$
which imply by Lemma~\ref{3lemm: cohomology devissage} that
$$\mathrm{Ext}^1_{\mathrm{GL}_3(\mathbb{Q}_p),\lambda}\left(
W_i,~\mathcal{F}_{P_{3-i}}^{\mathrm{GL}_3}(M_{3-i}(-s_i\cdot\lambda),~\pi_{3-i,1}^{\infty})\right)=0.$$
Hence we finish the proof of (\ref{3equation vanishing 1.1}) by the embedding (\ref{3equation vanishing 1.2}).
\end{proof}

\begin{lemm}\label{3lemm: special vanishing 2}
We have for $i=1,2$:
$$
\begin{xy}
(0,0)*+{\mathrm{Ext}^1_{\mathrm{GL}_3(\mathbb{Q}_p),\lambda}\left(\begin{xy}
(0,0)*+{\overline{L}(\lambda)\otimes_Ev_{P_i}^{\infty}}="a"; (22,0)*+{C_{s_i,s_i}}="b";
{\ar@{-}"a";"b"};
\end{xy},~C^2_{s_i,1}\right)}; (50,0)*+{=0};
(5.25,-6)*+{\mathrm{Ext}^1_{\mathrm{GL}_3(\mathbb{Q}_p),\lambda}\left(\begin{xy}
(0,0)*+{\overline{L}(\lambda)\otimes_Ev_{P_{3-i}}^{\infty}}="a"; (27,0)*+{C^2_{s_i,s_is_{3-i}}}="b";
{\ar@{-}"a";"b"};
\end{xy},~C_{s_i,s_i}\right)}; (50,-6)*+{=0};
(-2.4,-12)*+{\mathrm{Ext}^1_{\mathrm{GL}_3(\mathbb{Q}_p),\lambda}\left(\begin{xy}
(20,0)*+{C^1_{s_i,s_is_{3-i}}}="b"; (0,0)*+{\overline{L}(\lambda)}="a";
{\ar@{-}"a";"b"};
\end{xy},~C^1_{s_i,1}\right)}; (50,-12)*+{=0};
(0,-18)*+{\mathrm{Ext}^1_{\mathrm{GL}_3(\mathbb{Q}_p),\lambda}\left(\begin{xy}
(7,0)*+{\overline{L}(\lambda)\otimes_E\mathrm{St}_3^{\infty}}="a"; (22,0)*+{C^2_{s_i,1}}="b";
{\ar@{-}"a";"b"};
\end{xy},~C^2_{s_i,s_is_{3-i}}\right)}; (50,-18)*+{=0};
\end{xy}
$$
\end{lemm}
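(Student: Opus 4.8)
The plan is to adapt, case by case, the dévissage used in the proof of Lemma~\ref{3lemm: special vanishing 1}. Write the source of each of the four $\mathrm{Ext}^1$'s as a length-two uniserial representation $V$ sitting in a short exact sequence $A\hookrightarrow V\twoheadrightarrow B$ (so $A$ is the displayed subrepresentation and $B$ the displayed cosocle), and let $C$ be the irreducible target. In all four cases $C$ is an Orlik--Strauch object $\mathcal{F}^{\mathrm{GL}_3}_{P_{3-i}}\bigl(L(-s_i\cdot\lambda),~\pi^{\infty}\bigr)$ for a suitable smooth representation $\pi^{\infty}$ of $L_{3-i}(\mathbb{Q}_p)$ (one of $1_{L_{3-i}}$, $\pi_{i,2}^{\infty}$, $\pi_{i,3}^{\infty}$, $\pi_{3-i,1}^{\infty}$), so it embeds into $\widetilde{C}:=\mathcal{F}^{\mathrm{GL}_3}_{P_{3-i}}\bigl(M_{3-i}(-s_i\cdot\lambda),~\pi^{\infty}\bigr)$ with irreducible cokernel $\widetilde{C}/C$, namely the companion constituent of $C$ in the two-step representation $\widetilde{C}$, as in the analogue of (\ref{3non split ext in St prime}) (a $C^2_{s_is_{3-i},1}$-, $C_{s_is_{3-i},s_i}$-, or $C^1_{s_is_{3-i},1}$-type object). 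First one checks in each case that $\widetilde{C}/C$ is not a Jordan--Hölder constituent of $V$; then $\mathrm{Hom}_{\mathrm{GL}_3(\mathbb{Q}_p),\lambda}(V,~\widetilde{C}/C)=0$ (a nonzero map onto the irreducible $\widetilde{C}/C$ would exhibit it as a constituent of $V$), so the short exact sequence $C\hookrightarrow\widetilde{C}\twoheadrightarrow\widetilde{C}/C$ gives an injection $\mathrm{Ext}^1_{\mathrm{GL}_3(\mathbb{Q}_p),\lambda}(V,~C)\hookrightarrow\mathrm{Ext}^1_{\mathrm{GL}_3(\mathbb{Q}_p),\lambda}(V,~\widetilde{C})$, and it remains to show the right-hand side vanishes.

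Since $\widetilde{C}\cong\mathrm{Ind}_{P_{3-i}}^{\mathrm{GL}_3}\bigl(\overline{L}_{3-i}(s_i\cdot\lambda)\otimes_E\pi^{\infty}\bigr)^{\mathrm{an}}$, Lemma~\ref{3lemm: cohomology devissage} reduces $\mathrm{Ext}^1_{\mathrm{GL}_3(\mathbb{Q}_p),\lambda}(V,~\widetilde{C})=0$ to the two assertions $\mathrm{Ext}^1_{L_{3-i}(\mathbb{Q}_p),\lambda}\bigl(H_0(N_{3-i},V),~\overline{L}_{3-i}(s_i\cdot\lambda)\otimes_E\pi^{\infty}\bigr)=0$ and $\mathrm{Hom}_{L_{3-i}(\mathbb{Q}_p),\lambda}\bigl(H_1(N_{3-i},V),~\overline{L}_{3-i}(s_i\cdot\lambda)\otimes_E\pi^{\infty}\bigr)=0$. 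To verify these one computes $H_0(N_{3-i},V)$ and $H_1(N_{3-i},V)$ by feeding $A\hookrightarrow V\twoheadrightarrow B$ into the long exact sequence of $N_{3-i}$-homology: the terms $H_k(N_{3-i},A)$ and $H_k(N_{3-i},B)$ for $0\le k\le 2$ are all computable from Proposition~\ref{3prop: list of N coh} (directly for the $B$'s, and after one short further dévissage using Lemma~\ref{3lemm: structure of St} and (\ref{3generalized St picture}) for the locally algebraic $A$'s), and the relevant connecting maps are controlled because the weight spaces involved carry distinct $Z(L_{3-i}(\mathbb{Q}_p))$-characters. One then inspects the summands of $H_0(N_{3-i},V)$ and $H_1(N_{3-i},V)$ whose $\overline{L}_{3-i}$-weight equals $s_i\cdot\lambda$ and checks that each carries a $Z(L_{3-i}(\mathbb{Q}_p))$-character and/or a smooth part incompatible with $\overline{L}_{3-i}(s_i\cdot\lambda)\otimes_E\pi^{\infty}$, so that both the $\mathrm{Ext}^1$ and the $\mathrm{Hom}$ vanish; this last step is exactly the smooth-part bookkeeping on the $\mathrm{GL}_2$-Levi block already done in Lemmas~\ref{3lemm: nonvanishing ext1} and \ref{3lemm: nonvanishing ext2}.

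The main obstacle, compared with Lemma~\ref{3lemm: special vanishing 1}, is precisely this comparison at weight $s_i\cdot\lambda$: there the socle $A$ of the source was locally algebraic, so $H_0(N_{3-i},V)$ sat in the wrong weight $\lambda$ and was killed for free by central-character separation on $Z(L_{3-i}(\mathbb{Q}_p))$, whereas here the cosocle $B$ is a genuinely locally analytic object of the form $\mathcal{F}^{\mathrm{GL}_3}_{P_{3-i}}(L(-s_i\cdot\lambda),~-)$, so $H_0(N_{3-i},V)$ does surject onto a piece of weight $s_i\cdot\lambda$ and one must use the finer central-character and smooth-representation data to conclude. A subsidiary nuisance is the index bookkeeping in each of the four cases — the choice of $P_{3-i}$, of the Weyl translate $s_i\cdot\lambda$, and of $\pi^{\infty}$ — so that $\widetilde{C}/C$ really avoids the constituents of $V$; should a single $\widetilde{C}$ not suffice for some case, one simply iterates the same embedding one more step, embedding $\widetilde{C}/C$ in turn into a larger induced object.
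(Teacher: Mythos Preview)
Your approach is correct and close to the paper's, with one substantive difference worth flagging. You keep the source $V=[A\!-\!B]$ intact and compute $H_k(N_{3-i},V)$ via the long exact sequence attached to $A\hookrightarrow V\twoheadrightarrow B$. The paper instead replaces \emph{both} ends: besides embedding $C\hookrightarrow\widetilde{C}=\mathcal{F}_{P_{3-i}}^{\mathrm{GL}_3}\bigl(M_{3-i}(-s_i\cdot\lambda),\pi^{\infty}\bigr)$ as you do, it also produces a surjection $\widetilde{V}:=\mathcal{F}_{P_{3-i}}^{\mathrm{GL}_3}\bigl(M_{3-i}(-\lambda),\pi_{\bullet,2}^{\infty}\bigr)\twoheadrightarrow V$, so that the relevant $\mathrm{Ext}^1$ injects into $\mathrm{Ext}^1_{\mathrm{GL}_3(\mathbb{Q}_p),\lambda}(\widetilde{V},\widetilde{C})$ with $\widetilde{V}$ a single Orlik--Strauch object whose $H_k(N_{3-i},-)$ is read off directly from Proposition~\ref{3prop: list of N coh}. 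This avoids the extra d\'evissage you sketch for the locally algebraic socle $A$ and the analysis of connecting maps; in exchange it requires identifying the correct $\widetilde{V}$ and checking (implicitly, via Jordan--H\"older bookkeeping) that $\mathrm{Hom}(\ker(\widetilde{V}\to V),\widetilde{C})=0$. Your route is slightly more hands-on but equally valid, and your closing remark about iterating the embedding is unnecessary: one step suffices in all four cases.
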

\begin{proof}
We only prove that
\begin{equation}\label{3equation vanishing 2.1}
\mathrm{Ext}^1_{\mathrm{GL}_3(\mathbb{Q}_p),\lambda}\left(\begin{xy}
(0,0)*+{\overline{L}(\lambda)\otimes_Ev_{P_i}^{\infty}}="a"; (22,0)*+{C_{s_i,s_i}}="b";
{\ar@{-}"a";"b"};
\end{xy},~C^2_{s_i,1}\right)=0
\end{equation}
as the other cases are similar. The surjection
$$\mathcal{F}_{P_{3-i}}^{\mathrm{GL}_3}(M_{3-i}(-\lambda),~\pi_{3-i,2}^{\infty})\twoheadrightarrow \begin{xy}
(0,0)*+{\overline{L}(\lambda)\otimes_Ev_{P_i}^{\infty}}="a"; (22,0)*+{C_{s_i,s_i}}="b";
{\ar@{-}"a";"b"};
\end{xy}$$
and the embedding
$$C^2_{s_i,1}\hookrightarrow \mathcal{F}_{P_{3-i}}^{\mathrm{GL}_3}(M_{3-i}(-s_i\cdot\lambda),~\pi_{3-i,1}^{\infty})$$
induce an embedding
\begin{multline}\label{3equation vanishing 2.2}
\mathrm{Ext}^1_{\mathrm{GL}_3(\mathbb{Q}_p),\lambda}\left(\begin{xy}
(0,0)*+{\overline{L}(\lambda)\otimes_Ev_{P_i}^{\infty}}="a"; (22,0)*+{C_{s_i,s_i}}="b";
{\ar@{-}"a";"b"};
\end{xy},~C^2_{s_i,1}\right)\\
\hookrightarrow \mathrm{Ext}^1_{\mathrm{GL}_3(\mathbb{Q}_p),\lambda}\left(\mathcal{F}_{P_{3-i}}^{\mathrm{GL}_3}(M_{3-i}(-\lambda),~\pi_{3-i,2}^{\infty}),~\mathcal{F}_{P_{3-i}}^{\mathrm{GL}_3}(M_{3-i}(-s_i\cdot\lambda),~\pi_{3-i,1}^{\infty})\right).
\end{multline}
It follows from Proposition~\ref{3prop: list of N coh} that
$$
H_0(N_{3-i},~\mathcal{F}_{P_{3-i}}^{\mathrm{GL}_3}(M_{3-i}(-\lambda),~\pi_{3-i,2}^{\infty}))=\left(\overline{L}_{3-i}(\lambda)\oplus\overline{L}_{3-i}(s_i\cdot\lambda)\right)\otimes_E\pi_{3-i,2}^{\infty}
$$
and
\begin{multline*}
H_1(N_{3-i},~\mathcal{F}_{P_{3-i}}^{\mathrm{GL}_3}(M_{3-i}(-\lambda),~\pi_{3-i,2}^{\infty}))\\
=\left(\overline{L}_{3-i}(s_i\cdot\lambda)\oplus\overline{L}_{3-i}(s_is_{3-i}\cdot\lambda)\right)\otimes_E\pi_{3-i,2}^{\infty}\oplus I_{B\cap L_{3-i}}^{L_{3-i}}\left(\delta_{s_i\cdot\lambda}\right)\oplus I_{B\cap L_{3-i}}^{L_{3-i}}\left(\delta_{s_i\cdot\lambda}\otimes_E\chi_{s_1s_2s_1}^{\infty}\right).
\end{multline*}
We notice that $Z(L_{3-i}(\mathbb{Q}_p))$ acts on each direct summand of $H_k(N_{3-i},~\mathcal{F}_{P_{3-i}}^{\mathrm{GL}_3}(M_{3-i}(-\lambda),~\pi_{3-i,2}^{\infty}))$ ($k=0,1$) via a different character, and the only direct summand that produces the same character as $\overline{L}_{3-i}(s_i\cdot\lambda)\otimes\pi_{3-i,1}^{\infty}$ is $I_{B\cap L_{3-i}}^{L_{3-i}}\left(\delta_{s_i\cdot\lambda}\right)$. However, we know that
$$\mathrm{cosoc}_{L_{3-i}(\mathbb{Q}_p),\lambda}\left(I_{B\cap L_{3-i}}^{L_{3-i}}\left(\delta_{s_i\cdot\lambda}\right)\right)=I_{B\cap L_{3-i}}^{L_{3-i}}\left(\delta_{s_{3-i}s_i\cdot\lambda}\right)$$
and thus
\begin{equation*}
\mathrm{Hom}_{L_{3-i}(\mathbb{Q}_p),\lambda}\left(I_{B\cap L_{3-i}}^{L_{3-i}}\left(\delta_{s_{3-i}s_i\cdot\lambda}\right),~\overline{L}_{3-i}(s_i\cdot\lambda)\otimes\pi_{3-i,1}^{\infty}\right)=0.
\end{equation*}
As a result, we deduce the equalities
$$
\begin{xy}
(0,0)*+{\mathrm{Ext}^1_{L_{3-i}(\mathbb{Q}_p),\lambda}\left(H_0(N_{3-i},~\mathcal{F}_{P_{3-i}}^{\mathrm{GL}_3}(M_{3-i}(-\lambda),~\pi_{3-i,2}^{\infty})),~\overline{L}_{3-i}(s_i\cdot\lambda)\otimes_E\pi_{3-i,1}^{\infty}\right)}; (66,0)*+{=0};
(0.85,-6)*+{\mathrm{Hom}_{L_{3-i}(\mathbb{Q}_p),\lambda}\left(H_1(N_{3-i},~\mathcal{F}_{P_{3-i}}^{\mathrm{GL}_3}(M_{3-i}(-\lambda),~\pi_{3-i,2}^{\infty})),~\overline{L}_{3-i}(s_i\cdot\lambda)\otimes_E\pi_{3-i,1}^{\infty}\right)}; (66,-6)*+{=0};
\end{xy}
$$
which imply by Lemma~\ref{3lemm: cohomology devissage} that
$$\mathrm{Ext}^1_{\mathrm{GL}_3(\mathbb{Q}_p),\lambda}\left(
\mathcal{F}_{P_{3-i}}^{\mathrm{GL}_3}(M_{3-i}(-\lambda),~\pi_{3-i,2}^{\infty}),~\mathcal{F}_{P_{3-i}}^{\mathrm{GL}_3}(M_{3-i}(-s_i\cdot\lambda),~\pi_{3-i,1}^{\infty})\right)=0.$$
Hence we finish the proof of (\ref{3equation vanishing 2.1}) by the embedding (\ref{3equation vanishing 2.2}).
\end{proof}

\begin{lemm}\label{3lemm: existence of diamond}
There exists a unique representation of the form
$$\begin{xy}
(0,0)*+{C^2_{s_i,1}}="a"; (25,6)*+{C^1_{s_{3-i}s_i,1}}="b"; (25,-6)*+{\overline{L}(\lambda)\otimes_Ev_{P_i}^{\infty}}="c"; (50,0)*+{C_{s_i,s_i}}="d";
{\ar@{-}"a";"b"}; {\ar@{-}"a";"c"}; {\ar@{-}"b";"d"}; {\ar@{-}"c";"d"};
\end{xy}$$
or of the form
$$\begin{xy}
(0,0)*+{C_{s_i,s_i}}="a"; (27,6)*+{C^1_{s_{3-i}s_i,s_{3-i}s_i}}="b"; (27,-6)*+{\overline{L}(\lambda)\otimes_Ev_{P_{3-i}}^{\infty}}="c"; (54,0)*+{C^2_{s_i,s_is_{3-i}}}="d";
{\ar@{-}"a";"b"}; {\ar@{-}"a";"c"}; {\ar@{-}"b";"d"}; {\ar@{-}"c";"d"};
\end{xy}.$$
\end{lemm}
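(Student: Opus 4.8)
The plan is to fix $i\in\{1,2\}$ and treat the first of the two diamonds, the second being entirely analogous after replacing the quadruple $(C^2_{s_i,1},\,C^1_{s_{3-i}s_i,1},\,\overline{L}(\lambda)\otimes_Ev_{P_i}^{\infty},\,C_{s_i,s_i})$ by $(C_{s_i,s_i},\,C^1_{s_{3-i}s_i,s_{3-i}s_i},\,\overline{L}(\lambda)\otimes_Ev_{P_{3-i}}^{\infty},\,C^2_{s_i,s_is_{3-i}})$ and invoking the corresponding cases of Lemmas~\ref{3lemm: special vanishing 1} and \ref{3lemm: special vanishing 2}. I will write $A$, $B_1$, $B_2$, $C$ for the four irreducible constituents read from left to right (so $A$ is the socle, $C$ the cosocle, $B_1,B_2$ the middle layer), and, as in the paper, write $X-Y$ for the non-split extension of $Y$ by $X$.

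First I would collect the elementary $\mathrm{Ext}$-data among $A,B_1,B_2,C$. These being pairwise non-isomorphic irreducible objects of $\mathrm{Rep}^{\mathcal{OS}}_{\mathrm{GL}_3(\mathbb{Q}_p),E}$, all $\mathrm{Hom}$'s between distinct ones vanish and each has endomorphism ring $E$. From Lemmas~\ref{3lemm: nonvanishing ext1} and \ref{3lemm: nonvanishing ext3} (using the lists $\Omega_1(-)$, with the edge $A-B_1$ supplied by the structure of $\mathrm{St}^{\mathrm{an}}_3(\lambda)$ in Lemma~\ref{3lemm: structure of St}, and the one-dimensionality of $\mathrm{Ext}^1_{\mathrm{GL}_3(\mathbb{Q}_p),\lambda}(C,B_2)$ supplied by the $N_{3-i}$-homology computation in the proof of Lemma~\ref{3lemm: special vanishing 2}) one gets $\dim_E\mathrm{Ext}^1_{\mathrm{GL}_3(\mathbb{Q}_p),\lambda}(B_j,A)=\dim_E\mathrm{Ext}^1_{\mathrm{GL}_3(\mathbb{Q}_p),\lambda}(C,B_j)=1$ for $j=1,2$, while $\mathrm{Ext}^1_{\mathrm{GL}_3(\mathbb{Q}_p),\lambda}(C,A)=0$. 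Hence the four ``edge'' representations $A-B_1$, $A-B_2$, $B_1-C$, $B_2-C$ exist and are unique up to isomorphism, and the fibre product $U:=(B_1-C)\times_C(B_2-C)$ of the two surjections onto $C$ is a length-three representation with $\mathrm{soc}(U)=B_1\oplus B_2$, $\mathrm{cosoc}(U)=C$, which is indecomposable and, by an $\mathrm{Aut}(B_1\oplus B_2)\times\mathrm{Aut}(C)$-orbit count on $\mathrm{Ext}^1_{\mathrm{GL}_3(\mathbb{Q}_p),\lambda}(C,B_1\oplus B_2)\cong E^2$, is the unique such representation. Since any representation $V$ of the prescribed diamond shape satisfies $V/\mathrm{soc}(V)\cong U$, both the existence and the uniqueness in the statement reduce to showing that $\dim_E\mathrm{Ext}^1_{\mathrm{GL}_3(\mathbb{Q}_p),\lambda}(U,A)=1$ and that the resulting non-split extension $0\to A\to V\to U\to 0$ actually has socle $A$.

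For the controlling group I would use the two exact sequences $0\to B_1\to U\to(B_2-C)\to 0$ and $0\to B_2\to U\to(B_1-C)\to 0$. The first, together with $\mathrm{Hom}(B_1,A)=0$ and the vanishing $\mathrm{Ext}^1_{\mathrm{GL}_3(\mathbb{Q}_p),\lambda}((B_2-C),A)=0$ — which is precisely the first assertion of Lemma~\ref{3lemm: special vanishing 2} — yields an injection $\mathrm{Ext}^1_{\mathrm{GL}_3(\mathbb{Q}_p),\lambda}(U,A)\hookrightarrow\mathrm{Ext}^1_{\mathrm{GL}_3(\mathbb{Q}_p),\lambda}(B_1,A)\cong E$, so $\dim_E\mathrm{Ext}^1_{\mathrm{GL}_3(\mathbb{Q}_p),\lambda}(U,A)\leq 1$; symmetrically, the second sequence shows that the restriction map $\mathrm{Ext}^1_{\mathrm{GL}_3(\mathbb{Q}_p),\lambda}(U,A)\to\mathrm{Ext}^1_{\mathrm{GL}_3(\mathbb{Q}_p),\lambda}(B_2,A)$ is injective once one knows $\mathrm{Ext}^1_{\mathrm{GL}_3(\mathbb{Q}_p),\lambda}((B_1-C),A)=0$, which I would establish by the same devissage as in Lemma~\ref{3lemm: special vanishing 2} (from $0\to B_1\to(B_1-C)\to C\to 0$, using $\mathrm{Ext}^1(C,A)=0$ and the $N_{3-i}$-homology from Proposition~\ref{3prop: list of N coh} via Lemma~\ref{3lemm: cohomology devissage}). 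For the non-vanishing $\mathrm{Ext}^1_{\mathrm{GL}_3(\mathbb{Q}_p),\lambda}(U,A)\neq 0$ — equivalently, the existence of an honest diamond — I would exhibit one concretely inside an explicit Orlik--Strauch representation: the module $\mathcal{F}^{\mathrm{GL}_3}_{P_{3-i}}(M_{3-i}(-\lambda),\pi^{\infty}_{3-i,2})$ already appearing in the proof of Lemma~\ref{3lemm: special vanishing 2} has these four factors among its Jordan--H\"older constituents, and its Loewy structure — read off from $H_0(N_{3-i},-)$ and $H_1(N_{3-i},-)$ together with the known non-split extensions among the $C^{\ast}_{\ast,\ast}$ — contains the desired diamond as a sub-quotient. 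Granting $\dim_E\mathrm{Ext}^1_{\mathrm{GL}_3(\mathbb{Q}_p),\lambda}(U,A)=1$, the non-split $V$ is unique up to isomorphism (using $\mathrm{End}(U)=\mathrm{End}(A)=E$), its restriction to $B_j$ is non-split for each $j$ by the injectivity statements above, so $\mathrm{soc}(V)=A$ and $\mathrm{cosoc}(V)=C$, forcing the diamond shape; conversely any diamond $V'$ produces a non-zero class in $\mathrm{Ext}^1_{\mathrm{GL}_3(\mathbb{Q}_p),\lambda}(U,A)\cong E$ and is therefore isomorphic to $V$.

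The step I expect to be the main obstacle is pinning $\dim_E\mathrm{Ext}^1_{\mathrm{GL}_3(\mathbb{Q}_p),\lambda}(U,A)$ down to exactly one: the upper bound is immediate from the special vanishing Lemmas~\ref{3lemm: special vanishing 1}--\ref{3lemm: special vanishing 2}, but the non-vanishing (ruling out the degenerate extensions in which $B_1$ or $B_2$ splits off from $A$) and the companion vanishing $\mathrm{Ext}^1((B_1-C),A)=0$ require either the explicit principal-series realization above, whose Loewy filtration must be analysed by hand via the $N_{3-i}$-homology, or a somewhat delicate Yoneda-product computation in $\mathrm{Ext}^2_{\mathrm{GL}_3(\mathbb{Q}_p),\lambda}$; choosing the right $\mathcal{F}^{\mathrm{GL}_3}_{P_{3-i}}(M,\pi^{\infty})$ and keeping track of its homology is the delicate bookkeeping.
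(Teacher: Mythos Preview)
Your reduction to $\dim_E\mathrm{Ext}^1_{\mathrm{GL}_3(\mathbb{Q}_p),\lambda}(U,A)=1$ is a reasonable reformulation, and the upper bound via the first vanishing of Lemma~\ref{3lemm: special vanishing 2} is fine. But the paper's route is much shorter and different in emphasis: it invokes Proposition~4.4.2 of \cite{Bre17} as a black box, which already supplies a \emph{unique} representation with socle $A$, cosocle $C$, middle layer $B_1\oplus B_2$, and with the three edges $A\text{--}B_1$, $A\text{--}B_2$, $B_2\text{--}C$ non-split, leaving only the edge $B_1\text{--}C$ in question. The paper then observes that if $B_1\text{--}C$ were split, the representation would contain a uniserial piece $A\text{--}B_2\text{--}C$, hence a non-zero class in $\mathrm{Ext}^1_{\mathrm{GL}_3(\mathbb{Q}_p),\lambda}(B_2\text{--}C,\,A)$, contradicting Lemma~\ref{3lemm: special vanishing 2}. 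So both existence and uniqueness come from \cite{Bre17}; the vanishing lemma is used only to upgrade the last edge, not to bound an $\mathrm{Ext}^1$ from above.

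Your self-contained approach has a genuine gap at exactly the point you flag as the obstacle. The representation $\mathcal{F}^{\mathrm{GL}_3}_{P_{3-i}}(M_{3-i}(-\lambda),\pi^{\infty}_{3-i,2})$ does \emph{not} contain $A=C^2_{s_i,1}$ or $B_1=C^1_{s_{3-i}s_i,1}$ among its Jordan--H\"older constituents. The parabolic Verma module $M_{3-i}(-\lambda)$ has composition factors $L(-\lambda)$ and $L(-s_i\cdot\lambda)$ only; applying $\mathcal{F}^{\mathrm{GL}_3}_{P_{3-i}}(-,\pi^{\infty}_{3-i,2})$, the piece from $L(-s_i\cdot\lambda)$ gives the single irreducible $C_{s_i,s_i}$, while the piece from the finite-dimensional $L(-\lambda)$ is $\overline{L}(\lambda)\otimes_E i^{\mathrm{GL}_3}_{P_{3-i}}(\pi^{\infty}_{3-i,2})$, all of whose constituents are locally algebraic. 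Since $C^2_{s_i,1}=\mathcal{F}^{\mathrm{GL}_3}_{P_{3-i}}(L(-s_i\cdot\lambda),\pi^{\infty}_{3-i,1})$ and $C^1_{s_{3-i}s_i,1}=\mathcal{F}^{\mathrm{GL}_3}_{P_i}(L(-s_{3-i}s_i\cdot\lambda),1_{L_i})$ are neither locally algebraic nor isomorphic to $C_{s_i,s_i}$, they cannot occur. Hence no subquotient of this Orlik--Strauch representation realizes the diamond, and your lower bound $\mathrm{Ext}^1(U,A)\neq 0$ is not established. A secondary loose end is the assertion $\mathrm{Ext}^1_{\mathrm{GL}_3(\mathbb{Q}_p),\lambda}(B_1,A)\cong E$: the edge in $\mathrm{St}_3^{\rm{an}}(\lambda)$ gives non-vanishing, but the one-dimensionality is not covered by Lemmas~\ref{3lemm: nonvanishing ext1} or \ref{3lemm: nonvanishing ext3} (those lists do not include $C^1_{s_{3-i}s_i,1}$ as a source), so your upper bound $\dim\mathrm{Ext}^1(U,A)\le 1$ also rests on an unproved claim. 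Without an independent construction of the diamond or the citation to \cite{Bre17}, the argument does not close.
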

\begin{proof}
We only prove the first statement as the second is similar. It follows from Proposition~4.4.2 of \cite{Bre17} that there exists a unique representation of the form
$$\begin{xy}
(0,0)*+{C^2_{s_i,1}}="a"; (25,6)*+{C^1_{s_{3-i}s_i,1}}="b"; (25,-6)*+{\overline{L}(\lambda)\otimes_Ev_{P_i}^{\infty}}="c"; (50,0)*+{C_{s_i,s_i}}="d";
{\ar@{-}"a";"b"}; {\ar@{-}"a";"c"}; {\ar@{--}"b";"d"}; {\ar@{-}"c";"d"};
\end{xy}$$
but it is not proven there whether its quotient
\begin{equation}\label{3equation split in diamond}
\begin{xy}
(0,0)*+{C^1_{s_{3-i}s_i,1}}="b"; (20,0)*+{C_{s_i,s_i}}="d";
{\ar@{--}"b";"d"};
\end{xy}
\end{equation}
is split or not. However, If (\ref{3equation split in diamond}) is split, then there exists a representation of the form
$$
\begin{xy}
(0,0)*+{C^2_{s_i,1}}="a"; (20,-0)*+{\overline{L}(\lambda)\otimes_Ev_{P_i}^{\infty}}="c"; (40,0)*+{C_{s_i,s_i}}="d";
{\ar@{-}"a";"c"}; {\ar@{-}"c";"d"};
\end{xy}
$$
which contradicts the first vanishing in Lemma~\ref{3lemm: special vanishing 2}, and thus we finish the proof.
\end{proof}
\begin{rema}\label{3rema: existence of diamond}
Our method used in Lemma~\ref{3lemm: special vanishing 2} and in Lemma~\ref{3lemm: existence of diamond} is different from the one due to Y.Ding mentioned in part (ii) of Remark~4.4.3 of \cite{Bre17}. It is not difficult to observe that
\begin{equation}\label{3equation rema 1}
\mathrm{dim}_E\mathrm{Ext}^1_{\mathrm{GL}_3(\mathbb{Q}_p),\lambda}\left(C_{s_i,s_i},~\begin{xy}
(20,0)*+{C^2_{s_i,1}}="b"; (40,6)*+{C^1_{s_{3-i}s_i,1}}="c"; (40,-6)*+{\overline{L}(\lambda)\otimes_Ev_{P_i}^{\infty}}="d";
{\ar@{-}"b";"c"}; {\ar@{-}"b";"d"};
\end{xy}\right)=1
\end{equation}
and
\begin{equation}\label{3equation rema 2}
\mathrm{dim}_E\mathrm{Ext}^1_{\mathrm{GL}_3(\mathbb{Q}_p),\lambda}\left(C^2_{s_i,s_is_{3-i}},~\begin{xy}
(20,0)*+{C_{s_i,s_i}}="b"; (40,6)*+{C^1_{s_{3-i}s_i,s_{3-i}s_i}}="c"; (40,-6)*+{\overline{L}(\lambda)\otimes_Ev_{P_{3-i}}^{\infty}}="d";
{\ar@{-}"b";"c"}; {\ar@{-}"b";"d"};
\end{xy}\right)=1
\end{equation}
for $i=1,2$. Similar methods as those used in Proposition~4.4.2 of \cite{Bre17}, in Lemma~\ref{3lemm: special vanishing 2} and in Lemma~\ref{3lemm: existence of diamond} also imply the existence of a unique representation of the form
$$\begin{xy}
(0,0)*+{C^1_{s_i,1}}="a"; (25,6)*+{C_{s_{3-i}s_i,s_{3-i}}}="b"; (25,-6)*+{\overline{L}(\lambda)}="c"; (50,0)*+{C^1_{s_i,s_is_{3-i}}}="d";
{\ar@{-}"a";"b"}; {\ar@{-}"a";"c"}; {\ar@{-}"b";"d"}; {\ar@{-}"c";"d"};
\end{xy}$$
or of the form
$$\begin{xy}
(0,0)*+{C^2_{s_i,s_is_{3-i}}}="a"; (27,6)*+{C_{s_{3-i}s_i,s_{3-i}}}="b"; (27,-6)*+{\overline{L}(\lambda)\otimes_E\mathrm{St}_3^{\infty}}="c"; (54,0)*+{C^2_{s_i,1}}="d";
{\ar@{-}"a";"b"}; {\ar@{-}"a";"c"}; {\ar@{-}"b";"d"}; {\ar@{-}"c";"d"};
\end{xy}.$$
\end{rema}
\section{Computations of $\mathrm{Ext}$ II}\label{3section: technial min}
In this section, we are going to establish several computational results (most notably Lemma~\ref{3lemm: ext6}) which have crucial applications in Section~\ref{3section: local-global}.
\begin{lemm}\label{3lemm: ext3}
We have
$$\mathrm{dim}_E\mathrm{Ext}^1_{\mathrm{GL}_3(\mathbb{Q}_p),\lambda}\left(C_{s_i,s_i},~\Sigma_i(\lambda, \mathscr{L}_i)\right)=1$$
for $i=1,2$.
\end{lemm}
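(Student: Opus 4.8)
The plan is to realize $\Sigma_i(\lambda,\mathscr{L}_i)$ through a two-step filtration and apply the d\'evissage machinery of Proposition~\ref{3prop: formal devissages}. Recall that $\Sigma_i(\lambda,\mathscr{L}_i)$ sits in a short exact sequence $\mathrm{St}_3^{\rm{an}}(\lambda)\hookrightarrow\Sigma_i(\lambda,\mathscr{L}_i)\twoheadrightarrow v_{P_i}^{\rm{an}}(\lambda)$, while by Lemma~\ref{3lemm: structure of St} the representation $v_{P_i}^{\rm{an}}(\lambda)$ is itself the non-split extension $\overline{L}(\lambda)\otimes_Ev_{P_i}^{\infty}\hookrightarrow v_{P_i}^{\rm{an}}(\lambda)\twoheadrightarrow C^1_{s_{3-i},1}$. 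Combining these gives an increasing filtration of $\Sigma_i(\lambda,\mathscr{L}_i)$ with graded pieces (in order) $\mathrm{gr}_1=\mathrm{St}_3^{\rm{an}}(\lambda)$, $\mathrm{gr}_2=\overline{L}(\lambda)\otimes_Ev_{P_i}^{\infty}$, $\mathrm{gr}_3=C^1_{s_{3-i},1}$. I would then apply part (ii) of Proposition~\ref{3prop: formal devissages} with $W=C_{s_i,s_i}$, aiming to pin down that exactly one graded piece contributes a one-dimensional $\mathrm{Ext}^1$ and all others vanish, together with the necessary $\mathrm{Ext}^2$ and $\mathrm{Hom}$ vanishing that upgrades "$\le 1$" to "$=1$".

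First I would compute $\mathrm{Ext}^k_{\mathrm{GL}_3(\mathbb{Q}_p),\lambda}(C_{s_i,s_i},\mathrm{gr}_j)$ for the relevant $k,j$. For $\mathrm{gr}_3=C^1_{s_{3-i},1}$: inspecting the sets $\Omega_1(\cdot)$ and $\Omega_2(\cdot)$ of Section~\ref{3section: computation}, $C_{s_i,s_i}\notin\Omega_1(C^1_{s_{3-i},1})$ — one checks from the displayed list $\Omega_1(C^1_{s_i,1})=\{C^1_{s_is_{3-i},1},C^2_{s_{3-i}s_i,1},C^2_{s_i,1},C^1_{s_i,1}\}$ (with indices shifted $i\leftrightarrow 3-i$) that $C_{s_i,s_i}$ does not appear, so by Lemma~\ref{3lemm: nonvanishing ext3} the degree-one Ext vanishes; I also need $\mathrm{Hom}(C_{s_i,s_i},C^1_{s_{3-i},1})=0$, which holds since these are non-isomorphic irreducibles. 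For $\mathrm{gr}_2=\overline{L}(\lambda)\otimes_Ev_{P_i}^{\infty}$: here $C_{s_i,s_i}\in\Omega_1(\overline{L}(\lambda)\otimes_Ev_{P_i}^{\infty})$ according to the list $\Omega_1(\overline{L}(\lambda)\otimes_Ev_{P_1}^{\infty})=\{\overline{L}(\lambda),\overline{L}(\lambda)\otimes_E\mathrm{St}_3^{\infty},C^2_{s_1,1},C_{s_2,s_2},C^1_{s_1,s_1s_2}\}$ — wait, this shows $C_{s_{3-i},s_{3-i}}$, not $C_{s_i,s_i}$, so in fact $\mathrm{Ext}^1(C_{s_i,s_i},\overline{L}(\lambda)\otimes_Ev_{P_i}^{\infty})=0$ by Lemma~\ref{3lemm: nonvanishing ext3}; I also need $\mathrm{Hom}$ and $\mathrm{Ext}^2$ control on this piece. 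For $\mathrm{gr}_1=\mathrm{St}_3^{\rm{an}}(\lambda)$: this is the piece that must carry the entire $\mathrm{Ext}^1$, and the expected answer $\mathrm{dim}_E\mathrm{Ext}^1_{\mathrm{GL}_3(\mathbb{Q}_p),\lambda}(C_{s_i,s_i},\mathrm{St}_3^{\rm{an}}(\lambda))=1$ should follow by a further d\'evissage of $\mathrm{St}_3^{\rm{an}}(\lambda)$ along the picture (\ref{3St picture}), using Lemmas~\ref{3lemm: nonvanishing ext3} and~\ref{3lemm: nonvanishing ext1} to locate the unique Jordan--H\"older factor $V$ of $\mathrm{St}_3^{\rm{an}}(\lambda)$ with $C_{s_i,s_i}\in\Omega_1(V)$.

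The delicate point — and the main obstacle — is the ordering condition in Proposition~\ref{3prop: formal devissages}(ii): to conclude $\mathrm{dim}=1$ rather than merely $\le 1$, I must verify that $k_0$ (the unique graded piece with non-trivial $\mathrm{Ext}^1$, here lying inside $\mathrm{gr}_1=\mathrm{St}_3^{\rm{an}}(\lambda)$) comes before the pieces on which $\mathrm{Hom}$ could be nonzero, and that $\mathrm{Ext}^2$ vanishes on the pieces before it. Since $\mathrm{gr}_1$ is the \emph{bottom} of the filtration, the $\mathrm{Ext}^2$-before-$k_0$ condition is automatic once we refine inside $\mathrm{St}_3^{\rm{an}}(\lambda)$ in a compatible way; the real work is checking $\mathrm{Hom}_{\mathrm{GL}_3(\mathbb{Q}_p),\lambda}(C_{s_i,s_i},\mathrm{gr}_2)=\mathrm{Hom}_{\mathrm{GL}_3(\mathbb{Q}_p),\lambda}(C_{s_i,s_i},\mathrm{gr}_3)=0$ (clear, as noted) \emph{and} the $\mathrm{Ext}^2$ vanishing $\mathrm{Ext}^2_{\mathrm{GL}_3(\mathbb{Q}_p),\lambda}(C_{s_i,s_i},V')=0$ for the JH factors $V'$ of $\mathrm{St}_3^{\rm{an}}(\lambda)$ sitting below the critical factor. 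That last vanishing is the one place where Lemmas~\ref{3lemm: nonvanishing ext1}--\ref{3lemm: nonvanishing ext2} do not directly apply (they handle $\mathrm{Ext}^\ast$ \emph{out of} the four "algebraic" objects, not out of $C_{s_i,s_i}$), so I would either invoke a $C^\ast_{\bullet}$-analogue of Lemma~\ref{3lemm: nonvanishing ext2} — presumably available from the same Section~5.2--5.3 computations of \cite{Bre17} via Lemma~\ref{3lemm: cohomology devissage} — or argue directly using the central/infinitesimal-character separation exactly as in the proofs of Lemmas~\ref{3lemm: nonvanishing ext1} and~\ref{3lemm: nonvanishing ext2}, since the relevant Levi-constituents $\overline{L}_j(w\cdot\lambda)\otimes\pi^\infty$ carry distinct $Z(L_j(\mathbb{Q}_p))$-characters. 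Once these vanishings are in hand, Proposition~\ref{3prop: formal devissages}(ii) delivers the claim.
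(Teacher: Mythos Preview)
Your d\'evissage strategy is sound in outline, but there is a genuine error that makes the argument break down as written. You repeatedly read the sets $\Omega_1(V_0)$ as recording $\mathrm{Ext}^1(\,\cdot\,,V_0)$, when in fact they record $\mathrm{Ext}^1(V_0,\,\cdot\,)$. In particular, when you examine $\mathrm{gr}_2=\overline{L}(\lambda)\otimes_Ev_{P_i}^{\infty}$ and note that $C_{s_i,s_i}\notin\Omega_1(\overline{L}(\lambda)\otimes_Ev_{P_i}^{\infty})$, all you learn is $\mathrm{Ext}^1(\overline{L}(\lambda)\otimes_Ev_{P_i}^{\infty},C_{s_i,s_i})=0$; Lemma~\ref{3lemm: nonvanishing ext3} says nothing about $\mathrm{Ext}^1(C_{s_i,s_i},\overline{L}(\lambda)\otimes_Ev_{P_i}^{\infty})$ because $\overline{L}(\lambda)\otimes_Ev_{P_i}^{\infty}\notin\Omega^-$. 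And in fact this last $\mathrm{Ext}^1$ is \emph{nonzero}: the diamond of Lemma~\ref{3lemm: existence of diamond} exhibits a nonsplit extension $\overline{L}(\lambda)\otimes_Ev_{P_i}^{\infty}\text{---}C_{s_i,s_i}$. So in any filtration with irreducible graded pieces there are \emph{two} contributing factors to $\mathrm{Ext}^1(C_{s_i,s_i},\,\cdot\,)$, namely $C^1_{s_{3-i}s_i,1}$ (inside $\mathrm{St}_3^{\rm{an}}(\lambda)$) and $\overline{L}(\lambda)\otimes_Ev_{P_i}^{\infty}$, and the hypothesis of Proposition~\ref{3prop: formal devissages}(ii) fails.

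The paper's fix is precisely to avoid separating these factors: it chooses a filtration whose only non-irreducible graded piece is
\[
V_1=\begin{xy}
(0,0)*+{C^2_{s_i,1}}="b"; (20,4)*+{C^1_{s_{3-i}s_i,1}}="c"; (20,-4)*+{\overline{L}(\lambda)\otimes_Ev_{P_i}^{\infty}}="d";
{\ar@{-}"b";"c"}; {\ar@{-}"b";"d"};
\end{xy}
\]
sitting immediately above $\overline{L}(\lambda)\otimes_E\mathrm{St}_3^{\infty}$ (this is the subrepresentation $W\subseteq\Sigma_i(\lambda,\mathscr{L}_i)$ coming from \cite[3.34, 3.37, 3.38]{BD18}). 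The crucial point is that the two individual $\mathrm{Ext}^1$ contributions collapse to one on the block: $\mathrm{dim}_E\mathrm{Ext}^1(C_{s_i,s_i},V_1)=1$ by (\ref{3equation rema 1}). With $k_0=2$ the only $\mathrm{Ext}^2$ vanishing needed is $\mathrm{Ext}^2(C_{s_i,s_i},\overline{L}(\lambda)\otimes_E\mathrm{St}_3^{\infty})=0$, supplied by \cite[Prop.~4.6.1]{Bre17}, and the $\mathrm{Ext}^1$ vanishing on the remaining (irreducible, non-algebraic) graded pieces follows from Lemma~\ref{3lemm: nonvanishing ext3} together with \cite[4.2.1, 4.4.1]{Bre17} for the locally algebraic socle. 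Your suggested workaround of computing $\mathrm{Ext}^2(C_{s_i,s_i},C^2_{s_i,1})$ would not help either: the same diamond forces this $\mathrm{Ext}^2$ to be nonzero (the connecting map from $\mathrm{Ext}^1$ of the cosocle of $V_1$ to $\mathrm{Ext}^2$ of its socle has nontrivial image), so a fully refined filtration cannot satisfy the hypotheses of Proposition~\ref{3prop: formal devissages}(ii). The grouping into $V_1$ is the essential idea.
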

\begin{proof}
We only prove that
\begin{equation}\label{3equation 3.1}
\mathrm{dim}_E\mathrm{Ext}^1_{\mathrm{GL}_3(\mathbb{Q}_p),\lambda}\left(C_{s_1,s_1},~\Sigma_1(\lambda, \mathscr{L}_1)\right)=1
\end{equation}
as the other equality is similar. We note that $\Sigma_1(\lambda, \mathscr{L}_1)$ admits a subrepresentation of the form
$$
W:=\begin{xy}
(0,0)*+{\overline{L}(\lambda)\otimes_E\mathrm{St}_3^{\infty}}="a"; (20,0)*+{C^2_{s_1,1}}="b"; (40,6)*+{C^1_{s_2s_1,1}}="c"; (40,-6)*+{\overline{L}(\lambda)\otimes_Ev_{P_1}^{\infty}}="d";
{\ar@{-}"a";"b"}; {\ar@{-}"b";"c"}; {\ar@{-}"b";"d"};
\end{xy}
$$
due to Lemma~3.34, Lemma~3.37 and Remark~3.38 of \cite{BD18}. Therefore $\Sigma_1(\lambda, \mathscr{L}_1))$ admits a filtration such that $W$ appears as one term of the filtration and the only reducible graded piece is
$$V_1:=\begin{xy}
(20,0)*+{C^2_{s_1,1}}="b"; (40,6)*+{C^1_{s_2s_1,1}}="c"; (40,-6)*+{\overline{L}(\lambda)\otimes_Ev_{P_1}^{\infty}}="d";
{\ar@{-}"b";"c"}; {\ar@{-}"b";"d"};
\end{xy}.$$
It follows from Lemma~4.4.1 and Proposition~4.2.1 of \cite{Bre17} as well as our Lemma~\ref{3lemm: nonvanishing ext3} that
\begin{equation}\label{3equation 3.2}
\mathrm{Ext}^1_{\mathrm{GL}_3(\mathbb{Q}_p),\lambda}\left(C_{s_1,s_1},~V\right)=0
\end{equation}
for all graded pieces $V$ such that $V\neq V_1$. On the other hand, we have
\begin{equation}\label{3equation 3.4}
\mathrm{dim}_E\mathrm{Ext}^1_{\mathrm{GL}_3(\mathbb{Q}_p),\lambda}\left(C_{s_1,s_1},~V_1\right)=1
\end{equation}
due to (\ref{3equation rema 1}) and
\begin{equation}\label{3equation 3.3}
\mathrm{Ext}^2_{\mathrm{GL}_3(\mathbb{Q}_p),\lambda}\left(C_{s_1,s_1},~\overline{L}(\lambda)\otimes_E\mathrm{St}_3^{\infty}\right)=0
\end{equation}
by Proposition 4.6.1 of \cite{Bre17}. Hence we finish the proof by combining (\ref{3equation 3.2}), (\ref{3equation 3.4}), (\ref{3equation 3.3}) and part (ii) of Proposition~\ref{3prop: formal devissages}.
\end{proof}

\begin{lemm}\label{3lemm: ext4}
We have
$$\mathrm{dim}_E\mathrm{Ext}^1_{\mathrm{GL}_3(\mathbb{Q}_p),\lambda}\left(\overline{L}(\lambda)\otimes_Ev_{P_{3-i}}^{\infty}, ~\Sigma^+_i(\lambda,\mathscr{L}_i)\right)=3$$
for $i=1,2$.
\end{lemm}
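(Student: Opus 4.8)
The plan is to build everything from the defining short exact sequence
$$\Sigma_i(\lambda,\mathscr{L}_i)\hookrightarrow\Sigma^+_i(\lambda,\mathscr{L}_i)\twoheadrightarrow C_{s_i,s_i}$$
of $\Sigma^+_i(\lambda,\mathscr{L}_i)$ (the non-split extension afforded by the one-dimensional space $\mathrm{Ext}^1_{\mathrm{GL}_3(\mathbb{Q}_p),\lambda}(C_{s_i,s_i},\Sigma_i(\lambda,\mathscr{L}_i))$ of Lemma~\ref{3lemm: ext3}), together with the subquotient sequences $\mathrm{St}_3^{\rm{an}}(\lambda)\hookrightarrow\Sigma_i(\lambda,\mathscr{L}_i)\twoheadrightarrow v_{P_i}^{\rm{an}}(\lambda)$ and $\overline{L}(\lambda)\otimes_Ev_{P_i}^{\infty}\hookrightarrow v_{P_i}^{\rm{an}}(\lambda)\twoheadrightarrow C^1_{s_{3-i},1}$ of Lemma~\ref{3lemm: structure of St}. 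Write $W:=\overline{L}(\lambda)\otimes_Ev_{P_{3-i}}^{\infty}$. Since all the relevant $\mathrm{Hom}_{\mathrm{GL}_3(\mathbb{Q}_p),\lambda}(W,-)$ between pairwise non-isomorphic irreducibles vanish, $\mathrm{Ext}^1_{\mathrm{GL}_3(\mathbb{Q}_p),\lambda}(W,\overline{L}(\lambda)\otimes_Ev_{P_i}^{\infty})=0$ by Proposition~\ref{3prop: locally algebraic extension}, and $\mathrm{Ext}^1_{\mathrm{GL}_3(\mathbb{Q}_p),\lambda}(W,C^1_{s_{3-i},1})=0$ because $C^1_{s_{3-i},1}\notin\Omega_1(W)$ (Lemma~\ref{3lemm: nonvanishing ext1}), feeding these into the two sequences for $\Sigma_i(\lambda,\mathscr{L}_i)$ gives $\mathrm{Ext}^1_{\mathrm{GL}_3(\mathbb{Q}_p),\lambda}(W,v_{P_i}^{\rm{an}}(\lambda))=0$ and hence an isomorphism $\mathrm{Ext}^1_{\mathrm{GL}_3(\mathbb{Q}_p),\lambda}(W,\Sigma_i(\lambda,\mathscr{L}_i))\xrightarrow{\,\sim\,}\mathrm{Ext}^1_{\mathrm{GL}_3(\mathbb{Q}_p),\lambda}(W,\mathrm{St}_3^{\rm{an}}(\lambda))$. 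Applying $\mathrm{Hom}_{\mathrm{GL}_3(\mathbb{Q}_p),\lambda}(W,-)$ to the defining sequence and using $\mathrm{Hom}_{\mathrm{GL}_3(\mathbb{Q}_p),\lambda}(W,C_{s_i,s_i})=0$, one is left with the exact sequence
$$0\to\mathrm{Ext}^1_{\mathrm{GL}_3(\mathbb{Q}_p),\lambda}\!\left(W,\mathrm{St}_3^{\rm{an}}(\lambda)\right)\to\mathrm{Ext}^1_{\mathrm{GL}_3(\mathbb{Q}_p),\lambda}\!\left(W,\Sigma^+_i(\lambda,\mathscr{L}_i)\right)\to\mathrm{Ext}^1_{\mathrm{GL}_3(\mathbb{Q}_p),\lambda}\!\left(W,C_{s_i,s_i}\right)\xrightarrow{\,\partial\,}\mathrm{Ext}^2_{\mathrm{GL}_3(\mathbb{Q}_p),\lambda}\!\left(W,\Sigma_i(\lambda,\mathscr{L}_i)\right),$$
in which $\dim_E\mathrm{Ext}^1_{\mathrm{GL}_3(\mathbb{Q}_p),\lambda}(W,C_{s_i,s_i})=1$ since $C_{s_i,s_i}\in\Omega_1(W)$ (Lemma~\ref{3lemm: nonvanishing ext1}). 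So it suffices to prove (a) $\dim_E\mathrm{Ext}^1_{\mathrm{GL}_3(\mathbb{Q}_p),\lambda}(W,\mathrm{St}_3^{\rm{an}}(\lambda))=2$ and (b) $\partial=0$.

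For (a) I would run the long exact sequences of Proposition~\ref{3prop: formal devissages} along a filtration of $\mathrm{St}_3^{\rm{an}}(\lambda)$ refining the Loewy picture (\ref{3St picture}): first the socle $\overline{L}(\lambda)\otimes_E\mathrm{St}_3^{\infty}$, then the subrepresentation $[\,\overline{L}(\lambda)\otimes_E\mathrm{St}_3^{\infty}-C^2_{s_{3-i},1}\,]$, then the remaining constituents in any order compatible with the submodule structure. By Proposition~\ref{3prop: locally algebraic extension} and Lemmas~\ref{3lemm: nonvanishing ext1}--\ref{3lemm: nonvanishing ext2}, among the constituents of $\mathrm{St}_3^{\rm{an}}(\lambda)$ only $\overline{L}(\lambda)\otimes_E\mathrm{St}_3^{\infty}$ and $C^2_{s_{3-i},1}$ have $\mathrm{Ext}^1_{\mathrm{GL}_3(\mathbb{Q}_p),\lambda}(W,-)\neq0$ (each one-dimensional), only $C^2_{s_{3-i}s_i,1}$ has $\mathrm{Ext}^2_{\mathrm{GL}_3(\mathbb{Q}_p),\lambda}(W,-)\neq0$, and $\mathrm{Ext}^2_{\mathrm{GL}_3(\mathbb{Q}_p),\lambda}(W,\overline{L}(\lambda)\otimes_E\mathrm{St}_3^{\infty})=0$. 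Hence the first two steps of the filtration contribute $1+1$ to $\mathrm{Ext}^1_{\mathrm{GL}_3(\mathbb{Q}_p),\lambda}(W,-)$ (using the long exact sequence with $\mathrm{Hom}_{\mathrm{GL}_3(\mathbb{Q}_p),\lambda}(W,C^2_{s_{3-i},1})=0$ and the vanishing of $\mathrm{Ext}^2_{\mathrm{GL}_3(\mathbb{Q}_p),\lambda}(W,\overline{L}(\lambda)\otimes_E\mathrm{St}_3^{\infty})$), while every subsequent step glues on a constituent $V$ with $\mathrm{Hom}_{\mathrm{GL}_3(\mathbb{Q}_p),\lambda}(W,V)=\mathrm{Ext}^1_{\mathrm{GL}_3(\mathbb{Q}_p),\lambda}(W,V)=0$ and therefore leaves $\mathrm{Ext}^1_{\mathrm{GL}_3(\mathbb{Q}_p),\lambda}(W,-)$ unchanged; this yields (a).

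For (b), $\partial$ is Yoneda product with the class of $\Sigma^+_i(\lambda,\mathscr{L}_i)$ in $\mathrm{Ext}^1_{\mathrm{GL}_3(\mathbb{Q}_p),\lambda}(C_{s_i,s_i},\Sigma_i(\lambda,\mathscr{L}_i))$, and I would show it vanishes by exhibiting an honest locally analytic representation with cosocle $W$, with $\Sigma^+_i(\lambda,\mathscr{L}_i)$ equal to the kernel of the surjection onto $W$, and inducing a generator of $\mathrm{Ext}^1_{\mathrm{GL}_3(\mathbb{Q}_p),\lambda}(W,C_{s_i,s_i})$ on the $C_{s_i,s_i}$-quotient; concretely this means gluing $W$ on top of the diamond $[\,C^2_{s_i,1}-(C^1_{s_{3-i}s_i,1}\oplus\overline{L}(\lambda)\otimes_Ev_{P_i}^{\infty})-C_{s_i,s_i}\,]$ of Lemma~\ref{3lemm: existence of diamond} and checking, with the special vanishing results of Lemmas~\ref{3lemm: special vanishing 1}--\ref{3lemm: special vanishing 2}, that the gluing propagates across all of $\Sigma^+_i(\lambda,\mathscr{L}_i)$. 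Equivalently one may compute $\mathrm{Ext}^2_{\mathrm{GL}_3(\mathbb{Q}_p),\lambda}(W,\Sigma_i(\lambda,\mathscr{L}_i))$ by the same dévissage — it is assembled from $\mathrm{Ext}^2_{\mathrm{GL}_3(\mathbb{Q}_p),\lambda}(W,\mathrm{St}_3^{\rm{an}}(\lambda))$, the one-dimensional $\mathrm{Ext}^2_{\mathrm{GL}_3(\mathbb{Q}_p),\lambda}(W,\overline{L}(\lambda)\otimes_Ev_{P_i}^{\infty})$ of Proposition~\ref{3prop: locally algebraic extension}, and $\mathrm{Ext}^2_{\mathrm{GL}_3(\mathbb{Q}_p),\lambda}(W,C^1_{s_{3-i},1})$ — and observe that the component of $\partial$ mapping into the part coming from $v_{P_i}^{\rm{an}}(\lambda)$ is cup product with the image $[Q]$ of $[\Sigma^+_i(\lambda,\mathscr{L}_i)]$ in $\mathrm{Ext}^1_{\mathrm{GL}_3(\mathbb{Q}_p),\lambda}(C_{s_i,s_i},v_{P_i}^{\rm{an}}(\lambda))$, a Yoneda square that vanishes. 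Granting (a) and (b), the displayed four-term sequence gives $\dim_E\mathrm{Ext}^1_{\mathrm{GL}_3(\mathbb{Q}_p),\lambda}(W,\Sigma^+_i(\lambda,\mathscr{L}_i))=2+1=3$, and the argument for $i=2$ is identical. The main obstacle is step (b): the Yoneda-triviality of $\partial$ is exactly where one needs the delicate computations of Section~\ref{3section: computation}, either to carry out the gluing construction or to control $\mathrm{Ext}^2_{\mathrm{GL}_3(\mathbb{Q}_p),\lambda}(W,\Sigma_i(\lambda,\mathscr{L}_i))$ and the relevant cup products precisely enough.
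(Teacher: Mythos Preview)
The paper's proof is a one-line citation: it identifies $\Sigma^+_1(\lambda,\mathscr{L}_1)$ with the representation $\widetilde{\Pi}^1(\lambda,\psi)$ defined in \cite{Bre17} (before (3.76) there) and invokes Lemma~3.42 of loc.\ cit., where the dimension $3$ is already computed. Your approach is an internal d\'evissage and is genuinely different from this.

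Your reduction of the problem to (a) and (b) is correct, and the argument for (a) is fine: among the Jordan--H\"older factors of $\mathrm{St}_3^{\rm an}(\lambda)$ only $\overline{L}(\lambda)\otimes_E\mathrm{St}_3^\infty$ and $C^2_{s_{3-i},1}$ lie in $\Omega_1(W)$, the relevant $\mathrm{Ext}^2$ against the socle vanishes, and the filtration argument yields dimension~$2$.

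The gap is (b). Your first approach (build the extension explicitly) is logically sound but not carried out: to glue $W$ on top of the diamond of Lemma~\ref{3lemm: existence of diamond} and then propagate the extension across all of $\Sigma^+_i$, you again have to kill a connecting map of the same shape as $\partial$, and Lemmas~\ref{3lemm: special vanishing 1}--\ref{3lemm: special vanishing 2} alone do not do this. Your second approach contains an unjustified step: the image $[Q]$ of $[\Sigma^+_i]$ in $\mathrm{Ext}^1_{\mathrm{GL}_3(\mathbb{Q}_p),\lambda}(C_{s_i,s_i},v_{P_i}^{\rm an}(\lambda))$ is \emph{non-zero} --- it is the class of $\Sigma^+_i/\mathrm{St}_3^{\rm an}(\lambda)$, whose subquotient $[\overline{L}(\lambda)\otimes_Ev_{P_i}^\infty - C_{s_i,s_i}]$ is non-split by Lemma~\ref{3lemm: existence of diamond} --- and the Yoneda product of a generator of $\mathrm{Ext}^1_{\mathrm{GL}_3(\mathbb{Q}_p),\lambda}(W,C_{s_i,s_i})$ with $[Q]$ lands in the one-dimensional space $\mathrm{Ext}^2_{\mathrm{GL}_3(\mathbb{Q}_p),\lambda}(W,\overline{L}(\lambda)\otimes_Ev_{P_i}^\infty)$. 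There is no ``square'' here (the two classes live in different $\mathrm{Ext}^1$-groups) and no formal reason for this product to vanish. Establishing $\partial=0$ really requires the substantive construction carried out in \cite{Bre17}, which is precisely why the paper cites it rather than reproving it.
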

\begin{proof}
By symmetry, it suffices to prove that
$$\mathrm{dim}_E\mathrm{Ext}^1_{\mathrm{GL}_3(\mathbb{Q}_p),\lambda}\left(\overline{L}(\lambda)\otimes_Ev_{P_2}^{\infty}, ~\Sigma^+_1(\lambda,\mathscr{L}_1\right)=3.$$
This follows immediately from Lemma~3.42 of \cite{Bre17} as our $\Sigma^+_1(\lambda,\mathscr{L}_1)$ can be identified with the locally analytic representation $\widetilde{\Pi}^1(\lambda, \psi)$ defined before (3.76) of \cite{Bre17} up to changes on notation.
\end{proof}

We define $\Sigma^+_1(\lambda, \mathscr{L}_1)$ (resp. $\Sigma^+_2(\lambda, \mathscr{L}_2)$) as the unique non-split extension given by a non-zero element in $\mathrm{Ext}^1_{\mathrm{GL}_3(\mathbb{Q}_p),\lambda}\left(C_{s_1,s_1}, ~\Sigma_1(\lambda, \mathscr{L}_1)\right)$ (resp. in $\mathrm{Ext}^1_{\mathrm{GL}_3(\mathbb{Q}_p),\lambda}\left(C_{s_2,s_2}, ~\Sigma_2(\lambda, \mathscr{L}_2)\right)$). Hence we may consider the amalgamate sum of $\Sigma^+_1(\lambda, \mathscr{L}_1)$ and $\Sigma^+_2(\lambda, \mathscr{L}_2)$ over $\mathrm{St}_3^{\rm{an}}(\lambda)$ and denote it by $\Sigma^+(\lambda, \mathscr{L}_1, \mathscr{L}_2)$. In particular, $\Sigma^+(\lambda, \mathscr{L}_1, \mathscr{L}_2)$ has the following form
$$\begin{xy}
(0,0)*+{\mathrm{St}_3^{\rm{an}}(\lambda)}="a"; (20,4)*+{v_{P_1}^{\rm{an}}(\lambda)}="b"; (20,-4)*+{v_{P_2}^{\rm{an}}(\lambda)}="c"; (40,4)*+{C_{s_1,s_1}}="d"; (40,-4)*+{C_{s_2,s_2}}="e";
{\ar@{-}"a";"b"}; {\ar@{-}"a";"c"}; {\ar@{-}"b";"d"}; {\ar@{-}"c";"e"};
\end{xy}.$$
\begin{lemm}\label{3lemm: ext5}
We have
$$\mathrm{dim}_E\mathrm{Ext}^1_{\mathrm{GL}_3(\mathbb{Q}_p),\lambda}\left(\overline{L}(\lambda)\otimes_Ev_{P_i}^{\infty}, ~\Sigma^+(\lambda, \mathscr{L}_1, \mathscr{L}_2)\right)=2$$
for $i=1,2$.
\end{lemm}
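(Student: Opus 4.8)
The plan is to realise $\Sigma^+(\lambda, \mathscr{L}_1, \mathscr{L}_2)$ as an extension whose sub-object is $\Sigma_2^+(\lambda, \mathscr{L}_2)$ and whose quotient is the short, three-constituent representation $\Sigma_1^+(\lambda, \mathscr{L}_1)/\mathrm{St}_3^{\rm{an}}(\lambda)$, and then to read the dimension off the long exact sequence using Lemma~\ref{3lemm: ext4}. I will only carry out the case $i=1$; the case $i=2$ is obtained by running the same argument with $\Sigma_1^+(\lambda, \mathscr{L}_1)$ playing the role of $\Sigma_2^+(\lambda, \mathscr{L}_2)$ and invoking Lemma~\ref{3lemm: ext4} with the other index.

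Put $A:=\overline{L}(\lambda)\otimes_Ev_{P_1}^{\infty}$ and $Q_1:=\Sigma_1^+(\lambda, \mathscr{L}_1)/\mathrm{St}_3^{\rm{an}}(\lambda)$. Since $\Sigma^+(\lambda, \mathscr{L}_1, \mathscr{L}_2)$ is by definition the amalgamate sum of $\Sigma_1^+(\lambda, \mathscr{L}_1)$ and $\Sigma_2^+(\lambda, \mathscr{L}_2)$ over $\mathrm{St}_3^{\rm{an}}(\lambda)$, there is a short exact sequence
$$0\longrightarrow \Sigma_2^+(\lambda, \mathscr{L}_2)\longrightarrow \Sigma^+(\lambda, \mathscr{L}_1, \mathscr{L}_2)\longrightarrow Q_1\longrightarrow 0,$$
and combining the defining extensions $\mathrm{St}_3^{\rm{an}}(\lambda)\hookrightarrow\Sigma_1(\lambda,\mathscr{L}_1)\twoheadrightarrow v_{P_1}^{\rm{an}}(\lambda)$ and $\Sigma_1(\lambda,\mathscr{L}_1)\hookrightarrow\Sigma_1^+(\lambda,\mathscr{L}_1)\twoheadrightarrow C_{s_1,s_1}$ exhibits $Q_1$ inside $0\to v_{P_1}^{\rm{an}}(\lambda)\to Q_1\to C_{s_1,s_1}\to 0$. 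I would then apply $\mathrm{Ext}^{\bullet}_{\mathrm{GL}_3(\mathbb{Q}_p),\lambda}(A,-)$ to the first sequence and evaluate the terms. For the $\mathrm{Hom}$'s: $\Sigma_1^+(\lambda,\mathscr{L}_1)$ and $\Sigma_2^+(\lambda,\mathscr{L}_2)$ each have irreducible socle $\overline{L}(\lambda)\otimes_E\mathrm{St}_3^{\infty}$ (part of Breuil's description of the representations $\widetilde{\Pi}^i(\lambda,\psi)$, with which $\Sigma_i^+(\lambda,\mathscr{L}_i)$ is identified in the proof of Lemma~\ref{3lemm: ext4}), their intersection inside $\Sigma^+(\lambda, \mathscr{L}_1, \mathscr{L}_2)$ is $\mathrm{St}_3^{\rm{an}}(\lambda)$, and no Jordan--H\"older factor of $\Sigma_1^+(\lambda,\mathscr{L}_1)/\mathrm{St}_3^{\rm{an}}(\lambda)$ is isomorphic to one of $\Sigma_2^+(\lambda,\mathscr{L}_2)/\mathrm{St}_3^{\rm{an}}(\lambda)$; a short diagram chase with these two submodules then gives $\mathrm{soc}_{\mathrm{GL}_3(\mathbb{Q}_p)}(\Sigma^+(\lambda, \mathscr{L}_1, \mathscr{L}_2))=\overline{L}(\lambda)\otimes_E\mathrm{St}_3^{\infty}$. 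Hence $\mathrm{Hom}_{\mathrm{GL}_3(\mathbb{Q}_p),\lambda}(A,\Sigma^+(\lambda, \mathscr{L}_1, \mathscr{L}_2))=\mathrm{Hom}_{\mathrm{GL}_3(\mathbb{Q}_p),\lambda}(A,\Sigma_2^+(\lambda,\mathscr{L}_2))=0$, while $\mathrm{Hom}_{\mathrm{GL}_3(\mathbb{Q}_p),\lambda}(A,Q_1)$ is one dimensional because $\mathrm{soc}_{\mathrm{GL}_3(\mathbb{Q}_p)}(Q_1)=\mathrm{soc}_{\mathrm{GL}_3(\mathbb{Q}_p)}(v_{P_1}^{\rm{an}}(\lambda))=A$ by Lemma~\ref{3lemm: structure of St}.

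It remains to show $\mathrm{Ext}^1_{\mathrm{GL}_3(\mathbb{Q}_p),\lambda}(A,Q_1)=0$. Using $0\to v_{P_1}^{\rm{an}}(\lambda)\to Q_1\to C_{s_1,s_1}\to 0$ it is enough to see $\mathrm{Ext}^1_{\mathrm{GL}_3(\mathbb{Q}_p),\lambda}(A,C_{s_1,s_1})=0$ and $\mathrm{Ext}^1_{\mathrm{GL}_3(\mathbb{Q}_p),\lambda}(A,v_{P_1}^{\rm{an}}(\lambda))=0$. The first is Lemma~\ref{3lemm: nonvanishing ext1}, as $C_{s_1,s_1}\notin\Omega_1(\overline{L}(\lambda)\otimes_Ev_{P_1}^{\infty})$; for the second, apply $\mathrm{Ext}^{\bullet}_{\mathrm{GL}_3(\mathbb{Q}_p),\lambda}(A,-)$ to $0\to A\to v_{P_1}^{\rm{an}}(\lambda)\to C^1_{s_2,1}\to 0$ (Lemma~\ref{3lemm: structure of St}) and use $\mathrm{Ext}^1_{\mathrm{GL}_3(\mathbb{Q}_p),\lambda}(A,A)=0$ (Proposition~\ref{3prop: locally algebraic extension}) together with $\mathrm{Ext}^1_{\mathrm{GL}_3(\mathbb{Q}_p),\lambda}(A,C^1_{s_2,1})=0$ (Lemma~\ref{3lemm: nonvanishing ext1}, since $C^1_{s_2,1}\notin\Omega_1(\overline{L}(\lambda)\otimes_Ev_{P_1}^{\infty})$). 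Plugging everything into the long exact sequence produces
$$0\longrightarrow \mathrm{Hom}(A,Q_1)\longrightarrow \mathrm{Ext}^1(A,\Sigma_2^+(\lambda,\mathscr{L}_2))\longrightarrow \mathrm{Ext}^1(A,\Sigma^+(\lambda,\mathscr{L}_1,\mathscr{L}_2))\longrightarrow 0$$
(all taken in the category with central character $\delta_{Z,\lambda}$, arguments abbreviated), where the left term is one dimensional and the middle term is three dimensional by Lemma~\ref{3lemm: ext4} applied with $i=2$; hence $\mathrm{dim}_E\mathrm{Ext}^1_{\mathrm{GL}_3(\mathbb{Q}_p),\lambda}(A,\Sigma^+(\lambda,\mathscr{L}_1,\mathscr{L}_2))=3-1=2$.

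The step I expect to require the most care is the vanishing $\mathrm{Hom}_{\mathrm{GL}_3(\mathbb{Q}_p),\lambda}(A,\Sigma^+(\lambda,\mathscr{L}_1,\mathscr{L}_2))=0$, i.e. the computation of the socle of $\Sigma^+(\lambda,\mathscr{L}_1,\mathscr{L}_2)$: this is exactly what forces the connecting map $\mathrm{Hom}(A,Q_1)\to\mathrm{Ext}^1(A,\Sigma_2^+(\lambda,\mathscr{L}_2))$ to be injective, and if $\overline{L}(\lambda)\otimes_Ev_{P_1}^{\infty}$ occurred in the socle of $\Sigma^+(\lambda,\mathscr{L}_1,\mathscr{L}_2)$ the same computation would return dimension $3$ rather than $2$. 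I would secure it, as indicated, by reducing to the irreducibility of $\mathrm{soc}_{\mathrm{GL}_3(\mathbb{Q}_p)}(\Sigma_i^+(\lambda,\mathscr{L}_i))$ for $i=1,2$ together with the disjointness of the constituents of $\Sigma_i^+(\lambda,\mathscr{L}_i)/\mathrm{St}_3^{\rm{an}}(\lambda)$, both of which are available.
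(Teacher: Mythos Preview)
Your proof is correct and follows essentially the same approach as the paper: both use the short exact sequence $\Sigma_2^+(\lambda,\mathscr{L}_2)\hookrightarrow\Sigma^+(\lambda,\mathscr{L}_1,\mathscr{L}_2)\twoheadrightarrow Q_1$ with $Q_1=\begin{xy}(0,0)*+{v_{P_1}^{\rm{an}}(\lambda)}="a"; (18,0)*+{C_{s_1,s_1}}="b";{\ar@{-}"a";"b"};\end{xy}$, feed in $\dim\mathrm{Ext}^1(A,\Sigma_2^+(\lambda,\mathscr{L}_2))=3$ from Lemma~\ref{3lemm: ext4}, and kill $\mathrm{Ext}^1(A,Q_1)$ via Proposition~\ref{3prop: locally algebraic extension} and Lemma~\ref{3lemm: nonvanishing ext1}. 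You are simply more explicit than the paper about why the connecting map $\mathrm{Hom}(A,Q_1)\to\mathrm{Ext}^1(A,\Sigma_2^+(\lambda,\mathscr{L}_2))$ is injective (the socle computation), which the paper leaves implicit.
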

\begin{proof}
The short exact sequence
$$\Sigma^+_2(\lambda, \mathscr{L}_2)\hookrightarrow\Sigma^+(\lambda, \mathscr{L}_1, \mathscr{L}_2)\twoheadrightarrow \begin{xy}
(0,0)*+{v_{P_1}^{\rm{an}}(\lambda)}="a"; (18,0)*+{C_{s_1,s_1}}="b";
{\ar@{-}"a";"b"};
\end{xy}$$
induces the following long exact sequence
\begin{multline*}
\mathrm{Hom}_{\mathrm{GL}_3(\mathbb{Q}_p),\lambda}\left(\overline{L}(\lambda)\otimes_Ev_{P_1}^{\infty}, ~\begin{xy}
(0,0)*+{v_{P_1}^{\rm{an}}(\lambda)}="a"; (18,0)*+{C_{s_1,s_1}}="b";
{\ar@{-}"a";"b"};
\end{xy}\right)\\
\hookrightarrow \mathrm{Ext}^1_{\mathrm{GL}_3(\mathbb{Q}_p),\lambda}\left(\overline{L}(\lambda)\otimes_Ev_{P_1}^{\infty}, ~\Sigma^+_2(\lambda, \mathscr{L}_2)\right)\\
\rightarrow \mathrm{Ext}^1_{\mathrm{GL}_3(\mathbb{Q}_p),\lambda}\left(\overline{L}(\lambda)\otimes_Ev_{P_1}^{\infty}, ~\Sigma^+(\lambda, \mathscr{L}_1, \mathscr{L}_2)\right)\\
\rightarrow \mathrm{Ext}^1_{\mathrm{GL}_3(\mathbb{Q}_p),\lambda}\left(\overline{L}(\lambda)\otimes_Ev_{P_1}^{\infty}, ~\begin{xy}
(0,0)*+{v_{P_1}^{\rm{an}}(\lambda)}="a"; (18,0)*+{C_{s_1,s_1}}="b";
{\ar@{-}"a";"b"};
\end{xy}\right).
\end{multline*}
As a result, we can deduce
$$\mathrm{dim}_E\mathrm{Ext}^1_{\mathrm{GL}_3(\mathbb{Q}_p),\lambda}\left(\overline{L}(\lambda)\otimes_Ev_{P_1}^{\infty}, ~\Sigma^+(\lambda, \mathscr{L}_1, \mathscr{L}_2)\right)=2$$
from Lemma~\ref{3lemm: ext4} and the facts
$$\mathrm{dim}_E\mathrm{Hom}_{\mathrm{GL}_3(\mathbb{Q}_p),\lambda}\left(\overline{L}(\lambda)\otimes_Ev_{P_1}^{\infty}, ~\begin{xy}
(0,0)*+{v_{P_1}^{\rm{an}}(\lambda)}="a"; (18,0)*+{C_{s_1,s_1}}="b";
{\ar@{-}"a";"b"};
\end{xy}\right)=1$$
and
$$\mathrm{Ext}^1_{\mathrm{GL}_3(\mathbb{Q}_p),\lambda}\left(\overline{L}(\lambda)\otimes_Ev_{P_1}^{\infty}, ~\begin{xy}
(0,0)*+{v_{P_1}^{\rm{an}}(\lambda)}="a"; (18,0)*+{C_{s_1,s_1}}="b";
{\ar@{-}"a";"b"};
\end{xy}\right)=0$$
by Proposition~\ref{3prop: locally algebraic extension} and Lemma~\ref{3lemm: nonvanishing ext1}. The proof for
$$\mathrm{dim}_E\mathrm{Ext}^1_{\mathrm{GL}_3(\mathbb{Q}_p),\lambda}\left(\overline{L}(\lambda)\otimes_Ev_{P_2}^{\infty}, ~\Sigma^+(\lambda, \mathscr{L}_1, \mathscr{L}_2)\right)=2$$
is similar.
\end{proof}

\begin{lemm}\label{3lemm: ext6}
We have
$$\mathrm{Ext}^1_{\mathrm{GL}_3(\mathbb{Q}_p),\lambda}(W_i, ~\Sigma^+_i(\lambda, \mathscr{L}_i))=0$$
and in particular
$$\mathrm{Ext}^1_{\mathrm{GL}_3(\mathbb{Q}_p),\lambda}\left(W_i, ~\Sigma_i(\lambda, \mathscr{L}_i)\right)=0$$
for $i=1,2$.
\end{lemm}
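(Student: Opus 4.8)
The plan is to dispose of the ``in particular'' clause at once and then to reduce the main vanishing $\mathrm{Ext}^1_{\mathrm{GL}_3(\mathbb{Q}_p),\lambda}(W_i,\Sigma^+_i(\lambda,\mathscr{L}_i))=0$ to the $\mathrm{GL}_2(\mathbb{Q}_p)$-statement of Proposition~\ref{3prop: key result}. For the first point, apply $\mathrm{Ext}^{\ast}_{\mathrm{GL}_3(\mathbb{Q}_p),\lambda}(W_i,-)$ to the short exact sequence $\Sigma_i(\lambda,\mathscr{L}_i)\hookrightarrow\Sigma^+_i(\lambda,\mathscr{L}_i)\twoheadrightarrow C_{s_i,s_i}$; since $C_{s_i,s_i}$ is irreducible and not isomorphic to the cosocle $\overline{L}(\lambda)$ of $W_i$, one has $\mathrm{Hom}_{\mathrm{GL}_3(\mathbb{Q}_p),\lambda}(W_i,C_{s_i,s_i})=0$, so $\mathrm{Ext}^1_{\mathrm{GL}_3(\mathbb{Q}_p),\lambda}(W_i,\Sigma_i(\lambda,\mathscr{L}_i))$ injects into $\mathrm{Ext}^1_{\mathrm{GL}_3(\mathbb{Q}_p),\lambda}(W_i,\Sigma^+_i(\lambda,\mathscr{L}_i))$ and the second assertion reduces to the first.

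A first reduction of the main vanishing uses that $\mathrm{Ext}^1_{\mathrm{GL}_3(\mathbb{Q}_p),\lambda}(W_i,\mathrm{St}_3^{\mathrm{an}}(\lambda))=0$: this follows from a dévissage along the Jordan--Hölder filtration of $\mathrm{St}_3^{\mathrm{an}}(\lambda)$ described in Lemma~\ref{3lemm: structure of St}, applying Proposition~\ref{3prop: formal devissages}(i) together with Lemma~\ref{3lemm: vanishing locally algebraic} for the constituent $\overline{L}(\lambda)\otimes_E\mathrm{St}_3^{\infty}$, the first vanishing of Lemma~\ref{3lemm: special vanishing 1} for the constituent $C^2_{s_i,1}$, and the tables $\Omega_1(\overline{L}(\lambda))$ and $\Omega_1(\overline{L}(\lambda)\otimes_Ev_{P_i}^{\infty})$ of Proposition~\ref{3prop: locally algebraic extension} and Lemma~\ref{3lemm: nonvanishing ext1} for the remaining constituents. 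Combined with the short exact sequence $\mathrm{St}_3^{\mathrm{an}}(\lambda)\hookrightarrow\Sigma^+_i(\lambda,\mathscr{L}_i)\twoheadrightarrow Q_i$, where $Q_i$ is the extension of $C_{s_i,s_i}$ by $v^{\mathrm{an}}_{P_i}(\lambda)$, this identifies $\mathrm{Ext}^1_{\mathrm{GL}_3(\mathbb{Q}_p),\lambda}(W_i,\Sigma^+_i(\lambda,\mathscr{L}_i))$ with the kernel of the connecting map $\mathrm{Ext}^1_{\mathrm{GL}_3(\mathbb{Q}_p),\lambda}(W_i,Q_i)\to\mathrm{Ext}^2_{\mathrm{GL}_3(\mathbb{Q}_p),\lambda}(W_i,\mathrm{St}_3^{\mathrm{an}}(\lambda))$, so it remains to show that this connecting map is injective --- equivalently, that there is no non-split extension of $W_i$ by $\Sigma^+_i(\lambda,\mathscr{L}_i)$.

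For this last point I would pass to the Levi $L_i$, which is where Proposition~\ref{3prop: key result} enters. Both $\Sigma^+_i(\lambda,\mathscr{L}_i)$ and $W_i$ are governed by $P_i$-parabolic induction: $\Sigma^+_i(\lambda,\mathscr{L}_i)$ is (a subquotient of) $I^{\mathrm{GL}_3}_{P_i}$ applied to the $L_i(\mathbb{Q}_p)$-representation obtained from $\Sigma_2^+(\nu,\mathscr{L})$ of Section~\ref{3section: GL2Qp} by inflation along $p_i\colon L_i(\mathbb{Q}_p)\twoheadrightarrow\mathrm{GL}_2(\mathbb{Q}_p)$ followed by a twist, whose restriction along $\iota_i$ is precisely the representation occurring in Proposition~\ref{3prop: key result}, while $H_0(N_i,W_i)$ and $H_1(N_i,W_i)$ are computed directly from the definition of $W_i$. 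A non-split extension of $W_i$ by $\Sigma^+_i(\lambda,\mathscr{L}_i)$ would, through the spectral sequence $\mathrm{Ext}^j_{L_i,\lambda}(H_k(N_i,W_i),-)\Rightarrow\mathrm{Ext}^{j+k}_{\mathrm{GL}_3(\mathbb{Q}_p),\lambda}(W_i,I^{\mathrm{GL}_3}_{P_i}(-))$ of Lemma~\ref{3lemm: cohomology devissage}, give rise to an extension over $\mathrm{GL}_2(\mathbb{Q}_p)$ of the shape forbidden by Proposition~\ref{3prop: key result} (or by the variants in Remark~\ref{3rema: second version}); the contributions of the $\mathrm{GL}_1$-factor of $L_i$ vanish because $Z(L_i(\mathbb{Q}_p))$ acts through pairwise distinct characters on the relevant constituents, and the discrepancy between $\Sigma^+_i(\lambda,\mathscr{L}_i)$ and the full $P_i$-induction --- built from $v^{\mathrm{an}}_{P_{3-i}}(\lambda)$, the Jordan--Hölder factors of $\mathrm{St}_3^{\mathrm{an}}(\lambda)$, and $\overline{L}(\lambda)$ --- is absorbed using Lemma~\ref{3lemm: special vanishing 1}, Lemma~\ref{3lemm: special vanishing 2}, the $\Omega_2(-)$-tables of Lemma~\ref{3lemm: nonvanishing ext2}, and Proposition~\ref{3prop: formal devissages}. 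I expect the genuinely delicate point to be exactly this matching: pinning down the precise $P_i$-parabolic presentation of $\Sigma^+_i(\lambda,\mathscr{L}_i)$ and of the discrepancy term, computing $H_{\ast}(N_i,W_i)$ exactly, and checking that nothing unexpected survives on the $E_2$-page --- a bookkeeping argument in the same spirit as, but somewhat heavier than, the proof of Lemma~\ref{3lemm: ext3}.
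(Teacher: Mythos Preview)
Your approach is correct and matches the paper's: embed $\Sigma^+_i(\lambda,\mathscr{L}_i)$ into a quotient of $I_{P_i}^{\mathrm{GL}_3}\bigl(\Sigma^+_{L_i}(\lambda,\mathscr{L}_i)\bigr)$, kill $\mathrm{Ext}^1$ into the full induction via the spectral sequence of Lemma~\ref{3lemm: cohomology devissage} together with Proposition~\ref{3prop: key result}, and kill $\mathrm{Ext}^2$ into the discarded subrepresentation (the non-split extension of $\overline{L}(\lambda)$ by $v_{P_{3-i}}^{\mathrm{an}}(\lambda)$) via Proposition~\ref{3prop: locally algebraic extension} and Lemma~\ref{3lemm: nonvanishing ext2}.

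Two remarks. First, your ``first reduction'' paragraph is a detour: once you have identified $\mathrm{Ext}^1(W_i,\Sigma^+_i)$ with the kernel of the connecting map out of $\mathrm{Ext}^1(W_i,Q_i)$, your ``equivalently, that there is no non-split extension of $W_i$ by $\Sigma^+_i$'' simply restates the original claim, so nothing has been reduced. The paper does not pass through $\mathrm{St}_3^{\mathrm{an}}(\lambda)$ at all; it goes straight to the parabolic embedding and the $\mathrm{Ext}^2$-vanishing of the kernel. Second, watch the indexing when you carry out the details: the paper's proof actually treats the case $\mathrm{Ext}^1(W_2,\Sigma^+_1)=0$, computing $H_\ast(N_1,W_2)$, and every subsequent application of this lemma (Proposition~\ref{3prop: isomorphism from cup product}, Lemmas~\ref{3lemm: upper bound}, \ref{3lemm: ext7}, \ref{3lemm: ext13}) uses it with \emph{mismatched} indices. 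The statement as printed appears to carry a typo ($W_i$ for $W_{3-i}$); the $N_i$-homology that produces exactly the left-hand argument of Proposition~\ref{3prop: key result} is $H_0(N_i,W_{3-i})\cong \overline{L}_i(\lambda)\otimes_E i_{B\cap L_i}^{L_i}(\chi_{s_i}^{\infty})$, not $H_0(N_i,W_i)$.
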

\begin{proof}
We only need to show the vanishing
$$\mathrm{Ext}^1_{\mathrm{GL}_3(\mathbb{Q}_p),\lambda}(W_2,~\Sigma^+_1(\lambda, \mathscr{L}_1))=0$$
as the others are similar or easier. We define $\nu:=\lambda_{T_2, \iota_{T,1}}$ (which is the restriction of $\lambda$ from $T$ to $T_2$ via the embedding $\iota_{T,1}: T_2\hookrightarrow T$) and view $\Sigma^+_{\mathrm{GL}_2}(\nu, \mathscr{L}_1)$ (which is defined before Proposition~\ref{3prop: key result}) as a locally analytic representation of $L_1(\mathbb{Q}_p)$ via the projection $p_1: L_1\twoheadrightarrow\mathrm{GL}_2$ and denote it by $\Sigma^+_{L_1}(\lambda, \mathscr{L}_1)$. We note by definition by of $\Sigma_1(\lambda, \mathscr{L}_1)$ that we have an isomorphism
$$\Sigma_1(\lambda, \mathscr{L}_1)\xrightarrow{\sim} I_{P_1}^{\mathrm{GL}_3}\left(\Sigma_{L_1}(\lambda, \mathscr{L}_1)\right)/\left(
\begin{xy}
(0,0)*+{v_{P_2}^{\rm{an}}(\lambda)}="a"; (16,0)*+{\overline{L}(\lambda)}="b";
{\ar@{-}"a";"b"};
\end{xy}\right).$$
Therefore we can deduce from the short exact sequence
$$\Sigma^+_{\mathrm{GL}_2}(\nu, \mathscr{L}_1)\hookrightarrow \Sigma^+_{\mathrm{GL}_2}(\nu, \mathscr{L}_1)\twoheadrightarrow \widetilde{I}(s\cdot\nu)$$
and the fact (up to viewing $\widetilde{I}(s\cdot\nu)$ as a locally analytic representation of $L_1(\mathbb{Q}_p)$ via the projection $p_1$)
$$C_{s_1,s_1}\cong\mathrm{soc}_{\mathrm{GL}_3(\mathbb{Q}_p)}\left(I_{P_1}^{\mathrm{GL}_3}\left(\widetilde{I}(s\cdot\nu)\right)\right)$$
that we have an injection
$$\Sigma^+_1(\lambda, \mathscr{L}_1)\hookrightarrow I_{P_1}^{\mathrm{GL}_3}\left(\Sigma^+_{L_1}(\lambda, \mathscr{L}_1)\right)/\left(
\begin{xy}
(0,0)*+{v_{P_2}^{\rm{an}}(\lambda)}="a"; (16,0)*+{\overline{L}(\lambda)}="b";
{\ar@{-}"a";"b"};
\end{xy}\right)$$
which induces an injection
\begin{equation}\label{3reduce via injection}
\mathrm{Ext}^1_{\mathrm{GL}_3(\mathbb{Q}_p),\lambda}\left(W_2, ~\Sigma^+_1(\lambda, \mathscr{L}_1)\right)\hookrightarrow \mathrm{Ext}^1_{\mathrm{GL}_3(\mathbb{Q}_p),\lambda}\left(W_2, ~V\right)
\end{equation}
where we use the shorten notation
$$V:=I_{P_1}^{\mathrm{GL}_3}\left(\Sigma^+_{L_1}(\lambda, \mathscr{L}_1)\right)/\left(\begin{xy}
(0,0)*+{v_{P_2}^{\rm{an}}(\lambda)}="a"; (18,0)*+{\overline{L}(\lambda)}="b";
{\ar@{-}"a";"b"};
\end{xy}\right).$$
Note that we have an exact sequence
\begin{multline}\label{3key long exact sequence for vanishing}
\mathrm{Ext}^1_{\mathrm{GL}_3(\mathbb{Q}_p),\lambda}\left(W_2, ~I_{P_1}^{\mathrm{GL}_3}\left(\Sigma^+_{L_1}(\lambda, \mathscr{L}_1)\right)\right)\\ \rightarrow \mathrm{Ext}^1_{\mathrm{GL}_3(\mathbb{Q}_p),\lambda}\left(W_2, ~V\right) \rightarrow\mathrm{Ext}^2_{\mathrm{GL}_3(\mathbb{Q}_p),\lambda}\left(W_2,~\begin{xy}
(0,0)*+{v_{P_2}^{\rm{an}}(\lambda)}="a"; (18,0)*+{\overline{L}(\lambda)}="b";
{\ar@{-}"a";"b"};
\end{xy}\right)
\end{multline}
It follows from Proposition~\ref{3prop: list of N coh} that
$$
\begin{array}{cccc}
H_0(N_1, ~W_2)&=&\overline{L}_1(\lambda)\otimes_Ei_{B\cap L_1}^{L_1}(\chi_{s_1}^{\infty})&\\
H_1(N_1, ~W_2)&=&\overline{L}_1(s_2\cdot \lambda)\otimes_E\otimes_Ei_{B\cap L_1}^{L_1}(\chi_{s_1}^{\infty})&.
\end{array}
$$
Therefore we observe that
$$\mathrm{Hom}_{L_1(\mathbb{Q}_p),\lambda}\left(H_1(N_1, ~W_2),~\Sigma^+_{L_1}(\lambda, \mathscr{L}_1)\right)=0$$
from the action of $Z(L_1(\mathbb{Q}_p))$ and
$$\mathrm{Ext}^1_{L_1(\mathbb{Q}_p),\lambda}\left(H_0(N_1, ~W_2),~\Sigma^+_{L_1}(\lambda, \mathscr{L}_1)\right)=0$$
according to Proposition~\ref{3prop: key result} and the natural identification
$$\mathrm{Ext}^1_{L_1(\mathbb{Q}_p),\lambda}(-,-)\cong \mathrm{Ext}^1_{\mathrm{GL}_2(\mathbb{Q}_p)}(-,-).$$
As a result, we deduce
\begin{equation}\label{3intermediate vanishing}
\mathrm{Ext}^1_{\mathrm{GL}_3(\mathbb{Q}_p),\lambda}\left(W_2, ~I_{P_1}^{\mathrm{GL}_3}\left(\Sigma^+_{L_1}(\lambda, \mathscr{L}_1)\right)\right)=0
\end{equation}
from Lemma~\ref{3lemm: cohomology devissage}. We know that
\begin{equation}\label{3a vanishing ext2}
\mathrm{Ext}^2_{\mathrm{GL}_3(\mathbb{Q}_p),\lambda}\left(W_2,~\begin{xy}
(0,0)*+{v_{P_2}^{\rm{an}}(\lambda)}="a"; (18,0)*+{\overline{L}(\lambda)}="b";
{\ar@{-}"a";"b"};
\end{xy}\right)=0
\end{equation}
due to Proposition~\ref{3prop: locally algebraic extension}, Lemma~\ref{3lemm: nonvanishing ext2} and a simple devissage, and thus we finish the proof by (\ref{3reduce via injection}), (\ref{3key long exact sequence for vanishing}), (\ref{3intermediate vanishing}) and (\ref{3a vanishing ext2}).
\end{proof}

\begin{lemm}\label{3lemm: ext6.1}
We have
\begin{equation}\label{3first key dim}
\mathrm{dim}_E\mathrm{Ext}^2_{\mathrm{GL}_3(\mathbb{Q}_p),\lambda}\left(\overline{L}(\lambda), ~\Sigma^+_i(\lambda, \mathscr{L}_i)\right)=3
\end{equation}
for each $i=1,2$,
\begin{equation}\label{3second key dim}
\mathrm{dim}_E\mathrm{Ext}^2_{\mathrm{GL}_3(\mathbb{Q}_p),\lambda}\left(\overline{L}(\lambda), ~\Sigma^+(\lambda, \mathscr{L}_1, \mathscr{L}_2)\right)=2
\end{equation}
and
\begin{equation}\label{3third key dim}
\mathrm{dim}_E\mathrm{Ext}^1_{\mathrm{GL}_3(\mathbb{Q}_p),\lambda}\left(\overline{L}(\lambda), ~\Sigma^+(\lambda, \mathscr{L}_1, \mathscr{L}_2)\right)=1.
\end{equation}
\end{lemm}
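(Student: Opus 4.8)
The plan is to reduce all three identities by d\'evissage to quantities that are already available, systematically comparing the $\Sigma^{+}$-objects with the simpler $\Sigma_i(\lambda,\mathscr{L}_i)$ and $\Sigma(\lambda,\mathscr{L}_1,\mathscr{L}_2)$, for which Lemma~\ref{3lemm: ext2} is known. The first observation is that, since $C_{s_i,s_i}$ is irreducible and lies in neither $\Omega_1(\overline{L}(\lambda))$ nor $\Omega_2(\overline{L}(\lambda))$, Lemmas~\ref{3lemm: nonvanishing ext1} and~\ref{3lemm: nonvanishing ext2} give $\mathrm{Hom}_{\mathrm{GL}_3(\mathbb{Q}_p)}(\overline{L}(\lambda),C_{s_i,s_i})=\mathrm{Ext}^1_{\mathrm{GL}_3(\mathbb{Q}_p),\lambda}(\overline{L}(\lambda),C_{s_i,s_i})=\mathrm{Ext}^2_{\mathrm{GL}_3(\mathbb{Q}_p),\lambda}(\overline{L}(\lambda),C_{s_i,s_i})=0$. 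Applying the functors $\mathrm{Ext}^i_{\mathrm{GL}_3(\mathbb{Q}_p),\lambda}(\overline{L}(\lambda),-)$ to the short exact sequence $\Sigma(\lambda,\mathscr{L}_1,\mathscr{L}_2)\hookrightarrow\Sigma^{+}(\lambda,\mathscr{L}_1,\mathscr{L}_2)\twoheadrightarrow C_{s_1,s_1}\oplus C_{s_2,s_2}$ then yields $\mathrm{Ext}^k_{\mathrm{GL}_3(\mathbb{Q}_p),\lambda}(\overline{L}(\lambda),\Sigma^{+}(\lambda,\mathscr{L}_1,\mathscr{L}_2))\cong\mathrm{Ext}^k_{\mathrm{GL}_3(\mathbb{Q}_p),\lambda}(\overline{L}(\lambda),\Sigma(\lambda,\mathscr{L}_1,\mathscr{L}_2))$ for $k=1,2$, so (\ref{3second key dim}) and (\ref{3third key dim}) follow from Lemma~\ref{3lemm: ext2}. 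The same functors applied to $\Sigma_i(\lambda,\mathscr{L}_i)\hookrightarrow\Sigma^{+}_i(\lambda,\mathscr{L}_i)\twoheadrightarrow C_{s_i,s_i}$ give $\mathrm{Ext}^k_{\mathrm{GL}_3(\mathbb{Q}_p),\lambda}(\overline{L}(\lambda),\Sigma^{+}_i(\lambda,\mathscr{L}_i))\cong\mathrm{Ext}^k_{\mathrm{GL}_3(\mathbb{Q}_p),\lambda}(\overline{L}(\lambda),\Sigma_i(\lambda,\mathscr{L}_i))$ for $k=1,2$, so (\ref{3first key dim}) is equivalent to $\dim_E\mathrm{Ext}^2_{\mathrm{GL}_3(\mathbb{Q}_p),\lambda}(\overline{L}(\lambda),\Sigma_i(\lambda,\mathscr{L}_i))=3$.

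For that computation I would first extract the auxiliary values $\dim_E\mathrm{Ext}^1_{\mathrm{GL}_3(\mathbb{Q}_p),\lambda}(\overline{L}(\lambda),v_{P_i}^{\rm{an}}(\lambda))=2$ and $\mathrm{Ext}^2_{\mathrm{GL}_3(\mathbb{Q}_p),\lambda}(\overline{L}(\lambda),v_{P_i}^{\rm{an}}(\lambda))=0$ from the non-split extension $\overline{L}(\lambda)\otimes_Ev_{P_i}^{\infty}\hookrightarrow v_{P_i}^{\rm{an}}(\lambda)\twoheadrightarrow C^1_{s_{3-i},1}$ of Lemma~\ref{3lemm: structure of St}: both $\overline{L}(\lambda)\otimes_Ev_{P_i}^{\infty}$ and $C^1_{s_{3-i},1}$ lie in $\Omega_1(\overline{L}(\lambda))$ but in neither $\Omega_2(\overline{L}(\lambda))$, and all the relevant $\mathrm{Hom}$-groups vanish, so Proposition~\ref{3prop: locally algebraic extension} and Lemmas~\ref{3lemm: nonvanishing ext1}, \ref{3lemm: nonvanishing ext2} do the job. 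Feeding these, together with $\mathrm{Ext}^1_{\mathrm{GL}_3(\mathbb{Q}_p),\lambda}(\overline{L}(\lambda),\mathrm{St}_3^{\rm{an}}(\lambda))=0$ (Proposition~5.6 of \cite{Schr11}) and $\dim_E\mathrm{Ext}^2_{\mathrm{GL}_3(\mathbb{Q}_p),\lambda}(\overline{L}(\lambda),\mathrm{St}_3^{\rm{an}}(\lambda))=5$ (Lemma~\ref{3lemm: ext2.1}), into the long exact sequence attached to $\mathrm{St}_3^{\rm{an}}(\lambda)\hookrightarrow\Sigma_i(\lambda,\mathscr{L}_i)\twoheadrightarrow v_{P_i}^{\rm{an}}(\lambda)$, one is left with $0\to\mathrm{Ext}^1_{\mathrm{GL}_3(\mathbb{Q}_p),\lambda}(\overline{L}(\lambda),\Sigma_i(\lambda,\mathscr{L}_i))\to E^{2}\xrightarrow{\ \delta_i\ }E^{5}\to\mathrm{Ext}^2_{\mathrm{GL}_3(\mathbb{Q}_p),\lambda}(\overline{L}(\lambda),\Sigma_i(\lambda,\mathscr{L}_i))\to0$. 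Hence $\dim_E\mathrm{Ext}^2_{\mathrm{GL}_3(\mathbb{Q}_p),\lambda}(\overline{L}(\lambda),\Sigma_i(\lambda,\mathscr{L}_i))=3+\dim_E\mathrm{Ext}^1_{\mathrm{GL}_3(\mathbb{Q}_p),\lambda}(\overline{L}(\lambda),\Sigma_i(\lambda,\mathscr{L}_i))$, and (\ref{3first key dim}) comes down to the single vanishing statement $\mathrm{Ext}^1_{\mathrm{GL}_3(\mathbb{Q}_p),\lambda}(\overline{L}(\lambda),\Sigma^{+}_i(\lambda,\mathscr{L}_i))=0$, i.e. the injectivity of $\delta_i$.

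This vanishing is the crux, and it cannot be seen by dimension counting alone (the reductions above only bound the group by $1$). I would obtain it from Lemma~\ref{3lemm: ext6}, which already provides $\mathrm{Ext}^1_{\mathrm{GL}_3(\mathbb{Q}_p),\lambda}(W_i,\Sigma^{+}_i(\lambda,\mathscr{L}_i))=0$: applying $\mathrm{Ext}^i_{\mathrm{GL}_3(\mathbb{Q}_p),\lambda}(-,\Sigma^{+}_i(\lambda,\mathscr{L}_i))$ to the defining sequence $\overline{L}(\lambda)\otimes_Ev_{P_i}^{\infty}\hookrightarrow W_i\twoheadrightarrow\overline{L}(\lambda)$ exhibits $\mathrm{Ext}^1_{\mathrm{GL}_3(\mathbb{Q}_p),\lambda}(\overline{L}(\lambda),\Sigma^{+}_i(\lambda,\mathscr{L}_i))$ as a quotient of $\mathrm{Hom}_{\mathrm{GL}_3(\mathbb{Q}_p)}(\overline{L}(\lambda)\otimes_Ev_{P_i}^{\infty},\Sigma^{+}_i(\lambda,\mathscr{L}_i))$, and this last space is zero because $\mathrm{soc}_{\mathrm{GL}_3(\mathbb{Q}_p)}(\Sigma^{+}_i(\lambda,\mathscr{L}_i))=\overline{L}(\lambda)\otimes_E\mathrm{St}_3^{\infty}$ — inside $\Sigma^{+}_i(\lambda,\mathscr{L}_i)$ the constituent $\overline{L}(\lambda)\otimes_Ev_{P_i}^{\infty}$ appears only as the socle of the quotient layer $v_{P_i}^{\rm{an}}(\lambda)$, and the non-split structure of $\Sigma_i(\lambda,\mathscr{L}_i)$ (ultimately the non-splitness of the hidden $\mathrm{GL}_2$-layer, cf. the construction of $\Sigma_{\mathrm{GL}_2}(\nu,\mathscr{L})$ and Lemma~\ref{3lemm: structure of St}) forbids it from being a subobject. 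I expect this socle identification to be the one point requiring genuine care; a self-contained alternative I would also be ready to carry out is to mimic the proof of Lemma~\ref{3lemm: ext6} with $\overline{L}(\lambda)$ in place of $W_i$ — inject $\Sigma^{+}_i(\lambda,\mathscr{L}_i)$ into the explicit quotient of $I_{P_i}^{\mathrm{GL}_3}(\Sigma^{+}_{L_i}(\lambda,\mathscr{L}_i))$ used there, apply the spectral sequence of Lemma~\ref{3lemm: cohomology devissage}, note $H_0(N_i,\overline{L}(\lambda))=\overline{L}_i(\lambda)$ and $H_1(N_i,\overline{L}(\lambda))=\overline{L}_i(s_{3-i}\cdot\lambda)$, annihilate the $H_1$-term by a $Z(L_i(\mathbb{Q}_p))$-central-character argument and the $H_0$-term by Proposition~\ref{3prop: key result} — the delicate input then becoming the corresponding $\mathrm{GL}_2$-vanishing of Section~\ref{3section: GL2Qp}. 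Once $\mathrm{Ext}^1_{\mathrm{GL}_3(\mathbb{Q}_p),\lambda}(\overline{L}(\lambda),\Sigma^{+}_i(\lambda,\mathscr{L}_i))=0$ is in hand, the four-term sequence of the previous paragraph gives $\dim_E\mathrm{Ext}^2_{\mathrm{GL}_3(\mathbb{Q}_p),\lambda}(\overline{L}(\lambda),\Sigma^{+}_i(\lambda,\mathscr{L}_i))=5-2=3$, finishing (\ref{3first key dim}).
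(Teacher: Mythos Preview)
Your overall architecture is the same as the paper's: reduce (\ref{3second key dim}) and (\ref{3third key dim}) to Lemma~\ref{3lemm: ext2} via the vanishing $\mathrm{Ext}^k_{\mathrm{GL}_3(\mathbb{Q}_p),\lambda}(\overline{L}(\lambda),C_{s_i,s_i})=0$ for $k=1,2$; reduce (\ref{3first key dim}) by the same trick to $\Sigma_i(\lambda,\mathscr{L}_i)$; feed the $\mathrm{St}_3^{\rm{an}}(\lambda)\hookrightarrow\Sigma_i\twoheadrightarrow v_{P_i}^{\rm{an}}(\lambda)$ long exact sequence with the known values $5$, $2$, $0$ for $\mathrm{Ext}^2$ of the Steinberg and $\mathrm{Ext}^1,\mathrm{Ext}^2$ of $v_{P_i}^{\rm{an}}(\lambda)$; and then isolate the single nontrivial input $\mathrm{Ext}^1_{\mathrm{GL}_3(\mathbb{Q}_p),\lambda}(\overline{L}(\lambda),\Sigma_i(\lambda,\mathscr{L}_i))=0$. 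Up to here you match the paper verbatim.

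Where you diverge is in establishing that vanishing. The paper does \emph{not} invoke Lemma~\ref{3lemm: ext6}; instead it uses the presentation
\[
\Bigl(\begin{xy}(0,0)*+{v_{P_{3-i}}^{\rm{an}}(\lambda)}="a";(16,0)*+{\overline{L}(\lambda)}="b";{\ar@{-}"a";"b"};\end{xy}\Bigr)\hookrightarrow I_{P_i}^{\mathrm{GL}_3}\bigl(\Sigma_{L_i}(\lambda,\mathscr{L}_i)\bigr)\twoheadrightarrow\Sigma_i(\lambda,\mathscr{L}_i),
\]
checks $\mathrm{Ext}^2$ of the kernel vanishes, and then matches dimensions by showing both $\mathrm{Ext}^1$ of the kernel and $\mathrm{Ext}^1$ of the induced representation are one-dimensional --- the latter via Lemma~\ref{3lemm: cohomology devissage} together with the $\mathrm{GL}_2$-fact $\dim_E\mathrm{Ext}^1_{L_i(\mathbb{Q}_p),\lambda}(\overline{L}_i(\lambda),\Sigma_{L_i}(\lambda,\mathscr{L}_i))=1$ from Lemma~3.14 of \cite{BD18}, not Proposition~\ref{3prop: key result}. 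Your ``self-contained alternative'' is close in spirit but over-engineers it: working with $\Sigma_{L_i}$ rather than $\Sigma^{+}_{L_i}$ avoids Proposition~\ref{3prop: key result} entirely.

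One caveat on your primary route: be careful with the indices in Lemma~\ref{3lemm: ext6}. Its \emph{proof} (and every later use, e.g.\ in Proposition~\ref{3prop: isomorphism from cup product} and Lemma~\ref{3lemm: upper bound}) establishes $\mathrm{Ext}^1_{\mathrm{GL}_3(\mathbb{Q}_p),\lambda}(W_{3-i},\Sigma^{+}_i(\lambda,\mathscr{L}_i))=0$, i.e.\ with \emph{opposite} indices; the stated ``$W_i$'' is a typo. Your argument actually becomes cleaner once you use $W_{3-i}$: then $\mathrm{Hom}(\overline{L}(\lambda)\otimes_Ev_{P_{3-i}}^{\infty},\Sigma^{+}_i(\lambda,\mathscr{L}_i))=0$ holds trivially since $\overline{L}(\lambda)\otimes_Ev_{P_{3-i}}^{\infty}$ is not even a Jordan--H\"older constituent of $\Sigma^{+}_i(\lambda,\mathscr{L}_i)$, and no socle analysis is needed. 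With that fix your route via Lemma~\ref{3lemm: ext6} is a perfectly valid --- and arguably slicker --- alternative to the paper's direct parabolic-induction computation.
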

\begin{proof}
The equalities (\ref{3second key dim}) and (\ref{3third key dim}) follow directly from Lemma~\ref{3lemm: ext2} and the fact that
\begin{equation}\label{3vanishing of simple special ext}
\mathrm{Ext}^1_{\mathrm{GL}_3(\mathbb{Q}_p),\lambda}\left(\overline{L}(\lambda),~C_{s_i,s_i}\right)=\mathrm{Ext}^2_{\mathrm{GL}_3(\mathbb{Q}_p),\lambda}\left(\overline{L}(\lambda),~C_{s_i,s_i}\right)=0
\end{equation}
by Lemma~\ref{3lemm: nonvanishing ext1} and Lemma~\ref{3lemm: nonvanishing ext2} using a long exact sequence induced from the short exact sequence
$$\Sigma_i(\lambda, \mathscr{L}_i)\hookrightarrow\Sigma^+_i(\lambda, \mathscr{L}_i)\twoheadrightarrow C_{s_i,s_i}.$$
Due to a similar argument using (\ref{3vanishing of simple special ext}), we only need to show that
\begin{equation}\label{3equation ext 6.1.1}
\mathrm{dim}_E\mathrm{Ext}^2_{\mathrm{GL}_3(\mathbb{Q}_p),\lambda}\left(\overline{L}(\lambda), ~\Sigma_i(\lambda, \mathscr{L}_i)\right)=3
\end{equation}
to finish the proof of (\ref{3first key dim}). The short exact sequence
$$\mathrm{St}_3^{\rm{an}}(\lambda)\hookrightarrow\Sigma_i(\lambda, \mathscr{L}_i)\twoheadrightarrow v_{P_i}^{\rm{an}}(\lambda)$$ induces a long exact sequence
\begin{multline}\label{3equation ext 6.1.2}
\mathrm{Ext}^1_{\mathrm{GL}_3(\mathbb{Q}_p),\lambda}\left(\overline{L}(\lambda), ~\Sigma_i(\lambda, \mathscr{L}_i)\right)\rightarrow \mathrm{Ext}^1_{\mathrm{GL}_3(\mathbb{Q}_p),\lambda}\left(\overline{L}(\lambda), ~v_{P_i}^{\rm{an}}(\lambda)\right)\\
\rightarrow \mathrm{Ext}^2_{\mathrm{GL}_3(\mathbb{Q}_p),\lambda}\left(\overline{L}(\lambda), ~\mathrm{St}_3^{\rm{an}}(\lambda)\right)\rightarrow
\mathrm{Ext}^2_{\mathrm{GL}_3(\mathbb{Q}_p),\lambda}\left(\overline{L}(\lambda), ~\Sigma_i(\lambda, \mathscr{L}_i)\right)\rightarrow
\mathrm{Ext}^2_{\mathrm{GL}_3(\mathbb{Q}_p),\lambda}\left(\overline{L}(\lambda), ~v_{P_i}^{\rm{an}}(\lambda)\right).
\end{multline}
We know that
$$\mathrm{dim}_E\mathrm{Ext}^2_{\mathrm{GL}_3(\mathbb{Q}_p),\lambda}\left(\overline{L}(\lambda), ~\mathrm{St}_3^{\rm{an}}(\lambda)\right)=5$$
by Lemma~\ref{3lemm: ext2.1}. It follows from Proposition~\ref{3prop: locally algebraic extension}, Lemma~\ref{3lemm: nonvanishing ext1}, Lemma~\ref{3lemm: nonvanishing ext2} and a simple devissage that
\begin{equation}\label{3equation ext 6.1.2 prime}
\mathrm{dim}_E\mathrm{Ext}^1_{\mathrm{GL}_3(\mathbb{Q}_p),\lambda}\left(\overline{L}(\lambda), ~v_{P_i}^{\rm{an}}(\lambda)\right)=2
\end{equation}
and
\begin{equation}\label{3equation ext 6.1.2 primeprime}
\mathrm{Ext}^2_{\mathrm{GL}_3(\mathbb{Q}_p),\lambda}\left(\overline{L}(\lambda), ~v_{P_i}^{\rm{an}}(\lambda)\right)=0.
\end{equation}
Hence it remains to show that
\begin{equation}\label{3equation ext 6.1.3}
\mathrm{Ext}^1_{\mathrm{GL}_3(\mathbb{Q}_p),\lambda}\left(\overline{L}(\lambda), ~\Sigma_i(\lambda, \mathscr{L}_i)\right)=0
\end{equation}
to deduce (\ref{3equation ext 6.1.1}) from (\ref{3equation ext 6.1.2}). The short exact sequence
$$\begin{xy}
(0,0)*+{v_{P_{3-i}}^{\rm{an}}(\lambda)}="a"; (16,0)*+{\overline{L}(\lambda)}="b";
{\ar@{-}"a";"b"};
\end{xy}\hookrightarrow I_{P_i}^{\mathrm{GL}_3}\left(\Sigma_{L_i}(\lambda, \mathscr{L}_i)\right)\twoheadrightarrow\Sigma_i(\lambda, \mathscr{L}_i)$$
induces
\begin{multline*}
\mathrm{Ext}^1_{\mathrm{GL}_3(\mathbb{Q}_p),\lambda}\left(\overline{L}(\lambda),~\begin{xy}
(0,0)*+{v_{P_{3-i}}^{\rm{an}}(\lambda)}="a"; (16,0)*+{\overline{L}(\lambda)}="b";
{\ar@{-}"a";"b"};
\end{xy}\right)\\
\hookrightarrow\mathrm{Ext}^1_{\mathrm{GL}_3(\mathbb{Q}_p),\lambda}\left(\overline{L}(\lambda),~I_{P_i}^{\mathrm{GL}_3}\left(\Sigma_{L_i}(\lambda, \mathscr{L}_i)\right)\right)\twoheadrightarrow \mathrm{Ext}^1_{\mathrm{GL}_3(\mathbb{Q}_p),\lambda}\left(\overline{L}(\lambda),~\Sigma_i(\lambda, \mathscr{L}_i)\right)
\end{multline*}
by the vanishing
$$\mathrm{Ext}^2_{\mathrm{GL}_3(\mathbb{Q}_p),\lambda}\left(\overline{L}(\lambda), \begin{xy}
(0,0)*+{v_{P_{3-i}}^{\rm{an}}(\lambda)}="a"; (16,0)*+{\overline{L}(\lambda)}="b";
{\ar@{-}"a";"b"};
\end{xy}\right)=0$$
using Proposition~\ref{3prop: locally algebraic extension} and Lemma~\ref{3lemm: nonvanishing ext2}. Therefore we only need to show that
\begin{equation}\label{3equation ext 6.1.4}
\mathrm{dim}_E\mathrm{Ext}^1_{\mathrm{GL}_3(\mathbb{Q}_p),\lambda}\left(\overline{L}(\lambda),~\begin{xy}
(0,0)*+{v_{P_{3-i}}^{\rm{an}}(\lambda)}="a"; (16,0)*+{\overline{L}(\lambda)}="b";
{\ar@{-}"a";"b"};
\end{xy}\right)=1
\end{equation}
and
\begin{equation}\label{3equation ext 6.1.5}
\mathrm{dim}_E\mathrm{Ext}^1_{\mathrm{GL}_3(\mathbb{Q}_p),\lambda}\left(\overline{L}(\lambda),~I_{P_i}^{\mathrm{GL}_3}\left(\Sigma_{L_i}(\lambda, \mathscr{L}_i)\right)\right)=1.
\end{equation}
The equality (\ref{3equation ext 6.1.5}) follows from Lemma~\ref{3lemm: cohomology devissage} and the facts
$$
\mathrm{dim}_E\mathrm{Ext}^1_{L_i(\mathbb{Q}_p),\lambda}\left(H_0(N_i,~\overline{L}(\lambda)),~\Sigma_{L_i}(\lambda, \mathscr{L}_i)\right)=1,~\mathrm{Hom}_{L_i(\mathbb{Q}_p),\lambda}\left(H_1(N_i,~\overline{L}(\lambda)),~\Sigma_{L_i}(\lambda, \mathscr{L}_i)\right)=0
$$
where the first equality essentially follows from Lemma~3.14 of \cite{BD18} and the second equality follows from checking the action of $Z(L_i(\mathbb{Q}_p))$. On the other hand, (\ref{3equation ext 6.1.4}) follows from (\ref{3equation ext 6.1.2 prime}) and Proposition~\ref{3prop: locally algebraic extension} by an easy devissage. Hence we finish the proof.
\end{proof}

\begin{prop}\label{3prop: isomorphism from cup product}
The short exact sequence
$$\overline{L}(\lambda)\otimes_Ev_{P_i}^{\infty}\hookrightarrow W_i\twoheadrightarrow \overline{L}(\lambda)$$
induces the following isomorphisms
\begin{equation}\label{3first key isomorphism}
\mathrm{Ext}^1_{\mathrm{GL}_3(\mathbb{Q}_p),\lambda}\left(\overline{L}(\lambda)\otimes_Ev_{P_{3-i}}^{\infty}, ~\Sigma^+_i(\lambda, \mathscr{L}_i)\right)\xrightarrow{\sim}\mathrm{Ext}^2_{\mathrm{GL}_3(\mathbb{Q}_p),\lambda}\left(\overline{L}(\lambda), ~\Sigma^+_i(\lambda, \mathscr{L}_i)\right)
\end{equation}
and
\begin{equation}\label{3second key isomorphism}
\mathrm{Ext}^1_{\mathrm{GL}_3(\mathbb{Q}_p),\lambda}\left(\overline{L}(\lambda)\otimes_Ev_{P_{3-i}}^{\infty}, ~\Sigma^+(\lambda, \mathscr{L}_1, \mathscr{L}_2)\right)\xrightarrow{\sim}\mathrm{Ext}^2_{\mathrm{GL}_3(\mathbb{Q}_p),\lambda}\left(\overline{L}(\lambda), ~\Sigma^+(\lambda, \mathscr{L}_1, \mathscr{L}_2)\right)
\end{equation}
for $i=1,2$.
\end{prop}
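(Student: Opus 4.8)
The plan is to realize each of the two maps as the connecting homomorphism --- equivalently, as cup product with the class of $W_{3-i}$ --- in the long exact sequence attached to the short exact sequence $\overline{L}(\lambda)\otimes_Ev_{P_{3-i}}^{\infty}\hookrightarrow W_{3-i}\twoheadrightarrow\overline{L}(\lambda)$ (the displayed sequence, read for the index $3-i$), and then to prove that this connecting map is bijective by a dimension count. Throughout, write $\mathrm{Ext}^k(-,-)$ for $\mathrm{Ext}^k_{\mathrm{GL}_3(\mathbb{Q}_p),\lambda}(-,-)$, and fix $\Sigma\in\{\Sigma^+_i(\lambda,\mathscr{L}_i),\ \Sigma^+(\lambda,\mathscr{L}_1,\mathscr{L}_2)\}$. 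Applying $\mathrm{Ext}^{\bullet}(-,\Sigma)$ to the above sequence gives the exact sequence
\begin{multline*}
\mathrm{Ext}^1\bigl(\overline{L}(\lambda),\Sigma\bigr)\rightarrow\mathrm{Ext}^1\bigl(W_{3-i},\Sigma\bigr)\\
\rightarrow\mathrm{Ext}^1\bigl(\overline{L}(\lambda)\otimes_Ev_{P_{3-i}}^{\infty},\Sigma\bigr)\xrightarrow{\ \delta\ }\mathrm{Ext}^2\bigl(\overline{L}(\lambda),\Sigma\bigr)\\
\rightarrow\mathrm{Ext}^2\bigl(W_{3-i},\Sigma\bigr),
\end{multline*}
in which $\delta$ is the map appearing in the statement. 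Since $\mathrm{soc}_{\mathrm{GL}_3(\mathbb{Q}_p)}(\Sigma)=\overline{L}(\lambda)\otimes_E\mathrm{St}_3^{\infty}$, one has $\mathrm{Hom}_{\mathrm{GL}_3(\mathbb{Q}_p)}\bigl(\overline{L}(\lambda)\otimes_Ev_{P_{3-i}}^{\infty},\Sigma\bigr)=0$, so the arrow $\mathrm{Ext}^1(\overline{L}(\lambda),\Sigma)\to\mathrm{Ext}^1(W_{3-i},\Sigma)$ is injective; hence $\delta$ is injective as soon as $\mathrm{dim}_E\mathrm{Ext}^1(W_{3-i},\Sigma)=\mathrm{dim}_E\mathrm{Ext}^1(\overline{L}(\lambda),\Sigma)$, and then $\delta$ is an isomorphism because its source and target already have equal dimension: both equal $3$ when $\Sigma=\Sigma^+_i(\lambda,\mathscr{L}_i)$, by Lemmas~\ref{3lemm: ext4} and~\ref{3lemm: ext6.1}, and both equal $2$ when $\Sigma=\Sigma^+(\lambda,\mathscr{L}_1,\mathscr{L}_2)$, by Lemmas~\ref{3lemm: ext5} and~\ref{3lemm: ext6.1}.

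Thus the whole statement reduces to the equality $\mathrm{dim}_E\mathrm{Ext}^1(W_{3-i},\Sigma)=\mathrm{dim}_E\mathrm{Ext}^1(\overline{L}(\lambda),\Sigma)$. The right-hand side is known: it is $1$ for $\Sigma=\Sigma^+(\lambda,\mathscr{L}_1,\mathscr{L}_2)$ by~(\ref{3third key dim}) of Lemma~\ref{3lemm: ext6.1}, and it is $0$ for $\Sigma=\Sigma^+_i(\lambda,\mathscr{L}_i)$, the latter following from $\mathrm{Ext}^1(\overline{L}(\lambda),\Sigma_i(\lambda,\mathscr{L}_i))=0$ (cf.~(\ref{3equation ext 6.1.3})) and $\mathrm{Ext}^1(\overline{L}(\lambda),C_{s_i,s_i})=0$ (Lemma~\ref{3lemm: nonvanishing ext1}) applied to the short exact sequence $\Sigma_i(\lambda,\mathscr{L}_i)\hookrightarrow\Sigma^+_i(\lambda,\mathscr{L}_i)\twoheadrightarrow C_{s_i,s_i}$. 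For $\Sigma=\Sigma^+_i(\lambda,\mathscr{L}_i)$ the left-hand side is then the vanishing $\mathrm{Ext}^1(W_{3-i},\Sigma^+_i(\lambda,\mathscr{L}_i))=0$, which is Lemma~\ref{3lemm: ext6}, and this case is complete.

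The real work is the amalgamated case $\Sigma=\Sigma^+(\lambda,\mathscr{L}_1,\mathscr{L}_2)$, where $\mathrm{Ext}^1(W_{3-i},\Sigma^+)$ no longer vanishes --- it already receives the one-dimensional $\mathrm{Ext}^1(\overline{L}(\lambda),\Sigma^+)$ --- so one must pin its dimension down to exactly $1$. I would apply $\mathrm{Ext}^{\bullet}(W_{3-i},-)$ to the short exact sequence $\Sigma^+_i(\lambda,\mathscr{L}_i)\hookrightarrow\Sigma^+(\lambda,\mathscr{L}_1,\mathscr{L}_2)\twoheadrightarrow Q$, where $Q$ is the quotient, whose composition factors --- by the displayed form of $\Sigma^+(\lambda,\mathscr{L}_1,\mathscr{L}_2)$ --- are $\overline{L}(\lambda)\otimes_Ev_{P_{3-i}}^{\infty}$, $C^1_{s_i,1}$ and $C_{s_{3-i},s_{3-i}}$, with $v_{P_{3-i}}^{\rm{an}}(\lambda)$ (the non-split extension of $C^1_{s_i,1}$ by $\overline{L}(\lambda)\otimes_Ev_{P_{3-i}}^{\infty}$, cf.~Lemma~\ref{3lemm: structure of St}) as a subobject. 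Since $\mathrm{Ext}^1(W_{3-i},\Sigma^+_i(\lambda,\mathscr{L}_i))=0$ by Lemma~\ref{3lemm: ext6}, this embeds $\mathrm{Ext}^1(W_{3-i},\Sigma^+)$ into $\mathrm{Ext}^1(W_{3-i},Q)$, and a d\'evissage of $Q$ along its filtration reduces the bound to evaluating $\mathrm{Ext}^1(W_{3-i},\overline{L}(\lambda)\otimes_Ev_{P_{3-i}}^{\infty})$, $\mathrm{Ext}^1(W_{3-i},C^1_{s_i,1})$ and $\mathrm{Ext}^1(W_{3-i},C_{s_{3-i},s_{3-i}})$. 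Using the defining sequence of $W_{3-i}$ together with Proposition~\ref{3prop: locally algebraic extension}, Lemma~\ref{3lemm: nonvanishing ext1} and the tables $\Omega_1(\overline{L}(\lambda))$, $\Omega_1(\overline{L}(\lambda)\otimes_Ev_{P_{3-i}}^{\infty})$, the first group vanishes (the connecting map $\mathrm{Hom}(\overline{L}(\lambda)\otimes_Ev_{P_{3-i}}^{\infty},\overline{L}(\lambda)\otimes_Ev_{P_{3-i}}^{\infty})\to\mathrm{Ext}^1(\overline{L}(\lambda),\overline{L}(\lambda)\otimes_Ev_{P_{3-i}}^{\infty})$ sends $\mathrm{id}$ to the non-zero class of $W_{3-i}$ and hence surjects onto that one-dimensional space, while $\mathrm{Ext}^1(\overline{L}(\lambda)\otimes_Ev_{P_{3-i}}^{\infty},\overline{L}(\lambda)\otimes_Ev_{P_{3-i}}^{\infty})=0$); the third vanishes because $C_{s_{3-i},s_{3-i}}$ lies in neither $\Omega_1$-table; and the middle one is one-dimensional because $C^1_{s_i,1}\in\Omega_1(\overline{L}(\lambda))$ while $C^1_{s_i,1}\notin\Omega_1(\overline{L}(\lambda)\otimes_Ev_{P_{3-i}}^{\infty})$. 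Hence $\mathrm{dim}_E\mathrm{Ext}^1(W_{3-i},Q)\leq1$ and therefore $\mathrm{dim}_E\mathrm{Ext}^1(W_{3-i},\Sigma^+)\leq1$; with the lower bound $\mathrm{dim}_E\mathrm{Ext}^1(W_{3-i},\Sigma^+)\geq\mathrm{dim}_E\mathrm{Ext}^1(\overline{L}(\lambda),\Sigma^+)=1$ from the injection of the first paragraph, this gives equality, as required.

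The principal obstacle is exactly this amalgamated case: there $\mathrm{Ext}^1(W_{3-i},\Sigma^+)$ is genuinely non-zero, so injectivity of $\delta$ cannot be read off a single vanishing statement --- one must compute this dimension on the nose and, implicitly, check that the resulting one-dimensional space is the image of $\mathrm{Ext}^1(\overline{L}(\lambda),\Sigma^+)$, i.e.\ restricts to zero in $\mathrm{Ext}^1(\overline{L}(\lambda)\otimes_Ev_{P_{3-i}}^{\infty},\Sigma^+)$. Everything else is a formal consequence of the dimension formulas of Lemmas~\ref{3lemm: ext4},~\ref{3lemm: ext5} and~\ref{3lemm: ext6.1} together with the vanishing in Lemma~\ref{3lemm: ext6}.
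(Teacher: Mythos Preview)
Your proof is correct and follows the same overall strategy as the paper: realize the map as the connecting homomorphism in the long exact sequence attached to $W_{3-i}$, then combine injectivity with the dimension equalities from Lemmas~\ref{3lemm: ext4}, \ref{3lemm: ext5} and~\ref{3lemm: ext6.1}. For the first isomorphism your argument and the paper's coincide verbatim.

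For the second isomorphism the paper simply writes ``similar''; you correctly observe that this is not literally the same argument, since $\mathrm{Ext}^1(W_{3-i},\Sigma^+(\lambda,\mathscr{L}_1,\mathscr{L}_2))$ no longer vanishes, and you supply a direct d\'evissage showing it has dimension exactly~$1$ and coincides with the image of $\mathrm{Ext}^1(\overline{L}(\lambda),\Sigma^+)$. This is a valid and complete way to close the gap. An alternative, slightly shorter route (arguably what the paper's ``similar'' intends) is to use functoriality in the second variable: the inclusion $\Sigma^+_i\hookrightarrow\Sigma^+$ gives a commutative square with the already-established isomorphism~(\ref{3first key isomorphism}) on top and the map~(\ref{3second key isomorphism}) on the bottom; both vertical arrows are surjective (dimension drops from $3$ to $2$, and the cokernels vanish by the computations in the proofs of Lemmas~\ref{3lemm: ext5} and~\ref{3lemm: ext6.1}), so surjectivity of~(\ref{3second key isomorphism}) follows, and the equal dimensions finish. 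Your approach has the virtue of computing $\mathrm{Ext}^1(W_{3-i},\Sigma^+)$ explicitly, which is of independent use later in the paper (cf.\ the proof of Lemma~\ref{3lemm: upper bound}).
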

\begin{proof}
The vanishing from Lemma~\ref{3lemm: ext6} implies that
$$\mathrm{Ext}^1_{\mathrm{GL}_3(\mathbb{Q}_p),\lambda}\left(\overline{L}(\lambda)\otimes_Ev_{P_{3-i}}^{\infty}, ~\Sigma^+_i(\lambda, \mathscr{L}_i)\right)\rightarrow\mathrm{Ext}^2_{\mathrm{GL}_3(\mathbb{Q}_p),\lambda}\left(\overline{L}(\lambda), ~\Sigma^+_i(\lambda, \mathscr{L}_i)\right)$$
is an injection and hence an isomorphism as both spaces have dimension three according to Lemma~\ref{3lemm: ext4} and Lemma~\ref{3lemm: ext6.1}. The proof of (\ref{3second key isomorphism}) is similar. We emphasize that both (\ref{3first key isomorphism}) and (\ref{3second key isomorphism}) can be interpreted as the isomorphism given by the cup product with the one dimensional space $$\mathrm{Ext}^1_{\mathrm{GL}_3(\mathbb{Q}_p),\lambda}\left(\overline{L}(\lambda),~\overline{L}(\lambda)\otimes_Ev_{P_{3-i}}^{\infty}\right).$$
\end{proof}

We define
$$\Sigma^{\flat}(\lambda, \mathscr{L}_1, \mathscr{L}_2):=\Sigma(\lambda, \mathscr{L}_1, \mathscr{L}_2)/\overline{L}(\lambda)\otimes_E\mathrm{St}_3^{\infty}\mbox{ and }\Sigma^{\flat}_i(\lambda, \mathscr{L}_i):=\Sigma_i(\lambda, \mathscr{L}_i)/\overline{L}(\lambda)\otimes_E\mathrm{St}_3^{\infty}$$
for $i=1,2$.
\begin{lemm}\label{3lemm: ext7}
We have
$$\mathrm{dim}_E\mathrm{Ext}^1_{\mathrm{GL}_3(\mathbb{Q}_p),\lambda}\left(\overline{L}(\lambda), ~\Sigma^{\flat}(\lambda, \mathscr{L}_1, \mathscr{L}_2)\right)=1.$$
\end{lemm}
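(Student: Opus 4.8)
The plan is to feed the defining short exact sequence
\[
0\longrightarrow \overline{L}(\lambda)\otimes_E\mathrm{St}_3^{\infty}\longrightarrow \Sigma(\lambda,\mathscr{L}_1,\mathscr{L}_2)\longrightarrow \Sigma^{\flat}(\lambda,\mathscr{L}_1,\mathscr{L}_2)\longrightarrow 0
\]
into the long exact sequence of $\mathrm{Ext}^{\bullet}_{\mathrm{GL}_3(\mathbb{Q}_p),\lambda}(\overline{L}(\lambda),-)$. By Proposition~\ref{3prop: locally algebraic extension} we have $\mathrm{Ext}^1_{\mathrm{GL}_3(\mathbb{Q}_p),\lambda}(\overline{L}(\lambda),\overline{L}(\lambda)\otimes_E\mathrm{St}_3^{\infty})=0$ and $\dim_E\mathrm{Ext}^2_{\mathrm{GL}_3(\mathbb{Q}_p),\lambda}(\overline{L}(\lambda),\overline{L}(\lambda)\otimes_E\mathrm{St}_3^{\infty})=1$, while $\dim_E\mathrm{Ext}^1_{\mathrm{GL}_3(\mathbb{Q}_p),\lambda}(\overline{L}(\lambda),\Sigma(\lambda,\mathscr{L}_1,\mathscr{L}_2))=1$ by Lemma~\ref{3lemm: ext2}. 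Consequently the long exact sequence yields $\dim_E\mathrm{Ext}^1_{\mathrm{GL}_3(\mathbb{Q}_p),\lambda}(\overline{L}(\lambda),\Sigma^{\flat}(\lambda,\mathscr{L}_1,\mathscr{L}_2))=1+\dim_E\ker(\iota)$, where
\[
\iota\colon \mathrm{Ext}^2_{\mathrm{GL}_3(\mathbb{Q}_p),\lambda}(\overline{L}(\lambda),\overline{L}(\lambda)\otimes_E\mathrm{St}_3^{\infty})\longrightarrow \mathrm{Ext}^2_{\mathrm{GL}_3(\mathbb{Q}_p),\lambda}(\overline{L}(\lambda),\Sigma(\lambda,\mathscr{L}_1,\mathscr{L}_2))
\]
is induced by the inclusion $\overline{L}(\lambda)\otimes_E\mathrm{St}_3^{\infty}\hookrightarrow\Sigma(\lambda,\mathscr{L}_1,\mathscr{L}_2)$. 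Since the source of $\iota$ is one dimensional, the whole statement comes down to proving $\iota\neq 0$.

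To get at $\iota\neq 0$ I would pass to $\Sigma^+(\lambda,\mathscr{L}_1,\mathscr{L}_2)$. The inclusion $\overline{L}(\lambda)\otimes_E\mathrm{St}_3^{\infty}\hookrightarrow\Sigma(\lambda,\mathscr{L}_1,\mathscr{L}_2)$ factors through the natural inclusion $\Sigma(\lambda,\mathscr{L}_1,\mathscr{L}_2)\hookrightarrow\Sigma^+(\lambda,\mathscr{L}_1,\mathscr{L}_2)$ (coming from the presentation of $\Sigma^+$ as the amalgamated sum of the $\Sigma^+_i$ over $\mathrm{St}_3^{\mathrm{an}}(\lambda)$), so $\iota$ factors through the corresponding map $\iota^+$ into $\mathrm{Ext}^2_{\mathrm{GL}_3(\mathbb{Q}_p),\lambda}(\overline{L}(\lambda),\Sigma^+(\lambda,\mathscr{L}_1,\mathscr{L}_2))$, and it suffices to show $\iota^+\neq 0$. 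Fix $i\in\{1,2\}$ and let $e_{3-i}$ be a generator of the one dimensional space $\mathrm{Ext}^1_{\mathrm{GL}_3(\mathbb{Q}_p),\lambda}(\overline{L}(\lambda),\overline{L}(\lambda)\otimes_Ev_{P_{3-i}}^{\infty})$, i.e. the class of $W_{3-i}$. Applying $\mathrm{Ext}^{\bullet}_{\mathrm{GL}_3(\mathbb{Q}_p),\lambda}(-,\overline{L}(\lambda)\otimes_E\mathrm{St}_3^{\infty})$ to $\overline{L}(\lambda)\otimes_Ev_{P_{3-i}}^{\infty}\hookrightarrow W_{3-i}\twoheadrightarrow\overline{L}(\lambda)$ and using $\mathrm{Ext}^1_{\mathrm{GL}_3(\mathbb{Q}_p),\lambda}(\overline{L}(\lambda),\overline{L}(\lambda)\otimes_E\mathrm{St}_3^{\infty})=0$ (Proposition~\ref{3prop: locally algebraic extension}) together with $\mathrm{Ext}^1_{\mathrm{GL}_3(\mathbb{Q}_p),\lambda}(W_{3-i},\overline{L}(\lambda)\otimes_E\mathrm{St}_3^{\infty})=0$ (Lemma~\ref{3lemm: vanishing locally algebraic}), one sees that the connecting homomorphism, which is cup product with $e_{3-i}$, is an isomorphism from $\mathrm{Ext}^1_{\mathrm{GL}_3(\mathbb{Q}_p),\lambda}(\overline{L}(\lambda)\otimes_Ev_{P_{3-i}}^{\infty},\overline{L}(\lambda)\otimes_E\mathrm{St}_3^{\infty})$ onto $\mathrm{Ext}^2_{\mathrm{GL}_3(\mathbb{Q}_p),\lambda}(\overline{L}(\lambda),\overline{L}(\lambda)\otimes_E\mathrm{St}_3^{\infty})$ (both one dimensional by Proposition~\ref{3prop: locally algebraic extension}).

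Now I would invoke the isomorphism~(\ref{3second key isomorphism}) of Proposition~\ref{3prop: isomorphism from cup product}, which is precisely cup product with the same class $e_{3-i}$, identifying $\mathrm{Ext}^1_{\mathrm{GL}_3(\mathbb{Q}_p),\lambda}(\overline{L}(\lambda)\otimes_Ev_{P_{3-i}}^{\infty},\Sigma^+(\lambda,\mathscr{L}_1,\mathscr{L}_2))\xrightarrow{\sim}\mathrm{Ext}^2_{\mathrm{GL}_3(\mathbb{Q}_p),\lambda}(\overline{L}(\lambda),\Sigma^+(\lambda,\mathscr{L}_1,\mathscr{L}_2))$. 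By functoriality of the Yoneda product in the coefficient variable these two cup-product isomorphisms intertwine $\iota^+$ with the push-forward
\[
\mathrm{Ext}^1_{\mathrm{GL}_3(\mathbb{Q}_p),\lambda}(\overline{L}(\lambda)\otimes_Ev_{P_{3-i}}^{\infty},\overline{L}(\lambda)\otimes_E\mathrm{St}_3^{\infty})\longrightarrow \mathrm{Ext}^1_{\mathrm{GL}_3(\mathbb{Q}_p),\lambda}(\overline{L}(\lambda)\otimes_Ev_{P_{3-i}}^{\infty},\Sigma^+(\lambda,\mathscr{L}_1,\mathscr{L}_2))
\]
along $\overline{L}(\lambda)\otimes_E\mathrm{St}_3^{\infty}\hookrightarrow\Sigma^+(\lambda,\mathscr{L}_1,\mathscr{L}_2)$. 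From the short exact sequence $\overline{L}(\lambda)\otimes_E\mathrm{St}_3^{\infty}\hookrightarrow\Sigma^+(\lambda,\mathscr{L}_1,\mathscr{L}_2)\twoheadrightarrow\Sigma^+(\lambda,\mathscr{L}_1,\mathscr{L}_2)/\overline{L}(\lambda)\otimes_E\mathrm{St}_3^{\infty}$ the kernel of this push-forward is a quotient of $\mathrm{Hom}_{\mathrm{GL}_3(\mathbb{Q}_p),\lambda}(\overline{L}(\lambda)\otimes_Ev_{P_{3-i}}^{\infty},\Sigma^+(\lambda,\mathscr{L}_1,\mathscr{L}_2)/\overline{L}(\lambda)\otimes_E\mathrm{St}_3^{\infty})$, and this last space vanishes: by Lemma~\ref{3lemm: structure of St} and the structure of $\Sigma^+(\lambda,\mathscr{L}_1,\mathscr{L}_2)$ the socle of $\Sigma^+(\lambda,\mathscr{L}_1,\mathscr{L}_2)/\overline{L}(\lambda)\otimes_E\mathrm{St}_3^{\infty}$ is $C^2_{s_1,1}\oplus C^2_{s_2,1}$, which does not contain $\overline{L}(\lambda)\otimes_Ev_{P_{3-i}}^{\infty}$. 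Hence the push-forward is injective, so nonzero (its source being one dimensional); therefore $\iota^+\neq 0$, hence $\iota\neq 0$, hence $\ker(\iota)=0$, giving the desired dimension.

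I expect the real obstacle to be exactly this non-vanishing of $\iota$: a direct attack would require pinning down the image of $\mathrm{Ext}^2_{\mathrm{GL}_3(\mathbb{Q}_p),\lambda}(\overline{L}(\lambda),\overline{L}(\lambda)\otimes_E\mathrm{St}_3^{\infty})$ inside $\mathrm{Ext}^2_{\mathrm{GL}_3(\mathbb{Q}_p),\lambda}(\overline{L}(\lambda),\mathrm{St}_3^{\mathrm{an}}(\lambda))$ relative to the explicit basis of Lemma~\ref{3lemm: ext2.1} and tracking it through the quotient of Lemma~\ref{3lemm: ext2}, which would be delicate; routing the argument through $\Sigma^+(\lambda,\mathscr{L}_1,\mathscr{L}_2)$ and the cup-product isomorphism of Proposition~\ref{3prop: isomorphism from cup product} circumvents that bookkeeping. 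The points that still need care are the compatibility of the two cup-product isomorphisms (both realized by the single class $e_{3-i}$), which is functoriality of the connecting homomorphism, and the computation of the socle of $\Sigma^+(\lambda,\mathscr{L}_1,\mathscr{L}_2)/\overline{L}(\lambda)\otimes_E\mathrm{St}_3^{\infty}$.
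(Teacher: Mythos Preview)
Your argument is correct and takes a genuinely different route from the paper's. The paper proceeds by direct d\'evissage of $\Sigma^{\flat}(\lambda,\mathscr{L}_1,\mathscr{L}_2)$: it introduces subrepresentations $\Sigma^{\flat,--}\subset\Sigma^{\flat,-}\subset\Sigma^{\flat}$, shows $\mathrm{Ext}^1_{\mathrm{GL}_3(\mathbb{Q}_p),\lambda}(\overline{L}(\lambda),\Sigma^{\flat,-})=0$ by checking graded pieces, obtains an injection of $\mathrm{Ext}^1_{\mathrm{GL}_3(\mathbb{Q}_p),\lambda}(\overline{L}(\lambda),\Sigma^{\flat})$ into the two-dimensional space $\mathrm{Ext}^1_{\mathrm{GL}_3(\mathbb{Q}_p),\lambda}(\overline{L}(\lambda),C^1_{s_1,1}\oplus C^1_{s_2,1})$, and then rules out surjectivity by a contradiction argument that ultimately invokes Lemma~\ref{3lemm: ext6} directly; the lower bound comes from Lemma~\ref{3lemm: ext2}. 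You instead run the long exact sequence against the defining short exact sequence for $\Sigma^{\flat}$ and reduce everything to the injectivity of the map $\iota$ on $\mathrm{Ext}^2$, which you then verify by transporting the question through the cup-product isomorphism~(\ref{3second key isomorphism}) of Proposition~\ref{3prop: isomorphism from cup product} to the $\mathrm{Ext}^1$ level, where it becomes an easy $\mathrm{Hom}$ vanishing. Your approach is shorter and more conceptual because it reuses the packaging already done in Proposition~\ref{3prop: isomorphism from cup product} (whose proof is where Lemma~\ref{3lemm: ext6} enters), whereas the paper's argument is more hands-on and stays at the level of $\Sigma^{\flat}$ itself. One small point: for the $\mathrm{Hom}$ vanishing you only need that $\overline{L}(\lambda)\otimes_Ev_{P_{3-i}}^{\infty}$ is not in the socle of $\Sigma^{+}(\lambda,\mathscr{L}_1,\mathscr{L}_2)/\overline{L}(\lambda)\otimes_E\mathrm{St}_3^{\infty}$, which follows from the structure exhibited in the proof of Lemma~\ref{3lemm: ext3} (it sits above $C^2_{s_{3-i},1}$); you need not pin down the full socle.
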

\begin{proof}
We define $\Sigma^{\flat,-}(\lambda, \mathscr{L}_1, \mathscr{L}_2)$ as the subrepresentation of $\Sigma^{\flat}(\lambda, \mathscr{L}_1, \mathscr{L}_2)$ that fits into the following short exact sequence
\begin{equation}\label{3equation 7.1}
\Sigma^{\flat,-}(\lambda, \mathscr{L}_1, \mathscr{L}_2)\hookrightarrow\Sigma^{\flat}(\lambda, \mathscr{L}_1, \mathscr{L}_2)\twoheadrightarrow C^1_{s_2,1}\oplus C^1_{s_1,1},
\end{equation}
( cf. (\ref{3irr rep I}) for the definition of $C^1_{s_2,1}$, $C^1_{s_1,1}$, $C^2_{s_2,1}$ and $C^2_{s_1,1}$ ) and then define $\Sigma^{\flat,--}(\lambda, \mathscr{L}_1, \mathscr{L}_2)$ as the subrepresentation of $\Sigma^{\flat,-}(\lambda, \mathscr{L}_1, \mathscr{L}_2)$ that fits into
\begin{equation}\label{3equation 7.1 prime}
\Sigma^{\flat,--}(\lambda, \mathscr{L}_1, \mathscr{L}_2)\hookrightarrow\Sigma^{\flat,-}(\lambda, \mathscr{L}_1, \mathscr{L}_2)\twoheadrightarrow \left(\begin{xy}
(0,0)*+{C^2_{s_1,1}}="a"; (20,0)*+{\overline{L}(\lambda)\otimes_Ev_{P_1}^{\infty}}="b";
{\ar@{-}"a";"b"};
\end{xy}\right)\oplus \left(\begin{xy}
(0,0)*+{C^2_{s_2,1}}="a"; (20,0)*+{\overline{L}(\lambda)\otimes_Ev_{P_2}^{\infty}}="b";
{\ar@{-}"a";"b"};
\end{xy}\right).
\end{equation}
It follows from Lemma~\ref{3lemm: nonvanishing ext1} that
$$\mathrm{Ext}^1_{\mathrm{GL}_3(\mathbb{Q}_p),\lambda}(\overline{L}(\lambda), ~V)=0$$
for each $V\in\mathrm{JH}_{\mathrm{GL}_3(\mathbb{Q}_p)}\left(\Sigma^{\flat,--}(\lambda, \mathscr{L}_1, \mathscr{L}_2)\right)$ and therefore
\begin{equation}\label{3equation 7.1.1}
\mathrm{Ext}^1_{\mathrm{GL}_3(\mathbb{Q}_p),\lambda}\left(\overline{L}(\lambda), ~\Sigma^{\flat,--}(\lambda, \mathscr{L}_1, \mathscr{L}_2)\right)=0
\end{equation}
by part (i) of Proposition~\ref{3prop: formal devissages}. On the other hand, we know from Lemma~\ref{3lemm: nonvanishing ext1} and Lemma~\ref{3lemm: special vanishing 1} that there is no uniserial representation of the form
$$\begin{xy}
(0,0)*+{C^2_{s_i,1}}="a"; (20,0)*+{\overline{L}(\lambda)\otimes_Ev_{P_2}^{\infty}}="b"; (40,0)*+{\overline{L}(\lambda)}="c";
{\ar@{-}"a";"b"}; {\ar@{-}"b";"c"};
\end{xy}$$
which implies that
\begin{equation}\label{3equation 7.1.2}
\mathrm{Ext}^1_{\mathrm{GL}_3(\mathbb{Q}_p),\lambda}\left(\overline{L}(\lambda), ~\begin{xy}
(0,0)*+{C^2_{s_i,1}}="a"; (20,0)*+{\overline{L}(\lambda)\otimes_Ev_{P_i}^{\infty}}="b";
{\ar@{-}"a";"b"};
\end{xy}\right)=0
\end{equation}
for $i=1,2$. Hence we deduce from (\ref{3equation 7.1 prime}), (\ref{3equation 7.1.1}), (\ref{3equation 7.1.2}) and Proposition~\ref{3prop: formal devissages} that
\begin{equation}
\mathrm{Ext}^1_{\mathrm{GL}_3(\mathbb{Q}_p),\lambda}\left(\overline{L}(\lambda), ~\Sigma^{\flat,-}(\lambda, \mathscr{L}_1, \mathscr{L}_2)\right)=0.
\end{equation}
Therefore (\ref{3equation 7.1}) induces an injection
\begin{equation}\label{3equation 7.2}
\mathrm{Ext}^1_{\mathrm{GL}_3(\mathbb{Q}_p),\lambda}\left(\overline{L}(\lambda), ~\Sigma^{\flat}(\lambda, \mathscr{L}_1, \mathscr{L}_2)\right)\hookrightarrow \mathrm{Ext}^1_{\mathrm{GL}_3(\mathbb{Q}_p),\lambda}\left(\overline{L}(\lambda), ~C^1_{s_2,1}\oplus C^1_{s_1,1}\right).
\end{equation}
Assume first that (\ref{3equation 7.2}) is a surjection, then we pick a representation $W$ represented by a non-zero element in $\mathrm{Ext}^1_{\mathrm{GL}_3(\mathbb{Q}_p),\lambda}\left(\overline{L}(\lambda), ~\Sigma^{\flat}(\lambda, \mathscr{L}_1, \mathscr{L}_2)\right)$ lying in the preimage of $\mathrm{Ext}^1_{\mathrm{GL}_3(\mathbb{Q}_p),\lambda}\left(\overline{L}(\lambda), ~C^1_{s_2,1}\right)$ under (\ref{3equation 7.2}). We note that there is a short exact sequence
$$\Sigma^{\flat}_1(\lambda, \mathscr{L}_1)\hookrightarrow\Sigma^{\flat}(\lambda, \mathscr{L}_1, \mathscr{L}_2)\twoheadrightarrow v_{P_2}^{\rm{an}}(\lambda).$$
We observe that $\overline{L}(\lambda)$ lies above neither $C^1_{s_1,1}$ nor $\overline{L}(\lambda)\otimes_Ev_{P_2}^{\infty}$ inside $W$ by our definition and (\ref{3equation 7.1.2}), and thus $W$ is mapped to zero under the map
$$f:\mathrm{Ext}^1_{\mathrm{GL}_3(\mathbb{Q}_p),\lambda}\left(\overline{L}(\lambda), ~\Sigma^{\flat}(\lambda, \mathscr{L}_1, \mathscr{L}_2)\right)\rightarrow\mathrm{Ext}^1_{\mathrm{GL}_3(\mathbb{Q}_p),\lambda}\left(\overline{L}(\lambda), ~v_{P_2}^{\rm{an}}(\lambda)\right)$$
which means that $W$ comes from an element in
$$\mathrm{Ker}(f)=\mathrm{Ext}^1_{\mathrm{GL}_3(\mathbb{Q}_p),\lambda}\left(\overline{L}(\lambda), ~\Sigma^{\flat}_1(\lambda, \mathscr{L}_1)\right)$$
and in particular
\begin{equation}\label{3equation 7.6}
\mathrm{Ext}^1_{\mathrm{GL}_3(\mathbb{Q}_p),\lambda}\left(\overline{L}(\lambda), ~\Sigma^{\flat}_1(\lambda, \mathscr{L}_1)\right)\neq 0
\end{equation}
The short exact sequence
$$\overline{L}(\lambda)\otimes_Ev_{P_2}^{\infty}\hookrightarrow W_2\twoheadrightarrow\overline{L}(\lambda)$$
induces an injection
\begin{equation}\label{3equation 7.3}
\mathrm{Ext}^1_{\mathrm{GL}_3(\mathbb{Q}_p),\lambda}\left(\overline{L}(\lambda), ~\Sigma^{\flat}_1(\lambda, \mathscr{L}_1)\right)\hookrightarrow\mathrm{Ext}^1_{\mathrm{GL}_3(\mathbb{Q}_p),\lambda}\left(W_2, ~\Sigma^{\flat}_1(\lambda, \mathscr{L}_1)\right).
\end{equation}
On the other hand, the short exact sequence
\begin{equation}\label{3equation 7.3.1}
\overline{L}(\lambda)\otimes_E\mathrm{St}_3^{\infty}\hookrightarrow\Sigma_1(\lambda, \mathscr{L}_1)\twoheadrightarrow\Sigma^{\flat}_1(\lambda, \mathscr{L}_1)
\end{equation}
induces a long exact sequence
\begin{multline*}
\mathrm{Ext}^1_{\mathrm{GL}_3(\mathbb{Q}_p),\lambda}\left(W_2, ~\overline{L}(\lambda)\otimes_E\mathrm{St}_3^{\infty}\right)\rightarrow\mathrm{Ext}^1_{\mathrm{GL}_3(\mathbb{Q}_p),\lambda}\left(W_2, ~\Sigma_1(\lambda, \mathscr{L}_1)\right)\\ \rightarrow\mathrm{Ext}^1_{\mathrm{GL}_3(\mathbb{Q}_p),\lambda}\left(W_2, ~\Sigma^{\flat}_1(\lambda, \mathscr{L}_1)\right)\rightarrow\mathrm{Ext}^2_{\mathrm{GL}_3(\mathbb{Q}_p),\lambda}\left(W_2, ~\overline{L}(\lambda)\otimes_E\mathrm{St}_3^{\infty}\right)
\end{multline*}
which implies
\begin{equation}\label{3equation 7.4}
\mathrm{Ext}^1_{\mathrm{GL}_3(\mathbb{Q}_p),\lambda}\left(W_2, ~\Sigma_1(\lambda, \mathscr{L}_1)\right)\xrightarrow{\sim}\mathrm{Ext}^1_{\mathrm{GL}_3(\mathbb{Q}_p),\lambda}\left(W_2, ~\Sigma^{\flat}_1(\lambda, \mathscr{L}_1)\right)
\end{equation}
as we have
$$\mathrm{Ext}^1_{\mathrm{GL}_3(\mathbb{Q}_p),\lambda}\left(W_2, ~\overline{L}(\lambda)\otimes_E\mathrm{St}_3^{\infty}\right)=\mathrm{Ext}^2_{\mathrm{GL}_3(\mathbb{Q}_p),\lambda}\left(W_2, ~\overline{L}(\lambda)\otimes_E\mathrm{St}_3^{\infty}\right)=0$$
from Lemma~\ref{3lemm: vanishing locally algebraic}. We combine Lemma~\ref{3lemm: ext6}, (\ref{3equation 7.3}) and (\ref{3equation 7.4}) and deduce that
$$\mathrm{Ext}^1_{\mathrm{GL}_3(\mathbb{Q}_p),\lambda}\left(\overline{L}(\lambda), ~\Sigma^{\flat}_1(\lambda, \mathscr{L}_1)\right)=0$$
which contradicts (\ref{3equation 7.6}). In all, we have thus shown that
\begin{equation}\label{3equation 7.5}
\mathrm{dim}_E\mathrm{Ext}^1_{\mathrm{GL}_3(\mathbb{Q}_p),\lambda}\left(\overline{L}(\lambda), ~\Sigma^{\flat}(\lambda, \mathscr{L}_1, \mathscr{L}_2)\right)<\mathrm{dim}_E\mathrm{Ext}^1_{\mathrm{GL}_3(\mathbb{Q}_p),\lambda}\left(\overline{L}(\lambda), ~C^1_{s_2,1}\oplus C^1_{s_1,1}\right)=2
\end{equation}
by combining Lemma~\ref{3lemm: nonvanishing ext1}. Finally, the vanishing
$$\mathrm{Ext}^1_{\mathrm{GL}_3(\mathbb{Q}_p),\lambda}\left(\overline{L}(\lambda), ~\overline{L}(\lambda)\otimes_E\mathrm{St}_3^{\infty}\right)=0$$
from Proposition~\ref{3prop: locally algebraic extension} implies an injection
$$\mathrm{Ext}^1_{\mathrm{GL}_3(\mathbb{Q}_p),\lambda}\left(\overline{L}(\lambda), ~\Sigma(\lambda, \mathscr{L}_1, \mathscr{L}_2)\right)\hookrightarrow\mathrm{Ext}^1_{\mathrm{GL}_3(\mathbb{Q}_p),\lambda}\left(\overline{L}(\lambda), ~\Sigma^{\flat}(\lambda, \mathscr{L}_1, \mathscr{L}_2)\right)$$
which finishes the proof by combining Lemma~\ref{3lemm: ext2} and (\ref{3equation 7.5}).
\end{proof}

\begin{lemm}\label{3lemm: ext8}
We have
$$\mathrm{dim}_E\mathrm{Ext}^1_{\mathrm{GL}_3(\mathbb{Q}_p),\lambda}\left(W_0, ~\Sigma(\lambda, \mathscr{L}_1, \mathscr{L}_2)\right)=2.$$
\end{lemm}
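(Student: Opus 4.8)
The plan is to feed the short exact sequence
$$\overline{L}(\lambda)\otimes_E\left(v_{P_1}^{\infty}\oplus v_{P_2}^{\infty}\right)\hookrightarrow W_0\twoheadrightarrow\overline{L}(\lambda)$$
into $\mathrm{Ext}^{\bullet}_{\mathrm{GL}_3(\mathbb{Q}_p),\lambda}\left(-,~\Sigma(\lambda,\mathscr{L}_1,\mathscr{L}_2)\right)$. Since $\overline{L}(\lambda)\notin\mathrm{JH}_{\mathrm{GL}_3(\mathbb{Q}_p)}\left(\Sigma(\lambda,\mathscr{L}_1,\mathscr{L}_2)\right)$ and each $\overline{L}(\lambda)\otimes_Ev_{P_i}^{\infty}$ occurs in $\Sigma(\lambda,\mathscr{L}_1,\mathscr{L}_2)$ only inside the quotient $v_{P_i}^{\rm{an}}(\lambda)$, hence not in the socle, all the relevant $\mathrm{Hom}$-groups vanish and the long exact sequence collapses to
\begin{multline*}
0\to\mathrm{Ext}^1_{\mathrm{GL}_3(\mathbb{Q}_p),\lambda}\left(\overline{L}(\lambda),~\Sigma(\lambda,\mathscr{L}_1,\mathscr{L}_2)\right)\to\mathrm{Ext}^1_{\mathrm{GL}_3(\mathbb{Q}_p),\lambda}\left(W_0,~\Sigma(\lambda,\mathscr{L}_1,\mathscr{L}_2)\right)\\
\to\bigoplus_{i=1,2}\mathrm{Ext}^1_{\mathrm{GL}_3(\mathbb{Q}_p),\lambda}\left(\overline{L}(\lambda)\otimes_Ev_{P_i}^{\infty},~\Sigma(\lambda,\mathscr{L}_1,\mathscr{L}_2)\right)\xrightarrow{\ \delta\ }\mathrm{Ext}^2_{\mathrm{GL}_3(\mathbb{Q}_p),\lambda}\left(\overline{L}(\lambda),~\Sigma(\lambda,\mathscr{L}_1,\mathscr{L}_2)\right).
\end{multline*}
By Lemma~\ref{3lemm: ext2} the two outer terms have dimensions $1$ and $2$, so $\dim_E\mathrm{Ext}^1_{\mathrm{GL}_3(\mathbb{Q}_p),\lambda}\left(W_0,~\Sigma(\lambda,\mathscr{L}_1,\mathscr{L}_2)\right)=1+\dim_E\ker\delta$, and everything reduces to computing the middle direct sum and $\mathrm{rank}(\delta)$.

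Next I would show that each $\mathrm{Ext}^1_{\mathrm{GL}_3(\mathbb{Q}_p),\lambda}\left(\overline{L}(\lambda)\otimes_Ev_{P_i}^{\infty},~\Sigma(\lambda,\mathscr{L}_1,\mathscr{L}_2)\right)$ is one dimensional, \emph{and is generated by the pushforward along $\overline{L}(\lambda)\otimes_E\mathrm{St}_3^{\infty}\hookrightarrow\Sigma(\lambda,\mathscr{L}_1,\mathscr{L}_2)$ of a generator of the one dimensional space $\mathrm{Ext}^1_{\mathrm{GL}_3(\mathbb{Q}_p),\lambda}\left(\overline{L}(\lambda)\otimes_Ev_{P_i}^{\infty},~\overline{L}(\lambda)\otimes_E\mathrm{St}_3^{\infty}\right)$}. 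For this I would use the devissage coming from $\mathrm{St}_3^{\rm{an}}(\lambda)\hookrightarrow\Sigma_{3-i}(\lambda,\mathscr{L}_{3-i})\twoheadrightarrow v_{P_{3-i}}^{\rm{an}}(\lambda)$ and $\Sigma_{3-i}(\lambda,\mathscr{L}_{3-i})\hookrightarrow\Sigma(\lambda,\mathscr{L}_1,\mathscr{L}_2)\twoheadrightarrow v_{P_i}^{\rm{an}}(\lambda)$: with Proposition~\ref{3prop: locally algebraic extension}, Lemma~\ref{3lemm: nonvanishing ext1} and the shape of $\mathrm{St}_3^{\rm{an}}(\lambda)$ in Lemma~\ref{3lemm: structure of St} one gets $\dim_E\mathrm{Ext}^1_{\mathrm{GL}_3(\mathbb{Q}_p),\lambda}\left(\overline{L}(\lambda)\otimes_Ev_{P_i}^{\infty},~\mathrm{St}_3^{\rm{an}}(\lambda)\right)=2$, containing a canonical line of classes factoring through $\overline{L}(\lambda)\otimes_E\mathrm{St}_3^{\infty}$; the two generalized Steinberg inductions contribute nothing in $\mathrm{Ext}^1$, while the socle isomorphism $\mathrm{Hom}_{\mathrm{GL}_3(\mathbb{Q}_p),\lambda}\left(\overline{L}(\lambda)\otimes_Ev_{P_i}^{\infty},~v_{P_i}^{\rm{an}}(\lambda)\right)\cong E$ kills a one dimensional class, namely the connecting class of the extension $\Sigma_{3-i}(\lambda,\mathscr{L}_{3-i})\hookrightarrow\Sigma(\lambda,\mathscr{L}_1,\mathscr{L}_2)$. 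Pushing that extension forward along $\Sigma_{3-i}(\lambda,\mathscr{L}_{3-i})\twoheadrightarrow\Sigma^{\flat}_{3-i}(\lambda,\mathscr{L}_{3-i})$ keeps it non-split, because $\overline{L}(\lambda)\otimes_Ev_{P_i}^{\infty}$ lies directly above $C^2_{s_i,1}$ inside $\Sigma(\lambda,\mathscr{L}_1,\mathscr{L}_2)$ by the subrepresentation recalled in the proof of Lemma~\ref{3lemm: ext3} (itself from Lemma~3.34, Lemma~3.37 and Remark~3.38 of \cite{BD18}), whereas the $\overline{L}(\lambda)\otimes_E\mathrm{St}_3^{\infty}$-supported line dies under this pushforward; hence the dying class is not that line, which yields the italicized assertion and in particular $\dim_E\mathrm{Ext}^1_{\mathrm{GL}_3(\mathbb{Q}_p),\lambda}\left(\overline{L}(\lambda)\otimes_Ev_{P_i}^{\infty},~\Sigma(\lambda,\mathscr{L}_1,\mathscr{L}_2)\right)=1$.

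It remains to prove $\mathrm{rank}(\delta)=1$. The restriction of $\delta$ to the $i$-th summand is cup product with the $i$-th component $[W_i]\in\mathrm{Ext}^1_{\mathrm{GL}_3(\mathbb{Q}_p),\lambda}\left(\overline{L}(\lambda),~\overline{L}(\lambda)\otimes_Ev_{P_i}^{\infty}\right)$ of $[W_0]$, so by the previous step it sends a generator to the pushforward along $\overline{L}(\lambda)\otimes_E\mathrm{St}_3^{\infty}\hookrightarrow\Sigma(\lambda,\mathscr{L}_1,\mathscr{L}_2)$ of $[W_i]\cup(\text{generator})\in\mathrm{Ext}^2_{\mathrm{GL}_3(\mathbb{Q}_p),\lambda}\left(\overline{L}(\lambda),~\overline{L}(\lambda)\otimes_E\mathrm{St}_3^{\infty}\right)$. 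Applying $\mathrm{Ext}^{\bullet}_{\mathrm{GL}_3(\mathbb{Q}_p),\lambda}\left(-,~\overline{L}(\lambda)\otimes_E\mathrm{St}_3^{\infty}\right)$ to $\overline{L}(\lambda)\otimes_Ev_{P_i}^{\infty}\hookrightarrow W_i\twoheadrightarrow\overline{L}(\lambda)$, and using $\mathrm{Ext}^k_{\mathrm{GL}_3(\mathbb{Q}_p),\lambda}\left(W_i,~\overline{L}(\lambda)\otimes_E\mathrm{St}_3^{\infty}\right)=0$ for $k=1,2$ (Lemma~\ref{3lemm: vanishing locally algebraic}) together with $\mathrm{Ext}^k_{\mathrm{GL}_3(\mathbb{Q}_p),\lambda}\left(\overline{L}(\lambda),~\overline{L}(\lambda)\otimes_E\mathrm{St}_3^{\infty}\right)=0$ for $k=1,2$ (Proposition~\ref{3prop: locally algebraic extension}), shows that $\cup[W_i]$ is an isomorphism between one dimensional spaces; hence $[W_i]\cup(\text{generator})$ generates $\mathrm{Ext}^2_{\mathrm{GL}_3(\mathbb{Q}_p),\lambda}\left(\overline{L}(\lambda),~\overline{L}(\lambda)\otimes_E\mathrm{St}_3^{\infty}\right)$ for \emph{both} $i$. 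Finally the pushforward $\mathrm{Ext}^2_{\mathrm{GL}_3(\mathbb{Q}_p),\lambda}\left(\overline{L}(\lambda),~\overline{L}(\lambda)\otimes_E\mathrm{St}_3^{\infty}\right)\to\mathrm{Ext}^2_{\mathrm{GL}_3(\mathbb{Q}_p),\lambda}\left(\overline{L}(\lambda),~\Sigma(\lambda,\mathscr{L}_1,\mathscr{L}_2)\right)$ is injective: from $\overline{L}(\lambda)\otimes_E\mathrm{St}_3^{\infty}\hookrightarrow\Sigma(\lambda,\mathscr{L}_1,\mathscr{L}_2)\twoheadrightarrow\Sigma^{\flat}(\lambda,\mathscr{L}_1,\mathscr{L}_2)$ the map $\mathrm{Ext}^1_{\mathrm{GL}_3(\mathbb{Q}_p),\lambda}\left(\overline{L}(\lambda),~\Sigma(\lambda,\mathscr{L}_1,\mathscr{L}_2)\right)\to\mathrm{Ext}^1_{\mathrm{GL}_3(\mathbb{Q}_p),\lambda}\left(\overline{L}(\lambda),~\Sigma^{\flat}(\lambda,\mathscr{L}_1,\mathscr{L}_2)\right)$ is an isomorphism of one dimensional spaces by Lemma~\ref{3lemm: ext2} and Lemma~\ref{3lemm: ext7}, so the preceding connecting map into $\mathrm{Ext}^2_{\mathrm{GL}_3(\mathbb{Q}_p),\lambda}\left(\overline{L}(\lambda),~\overline{L}(\lambda)\otimes_E\mathrm{St}_3^{\infty}\right)$ is zero. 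Therefore the two restrictions of $\delta$ have the same one dimensional image, $\mathrm{rank}(\delta)=1$, $\dim_E\ker\delta=1$, and $\dim_E\mathrm{Ext}^1_{\mathrm{GL}_3(\mathbb{Q}_p),\lambda}\left(W_0,~\Sigma(\lambda,\mathscr{L}_1,\mathscr{L}_2)\right)=2$.

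The main obstacle is the second step: identifying which one dimensional quotient of the two dimensional space $\mathrm{Ext}^1_{\mathrm{GL}_3(\mathbb{Q}_p),\lambda}\left(\overline{L}(\lambda)\otimes_Ev_{P_i}^{\infty},~\mathrm{St}_3^{\rm{an}}(\lambda)\right)$ actually survives in $\Sigma(\lambda,\mathscr{L}_1,\mathscr{L}_2)$, i.e. verifying that the surviving class factors through the socle $\overline{L}(\lambda)\otimes_E\mathrm{St}_3^{\infty}$. This is precisely what forces the two cup-product images to coincide, hence the answer $2$ rather than $1$ or $3$, and it hinges on the detailed structure of $\Sigma_i(\lambda,\mathscr{L}_i)$ near its socle.
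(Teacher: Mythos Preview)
Your argument is correct in substance but takes a genuinely different route from the paper. You d\'evisser in the first variable (breaking $W_0$ into $\overline{L}(\lambda)\otimes_E(v_{P_1}^{\infty}\oplus v_{P_2}^{\infty})$ and $\overline{L}(\lambda)$) and then analyse the connecting map $\delta$ into $\mathrm{Ext}^2_{\mathrm{GL}_3(\mathbb{Q}_p),\lambda}\left(\overline{L}(\lambda),~\Sigma(\lambda,\mathscr{L}_1,\mathscr{L}_2)\right)$ via cup products, showing that both images land in the one dimensional pushforward of $\mathrm{Ext}^2_{\mathrm{GL}_3(\mathbb{Q}_p),\lambda}\left(\overline{L}(\lambda),~\overline{L}(\lambda)\otimes_E\mathrm{St}_3^{\infty}\right)$. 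The paper instead d\'evisses in the second variable: it first passes to the quotient $\Sigma^{\flat}(\lambda,\mathscr{L}_1,\mathscr{L}_2)=\Sigma(\lambda,\mathscr{L}_1,\mathscr{L}_2)/\overline{L}(\lambda)\otimes_E\mathrm{St}_3^{\infty}$, proves directly that $\mathrm{Ext}^1_{\mathrm{GL}_3(\mathbb{Q}_p),\lambda}\left(\overline{L}(\lambda)\otimes_Ev_{P_i}^{\infty},~\Sigma^{\flat}(\lambda,\mathscr{L}_1,\mathscr{L}_2)\right)=0$, whence $\mathrm{Ext}^1_{\mathrm{GL}_3(\mathbb{Q}_p),\lambda}\left(W_0,~\Sigma^{\flat}(\lambda,\mathscr{L}_1,\mathscr{L}_2)\right)\cong\mathrm{Ext}^1_{\mathrm{GL}_3(\mathbb{Q}_p),\lambda}\left(\overline{L}(\lambda),~\Sigma^{\flat}(\lambda,\mathscr{L}_1,\mathscr{L}_2)\right)$ is one dimensional by Lemma~\ref{3lemm: ext7}, and then feeds $\overline{L}(\lambda)\otimes_E\mathrm{St}_3^{\infty}\hookrightarrow\Sigma(\lambda,\mathscr{L}_1,\mathscr{L}_2)\twoheadrightarrow\Sigma^{\flat}(\lambda,\mathscr{L}_1,\mathscr{L}_2)$ into $\mathrm{Ext}^{\bullet}_{\mathrm{GL}_3(\mathbb{Q}_p),\lambda}\left(W_0,~-\right)$ using Lemma~\ref{3lemm: vanishing locally algebraic main} to pick up the extra dimension. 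The paper's route is shorter precisely because passing to $\Sigma^{\flat}$ kills the $\overline{L}(\lambda)\otimes_E\mathrm{St}_3^{\infty}$-supported classes outright, so no cup-product identification is needed; your route on the other hand makes the role of that socle class more transparent.

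One small slip: you write $\mathrm{Ext}^k_{\mathrm{GL}_3(\mathbb{Q}_p),\lambda}\left(\overline{L}(\lambda),~\overline{L}(\lambda)\otimes_E\mathrm{St}_3^{\infty}\right)=0$ for $k=1,2$, but for $k=2$ this space is one dimensional by Proposition~\ref{3prop: locally algebraic extension}. This is harmless, since the isomorphism $\cup[W_i]:\mathrm{Ext}^1_{\mathrm{GL}_3(\mathbb{Q}_p),\lambda}\left(\overline{L}(\lambda)\otimes_Ev_{P_i}^{\infty},~\overline{L}(\lambda)\otimes_E\mathrm{St}_3^{\infty}\right)\xrightarrow{\sim}\mathrm{Ext}^2_{\mathrm{GL}_3(\mathbb{Q}_p),\lambda}\left(\overline{L}(\lambda),~\overline{L}(\lambda)\otimes_E\mathrm{St}_3^{\infty}\right)$ follows already from the vanishing of $\mathrm{Ext}^k_{\mathrm{GL}_3(\mathbb{Q}_p),\lambda}\left(W_i,~\overline{L}(\lambda)\otimes_E\mathrm{St}_3^{\infty}\right)$ for $k=1,2$ (Lemma~\ref{3lemm: vanishing locally algebraic}); you should simply drop the erroneous clause.
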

\begin{proof}
The short exact sequence
$$\Sigma_i^{\flat}(\lambda, \mathscr{L}_i)\hookrightarrow\Sigma^{\flat}(\lambda, \mathscr{L}_1, \mathscr{L}_2)\twoheadrightarrow v_{P_{3-i}}^{\rm{an}}(\lambda)$$
induces a long exact sequence
\begin{multline}
\mathrm{Hom}_{\mathrm{GL}_3(\mathbb{Q}_p),\lambda}\left(\overline{L}(\lambda)\otimes_E v_{P_{3-i}}^{\infty}, ~v_{P_{3-i}}^{\rm{an}}(\lambda)\right)\hookrightarrow\mathrm{Ext}^1_{\mathrm{GL}_3(\mathbb{Q}_p),\lambda}\left(\overline{L}(\lambda)\otimes_E v_{P_{3-i}}^{\infty}, ~\Sigma_i^{\flat}(\lambda, \mathscr{L}_i)\right)\\
\rightarrow \mathrm{Ext}^1_{\mathrm{GL}_3(\mathbb{Q}_p),\lambda}\left(\overline{L}(\lambda)\otimes_E v_{P_{3-i}}^{\infty}, ~\Sigma^{\flat}(\lambda, \mathscr{L}_1, \mathscr{L}_2)\right)\rightarrow \mathrm{Ext}^1_{\mathrm{GL}_3(\mathbb{Q}_p),\lambda}\left(\overline{L}(\lambda)\otimes_E v_{P_{3-i}}^{\infty}, ~v_{P_{3-i}}^{\rm{an}}(\lambda)\right).
\end{multline}
It is easy to observe that
$$
\mathrm{dim}_E\mathrm{Hom}_{\mathrm{GL}_3(\mathbb{Q}_p),\lambda}\left(\overline{L}(\lambda)\otimes_E v_{P_{3-i}}^{\infty}, ~v_{P_{3-i}}^{\rm{an}}(\lambda)\right)=1
$$
and
$$
\mathrm{Ext}^1_{\mathrm{GL}_3(\mathbb{Q}_p),\lambda}\left(\overline{L}(\lambda)\otimes_E v_{P_{3-i}}^{\infty}, ~v_{P_{3-i}}^{\rm{an}}(\lambda)\right)=0$$
from Proposition~\ref{3prop: locally algebraic extension} and Lemma~\ref{3lemm: nonvanishing ext1}. We can actually observe from Lemma~\ref{3lemm: nonvanishing ext1} that the only $V\in\mathrm{JH}_{\mathrm{GL}_3(\mathbb{Q}_p)}(\Sigma_i^{\flat}(\lambda, \mathscr{L}_i))$ such that
$$\mathrm{Ext}^1_{\mathrm{GL}_3(\mathbb{Q}_p),\lambda}\left(\overline{L}(\lambda)\otimes_E v_{P_{3-i}}^{\infty}, ~V\right)\neq 0$$
is $V=C^2_{s_{3-i}, 1}$ and
$$\mathrm{dim}_E\mathrm{Ext}^1_{\mathrm{GL}_3(\mathbb{Q}_p),\lambda}\left(\overline{L}(\lambda)\otimes_E v_{P_{3-i}}^{\infty}, ~C^2_{s_{3-i}, 1}\right)=1.$$
Hence we deduce that
$$\mathrm{Ext}^1_{\mathrm{GL}_3(\mathbb{Q}_p),\lambda}\left(\overline{L}(\lambda)\otimes_E v_{P_{3-i}}^{\infty}, ~\Sigma_i^{\flat}(\lambda, \mathscr{L}_i)\right)\leq 1$$
and therefore
\begin{equation}\label{3equation 8.1}
\mathrm{Ext}^1_{\mathrm{GL}_3(\mathbb{Q}_p),\lambda}\left(\overline{L}(\lambda)\otimes_E v_{P_{3-i}}^{\infty}, ~\Sigma^{\flat}(\lambda, \mathscr{L}_1, \mathscr{L}_2)\right)=0
\end{equation}
for $i=1,2$. The short exact sequence
$$\overline{L}(\lambda)\otimes_E \left(v_{P_1}^{\infty}\oplus v_{P_2}^{\infty}\right)\hookrightarrow W_0\twoheadrightarrow \overline{L}(\lambda)$$
induces
\begin{multline*}
\mathrm{Ext}^1_{\mathrm{GL}_3(\mathbb{Q}_p),\lambda}\left(\overline{L}(\lambda), ~\Sigma^{\flat}(\lambda, \mathscr{L}_1, \mathscr{L}_2)\right)\hookrightarrow \mathrm{Ext}^1_{\mathrm{GL}_3(\mathbb{Q}_p),\lambda}\left(W_0, ~\Sigma^{\flat}(\lambda, \mathscr{L}_1, \mathscr{L}_2)\right)\\ \rightarrow \mathrm{Ext}^1_{\mathrm{GL}_3(\mathbb{Q}_p),\lambda}\left(\overline{L}(\lambda)\otimes_E \left(v_{P_{1}}^{\infty}\oplus v_{P_2}^{\infty}\right), ~\Sigma^{\flat}(\lambda, \mathscr{L}_1, \mathscr{L}_2)\right)
\end{multline*}
which implies
\begin{equation}\label{3equation 8.2}
\mathrm{Ext}^1_{\mathrm{GL}_3(\mathbb{Q}_p),\lambda}\left(\overline{L}(\lambda), ~\Sigma^{\flat}(\lambda, \mathscr{L}_1, \mathscr{L}_2)\right)\xrightarrow{\sim} \mathrm{Ext}^1_{\mathrm{GL}_3(\mathbb{Q}_p),\lambda}\left(W_0, ~\Sigma^{\flat}(\lambda, \mathscr{L}_1, \mathscr{L}_2)\right)
\end{equation}
by (\ref{3equation 8.1}). Finally, the short exact sequence (\ref{3equation 7.3.1}) induces
\begin{multline*}
\mathrm{Ext}^1_{\mathrm{GL}_3(\mathbb{Q}_p),\lambda}\left(W_0, ~\overline{L}(\lambda)\otimes_E\mathrm{St}_3^{\infty}\right)\hookrightarrow\mathrm{Ext}^1_{\mathrm{GL}_3(\mathbb{Q}_p),\lambda}\left(W_0, ~\Sigma(\lambda, \mathscr{L}_1, \mathscr{L}_2)\right)\\
\rightarrow\mathrm{Ext}^1_{\mathrm{GL}_3(\mathbb{Q}_p),\lambda}\left(W_0, ~\Sigma^{\flat}(\lambda, \mathscr{L}_1, \mathscr{L}_2)\right)\rightarrow\mathrm{Ext}^2_{\mathrm{GL}_3(\mathbb{Q}_p),\lambda}\left(W_0, ~\overline{L}(\lambda)\otimes_E\mathrm{St}_3^{\infty}\right)
\end{multline*}
which finishes the proof by
$$\mathrm{dim}_E\mathrm{Ext}^1_{\mathrm{GL}_3(\mathbb{Q}_p),\lambda}\left(W_0, ~\overline{L}(\lambda)\otimes_E\mathrm{St}_3^{\infty}\right)=1\mbox{ and }\mathrm{Ext}^2_{\mathrm{GL}_3(\mathbb{Q}_p),\lambda}\left(W_0, ~\overline{L}(\lambda)\otimes_E\mathrm{St}_3^{\infty}\right)=0$$
from Lemma~\ref{3lemm: vanishing locally algebraic main}, and by Lemma~\ref{3lemm: ext7} as well as (\ref{3equation 8.2}).
\end{proof}

\begin{lemm}\label{3lemm: ext9}
We have the inequality
$$\mathrm{dim}_E\mathrm{Ext}^1_{\mathrm{GL}_3(\mathbb{Q}_p),\lambda}\left(W_0, ~\begin{xy}
(0,0)*+{v_{P_i}^{\rm{an}}(\lambda)}="a"; (18,0)*+{C_{s_i,s_i}}="b";
{\ar@{-}"a";"b"};
\end{xy}\right)\leq 2$$
for $i=1,2$.
\end{lemm}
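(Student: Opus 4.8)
Write $\Pi_i$ for the target representation $\begin{xy}(0,0)*+{v_{P_i}^{\rm{an}}(\lambda)}="a"; (18,0)*+{C_{s_i,s_i}}="b";{\ar@{-}"a";"b"};\end{xy}$, which by definition of the notation fits into a short exact sequence $v_{P_i}^{\rm{an}}(\lambda)\hookrightarrow\Pi_i\twoheadrightarrow C_{s_i,s_i}$. The plan is a two-step dévissage on the second argument, combined with a dévissage of $W_0$ along its defining short exact sequence $\overline{L}(\lambda)\otimes_E(v_{P_1}^{\infty}\oplus v_{P_2}^{\infty})\hookrightarrow W_0\twoheadrightarrow\overline{L}(\lambda)$. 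Applying $\mathrm{Ext}^{\bullet}_{\mathrm{GL}_3(\mathbb{Q}_p),\lambda}(W_0,~-)$ to the sequence above gives
\[
\dim_E\mathrm{Ext}^1_{\mathrm{GL}_3(\mathbb{Q}_p),\lambda}\left(W_0,~\Pi_i\right)\leq \dim_E\mathrm{Ext}^1_{\mathrm{GL}_3(\mathbb{Q}_p),\lambda}\left(W_0,~v_{P_i}^{\rm{an}}(\lambda)\right)+\dim_E\mathrm{Ext}^1_{\mathrm{GL}_3(\mathbb{Q}_p),\lambda}\left(W_0,~C_{s_i,s_i}\right),
\]
so it suffices to bound each term on the right by $1$.

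For the term with $C_{s_i,s_i}$, I apply $\mathrm{Ext}^{\bullet}_{\mathrm{GL}_3(\mathbb{Q}_p),\lambda}(-,~C_{s_i,s_i})$ to the defining sequence of $W_0$. By Lemma~\ref{3lemm: nonvanishing ext1} we have $\mathrm{Ext}^1_{\mathrm{GL}_3(\mathbb{Q}_p),\lambda}(\overline{L}(\lambda),~C_{s_i,s_i})=0$ (as $C_{s_i,s_i}\notin\Omega_1(\overline{L}(\lambda))$), while $\mathrm{Ext}^1_{\mathrm{GL}_3(\mathbb{Q}_p),\lambda}(\overline{L}(\lambda)\otimes_Ev_{P_j}^{\infty},~C_{s_i,s_i})$ vanishes for $j=i$ and is one dimensional for $j=3-i$ (since $C_{s_i,s_i}\in\Omega_1(\overline{L}(\lambda)\otimes_Ev_{P_{3-i}}^{\infty})$); the long exact sequence then embeds $\mathrm{Ext}^1_{\mathrm{GL}_3(\mathbb{Q}_p),\lambda}(W_0,~C_{s_i,s_i})$ into a one dimensional space. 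For the term with $v_{P_i}^{\rm{an}}(\lambda)$, I use the short exact sequence $\overline{L}(\lambda)\otimes_Ev_{P_i}^{\infty}\hookrightarrow v_{P_i}^{\rm{an}}(\lambda)\twoheadrightarrow C^1_{s_{3-i},1}$ of Lemma~\ref{3lemm: structure of St} and apply $\mathrm{Ext}^{\bullet}_{\mathrm{GL}_3(\mathbb{Q}_p),\lambda}(W_0,~-)$. A further dévissage of $W_0$ together with Lemma~\ref{3lemm: nonvanishing ext1} and Proposition~\ref{3prop: locally algebraic extension} shows that $\mathrm{Ext}^1_{\mathrm{GL}_3(\mathbb{Q}_p),\lambda}(W_0,~C^1_{s_{3-i},1})$ is at most one dimensional (only the constituent $\overline{L}(\lambda)$ of $W_0$ has nonzero $\mathrm{Ext}^1$ into $C^1_{s_{3-i},1}$). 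Hence, provided one has the vanishing
\[
\mathrm{Ext}^1_{\mathrm{GL}_3(\mathbb{Q}_p),\lambda}\left(W_0,~\overline{L}(\lambda)\otimes_Ev_{P_i}^{\infty}\right)=0,
\]
the long exact sequence embeds $\mathrm{Ext}^1_{\mathrm{GL}_3(\mathbb{Q}_p),\lambda}(W_0,~v_{P_i}^{\rm{an}}(\lambda))$ into a one dimensional space, and combining the two bounds finishes the proof.

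The hard part is precisely the displayed vanishing: a naive dévissage of $W_0$ only gives $\dim\leq 1$ here, which would degrade the final bound to $3$, so one must show the group is genuinely $0$. I apply $\mathrm{Ext}^{\bullet}_{\mathrm{GL}_3(\mathbb{Q}_p),\lambda}(-,~\overline{L}(\lambda)\otimes_Ev_{P_i}^{\infty})$ to the defining sequence of $W_0$ and observe that, by Proposition~\ref{3prop: locally algebraic extension}, $\mathrm{Ext}^1_{\mathrm{GL}_3(\mathbb{Q}_p),\lambda}(\overline{L}(\lambda)\otimes_E(v_{P_1}^{\infty}\oplus v_{P_2}^{\infty}),~\overline{L}(\lambda)\otimes_Ev_{P_i}^{\infty})=0$, whereas both $\mathrm{Hom}_{\mathrm{GL}_3(\mathbb{Q}_p)}(\overline{L}(\lambda)\otimes_E(v_{P_1}^{\infty}\oplus v_{P_2}^{\infty}),~\overline{L}(\lambda)\otimes_Ev_{P_i}^{\infty})$ and $\mathrm{Ext}^1_{\mathrm{GL}_3(\mathbb{Q}_p),\lambda}(\overline{L}(\lambda),~\overline{L}(\lambda)\otimes_Ev_{P_i}^{\infty})$ are one dimensional. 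The connecting homomorphism between these two one dimensional spaces sends the projection $\overline{L}(\lambda)\otimes_E(v_{P_1}^{\infty}\oplus v_{P_2}^{\infty})\twoheadrightarrow\overline{L}(\lambda)\otimes_Ev_{P_i}^{\infty}$ to the push-out of $W_0$ along it, which is exactly the representation $W_i$; since $W_i$ is non-split, this connecting map is nonzero, hence an isomorphism of one dimensional spaces, and the long exact sequence forces $\mathrm{Ext}^1_{\mathrm{GL}_3(\mathbb{Q}_p),\lambda}(W_0,~\overline{L}(\lambda)\otimes_Ev_{P_i}^{\infty})=0$. Identifying this push-out with $W_i$ — that is, checking that $W_0$ genuinely records the non-split extension $W_i$ — is the only nontrivial point; everything else is bookkeeping with the sets $\Omega_1(\cdot)$ from Lemma~\ref{3lemm: nonvanishing ext1} and with Proposition~\ref{3prop: locally algebraic extension}.
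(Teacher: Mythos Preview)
Your proof is correct and follows the same overall architecture as the paper's: split $\Pi_i$ into $v_{P_i}^{\rm an}(\lambda)$ and $C_{s_i,s_i}$ and bound each $\mathrm{Ext}^1(W_0,-)$ by $1$; the $C_{s_i,s_i}$ piece is handled identically.

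For the $v_{P_i}^{\rm an}(\lambda)$ piece the two arguments diverge slightly. The paper d\'evisses the \emph{first} variable: it uses $\overline{L}(\lambda)\otimes_E v_{P_{3-i}}^{\infty}\hookrightarrow W_0\twoheadrightarrow W_i$ together with $\mathrm{Ext}^1(\overline{L}(\lambda)\otimes_E v_{P_j}^{\infty},\,v_{P_i}^{\rm an}(\lambda))=0$ to reduce to $\mathrm{Ext}^1(W_i,\,v_{P_i}^{\rm an}(\lambda))$, and then the connecting map coming from $\overline{L}(\lambda)\otimes_E v_{P_i}^{\infty}\hookrightarrow W_i\twoheadrightarrow\overline{L}(\lambda)$ subtracts the one-dimensional $\mathrm{Hom}(\overline{L}(\lambda)\otimes_E v_{P_i}^{\infty},\,v_{P_i}^{\rm an}(\lambda))$ from the two-dimensional $\mathrm{Ext}^1(\overline{L}(\lambda),\,v_{P_i}^{\rm an}(\lambda))$ (this uses $\mathrm{Hom}(W_i,\,v_{P_i}^{\rm an}(\lambda))=0$). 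You instead d\'evisse the \emph{second} variable via $\overline{L}(\lambda)\otimes_E v_{P_i}^{\infty}\hookrightarrow v_{P_i}^{\rm an}(\lambda)\twoheadrightarrow C^1_{s_{3-i},1}$ and isolate the vanishing $\mathrm{Ext}^1(W_0,\,\overline{L}(\lambda)\otimes_E v_{P_i}^{\infty})=0$ as a separate claim, proved by the push-out identification with $W_i$. The two routes are equivalent: both hinge on the fact that $W_0/(\overline{L}(\lambda)\otimes_E v_{P_{3-i}}^{\infty})\cong W_i$ is non-split, which is precisely what forces the relevant connecting homomorphism to be nonzero in each argument. The paper's packaging is marginally shorter since it never writes down $\mathrm{Ext}^1(W_0,\,\overline{L}(\lambda)\otimes_E v_{P_i}^{\infty})$ as a standalone object, while your version has the virtue of making the role of the non-splitness of $W_i$ completely explicit.
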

\begin{proof}
We know that
$$\mathrm{Ext}^1_{\mathrm{GL}_3(\mathbb{Q}_p),\lambda}\left(\overline{L}(\lambda)\otimes_Ev_{P_j}^{\infty}, ~C^1_{s_i,1}\right)=\mathrm{Ext}^1_{\mathrm{GL}_3(\mathbb{Q}_p),\lambda}\left(\overline{L}(\lambda)\otimes_Ev_{P_j}^{\infty}, ~\overline{L}(\lambda)\otimes_Ev_{P_i}^{\infty}\right)=0$$
for $i,j=1,2$ from Proposition~\ref{3prop: locally algebraic extension} and Lemma~\ref{3lemm: nonvanishing ext1}, and thus
$$\mathrm{Ext}^1_{\mathrm{GL}_3(\mathbb{Q}_p),\lambda}\left(\overline{L}(\lambda)\otimes_Ev_{P_j}^{\infty}, ~v_{P_i}^{\rm{an}}(\lambda)\right)=0$$
for $i,j=1,2$ which together with (\ref{3equation ext 6.1.2 prime}) imply that
\begin{multline}\label{3equation 9.1}
\mathrm{dim}_E\mathrm{Ext}^1_{\mathrm{GL}_3(\mathbb{Q}_p),\lambda}\left(W_0, ~v_{P_i}^{\rm{an}}(\lambda)\right)\leq \mathrm{dim}_E\mathrm{Ext}^1_{\mathrm{GL}_3(\mathbb{Q}_p),\lambda}\left(W_i, ~v_{P_i}^{\rm{an}}(\lambda)\right)\\
\leq \mathrm{dim}_E\mathrm{Ext}^1_{\mathrm{GL}_3(\mathbb{Q}_p),\lambda}\left(\overline{L}(\lambda), ~v_{P_i}^{\rm{an}}(\lambda)\right)-\mathrm{dim}_E\mathrm{Hom}_{\mathrm{GL}_3(\mathbb{Q}_p),\lambda}\left(\overline{L}(\lambda)\otimes_Ev_{P_i}^{\infty}, ~v_{P_i}^{\rm{an}}(\lambda)\right)\\
=2-1=1.
\end{multline}
On the other hand, note that
$$\mathrm{Ext}^1_{\mathrm{GL}_3(\mathbb{Q}_p),\lambda}\left(\overline{L}(\lambda), ~C_{s_i,s_i}\right)=\mathrm{Ext}^1_{\mathrm{GL}_3(\mathbb{Q}_p),\lambda}\left(\overline{L}(\lambda)\otimes_Ev_{P_i}^{\infty}, ~C_{s_i,s_i}\right)=0$$
by Lemma~\ref{3lemm: nonvanishing ext1} and thus we have
\begin{equation}\label{3equation 9.2}
\mathrm{dim}_E\mathrm{Ext}^1_{\mathrm{GL}_3(\mathbb{Q}_p),\lambda}\left(W_0, ~C_{s_i,s_i}\right)\leq \mathrm{dim}_E\mathrm{Ext}^1_{\mathrm{GL}_3(\mathbb{Q}_p),\lambda}\left(\overline{L}(\lambda)\otimes_Ev_{P_{3-i}}^{\infty}, ~C_{s_i,s_i}\right)=1
\end{equation}
where the last equality follows again from Lemma~\ref{3lemm: nonvanishing ext1}. We finish the proof by combining (\ref{3equation 9.1}) and (\ref{3equation 9.2}) with the inequality
\begin{multline*}
\mathrm{dim}_E\mathrm{Ext}^1_{\mathrm{GL}_3(\mathbb{Q}_p),\lambda}\left(W_0, ~\begin{xy}
(0,0)*+{v_{P_i}^{\rm{an}}(\lambda)}="a"; (18,0)*+{C_{s_i,s_i}}="b";
{\ar@{-}"a";"b"};
\end{xy}\right)\\
\leq \mathrm{dim}_E\mathrm{Ext}^1_{\mathrm{GL}_3(\mathbb{Q}_p),\lambda}\left(W_0, ~v_{P_i}^{\rm{an}}(\lambda)\right)+\mathrm{dim}_E\mathrm{Ext}^1_{\mathrm{GL}_3(\mathbb{Q}_p),\lambda}\left(W_0, ~C_{s_i,s_i}\right).
\end{multline*}
\end{proof}
\section{Key exact sequences}\label{3section: exact sequence min}
\begin{lemm}\label{3lemm: upper bound}
We have the inequality
$$\mathrm{dim}_E\mathrm{Ext}^1_{\mathrm{GL}_3(\mathbb{Q}_p),\lambda}\left(W_0, ~\Sigma^+(\lambda, \mathscr{L}_1, \mathscr{L}_2)\right)\leq 3.$$
\end{lemm}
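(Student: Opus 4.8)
The plan is to run the long exact sequence attached to the short exact sequence
$$\overline{L}(\lambda)\otimes_E\left(v_{P_1}^{\infty}\oplus v_{P_2}^{\infty}\right)\hookrightarrow W_0\twoheadrightarrow \overline{L}(\lambda)$$
(inclusion of the socle of $W_0$, the quotient by it being the cosocle $\overline{L}(\lambda)$) applied to the functor $\mathrm{Ext}^{\bullet}_{\mathrm{GL}_3(\mathbb{Q}_p),\lambda}(-,~\Sigma^+)$, where I abbreviate $\Sigma^+:=\Sigma^+(\lambda, \mathscr{L}_1, \mathscr{L}_2)$. This produces the exact sequence
\begin{multline*}
\mathrm{Ext}^1_{\mathrm{GL}_3(\mathbb{Q}_p),\lambda}\left(\overline{L}(\lambda), ~\Sigma^+\right)\xrightarrow{f}\mathrm{Ext}^1_{\mathrm{GL}_3(\mathbb{Q}_p),\lambda}\left(W_0, ~\Sigma^+\right)\\
\xrightarrow{g}\mathrm{Ext}^1_{\mathrm{GL}_3(\mathbb{Q}_p),\lambda}\left(\overline{L}(\lambda)\otimes_E\left(v_{P_1}^{\infty}\oplus v_{P_2}^{\infty}\right), ~\Sigma^+\right)\\
\xrightarrow{\delta}\mathrm{Ext}^2_{\mathrm{GL}_3(\mathbb{Q}_p),\lambda}\left(\overline{L}(\lambda), ~\Sigma^+\right),
\end{multline*}
from which $\mathrm{Im}(f)=\mathrm{Ker}(g)$ and $\mathrm{Im}(g)=\mathrm{Ker}(\delta)$ yield
$$\mathrm{dim}_E\mathrm{Ext}^1_{\mathrm{GL}_3(\mathbb{Q}_p),\lambda}\left(W_0, ~\Sigma^+\right)\leq \mathrm{dim}_E\mathrm{Ext}^1_{\mathrm{GL}_3(\mathbb{Q}_p),\lambda}\left(\overline{L}(\lambda), ~\Sigma^+\right)+\mathrm{dim}_E\mathrm{Ker}(\delta).$$
By (\ref{3third key dim}) the first term on the right equals $1$; the middle group of the sequence decomposes as $\mathrm{Ext}^1_{\mathrm{GL}_3(\mathbb{Q}_p),\lambda}(\overline{L}(\lambda)\otimes_Ev_{P_1}^{\infty}, \Sigma^+)\oplus\mathrm{Ext}^1_{\mathrm{GL}_3(\mathbb{Q}_p),\lambda}(\overline{L}(\lambda)\otimes_Ev_{P_2}^{\infty}, \Sigma^+)$ and so has dimension $2+2=4$ by Lemma~\ref{3lemm: ext5}, while its target has dimension $2$ by (\ref{3second key dim}). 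Hence everything reduces to showing that $\delta$ is surjective: then $\mathrm{dim}_E\mathrm{Ker}(\delta)=2$ and $\mathrm{dim}_E\mathrm{Ext}^1_{\mathrm{GL}_3(\mathbb{Q}_p),\lambda}(W_0, \Sigma^+)\leq 1+2=3$.

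To prove $\delta$ surjective I would use Proposition~\ref{3prop: isomorphism from cup product} together with naturality of connecting homomorphisms. Fix $j\in\{1,2\}$. Quotienting $W_0$ by the $v_{P_{3-j}}^{\infty}$-part of its socle yields $W_j$, and the quotient map $W_0\twoheadrightarrow W_j$ extends to a morphism of short exact sequences from the one above to $\overline{L}(\lambda)\otimes_Ev_{P_j}^{\infty}\hookrightarrow W_j\twoheadrightarrow \overline{L}(\lambda)$, being the projection onto the $v_{P_j}^{\infty}$-summand on subobjects and the identity on quotients. By naturality of $\delta$, its restriction to the direct summand $\mathrm{Ext}^1_{\mathrm{GL}_3(\mathbb{Q}_p),\lambda}(\overline{L}(\lambda)\otimes_Ev_{P_j}^{\infty}, \Sigma^+)$ is precisely the connecting map of the latter sequence, equivalently the cup product with the one-dimensional space $\mathrm{Ext}^1_{\mathrm{GL}_3(\mathbb{Q}_p),\lambda}(\overline{L}(\lambda), \overline{L}(\lambda)\otimes_Ev_{P_j}^{\infty})$ (Proposition~\ref{3prop: locally algebraic extension}); and this is an isomorphism onto $\mathrm{Ext}^2_{\mathrm{GL}_3(\mathbb{Q}_p),\lambda}(\overline{L}(\lambda), \Sigma^+)$ by (\ref{3second key isomorphism}) of Proposition~\ref{3prop: isomorphism from cup product}. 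In particular $\delta$ is surjective already on a single summand, which completes the argument.

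The main (and only slightly delicate) point is the identification in the last paragraph: one must check that quotienting $W_0$ by the $v_{P_{3-j}}^{\infty}$-part of the socle genuinely gives $W_j$ and that the induced map on socles is the summand projection, so that naturality of $\delta$ applies, and that the connecting map thereby obtained coincides with the isomorphism recorded in Proposition~\ref{3prop: isomorphism from cup product}. All of this is formal; beyond the cited Lemma~\ref{3lemm: ext5}, (\ref{3second key dim}), (\ref{3third key dim}) and Proposition~\ref{3prop: isomorphism from cup product} no further $\mathrm{Ext}$-group needs to be computed.
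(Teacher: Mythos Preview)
Your proof is correct and takes a genuinely different route from the paper's. The paper first bounds the dimension by $4$ via a d\'evissage of $\Sigma^+(\lambda,\mathscr{L}_1,\mathscr{L}_2)$ in the second variable (the short exact sequence $\Sigma\hookrightarrow\Sigma^+\twoheadrightarrow C_{s_1,s_1}\oplus C_{s_2,s_2}$, invoking Lemma~\ref{3lemm: ext8}), then assumes the dimension equals $4$ and derives a contradiction with Lemma~\ref{3lemm: ext6} through an auxiliary d\'evissage and Lemma~\ref{3lemm: ext9}. You instead d\'evissage $W_0$ in the first variable and observe that the connecting map $\delta$ is already surjective by Proposition~\ref{3prop: isomorphism from cup product} (which is proved earlier and does not depend on this lemma), getting $\leq 3$ in one step. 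Your long exact sequence is exactly the one the paper uses afterwards in Proposition~\ref{3prop: main dim} to prove the lower bound $\geq 3$; your observation is that once the surjectivity of $\delta$ is recognised, the same sequence gives the upper bound as well, so Lemma~\ref{3lemm: upper bound} and Proposition~\ref{3prop: main dim} collapse into a single computation. This bypasses Lemmas~\ref{3lemm: ext8} and~\ref{3lemm: ext9} entirely for the purpose of the upper bound, though those lemmas are still used elsewhere in the paper. The naturality check you flag (that $W_0/(\overline{L}(\lambda)\otimes_E v_{P_{3-j}}^\infty)\cong W_j$ and that $\delta$ restricted to the $j$-th summand is the connecting map for $W_j$) is indeed formal.
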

\begin{proof}
The short exact sequence
$$\Sigma(\lambda, \mathscr{L}_1, \mathscr{L}_2)\hookrightarrow\Sigma^+(\lambda, \mathscr{L}_1, \mathscr{L}_2)\twoheadrightarrow C_{s_1,s_1}\oplus C_{s_2,s_2}$$
induces the exact sequence
\begin{multline}\label{3first key exact sequence}
\mathrm{Ext}^1_{\mathrm{GL}_3(\mathbb{Q}_p),\lambda}\left(W_0, ~\Sigma(\lambda, \mathscr{L}_1, \mathscr{L}_2)\right)\hookrightarrow \mathrm{Ext}^1_{\mathrm{GL}_3(\mathbb{Q}_p),\lambda}\left(W_0, ~\Sigma^+(\lambda, \mathscr{L}_1, \mathscr{L}_2)\right)\\ \rightarrow\mathrm{Ext}^1_{\mathrm{GL}_3(\mathbb{Q}_p),\lambda}\left(W_0, ~C_{s_1,s_1}\oplus C_{s_2,s_2}\right).
\end{multline}
We know that
\begin{multline*}
\mathrm{dim}_E\mathrm{Ext}^1_{\mathrm{GL}_3(\mathbb{Q}_p),\lambda}\left(W_0, ~C_{s_1,s_1}\oplus C_{s_2,s_2}\right)\\=\mathrm{dim}_E\mathrm{Ext}^1_{\mathrm{GL}_3(\mathbb{Q}_p),\lambda}\left(W_0, ~C_{s_1,s_1}\right)+\mathrm{dim}_E\mathrm{Ext}^1_{\mathrm{GL}_3(\mathbb{Q}_p),\lambda}\left(W_0, ~C_{s_2,s_2}\right)=1+1=2
\end{multline*}
by Lemma~\ref{3lemm: nonvanishing ext1} and Lemma~\ref{3lemm: nonvanishing ext2}. We also know that $$\mathrm{dim}_E\mathrm{Ext}^1_{\mathrm{GL}_3(\mathbb{Q}_p),\lambda}\left(W_0, ~\Sigma(\lambda, \mathscr{L}_1, \mathscr{L}_2)\right)=2$$
by Lemma~\ref{3lemm: ext8}, and thus we obtain the following inequality:
\begin{multline}
\mathrm{dim}_E\mathrm{Ext}^1_{\mathrm{GL}_3(\mathbb{Q}_p),\lambda}\left(W_0, ~\Sigma^+(\lambda, \mathscr{L}_1, \mathscr{L}_2)\right)\\ \leq \mathrm{dim}_E\mathrm{Ext}^1_{\mathrm{GL}_3(\mathbb{Q}_p),\lambda}\left(W_0, ~\Sigma(\lambda, \mathscr{L}_1, \mathscr{L}_2)\right)+\mathrm{dim}_E\mathrm{Ext}^1_{\mathrm{GL}_3(\mathbb{Q}_p),\lambda}\left(W_0, ~C_{s_1,s_1}\oplus C_{s_2,s_2}\right)
=2+2=4.
\end{multline}
Assume first that
\begin{equation}\label{3assumption upper bound}
\mathrm{dim}_E\mathrm{Ext}^1_{\mathrm{GL}_3(\mathbb{Q}_p),\lambda}\left(W_0, ~\Sigma^+(\lambda, \mathscr{L}_1, \mathscr{L}_2)\right)=4.
\end{equation}
The short exact sequence
$$\Sigma^+_1(\lambda,\mathscr{L}_1)\hookrightarrow \Sigma^+(\lambda, \mathscr{L}_1, \mathscr{L}_2)\twoheadrightarrow \left(\begin{xy}
(0,0)*+{v_{P_2}^{\rm{an}}(\lambda)}="a"; (18,0)*+{C_{s_2,s_2}}="b";
{\ar@{-}"a";"b"};
\end{xy}\right)$$
induces a long exact sequence
\begin{multline}
\mathrm{Ext}^1_{\mathrm{GL}_3(\mathbb{Q}_p),\lambda}\left(W_0,~\Sigma^+_1(\lambda,\mathscr{L}_1)\right)\hookrightarrow\mathrm{Ext}^1_{\mathrm{GL}_3(\mathbb{Q}_p),\lambda}\left(W_0,~\Sigma^+(\lambda, \mathscr{L}_1, \mathscr{L}_2)\right)\\ \rightarrow\mathrm{Ext}^1_{\mathrm{GL}_3(\mathbb{Q}_p),\lambda}\left(W_0,~\begin{xy}
(0,0)*+{v_{P_2}^{\rm{an}}(\lambda)}="a"; (18,0)*+{C_{s_2,s_2}}="b";
{\ar@{-}"a";"b"};
\end{xy}\right)
\end{multline}
which implies
\begin{equation}\label{3lower bound}
\mathrm{dim}_E\mathrm{Ext}^1_{\mathrm{GL}_3(\mathbb{Q}_p),\lambda}\left(W_0,~\Sigma^+_1(\lambda,\mathscr{L}_1)\right)\geq 2
\end{equation}
by (\ref{3assumption upper bound}) and Lemma~\ref{3lemm: ext9}. We observe that $\Sigma^+_1(\lambda,\mathscr{L}_1)$ admits a filtration whose only reducible graded piece is
$$\begin{xy}
(0,0)*+{C^2_{s_1,1}}="a"; (20,0)*+{\overline{L}(\lambda)\otimes_E v_{P_1}^{\infty}}="b";
{\ar@{-}"a";"b"};
\end{xy}$$
and thus it follows from Lemma~\ref{3lemm: nonvanishing ext1} and
$$\mathrm{Ext}^1_{\mathrm{GL}_3(\mathbb{Q}_p),\lambda}\left(\overline{L}(\lambda)\otimes_E v_{P_1}^{\infty},~C^2_{s_1,1}-\overline{L}(\lambda)\otimes_E v_{P_1}^{\infty}\right)=0$$
(coming from Proposition~\ref{3prop: locally algebraic extension}, Lemma~\ref{3lemm: nonvanishing ext1} together with a simple devissage) that
$$\mathrm{Ext}^1_{\mathrm{GL}_3(\mathbb{Q}_p),\lambda}\left(\overline{L}(\lambda)\otimes_E v_{P_1}^{\infty},~V\right)=0$$
for all graded pieces of such a filtration except the subrepresentation $\overline{L}(\lambda)\otimes_E \mathrm{St}_3^{\infty}$. Hence we deduce by part (ii) of Proposition~\ref{3prop: formal devissages} an isomorphism of one dimensional spaces
\begin{equation}\label{3easy computation}
\mathrm{Ext}^1_{\mathrm{GL}_3(\mathbb{Q}_p),\lambda}\left(\overline{L}(\lambda)\otimes_E v_{P_1}^{\infty}, ~\overline{L}(\lambda)\otimes_E \mathrm{St}_3^{\infty}\right)\xrightarrow{\sim}\mathrm{Ext}^1_{\mathrm{GL}_3(\mathbb{Q}_p),\lambda}\left(\overline{L}(\lambda)\otimes_E v_{P_1}^{\infty}, ~\Sigma^+_1(\lambda,\mathscr{L}_1)\right).
\end{equation}
Then the short exact sequence
$$\overline{L}(\lambda)\otimes_Ev_{P_1}^{\infty}\hookrightarrow W_0\twoheadrightarrow W_2$$
induces a long exact sequence
\begin{multline*}
\mathrm{Ext}^1_{\mathrm{GL}_3(\mathbb{Q}_p),\lambda}\left(W_2, ~\Sigma^+_1(\lambda,\mathscr{L}_1)\right)\hookrightarrow \mathrm{Ext}^1_{\mathrm{GL}_3(\mathbb{Q}_p),\lambda}\left(W_0, ~\Sigma^+_1(\lambda,\mathscr{L}_1)\right)\\
\rightarrow \mathrm{Ext}^1_{\mathrm{GL}_3(\mathbb{Q}_p),\lambda}\left(\overline{L}(\lambda)\otimes_E v_{P_1}^{\infty}, ~\Sigma^+_1(\lambda,\mathscr{L}_1)\right)
\end{multline*}
which together with (\ref{3lower bound}) and (\ref{3easy computation}) implies that
$$\mathrm{dim}_E\mathrm{Ext}^1_{\mathrm{GL}_3(\mathbb{Q}_p),\lambda}\left(W_2,~\Sigma^+_1(\lambda,\mathscr{L}_1)\right)\geq 1$$
which contradicts Lemma~\ref{3lemm: ext6}. Hence we finish the proof.
\end{proof}

\begin{prop}\label{3prop: main dim}
We have
$$\mathrm{dim}_E\mathrm{Ext}^1_{\mathrm{GL}_3(\mathbb{Q}_p),\lambda}\left(W_0, ~\Sigma^+(\lambda, \mathscr{L}_1, \mathscr{L}_2)\right)=3.$$
\end{prop}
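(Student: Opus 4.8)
We already have the upper bound $\dim_E\mathrm{Ext}^1_{\mathrm{GL}_3(\mathbb{Q}_p),\lambda}\left(W_0,~\Sigma^+(\lambda, \mathscr{L}_1, \mathscr{L}_2)\right)\leq 3$ from Lemma~\ref{3lemm: upper bound}, so the only thing left is the reverse inequality $\geq 3$. Throughout, abbreviate $\Sigma^+$ for $\Sigma^+(\lambda, \mathscr{L}_1, \mathscr{L}_2)$. The plan is to run the d\'evissage attached to the defining short exact sequence
$$\overline{L}(\lambda)\otimes_E\left(v_{P_1}^{\infty}\oplus v_{P_2}^{\infty}\right)\hookrightarrow W_0\twoheadrightarrow \overline{L}(\lambda)$$
of $W_0$, apply the contravariant functor $\mathrm{Ext}^{\bullet}_{\mathrm{GL}_3(\mathbb{Q}_p),\lambda}\left(-,~\Sigma^+\right)$, and feed in the dimension counts of Lemma~\ref{3lemm: ext5} and Lemma~\ref{3lemm: ext6.1}.

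The first step is to record that $\mathrm{soc}_{\mathrm{GL}_3(\mathbb{Q}_p)}\left(\Sigma^+\right)=\overline{L}(\lambda)\otimes_E\mathrm{St}_3^{\infty}$; this is immediate from the construction of $\Sigma^+$ as the amalgamate sum of $\Sigma^+_1(\lambda, \mathscr{L}_1)$ and $\Sigma^+_2(\lambda, \mathscr{L}_2)$ over $\mathrm{St}_3^{\rm{an}}(\lambda)$ together with the structure of $\mathrm{St}_3^{\rm{an}}(\lambda)$ in Lemma~\ref{3lemm: structure of St} (the relevant extension classes defining $\Sigma_i(\lambda,\mathscr{L}_i)$ restrict nontrivially to $\mathrm{Ext}^1_{\mathrm{GL}_3(\mathbb{Q}_p),\lambda}(\overline{L}(\lambda)\otimes_Ev_{P_i}^{\infty},\mathrm{St}_3^{\rm{an}}(\lambda))$). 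Since $\overline{L}(\lambda)\otimes_E\mathrm{St}_3^{\infty}$ is a Jordan--H\"older constituent neither of $W_0$ nor of $\overline{L}(\lambda)\otimes_Ev_{P_i}^{\infty}$, this forces
$$\mathrm{Hom}_{\mathrm{GL}_3(\mathbb{Q}_p),\lambda}\left(W_0,~\Sigma^+\right)=\mathrm{Hom}_{\mathrm{GL}_3(\mathbb{Q}_p),\lambda}\left(\overline{L}(\lambda)\otimes_Ev_{P_i}^{\infty},~\Sigma^+\right)=0$$
for $i=1,2$. Hence the long exact sequence degenerates into the four-term exact sequence
\begin{multline*}
0\rightarrow\mathrm{Ext}^1_{\mathrm{GL}_3(\mathbb{Q}_p),\lambda}\left(\overline{L}(\lambda),~\Sigma^+\right)\rightarrow\mathrm{Ext}^1_{\mathrm{GL}_3(\mathbb{Q}_p),\lambda}\left(W_0,~\Sigma^+\right)\\
\rightarrow\bigoplus_{i=1,2}\mathrm{Ext}^1_{\mathrm{GL}_3(\mathbb{Q}_p),\lambda}\left(\overline{L}(\lambda)\otimes_Ev_{P_i}^{\infty},~\Sigma^+\right)\xrightarrow{~\partial~}\mathrm{Ext}^2_{\mathrm{GL}_3(\mathbb{Q}_p),\lambda}\left(\overline{L}(\lambda),~\Sigma^+\right),
\end{multline*}
and exactness gives $\dim_E\mathrm{Ext}^1_{\mathrm{GL}_3(\mathbb{Q}_p),\lambda}\left(W_0,~\Sigma^+\right)=\dim_E\mathrm{Ext}^1_{\mathrm{GL}_3(\mathbb{Q}_p),\lambda}\left(\overline{L}(\lambda),~\Sigma^+\right)+\dim_E\ker\partial$.

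Next I would substitute the known dimensions. By Lemma~\ref{3lemm: ext5} each summand $\mathrm{Ext}^1_{\mathrm{GL}_3(\mathbb{Q}_p),\lambda}\left(\overline{L}(\lambda)\otimes_Ev_{P_i}^{\infty},~\Sigma^+\right)$ is two dimensional, so the source of $\partial$ has dimension $4$; by Lemma~\ref{3lemm: ext6.1} its target $\mathrm{Ext}^2_{\mathrm{GL}_3(\mathbb{Q}_p),\lambda}\left(\overline{L}(\lambda),~\Sigma^+\right)$ has dimension $2$ (equation~(\ref{3second key dim})) and $\mathrm{Ext}^1_{\mathrm{GL}_3(\mathbb{Q}_p),\lambda}\left(\overline{L}(\lambda),~\Sigma^+\right)$ has dimension $1$ (equation~(\ref{3third key dim})). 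Rank--nullity then gives $\dim_E\ker\partial\geq 4-2=2$, whence $\dim_E\mathrm{Ext}^1_{\mathrm{GL}_3(\mathbb{Q}_p),\lambda}\left(W_0,~\Sigma^+\right)\geq 1+2=3$. Combined with Lemma~\ref{3lemm: upper bound} this yields the desired equality, and shows in addition that $\partial$ is surjective and $\dim_E\ker\partial=2$.

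This is a purely formal d\'evissage once Lemmas~\ref{3lemm: ext5} and \ref{3lemm: ext6.1} are available, so I do not expect a serious obstacle. The one point that genuinely requires attention is the identification of $\mathrm{soc}_{\mathrm{GL}_3(\mathbb{Q}_p)}\left(\Sigma^+\right)$, needed to annihilate the $\mathrm{Hom}$-terms and thereby pin down the left end of the exact sequence; everything else is bookkeeping. All of the real content sits upstream, ultimately in Lemma~\ref{3lemm: ext6} (through Proposition~\ref{3prop: isomorphism from cup product}) and in the $\mathrm{GL}_2(\mathbb{Q}_p)$-input Proposition~\ref{3prop: key result}.
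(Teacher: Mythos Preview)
Your proof is correct and follows essentially the same route as the paper: both run the long exact sequence attached to $\overline{L}(\lambda)\otimes_E(v_{P_1}^{\infty}\oplus v_{P_2}^{\infty})\hookrightarrow W_0\twoheadrightarrow\overline{L}(\lambda)$ and plug in Lemmas~\ref{3lemm: ext5}, \ref{3lemm: ext6.1} and \ref{3lemm: upper bound} to squeeze the dimension between $3$ and $3$. You are slightly more explicit in justifying the vanishing of the $\mathrm{Hom}$-terms via the socle of $\Sigma^+$, which the paper records only by writing the first arrow of (\ref{3main long exact sequence}) as an injection.
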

\begin{proof}
The short exact sequence
$$\overline{L}(\lambda)\otimes_E\left(v_{P_2}^{\infty}\oplus v_{P_1}^{\infty}\right)\hookrightarrow W_0\twoheadrightarrow \overline{L}(\lambda)$$  induces a long exact sequence
\begin{multline}\label{3main long exact sequence}
\mathrm{Ext}^1_{\mathrm{GL}_3(\mathbb{Q}_p),\lambda}\left(\overline{L}(\lambda), ~\Sigma^+(\lambda, \mathscr{L}_1, \mathscr{L}_2)\right)\hookrightarrow\mathrm{Ext}^1_{\mathrm{GL}_3(\mathbb{Q}_p),\lambda}\left(W_0, ~\Sigma^+(\lambda, \mathscr{L}_1, \mathscr{L}_2)\right)\\
\rightarrow \mathrm{Ext}^1_{\mathrm{GL}_3(\mathbb{Q}_p),\lambda}\left(\overline{L}(\lambda)\otimes_E \left(v_{P_2}^{\infty}\oplus v_{P_1}^{\infty}\right), ~\Sigma^+(\lambda, \mathscr{L}_1, \mathscr{L}_2)\right)\rightarrow \mathrm{Ext}^2_{\mathrm{GL}_3(\mathbb{Q}_p),\lambda}\left(\overline{L}(\lambda), ~\Sigma^+(\lambda, \mathscr{L}_1, \mathscr{L}_2)\right)
\end{multline}
and thus we have
\begin{multline}
\mathrm{dim}_E\mathrm{Ext}^1_{\mathrm{GL}_3(\mathbb{Q}_p),\lambda}(W_0, ~\Sigma^+(\lambda, \mathscr{L}_1, \mathscr{L}_2))\\
\geq \mathrm{dim}_E\mathrm{Ext}^1_{\mathrm{GL}_3(\mathbb{Q}_p),\lambda}(\overline{L}(\lambda), ~\Sigma^+(\lambda, \mathscr{L}_1, \mathscr{L}_2))+\mathrm{dim}_E\mathrm{Ext}^1_{\mathrm{GL}_3(\mathbb{Q}_p),\lambda}(\overline{L}(\lambda)\otimes_E\left(v_{P_2}^{\infty}\oplus v_{P_1}^{\infty}\right), ~\Sigma^+(\lambda, \mathscr{L}_1, \mathscr{L}_2))\\-\mathrm{dim}_E\mathrm{Ext}^2_{\mathrm{GL}_3(\mathbb{Q}_p),\lambda}(\overline{L}(\lambda), ~\Sigma^+(\lambda, \mathscr{L}_1, \mathscr{L}_2))=1+4-2=3
\end{multline}
due to Lemma~\ref{3lemm: ext5} and Lemma~\ref{3lemm: ext6.1}, which finishes the proof by combining with Lemma~\ref{3lemm: upper bound}.
\end{proof}

We define $\Sigma^{\sharp}(\lambda, \mathscr{L}_1, \mathscr{L}_2)$ as the unique non-split extension of $\Sigma(\lambda, \mathscr{L}_1, \mathscr{L}_2)$ by $\overline{L}(\lambda)$ ( cf. Lemma~\ref{3lemm: ext2}) and then set $\Sigma^{\sharp,+}(\lambda, \mathscr{L}_1, \mathscr{L}_2)$ to be the amalgamate sum of $\Sigma^{\sharp}(\lambda, \mathscr{L}_1, \mathscr{L}_2)$ and $\Sigma^+(\lambda, \mathscr{L}_1, \mathscr{L}_2)$ over $\Sigma(\lambda, \mathscr{L}_1, \mathscr{L}_2)$. Hence $\Sigma^{\sharp}(\lambda, \mathscr{L}_1, \mathscr{L}_2)$ has the form
$$
\begin{xy}
(0,0)*+{\mathrm{St}_3^{\rm{an}}(\lambda)}="a"; (20,4)*+{v_{P_1}^{\rm{an}}(\lambda)}="b"; (20,-4)*+{v_{P_2}^{\rm{an}}(\lambda)}="c"; (40,0)*+{\overline{L}(\lambda)}="d";
{\ar@{-}"a";"b"}; {\ar@{-}"a";"c"}; {\ar@{-}"b";"d"}; {\ar@{-}"c";"d"};
\end{xy}
$$
and $\Sigma^{\sharp,+}(\lambda, \mathscr{L}_1, \mathscr{L}_2)$ has the form
$$
\begin{xy}
(0,0)*+{\mathrm{St}_3^{\rm{an}}(\lambda)}="a"; (20,4)*+{v_{P_1}^{\rm{an}}(\lambda)}="b"; (20,-4)*+{v_{P_2}^{\rm{an}}(\lambda)}="c"; (40,8)*+{C_{s_1,s_1}}="d"; (40,-8)*+{C_{s_2,s_2}}="e"; (40,0)*+{\overline{L}(\lambda)}="f";
{\ar@{-}"a";"b"}; {\ar@{-}"a";"c"}; {\ar@{-}"b";"d"}; {\ar@{-}"c";"e"}; {\ar@{-}"b";"f"}; {\ar@{-}"c";"f"};
\end{xy}.
$$
It follows from Lemma~\ref{3lemm: ext2}, Proposition~\ref{3prop: locally algebraic extension}, (\ref{3vanishing of simple special ext}) and an easy devissage that
\begin{equation}\label{3vanishing of ext}
\mathrm{Ext}^1_{\mathrm{GL}_3(\mathbb{Q}_p),\lambda}\left(\overline{L}(\lambda),~\Sigma^{\sharp}(\lambda, \mathscr{L}_1, \mathscr{L}_2)\right)=\mathrm{Ext}^1_{\mathrm{GL}_3(\mathbb{Q}_p),\lambda}\left(\overline{L}(\lambda),~\Sigma^{\sharp,+}(\lambda, \mathscr{L}_1, \mathscr{L}_2)\right)=0.
\end{equation}
Then we set
$$\Sigma^{\ast,\flat}(\lambda, \mathscr{L}_1, \mathscr{L}_2):=\Sigma^{\ast}(\lambda, \mathscr{L}_1, \mathscr{L}_2)/\overline{L}(\lambda)\otimes_E\mathrm{St}_3^{\infty}$$
for $\ast=\{+\}, \{\sharp\}$ and $\{\sharp, +\}$.
It follows from Lemma~\ref{3lemm: ext7}, (\ref{3vanishing of simple special ext}) and an easy devissage that
\begin{equation}\label{3vanishing of ext prime}
\mathrm{Ext}^1_{\mathrm{GL}_3(\mathbb{Q}_p),\lambda}\left(\overline{L}(\lambda),~\Sigma^{\sharp, \flat}(\lambda, \mathscr{L}_1, \mathscr{L}_2)\right)=\mathrm{Ext}^1_{\mathrm{GL}_3(\mathbb{Q}_p),\lambda}\left(\overline{L}(\lambda),~\Sigma^{\sharp,+, \flat}(\lambda, \mathscr{L}_1, \mathscr{L}_2)\right)=0.
\end{equation}
\begin{lemm}\label{3lemm: ext10}
We have
$$\mathrm{Ext}^1_{\mathrm{GL}_3(\mathbb{Q}_p),\lambda}\left(\overline{L}(\lambda), ~\Sigma^{\sharp}(\lambda, \mathscr{L}_1,\mathscr{L}_2)\right)=\mathrm{Ext}^1_{\mathrm{GL}_3(\mathbb{Q}_p),\lambda}\left(\overline{L}(\lambda), ~\Sigma^{\sharp,+}(\lambda, \mathscr{L}_1,\mathscr{L}_2)\right)=0$$
and
$$\mathrm{dim}_E\mathrm{Ext}^2_{\mathrm{GL}_3(\mathbb{Q}_p),\lambda}\left(\overline{L}(\lambda), ~\Sigma^{\sharp}(\lambda, \mathscr{L}_1, \mathscr{L}_2)\right)=\mathrm{dim}_E\mathrm{Ext}^2_{\mathrm{GL}_3(\mathbb{Q}_p),\lambda}\left(\overline{L}(\lambda), ~\Sigma^{\sharp,+}(\lambda, \mathscr{L}_1, \mathscr{L}_2)\right)=2$$
\end{lemm}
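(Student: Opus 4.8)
The plan is to run the long exact sequence of $\mathrm{Ext}^{\bullet}_{\mathrm{GL}_3(\mathbb{Q}_p),\lambda}(\overline{L}(\lambda),-)$ against the two defining short exact sequences
$$
\Sigma(\lambda, \mathscr{L}_1, \mathscr{L}_2)\hookrightarrow\Sigma^{\sharp}(\lambda, \mathscr{L}_1, \mathscr{L}_2)\twoheadrightarrow\overline{L}(\lambda)
\quad\text{and}\quad
\Sigma^+(\lambda, \mathscr{L}_1, \mathscr{L}_2)\hookrightarrow\Sigma^{\sharp,+}(\lambda, \mathscr{L}_1, \mathscr{L}_2)\twoheadrightarrow\overline{L}(\lambda),
$$
the second being available because the amalgamate sum is a pushout of two injections and hence again a subrepresentation with the same cokernel $\overline{L}(\lambda)$. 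The essential ingredients are $\mathrm{Ext}^1_{\mathrm{GL}_3(\mathbb{Q}_p),\lambda}(\overline{L}(\lambda),\overline{L}(\lambda))=\mathrm{Ext}^2_{\mathrm{GL}_3(\mathbb{Q}_p),\lambda}(\overline{L}(\lambda),\overline{L}(\lambda))=0$ from Proposition~\ref{3prop: locally algebraic extension}, the values $\mathrm{dim}_E\mathrm{Ext}^1_{\mathrm{GL}_3(\mathbb{Q}_p),\lambda}(\overline{L}(\lambda),\Sigma(\lambda, \mathscr{L}_1, \mathscr{L}_2))=1$ and $\mathrm{dim}_E\mathrm{Ext}^2_{\mathrm{GL}_3(\mathbb{Q}_p),\lambda}(\overline{L}(\lambda),\Sigma(\lambda, \mathscr{L}_1, \mathscr{L}_2))=2$ from Lemma~\ref{3lemm: ext2}, and $\mathrm{dim}_E\mathrm{Ext}^1_{\mathrm{GL}_3(\mathbb{Q}_p),\lambda}(\overline{L}(\lambda),\Sigma^+(\lambda, \mathscr{L}_1, \mathscr{L}_2))=1$, $\mathrm{dim}_E\mathrm{Ext}^2_{\mathrm{GL}_3(\mathbb{Q}_p),\lambda}(\overline{L}(\lambda),\Sigma^+(\lambda, \mathscr{L}_1, \mathscr{L}_2))=2$ from Lemma~\ref{3lemm: ext6.1}.

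First I would treat $\Sigma^{\sharp}$. Since $\overline{L}(\lambda)$ is not a Jordan--H\"older constituent of $\Sigma(\lambda, \mathscr{L}_1, \mathscr{L}_2)$, one has $\mathrm{Hom}_{\mathrm{GL}_3(\mathbb{Q}_p),\lambda}(\overline{L}(\lambda),\Sigma(\lambda, \mathscr{L}_1, \mathscr{L}_2))=0$; hence in the long exact sequence the connecting map $\mathrm{Hom}_{\mathrm{GL}_3(\mathbb{Q}_p),\lambda}(\overline{L}(\lambda),\overline{L}(\lambda))\to\mathrm{Ext}^1_{\mathrm{GL}_3(\mathbb{Q}_p),\lambda}(\overline{L}(\lambda),\Sigma(\lambda, \mathscr{L}_1, \mathscr{L}_2))$ sends the identity to the extension class of $\Sigma^{\sharp}$, which is nonzero because the extension is non-split; as the target is one-dimensional this connecting map is surjective, and together with $\mathrm{Ext}^1_{\mathrm{GL}_3(\mathbb{Q}_p),\lambda}(\overline{L}(\lambda),\overline{L}(\lambda))=0$ this forces $\mathrm{Ext}^1_{\mathrm{GL}_3(\mathbb{Q}_p),\lambda}(\overline{L}(\lambda),\Sigma^{\sharp}(\lambda, \mathscr{L}_1, \mathscr{L}_2))=0$. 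Continuing the sequence, the vanishing of both $\mathrm{Ext}^1_{\mathrm{GL}_3(\mathbb{Q}_p),\lambda}(\overline{L}(\lambda),\overline{L}(\lambda))$ and $\mathrm{Ext}^2_{\mathrm{GL}_3(\mathbb{Q}_p),\lambda}(\overline{L}(\lambda),\overline{L}(\lambda))$ makes $\mathrm{Ext}^2_{\mathrm{GL}_3(\mathbb{Q}_p),\lambda}(\overline{L}(\lambda),\Sigma(\lambda, \mathscr{L}_1, \mathscr{L}_2))\to\mathrm{Ext}^2_{\mathrm{GL}_3(\mathbb{Q}_p),\lambda}(\overline{L}(\lambda),\Sigma^{\sharp}(\lambda, \mathscr{L}_1, \mathscr{L}_2))$ an isomorphism, so the dimension is $2$.

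The case of $\Sigma^{\sharp,+}$ would go through the identical chase with $\Sigma(\lambda, \mathscr{L}_1, \mathscr{L}_2)$ replaced by $\Sigma^+(\lambda, \mathscr{L}_1, \mathscr{L}_2)$ (again $\overline{L}(\lambda)$ is not a constituent of $\Sigma^+(\lambda, \mathscr{L}_1, \mathscr{L}_2)$, and one invokes Lemma~\ref{3lemm: ext6.1}), provided one knows that the class of $\Sigma^{\sharp,+}$ in $\mathrm{Ext}^1_{\mathrm{GL}_3(\mathbb{Q}_p),\lambda}(\overline{L}(\lambda),\Sigma^+(\lambda, \mathscr{L}_1, \mathscr{L}_2))$ is nonzero. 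That class is the pushforward of the class of $\Sigma^{\sharp}$ along $\Sigma(\lambda, \mathscr{L}_1, \mathscr{L}_2)\hookrightarrow\Sigma^+(\lambda, \mathscr{L}_1, \mathscr{L}_2)$, and this pushforward is injective because the short exact sequence $\Sigma(\lambda, \mathscr{L}_1, \mathscr{L}_2)\hookrightarrow\Sigma^+(\lambda, \mathscr{L}_1, \mathscr{L}_2)\twoheadrightarrow C_{s_1,s_1}\oplus C_{s_2,s_2}$ satisfies $\mathrm{Hom}_{\mathrm{GL}_3(\mathbb{Q}_p),\lambda}(\overline{L}(\lambda),C_{s_1,s_1}\oplus C_{s_2,s_2})=0$; being an injection between one-dimensional spaces it is an isomorphism, so a nonzero class goes to a nonzero class. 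Alternatively, the two $\mathrm{Ext}^1$-vanishings are already recorded as (\ref{3vanishing of ext}), and for the $\mathrm{Ext}^2$-statements one only needs the pushout short exact sequences together with $\mathrm{Ext}^1_{\mathrm{GL}_3(\mathbb{Q}_p),\lambda}(\overline{L}(\lambda),\overline{L}(\lambda))=\mathrm{Ext}^2_{\mathrm{GL}_3(\mathbb{Q}_p),\lambda}(\overline{L}(\lambda),\overline{L}(\lambda))=0$. I do not expect a genuine obstacle: the only points that need care are the bookkeeping of the connecting homomorphisms, so that each relevant extension class is recognized inside a one-dimensional $\mathrm{Ext}^1$ and thereby forces the required surjectivity or injectivity, and checking that the amalgamate-sum short exact sequence really has kernel $\Sigma^+(\lambda, \mathscr{L}_1, \mathscr{L}_2)$ and cokernel $\overline{L}(\lambda)$; both are routine.
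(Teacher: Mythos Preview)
Your proof is correct and essentially matches the paper's approach. The paper argues the $\Sigma^{\sharp}$ case exactly as you do (long exact sequence from $\Sigma\hookrightarrow\Sigma^{\sharp}\twoheadrightarrow\overline{L}(\lambda)$, non-split connecting map, Lemma~\ref{3lemm: ext2}, Proposition~\ref{3prop: locally algebraic extension}); for $\Sigma^{\sharp,+}$ the paper reduces to the $\Sigma^{\sharp}$ case via the other short exact sequence $\Sigma^{\sharp}\hookrightarrow\Sigma^{\sharp,+}\twoheadrightarrow C_{s_1,s_1}\oplus C_{s_2,s_2}$ and (\ref{3vanishing of simple special ext}), whereas your primary route runs the same devissage directly against $\Sigma^+\hookrightarrow\Sigma^{\sharp,+}\twoheadrightarrow\overline{L}(\lambda)$ with Lemma~\ref{3lemm: ext6.1}---but this is a cosmetic difference (and your alternative remark already points to the paper's reduction).
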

\begin{proof}
It follows from (\ref{3vanishing of simple special ext}) that we only need to show that
$$\mathrm{Ext}^1_{\mathrm{GL}_3(\mathbb{Q}_p),\lambda}\left(\overline{L}(\lambda), ~\Sigma^{\sharp}(\lambda, \mathscr{L}_1,\mathscr{L}_2)\right)=0\mbox{ and }\mathrm{dim}_E\mathrm{Ext}^2_{\mathrm{GL}_3(\mathbb{Q}_p),\lambda}\left(\overline{L}(\lambda), ~\Sigma^{\sharp}(\lambda, \mathscr{L}_1, \mathscr{L}_2)\right)=2.$$
These results follow from combining the long exact sequence
\begin{multline*}
\mathrm{Hom}_{\mathrm{GL}_3(\mathbb{Q}_p),\lambda}\left(\overline{L}(\lambda), ~\overline{L}(\lambda)\right)\hookrightarrow\mathrm{Ext}^1_{\mathrm{GL}_3(\mathbb{Q}_p),\lambda}\left(\overline{L}(\lambda), ~\Sigma(\lambda, \mathscr{L}_1,\mathscr{L}_2)\right)\\ \rightarrow\mathrm{Ext}^1_{\mathrm{GL}_3(\mathbb{Q}_p),\lambda}\left(\overline{L}(\lambda), ~\Sigma^{\sharp}(\lambda, \mathscr{L}_1,\mathscr{L}_2)\right) \rightarrow\mathrm{Ext}^1_{\mathrm{GL}_3(\mathbb{Q}_p),\lambda}\left(\overline{L}(\lambda), ~\overline{L}(\lambda)\right)\\ \rightarrow\mathrm{Ext}^2_{\mathrm{GL}_3(\mathbb{Q}_p),\lambda}\left(\overline{L}(\lambda), ~\Sigma(\lambda, \mathscr{L}_1,\mathscr{L}_2)\right)\rightarrow\mathrm{Ext}^2_{\mathrm{GL}_3(\mathbb{Q}_p),\lambda}\left(\overline{L}(\lambda), ~\Sigma^{\sharp}(\lambda, \mathscr{L}_1,\mathscr{L}_2)\right) \\ \rightarrow\mathrm{Ext}^2_{\mathrm{GL}_3(\mathbb{Q}_p),\lambda}\left(\overline{L}(\lambda), ~\overline{L}(\lambda)\right),
\end{multline*}
with Lemma~\ref{3lemm: ext2} and the equalities
$$
\begin{array}{cccc}
\mathrm{dim}_E&\mathrm{Hom}_{\mathrm{GL}_3(\mathbb{Q}_p),\lambda}(\overline{L}(\lambda), ~\overline{L}(\lambda))&=&1\\
&\mathrm{Ext}^1_{\mathrm{GL}_3(\mathbb{Q}_p),\lambda}(\overline{L}(\lambda), ~\overline{L}(\lambda))&=&0\\
&\mathrm{Ext}^2_{\mathrm{GL}_3(\mathbb{Q}_p),\lambda}(\overline{L}(\lambda), ~\overline{L}(\lambda))&=&0
\end{array}
$$
due to Proposition~\ref{3prop: locally algebraic extension}.
\end{proof}

\begin{lemm}\label{3lemm: ext11}
We have
$$\mathrm{Ext}^1_{\mathrm{GL}_3(\mathbb{Q}_p),\lambda}(\overline{L}(\lambda), ~\Sigma^{\sharp,\flat}(\lambda, \mathscr{L}_1, \mathscr{L}_2))=\mathrm{Ext}^1_{\mathrm{GL}_3(\mathbb{Q}_p),\lambda}(\overline{L}(\lambda), ~\Sigma^{\sharp,+,\flat}(\lambda, \mathscr{L}_1, \mathscr{L}_2))=0$$
and
$$\mathrm{dim}_E\mathrm{Ext}^2_{\mathrm{GL}_3(\mathbb{Q}_p),\lambda}(\overline{L}(\lambda), ~\Sigma^{\sharp,\flat}(\lambda, \mathscr{L}_1, \mathscr{L}_2))= \mathrm{dim}_E\mathrm{Ext}^2_{\mathrm{GL}_3(\mathbb{Q}_p),\lambda}(\overline{L}(\lambda), ~\Sigma^{\sharp,+,\flat}(\lambda, \mathscr{L}_1, \mathscr{L}_2))\geq 1.$$
\end{lemm}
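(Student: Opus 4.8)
The plan is to deduce the lemma from the non-flat statements of Lemma~\ref{3lemm: ext10} together with the locally algebraic $\mathrm{Ext}$-computations of Proposition~\ref{3prop: locally algebraic extension}, by means of the two evident short exact sequences: the one expressing each flat representation as a quotient by the locally algebraic Steinberg $\overline{L}(\lambda)\otimes_E\mathrm{St}_3^{\infty}$, and the one expressing $\Sigma^{\sharp,+,\flat}(\lambda, \mathscr{L}_1, \mathscr{L}_2)$ as an extension of $C_{s_1,s_1}\oplus C_{s_2,s_2}$ by $\Sigma^{\sharp,\flat}(\lambda, \mathscr{L}_1, \mathscr{L}_2)$. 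The vanishing of $\mathrm{Ext}^1_{\mathrm{GL}_3(\mathbb{Q}_p),\lambda}\left(\overline{L}(\lambda),-\right)$ on both flat representations is nothing but (\ref{3vanishing of ext prime}), already obtained from Lemma~\ref{3lemm: ext7}, (\ref{3vanishing of simple special ext}) and a devissage, so only the $\mathrm{Ext}^2$ assertion remains to be discussed.

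For the lower bound I would start from the short exact sequence
$$\overline{L}(\lambda)\otimes_E\mathrm{St}_3^{\infty}\hookrightarrow\Sigma^{\sharp}(\lambda, \mathscr{L}_1, \mathscr{L}_2)\twoheadrightarrow\Sigma^{\sharp,\flat}(\lambda, \mathscr{L}_1, \mathscr{L}_2),$$
which is legitimate because $\overline{L}(\lambda)\otimes_E\mathrm{St}_3^{\infty}$ is the socle of $\mathrm{St}_3^{\rm{an}}(\lambda)$ (Lemma~\ref{3lemm: structure of St}), hence of $\Sigma^{\sharp}(\lambda, \mathscr{L}_1, \mathscr{L}_2)$. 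Applying $\mathrm{Ext}^{\bullet}_{\mathrm{GL}_3(\mathbb{Q}_p),\lambda}\left(\overline{L}(\lambda),-\right)$ and feeding in $\mathrm{Ext}^1_{\mathrm{GL}_3(\mathbb{Q}_p),\lambda}\left(\overline{L}(\lambda), \overline{L}(\lambda)\otimes_E\mathrm{St}_3^{\infty}\right)=0$, $\mathrm{dim}_E\mathrm{Ext}^2_{\mathrm{GL}_3(\mathbb{Q}_p),\lambda}\left(\overline{L}(\lambda), \overline{L}(\lambda)\otimes_E\mathrm{St}_3^{\infty}\right)=1$ from Proposition~\ref{3prop: locally algebraic extension}, and $\mathrm{Ext}^1_{\mathrm{GL}_3(\mathbb{Q}_p),\lambda}\left(\overline{L}(\lambda), \Sigma^{\sharp}(\lambda, \mathscr{L}_1, \mathscr{L}_2)\right)=0$, $\mathrm{dim}_E\mathrm{Ext}^2_{\mathrm{GL}_3(\mathbb{Q}_p),\lambda}\left(\overline{L}(\lambda), \Sigma^{\sharp}(\lambda, \mathscr{L}_1, \mathscr{L}_2)\right)=2$ from Lemma~\ref{3lemm: ext10}, together with the already-known vanishing of $\mathrm{Ext}^1_{\mathrm{GL}_3(\mathbb{Q}_p),\lambda}\left(\overline{L}(\lambda), \Sigma^{\sharp,\flat}(\lambda, \mathscr{L}_1, \mathscr{L}_2)\right)$, the relevant stretch of the long exact sequence collapses to an exact sequence $0\to E\to E^2\to\mathrm{Ext}^2_{\mathrm{GL}_3(\mathbb{Q}_p),\lambda}\left(\overline{L}(\lambda), \Sigma^{\sharp,\flat}(\lambda, \mathscr{L}_1, \mathscr{L}_2)\right)$. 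The cokernel of the injection $E\hookrightarrow E^2$ is one dimensional and embeds into the target, which yields $\mathrm{dim}_E\mathrm{Ext}^2_{\mathrm{GL}_3(\mathbb{Q}_p),\lambda}\left(\overline{L}(\lambda), \Sigma^{\sharp,\flat}(\lambda, \mathscr{L}_1, \mathscr{L}_2)\right)\geq 1$.

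For the equality between the two $\mathrm{Ext}^2$ dimensions I would use the short exact sequence
$$\Sigma^{\sharp,\flat}(\lambda, \mathscr{L}_1, \mathscr{L}_2)\hookrightarrow\Sigma^{\sharp,+,\flat}(\lambda, \mathscr{L}_1, \mathscr{L}_2)\twoheadrightarrow C_{s_1,s_1}\oplus C_{s_2,s_2},$$
read off from the amalgamated-sum description of $\Sigma^{\sharp,+}(\lambda, \mathscr{L}_1, \mathscr{L}_2)$ over $\Sigma(\lambda, \mathscr{L}_1, \mathscr{L}_2)$ after dividing out $\overline{L}(\lambda)\otimes_E\mathrm{St}_3^{\infty}$. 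Since $\mathrm{Ext}^1_{\mathrm{GL}_3(\mathbb{Q}_p),\lambda}\left(\overline{L}(\lambda), C_{s_i,s_i}\right)=\mathrm{Ext}^2_{\mathrm{GL}_3(\mathbb{Q}_p),\lambda}\left(\overline{L}(\lambda), C_{s_i,s_i}\right)=0$ by (\ref{3vanishing of simple special ext}), the associated long exact sequence forces $\mathrm{Ext}^2_{\mathrm{GL}_3(\mathbb{Q}_p),\lambda}\left(\overline{L}(\lambda), \Sigma^{\sharp,\flat}(\lambda, \mathscr{L}_1, \mathscr{L}_2)\right)\xrightarrow{\sim}\mathrm{Ext}^2_{\mathrm{GL}_3(\mathbb{Q}_p),\lambda}\left(\overline{L}(\lambda), \Sigma^{\sharp,+,\flat}(\lambda, \mathscr{L}_1, \mathscr{L}_2)\right)$, which finishes the proof. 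I do not anticipate a genuine obstacle: the whole argument is a formal long-exact-sequence chase once the inputs of Lemmas~\ref{3lemm: ext7}, \ref{3lemm: ext10} and Proposition~\ref{3prop: locally algebraic extension} are in place; the only point requiring a little care is checking that $\overline{L}(\lambda)\otimes_E\mathrm{St}_3^{\infty}$ is indeed a subrepresentation of both $\Sigma^{\sharp}(\lambda, \mathscr{L}_1, \mathscr{L}_2)$ and $\Sigma^{\sharp,+}(\lambda, \mathscr{L}_1, \mathscr{L}_2)$ with the asserted quotients, which is immediate from the explicit pictures and Lemma~\ref{3lemm: structure of St}.
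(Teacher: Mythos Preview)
Your proposal is correct and follows essentially the same approach as the paper: both reduce the $\Sigma^{\sharp,+,\flat}$ case to the $\Sigma^{\sharp,\flat}$ case via (\ref{3vanishing of simple special ext}), and both handle the latter using the long exact sequence associated with $\overline{L}(\lambda)\otimes_E\mathrm{St}_3^{\infty}\hookrightarrow\Sigma^{\sharp}(\lambda,\mathscr{L}_1,\mathscr{L}_2)\twoheadrightarrow\Sigma^{\sharp,\flat}(\lambda,\mathscr{L}_1,\mathscr{L}_2)$ together with the inputs from Lemma~\ref{3lemm: ext10}, Proposition~\ref{3prop: locally algebraic extension}, and (\ref{3vanishing of ext prime}). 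The only cosmetic difference is the order of presentation (the paper reduces first, you reduce last), but the mathematical content is identical.
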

\begin{proof}
It follows from (\ref{3vanishing of simple special ext}) that we only need to show that
$$\mathrm{Ext}^1_{\mathrm{GL}_3(\mathbb{Q}_p),\lambda}(\overline{L}(\lambda), ~\Sigma^{\sharp,\flat}(\lambda, \mathscr{L}_1, \mathscr{L}_2))=0\mbox{ and }\mathrm{dim}_E\mathrm{Ext}^2_{\mathrm{GL}_3(\mathbb{Q}_p),\lambda}(\overline{L}(\lambda), ~\Sigma^{\sharp,\flat}(\lambda, \mathscr{L}_1, \mathscr{L}_2))\geq 1,$$
which follow from combining (\ref{3vanishing of ext prime}), Lemma~\ref{3lemm: ext10} and the long exact sequence
\begin{multline}\label{3equation 11.1}
\mathrm{Ext}^1_{\mathrm{GL}_3(\mathbb{Q}_p),\lambda}(\overline{L}(\lambda), ~\overline{L}(\lambda)\otimes_E\mathrm{St}_3^{\infty})\rightarrow\mathrm{Ext}^1_{\mathrm{GL}_3(\mathbb{Q}_p),\lambda}(\overline{L}(\lambda), ~\Sigma^{\sharp}(\lambda, \mathscr{L}_1, \mathscr{L}_2))\\ \rightarrow\mathrm{Ext}^1_{\mathrm{GL}_3(\mathbb{Q}_p),\lambda}(\overline{L}(\lambda), ~\Sigma^{\sharp,\flat}(\lambda, \mathscr{L}_1, \mathscr{L}_2)) \rightarrow\mathrm{Ext}^2_{\mathrm{GL}_3(\mathbb{Q}_p),\lambda}(\overline{L}(\lambda), ~\overline{L}(\lambda)\otimes_E\mathrm{St}_3^{\infty})\\ \rightarrow \mathrm{Ext}^2_{\mathrm{GL}_3(\mathbb{Q}_p),\lambda}(\overline{L}(\lambda), ~\Sigma^{\sharp}(\lambda, \mathscr{L}_1, \mathscr{L}_2))\rightarrow\mathrm{Ext}^2_{\mathrm{GL}_3(\mathbb{Q}_p),\lambda}(\overline{L}(\lambda), ~\Sigma^{\sharp,\flat}(\lambda, \mathscr{L}_1, \mathscr{L}_2))
\end{multline}
with the equalities
$$
\begin{array}{cccc}
&\mathrm{Ext}^1_{\mathrm{GL}_3(\mathbb{Q}_p),\lambda}(\overline{L}(\lambda), ~\overline{L}(\lambda)\otimes_E\mathrm{St}_3^{\infty})&=&0\\
\mathrm{dim}_E&\mathrm{Ext}^2_{\mathrm{GL}_3(\mathbb{Q}_p),\lambda}(\overline{L}(\lambda), ~\overline{L}(\lambda)\otimes_E\mathrm{St}_3^{\infty})&=&1\\
\end{array}
$$
due to Proposition~\ref{3prop: locally algebraic extension}.
\end{proof}

We use the shorten notation $\underline{\mathscr{L}}:=(\mathscr{L}_1, \mathscr{L}_2, \mathscr{L}_1^{\prime}, \mathscr{L}_2^{\prime})$ for a tuple of four elements in $E$. We recall from Proposition~\ref{3prop: isomorphism from cup product} an isomorphism of two dimensional spaces
\begin{equation}\label{3isomorphism for normalization}
\mathrm{Ext}^1_{\mathrm{GL}_3(\mathbb{Q}_p),\lambda}\left(\overline{L}(\lambda)\otimes_Ev_{P_i}^{\infty},~\Sigma^+(\lambda, \mathscr{L}_1, \mathscr{L}_2)\right)\xrightarrow{\sim}\mathrm{Ext}^2_{\mathrm{GL}_3(\mathbb{Q}_p),\lambda}\left(\overline{L}(\lambda),~\Sigma^+(\lambda, \mathscr{L}_1, \mathscr{L}_2)\right).
\end{equation}
We emphasize that the isomorphism (\ref{3isomorphism for normalization}) can be naturally interpreted as the cup product map
\begin{multline}\label{3cup product interpretation}
\mathrm{Ext}^1_{\mathrm{GL}_3(\mathbb{Q}_p),\lambda}\left(\overline{L}(\lambda)\otimes_Ev_{P_i}^{\infty},~\Sigma^+(\lambda, \mathscr{L}_1, \mathscr{L}_2)\right)~\cup~ \mathrm{Ext}^1_{\mathrm{GL}_3(\mathbb{Q}_p),\lambda}\left(\overline{L}(\lambda),~\overline{L}(\lambda)\otimes_Ev_{P_i}^{\infty}\right)\\
\rightarrow\mathrm{Ext}^2_{\mathrm{GL}_3(\mathbb{Q}_p),\lambda}\left(\overline{L}(\lambda),~\Sigma^+(\lambda, \mathscr{L}_1, \mathscr{L}_2)\right)
\end{multline}
where $\mathrm{Ext}^1_{\mathrm{GL}_3(\mathbb{Q}_p),\lambda}\left(\overline{L}(\lambda),~\overline{L}(\lambda)\otimes_Ev_{P_i}^{\infty}\right)$ is one dimensional by Proposition~\ref{3prop: locally algebraic extension}. We recall from the proof of Lemma~\ref{3lemm: ext6.1} that there is a canonical isomorphism
$$\mathrm{Ext}^2_{\mathrm{GL}_3(\mathbb{Q}_p),\lambda}\left(\overline{L}(\lambda),~\Sigma(\lambda, \mathscr{L}_1, \mathscr{L}_2)\right)\xrightarrow{\sim}\mathrm{Ext}^2_{\mathrm{GL}_3(\mathbb{Q}_p),\lambda}\left(\overline{L}(\lambda),~\Sigma^+(\lambda, \mathscr{L}_1, \mathscr{L}_2)\right)$$
which together with Lemma~\ref{3lemm: ext2} implies that $\mathrm{Ext}^2_{\mathrm{GL}_3(\mathbb{Q}_p),\lambda}\left(\overline{L}(\lambda),~\Sigma^+(\lambda, \mathscr{L}_1, \mathscr{L}_2)\right)$ admits a basis of the form
$$\{\kappa(b_{1,\mathrm{val}_p}\wedge b_{2,\mathrm{val}_p}), \iota_1(D_0)\},$$
and therefore the element
$$\iota_1(D_0)+\mathscr{L}\kappa(b_{1,\mathrm{val}_p}\wedge b_{2,\mathrm{val}_p})$$
generates a line in $\mathrm{Ext}^2_{\mathrm{GL}_3(\mathbb{Q}_p),\lambda}\left(\overline{L}(\lambda),~\Sigma^+(\lambda, \mathscr{L}_1, \mathscr{L}_2)\right)$ for each $\mathscr{L}\in E$. We define $\Sigma^+_i(\lambda, \mathscr{L}_1, \mathscr{L}_2, \mathscr{L}_i^{\prime})$ as the representation represent by the preimage of $$\iota_1(D_0)+\mathscr{L}_i^{\prime}\kappa(b_{1,\mathrm{val}_p}\wedge b_{2,\mathrm{val}_p})$$
in
$$\mathrm{Ext}^1_{\mathrm{GL}_3(\mathbb{Q}_p),\lambda}\left(\overline{L}(\lambda)\otimes_Ev_{P_i}^{\infty},~\Sigma^+(\lambda, \mathscr{L}_1, \mathscr{L}_2)\right)$$
via (\ref{3isomorphism for normalization}) for $i=1,2$. Then we define $\Sigma^+(\lambda,\underline{\mathscr{L}})$ as the amalgamate sum of $\Sigma^+_1(\lambda, \mathscr{L}_1, \mathscr{L}_2, \mathscr{L}_1^{\prime})$ and $\Sigma^+_2(\lambda, \mathscr{L}_1, \mathscr{L}_2, \mathscr{L}_2^{\prime})$ over $\Sigma^+(\lambda, \mathscr{L}_1, \mathscr{L}_2)$, and therefore $\Sigma^+(\lambda,\underline{\mathscr{L}})$ has the form
$$\begin{xy}
(0,0)*+{\mathrm{St}_3^{\rm{an}}(\lambda)}="a"; (20,4)*+{v_{P_1}^{\rm{an}}(\lambda)}="b"; (20,-4)*+{v_{P_2}^{\rm{an}}(\lambda)}="c"; (40,4)*+{C_{s_1,s_1}}="d"; (40,-4)*+{C_{s_2,s_2}}="e"; (60,4)*+{\overline{L}(\lambda)\otimes_Ev_{P_2}^{\infty}}="g"; (60,-4)*+{\overline{L}(\lambda)\otimes_Ev_{P_1}^{\infty}}="h";
{\ar@{-}"a";"b"}; {\ar@{-}"a";"c"}; {\ar@{-}"b";"d"}; {\ar@{-}"c";"e"}; {\ar@{-}"d";"g"}; {\ar@{-}"e";"h"};
\end{xy}.$$
We define $\Sigma^{\sharp,+}(\lambda,\underline{\mathscr{L}})$ as the amalgamate sum of $\Sigma^{+}(\lambda,\underline{\mathscr{L}})$ and $\Sigma^{\sharp}(\lambda,\mathscr{L}_1, \mathscr{L}_2)$ over $\Sigma(\lambda,\mathscr{L}_1, \mathscr{L}_2)$, and thus $\Sigma^{\sharp,+}(\lambda,\underline{\mathscr{L}})$ has the form
$$\begin{xy}
(0,0)*+{\mathrm{St}_3^{\rm{an}}(\lambda)}="a"; (20,4)*+{v_{P_1}^{\rm{an}}(\lambda)}="b"; (20,-4)*+{v_{P_2}^{\rm{an}}(\lambda)}="c"; (40,8)*+{C_{s_1,s_1}}="d"; (40,-8)*+{C_{s_2,s_2}}="e"; (40,0)*+{\overline{L}(\lambda)}="f"; (60,8)*+{\overline{L}(\lambda)\otimes_Ev_{P_2}^{\infty}}="g"; (60,-8)*+{\overline{L}(\lambda)\otimes_Ev_{P_1}^{\infty}}="h";
{\ar@{-}"a";"b"}; {\ar@{-}"a";"c"}; {\ar@{-}"b";"d"}; {\ar@{-}"c";"e"}; {\ar@{-}"b";"f"}; {\ar@{-}"c";"f"}; {\ar@{-}"d";"g"}; {\ar@{-}"e";"h"};
\end{xy}.$$
We also need the quotients
$$\Sigma^{+,\flat}(\lambda,\underline{\mathscr{L}}):=\Sigma^+(\lambda,\underline{\mathscr{L}})/\overline{L}(\lambda)\otimes_E\mathrm{St}_3^{\infty},~\Sigma^{\sharp,+,\flat}(\lambda,\underline{\mathscr{L}}):=\Sigma^{\sharp,+}(\lambda,\underline{\mathscr{L}})/\overline{L}(\lambda)\otimes_E\mathrm{St}_3^{\infty}.$$
\begin{lemm}\label{3lemm: ext13}
We have the inequality
$$\mathrm{dim}_E\mathrm{Ext}^1_{\mathrm{GL}_3(\mathbb{Q}_p),\lambda}\left(\overline{L}(\lambda), ~\Sigma^{\sharp,+,\flat}(\lambda,\underline{\mathscr{L}})\right)\leq 1.$$
\end{lemm}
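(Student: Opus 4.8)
The plan is to compare $\Sigma^{\sharp,+,\flat}(\lambda,\underline{\mathscr{L}})$ with $\Sigma^{\sharp,+,\flat}(\lambda,\mathscr{L}_1,\mathscr{L}_2)$, for which $\mathrm{Ext}^1_{\mathrm{GL}_3(\mathbb{Q}_p),\lambda}(\overline{L}(\lambda),-)$ already vanishes (Lemma~\ref{3lemm: ext11}). Unwinding the amalgams shows that $\Sigma^{\sharp,+}(\lambda,\mathscr{L}_1,\mathscr{L}_2)$ is a subrepresentation of $\Sigma^{\sharp,+}(\lambda,\underline{\mathscr{L}})$ with quotient $\overline{L}(\lambda)\otimes_E(v_{P_1}^{\infty}\oplus v_{P_2}^{\infty})$; dividing by the common socle $\overline{L}(\lambda)\otimes_E\mathrm{St}_3^{\infty}$ gives a short exact sequence
$$\Sigma^{\sharp,+,\flat}(\lambda,\mathscr{L}_1,\mathscr{L}_2)\hookrightarrow\Sigma^{\sharp,+,\flat}(\lambda,\underline{\mathscr{L}})\twoheadrightarrow\overline{L}(\lambda)\otimes_E\bigl(v_{P_1}^{\infty}\oplus v_{P_2}^{\infty}\bigr).$$
Applying $\mathrm{Hom}_{\mathrm{GL}_3(\mathbb{Q}_p),\lambda}(\overline{L}(\lambda),-)$ and using $\mathrm{Hom}_{\mathrm{GL}_3(\mathbb{Q}_p),\lambda}(\overline{L}(\lambda),\overline{L}(\lambda)\otimes_Ev_{P_i}^{\infty})=0$ together with Lemma~\ref{3lemm: ext11}, one identifies $\mathrm{Ext}^1_{\mathrm{GL}_3(\mathbb{Q}_p),\lambda}(\overline{L}(\lambda),\Sigma^{\sharp,+,\flat}(\lambda,\underline{\mathscr{L}}))$ with the kernel of the connecting map
$$\partial\colon\mathrm{Ext}^1_{\mathrm{GL}_3(\mathbb{Q}_p),\lambda}\bigl(\overline{L}(\lambda),\overline{L}(\lambda)\otimes_E(v_{P_1}^{\infty}\oplus v_{P_2}^{\infty})\bigr)\longrightarrow\mathrm{Ext}^2_{\mathrm{GL}_3(\mathbb{Q}_p),\lambda}\bigl(\overline{L}(\lambda),\Sigma^{\sharp,+,\flat}(\lambda,\mathscr{L}_1,\mathscr{L}_2)\bigr).$$
Since the source is two dimensional by Proposition~\ref{3prop: locally algebraic extension}, the lemma reduces to proving $\partial\neq 0$.

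The next step is to recognise $\partial$ as a cup product. It is Yoneda composition with the class of the sequence above, and chasing the amalgam construction shows that its restriction to the summand $\mathrm{Ext}^1_{\mathrm{GL}_3(\mathbb{Q}_p),\lambda}(\overline{L}(\lambda),\overline{L}(\lambda)\otimes_Ev_{P_i}^{\infty})$ sends the distinguished generator $c_i$ used in (\ref{3isomorphism for normalization}) and (\ref{3cup product interpretation}) to the image of $c_i\cup\bigl[\Sigma^+_i(\lambda,\mathscr{L}_1,\mathscr{L}_2,\mathscr{L}_i^{\prime})\bigr]$ under the map $g\colon\mathrm{Ext}^2_{\mathrm{GL}_3(\mathbb{Q}_p),\lambda}(\overline{L}(\lambda),\Sigma^+(\lambda,\mathscr{L}_1,\mathscr{L}_2))\to\mathrm{Ext}^2_{\mathrm{GL}_3(\mathbb{Q}_p),\lambda}(\overline{L}(\lambda),\Sigma^{\sharp,+,\flat}(\lambda,\mathscr{L}_1,\mathscr{L}_2))$ induced by $\Sigma^+(\lambda,\mathscr{L}_1,\mathscr{L}_2)\twoheadrightarrow\Sigma^{+,\flat}(\lambda,\mathscr{L}_1,\mathscr{L}_2)\hookrightarrow\Sigma^{\sharp,+,\flat}(\lambda,\mathscr{L}_1,\mathscr{L}_2)$. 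By the very construction of $\Sigma^+_i(\lambda,\mathscr{L}_1,\mathscr{L}_2,\mathscr{L}_i^{\prime})$ in the discussion preceding the statement, $c_i\cup\bigl[\Sigma^+_i(\lambda,\mathscr{L}_1,\mathscr{L}_2,\mathscr{L}_i^{\prime})\bigr]=\iota_1(D_0)+\mathscr{L}_i^{\prime}\kappa(b_{1,\mathrm{val}_p}\wedge b_{2,\mathrm{val}_p})$ inside $\mathrm{Ext}^2_{\mathrm{GL}_3(\mathbb{Q}_p),\lambda}(\overline{L}(\lambda),\Sigma^+(\lambda,\mathscr{L}_1,\mathscr{L}_2))$, which is two dimensional with basis $\{\kappa(b_{1,\mathrm{val}_p}\wedge b_{2,\mathrm{val}_p}),\iota_1(D_0)\}$ by Lemma~\ref{3lemm: ext2}. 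So it suffices to show that $g$ does not kill both of the lines $E\bigl(\iota_1(D_0)+\mathscr{L}_i^{\prime}\kappa(b_{1,\mathrm{val}_p}\wedge b_{2,\mathrm{val}_p})\bigr)$, $i=1,2$.

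To bound $\ker g$: since $\Sigma^{+,\flat}(\lambda,\mathscr{L}_1,\mathscr{L}_2)\hookrightarrow\Sigma^{\sharp,+,\flat}(\lambda,\mathscr{L}_1,\mathscr{L}_2)$ has cokernel $\overline{L}(\lambda)$ and $\mathrm{Ext}^1_{\mathrm{GL}_3(\mathbb{Q}_p),\lambda}(\overline{L}(\lambda),\overline{L}(\lambda))=0$, this inclusion is injective on $\mathrm{Ext}^2_{\mathrm{GL}_3(\mathbb{Q}_p),\lambda}(\overline{L}(\lambda),-)$, so $\ker g$ is the kernel of $\mathrm{Ext}^2_{\mathrm{GL}_3(\mathbb{Q}_p),\lambda}(\overline{L}(\lambda),\Sigma^+(\lambda,\mathscr{L}_1,\mathscr{L}_2))\to\mathrm{Ext}^2_{\mathrm{GL}_3(\mathbb{Q}_p),\lambda}(\overline{L}(\lambda),\Sigma^{+,\flat}(\lambda,\mathscr{L}_1,\mathscr{L}_2))$; by the long exact sequence for $\overline{L}(\lambda)\otimes_E\mathrm{St}_3^{\infty}\hookrightarrow\Sigma^+(\lambda,\mathscr{L}_1,\mathscr{L}_2)\twoheadrightarrow\Sigma^{+,\flat}(\lambda,\mathscr{L}_1,\mathscr{L}_2)$ this is the image of the one dimensional space $\mathrm{Ext}^2_{\mathrm{GL}_3(\mathbb{Q}_p),\lambda}(\overline{L}(\lambda),\overline{L}(\lambda)\otimes_E\mathrm{St}_3^{\infty})$, hence at most one dimensional. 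If $\mathscr{L}_1^{\prime}\neq\mathscr{L}_2^{\prime}$, the two lines are distinct, jointly span the target, and since $\dim_E\ker g\leq 1$ they cannot both be annihilated by $g$ — done. If $\mathscr{L}_1^{\prime}=\mathscr{L}_2^{\prime}$, I would finish by showing that the image of the (``purely smooth'') generator of $\mathrm{Ext}^2_{\mathrm{GL}_3(\mathbb{Q}_p),\lambda}(\overline{L}(\lambda),\overline{L}(\lambda)\otimes_E\mathrm{St}_3^{\infty})$ in $\mathrm{Ext}^2_{\mathrm{GL}_3(\mathbb{Q}_p),\lambda}(\overline{L}(\lambda),\Sigma^+(\lambda,\mathscr{L}_1,\mathscr{L}_2))$ lies on the line $E\kappa(b_{1,\mathrm{val}_p}\wedge b_{2,\mathrm{val}_p})$; since that line meets no line $E(\iota_1(D_0)+\mathscr{L}^{\prime}\kappa(b_{1,\mathrm{val}_p}\wedge b_{2,\mathrm{val}_p}))$, the required assertion follows. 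This last point — equivalently, that the $p$-adic dilogarithm class $\iota_1(D_0)$ is not destroyed by the locally algebraic extension $\mathrm{Ext}^2_{\mathrm{GL}_3(\mathbb{Q}_p),\lambda}(\overline{L}(\lambda),\overline{L}(\lambda)\otimes_E\mathrm{St}_3^{\infty})$ upon passing to $\Sigma^{+,\flat}$ — is the crux; I expect to obtain it by identifying the image of that one dimensional space inside the five dimensional $\mathrm{Ext}^2_{\mathrm{GL}_3(\mathbb{Q}_p),\lambda}(\overline{L}(\lambda),\mathrm{St}_3^{\rm{an}}(\lambda))$ of Lemma~\ref{3lemm: ext2.1} (it should be spanned by $\kappa(b_{1,\mathrm{val}_p}\wedge b_{2,\mathrm{val}_p})$, the only ``smooth wedge smooth'' class), via the description of $\kappa$ from Section~\ref{3subsection: log dilog} and the explicit cohomology computations of \cite{Schr11}. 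Everything else is routine long exact sequence chasing.
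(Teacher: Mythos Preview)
Your approach is correct in outline but genuinely different from the paper's. You both start from the same short exact sequence and the injection
\[
\mathrm{Ext}^1_{\mathrm{GL}_3(\mathbb{Q}_p),\lambda}\bigl(\overline{L}(\lambda),\Sigma^{\sharp,+,\flat}(\lambda,\underline{\mathscr{L}})\bigr)\hookrightarrow\mathrm{Ext}^1_{\mathrm{GL}_3(\mathbb{Q}_p),\lambda}\bigl(\overline{L}(\lambda),\overline{L}(\lambda)\otimes_E(v_{P_1}^{\infty}\oplus v_{P_2}^{\infty})\bigr)
\]
into a two-dimensional target. From here the paper argues by contradiction: if this injection were surjective, one could choose a class in the preimage of the summand $\mathrm{Ext}^1_{\mathrm{GL}_3(\mathbb{Q}_p),\lambda}(\overline{L}(\lambda),\overline{L}(\lambda)\otimes_Ev_{P_2}^{\infty})$, and the existence of the corresponding representation forces $\mathrm{Ext}^1_{\mathrm{GL}_3(\mathbb{Q}_p),\lambda}(W_2,\Sigma^{\sharp,+,\flat}(\lambda,\mathscr{L}_1,\mathscr{L}_2))\neq 0$. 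The paper then shows this group vanishes by d\'evissage together with Lemma~\ref{3lemm: ext6}, whose proof ultimately rests on the $\mathrm{GL}_2$ infinitesimal-character argument of Proposition~\ref{3prop: key result}. You instead analyze the connecting map $\partial$ directly via cup product and the explicit basis $\{\kappa(b_{1,\mathrm{val}_p}\wedge b_{2,\mathrm{val}_p}),\iota_1(D_0)\}$, bypassing Lemma~\ref{3lemm: ext6} entirely for this lemma. What your approach buys is a uniform treatment independent of $\underline{\mathscr{L}}$ that foreshadows the argument of Proposition~\ref{3prop: restriction}; what the paper's approach buys is that it works without knowing in advance which line of $\mathrm{Ext}^2_{\mathrm{GL}_3(\mathbb{Q}_p),\lambda}(\overline{L}(\lambda),\Sigma^+(\lambda,\mathscr{L}_1,\mathscr{L}_2))$ carries the locally algebraic class.

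Your final step---that the image of $\mathrm{Ext}^2_{\mathrm{GL}_3(\mathbb{Q}_p),\lambda}(\overline{L}(\lambda),\overline{L}(\lambda)\otimes_E\mathrm{St}_3^{\infty})$ in $\mathrm{Ext}^2_{\mathrm{GL}_3(\mathbb{Q}_p),\lambda}(\overline{L}(\lambda),\Sigma^+(\lambda,\mathscr{L}_1,\mathscr{L}_2))$ is the line $E\kappa(b_{1,\mathrm{val}_p}\wedge b_{2,\mathrm{val}_p})$---is left as an expectation rather than a proof. It is correct: the paper itself asserts exactly this identification in the proof of Proposition~\ref{3prop: restriction} (the image of (\ref{3degenerate ext2}) is ``exactly the line $E\kappa(b_{1,\mathrm{val}_p}\wedge b_{2,\mathrm{val}_p})$''), and your heuristic that $\mathrm{val}_p$ is the smooth direction in $\mathrm{Hom}_{\rm cont}(\mathbb{Q}_p^{\times},E)$ is the right one. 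But to make your argument self-contained you should justify this properly rather than deferring to \cite{Schr11}; otherwise the case $\mathscr{L}_1^{\prime}=\mathscr{L}_2^{\prime}$ remains a genuine (if small) gap in your write-up.
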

\begin{proof}
The short exact sequence
$$\Sigma^{\sharp,+,\flat}(\lambda,\mathscr{L}_1,\mathscr{L}_2)\hookrightarrow\Sigma^{\sharp,+,\flat}(\lambda,\underline{\mathscr{L}})\twoheadrightarrow \overline{L}(\lambda)\otimes_E\left(v_{P_2}^{\infty}\oplus v_{P_1}^{\infty}\right)$$
induces an injection
\begin{equation}\label{3equation 13.1}
\mathrm{Ext}^1_{\mathrm{GL}_3(\mathbb{Q}_p),\lambda}\left(\overline{L}(\lambda), ~\Sigma^{\sharp,+,\flat}(\lambda,\underline{\mathscr{L}})\right)\hookrightarrow \mathrm{Ext}^1_{\mathrm{GL}_3(\mathbb{Q}_p),\lambda}\left(\overline{L}(\lambda), ~\overline{L}(\lambda)\otimes_E\left(v_{P_2}^{\infty}\oplus v_{P_1}^{\infty}\right)\right)
\end{equation}
by Lemma~\ref{3lemm: ext11}. Note that we have
$$\mathrm{dim}_E\mathrm{Ext}^1_{\mathrm{GL}_3(\mathbb{Q}_p),\lambda}\left(\overline{L}(\lambda), ~\overline{L}(\lambda)\otimes_E\left(v_{P_2}^{\infty}\oplus v_{P_1}^{\infty}\right)\right)=2$$
by Proposition~\ref{3prop: locally algebraic extension}. Assume first that (\ref{3equation 13.1}) is a surjection, and thus we can pick a representation $W$ represented by a non-zero element lying in the preimage of $\overline{L}(\lambda)\otimes_Ev_{P_2}^{\infty}$ under (\ref{3equation 13.1}). We observe that the very existence of $W$ implies that
\begin{equation}\label{3nonvanishing of ext 13.1}
\mathrm{Ext}^1_{\mathrm{GL}_3(\mathbb{Q}_p),\lambda}\left(W_2,~\Sigma^{\sharp,+,\flat}(\lambda,\mathscr{L}_1,\mathscr{L}_2)\right)\neq 0.
\end{equation}
We define
$$\Sigma_i^{+,\flat}(\lambda,\mathscr{L}_i):=\Sigma_i^+(\lambda,\mathscr{L}_i)/\overline{L}(\lambda)\otimes_E\mathrm{St}_3^{\infty}$$
and thus we have an embedding
$$\Sigma_i^{+,\flat}(\lambda,\mathscr{L}_i)\hookrightarrow\Sigma^{\sharp,+,\flat}(\lambda,\mathscr{L}_1,\mathscr{L}_2)$$
for each $i=1,2$.
We notice that the quotient $\Sigma^{\sharp,+,\flat}(\lambda,\mathscr{L}_1,\mathscr{L}_2)/\Sigma_1^{+,\flat}(\lambda,\mathscr{L}_1)$ fits into a short exact sequence
$$\left(\begin{xy}
(0,0)*+{v_{P_2}^{\rm{an}}(\lambda)}="a"; (16,0)*+{\overline{L}(\lambda)}="b";
{\ar@{-}"a";"b"};
\end{xy}\right)\hookrightarrow\Sigma^{\sharp,+,\flat}(\lambda,\mathscr{L}_1,\mathscr{L}_2)/\Sigma_1^{+,\flat}(\lambda,\mathscr{L}_1)\twoheadrightarrow C_{s_2,s_2}.$$
Hence it remains to show the equality
\begin{equation}\label{3equation 13.3}
\mathrm{Ext}^1_{\mathrm{GL}_3(\mathbb{Q}_p),\lambda}\left(W_2,~\begin{xy}
(0,0)*+{v_{P_2}^{\rm{an}}(\lambda)}="a"; (16,0)*+{\overline{L}(\lambda)}="b";
{\ar@{-}"a";"b"};
\end{xy}\right)=0
\end{equation}
and the equality
\begin{equation}\label{3equation 13.4}
\mathrm{Ext}^1_{\mathrm{GL}_3(\mathbb{Q}_p),\lambda}\left(W_2, ~C_{s_2,s_2}\right)=0
\end{equation}
to finish the proof of
\begin{equation}\label{3equation 13.2}
\mathrm{Ext}^1_{\mathrm{GL}_3(\mathbb{Q}_p),\lambda}\left(W_2, ~\Sigma^{\sharp,+,\flat}(\lambda,\mathscr{L}_1,\mathscr{L}_2)/\Sigma_1^{+,\flat}(\lambda,\mathscr{L}_1)\right)=0.
\end{equation}
The vanishing (\ref{3equation 13.4}) follows from Lemma~\ref{3lemm: nonvanishing ext1} and part (i) of Proposition~\ref{3prop: formal devissages}.
It follows from Proposition~\ref{3prop: locally algebraic extension}, Lemma~\ref{3lemm: nonvanishing ext1} and a simple devissage that
\begin{equation}\label{3equation 13.5}
\mathrm{Ext}^1_{\mathrm{GL}_3(\mathbb{Q}_p),\lambda}\left(\overline{L}(\lambda)\otimes_Ev_{P_2}^{\infty},~C^1_{s_1,1}\right)=\mathrm{Ext}^1_{\mathrm{GL}_3(\mathbb{Q}_p),\lambda}\left(\overline{L}(\lambda),~\begin{xy}
(0,0)*+{C^1_{s_1,1}}="a"; (16,0)*+{\overline{L}(\lambda)}="b";
{\ar@{-}"a";"b"};
\end{xy}\right)=0.
\end{equation}
Hence if
$$\mathrm{Ext}^1_{\mathrm{GL}_3(\mathbb{Q}_p),\lambda}\left(W_2,~\begin{xy}
(0,0)*+{C^1_{s_1,1}}="a"; (16,0)*+{\overline{L}(\lambda)}="b";
{\ar@{-}"a";"b"};
\end{xy}\right)\neq0$$
then there exists a uniserial representation of the form
$$\begin{xy}
(0,0)*+{C^1_{s_1,1}}="a"; (16,0)*+{\overline{L}(\lambda)}="b"; (35,0)*+{\overline{L}(\lambda)\otimes_Ev_{P_2}^{\infty}}="c";
{\ar@{-}"a";"b"}; {\ar@{-}"b";"c"};
\end{xy}$$
which contradicts (\ref{3equation 13.5}) and Lemma~\ref{3lemm: special vanishing 1}. As a result, we have shown that
$$\mathrm{Ext}^1_{\mathrm{GL}_3(\mathbb{Q}_p),\lambda}\left(W_2,~\begin{xy}
(0,0)*+{C^1_{s_1,1}}="a"; (16,0)*+{\overline{L}(\lambda)}="b";
{\ar@{-}"a";"b"};
\end{xy}\right)=0$$
which together with Proposition~\ref{3prop: locally algebraic extension} and part (i) of Proposition~\ref{3prop: formal devissages} implies (\ref{3equation 13.3}) and hence (\ref{3equation 13.2}) as well concerning (\ref{3equation 13.4}). Therefore we can combine (\ref{3equation 13.2}) with Lemma~\ref{3lemm: ext6} and conclude that
$$\mathrm{Ext}^1_{\mathrm{GL}_3(\mathbb{Q}_p),\lambda}\left(W_2,~\Sigma^{\sharp,+,\flat}(\lambda,\mathscr{L}_1,\mathscr{L}_2)\right)=0$$
which contradicts (\ref{3nonvanishing of ext 13.1}). Consequently, the injection (\ref{3equation 13.1}) must be strict and we finish the proof.
\end{proof}

According to Lemma~\ref{3lemm: ext11}, the short exact sequence
$$\Sigma^{\sharp,+}(\lambda, \mathscr{L}_1, \mathscr{L}_2)\hookrightarrow\Sigma^{\sharp,+}(\lambda,\underline{\mathscr{L}})\twoheadrightarrow \overline{L}(\lambda)\otimes_E(v_{P_2}^{\infty}\oplus v_{P_1}^{\infty})$$
induces a long exact sequence:
\begin{multline}\label{3second key sequence}
\mathrm{Ext}^1_{\mathrm{GL}_3(\mathbb{Q}_p),\lambda}\left(\overline{L}(\lambda), ~\Sigma^{\sharp,+}(\lambda,\underline{\mathscr{L}})\right)\hookrightarrow\mathrm{Ext}^1_{\mathrm{GL}_3(\mathbb{Q}_p),\lambda}\left(\overline{L}(\lambda), ~\overline{L}(\lambda)\otimes_E(v_{P_2}^{\infty}\oplus v_{P_1}^{\infty})\right)\\
\xrightarrow{f}\mathrm{Ext}^2_{\mathrm{GL}_3(\mathbb{Q}_p),\lambda}\left(\overline{L}(\lambda), ~\Sigma^{\sharp,+}(\lambda,\mathscr{L}_1,\mathscr{L}_2)\right)
\end{multline}
\begin{prop}\label{3prop: restriction}
We have
$$\mathrm{dim}_E\mathrm{Ext}^1_{\mathrm{GL}_3(\mathbb{Q}_p),\lambda}(\overline{L}(\lambda), ~\Sigma^{\sharp,+,\flat}(\lambda,\underline{\mathscr{L}}))=1$$
and the image of $f$ is not contained in the image of the natural injection
$$\mathrm{Ext}^2_{\mathrm{GL}_3(\mathbb{Q}_p),\lambda}\left(\overline{L}(\lambda),~\overline{L}(\lambda)\otimes_E\mathrm{St}_3^{\infty}\right)\hookrightarrow\mathrm{Ext}^2_{\mathrm{GL}_3(\mathbb{Q}_p),\lambda}\left(\overline{L}(\lambda), ~\Sigma^{\sharp,+}(\lambda,\mathscr{L}_1,\mathscr{L}_2)\right).$$
\end{prop}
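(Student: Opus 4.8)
The plan is to derive both assertions from a comparison of the long exact sequence (\ref{3second key sequence}) with its $\flat$-analogue, together with one elementary dimension count; I will abbreviate $\mathrm{Ext}^i(-,-):=\mathrm{Ext}^i_{\mathrm{GL}_3(\mathbb{Q}_p),\lambda}(-,-)$ throughout.

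First I would apply $\mathrm{Ext}^{\bullet}(\overline{L}(\lambda),-)$ to the short exact sequence
$$\Sigma^{\sharp,+,\flat}(\lambda,\mathscr{L}_1,\mathscr{L}_2)\hookrightarrow\Sigma^{\sharp,+,\flat}(\lambda,\underline{\mathscr{L}})\twoheadrightarrow\overline{L}(\lambda)\otimes_E(v_{P_2}^{\infty}\oplus v_{P_1}^{\infty})$$
used in the proof of Lemma~\ref{3lemm: ext13}. Since $\mathrm{Ext}^1(\overline{L}(\lambda),\Sigma^{\sharp,+,\flat}(\lambda,\mathscr{L}_1,\mathscr{L}_2))=0$ by Lemma~\ref{3lemm: ext11}, and since there are no nonzero morphisms $\overline{L}(\lambda)\to\overline{L}(\lambda)\otimes_E(v_{P_2}^{\infty}\oplus v_{P_1}^{\infty})$, this produces an exact sequence
$$0\to\mathrm{Ext}^1\!\left(\overline{L}(\lambda),\Sigma^{\sharp,+,\flat}(\lambda,\underline{\mathscr{L}})\right)\to\mathrm{Ext}^1\!\left(\overline{L}(\lambda),\overline{L}(\lambda)\otimes_E(v_{P_2}^{\infty}\oplus v_{P_1}^{\infty})\right)\xrightarrow{g}\mathrm{Ext}^2\!\left(\overline{L}(\lambda),\Sigma^{\sharp,+,\flat}(\lambda,\mathscr{L}_1,\mathscr{L}_2)\right),$$
whose middle term is $2$-dimensional by Proposition~\ref{3prop: locally algebraic extension}. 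Reduction modulo $\overline{L}(\lambda)\otimes_E\mathrm{St}_3^{\infty}$ defines a morphism of short exact sequences from $\Sigma^{\sharp,+}(\lambda,\mathscr{L}_1,\mathscr{L}_2)\hookrightarrow\Sigma^{\sharp,+}(\lambda,\underline{\mathscr{L}})\twoheadrightarrow\overline{L}(\lambda)\otimes_E(v_{P_2}^{\infty}\oplus v_{P_1}^{\infty})$ to the one above, so by naturality of the connecting homomorphism one gets $g=\pi\circ f$, where $f$ is the map occurring in (\ref{3second key sequence}) and $\pi\colon\mathrm{Ext}^2(\overline{L}(\lambda),\Sigma^{\sharp,+}(\lambda,\mathscr{L}_1,\mathscr{L}_2))\to\mathrm{Ext}^2(\overline{L}(\lambda),\Sigma^{\sharp,+,\flat}(\lambda,\mathscr{L}_1,\mathscr{L}_2))$ is induced by the quotient. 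Next I would apply $\mathrm{Ext}^{\bullet}(\overline{L}(\lambda),-)$ to $\overline{L}(\lambda)\otimes_E\mathrm{St}_3^{\infty}\hookrightarrow\Sigma^{\sharp,+}(\lambda,\mathscr{L}_1,\mathscr{L}_2)\twoheadrightarrow\Sigma^{\sharp,+,\flat}(\lambda,\mathscr{L}_1,\mathscr{L}_2)$: using $\mathrm{Ext}^1(\overline{L}(\lambda),\Sigma^{\sharp,+,\flat}(\lambda,\mathscr{L}_1,\mathscr{L}_2))=0$ once more (Lemma~\ref{3lemm: ext11}), the natural map $\iota\colon\mathrm{Ext}^2(\overline{L}(\lambda),\overline{L}(\lambda)\otimes_E\mathrm{St}_3^{\infty})\to\mathrm{Ext}^2(\overline{L}(\lambda),\Sigma^{\sharp,+}(\lambda,\mathscr{L}_1,\mathscr{L}_2))$ is injective with image exactly $\ker\pi$; as its source is $1$-dimensional (Proposition~\ref{3prop: locally algebraic extension}) and its target is $2$-dimensional (Lemma~\ref{3lemm: ext10}), we conclude $\dim_E\ker\pi=1$, so the image of $\pi$ is $1$-dimensional.

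For the first equality: because $g=\pi\circ f$, the image of $g$ lies in the image of $\pi$ and hence is at most $1$-dimensional, so
$$\dim_E\mathrm{Ext}^1\!\left(\overline{L}(\lambda),\Sigma^{\sharp,+,\flat}(\lambda,\underline{\mathscr{L}})\right)=\dim_E\ker g\geq 2-1=1;$$
combined with the bound $\leq 1$ of Lemma~\ref{3lemm: ext13} this forces equality. For the second assertion, the equality just proved gives $\dim_E\ker g=1$, hence $g=\pi\circ f$ is nonzero, hence $\mathrm{image}(f)\not\subseteq\ker\pi$; and $\ker\pi$ is precisely the image of the natural injection $\mathrm{Ext}^2(\overline{L}(\lambda),\overline{L}(\lambda)\otimes_E\mathrm{St}_3^{\infty})\hookrightarrow\mathrm{Ext}^2(\overline{L}(\lambda),\Sigma^{\sharp,+}(\lambda,\mathscr{L}_1,\mathscr{L}_2))$ appearing in the statement, which is exactly the claim.

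The computations here are light; the only point I expect to require genuine care is verifying that reduction modulo $\overline{L}(\lambda)\otimes_E\mathrm{St}_3^{\infty}$ really does produce a commutative morphism of short exact sequences linking $\Sigma^{\sharp,+}(\lambda,\mathscr{L}_1,\mathscr{L}_2)\hookrightarrow\Sigma^{\sharp,+}(\lambda,\underline{\mathscr{L}})\twoheadrightarrow\overline{L}(\lambda)\otimes_E(v_{P_2}^{\infty}\oplus v_{P_1}^{\infty})$ to its $\flat$-counterpart (so that naturality of connecting maps gives $g=\pi\circ f$), and that the ``natural injection'' of the statement is the one induced by the socle inclusion $\overline{L}(\lambda)\otimes_E\mathrm{St}_3^{\infty}\hookrightarrow\Sigma^{\sharp,+}(\lambda,\mathscr{L}_1,\mathscr{L}_2)$, so that its image coincides with $\ker\pi$. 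No new $\mathrm{Ext}$-group computation is needed: everything about the dilogarithm class $\iota_1(D_0)$ and the invariants $\mathscr{L}_1,\mathscr{L}_2,\mathscr{L}_1^{\prime},\mathscr{L}_2^{\prime}$ has already been absorbed into the construction of $\Sigma^{\sharp,+}(\lambda,\underline{\mathscr{L}})$ and into Lemmas~\ref{3lemm: ext10}, \ref{3lemm: ext11} and \ref{3lemm: ext13}.
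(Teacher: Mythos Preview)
Your proof is correct and follows essentially the same approach as the paper: both set up the commutative square relating the connecting maps $f$ and $g$ via the quotient map $\pi$ (the paper calls it $k$), observe that $g=\pi\circ f$, compute that $\ker\pi$ is the one-dimensional image of $\mathrm{Ext}^2(\overline{L}(\lambda),\overline{L}(\lambda)\otimes_E\mathrm{St}_3^{\infty})$, and combine the resulting lower bound $\dim\ker g\geq 1$ with Lemma~\ref{3lemm: ext13}. The paper adds one extra paragraph giving an explicit cup-product description of $f$ restricted to each summand $\mathrm{Ext}^1(\overline{L}(\lambda),\overline{L}(\lambda)\otimes_Ev_{P_i}^{\infty})$, identifying its image with the line $E(\iota_1(D_0)+\mathscr{L}_i'\kappa(b_{1,\mathrm{val}_p}\wedge b_{2,\mathrm{val}_p}))$; this is not needed to prove the proposition as stated, but it is invoked in the proof of the next proposition (Proposition~\ref{3prop: criterion of existence}), so you should be aware that this explicit identification has to be established somewhere.
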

\begin{proof}
We use the shorten notation for the two dimensional space
$$M:=\mathrm{Ext}^1_{\mathrm{GL}_3(\mathbb{Q}_p),\lambda}\left(\overline{L}(\lambda), ~\overline{L}(\lambda)\otimes_E(v_{P_2}^{\infty}\oplus v_{P_1}^{\infty})\right).$$
We actually have the following commutative diagram
\begin{equation}
\begin{xy}
(0,0)*+{\mathrm{Ext}^1_{\mathrm{GL}_3(\mathbb{Q}_p),\lambda}\left(\overline{L}(\lambda), ~\Sigma^{\sharp,+}(\lambda,\underline{\mathscr{L}})\right)}="a"; (40,0)*+{M}="b"; (80,0)*+{\mathrm{Ext}^2_{\mathrm{GL}_3(\mathbb{Q}_p),\lambda}\left(\overline{L}(\lambda), ~\Sigma^{\sharp,+}(\lambda,\mathscr{L}_1,\mathscr{L}_2)\right)}="c"; (0,-15)*+{\mathrm{Ext}^1_{\mathrm{GL}_3(\mathbb{Q}_p),\lambda}\left(\overline{L}(\lambda), ~\Sigma^{\sharp,+,\flat}(\lambda,\underline{\mathscr{L}})\right)}="d"; (40,-15)*+{M}="e"; (80,-15)*+{\mathrm{Ext}^2_{\mathrm{GL}_3(\mathbb{Q}_p),\lambda}\left(\overline{L}(\lambda), ~\Sigma^{\sharp,+,\flat}(\lambda,\mathscr{L}_1,\mathscr{L}_2)\right)}="f"; (32,3)*+{i}="g"; (47,3)*+{f}="h"; (32,-12)*+{j}="i"; (47,-12)*+{g}="j"; (-3,-7.5)*+{h}="k"; (77,-7.5)*+{k}="l";
{\ar@{^{(}->}"a";"b"}; {\ar@{->}"b";"c"}; {\ar@{^{(}->}"a";"d"}; {\ar@{=}"b";"e"}; {\ar@{->}"c";"f"}; {\ar@{^{(}->}"d";"e"}; {\ar@{->}"e";"f"};
\end{xy}
\end{equation}
where the middle vertical map is just an equality. We know that $h$ is injective by the vanishing
$$\mathrm{Ext}^1_{\mathrm{GL}_3(\mathbb{Q}_p),\lambda}\left(\overline{L}(\lambda), ~\overline{L}(\lambda)\otimes_E\mathrm{St}_3^{\infty}\right)=0$$
and $k$ has a one dimensional image by (\ref{3equation 11.1}). Both $i$ and $j$ are injective due to (\ref{3vanishing of ext}) and (\ref{3vanishing of ext prime}). Therefore by a simple diagram chasing we have
\begin{multline*}
\mathrm{dim}_E\mathrm{Ext}^1_{\mathrm{GL}_3(\mathbb{Q}_p),\lambda}\left(\overline{L}(\lambda), ~\Sigma^{\sharp,+,\flat}(\lambda,\underline{\mathscr{L}})\right)\\
=\mathrm{dim}_EM-\mathrm{dim}_E\mathrm{Im}(g)\geq \mathrm{dim}_EM-\mathrm{dim}_E\mathrm{Im}(k)=2-1=1
\end{multline*}
by Lemma~\ref{3lemm: ext11} and therefore
$$\mathrm{dim}_E\mathrm{Ext}^1_{\mathrm{GL}_3(\mathbb{Q}_p),\lambda}\left(\overline{L}(\lambda), ~\Sigma^{\sharp,+,\flat}(\lambda,\underline{\mathscr{L}})\right)=1$$
by Lemma~\ref{3lemm: ext13}. Moreover, the map $g$ has a one dimensional image and hence $k\circ f$ has one dimensional image, meaning that the image of $f$ has dimension one or two and is not contained in $\mathrm{Ker}(k)$, which is exactly the image of
\begin{equation}\label{3degenerate ext2}
\mathrm{Ext}^2_{\mathrm{GL}_3(\mathbb{Q}_p),\lambda}\left(\overline{L}(\lambda), ~\overline{L}(\lambda)\otimes_E\mathrm{St}_3^{\infty}\right)\rightarrow\mathrm{Ext}^2_{\mathrm{GL}_3(\mathbb{Q}_p),\lambda}\left(\overline{L}(\lambda), ~\Sigma^{\sharp,+}(\lambda,\mathscr{L}_1,\mathscr{L}_2)\right)
\end{equation}
by (\ref{3equation 11.1}). In fact, the restriction of $f$ to the direct summand $\mathrm{Ext}^1_{\mathrm{GL}_3(\mathbb{Q}_p),\lambda}\left(\overline{L}(\lambda), ~\overline{L}(\lambda)\otimes_Ev_{P_i}^{\infty}\right)$ is given by the cup product map with a non-zero element in the line of $$\mathrm{Ext}^1_{\mathrm{GL}_3(\mathbb{Q}_p),\lambda}\left(\overline{L}(\lambda)\otimes_Ev_{P_i}^{\infty},~\Sigma^+(\lambda, \mathscr{L}_1, \mathscr{L}_2)\right)$$
given by the preimage of
$$E\left(\iota_1(D_0)+\mathscr{L}_i^{\prime}\kappa(b_{1,\mathrm{val}_p}\wedge b_{2,\mathrm{val}_p})\right)$$
via (\ref{3isomorphism for normalization}) by our definition of $\Sigma^{\sharp,+}(\lambda,\underline{\mathscr{L}})$ and it is obvious that $\iota_1(D_0)+\mathscr{L}_i^{\prime}\kappa(b_{1,\mathrm{val}_p}\wedge b_{2,\mathrm{val}_p})$ does not lie in the image of (\ref{3degenerate ext2}) which is exactly the line $E\kappa(b_{1,\mathrm{val}_p}\wedge b_{2,\mathrm{val}_p})$.
\end{proof}
\begin{prop}\label{3prop: criterion of existence}
We have
$$\mathrm{dim}_E\mathrm{Ext}^1_{\mathrm{GL}_3(\mathbb{Q}_p),\lambda}\left(\overline{L}(\lambda), ~\Sigma^{\sharp,+}(\lambda,\underline{\mathscr{L}})\right)=1$$
if and only if $\mathscr{L}_1^{\prime}=\mathscr{L}_2^{\prime}=\mathscr{L}_3$ for a certain $\mathscr{L}_3\in E$.
\end{prop}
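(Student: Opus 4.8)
The plan is to extract $\mathrm{Ext}^1_{\mathrm{GL}_3(\mathbb{Q}_p),\lambda}\bigl(\overline{L}(\lambda),\Sigma^{\sharp,+}(\lambda,\underline{\mathscr{L}})\bigr)$ from the long exact sequence $(\ref{3second key sequence})$. Write $M:=\mathrm{Ext}^1_{\mathrm{GL}_3(\mathbb{Q}_p),\lambda}\bigl(\overline{L}(\lambda),\overline{L}(\lambda)\otimes_E(v_{P_2}^\infty\oplus v_{P_1}^\infty)\bigr)$ and let $f\colon M\to\mathrm{Ext}^2_{\mathrm{GL}_3(\mathbb{Q}_p),\lambda}\bigl(\overline{L}(\lambda),\Sigma^{\sharp,+}(\lambda,\mathscr{L}_1,\mathscr{L}_2)\bigr)$ be the connecting map in $(\ref{3second key sequence})$. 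Since $\mathrm{Ext}^1_{\mathrm{GL}_3(\mathbb{Q}_p),\lambda}\bigl(\overline{L}(\lambda),\Sigma^{\sharp,+}(\lambda,\mathscr{L}_1,\mathscr{L}_2)\bigr)=0$ by Lemma~\ref{3lemm: ext10}, that sequence identifies $\mathrm{Ext}^1_{\mathrm{GL}_3(\mathbb{Q}_p),\lambda}\bigl(\overline{L}(\lambda),\Sigma^{\sharp,+}(\lambda,\underline{\mathscr{L}})\bigr)$ with $\ker(f)$. By Proposition~\ref{3prop: locally algebraic extension}, $M$ is the direct sum of the two one-dimensional spaces $\mathrm{Ext}^1_{\mathrm{GL}_3(\mathbb{Q}_p),\lambda}\bigl(\overline{L}(\lambda),\overline{L}(\lambda)\otimes_E v_{P_i}^\infty\bigr)$, $i=1,2$, so $\dim_E\mathrm{Ext}^1_{\mathrm{GL}_3(\mathbb{Q}_p),\lambda}\bigl(\overline{L}(\lambda),\Sigma^{\sharp,+}(\lambda,\underline{\mathscr{L}})\bigr)=2-\dim_E\mathrm{Im}(f)$, and the proposition reduces to the claim that $\dim_E\mathrm{Im}(f)=1$ if and only if $\mathscr{L}_1'=\mathscr{L}_2'$.

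For this I would invoke the explicit description of $f$ obtained in the proof of Proposition~\ref{3prop: restriction}: the restriction of $f$ to the summand $\mathrm{Ext}^1_{\mathrm{GL}_3(\mathbb{Q}_p),\lambda}\bigl(\overline{L}(\lambda),\overline{L}(\lambda)\otimes_E v_{P_i}^\infty\bigr)$ is the cup product with a nonzero element of the line in $\mathrm{Ext}^1_{\mathrm{GL}_3(\mathbb{Q}_p),\lambda}\bigl(\overline{L}(\lambda)\otimes_E v_{P_i}^\infty,\Sigma^+(\lambda,\mathscr{L}_1,\mathscr{L}_2)\bigr)$ which, by the very definition of $\Sigma^+_i(\lambda,\mathscr{L}_1,\mathscr{L}_2,\mathscr{L}_i')$, corresponds under $(\ref{3isomorphism for normalization})$ to the line $E\bigl(\iota_1(D_0)+\mathscr{L}_i'\kappa(b_{1,\mathrm{val}_p}\wedge b_{2,\mathrm{val}_p})\bigr)$. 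Because $(\ref{3isomorphism for normalization})$ is the cup-product isomorphism $(\ref{3cup product interpretation})$, it follows that the image of $f$ restricted to this summand is precisely the line $E\bigl(\iota_1(D_0)+\mathscr{L}_i'\kappa(b_{1,\mathrm{val}_p}\wedge b_{2,\mathrm{val}_p})\bigr)$, viewed inside $\mathrm{Ext}^2_{\mathrm{GL}_3(\mathbb{Q}_p),\lambda}\bigl(\overline{L}(\lambda),\Sigma^{\sharp,+}(\lambda,\mathscr{L}_1,\mathscr{L}_2)\bigr)$ through the isomorphism $\mathrm{Ext}^2_{\mathrm{GL}_3(\mathbb{Q}_p),\lambda}\bigl(\overline{L}(\lambda),\Sigma^+(\lambda,\mathscr{L}_1,\mathscr{L}_2)\bigr)\xrightarrow{\sim}\mathrm{Ext}^2_{\mathrm{GL}_3(\mathbb{Q}_p),\lambda}\bigl(\overline{L}(\lambda),\Sigma^{\sharp,+}(\lambda,\mathscr{L}_1,\mathscr{L}_2)\bigr)$ induced by $\Sigma^+(\lambda,\mathscr{L}_1,\mathscr{L}_2)\hookrightarrow\Sigma^{\sharp,+}(\lambda,\mathscr{L}_1,\mathscr{L}_2)\twoheadrightarrow\overline{L}(\lambda)$ together with $\mathrm{Ext}^1_{\mathrm{GL}_3(\mathbb{Q}_p),\lambda}(\overline{L}(\lambda),\overline{L}(\lambda))=\mathrm{Ext}^2_{\mathrm{GL}_3(\mathbb{Q}_p),\lambda}(\overline{L}(\lambda),\overline{L}(\lambda))=0$ (Proposition~\ref{3prop: locally algebraic extension}).

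The conclusion is then pure linear algebra inside $\mathrm{Ext}^2_{\mathrm{GL}_3(\mathbb{Q}_p),\lambda}\bigl(\overline{L}(\lambda),\Sigma(\lambda,\mathscr{L}_1,\mathscr{L}_2)\bigr)$, which by Lemma~\ref{3lemm: ext2} (and the canonical identifications of the preceding paragraph, as in the proof of Lemma~\ref{3lemm: ext6.1}) is two-dimensional with basis $\{\kappa(b_{1,\mathrm{val}_p}\wedge b_{2,\mathrm{val}_p}),\iota_1(D_0)\}$. The image of $f$ is the span of the two lines $E\bigl(\iota_1(D_0)+\mathscr{L}_i'\kappa(b_{1,\mathrm{val}_p}\wedge b_{2,\mathrm{val}_p})\bigr)$, $i=1,2$; since $\iota_1(D_0)\neq 0$ each of these is a genuine line, so $\dim_E\mathrm{Im}(f)\geq 1$ always, while the difference of the two generators equals $(\mathscr{L}_1'-\mathscr{L}_2')\kappa(b_{1,\mathrm{val}_p}\wedge b_{2,\mathrm{val}_p})$. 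Hence the two lines coincide, i.e. $\dim_E\mathrm{Im}(f)=1$, exactly when $\mathscr{L}_1'=\mathscr{L}_2'$, in which case one sets $\mathscr{L}_3:=\mathscr{L}_1'=\mathscr{L}_2'$; when $\mathscr{L}_1'\neq\mathscr{L}_2'$ one gets $\dim_E\mathrm{Im}(f)=2$ and $\mathrm{Ext}^1_{\mathrm{GL}_3(\mathbb{Q}_p),\lambda}\bigl(\overline{L}(\lambda),\Sigma^{\sharp,+}(\lambda,\underline{\mathscr{L}})\bigr)=0$. The only delicate point is the bookkeeping of the several canonical identifications of $\mathrm{Ext}^2$-groups and the compatibility of the cup-product description of $f$ on the two summands, but all of this is already contained in Proposition~\ref{3prop: restriction}, Lemma~\ref{3lemm: ext6.1} and Lemma~\ref{3lemm: ext2}, so no new computation is required.
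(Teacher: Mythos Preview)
Your proof is correct and follows essentially the same approach as the paper: both use the exact sequence (\ref{3second key sequence}) to identify $\mathrm{Ext}^1_{\mathrm{GL}_3(\mathbb{Q}_p),\lambda}\bigl(\overline{L}(\lambda),\Sigma^{\sharp,+}(\lambda,\underline{\mathscr{L}})\bigr)$ with $\ker(f)$, invoke the cup-product description of $f$ from Proposition~\ref{3prop: restriction} to see that the image of each summand is the line $E\bigl(\iota_1(D_0)+\mathscr{L}_i'\kappa(b_{1,\mathrm{val}_p}\wedge b_{2,\mathrm{val}_p})\bigr)$, and conclude by the elementary linear algebra that these two lines coincide exactly when $\mathscr{L}_1'=\mathscr{L}_2'$. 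Your write-up is slightly more explicit about the bookkeeping of the $\mathrm{Ext}^2$-identifications, but no new idea is involved.
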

\begin{proof}
It follows from (\ref{3second key sequence}) that
$$\mathrm{Ext}^1_{\mathrm{GL}_3(\mathbb{Q}_p),\lambda}\left(\overline{L}(\lambda), ~\Sigma^{\sharp,+}(\lambda,\underline{\mathscr{L}})\right)=1$$
if and only if the image of $f$ is one dimensional. Then we notice by the interpretation of $f$ as cup product in Proposition~\ref{3prop: restriction} that the image of
$$\mathrm{Ext}^1_{\mathrm{GL}_3(\mathbb{Q}_p),\lambda}\left(\overline{L}(\lambda), ~\overline{L}(\lambda)\otimes_Ev_{P_i}^{\infty}\right)$$
under $f$ is the line of
$$\mathrm{Ext}^2_{\mathrm{GL}_3(\mathbb{Q}_p),\lambda}\left(\overline{L}(\lambda), ~\Sigma^{\sharp,+}(\lambda,\mathscr{L}_1,\mathscr{L}_2)\right)$$
generated by
$$\iota_1(D_0)+\mathscr{L}_i^{\prime}\kappa(b_{1,\mathrm{val}_p}\wedge b_{2,\mathrm{val}_p})$$
for each $i=1,2$. Therefore the image of $f$ is one dimensional if and only if the two lines for $i=1,2$ coincide which means that $$\mathscr{L}_1^{\prime}=\mathscr{L}_2^{\prime}=\mathscr{L}_3$$
for a certain $\mathscr{L}_3\in E$.
\end{proof}
We use the notation $\Sigma^{\sharp,+}(\lambda, \mathscr{L}_1, \mathscr{L}_2, \mathscr{L}_3)$ for the representation $\Sigma^{\sharp,+}(\lambda,\underline{\mathscr{L}})$ when
$$\underline{\mathscr{L}}=(\mathscr{L}_1, \mathscr{L}_2, \mathscr{L}_3, \mathscr{L}_3).$$
We define $\Sigma^{\rm{min}}(\lambda, \mathscr{L}_1, \mathscr{L}_2, \mathscr{L}_3)$ as the unique representation (up to isomorphism) given by a non-zero element in $\mathrm{Ext}^1_{\mathrm{GL}_3(\mathbb{Q}_p),\lambda}\left(\overline{L}(\lambda), ~\Sigma^{\sharp,+}(\lambda, \mathscr{L}_1, \mathscr{L}_2, \mathscr{L}_3)\right)$ according to Proposition~\ref{3prop: criterion of existence}. Therefore by our definition $\Sigma^{\rm{min}}(\lambda, \mathscr{L}_1, \mathscr{L}_2, \mathscr{L}_3)$ has the following form
\begin{equation}\label{3naive picture for min}
\begin{xy}
(0,0)*+{\mathrm{St}_3^{\rm{an}}(\lambda)}="a"; (20,6)*+{v_{P_1}^{\rm{an}}(\lambda)}="b"; (20,-6)*+{v_{P_2}^{\rm{an}}(\lambda)}="c"; (40,8)*+{C_{s_1,s_1}}="d"; (40,-8)*+{C_{s_2,s_2}}="e"; (40,0)*+{\overline{L}(\lambda)}="f"; (60,8)*+{\overline{L}(\lambda)\otimes_Ev_{P_2}^{\infty}}="g"; (60,-8)*+{\overline{L}(\lambda)\otimes_Ev_{P_1}^{\infty}}="h"; (80,0)*+{\overline{L}(\lambda)}="i";
{\ar@{-}"a";"b"}; {\ar@{-}"a";"c"}; {\ar@{-}"b";"d"}; {\ar@{-}"c";"e"}; {\ar@{-}"b";"f"}; {\ar@{-}"c";"f"}; {\ar@{-}"d";"g"}; {\ar@{-}"e";"h"}; {\ar@{-}"g";"i"}; {\ar@{-}"h";"i"}; {\ar@{--}"b";"i"}; {\ar@{--}"c";"i"};
\end{xy}.
\end{equation}
It follows from Proposition~\ref{3prop: locally algebraic extension}, Proposition~\ref{3prop: criterion of existence}, the definition of $\Sigma^{\rm{min}}(\lambda, \mathscr{L}_1, \mathscr{L}_2, \mathscr{L}_3)$ and an easy devissage that
\begin{equation}\label{3vanishing of ext1 for min}
\mathrm{Ext}^1_{\mathrm{GL}_3(\mathbb{Q}_p),\lambda}\left(\overline{L}(\lambda), ~\Sigma^{\rm{min}}(\lambda, \mathscr{L}_1, \mathscr{L}_2, \mathscr{L}_3)\right)=0.
\end{equation}
\begin{rema}\label{3rema: noncanonical}
The definition of the invariant $\mathscr{L}_3\in E$ of $\Sigma^{\rm{min}}(\lambda, \mathscr{L}_1, \mathscr{L}_2, \mathscr{L}_3)$ obviously relies on the choice of a special $p$\text{-}adic dilogarithm function $D_0$ which is non-canonical. This is similar to the definition of the invariants $\mathscr{L}_1, \mathscr{L}_2\in E$ which relies on the choice of a special $p$\text{-}adic logarithm function $\mathrm{log}_0$.
\end{rema}
\begin{lemm}\label{3lemm: preparation for global}
We have
$$\mathrm{dim}_E\mathrm{Ext}^1_{\mathrm{GL}_3(\mathbb{Q}_p),\lambda}\left(W_0, ~\Sigma^{\sharp,+}(\lambda, \mathscr{L}_1, \mathscr{L}_2)\right)=2.$$
Moreover, if $V$ is a locally analytic representation determined by a line
$$M_V\subsetneq \mathrm{Ext}^1_{\mathrm{GL}_3(\mathbb{Q}_p),\lambda}\left(W_0, ~\Sigma^{\sharp,+}(\lambda, \mathscr{L}_1, \mathscr{L}_2)\right)$$
satisfying
$$M_V\neq \mathrm{Ext}^1_{\mathrm{GL}_3(\mathbb{Q}_p),\lambda}\left(W_0, ~\overline{L}(\lambda)\otimes_E\mathrm{St}_3^{\infty}\right)\hookrightarrow \mathrm{Ext}^1_{\mathrm{GL}_3(\mathbb{Q}_p),\lambda}\left(W_0, ~\Sigma^{\sharp,+}(\lambda, \mathscr{L}_1, \mathscr{L}_2)\right),$$
then there exists a unique $\mathscr{L}_3\in E$ such that
$$V\cong \Sigma^{\rm{min}}(\lambda, \mathscr{L}_1, \mathscr{L}_2, \mathscr{L}_3).$$
\end{lemm}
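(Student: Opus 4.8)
The overall strategy is to repeat the pattern used for $\Sigma^{\sharp,+}(\lambda,\underline{\mathscr L})$ in Proposition~\ref{3prop: restriction} and Proposition~\ref{3prop: criterion of existence}, but now starting from $W_0$ rather than from $\overline{L}(\lambda)$. First I would establish the dimension count $\mathrm{dim}_E\mathrm{Ext}^1_{\mathrm{GL}_3(\mathbb{Q}_p),\lambda}(W_0,\Sigma^{\sharp,+}(\lambda,\mathscr L_1,\mathscr L_2))=2$. The natural way is to combine the short exact sequence $\overline{L}(\lambda)\otimes_E\mathrm{St}_3^{\infty}\hookrightarrow\Sigma^{\sharp,+}(\lambda,\mathscr L_1,\mathscr L_2)\twoheadrightarrow\Sigma^{\sharp,+,\flat}(\lambda,\mathscr L_1,\mathscr L_2)$ with the long exact sequence for $\mathrm{Ext}^\bullet_{\mathrm{GL}_3(\mathbb{Q}_p),\lambda}(W_0,-)$. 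One knows $\mathrm{dim}_E\mathrm{Ext}^1_{\mathrm{GL}_3(\mathbb{Q}_p),\lambda}(W_0,\overline{L}(\lambda)\otimes_E\mathrm{St}_3^{\infty})=1$ and $\mathrm{Ext}^2_{\mathrm{GL}_3(\mathbb{Q}_p),\lambda}(W_0,\overline{L}(\lambda)\otimes_E\mathrm{St}_3^{\infty})=0$ from Lemma~\ref{3lemm: vanishing locally algebraic main}, so it remains to compute $\mathrm{dim}_E\mathrm{Ext}^1_{\mathrm{GL}_3(\mathbb{Q}_p),\lambda}(W_0,\Sigma^{\sharp,+,\flat}(\lambda,\mathscr L_1,\mathscr L_2))$. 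For this I would devissage through $\Sigma^{\sharp,+,\flat}(\lambda,\mathscr L_1,\mathscr L_2)$, feeding in Lemma~\ref{3lemm: ext9} (the bound by $2$ for the relevant subquotients $\begin{xy}(0,0)*+{v_{P_i}^{\rm{an}}(\lambda)}="a"; (18,0)*+{C_{s_i,s_i}}="b";{\ar@{-}"a";"b"};\end{xy}$), Lemma~\ref{3lemm: nonvanishing ext1}, and the vanishing results of Lemma~\ref{3lemm: ext6} and Lemma~\ref{3lemm: special vanishing 1}; the structure picture for $\Sigma^{\sharp,+,\flat}$ lets one check that only one direction contributes, giving dimension $1$ and hence the total $2$.

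Next I would identify the two-dimensional space $\mathrm{Ext}^1_{\mathrm{GL}_3(\mathbb{Q}_p),\lambda}(W_0,\Sigma^{\sharp,+}(\lambda,\mathscr L_1,\mathscr L_2))$ explicitly via the pushout/pullback maps coming from $W_0$. Using the short exact sequence $\overline{L}(\lambda)\otimes_E(v_{P_1}^{\infty}\oplus v_{P_2}^{\infty})\hookrightarrow W_0\twoheadrightarrow\overline{L}(\lambda)$ and Proposition~\ref{3prop: isomorphism from cup product} (the cup-product isomorphism with $\mathrm{Ext}^1_{\mathrm{GL}_3(\mathbb{Q}_p),\lambda}(\overline{L}(\lambda),\overline{L}(\lambda)\otimes_Ev_{P_{3-i}}^{\infty})$), one gets a commutative diagram relating $\mathrm{Ext}^1(W_0,-)$ to $\mathrm{Ext}^1(\overline{L}(\lambda),-)$ and $\mathrm{Ext}^2(\overline{L}(\lambda),-)$; this is precisely the mechanism already used to prove Proposition~\ref{3prop: main dim}. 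The point is that an element $W\in\mathrm{Ext}^1_{\mathrm{GL}_3(\mathbb{Q}_p),\lambda}(W_0,\Sigma^{\sharp,+}(\lambda,\mathscr L_1,\mathscr L_2))$ pulled back along $\overline{L}(\lambda)\otimes_Ev_{P_i}^{\infty}\hookrightarrow W_0$ and pushed out along the quotient $\Sigma^{\sharp,+}\twoheadrightarrow\Sigma^+$ records, via the cup-product isomorphism (\ref{3isomorphism for normalization}), an element of $\mathrm{Ext}^2_{\mathrm{GL}_3(\mathbb{Q}_p),\lambda}(\overline{L}(\lambda),\Sigma^+(\lambda,\mathscr L_1,\mathscr L_2))$; since $\mathrm{Ext}^2_{\mathrm{GL}_3(\mathbb{Q}_p),\lambda}(\overline{L}(\lambda),\Sigma^+)$ has basis $\{\kappa(b_{1,\mathrm{val}_p}\wedge b_{2,\mathrm{val}_p}),\iota_1(D_0)\}$ by Lemma~\ref{3lemm: ext2} (via the canonical isomorphism with $\mathrm{Ext}^2(\overline{L}(\lambda),\Sigma)$ from the proof of Lemma~\ref{3lemm: ext6.1}), such an element is of the form $\iota_1(D_0)+\mathscr L_i^{\prime}\kappa(b_{1,\mathrm{val}_p}\wedge b_{2,\mathrm{val}_p})$ for $i=1,2$, except when it lies on the degenerate line $E\kappa(b_{1,\mathrm{val}_p}\wedge b_{2,\mathrm{val}_p})$ — which is exactly the image of $\mathrm{Ext}^1_{\mathrm{GL}_3(\mathbb{Q}_p),\lambda}(W_0,\overline{L}(\lambda)\otimes_E\mathrm{St}_3^{\infty})$, i.e.\ $M_V$ equal to the excluded line. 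So for $M_V$ not the excluded line, both the $i=1$ and $i=2$ pushouts are nondegenerate, and the compatibility forcing $V$ to come from a genuine amalgamate sum over $\Sigma^+(\lambda,\mathscr L_1,\mathscr L_2)$ (not just a formal extension of $W_0$) means the two lines must coincide, i.e.\ $\mathscr L_1^{\prime}=\mathscr L_2^{\prime}$; calling this common value $\mathscr L_3$ and comparing with the definition of $\Sigma^{\sharp,+}(\lambda,\mathscr L_1,\mathscr L_2,\mathscr L_3)$ and then of $\Sigma^{\rm{min}}$ (the unique nonzero element of $\mathrm{Ext}^1_{\mathrm{GL}_3(\mathbb{Q}_p),\lambda}(\overline{L}(\lambda),\Sigma^{\sharp,+}(\lambda,\mathscr L_1,\mathscr L_2,\mathscr L_3))$ by Proposition~\ref{3prop: criterion of existence}) gives $V\cong\Sigma^{\rm{min}}(\lambda,\mathscr L_1,\mathscr L_2,\mathscr L_3)$. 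Uniqueness of $\mathscr L_3$ follows since different $\mathscr L_3$ give nonisomorphic $\Sigma^{\rm{min}}$, which can be read off the invariants $\iota_1(D_0)+\mathscr L_3\kappa(b_{1,\mathrm{val}_p}\wedge b_{2,\mathrm{val}_p})$.

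The main obstacle I anticipate is the bookkeeping in the second paragraph: one must carefully track how a class in $\mathrm{Ext}^1(W_0,\Sigma^{\sharp,+}(\lambda,\mathscr L_1,\mathscr L_2))$ restricts along the two inclusions $\overline{L}(\lambda)\otimes_Ev_{P_i}^{\infty}\hookrightarrow W_0$ and how these restrictions interact with the quotient $\Sigma^{\sharp,+}(\lambda,\mathscr L_1,\mathscr L_2)\twoheadrightarrow\Sigma^+(\lambda,\mathscr L_1,\mathscr L_2)$, in order to argue that the reconstructed representation is in fact built as an amalgamate sum over $\Sigma^+$ and therefore forces $\mathscr L_1^{\prime}=\mathscr L_2^{\prime}$. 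This is the analogue of the diagram-chasing in Proposition~\ref{3prop: restriction}, but now with $W_0$ in place of $\overline{L}(\lambda)$; the extra composition factor $\overline{L}(\lambda)\otimes_E(v_{P_1}^\infty\oplus v_{P_2}^\infty)$ in $W_0$ means one has to simultaneously handle two cup-product maps and verify that the condition $M_V\neq\mathrm{Ext}^1(W_0,\overline{L}(\lambda)\otimes_E\mathrm{St}_3^\infty)$ is exactly what rules out the degenerate case. Once the diagram is set up correctly, every vanishing needed is already available from Section~\ref{3section: computation} and Section~\ref{3section: technial min}, so the remainder is formal.
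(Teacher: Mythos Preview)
Your overall strategy is correct but differs from the paper's in both halves, and your second paragraph contains a real slip.

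\textbf{Dimension count.} You propose to filter the target via $\overline{L}(\lambda)\otimes_E\mathrm{St}_3^{\infty}\hookrightarrow\Sigma^{\sharp,+}\twoheadrightarrow\Sigma^{\sharp,+,\flat}$ and then devissage $\Sigma^{\sharp,+,\flat}$. The paper instead filters the source via $\overline{L}(\lambda)\otimes_E(v_{P_1}^{\infty}\oplus v_{P_2}^{\infty})\hookrightarrow W_0\twoheadrightarrow\overline{L}(\lambda)$ and sets up a two-row commutative diagram comparing $V^+=\Sigma^+(\lambda,\mathscr{L}_1,\mathscr{L}_2)$ with $V^{\sharp,+}=\Sigma^{\sharp,+}(\lambda,\mathscr{L}_1,\mathscr{L}_2)$. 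The key step is showing the vertical map $h_2:\mathrm{Ext}^1(V_1^{\rm{alg}}\oplus V_2^{\rm{alg}},V^+)\to\mathrm{Ext}^1(V_1^{\rm{alg}}\oplus V_2^{\rm{alg}},V^{\sharp,+})$ is an isomorphism (a short argument ruling out a uniserial of type $C^1_{s_i,1}\mbox{---}\overline{L}(\lambda)\mbox{---}V_i^{\rm{alg}}$ via Lemma~\ref{3lemm: special vanishing 1}); then $\dim\mathrm{Ext}^1(W_0,V^{\sharp,+})=4-2=2$ drops out of already-computed numbers (Lemma~\ref{3lemm: ext5}, Lemma~\ref{3lemm: ext6.1}). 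Your route could work, but your sketch of how to extract $\dim\mathrm{Ext}^1(W_0,\Sigma^{\sharp,+,\flat})=1$ is vague; none of the lemmas you cite gives it directly.

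\textbf{Identification with $\Sigma^{\rm{min}}$.} Here you have a genuine error: there is no quotient $\Sigma^{\sharp,+}\twoheadrightarrow\Sigma^+$; rather $\Sigma^+\hookrightarrow\Sigma^{\sharp,+}$ with cokernel $\overline{L}(\lambda)$. So ``pushed out along the quotient $\Sigma^{\sharp,+}\twoheadrightarrow\Sigma^+$'' does not make sense. What the paper does is use the isomorphism $h_2$ above to transport from $V^{\sharp,+}$ back to $V^+$. More importantly, your reason for $\mathscr{L}_1'=\mathscr{L}_2'$ (``compatibility forcing $V$ to come from a genuine amalgamate sum'') is not the actual mechanism. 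The paper's argument is that $g_2:\mathrm{Ext}^1(W_0,V^{\sharp,+})\hookrightarrow\mathrm{Ext}^1(V_1^{\rm{alg}}\oplus V_2^{\rm{alg}},V^{\sharp,+})$ identifies $\mathrm{Ext}^1(W_0,V^{\sharp,+})$ with the \emph{kernel} of the connecting map $k_2$ to $\mathrm{Ext}^2(\overline{L}(\lambda),V^{\sharp,+})$, while the restriction of $k_2$ to each summand $\mathrm{Ext}^1(V_i^{\rm{alg}},V^{\sharp,+})$ is an isomorphism (cup product). Hence a class $(a_1,a_2)$ coming from $W_0$ satisfies $k_2(a_1)+k_2(a_2)=0$, so $a_1,a_2$ determine the same line in $\mathrm{Ext}^2(\overline{L}(\lambda),V^{\sharp,+})$; when $M_V$ is not the excluded line this common line is $E(\iota_1(D_0)+\mathscr{L}_3\kappa(b_{1,\mathrm{val}_p}\wedge b_{2,\mathrm{val}_p}))$ for a unique $\mathscr{L}_3$, and comparing with the construction of $\Sigma^{\rm{min}}$ via Proposition~\ref{3prop: criterion of existence} yields $V\cong\Sigma^{\rm{min}}(\lambda,\mathscr{L}_1,\mathscr{L}_2,\mathscr{L}_3)$. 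Once you correct the direction of the $\Sigma^+\leftrightarrow\Sigma^{\sharp,+}$ passage and replace the amalgamate-sum heuristic with this kernel-of-connecting-map argument, your plan coincides with the paper's.
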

\begin{proof}
The short exact sequence
$$\overline{L}(\lambda)\otimes_E\left(v_{P_1}^{\infty}\oplus v_{P_2}^{\infty}\right)\hookrightarrow W_0\twoheadrightarrow \overline{L}(\lambda)$$
together with Lemma~\ref{3lemm: ext10} induce a commutative diagram
\begin{equation}
\begin{xy}
(0,0)*+{\mathrm{Ext}^1_{\mathrm{GL}_3(\mathbb{Q}_p),\lambda}\left(W_0, ~V^+\right)}="a"; (55,0)*+{\mathrm{Ext}^1_{\mathrm{GL}_3(\mathbb{Q}_p),\lambda}\left(V_1^{\rm{alg}}\oplus V_2^{\rm{alg}}, ~V^+\right)}="b"; (110,0)*+{\mathrm{Ext}^2_{\mathrm{GL}_3(\mathbb{Q}_p),\lambda}\left(\overline{L}(\lambda), ~V^+\right)}="e";
(0,-12)*+{\mathrm{Ext}^1_{\mathrm{GL}_3(\mathbb{Q}_p),\lambda}\left(W_0, ~V^{\sharp, +}\right)}="c"; (55,-12)*+{\mathrm{Ext}^1_{\mathrm{GL}_3(\mathbb{Q}_p),\lambda}\left(V_1^{\rm{alg}}\oplus V_2^{\rm{alg}}, ~V^{\sharp, +}\right)}="d"; (110,-12)*+{\mathrm{Ext}^2_{\mathrm{GL}_3(\mathbb{Q}_p),\lambda}\left(\overline{L}(\lambda), ~V^{\sharp, +}\right)}="f"; (23,2)*+{g_1}; (23,-10)*+{g_2}; (-3,-6)*+{h_1}; (52,-6)*+{h_2}; (107,-6)*+{h_3}; (85,2)*+{k_1}; (85,-10)*+{k_2};
{\ar@{->}"a";"b"}; {\ar@{->}"a";"c"}; {\ar@{^{(}->}"c";"d"}; {\ar@{^{(}->}"b";"d"}; {\ar@{->>}"b";"e"}; {\ar@{->}"d";"f"}; {\ar@{->}"e";"f"};
\end{xy}
\end{equation}
where we use shorten notation $V_i^{\rm{alg}}$ for $\overline{L}(\lambda)\otimes_Ev_{P_i}^{\infty}$, $V^+$ for $\Sigma^+(\lambda, \mathscr{L}_1, \mathscr{L}_2)$ and $V^{\sharp, +}$ for $\Sigma^{\sharp,+}(\lambda, \mathscr{L}_1, \mathscr{L}_2)$ to save space. We observe that $g_2$ is an injection due to Lemma~\ref{3lemm: ext10}, $k_1$ is a surjection by the proof of Proposition~\ref{3prop: main dim}, $h_3$ is an isomorphism by Proposition~\ref{3prop: locally algebraic extension} and an easy devissage and finally $h_2$ is an injection. Assume that $h_2$ is not surjective, then any representation given by a non-zero element in $\mathrm{Coker}(h_2)$ admits a quotient of the form
\begin{equation}\label{3special uniserial}
\begin{xy}
(0,0)*+{C^1_{s_i,1}}="a"; (16,0)*+{\overline{L}(\lambda)}="b"; (32,0)*+{V_i^{\rm{alg}}}="c";
{\ar@{-}"a";"b"}; {\ar@{-}"b";"c"};
\end{xy}
\end{equation}
for $i=1$ or $2$ due to Lemma~\ref{3lemm: nonvanishing ext1}. However, it follows from Lemma~\ref{3lemm: special vanishing 1} that there is no uniserial representation of the form (\ref{3special uniserial}), which implies that $h_2$ is indeed an isomorphism, and hence $k_2$ is surjective by a diagram chasing. Therefore we conclude that
\begin{multline*}
\mathrm{dim}_E\mathrm{Ext}^1_{\mathrm{GL}_3(\mathbb{Q}_p),\lambda}\left(W_0, ~V^{\sharp, +}\right)\\
=\mathrm{dim}_E\mathrm{Ext}^1_{\mathrm{GL}_3(\mathbb{Q}_p),\lambda}\left(V_1^{\rm{alg}}\oplus V_2^{\rm{alg}}, ~V^{\sharp, +}\right)-\mathrm{dim}_E\mathrm{Ext}^2_{\mathrm{GL}_3(\mathbb{Q}_p),\lambda}\left(\overline{L}(\lambda), ~V^{\sharp, +}\right)\\
=\mathrm{dim}_E\mathrm{Ext}^1_{\mathrm{GL}_3(\mathbb{Q}_p),\lambda}\left(V_1^{\rm{alg}}\oplus V_2^{\rm{alg}}, ~V^+\right)-\mathrm{dim}_E\mathrm{Ext}^2_{\mathrm{GL}_3(\mathbb{Q}_p),\lambda}\left(\overline{L}(\lambda), ~V^+\right)=4-2=2.
\end{multline*}
The final claim on the existence of a unique $\mathscr{L}_3$ follows from Proposition~\ref{3prop: criterion of existence}, our definition of $\Sigma^{\rm{min}}(\lambda, \mathscr{L}_1, \mathscr{L}_2, \mathscr{L}_3)$ and the observation that the restriction of $k_2$ to the direct summand $$\mathrm{Ext}^1_{\mathrm{GL}_3(\mathbb{Q}_p),\lambda}\left(V_i^{\rm{alg}}, ~V^{\sharp, +}\right)$$
induces isomorphisms
$$\mathrm{Ext}^1_{\mathrm{GL}_3(\mathbb{Q}_p),\lambda}\left(V_i^{\rm{alg}}, ~V^{\sharp, +}\right)\xrightarrow{\sim}\mathrm{Ext}^2_{\mathrm{GL}_3(\mathbb{Q}_p),\lambda}\left(\overline{L}(\lambda), ~V^{\sharp, +}\right)$$
which can be interpreted as the cup product morphism with the one dimensional space
$$\mathrm{Ext}^1_{\mathrm{GL}_3(\mathbb{Q}_p),\lambda}\left(\overline{L}(\lambda),~V_i^{\rm{alg}}\right)$$
for $i=1,2$.
\end{proof}
We define $\Sigma^{\sharp, +}_i(\lambda, \mathscr{L}_1, \mathscr{L}_2, \mathscr{L}_3)$ as the subrepresentation of $\Sigma^{\sharp, +}(\lambda, \mathscr{L}_1, \mathscr{L}_2, \mathscr{L}_3)$ that fits into the short exact sequence
$$\Sigma^{\sharp, +}_i(\lambda, \mathscr{L}_1, \mathscr{L}_2, \mathscr{L}_3)\hookrightarrow\Sigma^{\sharp, +}(\lambda, \mathscr{L}_1, \mathscr{L}_2, \mathscr{L}_3)\twoheadrightarrow \overline{L}(\lambda)\otimes_Ev_{P_i}^{\infty}$$
for each $i=1,2$. We use the notation $\mathcal{D}_i(\lambda, \mathscr{L}_1, \mathscr{L}_2, \mathscr{L}_3)^{\prime}$ for the object in the derived category $\mathcal{D}^b\left(\mathrm{Mod}_{D(\mathrm{GL}_3(\mathbb{Q}_p), E)}\right)$ associated with the complex
$$\left[W_{3-i}^{\prime}\longrightarrow\Sigma^{\sharp, +}_i(\lambda, \mathscr{L}_1, \mathscr{L}_2, \mathscr{L}_3)^{\prime}\right].$$
\begin{prop}\label{3prop: relation with derived object}
The object
$$\mathcal{D}_i(\lambda, \mathscr{L}_1, \mathscr{L}_2, \mathscr{L}_3)^{\prime}\in \mathcal{D}^b\left(\mathrm{Mod}_{D(\mathrm{GL}_3(\mathbb{Q}_p), E)}\right)$$
fits into the distinguished triangle
\begin{equation}\label{3distinguished triangle}
\overline{L}(\lambda)^{\prime}\longrightarrow \mathcal{D}_i(\lambda, \mathscr{L}_1, \mathscr{L}_2, \mathscr{L}_3)^{\prime}\longrightarrow \Sigma^{\sharp, +}(\lambda, \mathscr{L}_1, \mathscr{L}_2)^{\prime}[-1]\xrightarrow{~+1}
\end{equation}
for each $i=1,2$. Moreover, the element in
\begin{multline}\label{3canonical translation}
\mathrm{Ext}^2_{\mathrm{GL}_3(\mathbb{Q}_p),\lambda}\left(\overline{L}(\lambda),~\Sigma(\lambda, \mathscr{L}_1, \mathscr{L}_2)\right)\\
\xrightarrow{\sim}\mathrm{Ext}^2_{\mathrm{GL}_3(\mathbb{Q}_p),\lambda}\left(\overline{L}(\lambda),~\Sigma^{\sharp, +}(\lambda, \mathscr{L}_1, \mathscr{L}_2)\right)\\
\cong \mathrm{Hom}_{\mathcal{D}^b\left(\mathrm{Mod}_{D(\mathrm{GL}_3(\mathbb{Q}_p), E)}\right)}\left(\Sigma^{\sharp, +}(\lambda, \mathscr{L}_1, \mathscr{L}_2)^{\prime}[-2],~\overline{L}(\lambda)^{\prime}\right)
\end{multline}
associated with the distinguished triangle (\ref{3distinguished triangle}) is
\begin{equation}\label{3explicit element}
\iota_1(D_0)+\mathscr{L}_3\kappa(b_{1,\mathrm{val}_p}\wedge b_{2,\mathrm{val}_p}).
\end{equation}
\end{prop}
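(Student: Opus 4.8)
The plan is to realise $\mathcal{D}_i(\lambda, \mathscr{L}_1, \mathscr{L}_2, \mathscr{L}_3)^{\prime}$ as the canonical truncation distinguished triangle of its defining two-term complex, and then to compute the connecting morphism of that triangle inside $\mathrm{Ext}^2_{\mathrm{GL}_3(\mathbb{Q}_p),\lambda}(\overline{L}(\lambda),~\Sigma^{\sharp,+}(\lambda,\mathscr{L}_1,\mathscr{L}_2))$. First I would analyse the differential $\phi^{\prime}$ of the complex $[W_{3-i}^{\prime}\to\Sigma^{\sharp,+}_i(\lambda,\mathscr{L}_1,\mathscr{L}_2,\mathscr{L}_3)^{\prime}]$: it is nonzero, hence the transpose of a nonzero morphism $\phi\colon\Sigma^{\sharp,+}_i(\lambda,\mathscr{L}_1,\mathscr{L}_2,\mathscr{L}_3)\to W_{3-i}$, and $\mathrm{Hom}_{\mathrm{GL}_3(\mathbb{Q}_p)}(\Sigma^{\sharp,+}_i(\lambda,\mathscr{L}_1,\mathscr{L}_2,\mathscr{L}_3),~W_{3-i})$ is one dimensional (a nonzero morphism has image a nonzero subrepresentation of $W_{3-i}$, hence $\overline{L}(\lambda)\otimes_Ev_{P_{3-i}}^{\infty}$ or $W_{3-i}$; the latter is impossible since $\overline{L}(\lambda)\otimes_Ev_{P_{3-i}}^{\infty}$ lies in the cosocle of $\Sigma^{\sharp,+}_i(\lambda,\mathscr{L}_1,\mathscr{L}_2,\mathscr{L}_3)$, as recalled in Theorem~\ref{3theo: complex introduction}, but is the socle, not the cosocle, of $W_{3-i}$). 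Thus, up to scalar, $\phi$ is the composite $\Sigma^{\sharp,+}_i(\lambda,\mathscr{L}_1,\mathscr{L}_2,\mathscr{L}_3)\twoheadrightarrow\overline{L}(\lambda)\otimes_Ev_{P_{3-i}}^{\infty}\hookrightarrow W_{3-i}$. Unwinding the definitions in Section~\ref{3section: exact sequence min}, $\Sigma^{\sharp,+}_i(\lambda,\mathscr{L}_1,\mathscr{L}_2,\mathscr{L}_3)$ is the pushout of the extension $0\to\Sigma^+(\lambda,\mathscr{L}_1,\mathscr{L}_2)\to\Sigma^+_{3-i}(\lambda,\mathscr{L}_1,\mathscr{L}_2,\mathscr{L}_3)\to\overline{L}(\lambda)\otimes_Ev_{P_{3-i}}^{\infty}\to0$ along $\Sigma^+(\lambda,\mathscr{L}_1,\mathscr{L}_2)\hookrightarrow\Sigma^{\sharp,+}(\lambda,\mathscr{L}_1,\mathscr{L}_2)$; hence $\ker(\phi)\cong\Sigma^{\sharp,+}(\lambda,\mathscr{L}_1,\mathscr{L}_2)$, $\mathrm{im}(\phi)=\overline{L}(\lambda)\otimes_Ev_{P_{3-i}}^{\infty}$ and $\mathrm{coker}(\phi)=\overline{L}(\lambda)$. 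Dualising, $\phi^{\prime}$ has kernel $\overline{L}(\lambda)^{\prime}$ and cokernel $\Sigma^{\sharp,+}(\lambda,\mathscr{L}_1,\mathscr{L}_2)^{\prime}$, placed in two consecutive cohomological degrees, so the canonical truncation triangle of $\mathcal{D}_i(\lambda,\mathscr{L}_1,\mathscr{L}_2,\mathscr{L}_3)^{\prime}$ is, after the obvious shift, the triangle (\ref{3distinguished triangle}).

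Next I would compute the connecting morphism of (\ref{3distinguished triangle}), which lies in $\mathrm{Ext}^2_{\mathcal{D}(\mathrm{GL}_3(\mathbb{Q}_p),E)}(\Sigma^{\sharp,+}(\lambda,\mathscr{L}_1,\mathscr{L}_2)^{\prime},~\overline{L}(\lambda)^{\prime})=\mathrm{Ext}^2_{\mathrm{GL}_3(\mathbb{Q}_p),\lambda}(\overline{L}(\lambda),~\Sigma^{\sharp,+}(\lambda,\mathscr{L}_1,\mathscr{L}_2))$. By the standard homological formalism underlying Proposition~3.2 of \cite{Schr11}, it is the Yoneda class of the four-term exact sequence obtained by splicing the short exact sequences $0\to\ker(\phi^{\prime})\to W_{3-i}^{\prime}\to\mathrm{im}(\phi^{\prime})\to0$ and $0\to\mathrm{im}(\phi^{\prime})\to\Sigma^{\sharp,+}_i(\lambda,\mathscr{L}_1,\mathscr{L}_2,\mathscr{L}_3)^{\prime}\to\mathrm{coker}(\phi^{\prime})\to0$. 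Since $\mathrm{im}(\phi^{\prime})=(\overline{L}(\lambda)\otimes_Ev_{P_{3-i}}^{\infty})^{\prime}$, dualising shows these are the transposes of the defining non-split extension $0\to\overline{L}(\lambda)\otimes_Ev_{P_{3-i}}^{\infty}\to W_{3-i}\to\overline{L}(\lambda)\to0$ and of the extension $0\to\Sigma^{\sharp,+}(\lambda,\mathscr{L}_1,\mathscr{L}_2)\to\Sigma^{\sharp,+}_i(\lambda,\mathscr{L}_1,\mathscr{L}_2,\mathscr{L}_3)\to\overline{L}(\lambda)\otimes_Ev_{P_{3-i}}^{\infty}\to0$. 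Therefore the connecting morphism of (\ref{3distinguished triangle}) is the Yoneda (cup) product of the class of $W_{3-i}$ in $\mathrm{Ext}^1_{\mathrm{GL}_3(\mathbb{Q}_p),\lambda}(\overline{L}(\lambda),~\overline{L}(\lambda)\otimes_Ev_{P_{3-i}}^{\infty})$ with the class of the second extension in $\mathrm{Ext}^1_{\mathrm{GL}_3(\mathbb{Q}_p),\lambda}(\overline{L}(\lambda)\otimes_Ev_{P_{3-i}}^{\infty},~\Sigma^{\sharp,+}(\lambda,\mathscr{L}_1,\mathscr{L}_2))$; and by the pushout description recalled above, the latter class is the image of the defining class of $\Sigma^+_{3-i}(\lambda,\mathscr{L}_1,\mathscr{L}_2,\mathscr{L}_3)$ under the map induced by $\Sigma^+(\lambda,\mathscr{L}_1,\mathscr{L}_2)\hookrightarrow\Sigma^{\sharp,+}(\lambda,\mathscr{L}_1,\mathscr{L}_2)$.

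Finally I would compare with the construction of $\Sigma^+_{3-i}(\lambda,\mathscr{L}_1,\mathscr{L}_2,\mathscr{L}_3)$ given before Lemma~\ref{3lemm: ext13}: its defining class in $\mathrm{Ext}^1_{\mathrm{GL}_3(\mathbb{Q}_p),\lambda}(\overline{L}(\lambda)\otimes_Ev_{P_{3-i}}^{\infty},~\Sigma^+(\lambda,\mathscr{L}_1,\mathscr{L}_2))$ is, via the cup-product isomorphism (\ref{3isomorphism for normalization}), that is (\ref{3second key isomorphism}) of Proposition~\ref{3prop: isomorphism from cup product}, the preimage of $\iota_1(D_0)+\mathscr{L}_3\kappa(b_{1,\mathrm{val}_p}\wedge b_{2,\mathrm{val}_p})$. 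As the Yoneda product with the class of $W_{3-i}$ is functorial in the second variable, it commutes with the pushforward along $\Sigma^+(\lambda,\mathscr{L}_1,\mathscr{L}_2)\hookrightarrow\Sigma^{\sharp,+}(\lambda,\mathscr{L}_1,\mathscr{L}_2)$, and the induced map $\mathrm{Ext}^2_{\mathrm{GL}_3(\mathbb{Q}_p),\lambda}(\overline{L}(\lambda),~\Sigma^+(\lambda,\mathscr{L}_1,\mathscr{L}_2))\to\mathrm{Ext}^2_{\mathrm{GL}_3(\mathbb{Q}_p),\lambda}(\overline{L}(\lambda),~\Sigma^{\sharp,+}(\lambda,\mathscr{L}_1,\mathscr{L}_2))$ is the canonical isomorphism of Lemma~\ref{3lemm: ext6.1} and Remark~\ref{3rema: canonical isomorphism}, compatible with the identification (\ref{3canonical translation}) already fixed through Lemma~\ref{3lemm: ext2}. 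Chasing these identifications, the connecting morphism of (\ref{3distinguished triangle}) is exactly the element (\ref{3explicit element}), which completes the proof.

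The hard part will be the homological-algebra comparison in the middle paragraph: checking precisely that the connecting morphism of the truncation triangle of a two-term complex is the spliced Yoneda $2$-extension, and that this abstract Yoneda product agrees with the ``cup product'' normalisation used throughout Section~\ref{3section: exact sequence min}, all the while keeping exact track of the several canonical isomorphisms on $\mathrm{Ext}^2_{\mathrm{GL}_3(\mathbb{Q}_p),\lambda}(\overline{L}(\lambda),-)$ and of any signs --- since both $\mathcal{D}_i(\lambda,\mathscr{L}_1,\mathscr{L}_2,\mathscr{L}_3)^{\prime}$ and the element (\ref{3explicit element}) are only pinned down up to those identifications.
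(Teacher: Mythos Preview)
Your approach is correct and genuinely different from the paper's. The paper does not compute the truncation triangle of $\mathcal{D}_i$ directly; instead it first invokes Proposition~3.2 of \cite{Schr11} to construct an abstract object $\mathcal{D}(\lambda,\mathscr{L}_1,\mathscr{L}_2,\mathscr{L}_3)'$ fitting into the triangle (\ref{3distinguished triangle}) with connecting class (\ref{3explicit element}) \emph{by fiat}, rotates it via $\mathbf{TR2}$, and then applies the octahedral axiom $\mathbf{TR4}$ to the factorisation $\Sigma^{\sharp,+}(\lambda,\mathscr{L}_1,\mathscr{L}_2)'[-1]\to(\overline{L}(\lambda)\otimes_Ev_{P_{3-i}}^{\infty})'\to\overline{L}(\lambda)'[1]$ (which exists precisely because (\ref{3isomorphism for normalization}) is a cup-product isomorphism) to produce a triangle $W_{3-i}'\to\Sigma^{\sharp,+}_i(\lambda,\mathscr{L}_1,\mathscr{L}_2,\mathscr{L}_3)'\to\mathcal{D}'\xrightarrow{+1}$, whence $\mathcal{D}_i'\cong\mathcal{D}'$ by the uniqueness part of Proposition~3.2 of \cite{Schr11}. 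Your route is the converse: you read off the triangle from the cohomology of the two-term complex and then identify the connecting morphism as a spliced Yoneda $2$-extension, which you recognise as the cup product defining $\Sigma^+_{3-i}(\lambda,\mathscr{L}_1,\mathscr{L}_2,\mathscr{L}_3)$. The paper's argument is slicker in that it never unwinds the connecting morphism and delegates everything to the uniqueness statement in \cite{Schr11}; your argument is more self-contained and avoids $\mathbf{TR4}$, at the cost of the bookkeeping you flag in your final paragraph (which is indeed routine but tedious: the compatibility of Yoneda splicing with cup product is standard, and the relevant $\mathrm{Ext}^2$ identifications are already pinned down in Lemma~\ref{3lemm: ext6.1} and (\ref{3isomorphism for normalization})).
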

\begin{proof}
It follows from Proposition~3.2 of \cite{Schr11} that there is a unique (up to isomorphism) object
$$\mathcal{D}(\lambda, \mathscr{L}_1, \mathscr{L}_2, \mathscr{L}_3)^{\prime}\in \mathcal{D}^b\left(\mathrm{Mod}_{D(\mathrm{GL}_3(\mathbb{Q}_p), E)}\right)$$
that fits into a distinguished triangle
\begin{equation}\label{3formal triangle}
\overline{L}(\lambda)^{\prime}\longrightarrow \mathcal{D}(\lambda, \mathscr{L}_1, \mathscr{L}_2, \mathscr{L}_3)\longrightarrow \Sigma^{\sharp, +}(\lambda, \mathscr{L}_1, \mathscr{L}_2)[-1]\xrightarrow{~+1}
\end{equation}
such that the element in $\mathrm{Ext}^2_{\mathrm{GL}_3(\mathbb{Q}_p),\lambda}\left(\overline{L}(\lambda),~\Sigma(\lambda, \mathscr{L}_1, \mathscr{L}_2)\right)$ associated with (\ref{3formal triangle}) via (\ref{3canonical translation}) is (\ref{3explicit element}). It follows from $\mathbf{TR 2}$ ( cf. Section~10.2.1 of \cite{Wei94}) that
\begin{equation}\label{3formal triangle 2}
\mathcal{D}(\lambda, \mathscr{L}_1, \mathscr{L}_2, \mathscr{L}_3)^{\prime}\longrightarrow\Sigma^{\sharp, +}(\lambda, \mathscr{L}_1, \mathscr{L}_2)^{\prime}[-1]\longrightarrow\overline{L}(\lambda)^{\prime}[1]\xrightarrow{~+1}
\end{equation}
is another distinguished triangle. The isomorphism (\ref{3isomorphism for normalization}) can be reinterpreted as the isomorphism
\begin{multline}
\mathrm{Hom}_{\mathcal{D}^b\left(\mathrm{Mod}_{D(\mathrm{GL}_3(\mathbb{Q}_p), E)}\right)}\left(\Sigma^{\sharp, +}(\lambda, \mathscr{L}_1, \mathscr{L}_2)^{\prime}[-1],~\left(\overline{L}(\lambda)\otimes_Ev_{P_{3-i}}^{\infty}\right)^{\prime}\right)\\
\xrightarrow{\sim}\mathrm{Hom}_{\mathcal{D}^b\left(\mathrm{Mod}_{D(\mathrm{GL}_3(\mathbb{Q}_p), E)}\right)}\left(\Sigma^{\sharp, +}(\lambda, \mathscr{L}_1, \mathscr{L}_2)^{\prime}[-1],~\overline{L}(\lambda)^{\prime}[1]\right)
\end{multline}
induced by the composition with $\mathrm{Hom}_{\mathcal{D}^b\left(\mathrm{Mod}_{D(\mathrm{GL}_3(\mathbb{Q}_p), E)}\right)}\left(\left(\overline{L}(\lambda)\otimes_Ev_{P_{3-i}}^{\infty}\right)^{\prime},~\overline{L}(\lambda)^{\prime}[1]\right)$. As a result, each morphism
$$\Sigma^{\sharp, +}(\lambda, \mathscr{L}_1, \mathscr{L}_2)^{\prime}[-1]\rightarrow\overline{L}(\lambda)^{\prime}[1]$$
uniquely factors through a composition
$$\Sigma^{\sharp, +}(\lambda, \mathscr{L}_1, \mathscr{L}_2)^{\prime}[-1]\rightarrow\left(\overline{L}(\lambda)\otimes_Ev_{P_{3-i}}^{\infty}\right)^{\prime}\rightarrow\overline{L}(\lambda)^{\prime}[1] $$
which induces a commutative diagram with four distinguished triangles
\begin{equation}
\begin{xy}
(0,0)*+{\Sigma^{\sharp, +}(\lambda, \mathscr{L}_1, \mathscr{L}_2)^{\prime}[-1]}="a"; (20,30)*+{\left(\overline{L}(\lambda)\otimes_Ev_{P_{3-i}}^{\infty}\right)^{\prime}}="b"; (40,20)*+{\overline{L}(\lambda)^{\prime}[1]}="c"; (40,60)*+{\Sigma^{\sharp, +}_i(\lambda, \mathscr{L}_1, \mathscr{L}_2, \mathscr{L}_3)^{\prime}}="d"; (60,30)*+{\mathcal{D}(\lambda, \mathscr{L}_1, \mathscr{L}_2, \mathscr{L}_3)^{\prime}}="e"; (80,0)*+{W_{3-i}^{\prime}[1]}="f"; (50,75)*+{}="g"; (80,40)*+{}="h"; (100,-10)*+{}="i"; (95,-22.5)*+{}="j"; (42.5,70)*+{+1}="k"; (70,40)*+{+1}="l"; (92,-2)*+{+1}="m"; (90,-10)*+{+1}="n";
{\ar@{->}"a";"b"}; {\ar@{->}"a";"c"}; {\ar@{->}"b";"c"}; {\ar@{->}"b";"d"}; {\ar@{->}"c";"e"}; {\ar@{->}"c";"f"}; {\ar@{->}"d";"e"}; {\ar@{->}"e";"f"}; {\ar@{->}"d";"g"}; {\ar@{->}"e";"h"}; {\ar@{->}"f";"i"}; {\ar@{->}"f";"j"};
\end{xy}
\end{equation}
by $\mathbf{TR 4}$. Hence we deduce that
$$\Sigma^{\sharp, +}_i(\lambda, \mathscr{L}_1, \mathscr{L}_2, \mathscr{L}_3)^{\prime}\longrightarrow \mathcal{D}(\lambda, \mathscr{L}_1, \mathscr{L}_2, \mathscr{L}_3)^{\prime}\longrightarrow W_{3-i}^{\prime}[1]\xrightarrow{~+1}$$
or equivalently
$$W_{3-i}^{\prime}\longrightarrow\Sigma^{\sharp, +}_i(\lambda, \mathscr{L}_1, \mathscr{L}_2, \mathscr{L}_3)^{\prime}\longrightarrow \mathcal{D}(\lambda, \mathscr{L}_1, \mathscr{L}_2, \mathscr{L}_3)^{\prime}\xrightarrow{~+1}$$
is a distinguished triangle. On the other hand, it is easy to see that $\mathcal{D}_i(\lambda, \mathscr{L}_1, \mathscr{L}_2, \mathscr{L}_3)^{\prime}$ fits into the distinguished triangle
$$W_{3-i}^{\prime}\longrightarrow\Sigma^{\sharp, +}_i(\lambda, \mathscr{L}_1, \mathscr{L}_2, \mathscr{L}_3)^{\prime}\longrightarrow \mathcal{D}_i(\lambda, \mathscr{L}_1, \mathscr{L}_2, \mathscr{L}_3)^{\prime}\xrightarrow{~+1}$$
and thus we conclude that
$$\mathcal{D}_i(\lambda, \mathscr{L}_1, \mathscr{L}_2, \mathscr{L}_3)^{\prime}\cong\mathcal{D}(\lambda, \mathscr{L}_1, \mathscr{L}_2, \mathscr{L}_3)^{\prime}\in\mathcal{D}^b\left(\mathrm{Mod}_{D(\mathrm{GL}_3(\mathbb{Q}_p), E)}\right)$$
by the uniqueness in Proposition~3.2 of \cite{Schr11}. Hence we finish the proof.
\end{proof}
We define $\Sigma^{\rm{min},-}(\lambda, \mathscr{L}_1, \mathscr{L}_2, \mathscr{L}_3)$ as the unique subrepresentation of $\Sigma^{\rm{min}}(\lambda, \mathscr{L}_1, \mathscr{L}_2, \mathscr{L}_3)$ of the form
$$\begin{xy}
(0,0)*+{\mathrm{St}_3^{\rm{an}}(\lambda)}="a"; (20,4)*+{v_{P_1}^{\rm{an}}(\lambda)}="b"; (20,-4)*+{v_{P_2}^{\rm{an}}(\lambda)}="c"; (40,4)*+{C_{s_1,s_1}}="d"; (40,-4)*+{C_{s_2,s_2}}="e"; (60,4)*+{\overline{L}(\lambda)\otimes_Ev_{P_2}^{\infty}}="g"; (60,-4)*+{\overline{L}(\lambda)\otimes_Ev_{P_1}^{\infty}}="h";
{\ar@{-}"a";"b"}; {\ar@{-}"a";"c"}; {\ar@{-}"b";"d"}; {\ar@{-}"c";"e"}; {\ar@{-}"d";"g"}; {\ar@{-}"e";"h"};
\end{xy}$$
that fits into the short exact sequence
\begin{equation}\label{3short exact sequence definition}
\Sigma^{\rm{min},-}(\lambda, \mathscr{L}_1, \mathscr{L}_2, \mathscr{L}_3)\hookrightarrow\Sigma^{\rm{min}}(\lambda, \mathscr{L}_1, \mathscr{L}_2, \mathscr{L}_3)\twoheadrightarrow\overline{L}(\lambda)^{\oplus2}
\end{equation}
and $\Sigma^{\rm{min},--}(\lambda, \mathscr{L}_1, \mathscr{L}_2, \mathscr{L}_3)$ as the unique subrepresentation of $\Sigma^{\rm{min},-}(\lambda, \mathscr{L}_1, \mathscr{L}_2, \mathscr{L}_3)$ of the form
$$
\begin{xy}
(0,0)*+{\mathrm{St}_3^{\rm{an}}(\lambda)}="a"; (30,4)*+{\overline{L}(\lambda)\otimes_Ev_{P_1}^{\infty}}="b"; (30,-4)*+{\overline{L}(\lambda)\otimes_Ev_{P_2}^{\infty}}="c"; (60,4)*+{C_{s_1,s_1}}="d"; (60,-4)*+{C_{s_2,s_2}}="e";
{\ar@{-}"a";"b"}; {\ar@{-}"a";"c"}; {\ar@{-}"b";"d"}; {\ar@{-}"c";"e"};
\end{xy}
$$
that fits into the short exact sequence
\begin{equation}\label{3short exact sequence for min}
\Sigma^{\rm{min},--}(\lambda, \mathscr{L}_1, \mathscr{L}_2, \mathscr{L}_3)\hookrightarrow\Sigma^{\rm{min},-}(\lambda, \mathscr{L}_1, \mathscr{L}_2, \mathscr{L}_3)\twoheadrightarrow \left(\overline{L}(\lambda)\otimes_Ev_{P_1}^{\infty}\right)\oplus \left(\overline{L}(\lambda)\otimes_Ev_{P_2}^{\infty}\right)\oplus C^1_{s_2,1}\oplus C^1_{s_1,1}.
\end{equation}
The short exact sequence (\ref{3short exact sequence definition}) induces a long exact sequence
\begin{multline*}
\mathrm{Hom}_{\mathrm{GL}_3(\mathbb{Q}_p),\lambda}\left(\overline{L}(\lambda),~\overline{L}(\lambda)^{\oplus 2}\right)\hookrightarrow\mathrm{Ext}^1_{\mathrm{GL}_3(\mathbb{Q}_p),\lambda}\left(\overline{L}(\lambda),~\Sigma^{\rm{min},-}(\lambda, \mathscr{L}_1, \mathscr{L}_2, \mathscr{L}_3)\right)\\
\rightarrow\mathrm{Ext}^1_{\mathrm{GL}_3(\mathbb{Q}_p),\lambda}\left(\overline{L}(\lambda),~\Sigma^{\rm{min}}(\lambda, \mathscr{L}_1, \mathscr{L}_2, \mathscr{L}_3)\right)\rightarrow\mathrm{Ext}^1_{\mathrm{GL}_3(\mathbb{Q}_p),\lambda}\left(\overline{L}(\lambda),~\overline{L}(\lambda)^{\oplus 2}\right)
\end{multline*}
which easily implies that
$$\mathrm{dim}_E\mathrm{Ext}^1_{\mathrm{GL}_3(\mathbb{Q}_p),\lambda}\left(\overline{L}(\lambda),~\Sigma^{\rm{min},-}(\lambda, \mathscr{L}_1, \mathscr{L}_2, \mathscr{L}_3)\right)=2$$
by Proposition~\ref{3prop: locally algebraic extension} and (\ref{3vanishing of ext1 for min}). On the other hand, we notice that $\Sigma^{\rm{min},--}(\lambda, \mathscr{L}_1, \mathscr{L}_2, \mathscr{L}_3)$ admits a filtration whose only reducible graded piece is
$$\begin{xy}
(0,0)*+{C^1_{s_i,1}}="a"; (20,0)*+{\overline{L}(\lambda)\otimes_Ev_{P_i}^{\infty}}="b";
{\ar@{-}"a";"b"};
\end{xy}$$
and
$$\mathrm{Ext}^1_{\mathrm{GL}_3(\mathbb{Q}_p),\lambda}\left(\overline{L}(\lambda),~V\right)=0$$
for all graded pieces $V$ of such a filtration by Lemma~\ref{3lemm: nonvanishing ext1} and Lemma~\ref{3lemm: special vanishing 1}, which implies that
$$\mathrm{Ext}^1_{\mathrm{GL}_3(\mathbb{Q}_p),\lambda}\left(\overline{L}(\lambda),~\Sigma^{\rm{min},--}(\lambda, \mathscr{L}_1, \mathscr{L}_2, \mathscr{L}_3)\right)=0.$$
Therefore (\ref{3short exact sequence for min}) induces an injection of a two dimensional space into a four dimensional space
\begin{multline}
M^{\rm{min}}:=\mathrm{Ext}^1_{\mathrm{GL}_3(\mathbb{Q}_p),\lambda}\left(\overline{L}(\lambda),~\Sigma^{\rm{min},-}(\lambda, \mathscr{L}_1, \mathscr{L}_2, \mathscr{L}_3)\right)\\
\hookrightarrow M^+:=\mathrm{Ext}^1_{\mathrm{GL}_3(\mathbb{Q}_p),\lambda}\left(\overline{L}(\lambda),~\left(\overline{L}(\lambda)\otimes_Ev_{P_1}^{\infty}\right)\oplus \left(\overline{L}(\lambda)\otimes_Ev_{P_2}^{\infty}\right)\oplus C^1_{s_2,1}\oplus C^1_{s_1,1}\right).
\end{multline}
It follows from the definition of $\Sigma^{\rm{min},-}(\lambda, \mathscr{L}_1, \mathscr{L}_2, \mathscr{L}_3)$ that we have embeddings
$$\Sigma(\lambda, \mathscr{L}_1, \mathscr{L}_2)\hookrightarrow\Sigma^+(\lambda, \mathscr{L}_1, \mathscr{L}_2)\hookrightarrow\Sigma^{\rm{min},-}(\lambda, \mathscr{L}_1, \mathscr{L}_2, \mathscr{L}_3)$$
which allow us to identify
$$M^-:=\mathrm{Ext}^1_{\mathrm{GL}_3(\mathbb{Q}_p),\lambda}\left(\overline{L}(\lambda),~\Sigma(\lambda, \mathscr{L}_1, \mathscr{L}_2)\right)$$
with a line in $M^{\rm{min}}$. We use the number $1,2,3,4$ to index the four representations $\overline{L}(\lambda)\otimes_Ev_{P_1}^{\infty}$, $\overline{L}(\lambda)\otimes_Ev_{P_2}^{\infty}$, $C^1_{s_2,1}$ and $C^1_{s_1,1}$ respectively, and we use the notation $M_I$ for each subset $I\subseteq \{1,2,3,4\}$ to denote the corresponding subspace of $M^+$ with dimension the cardinality of $I$. For example, $M_{\{1,2\}}$ denotes the two dimensional subspace $$\mathrm{Ext}^1_{\mathrm{GL}_3(\mathbb{Q}_p),\lambda}\left(\overline{L}(\lambda),~\left(\overline{L}(\lambda)\otimes_Ev_{P_1}^{\infty}\right)\oplus \left(\overline{L}(\lambda)\otimes_Ev_{P_2}^{\infty}\right)\right)$$
of $M^+$.
\begin{lemm}\label{3lemm: distinguish mult two}
We have the following characterizations of $M^{\rm{min}}$ inside $M^+$:
$$M^{\rm{min}}\cap M_{\{i,j\}}=0\mbox{ for }\{i,j\}\neq \{3,4\},$$
$$M^{\rm{min}}\cap M_{\{1,3,4\}}=M^{\rm{min}}\cap M_{\{2,3,4\}}=M^{\rm{min}}\cap M_{\{3,4\}}=M^-,$$
and
$$M^{\rm{min}}=(M^{\rm{min}}\cap M_{\{1,2,3\}})\oplus (M^{\rm{min}}\cap M_{\{1,2,4\}}).$$
\end{lemm}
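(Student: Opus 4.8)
The plan is to exploit the structure of $\Sigma^{\rm{min},-}(\lambda, \mathscr{L}_1, \mathscr{L}_2, \mathscr{L}_3)$ as an extension of $\overline{L}(\lambda)\otimes_E(v_{P_1}^{\infty}\oplus v_{P_2}^{\infty}\oplus C^1_{s_2,1}\oplus C^1_{s_1,1})$ by $\Sigma^{\rm{min},--}(\lambda, \mathscr{L}_1, \mathscr{L}_2, \mathscr{L}_3)$, together with the already-established vanishing $\mathrm{Ext}^1_{\mathrm{GL}_3(\mathbb{Q}_p),\lambda}\left(\overline{L}(\lambda),~\Sigma^{\rm{min},--}(\lambda, \mathscr{L}_1, \mathscr{L}_2, \mathscr{L}_3)\right)=0$, which gives the identification of $M^{\rm{min}}$ with a two dimensional subspace of the four dimensional $M^+$. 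The key principle is that an element of $M^+$ lies in $M^{\rm{min}}$ exactly when the corresponding extension of $\overline{L}(\lambda)$ by $\Sigma^{\rm{min},--}(\lambda, \mathscr{L}_1, \mathscr{L}_2, \mathscr{L}_3)$ ``lifts'', i.e. the connecting map $M^+\to\mathrm{Ext}^2_{\mathrm{GL}_3(\mathbb{Q}_p),\lambda}\left(\overline{L}(\lambda),~\Sigma^{\rm{min},--}(\lambda, \mathscr{L}_1, \mathscr{L}_2, \mathscr{L}_3)\right)$ from the short exact sequence (\ref{3short exact sequence for min}) kills it. So each claimed intersection statement translates into checking which combinations of the four ``directions'' $1,2,3,4$ produce a genuine subrepresentation of $\Sigma^{\rm{min},-}(\lambda, \mathscr{L}_1, \mathscr{L}_2, \mathscr{L}_3)$ with socle containing $\overline{L}(\lambda)$ on top.

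First I would handle the containment $M^-\subseteq M^{\rm{min}}\cap M_{\{3,4\}}$: by construction $\Sigma(\lambda, \mathscr{L}_1, \mathscr{L}_2)\hookrightarrow\Sigma^+(\lambda, \mathscr{L}_1, \mathscr{L}_2)\hookrightarrow\Sigma^{\rm{min},-}(\lambda, \mathscr{L}_1, \mathscr{L}_2, \mathscr{L}_3)$, and the cosocle of $\Sigma(\lambda, \mathscr{L}_1, \mathscr{L}_2)$ is $v_{P_1}^{\rm{an}}(\lambda)\oplus v_{P_2}^{\rm{an}}(\lambda)$, whose only constituents meeting the list $\{1,2,3,4\}$ are $C^1_{s_2,1}$ and $C^1_{s_1,1}$ (since $v_{P_i}^{\rm{an}}(\lambda)$ has constituents $\overline{L}(\lambda)\otimes_Ev_{P_i}^{\infty}$ and $C^1_{s_{3-i},1}$ by Lemma~\ref{3lemm: structure of St}); this forces $M^-\subseteq M_{\{3,4\}}$, hence $M^-\subseteq M^{\rm{min}}\cap M_{\{3,4\}}$. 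Since $M^-$ is one dimensional by Lemma~\ref{3lemm: ext2}, I then need the reverse inequality $\dim_E(M^{\rm{min}}\cap M_{\{3,4\}})\le 1$, which I would get from the vanishing Lemma~\ref{3lemm: special vanishing 1} and Lemma~\ref{3lemm: nonvanishing ext1}: an element of $M^{\rm{min}}\cap M_{\{3,4\}}$ gives a representation of $\mathrm{GL}_3(\mathbb{Q}_p)$ in which $\overline{L}(\lambda)$ sits above only $C^1_{s_1,1}$ and $C^1_{s_2,1}$ inside $\Sigma^{\rm{min},-}$, and the relevant $\mathrm{Ext}$-spaces force such an extension (modulo $\Sigma(\lambda, \mathscr{L}_1, \mathscr{L}_2)$) to be at most one dimensional. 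The same socle/cosocle bookkeeping gives $M^{\rm{min}}\cap M_{\{i,j\}}=0$ for $\{i,j\}\ne\{3,4\}$: if $j\in\{1,2\}$ then a nonzero element would produce a uniserial piece of the form $\begin{xy}(0,0)*+{C^1_{s_k,1}}="a"; (16,0)*+{\overline{L}(\lambda)}="b"; (36,0)*+{\overline{L}(\lambda)\otimes_Ev_{P_j}^{\infty}}="c";{\ar@{-}"a";"b"}; {\ar@{-}"b";"c"};\end{xy}$ or $\begin{xy}(0,0)*+{\overline{L}(\lambda)\otimes_Ev_{P_i}^{\infty}}="a"; (24,0)*+{\overline{L}(\lambda)}="b"; (44,0)*+{\overline{L}(\lambda)\otimes_Ev_{P_j}^{\infty}}="c";{\ar@{-}"a";"b"}; {\ar@{-}"b";"c"};\end{xy}$, contradicting Lemma~\ref{3lemm: special vanishing 1} and Proposition~\ref{3prop: locally algebraic extension} respectively.

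Next I would establish $M^{\rm{min}}\cap M_{\{1,3,4\}}=M^{\rm{min}}\cap M_{\{2,3,4\}}=M^-$: the inclusion $M^-\subseteq M_{\{3,4\}}\subseteq M_{\{1,3,4\}}\cap M_{\{2,3,4\}}$ is clear, and for the reverse I would argue that if $v\in M^{\rm{min}}\cap M_{\{1,3,4\}}$ but $v\notin M_{\{3,4\}}$, then the image of $v$ in $M_{\{1\}}$ is nonzero, producing inside the resulting representation a quotient of the form $\begin{xy}(0,0)*+{C^1_{s_1,1}}="a"; (16,0)*+{\overline{L}(\lambda)}="b"; (36,0)*+{\overline{L}(\lambda)\otimes_Ev_{P_1}^{\infty}}="c";{\ar@{-}"a";"b"}; {\ar@{-}"b";"c"};\end{xy}$ or involving $\overline{L}(\lambda)$ directly above $\overline{L}(\lambda)\otimes_Ev_{P_1}^{\infty}$; in either case I reach a contradiction using Lemma~\ref{3lemm: special vanishing 1} exactly as in the proof of Lemma~\ref{3lemm: ext13}. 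Finally, for the direct-sum decomposition $M^{\rm{min}}=(M^{\rm{min}}\cap M_{\{1,2,3\}})\oplus (M^{\rm{min}}\cap M_{\{1,2,4\}})$: I would show each summand is one dimensional by exhibiting explicit elements --- one ``of type $\mathscr{L}_1^{\prime}$'' using the subrepresentation $\Sigma^{\sharp,+}_1$ and one ``of type $\mathscr{L}_2^{\prime}$'' using $\Sigma^{\sharp,+}_2$, which by the construction of $\Sigma^{\rm{min}}$ (where $\mathscr{L}_1^{\prime}=\mathscr{L}_2^{\prime}=\mathscr{L}_3$) indeed land in $M_{\{1,2,3\}}$ and $M_{\{1,2,4\}}$ respectively. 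Since their spans together span all of $M^{\rm{min}}$ (two dimensional) and intersect trivially (as $M_{\{1,2,3\}}\cap M_{\{1,2,4\}}=M_{\{1,2\}}$ meets $M^{\rm{min}}$ only in $0$ by the first bullet), the decomposition follows.

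The main obstacle will be the reverse inequalities, i.e. proving that various $M^{\rm{min}}\cap M_I$ are \emph{no larger} than claimed. These are exactly the ``non-existence of a uniserial representation of a certain shape'' statements, and each reduces --- after a devissage using the filtration of $\Sigma^{\rm{min},-}$ and the $\mathrm{Ext}$-computations of Section~\ref{3section: computation} --- to an application of Lemma~\ref{3lemm: special vanishing 1} (and occasionally Lemma~\ref{3lemm: special vanishing 2}); the bookkeeping of which constituents can appear above which, inside the rather large representation $\Sigma^{\rm{min},-}(\lambda, \mathscr{L}_1, \mathscr{L}_2, \mathscr{L}_3)$, is where the care is needed. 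The diagram-chasing and dimension counts, by contrast, are routine once the vanishing inputs are in place.
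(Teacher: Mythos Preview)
Your plan correctly identifies the broad strategy --- locate $M^{\rm{min}}$ inside $M^+$ and check intersections with coordinate subspaces --- but it has a genuine gap in the hardest case, namely $M^{\rm{min}}\cap M_{\{1,2\}}=0$, which you treat on the same footing as the mixed cases $\{i,j\}$ with one index in $\{1,2\}$ and one in $\{3,4\}$.

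Your argument for $\{i,j\}\neq\{3,4\}$ is that a nonzero element would force a uniserial subquotient of the shape $C^1_{s_k,1}\mbox{---}\overline{L}(\lambda)\mbox{---}\overline{L}(\lambda)\otimes_Ev_{P_j}^{\infty}$ or $\overline{L}(\lambda)\otimes_Ev_{P_i}^{\infty}\mbox{---}\overline{L}(\lambda)\mbox{---}\overline{L}(\lambda)\otimes_Ev_{P_j}^{\infty}$, contradicting Lemma~\ref{3lemm: special vanishing 1} or Proposition~\ref{3prop: locally algebraic extension}. But in the extension $V$ classified by an element of $M^{\rm{min}}$, the copy of $\overline{L}(\lambda)$ is the \emph{cosocle}; nothing sits above it, so the uniserials you write down (with $\overline{L}(\lambda)$ in the middle) never arise as subquotients of $V$. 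More to the point, for $\{i,j\}=\{1,2\}$ the cosocle constituents indexed by $1$ and $2$ are the copies of $\overline{L}(\lambda)\otimes_Ev_{P_1}^{\infty}$ and $\overline{L}(\lambda)\otimes_Ev_{P_2}^{\infty}$ sitting above $C_{s_2,s_2}$ and $C_{s_1,s_1}$ respectively; a diagonal element of $M_{\{1,2\}}$ simply gives a quotient isomorphic to $W_0$, which of course exists, so there is no immediate uniserial obstruction.

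The paper handles this case quite differently and it is the bulk of the proof. One passes to the quotient $\Sigma^{\rm{min},-,\prime,\flat}(\lambda,\mathscr{L}_1,\mathscr{L}_2,\mathscr{L}_3)$ (killing $\overline{L}(\lambda)\otimes_E\mathrm{St}_3^{\infty}$ and the constituents $C^1_{s_1,1}$, $C^1_{s_2,1}$, $C_{s_1s_2s_1,1}$), observes that it splits as $V_1\oplus V_2$ with each $V_i$ embedding into $\begin{xy}(0,0)*+{\Sigma_i^{+,\flat}(\lambda,\mathscr{L}_i)}="a"; (28,0)*+{\overline{L}(\lambda)\otimes_Ev_{P_{3-i}}^{\infty}}="b";{\ar@{-}"a";"b"};\end{xy}$, and then invokes Lemma~\ref{3lemm: ext6} (hence ultimately the $\mathrm{GL}_2$ input Proposition~\ref{3prop: key result}) to obtain $\mathrm{Ext}^1_{\mathrm{GL}_3(\mathbb{Q}_p),\lambda}(\overline{L}(\lambda),V_i)=0$. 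This is not a devissage to Lemma~\ref{3lemm: special vanishing 1} or Proposition~\ref{3prop: locally algebraic extension}; the vanishing genuinely requires the infinitesimal-character argument from Section~\ref{3section: GL2Qp}. Similarly, for $M^{\rm{min}}\cap M_{\{i,3,4\}}=M^-$ the paper does not argue via forbidden uniserials but cites the proof of Lemma~\ref{3lemm: upper bound} (again resting on Lemma~\ref{3lemm: ext6}) to get $M^{\rm{min}}\not\subseteq M_{\{i,3,4\}}$, after which the mixed two-index cases follow by pure linear algebra together with $M^-\cap M_{\{3\}}=M^-\cap M_{\{4\}}=0$. Your plan should be revised to route the $\{1,2\}$ case (and, cleanly, the $\{i,3,4\}$ step) through Lemma~\ref{3lemm: ext6}.
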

\begin{proof}
As $C^1_{s_1,1}$ and $C^1_{s_2,1}$ are in the cosocle of $\Sigma(\lambda, \mathscr{L}_1, \mathscr{L}_2)$, it is immediate that
$$M^-\subseteq M_{\{3,4\}}.$$
It follows from (\ref{3naive picture for min}) that
$$M^{\rm{min}}\not\subseteq M_{\{3,4\}}$$
and thus $M^{\rm{min}}\cap M_{\{3,4\}}$ is one dimensional which must coincide with $M^-$. The proof of Lemma~\ref{3lemm: upper bound} implies that $M\not\subseteq  M_{\{i,3,4\}}$ for $i=1,2$ and therefore $M\cap M_{\{i,3,4\}}$ is one dimensional, which implies that
$$M^{\rm{min}}\cap M_{\{i,3,4\}}=M^-$$
by the inclusion
$$M^{\rm{min}}\cap M_{\{3,4\}}\subseteq M^{\rm{min}}\cap M_{\{i,3,4\}}$$
for $i=1,2$. We observe ( cf. Lemma~\ref{3lemm: ext6}) that
$$M^-\cap M_{\{3\}}=M^-\cap M_{\{4\}}=0$$
and thus
$$M^{\rm{min}}\cap M_{\{i,j\}}=M^-\cap M_{\{i,j\}}=0$$
for each $\{i,j\}\neq \{3,4\}, \{1,2\}$. We define $\Sigma^{\rm{min},-,\prime}(\lambda, \mathscr{L}_1, \mathscr{L}_2, \mathscr{L}_3)$ as the unique subrepresentation of $\Sigma^{\rm{min},-}(\lambda, \mathscr{L}_1, \mathscr{L}_2, \mathscr{L}_3)$ that fits into the short exact sequence
$$\Sigma^{\rm{min},-,\prime}(\lambda, \mathscr{L}_1, \mathscr{L}_2, \mathscr{L}_3)\hookrightarrow\Sigma^{\rm{min},-}(\lambda, \mathscr{L}_1, \mathscr{L}_2, \mathscr{L}_3)\twoheadrightarrow C^1_{s_1,1}\oplus C^1_{s_2,1}\oplus C_{s_1s_2s_1,1}$$
and then define
$$\Sigma^{\rm{min},-,\prime,\flat}(\lambda, \mathscr{L}_1, \mathscr{L}_2, \mathscr{L}_3):=\Sigma^{\rm{min},-,\prime}(\lambda, \mathscr{L}_1, \mathscr{L}_2, \mathscr{L}_3)/\overline{L}(\lambda)\otimes_E\mathrm{St}_3^{\infty}.$$
It is obvious that $M^{\rm{min}}\cap M_{\{1,2\}}\neq 0$ if and only if
$$
\mathrm{Ext}^1_{\mathrm{GL}_3(\mathbb{Q}_p),\lambda}\left(\overline{L}(\lambda),~\Sigma^{\rm{min},-,\prime}(\lambda, \mathscr{L}_1, \mathscr{L}_2, \mathscr{L}_3)\right)\neq 0
$$
which implies that
\begin{equation}\label{3nonvanishing contradiction}
\mathrm{Ext}^1_{\mathrm{GL}_3(\mathbb{Q}_p),\lambda}\left(\overline{L}(\lambda),~\Sigma^{\rm{min},-,\prime,\flat}(\lambda, \mathscr{L}_1, \mathscr{L}_2, \mathscr{L}_3)\right)\neq 0
\end{equation}
as
$$\mathrm{Ext}^1_{\mathrm{GL}_3(\mathbb{Q}_p),\lambda}\left(\overline{L}(\lambda),~\overline{L}(\lambda)\otimes_E\mathrm{St}_3^{\infty}\right)=0$$
due to Proposition~\ref{3prop: locally algebraic extension}. We notice that we have a direct sum decomposition
$$\Sigma^{\rm{min},-,\prime,\flat}(\lambda, \mathscr{L}_1, \mathscr{L}_2, \mathscr{L}_3)=V_1\oplus V_2$$
where $V_i$ is a representation of the form
$$\begin{xy}
(0,0)*+{C^2_{s_i,1}}="a"; (30,4)*+{C^1_{s_{3-i}s_i,1}}="b"; (60,8)*+{C^2_{s_{3-i}s_i,1}}="c";  (30,-4)*+{\overline{L}(\lambda)\otimes_Ev_{P_i}^{\infty}}="d"; (60,0)*+{C_{s_i,s_i}}="e"; (90,4)*+{\overline{L}(\lambda)\otimes_Ev_{P_{3-i}}^{\infty}}="f";
{\ar@{-}"a";"b"}; {\ar@{-}"a";"d"}; {\ar@{-}"b";"c"}; {\ar@{-}"b";"e"}; {\ar@{-}"d";"e"}; {\ar@{-}"e";"f"};
\end{xy}.$$
Switching $V_1$ and $V_2$ if necessary, we can assume by (\ref{3nonvanishing contradiction}) that
$$\mathrm{Ext}^1_{\mathrm{GL}_3(\mathbb{Q}_p),\lambda}\left(\overline{L}(\lambda),~V_1\right)\neq 0.$$
On the other hand, we have an embedding
$$V_1\hookrightarrow
\begin{xy}
(0,0)*+{\Sigma_1^{+,\flat}(\lambda, \mathscr{L}_1)}="a"; (25,0)*+{\overline{L}(\lambda)\otimes_Ev_{P_2}^{\infty}}="b";
{\ar@{-}"a";"b"};
\end{xy}$$
which induces an embedding
$$\mathrm{Ext}^1_{\mathrm{GL}_3(\mathbb{Q}_p),\lambda}\left(\overline{L}(\lambda),~V_1\right)\hookrightarrow \mathrm{Ext}^1_{\mathrm{GL}_3(\mathbb{Q}_p),\lambda}\left(\overline{L}(\lambda),~\begin{xy}
(0,0)*+{\Sigma_1^{+,\flat}(\lambda, \mathscr{L}_1)}="a"; (25,0)*+{\overline{L}(\lambda)\otimes_Ev_{P_2}^{\infty}}="b";
{\ar@{-}"a";"b"};
\end{xy}\right)$$
and in particular
\begin{equation}\label{3nonvanishing contradiction prime}
\mathrm{Ext}^1_{\mathrm{GL}_3(\mathbb{Q}_p),\lambda}\left(\overline{L}(\lambda),~\begin{xy}
(0,0)*+{\Sigma_1^{+,\flat}(\lambda, \mathscr{L}_1)}="a"; (25,0)*+{\overline{L}(\lambda)\otimes_Ev_{P_2}^{\infty}}="b";
{\ar@{-}"a";"b"};
\end{xy}\right)\neq 0.
\end{equation}
The short exact sequences
$$\overline{L}(\lambda)\otimes_E\mathrm{St}_3^{\infty}\hookrightarrow\Sigma_1(\lambda, \mathscr{L}_1)\twoheadrightarrow\Sigma_1^{\flat}(\lambda, \mathscr{L}_1),~\overline{L}(\lambda)\otimes_E\mathrm{St}_3^{\infty}\hookrightarrow\Sigma_1^+(\lambda, \mathscr{L}_1)\twoheadrightarrow\Sigma_1^{+,\flat}(\lambda, \mathscr{L}_1)$$
induce isomorphisms
\begin{equation}\label{3isomorphism from flat}
\begin{xy}
(0,0)*+{\mathrm{Ext}^1_{\mathrm{GL}_3(\mathbb{Q}_p),\lambda}\left(\overline{L}(\lambda),~\Sigma_1(\lambda, \mathscr{L}_1)\right)}; (30,0)*+{\xrightarrow{\sim}}; (60,0)*+{\mathrm{Ext}^1_{\mathrm{GL}_3(\mathbb{Q}_p),\lambda}\left(\overline{L}(\lambda),~\Sigma_1^{\flat}(\lambda, \mathscr{L}_1)\right)}; (0.4,-6)*+{\mathrm{Ext}^1_{\mathrm{GL}_3(\mathbb{Q}_p),\lambda}\left(\overline{L}(\lambda),~\Sigma_1^+(\lambda, \mathscr{L}_1)\right)}; (30,-6)*+{\xrightarrow{\sim}}; (62,-6)*+{\mathrm{Ext}^1_{\mathrm{GL}_3(\mathbb{Q}_p),\lambda}\left(\overline{L}(\lambda),~\Sigma_1^{+,\flat}(\lambda, \mathscr{L}_1)\right)};
\end{xy}
\end{equation}
by Lemma~\ref{3lemm: vanishing locally algebraic}. Hence we deduce that
\begin{equation}\label{3vanishing for flat}
\mathrm{Ext}^1_{\mathrm{GL}_3(\mathbb{Q}_p),\lambda}\left(W_2,~\Sigma_1^{\flat}(\lambda, \mathscr{L}_1)\right)=\mathrm{Ext}^1_{\mathrm{GL}_3(\mathbb{Q}_p),\lambda}\left(W_2,~\Sigma_1^{+,\flat}(\lambda, \mathscr{L}_1)\right)=0
\end{equation}
from Lemma~\ref{3lemm: ext6} and (\ref{3isomorphism from flat}). The surjection $W_2\twoheadrightarrow\overline{L}(\lambda)$ induces an embedding
$$\mathrm{Ext}^1_{\mathrm{GL}_3(\mathbb{Q}_p),\lambda}\left(\overline{L}(\lambda),~\Sigma_1^{\flat}(\lambda, \mathscr{L}_1)\right)\hookrightarrow \mathrm{Ext}^1_{\mathrm{GL}_3(\mathbb{Q}_p),\lambda}\left(W_2,~\Sigma_1^{\flat}(\lambda, \mathscr{L}_1)\right)$$
which together with (\ref{3vanishing for flat}) imply that
$$\mathrm{Ext}^1_{\mathrm{GL}_3(\mathbb{Q}_p),\lambda}\left(\overline{L}(\lambda),~\Sigma_1^{\flat}(\lambda, \mathscr{L}_1)\right)=0$$
and hence
\begin{equation}\label{3vanishing for flat prime}
\mathrm{Ext}^1_{\mathrm{GL}_3(\mathbb{Q}_p),\lambda}\left(\overline{L}(\lambda),~\Sigma_1^{+,\flat}(\lambda, \mathscr{L}_1)\right)=0
\end{equation}
by (\ref{3vanishing of simple special ext}) and an easy devissage. It follows from (\ref{3vanishing for flat}) and (\ref{3vanishing for flat prime}) that
$$
\mathrm{Ext}^1_{\mathrm{GL}_3(\mathbb{Q}_p),\lambda}\left(\overline{L}(\lambda),~\begin{xy}
(0,0)*+{\Sigma_1^{+,\flat}(\lambda, \mathscr{L}_1)}="a"; (25,0)*+{\overline{L}(\lambda)\otimes_Ev_{P_2}^{\infty}}="b";
{\ar@{-}"a";"b"};
\end{xy}\right)=0
$$
which contradicts (\ref{3nonvanishing contradiction prime}). A a result, we have shown that
$$M^{\rm{min}}\cap M_{\{1,2\}}=0.$$
As $M^-\not\subseteq M_{\{1,2,i\}}$ for $i=3,4$, we deduce that both $M^{\rm{min}}\cap M_{\{1,2,3\}}$ and $M^{\rm{min}}\cap M_{\{1,2,4\}}$ are one dimensional. On the other hand, since we know that
$$(M^{\rm{min}}\cap M_{\{1,2,3\}})\cap(M^{\rm{min}}\cap M_{\{1,2,4\}})=M^{\rm{min}}\cap M_{\{1,2\}}=0,$$
we deduce the following direct sum decomposition
$$M^{\rm{min}}=(M^{\rm{min}}\cap M_{\{1,2,3\}})\oplus (M^{\rm{min}}\cap M_{\{1,2,4\}}).$$
\end{proof}
We use the notation $\overline{L}(\lambda)^i$ for copy of $\overline{L}(\lambda)$ inside $\overline{L}(\lambda)^{\oplus2}$ corresponding to the one dimensional space $M^{\rm{min}}\cap M_{\{1,2,i+2\}}$ inside $M^{\rm{min}}$, and therefore we have a surjection
\begin{equation}
\Sigma^{\rm{min}}(\lambda, \mathscr{L}_1, \mathscr{L}_2, \mathscr{L}_3)\twoheadrightarrow \left(
\begin{xy}
(0,0)*+{C^1_{s_2,1}}="a"; (16,0)*+{\overline{L}(\lambda)^1}="b";
{\ar@{-}"a";"b"};
\end{xy}\right)\oplus \left(\begin{xy}
(0,0)*+{C^1_{s_1,1}}="a"; (16,0)*+{\overline{L}(\lambda)^2}="b";
{\ar@{-}"a";"b"};
\end{xy}\right).
\end{equation}

As a result, the representation $\Sigma^{\rm{min}}(\lambda, \mathscr{L}_1, \mathscr{L}_2, \mathscr{L}_3)$ has the following form:
\begin{equation}\label{3simple picture for min}
\begin{xy}
(0,0)*+{\mathrm{St}_3^{\rm{an}}(\lambda)}="a"; (20,5)*+{v_{P_1}^{\rm{an}}(\lambda)}="b"; (40,9)*+{C_{s_1,s_1}}="d"; (60,9)*+{\overline{L}(\lambda)\otimes_Ev_{P_2}^{\infty}}="f"; (20,-5)*+{v_{P_2}^{\rm{an}}(\lambda)}="c"; (40,-9)*+{C_{s_2,s_2}}="e"; (60,-9)*+{\overline{L}(\lambda)\otimes_Ev_{P_1}^{\infty}}="g"; (90,3)*+{\overline{L}(\lambda)^1}="h1"; (90,-3)*+{\overline{L}(\lambda)^2}="h2";
{\ar@{-}"a";"b"}; {\ar@{-}"a";"c"}; {\ar@{-}"b";"d"}; {\ar@{-}"c";"e"}; {\ar@{-}"d";"f"}; {\ar@{-}"e";"g"}; {\ar@{-}"f";"h1"}; {\ar@{-}"b";"h1"}; {\ar@{-}"g";"h1"}; {\ar@{-}"f";"h2"}; {\ar@{-}"c";"h2"}; {\ar@{-}"g";"h2"};
\end{xy}.
\end{equation}
If we clarify the internal structure of $\mathrm{St}_3^{\rm{an}}(\lambda)$, $v_{P_1}^{\rm{an}}(\lambda)$ and $v_{P_2}^{\rm{an}}(\lambda)$ using Lemma~\ref{3lemm: structure of St}, then $\Sigma^{\rm{min}}(\lambda, \mathscr{L}_1, \mathscr{L}_2, \mathscr{L}_3)$ has the following form:
\begin{equation}\label{3main picture for min}
\begin{xy}
(-20,0)*+{\overline{L}(\lambda)\otimes_E\mathrm{St}_3^{\infty}}="a1";
(-6,9)*+{C^2_{s_1,1}}="a2"; (13.5,22)*+{C^1_{s_2s_1,1}}="a4"; (51,21)*+{C^2_{s_2s_1,1}}="a6"; (20,14)*+{\overline{L}(\lambda)\otimes_Ev_{P_1}^{\infty}}="b1"; (70,12)*+{C^1_{s_2,1}}="b2"; (42,30)*+{C_{s_1,s_1}}="d1"; (77,22)*+{\overline{L}(\lambda)\otimes_Ev_{P_2}^{\infty}}="f1"; (-6,-9)*+{C^2_{s_2,1}}="a3";(13.5,-22)*+{C^1_{s_1s_2,1}}="a5"; (51,-21)*+{C^2_{s_1s_2,1}}="a7"; (20,-14)*+{\overline{L}(\lambda)\otimes_Ev_{P_2}^{\infty}}="c1"; (70,-12)*+{C^1_{s_1,1}}="c2"; (42,-30)*+{C_{s_2,s_2}}="e1"; (77,-22)*+{\overline{L}(\lambda)\otimes_Ev_{P_1}^{\infty}}="g1"; (100,5)*+{\overline{L}(\lambda)^1}="h1"; (100,-5)*+{\overline{L}(\lambda)^2}="h2"; (73,-0)*+{C_{s_1s_2s_1,1}}="a8";
{\ar@{-}"a1";"a2"}; {\ar@{-}"a2";"a4"}; {\ar@{-}"a2";"b1"}; {\ar@{-}"a2";"a7"}; {\ar@{--}"a4";"b2"}; {\ar@{-}"a4";"a6"}; {\ar@{-}"a4";"d1"}; {\ar@{--}"a6";"c2"}; {\ar@{--}"a6";"a8"}; {\ar@{-}"b1";"b2"}; {\ar@{-}"b1";"d1"}; {\ar@{-}"b2";"h1"}; {\ar@{-}"d1";"f1"}; {\ar@{-}"f1";"h2"}; {\ar@{-}"f1";"h1"}; {\ar@{-}"a1";"a3"}; {\ar@{-}"a3";"a5"}; {\ar@{-}"a3";"c1"}; {\ar@{-}"a3";"a6"}; {\ar@{--}"a5";"c2"}; {\ar@{-}"a5";"e1"}; {\ar@{-}"a5";"a7"}; {\ar@{--}"a7";"b2"}; {\ar@{--}"a7";"a8"}; {\ar@{-}"c1";"c2"}; {\ar@{-}"c1";"e1"}; {\ar@{-}"c2";"h2"}; {\ar@{-}"e1";"g1"}; {\ar@{-}"g1";"h1"}; {\ar@{-}"g1";"h2"};
\end{xy}.
\end{equation}
\begin{rema}
It is actually possible to show that all the possibly split extensions illustrated in (\ref{3main picture for min}) are non-split. However, the proof is quite technical and not related to the $p$\text{-}adic dilogarithm function, and thus we decided not to include the proof here.
\end{rema}
We observe that $\Sigma^{\rm{min}}(\lambda, \mathscr{L}_1, \mathscr{L}_2, \mathscr{L}_3)$ admits a unique subrepresentation $\Sigma^{\mathrm{Ext}^1, -}(\lambda, \mathscr{L}_1, \mathscr{L}_2, \mathscr{L}_3)$ of the form
$$
\begin{xy}
(0,0)*+{\overline{L}(\lambda)\otimes_E\mathrm{St}_3^{\infty}}="a1";
(20,8)*+{C^2_{s_1,1}}="a2"; (45,12)*+{C^1_{s_2s_1,1}}="a4"; (45,4)*+{\overline{L}(\lambda)\otimes_Ev_{P_1}^{\infty}}="b1"; (70,8)*+{C_{s_1,s_1}}="d1"; (95,4)*+{\overline{L}(\lambda)\otimes_Ev_{P_2}^{\infty}}="f1"; (20,-8)*+{C^2_{s_2,1}}="a3";(45,-12)*+{C^1_{s_1s_2,1}}="a5"; (45,-4)*+{\overline{L}(\lambda)\otimes_Ev_{P_2}^{\infty}}="c1"; (70,-8)*+{C_{s_2,s_2}}="e1"; (95,-4)*+{\overline{L}(\lambda)\otimes_Ev_{P_1}^{\infty}}="g1";
{\ar@{-}"a1";"a2"}; {\ar@{-}"a2";"a4"}; {\ar@{-}"a2";"b1"}; {\ar@{-}"a4";"d1"}; {\ar@{-}"b1";"d1"}; {\ar@{-}"d1";"f1"}; {\ar@{-}"a1";"a3"}; {\ar@{-}"a3";"a5"}; {\ar@{-}"a3";"c1"}; {\ar@{-}"a5";"e1"}; {\ar@{-}"c1";"e1"}; {\ar@{-}"e1";"g1"};
\end{xy}
$$
which can be uniquely extend to a representation $\Sigma^{\mathrm{Ext}^1}(\lambda, \mathscr{L}_1, \mathscr{L}_2, \mathscr{L}_3)$ of the form:
\begin{equation}\label{3main picture for Ext1}
\begin{xy}
(0,0)*+{\overline{L}(\lambda)\otimes_E\mathrm{St}_3^{\infty}}="a1";
(20,8)*+{C^2_{s_1,1}}="a2"; (45,12)*+{C^1_{s_2s_1,1}}="a4"; (45,4)*+{\overline{L}(\lambda)\otimes_Ev_{P_1}^{\infty}}="b1"; (70,8)*+{C_{s_1,s_1}}="d1"; (95,4)*+{\overline{L}(\lambda)\otimes_Ev_{P_2}^{\infty}}="f1"; (20,-8)*+{C^2_{s_2,1}}="a3";(45,-12)*+{C^1_{s_1s_2,1}}="a5"; (45,-4)*+{\overline{L}(\lambda)\otimes_Ev_{P_2}^{\infty}}="c1"; (70,-8)*+{C_{s_2,s_2}}="e1"; (95,-4)*+{\overline{L}(\lambda)\otimes_Ev_{P_1}^{\infty}}="g1"; (95,12)*+{C^1_{s_2s_1,s_2s_1}}="i1"; (95,-12)*+{C^1_{s_1s_2,s_1s_2}}="j1"; (120,8)*+{C^2_{s_1,s_1s_2}}="i2"; (120,-8)*+{C^2_{s_2,s_2s_1}}="j2";
{\ar@{-}"a1";"a2"}; {\ar@{-}"a2";"a4"}; {\ar@{-}"a2";"b1"}; {\ar@{-}"a4";"d1"}; {\ar@{-}"b1";"d1"}; {\ar@{-}"d1";"f1"}; {\ar@{-}"a1";"a3"}; {\ar@{-}"a3";"a5"}; {\ar@{-}"a3";"c1"}; {\ar@{-}"a5";"e1"}; {\ar@{-}"c1";"e1"}; {\ar@{-}"e1";"g1"}; {\ar@{-}"d1";"i1"}; {\ar@{-}"e1";"j1"}; {\ar@{-}"i1";"i2"}; {\ar@{-}"j1";"j2"}; {\ar@{-}"f1";"i2"}; {\ar@{-}"g1";"j2"};
\end{xy}
\end{equation}
according to Section~4.4 and 4.6 of \cite{Bre17} together with our Lemma~\ref{3lemm: existence of diamond}. Finally, we define $\Sigma^{\rm{min},+}(\lambda, \mathscr{L}_1, \mathscr{L}_2, \mathscr{L}_3)$ as the amalgamate sum of $\Sigma^{\rm{min}}(\lambda, \mathscr{L}_1, \mathscr{L}_2, \mathscr{L}_3)$ and $\Sigma^{\mathrm{Ext}^1}(\lambda, \mathscr{L}_1, \mathscr{L}_2, \mathscr{L}_3)$ over $\Sigma^{\mathrm{Ext}^1, -}(\lambda, \mathscr{L}_1, \mathscr{L}_2, \mathscr{L}_3)$.
\begin{rema}\label{3quotient independent of invariants}
It is actually possible to prove (by several technical computations of $\mathrm{Ext}$\text{-}groups) that the quotient
$$\Sigma^{\rm{min},+}(\lambda, \mathscr{L}_1, \mathscr{L}_2, \mathscr{L}_3)/\overline{L}(\lambda)\otimes_E\mathrm{St}_3^{\infty}$$
and the quotient
$$\Sigma^{\rm{min}}(\lambda, \mathscr{L}_1, \mathscr{L}_2, \mathscr{L}_3)/\overline{L}(\lambda)\otimes_E\mathrm{St}_3^{\infty}$$
are independent of the choices of $\mathscr{L}_1, \mathscr{L}_2, \mathscr{L}_3\in E$.
\end{rema}
\section{Local-global compatibility}\label{3section: local-global}
We are going to borrow most of the notation and assumptions from Section~6 of \cite{Bre17}. We fix embeddings $\iota_{\infty}:\overline{\mathbb{Q}}\hookrightarrow\mathbb{C}$, $\iota_p: \overline{\mathbb{Q}}$, an imaginary quadratic CM extension $F$ of $\mathbb{Q}$ and a unitary group $G/\mathbb{Q}$ attached to the extension $F/\mathbb{Q}$ such that $G\times_{\mathbb{Q}}F\cong\mathrm{GL}_3$ and $G(\mathbb{R})$ is compact. If $\ell$ is a finite place of $\mathbb{Q}$ which splits completely in $F$, we have isomorphisms $\iota_{G,w}: G(\mathbb{Q}_{\ell})\xrightarrow{\sim}G(F_w)\cong\mathrm{GL}_3(F_w)$ for each finite place $w$ of $F$ over $\ell$. We assume that $p$ splits completely in $F$, and we fix a finite place $w_0$ of $F$ dividing $p$ and therefore $G(\mathbb{Q}_p)\cong G(F_{w_0})\cong \mathrm{GL}_3(\mathbb{Q}_p)$.

We fix an open compact subgroup $U^p\subsetneq G(\mathbb{A}^{\infty,p}_{\mathbb{Q}})$ of the form $U^p=\prod_{\ell\neq p}U_{\ell}$ where $U_{\ell}$ is an open compact subgroup of $G(\mathbb{Q}_{\ell})$. For each finite extension $E$ of $\mathbb{Q}_p$ inside $\overline{\mathbb{Q}_p}$, we consider the following $\mathcal{O}_E$-lattice inside a $p$\text{-}adic Banach space:
\begin{equation}
\widehat{S}(U^p,\mathcal{O}_E):=\{f : G(\mathbb{Q})\backslash G(\mathbb{A}^{\infty}_{\mathbb{Q}})/U^p\rightarrow \mathcal{O}_E, ~f \text{ continuous}\}
\end{equation}
and note that $\widehat{S}(U^p,E):=\widehat{S}(U^p,\mathcal{O}_E)\otimes_{\mathcal{O}_E}E$. The right translation of $G(\mathbb{Q}_p)$ on $G(\mathbb{Q})\backslash G(\mathbb{A}^{\infty}_{\mathbb{Q}})/U^p$ induces a $p$\text{-}adic continuous action of $G(\mathbb{Q}_p)$ on $\widehat{S}(U^p,\mathcal{O}_E)$ which makes $\widehat{S}(U^p,E)$ an admissible Banach representation of $G(\mathbb{Q}_p)$ in the sense of \cite{ST02}. We use the notation $\widehat{S}(U^p,E)^{\rm{alg}}\subseteq\widehat{S}(U^p,E)^{\rm{an}}$ following Section~6 of \cite{Bre17} for the subspaces of locally $\mathbb{Q}_p$-algebraic vectors and locally $\mathbb{Q}_p$-analytic vectors inside $\widehat{S}(U^p,E)$ respectively. Moreover, we have the following decomposition:
\begin{equation}\label{3global decomposition}
\widehat{S}(U^p,E)^{\rm{alg}}\otimes_E\overline{\mathbb{Q}_p}\cong\bigoplus_{\pi}(\pi_f^{v_0})^{U_p}\otimes_{\overline{\mathbb{Q}}}(\pi_{v_0}\otimes_{\overline{\mathbb{Q}}}W_p)
\end{equation}
where the direct sum is over the automorphic representations $\pi$ of $G(\mathbb{A}_{\mathbb{Q}})$ over $\mathbb{C}$ and $W_p$ is the $\mathbb{Q}_p$-algebraic representation of $G(\mathbb{Q}_p)$ over $\overline{\mathbb{Q}_p}$ associated with the algebraic representation $\pi_{\infty}$ of $G(\mathbb{R})$ over $\mathbb{C}$ via $\iota_p$ and $\iota_{\infty}$. In particular, each distinct $\pi$ appears with multiplicity one ( cf. the paragraph after (55) of \cite{Bre17} for further references).

We use the notation $D(U^p)$ for the set of finite places $\ell$ of $\mathbb{Q}$ that are different from $p$, split completely in $F$ and such that $U_{\ell}$ is a maximal open compact subgroup of $G(\mathbb{Q}_{\ell})$. Then we consider the commutative polynomial algebra $\mathbb{T}(U^p):=E[T^{(j)}_w]$ generated by the variables $T^{(j)}_w$ indexed by $j\in\{1,\cdots,n\}$ and $w$ a finite place of $F$ over a place $\ell$ of $\mathbb{Q}$ such that $\ell\in D(U^p)$. The algebra $\mathbb{T}(U^p)$ acts on $\widehat{S}(U^p,E)$, $\widehat{S}(U^p,E)^{\rm{alg}}$ and $\widehat{S}(U^p,E)^{\rm{an}}$ via the usual double coset operators. The action of $\mathbb{T}(U^p)$ commutes with that of $G(\mathbb{Q}_p)$.

We fix now $\alpha\in E^{\times}$, hence a Deligne--Fontaine module $\underline{D}$ over $\mathbb{Q}_p=F_{w_0}$ of rank three of the form
\begin{equation}\label{3Deligne Fontaine}
\underline{D}=Ee_2\oplus Ee_1\oplus Ee_0,\mbox{ with }
\left\{\begin{array}{ccc}
\varphi(e_2)&=&\alpha e_2\\
\varphi(e_1)&=&p^{-1}\alpha e_1\\
\varphi(e_0)&=&p^{-2}\alpha e_0\\
\end{array}\right.
\mbox{ and }
\left\{\begin{array}{ccc}
N(e_2)&=&e_1\\
N(e_1)&=&e_0\\
N(e_0)&=&0\\
\end{array}\right.
\end{equation}
and finally a tuple of Hodge--Tate weights $\underline{k}=(k_1>k_2>k_3)$. If $\rho: \mathrm{Gal}(\overline{F}/F)\rightarrow \mathrm{GL}_3(E)$ is an absolute irreducible continuous representation which is unramified at each finite place $w$ lying over a finite place $\ell\in D(U^p)$, we can associate to $\rho$ a maximal ideal $\mathfrak{m}_{\rho}\subseteq \mathbb{T}(U^p)$ with residual field $E$ by the usual method described in the middle paragraph on Page 58 of \cite{Bre17}. We use the notation $\star_{\mathfrak{m}_\rho}$ for spaces of localization and $\star[\mathfrak{m}_\rho]$ for torsion subspaces where $\star\in\{\widehat{S}(U^p,E), \widehat{S}(U^p,E)^{\rm{alg}}, \widehat{S}(U^p,E)^{\rm{an}}\}$.

We assume that there exists $U^p$ and $\rho$ such that
\begin{enumerate}
\item $\rho$ is absolutely irreducible and unramified at each finite place $w$ of $F$ over a place $\ell$ of $\mathbb{Q}$ satisfying $\ell\in D(U^p)$;
\item $\widehat{S}(U^p,E)^{\rm{alg}}[\mathfrak{m}_{\rho}]\neq 0$ (hence $\rho$ is automorphic and $\rho_{w_0}:=\rho|_{\mathrm{Gal}(\overline{F_{w_0}}/F_{w_0})}$ is potentially semi-stable);
\item $\rho_{w_0}$ has Hodge--Tate weights $\underline{k}$ and gives the Deligne--Fontaine module $\underline{D}$.
\end{enumerate}
By identifying $\widehat{S}(U^p,E)^{\rm{alg}}$ with a representation of $\mathrm{GL}_3(\mathbb{Q}_p)$ via $\iota_{G,w_0}$, we have the following isomorphism up to normalization from (\ref{3global decomposition}) and \cite{Ca14}:
\begin{equation}
\widehat{S}(U^{v_0},E)^{\rm{alg}}[\mathfrak{m}_{\rho}]\cong\left(\overline{L}(\lambda)\otimes_E\mathrm{St}_3^{\infty}\otimes_E(\mathrm{ur}(\alpha)\otimes_E\varepsilon^2)\circ\mathrm{det}\right)^{\oplus d(U^p, \rho)}
\end{equation}
for all $(U^p, \rho)$ satisfying the conditions (i), (ii) and (iii), where $\lambda=(\lambda_1,\lambda_2,\lambda_3)=(k_1-2,k_2-1,k_3)$ and $d(U^p,\rho)\geq 1$ is an integer depending only on $U^p$ and $\rho$.

\begin{theo}\label{3theo: main}
We consider $U^p=\prod_{\ell\neq p}U_{\ell}$ and $\rho: \mathrm{Gal}(\overline{F}/F)\rightarrow\mathrm{GL}_3(E)$ such that
\begin{enumerate}
\item $\rho$ is absolutely irreducible and unramified at each finite place $w$ of $F$ lying above $D(U^p)$;
\item $\widehat{S}(U^p, E)^{\rm{alg}}[\mathfrak{m}_{\rho}]\neq 0$;
\item $\rho$ has Hodge--Tate weights $\underline{k}$ and gives the Deligne--Fontaine module $\underline{D}$ as in (\ref{3Deligne Fontaine});
\item the filtration on $\underline{D}$ is non-critical in the sense of (ii) of Remark~6.1.4 of \cite{Bre17};
\item only one automorphic representation $\pi$ contributes to $\widehat{S}(U^p, E)^{\rm{alg}}[\mathfrak{m}_{\rho}]$.
\end{enumerate}
Then there exists a unique choice of $\mathscr{L}_1, \mathscr{L}_2, \mathscr{L}_3\in E$ such that:
\begin{multline}
\mathrm{Hom}_{\mathrm{GL}_3(\mathbb{Q}_p)}\left(\Sigma^{\rm{min},+}(\lambda, \mathscr{L}_1, \mathscr{L}_2, \mathscr{L}_3)\otimes_E(\mathrm{ur}(\alpha)\otimes_E\varepsilon^2)\circ\mathrm{det}, ~\widehat{S}(U^p, E)^{\rm{an}}[\mathfrak{m}_{\rho}]\right)\\
\xrightarrow{\sim}\mathrm{Hom}_{\mathrm{GL}_3(\mathbb{Q}_p)}\left(\overline{L}(\lambda)\otimes_E\mathrm{St}_3^{\infty}\otimes_E(\mathrm{ur}(\alpha)\otimes_E\varepsilon^2)\circ\mathrm{det}, ~\widehat{S}(U^p, E)^{\rm{an}}[\mathfrak{m}_{\rho}]\right).\\
\end{multline}
\end{theo}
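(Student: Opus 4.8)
The plan is to reduce the displayed isomorphism to the analogous statement of \cite{Bre17} for the subrepresentation of $\Sigma^{\rm{min},+}(\lambda,\mathscr{L}_1,\mathscr{L}_2,\mathscr{L}_3)$ of the form (\ref{3representation ext 1}), and then to bridge the gap by a dévissage along the explicitly known structure (\ref{3main picture for min})--(\ref{3main picture for Ext1}) of $\Sigma^{\rm{min},+}(\lambda,\mathscr{L}_1,\mathscr{L}_2,\mathscr{L}_3)$, mimicking the proof of Theorem~6.2.1 of \cite{Bre17}. Set $\chi:=(\mathrm{ur}(\alpha)\otimes_E\varepsilon^2)\circ\mathrm{det}$ and $X:=\widehat{S}(U^p,E)^{\rm{an}}[\mathfrak{m}_{\rho}]$; since $\mathrm{Hom}_{\mathrm{GL}_3(\mathbb{Q}_p)}(-\otimes_E\chi,X)=\mathrm{Hom}_{\mathrm{GL}_3(\mathbb{Q}_p)}(-,X\otimes_E\chi^{-1})$ and $X\otimes_E\chi^{-1}$ still satisfies the hypotheses recalled before the theorem (with $d(U^p,\rho)$ unchanged), I suppress the twist by $\chi$ and replace $X$ by $X\otimes_E\chi^{-1}$ below. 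First, by Theorem~1.1 and Theorem~1.3 of \cite{Bre17} — applied under the hypotheses (i)--(v), which are exactly Breuil's — there is a locally analytic representation $\Pi$ of the form (\ref{3representation ext 1}) such that restriction to $\overline{L}(\lambda)\otimes_E\mathrm{St}_3^{\infty}$ induces an isomorphism
$$\mathrm{Hom}_{\mathrm{GL}_3(\mathbb{Q}_p)}\left(\Pi, X\right)\xrightarrow{\sim}\mathrm{Hom}_{\mathrm{GL}_3(\mathbb{Q}_p)}\left(\overline{L}(\lambda)\otimes_E\mathrm{St}_3^{\infty}, X\right),$$
so in particular $\Pi\hookrightarrow X$. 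By Corollary~\ref{3coro: criterion} this $\Pi$ is isomorphic to the subrepresentation $\Sigma^{\mathrm{Ext}^1}(\lambda,\mathscr{L}_1,\mathscr{L}_2,\mathscr{L}_3)\subseteq\Sigma^{\rm{min},+}(\lambda,\mathscr{L}_1,\mathscr{L}_2,\mathscr{L}_3)$ for a unique triple $(\mathscr{L}_1,\mathscr{L}_2,\mathscr{L}_3)\in E^3$ — this is the triple of the statement, and its uniqueness follows from the uniqueness clause of Corollary~\ref{3coro: criterion} (given that any $\Sigma^{\rm{min},+}(\lambda,\mathscr{L}_1',\mathscr{L}_2',\mathscr{L}_3')$ satisfying the conclusion embeds into $X$, being nonzero and hence injective on its simple socle, and so contains a copy of the $\Pi$ above). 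Thus, composing with the isomorphism above, the theorem reduces to showing that the restriction map $\mathrm{Hom}_{\mathrm{GL}_3(\mathbb{Q}_p)}(\Sigma^{\rm{min},+}(\lambda,\mathscr{L}_1,\mathscr{L}_2,\mathscr{L}_3),X)\to\mathrm{Hom}_{\mathrm{GL}_3(\mathbb{Q}_p)}(\Pi,X)$ is an isomorphism.

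Recall from Section~\ref{3section: exact sequence min} that $\Sigma^{\rm{min},+}(\lambda,\mathscr{L}_1,\mathscr{L}_2,\mathscr{L}_3)$ is the amalgamated sum of $\Pi=\Sigma^{\mathrm{Ext}^1}(\lambda,\mathscr{L}_1,\mathscr{L}_2,\mathscr{L}_3)$ and $\Sigma^{\rm{min}}(\lambda,\mathscr{L}_1,\mathscr{L}_2,\mathscr{L}_3)$ over their common subrepresentation $\Sigma^{\mathrm{Ext}^1,-}(\lambda,\mathscr{L}_1,\mathscr{L}_2,\mathscr{L}_3)$; by the universal property of the pushout, extending a fixed embedding $\Pi\hookrightarrow X$ to $\Sigma^{\rm{min},+}(\lambda,\mathscr{L}_1,\mathscr{L}_2,\mathscr{L}_3)$ is the same as extending the induced embedding $\Sigma^{\mathrm{Ext}^1,-}(\lambda,\mathscr{L}_1,\mathscr{L}_2,\mathscr{L}_3)\hookrightarrow X$ to $\Sigma^{\rm{min}}(\lambda,\mathscr{L}_1,\mathscr{L}_2,\mathscr{L}_3)$, and any such extension is automatically an embedding since $\Sigma^{\rm{min},+}(\lambda,\mathscr{L}_1,\mathscr{L}_2,\mathscr{L}_3)$ has finite length and simple socle $\overline{L}(\lambda)\otimes_E\mathrm{St}_3^{\infty}$, on which the map is nonzero. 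I would carry this out along a filtration $\Sigma^{\mathrm{Ext}^1,-}(\lambda,\mathscr{L}_1,\mathscr{L}_2,\mathscr{L}_3)=\mathrm{Fil}_0\subseteq\cdots\subseteq\mathrm{Fil}_r=\Sigma^{\rm{min}}(\lambda,\mathscr{L}_1,\mathscr{L}_2,\mathscr{L}_3)$ with irreducible graded pieces; comparing (\ref{3main picture for min}) with the shape of $\Sigma^{\mathrm{Ext}^1,-}(\lambda,\mathscr{L}_1,\mathscr{L}_2,\mathscr{L}_3)$, the $\mathrm{gr}_j$ are among
$$\{\,C^1_{s_1,1},\ C^1_{s_2,1},\ C^2_{s_1s_2,1},\ C^2_{s_2s_1,1},\ C_{s_1s_2s_1,1},\ \overline{L}(\lambda)\,\}.$$
At the $j$-th step, a map $\mathrm{Fil}_{j-1}\to X$ extends to $\mathrm{Fil}_j$ if and only if its image under the connecting map $\mathrm{Hom}_{\mathrm{GL}_3(\mathbb{Q}_p)}(\mathrm{Fil}_{j-1},X)\to\mathrm{Ext}^1_{\mathrm{GL}_3(\mathbb{Q}_p)}(\mathrm{gr}_j,X)$ of $\mathrm{Fil}_{j-1}\hookrightarrow\mathrm{Fil}_j\twoheadrightarrow\mathrm{gr}_j$ vanishes, and the extension is unique (and the iterated restriction stays injective) provided $\mathrm{Hom}_{\mathrm{GL}_3(\mathbb{Q}_p)}(\mathrm{gr}_j,X)=0$. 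For $\mathrm{gr}_j=\overline{L}(\lambda)$ the latter is immediate: a morphism out of the irreducible locally algebraic representation $\overline{L}(\lambda)$ factors through the locally algebraic vectors of $X$, which by (\ref{3global decomposition}), \cite{Ca14} and hypothesis (v) form a direct sum of copies of $\overline{L}(\lambda)\otimes_E\mathrm{St}_3^{\infty}$, which is not isomorphic to $\overline{L}(\lambda)$.

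For the remaining graded pieces $C^1_{s_1,1}$, $C^1_{s_2,1}$, $C^2_{s_1s_2,1}$, $C^2_{s_2s_1,1}$, $C_{s_1s_2s_1,1}$ — all non-locally-algebraic constituents of $\mathrm{St}_3^{\rm{an}}(\lambda)$ — the vanishing of $\mathrm{Hom}_{\mathrm{GL}_3(\mathbb{Q}_p)}(\mathrm{gr}_j,X)$ and of the connecting maps is to be obtained exactly as in the proof of Theorem~6.2.1 of \cite{Bre17}: one passes to the Jacquet--Emerton modules $J_{\overline{P_i}}(X)$ and $J_{\overline{B}}(X)$ (cf. \cite{Eme06}) via Emerton's adjunction and the associated spectral sequence, identifies the relevant $\mathrm{Hom}$- and $\mathrm{Ext}^1$-groups against $X$ with spaces manufactured from the trianguline structure of $\rho_{w_0}$ — which, being semi-stable, non-critical and with $N^2\neq 0$, is rigid and governed precisely by the Deligne--Fontaine module $\underline{D}$ together with the three Hodge-filtration invariants $\mathscr{L}_1,\mathscr{L}_2,\mathscr{L}_3$ — and then feeds in the $\mathrm{GL}_3(\mathbb{Q}_p)$-side computations of Sections~\ref{3section: computation}--\ref{3section: exact sequence min}, notably Lemma~\ref{3lemm: ext6} and Proposition~\ref{3prop: main dim}, which cut the $\mathrm{Ext}^1_{\mathrm{GL}_3(\mathbb{Q}_p),\lambda}$-groups occurring in the dévissage down to at most one dimension. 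One-dimensionality forces each obstruction class to be $0$ or a fixed generator, and the rigidity of $\rho_{w_0}$, matched against the invariants built into $\Sigma^{\rm{min},+}(\lambda,\mathscr{L}_1,\mathscr{L}_2,\mathscr{L}_3)$ through Corollary~\ref{3coro: criterion}, forces the value $0$. Reassembling the steps gives the desired extension $\Sigma^{\rm{min},+}(\lambda,\mathscr{L}_1,\mathscr{L}_2,\mathscr{L}_3)\hookrightarrow X$ of any $\Pi\hookrightarrow X$, hence both surjectivity and injectivity of the last restriction map, which by the first paragraph finishes the proof.

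The main obstacle is precisely this last block of vanishing statements — propagating the embedding of $\Pi$ across the constituents $C^2_{s_1s_2,1}$, $C^2_{s_2s_1,1}$, $C_{s_1s_2s_1,1}$ and the two top copies of $\overline{L}(\lambda)$ in (\ref{3main picture for min}). This is not formal and, exactly as in \cite{Bre17}, requires a delicate description of $J_{\overline{B}}(X)$ and of $X$ near its locally algebraic vectors; moreover the placement of the $\overline{L}(\lambda)$-layer is the one controlled by the $p$-adic dilogarithm invariant $\mathscr{L}_3$ — as Proposition~\ref{3prop: relation with derived object} makes transparent — so it must match the value attached to $\rho_{w_0}$, and this is the step where the choice of $\mathscr{L}_3$ genuinely enters. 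Everything else is a routine propagation of vanishing along the filtration described above.
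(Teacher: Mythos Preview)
Your proposal has a genuine circularity: you invoke Corollary~\ref{3coro: criterion} to identify Breuil's $\Pi$ with $\Sigma^{\mathrm{Ext}^1}(\lambda,\mathscr{L}_1,\mathscr{L}_2,\mathscr{L}_3)$ for some triple, but in the paper Corollary~\ref{3coro: criterion} is \emph{deduced from} Theorem~\ref{3theo: main} (its proof reads ``\ldots according to Theorem~\ref{3theo: main}''). This is not a cosmetic issue. Representations of the shape (\ref{3representation ext 1}) form a four-parameter family (cf.\ Section~\ref{3subsection: criterion of global embedding}), while the $\Sigma^{\mathrm{Ext}^1}(\lambda,\mathscr{L}_1,\mathscr{L}_2,\mathscr{L}_3)$ form only a three-parameter subfamily; that a globally occurring $\Pi$ lands in this subfamily is precisely the content of the theorem you are trying to prove. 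Without Corollary~\ref{3coro: criterion} you have no way to pre-select $\mathscr{L}_3$, and your d\'evissage then has no target.

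Even setting the circularity aside, your last paragraph concedes the real obstacle: you need the obstruction in $\mathrm{Ext}^1_{\mathrm{GL}_3(\mathbb{Q}_p)}(\overline{L}(\lambda),X)$ to vanish for the two top $\overline{L}(\lambda)$-layers, and you offer no mechanism for this beyond a hope that ``rigidity of $\rho_{w_0}$'' forces it. The paper avoids this entirely by a different manoeuvre. It does \emph{not} fix $\mathscr{L}_3$ in advance and does not compute obstruction classes against $X$. Instead it filters so that $\Sigma^{\sharp,+}(\lambda,\mathscr{L}_1,\mathscr{L}_2)$ and $\Sigma^{\rm{min}}(\lambda,\mathscr{L}_1,\mathscr{L}_2,\mathscr{L}_3)$ are consecutive, with graded piece the locally algebraic $W_0$; then, by part~(ii) of Proposition~\ref{3prop: adjunction} (the formalized \'Etape~2 of \cite{Bre17}), any embedding of $\Sigma^{\sharp,+}(\lambda,\mathscr{L}_1,\mathscr{L}_2)$ extends to \emph{some} quotient $V$ of the fibre product $\Sigma^{\rm{min}}(\lambda,\mathscr{L}_1,\mathscr{L}_2,\mathscr{L}_3)\times_{W_0}\bigl(\overline{L}(\lambda)\otimes_E i_B^{\mathrm{GL}_3}(\chi_{s_1s_2s_1}^{\infty})\bigr)$ satisfying $\mathrm{soc}(V)=V^{\rm{alg}}=\overline{L}(\lambda)\otimes_E\mathrm{St}_3^{\infty}$. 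The purely representation-theoretic Lemma~\ref{3lemm: preparation for global} then shows that any such $V$ is already isomorphic to $\Sigma^{\rm{min}}(\lambda,\mathscr{L}_1,\mathscr{L}_2,\mathscr{L}_3')$ for some $\mathscr{L}_3'\in E$. So the value of $\mathscr{L}_3$ is \emph{produced} by the extension step, not assumed beforehand, and no Ext-obstruction against $X$ ever needs to be killed by hand.
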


We recall several useful results from \cite{Bre17} and \cite{BH18}.
\begin{prop}\label{3prop: injectivity}
Suppose that $U^p=\prod_{\ell\neq p}U_{\ell}$ is a sufficiently small open compact subgroup of $G(\mathbb{A}_{\mathbb{Q}}^{\infty,p})$, $\widehat{S}(U^p, E)^{\rm{an}}\hookrightarrow \Pi\twoheadrightarrow \Pi_1$ a short exact sequence of admissible locally analytic representations of $\mathrm{GL}_3(\mathbb{Q}_p)$, $\chi: T(\mathbb{Q}_p)\rightarrow E^{\times}$ a locally analytic character and $\eta: U(\mathfrak{t})\rightarrow E$ its derived character, then we have  $T(\mathbb{Q}_p)^+$-equivariant short exact sequences of finite dimensional $E$-spaces
$$(\widehat{S}(U^p, E)^{\rm{an}})^{\overline{N}(\mathbb{Z}_p)}[\mathfrak{t}=\eta]\hookrightarrow \Pi^{\overline{N}(\mathbb{Z}_p)}[\mathfrak{t}=\eta]\twoheadrightarrow \Pi_1^{\overline{N}(\mathbb{Z}_p)}[\mathfrak{t}=\eta]$$
and
$$(\widehat{S}(U^p, E)^{\rm{an}})^{\overline{N}(\mathbb{Z}_p)}[\mathfrak{t}=\eta]_{\chi}\hookrightarrow \Pi^{\overline{N}(\mathbb{Z}_p)}[\mathfrak{t}=\eta]_{\chi}\twoheadrightarrow \Pi_1^{\overline{N}(\mathbb{Z}_p)}[\mathfrak{t}=\eta]_{\chi}$$
where $T(\mathbb{Q}_p)^+$ is a submonoid of $T(\mathbb{Q}_p)$ defined by
$$T(\mathbb{Q}_p)^+:=\{t\in T(\mathbb{Q}_p)\mid t\overline{N}(\mathbb{Z}_p)t^{-1}\subseteq \overline{N}(\mathbb{Z}_p)\}.$$
\end{prop}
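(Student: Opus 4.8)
The plan is as follows. The left-exactness of both displayed sequences is formal: the functor $(-)^{\overline{N}(\mathbb{Z}_p)}$ is left exact, passage to the $\eta$-generalized eigenspace $(-)[\mathfrak{t}=\eta]$ for the $U(\mathfrak{t})$-action and passage to the $\chi$-generalized eigenspace $(-)_{\chi}$ for the (deformed) $T(\mathbb{Q}_p)^+$-action of \cite{Eme06} are exact functors on the relevant finite-dimensional $E$-spaces, and all maps in sight are $T(\mathbb{Q}_p)^+$-equivariant by functoriality of Emerton's constructions. Hence the entire content is surjectivity on the right; moreover, once the first short exact sequence is established the second follows from it by applying the exact functor $(-)_{\chi}$. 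So I would concentrate on proving exactness of
$$0\to(\widehat{S}(U^p, E)^{\rm{an}})^{\overline{N}(\mathbb{Z}_p)}[\mathfrak{t}=\eta]\to\Pi^{\overline{N}(\mathbb{Z}_p)}[\mathfrak{t}=\eta]\to\Pi_1^{\overline{N}(\mathbb{Z}_p)}[\mathfrak{t}=\eta]\to0.$$

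First I would form the long exact sequence of (derived) $\overline{N}(\mathbb{Z}_p)$-invariants attached to $0\to\widehat{S}(U^p,E)^{\rm{an}}\to\Pi\to\Pi_1\to0$ inside the category of locally analytic representations of $\mathrm{GL}_3(\mathbb{Q}_p)$; it is $T(\mathbb{Q}_p)^+$-equivariant and stays so after passing to $\eta$-generalized eigenspaces for $U(\mathfrak{t})$ on each term (here admissibility of $\Pi$ and $\Pi_1$ guarantees that these eigenspaces are finite dimensional, cf. \cite{Eme06}). The surjectivity I want then follows once I show the vanishing
$$H^1\!\left(\overline{N}(\mathbb{Z}_p),\,\widehat{S}(U^p, E)^{\rm{an}}\right)=0,$$
since this forces the connecting map out of $\Pi_1^{\overline{N}(\mathbb{Z}_p)}$ to be zero.

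To obtain this vanishing I would exploit the explicit description of completed cohomology for the definite unitary group $G$. Because $G(\mathbb{R})$ is compact, for $U^p$ sufficiently small the finite arithmetic groups $g_iU^pg_i^{-1}\cap G(\mathbb{Q})$ ($1\le i\le h$, with $h$ the relevant class number) are trivial, so $G(\mathbb{Q})\backslash G(\mathbb{A}_{\mathbb{Q}}^{\infty})/U^p\cong\bigsqcup_{i=1}^{h}\mathrm{GL}_3(\mathbb{Q}_p)$ and therefore $\widehat{S}(U^p,\mathcal{O}_E)\cong\mathcal{C}(\mathrm{GL}_3(\mathbb{Q}_p),\mathcal{O}_E)^{\oplus h}$ with $\mathrm{GL}_3(\mathbb{Q}_p)$ acting by the regular representation; taking locally analytic vectors gives $\widehat{S}(U^p,E)^{\rm{an}}\cong\mathcal{C}^{\rm{an}}(\mathrm{GL}_3(\mathbb{Q}_p),E)^{\oplus h}$. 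Restricted to $\overline{N}(\mathbb{Z}_p)$ this is induced from the trivial subgroup: since $\overline{N}(\mathbb{Z}_p)$ is a polydisc and the orbit map $\mathrm{GL}_3(\mathbb{Q}_p)\to\overline{N}(\mathbb{Z}_p)\backslash\mathrm{GL}_3(\mathbb{Q}_p)$ admits a locally analytic section, one has $\mathcal{C}^{\rm{an}}(\mathrm{GL}_3(\mathbb{Q}_p),E)\cong\mathcal{C}^{\rm{an}}(\overline{N}(\mathbb{Z}_p),E)\,\widehat{\otimes}_E\,\mathcal{C}^{\rm{an}}(\overline{N}(\mathbb{Z}_p)\backslash\mathrm{GL}_3(\mathbb{Q}_p),E)$ with $\overline{N}(\mathbb{Z}_p)$ acting by translation on the first factor only, i.e. $\cong\mathrm{Ind}_{\{1\}}^{\overline{N}(\mathbb{Z}_p)}(\mathcal{C}^{\rm{an}}(\overline{N}(\mathbb{Z}_p)\backslash\mathrm{GL}_3(\mathbb{Q}_p),E))$. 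Shapiro's lemma then yields $H^i(\overline{N}(\mathbb{Z}_p),\widehat{S}(U^p,E)^{\rm{an}})=0$ for all $i>0$, which is what I need. For a general $U^p$ one reduces to the small-level case by a corestriction/transfer argument, or equivalently one notes that the conclusion is insensitive to such an auxiliary shrinking.

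The hard part is not the bookkeeping above but making the $\overline{N}(\mathbb{Z}_p)$-acyclicity statement and the Shapiro isomorphism rigorous in the functional-analytic setting: one has to work with spaces of compact type and completed tensor products, verify continuity of the chosen section of the orbit map, and check that the derived functors of $(-)^{\overline{N}(\mathbb{Z}_p)}$ in the category of modules over $\mathcal{D}(\overline{N}(\mathbb{Z}_p),E)$ agree with naive group cohomology so that the long exact sequence and Shapiro's lemma apply. All of this is supplied by Emerton's foundational work on Jacquet modules of locally analytic representations and is precisely what the cited results of \cite{Bre17} and \cite{BH18} record; accordingly, in the write-up the proposition would be deduced by quoting those statements, with the outline above serving as the conceptual justification.
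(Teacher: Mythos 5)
The paper's own proof is precisely the citation to Proposition~6.3.3 of \cite{Bre17} and Proposition~4.1 of \cite{BH18}, which is what you propose to do in the end. Your conceptual outline---reducing surjectivity to the vanishing of $H^1\bigl(\overline{N}(\mathbb{Z}_p),\widehat{S}(U^p,E)^{\rm{an}}\bigr)$, obtained from the identification (valid because $G(\mathbb{R})$ is compact and $U^p$ is sufficiently small) of $\widehat{S}(U^p,E)^{\rm{an}}$ with finitely many copies of locally analytic functions on $\mathrm{GL}_3(\mathbb{Q}_p)$ together with a Shapiro-type acyclicity for the compact group $\overline{N}(\mathbb{Z}_p)$---is a fair summary of what those references establish, so this is essentially the same approach as the paper.
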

\begin{proof}
This is Proposition~6.3.3 of \cite{Bre17} and Proposition~4.1 of \cite{BH18}.
\end{proof}

\begin{prop}\label{3prop: socle}
We fix $U^p$ and $\rho$ as in Theorem~\ref{3theo: main}. For a locally analytic character $\chi: T(\mathbb{Q}_p)\rightarrow E^{\times}$, we have
$$\mathrm{Hom}_{T(\mathbb{Q}_p)^+}\left(\chi\otimes_E(\mathrm{ur}(\alpha)\otimes_E\varepsilon^2)\circ\mathrm{det}, ~(\widehat{S}(U^p, E)^{\rm{an}}[\mathfrak{m}_{\rho}])^{\overline{N}(\mathbb{Z}_p)}\right)\neq 0$$
if and only if $\chi=\delta_{\lambda}$.
\end{prop}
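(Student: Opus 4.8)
The plan is to deduce Proposition~\ref{3prop: socle} from the classical description of the locally algebraic vectors together with Emerton's adjunction formula for the Jacquet functor, applied to the admissible Banach space $\widehat{S}(U^p,E)$. First I would recall that by the very definition of the action via $\iota_{G,w_0}$ and by (\ref{3global decomposition}) combined with \cite{Ca14}, we have an isomorphism
$$\widehat{S}(U^p,E)^{\rm{alg}}[\mathfrak{m}_\rho]\cong\left(\overline{L}(\lambda)\otimes_E\mathrm{St}_3^\infty\otimes_E(\mathrm{ur}(\alpha)\otimes_E\varepsilon^2)\circ\mathrm{det}\right)^{\oplus d(U^p,\rho)},$$
so the space of interest is built from a single locally algebraic representation up to multiplicity. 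The key point is that the functor $(-)^{\overline{N}(\mathbb{Z}_p)}$ together with the $T(\mathbb{Q}_p)^+$-action computes (a model for) Emerton's Jacquet module $J_{\overline{B}}$; more precisely, by Proposition~\ref{3prop: injectivity} (which is Proposition~6.3.3 of \cite{Bre17}) the spaces $(\widehat{S}(U^p,E)^{\rm{an}}[\mathfrak{m}_\rho])^{\overline{N}(\mathbb{Z}_p)}[\mathfrak{t}=\eta]$ behave exactly as in loc.\ cit., and one knows that a character $\chi$ occurs in the $\overline{N}(\mathbb{Z}_p)$-invariants (with the correct infinitesimal character $\eta$ forced by the condition $\mathfrak{t}=\eta$) only if it comes from the Jacquet module of the locally algebraic part, since the $\mathfrak m_\rho$-isotypic completed cohomology here has no other infinitesimal character entering at the relevant weight.

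Concretely I would argue as follows. The ``if'' direction: since $\overline{L}(\lambda)\otimes_E\mathrm{St}_3^\infty$ is a subrepresentation of a locally analytic principal series $I_B^{\mathrm{GL}_3}(\delta_{T,\lambda})$ (this is the definition of $\mathrm{St}_3^{\rm{an}}(\lambda)$ twisted back, or directly the classical statement that the smooth Steinberg embeds in the smooth principal series of $1$), and since $\widehat{S}(U^p,E)^{\rm{alg}}[\mathfrak{m}_\rho]\neq 0$ embeds into $\widehat{S}(U^p,E)^{\rm{an}}[\mathfrak{m}_\rho]$, one gets a nonzero $T(\mathbb{Q}_p)^+$-equivariant map $\delta_\lambda\otimes_E(\mathrm{ur}(\alpha)\otimes_E\varepsilon^2)\circ\mathrm{det}\to(\widehat{S}(U^p,E)^{\rm{an}}[\mathfrak m_\rho])^{\overline{N}(\mathbb{Z}_p)}$ by taking the highest weight vector of $\overline{L}(\lambda)$ tensored with the $\overline{N}(\mathbb{Z}_p)$-invariant line in $\mathrm{St}_3^\infty$ (here I use that $(\mathrm{St}_3^\infty)^{\overline{N}(\mathbb{Z}_p)}$ is one-dimensional with $T(\mathbb{Q}_p)^+$ acting through $\delta_B^{-1}$ or the appropriate modulus, matching $\delta_\lambda$ after the algebraic twist). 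The ``only if'' direction is the substantive one: suppose $\chi\neq\delta_\lambda$ and $\mathrm{Hom}_{T(\mathbb{Q}_p)^+}(\chi\otimes_E(\mathrm{ur}(\alpha)\otimes_E\varepsilon^2)\circ\mathrm{det},(\widehat{S}(U^p,E)^{\rm{an}}[\mathfrak m_\rho])^{\overline{N}(\mathbb{Z}_p)})\neq 0$. By Proposition~\ref{3prop: injectivity} applied with $\eta$ the derived character of $\chi$, this nonzero Jacquet-module class has a fixed infinitesimal character. If $\eta$ is not $W$-conjugate to $\mathrm{d}\delta_\lambda$ (i.e.\ not in the block of $\overline{L}(\lambda)$) then it cannot appear, because $\widehat{S}(U^p,E)[\mathfrak m_\rho]^{\rm{alg}}$ has infinitesimal character $\mathrm{d}\delta_\lambda$ and — by the potential semistability and the rigidity of the eigenvariety / the vanishing of the relevant $\mathrm{Ext}^\bullet$ — no companion points of other infinitesimal character show up at level $U^p$; if $\eta$ is $W$-conjugate to $\mathrm{d}\delta_\lambda$ but $\chi\neq\delta_\lambda$, one uses the classicality/non-criticality hypothesis (iv) together with the computation of $J_{\overline{B}}$ of the locally algebraic vectors, namely $J_{\overline{B}}(\overline{L}(\lambda)\otimes_E\mathrm{St}_3^\infty)=\delta_\lambda$ (the smooth Steinberg has one-dimensional Jacquet module), to rule out the other Weyl chamber translates.

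The main obstacle I anticipate is precisely this last rigidity input: showing that no ``companion constituent'' other than the one attached to $\delta_\lambda$ can contribute to the $\overline{N}(\mathbb{Z}_p)$-invariants of $\widehat{S}(U^p,E)^{\rm{an}}[\mathfrak m_\rho]$. In \cite{Bre17} this is handled via the adjunction $\mathrm{Hom}_{\mathrm{GL}_3(\mathbb{Q}_p)}(I_{\overline{B}}^{\mathrm{GL}_3}(\chi),\widehat{S}(U^p,E)^{\rm{an}}[\mathfrak m_\rho])\cong\mathrm{Hom}_{T(\mathbb{Q}_p)^+}(\chi\delta_B^{-1},J_{\overline{B}}(\cdots))$ together with the classification of the constituents of the relevant principal series and the fact that a locally analytic companion point would force, via local-global compatibility and the structure of $(\varphi,\Gamma)$-modules, a critical Hodge filtration on $\rho_{w_0}$, contradicting hypothesis (iv); and it uses hypothesis (v) to pin the multiplicity space down to the single automorphic representation $\pi$. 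So my plan is to transcribe that argument: invoke Proposition~\ref{3prop: injectivity}, reduce to a statement about $J_{\overline{B}}$ of $\widehat{S}(U^p,E)^{\rm{an}}[\mathfrak m_\rho]$, compute $J_{\overline{B}}$ of the locally algebraic part explicitly, and then quote the non-criticality (iv) plus the (by now standard) classicality results at non-critical points to exclude all $\chi\neq\delta_\lambda$ — mirroring the proof of Proposition~6.3.4 of \cite{Bre17} essentially verbatim. I would flag that the only genuinely new content beyond \cite{Bre17} is bookkeeping of the twist by $(\mathrm{ur}(\alpha)\otimes_E\varepsilon^2)\circ\mathrm{det}$, which is harmless.
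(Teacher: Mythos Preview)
Your proposal is correct and matches the paper's approach: the paper simply cites Proposition~6.3.4 of \cite{Bre17}, and your sketch is precisely a reconstruction of that argument (Emerton's adjunction for $J_{\overline{B}}$, the explicit Jacquet module of $\overline{L}(\lambda)\otimes_E\mathrm{St}_3^\infty$, and the non-criticality hypothesis to exclude companion points). There is nothing to add beyond the observation that the twist by $(\mathrm{ur}(\alpha)\otimes_E\varepsilon^2)\circ\mathrm{det}$ is indeed harmless bookkeeping, as you note.
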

\begin{proof}
This is Proposition~6.3.4 of \cite{Bre17}.
\end{proof}

We recall the notation $i_{B}^{\mathrm{GL}_3}(\chi_w^{\infty})$ for a smooth principal series for each $w\in W$ from Section~\ref{3subsection: main notation}. Given three locally analytic representations $V_i$ for $i=1,2,3$ and two surjections $V_1\twoheadrightarrow V_2$ and $V_3\twoheadrightarrow V_2$, we use the notation $V_1\times_{V_2}V_3$ for the representation given by the fiber product of $V_1$ and $V_3$ over $V_2$ with natural surjections $V_1\times_{V_2}V_3\twoheadrightarrow V_1$ and $V_1\times_{V_2}V_3\twoheadrightarrow V_3$. We also use the shorten notation $V^{\rm{alg}}$ for the maximally locally algebraic subrepresentation of a locally analytic representation $V$. We recall that $U^p$ is \emph{sufficiently small} if there exists $\ell\neq p$ such that $U_{\ell}$ has no non-trivial element with finite order.
\begin{prop}\label{3prop: adjunction}
We fix $U^p$ and $\rho$ as in Theorem~\ref{3theo: main} and assume moreover that $U^p$ is a sufficiently small open compact subgroup of $G(\mathbb{A}_{\mathbb{Q}}^{\infty,p})$. We also fix a non-split short exact sequence $V_1\hookrightarrow V_2\twoheadrightarrow V_3$ of finite length representations inside the category $\mathrm{Rep}^{\mathcal{OS}}_{\mathrm{GL}_3(\mathbb{Q}_p), E}$ such that $V_1\otimes_E(\mathrm{ur}(\alpha)\otimes_E\varepsilon^2)\circ\mathrm{det}$ embeds into $\widehat{S}(U^p, E)^{\rm{an}}[\mathfrak{m}_{\rho}]$. We conclude that:
\begin{enumerate}
\item if $V_3$ is irreducible and not locally algebraic, then we have an embedding
 $$V_2\otimes_E(\mathrm{ur}(\alpha)\otimes_E\varepsilon^2)\circ\mathrm{det}\hookrightarrow\widehat{S}(U^p, E)^{\rm{an}}[\mathfrak{m}_{\rho}];$$
\item if there is a surjection
$$\overline{L}(\lambda)\otimes_Ei_{B}^{\mathrm{GL}_3}(\chi_w^{\infty})\twoheadrightarrow V_3$$
for a certain $w\in W$, then there exists a certain quotient $V_4$ of $V_2\times_{V_3}\left(\overline{L}(\lambda)\otimes_Ei_{B}^{\mathrm{GL}_3}(\chi_w^{\infty})\right)$ satisfying $$\mathrm{soc}_{\mathrm{GL}_3(\mathbb{Q}_p)}(V_4)=V_4^{\rm{alg}}=\overline{L}(\lambda)\otimes_E\mathrm{St}_3^{\infty}$$
such that we have an embedding
$$V_4\otimes_E(\mathrm{ur}(\alpha)\otimes_E\varepsilon^2)\circ\mathrm{det}\hookrightarrow\widehat{S}(U^p, E)^{\rm{an}}[\mathfrak{m}_{\rho}].$$
\end{enumerate}
\end{prop}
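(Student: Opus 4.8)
The plan is to follow closely the proof of Theorem~6.2.1 of \cite{Bre17}. Throughout, absorb the twist and write $\widehat{S} := \widehat{S}(U^p, E)^{\rm{an}}[\mathfrak{m}_{\rho}]$, so that the hypothesis provides a $\mathrm{GL}_3(\mathbb{Q}_p)$-equivariant embedding $\iota: V_1 \hookrightarrow \widehat{S}$. The common device for both parts is the pushout: form
$\Pi := \bigl(\widehat{S}\oplus V_2\bigr)/\{(\iota(v),-v): v\in V_1\}$, which sits in a short exact sequence $\widehat{S}\hookrightarrow \Pi \twoheadrightarrow V_3$ and receives an automatic embedding $V_2\hookrightarrow \Pi$. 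The class of $\Pi$ in $\mathrm{Ext}^1_{\mathrm{GL}_3(\mathbb{Q}_p)}(V_3,\widehat{S})$ is the obstruction $\delta(\iota)$ to extending $\iota$ to $V_2$, and the whole matter is to control this class by means of Proposition~\ref{3prop: injectivity} and Proposition~\ref{3prop: socle}. Since all of this machinery is phrased in terms of $\widehat{S}(U^p,E)^{\rm{an}}$ rather than its $\mathfrak{m}_{\rho}$-torsion, I would at the outset enlarge $\iota$ to $V_1\hookrightarrow \widehat{S}(U^p,E)^{\rm{an}}$, run the argument there, and check at the end, as in \cite{Bre17}, that the embedding produced factors through $\widehat{S}(U^p,E)^{\rm{an}}[\mathfrak{m}_{\rho}]$ because its image contains the $\mathfrak{m}_{\rho}$-torsion subspace $\iota(V_1)$ and no constituent of $V_2$ connected to it survives outside $[\mathfrak{m}_{\rho}]$.

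For part (i), I would first record that $\mathrm{Hom}_{\mathrm{GL}_3(\mathbb{Q}_p)}(V_3,\widehat{S})=0$: by Proposition~\ref{3prop: socle} the only locally analytic $\chi$ with $\mathrm{Hom}_{T(\mathbb{Q}_p)^+}\!\bigl(\chi\otimes_E(\mathrm{ur}(\alpha)\otimes_E\varepsilon^2)\circ\mathrm{det},\,(\widehat{S}(U^p,E)^{\rm{an}}[\mathfrak{m}_{\rho}])^{\overline{N}(\mathbb{Z}_p)}\bigr)\neq 0$ is $\chi=\delta_{\lambda}$, and an irreducible object of $\mathrm{Rep}^{\mathcal{OS}}_{\mathrm{GL}_3(\mathbb{Q}_p), E}$ produces a $\delta_{\lambda}$-eigenvector in its $\overline{N}(\mathbb{Z}_p)$-invariants only when it is locally algebraic of the relevant weight, which $V_3$ is not. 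The core step is then to show $\delta(\iota)=0$, i.e. that the pushout splits over $\widehat{S}$; I would apply Proposition~\ref{3prop: injectivity} to the enlarged pushout $\widehat{S}(U^p,E)^{\rm{an}}\hookrightarrow \Pi'\twoheadrightarrow V_3$ for every pair $(\chi,\eta)$ of a locally analytic character of $T(\mathbb{Q}_p)$ and its derived character, obtaining short exact sequences $(\widehat{S}(U^p,E)^{\rm{an}})^{\overline{N}(\mathbb{Z}_p)}[\mathfrak{t}=\eta]_{\chi}\hookrightarrow (\Pi')^{\overline{N}(\mathbb{Z}_p)}[\mathfrak{t}=\eta]_{\chi}\twoheadrightarrow V_3^{\overline{N}(\mathbb{Z}_p)}[\mathfrak{t}=\eta]_{\chi}$, and combine this with the injectivity-type properties of $\widehat{S}(U^p,E)^{\rm{an}}$ for sufficiently small $U^p$ encoded in Proposition~\ref{3prop: injectivity} to force $\Pi'$, hence $\Pi$, to split, exactly as in \cite{Bre17}. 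Once $\Pi\cong \widehat{S}\oplus V_3$, the composite $V_2\hookrightarrow\Pi\twoheadrightarrow\widehat{S}$ has kernel injecting into $V_3$; since $V_3$ is irreducible and $V_1\hookrightarrow V_2\twoheadrightarrow V_3$ is non-split, this kernel is zero, giving the desired embedding $V_2\hookrightarrow\widehat{S}$.

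For part (ii), $V_3$ may contain locally algebraic constituents, so one cannot simply split $V_3$ off; instead I would use the surjection $q: P\twoheadrightarrow V_3$ with $P:=\overline{L}(\lambda)\otimes_E i_B^{\mathrm{GL}_3}(\chi_w^{\infty})$ to rebuild the Steinberg. Form the fibre product $X:=V_2\times_{V_3}P$, sitting in short exact sequences $V_1\hookrightarrow X\twoheadrightarrow P$ and $\ker q\hookrightarrow X\twoheadrightarrow V_2$. Applying the pushout construction and the same machinery (Propositions~\ref{3prop: injectivity} and \ref{3prop: socle}) to $V_1\hookrightarrow X$, I would produce a non-zero $\mathrm{GL}_3(\mathbb{Q}_p)$-map $X\to\widehat{S}$ restricting to $\iota$ on $V_1$, and take $V_4$ to be its image (or a suitable subquotient of $X$ cut down so that its socle is minimal). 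That $V_4$ is a quotient of $X$ is clear; the point to verify is $\mathrm{soc}_{\mathrm{GL}_3(\mathbb{Q}_p)}(V_4)=V_4^{\rm{alg}}=\overline{L}(\lambda)\otimes_E\mathrm{St}_3^{\infty}$. For this I would use that $\mathrm{soc}_{\mathrm{GL}_3(\mathbb{Q}_p)}(\widehat{S})$ is a sum of copies of $\overline{L}(\lambda)\otimes_E\mathrm{St}_3^{\infty}$ (again via Proposition~\ref{3prop: socle} together with the description of $\widehat{S}(U^p,E)^{\rm{alg}}[\mathfrak{m}_{\rho}]$), that $\overline{L}(\lambda)\otimes_E\mathrm{St}_3^{\infty}$ occurs with multiplicity one among the constituents of the principal series $P$ and is not killed by $X\to\widehat{S}$, and that the other locally algebraic constituents of $P$ — the $\overline{L}(\lambda)\otimes_E v_{P_i}^{\infty}$ and $\overline{L}(\lambda)$, by the known submodule structure of $i_B^{\mathrm{GL}_3}(\chi_w^{\infty})$ — do not embed into $\widehat{S}$, so none of them can lie in $\mathrm{soc}(V_4)$.

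The step I expect to be the genuine obstacle — in both parts, and the one where I would stay closest to \cite{Bre17, proof of Theorem~6.2.1} — is exactly the vanishing of $\delta(\iota)$, i.e. showing that the pushout extension of $V_3$ (respectively of $P$) by $\widehat{S}$ splits, equivalently that the bookkeeping of $\overline{N}(\mathbb{Z}_p)$-invariants with prescribed infinitesimal and central character propagates from $V_1$ to $V_2$ (resp. to $X$). This is where the sufficient smallness of $U^p$, Proposition~\ref{3prop: injectivity}, Proposition~\ref{3prop: socle}, and — for part (ii) — the precise internal structure of the smooth principal series $i_B^{\mathrm{GL}_3}(\chi_w^{\infty})$ all have to be used together, and where the hypothesis that $V_3$ be irreducible and not locally algebraic (so invisible to the relevant functor) is decisive in part (i).
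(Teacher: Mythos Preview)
Your proposal is correct and takes essentially the same approach as the paper: the paper's own proof is simply the sentence ``This is an immediate generalization (or rather formalization) of Section~6.4 of \cite{Bre17}; part (i) (resp.~(ii)) generalizes the \'Etape~1 (resp.~\'Etape~2) there,'' and what you have written is precisely a sketch of those two \'Etapes --- the pushout construction, the use of Proposition~\ref{3prop: injectivity} and Proposition~\ref{3prop: socle} to force the obstruction class to vanish, and for (ii) the fibre product with the full principal series $\overline{L}(\lambda)\otimes_E i_B^{\mathrm{GL}_3}(\chi_w^{\infty})$ followed by cutting down to an image with the correct socle. One small inaccuracy of phrasing: in part~(i) the splitting is not obtained from ``injectivity-type properties of $\widehat{S}(U^p,E)^{\rm{an}}$'' per se, but rather by exhibiting, via adjunction, a $T(\mathbb{Q}_p)^+$-eigenvector in $(\Pi')^{\overline{N}(\mathbb{Z}_p)}$ for a character $\chi\neq\delta_\lambda$ (coming from the Orlik--Strauch description of $V_3$), which by Proposition~\ref{3prop: socle} cannot land in $\widehat{S}$ and therefore furnishes the section --- but this is exactly \'Etape~1 of \cite{Bre17}, which you cite.
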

\begin{proof}
This is an immediate generalization (or rather formalization) of Section~6.4 of \cite{Bre17}. More precisely, part (i) (resp. (ii)) generalizes the \'Etape 1 (resp. the \'Etape 2) of Section~6.4 of \cite{Bre17}.
\end{proof}

\begin{proof}[proof of Theorem~\ref{3theo: main}]
We may assume that $\alpha=1$ for simplicity of notation thanks to Lemma~\ref{3lemm: det twist}. According to the \'Etape 1 and 2 of Section~6.2 of \cite{Bre17}, we may assume without loss of generality that $U^p$ is sufficiently small and it is sufficient to show that there exists a unique choice of $\mathscr{L}_1, \mathscr{L}_2, \mathscr{L}_3\in E$ such that
\begin{equation}\label{3global embedding}
\mathrm{Hom}_{\mathrm{GL}_3(\mathbb{Q}_p)}\left(\Sigma^{\rm{min}, +}(\lambda, \mathscr{L}_1, \mathscr{L}_2, \mathscr{L}_3)\otimes_E(\mathrm{ur}(\alpha)\otimes_E\varepsilon^2)\circ\mathrm{det}, ~\widehat{S}(U^p, E)^{\rm{an}}[\mathfrak{m}_{\rho}]\right)\neq 0.
\end{equation}
We borrow the notation $\Pi^i(\underline{k}, \underline{D})$ from Theorem~6.2.1 of \cite{Bre17}. We observe from (\ref{3main picture for min}) that $\Sigma^{\rm{min},+}(\lambda, \mathscr{L}_1, \mathscr{L}_2, \mathscr{L}_3)$ contains a unique subrepresentation $\Sigma^{\mathrm{Ext}^1}(\lambda, \mathscr{L}_1, \mathscr{L}_2, \mathscr{L}_3)$ of the form
\begin{equation}\label{3special form ext 1}
\begin{xy}
(0,0)*+{\overline{L}(\lambda)\otimes_E\mathrm{St}_3^{\infty}}="a"; (30,6)*+{\Pi^1(\underline{k}, \underline{D})}="b"; (30,-6)*+{\Pi^2(\underline{k}, \underline{D})}="c";
{\ar@{-}"a";"b"}; {\ar@{-}"a";"c"};
\end{xy}.
\end{equation}
Moreover, $\Sigma^{\rm{min},+}(\lambda, \mathscr{L}_1, \mathscr{L}_2, \mathscr{L}_3)$ is uniquely determined by $\Sigma^{\mathrm{Ext}^1}(\lambda, \mathscr{L}_1, \mathscr{L}_2, \mathscr{L}_3)$ up to isomorphism. It is known by \'Etape 3 of Section~6.2 of \cite{Bre17} that there is at most one choice of $\mathscr{L}_1, \mathscr{L}_2, \mathscr{L}_3\in E$ such that
$$\mathrm{Hom}_{\mathrm{GL}_3(\mathbb{Q}_p)}\left(\Sigma^{\mathrm{Ext}^1}(\lambda, \mathscr{L}_1, \mathscr{L}_2, \mathscr{L}_3)\otimes_E(\mathrm{ur}(\alpha)\otimes_E\varepsilon^2)\circ\mathrm{det}, ~\widehat{S}(U^p, E)^{\rm{an}}[\mathfrak{m}_{\rho}]\right)\neq 0,$$
and thus there is at most one choice of $\mathscr{L}_1, \mathscr{L}_2, \mathscr{L}_3\in E$ such that (\ref{3global embedding}) holds. As a result, it remains to show the existence of $\mathscr{L}_1, \mathscr{L}_2, \mathscr{L}_3\in E$ that satisfies (\ref{3global embedding}). We notice that $\Sigma^{\rm{min},+}(\lambda, \mathscr{L}_1, \mathscr{L}_2, \mathscr{L}_3)$ admits an increasing filtration $\mathrm{Fil}_{\bullet}$ satisfying the following conditions
\begin{enumerate}
\item the representations $\Sigma^{\rm{min}}(\lambda, \mathscr{L}_1, \mathscr{L}_2, \mathscr{L}_3)$ and $\Sigma^{\sharp,+}(\lambda, \mathscr{L}_1, \mathscr{L}_2)$ ( cf. their definition after Proposition~\ref{3prop: main dim} and Proposition~\ref{3prop: criterion of existence}) appear as two consecutive terms of the filtration;
\item each graded piece is either locally algebraic or irreducible.
\end{enumerate}
As a result, the only reducible graded pieces of this filtration is the quotient
$$\Sigma^{\rm{min}}(\lambda, \mathscr{L}_1, \mathscr{L}_2, \mathscr{L}_3)/\Sigma^{\sharp,+}(\lambda, \mathscr{L}_1, \mathscr{L}_2)\cong W_0.$$
Then we can prove the existence of $\mathscr{L}_1, \mathscr{L}_2, \mathscr{L}_3\in E$ satisfying (\ref{3global embedding}) by reducing to the isomorphism
\begin{multline}\label{3induction on filtration}
\mathrm{Hom}_{\mathrm{GL}_3(\mathbb{Q}_p)}\left(\mathrm{Fil}_{k+1}\Sigma^{\rm{max}}(\lambda, \mathscr{L}_1, \mathscr{L}_2, \mathscr{L}_3)\otimes_E(\mathrm{ur}(\alpha)\otimes_E\varepsilon^2)\circ\mathrm{det}, ~\widehat{S}(U^p, E)^{\rm{an}}[\mathfrak{m}_{\rho}]\right)\\
\xrightarrow{\sim} \mathrm{Hom}_{\mathrm{GL}_3(\mathbb{Q}_p)}\left(\mathrm{Fil}_k\Sigma^{\rm{max}}(\lambda, \mathscr{L}_1, \mathscr{L}_2, \mathscr{L}_3)\otimes_E(\mathrm{ur}(\alpha)\otimes_E\varepsilon^2)\circ\mathrm{det}, ~\widehat{S}(U^p, E)^{\rm{an}}[\mathfrak{m}_{\rho}]\right)
\end{multline}
for each $k\in\mathbb{Z}$. If
$$
\mathrm{Gr}_k:=\mathrm{Fil}_{k+1}\Sigma^{\rm{min}}(\lambda, \mathscr{L}_1, \mathscr{L}_2, \mathscr{L}_3)/\mathrm{Fil}_k\Sigma^{\rm{min}}(\lambda, \mathscr{L}_1, \mathscr{L}_2, \mathscr{L}_3)$$
is not locally algebraic, then (\ref{3induction on filtration}) is true in this case by part (i) of Proposition~\ref{3prop: adjunction}. The only locally algebraic graded pieces of the filtration except $\overline{L}(\lambda)\otimes_E\mathrm{St}_3^{\infty}$ are $\overline{L}(\lambda)\otimes_Ev_{P_1}^{\infty}$, $\overline{L}(\lambda)\otimes_Ev_{P_2}^{\infty}$ and $W_0$. The isomorphism (\ref{3induction on filtration}) when the graded piece $\mathrm{Gr}_k$ equals $\overline{L}(\lambda)\otimes_Ev_{P_1}^{\infty}$ or $\overline{L}(\lambda)\otimes_Ev_{P_2}^{\infty}$ has been treated in \'Etape 2 of Section~6.4 of \cite{Bre17}. As a result, it remains to show that
\begin{multline}\label{3last isomorphism}
\mathrm{Hom}_{\mathrm{GL}_3(\mathbb{Q}_p)}\left(\Sigma^{\rm{min}}(\lambda, \mathscr{L}_1, \mathscr{L}_2, \mathscr{L}_3)\otimes_E(\mathrm{ur}(\alpha)\otimes_E\varepsilon^2)\circ\mathrm{det}, ~\widehat{S}(U^p, E)^{\rm{an}}[\mathfrak{m}_{\rho}]\right)\\
\xrightarrow{\sim} \mathrm{Hom}_{\mathrm{GL}_3(\mathbb{Q}_p)}\left(\Sigma^{\sharp,+}(\lambda, \mathscr{L}_1, \mathscr{L}_2)\otimes_E(\mathrm{ur}(\alpha)\otimes_E\varepsilon^2)\circ\mathrm{det}, ~\widehat{S}(U^p, E)^{\rm{an}}[\mathfrak{m}_{\rho}]\right)
\end{multline}
to finish the proof of Theorem~\ref{3theo: main}. It follows from results in Section~5.3 of \cite{Bre17} ( cf. (53) of \cite{Bre17}) that $i_{B}^{\mathrm{GL}_3}(\chi_{s_1s_2s_1}^{\infty})$ has the form
$$
\begin{xy}
(0,0)*+{\mathrm{St}_3^{\infty}}="a"; (20,4)*+{v_{P_1}^{\infty}}="b"; (20,-4)*+{v_{P_2}^{\infty}}="c"; (40,0)*+{1_3}="d";
{\ar@{-}"a";"b"}; {\ar@{-}"a";"c"}; {\ar@{-}"b";"d"}; {\ar@{-}"c";"d"};
\end{xy}
$$
and thus there is a surjection
$$\overline{L}(\lambda)\otimes_Ei_{B}^{\mathrm{GL}_3}(\chi_{s_1s_2s_1}^{\infty})\twoheadrightarrow W_0.$$
According to part (ii) of Proposition~\ref{3prop: adjunction}, we only need to show that any quotient $V$ of
$$V^{\diamond}:=\Sigma^{\rm{min}}(\lambda, \mathscr{L}_1, \mathscr{L}_2, \mathscr{L}_3)\times_{W_0}\left(\overline{L}(\lambda)\otimes_Ei_{B}^{\mathrm{GL}_3}(\chi_{s_1s_2s_1}^{\infty})\right)$$
such that
\begin{equation}\label{3necessary condition}
\mathrm{soc}_{\mathrm{GL}_3(\mathbb{Q}_p)}(V)=V^{\rm{alg}}=\overline{L}(\lambda)\otimes_E\mathrm{St}_3^{\infty}
\end{equation}
must have the form
$$\Sigma^{\rm{min}}(\lambda, \mathscr{L}_1, \mathscr{L}_2, \mathscr{L}^{\prime}_3)$$
for certain $\mathscr{L}^{\prime}_3\in E$. We recall from Proposition~\ref{3prop: criterion of existence} and our definition of $\Sigma^{\rm{min}}(\lambda, \mathscr{L}_1, \mathscr{L}_2, \mathscr{L}_3)$ afterwards that $\Sigma^{\rm{min}}(\lambda, \mathscr{L}_1, \mathscr{L}_2, \mathscr{L}_3)$ fits into a short exact sequence
\begin{equation}
\Sigma^{\sharp, +}(\lambda, \mathscr{L}_1, \mathscr{L}_2)\hookrightarrow\Sigma^{\rm{min}}(\lambda, \mathscr{L}_1, \mathscr{L}_2, \mathscr{L}_3)\twoheadrightarrow W_0
\end{equation}
and thus $V^\diamond$ fits (by definition of fiber product) into a short exact sequence
\begin{equation}
\Sigma^{\sharp, +}(\lambda, \mathscr{L}_1, \mathscr{L}_2)\hookrightarrow V^{\diamond}\twoheadrightarrow i_{B}^{\mathrm{GL}_3}(\chi_{s_1s_2s_1}^{\infty})
\end{equation}
and in particular
$$\mathrm{soc}_{\mathrm{GL}_3(\mathbb{Q}_p)}(V^{\diamond})=\left(\overline{L}(\lambda)\otimes_E\mathrm{St}_3^{\infty}\right)^{\oplus2}.$$
Hence the condition (\ref{3necessary condition}) implies that $V$ fits into a short exact sequence
$$\overline{L}(\lambda)\otimes_E\mathrm{St}_3^{\infty}\xrightarrow{j} V^{\diamond}\twoheadrightarrow V$$
and that
$$j\left(\overline{L}(\lambda)\otimes_E\mathrm{St}_3^{\infty}\right)\cap \Sigma^{\sharp, +}(\lambda, \mathscr{L}_1, \mathscr{L}_2)=0\subseteq V^{\diamond}$$
which induces an injection
$$\Sigma^{\sharp, +}(\lambda, \mathscr{L}_1, \mathscr{L}_2)\hookrightarrow V.$$
Therefore $V$ fits into a short exact sequence
$$\Sigma^{\sharp, +}(\lambda, \mathscr{L}_1, \mathscr{L}_2)\hookrightarrow V\twoheadrightarrow W_0$$
and thus corresponds to a line $M_V$ inside
$$\mathrm{Ext}^1_{\mathrm{GL}_3(\mathbb{Q}_p),\lambda}\left(W_0,~\Sigma^{\sharp, +}(\lambda, \mathscr{L}_1, \mathscr{L}_2)\right)$$
which is two dimensional by Lemma~\ref{3lemm: preparation for global}. Moreover, the condition (\ref{3necessary condition}) implies that $M_V$ is different from the line
$$\mathrm{Ext}^1_{\mathrm{GL}_3(\mathbb{Q}_p),\lambda}\left(W_0,~\overline{L}(\lambda)\otimes_E\mathrm{St}_3^{\infty}\right)\hookrightarrow\mathrm{Ext}^1_{\mathrm{GL}_3(\mathbb{Q}_p),\lambda}\left(W_0,~\Sigma^{\sharp, +}(\lambda, \mathscr{L}_1, \mathscr{L}_2)\right).$$
Hence it follows from Lemma~\ref{3lemm: preparation for global} that there exists $\mathscr{L}^{\prime}_3\in E$ such that
$$V\cong \Sigma^{\rm{min}}(\lambda, \mathscr{L}_1, \mathscr{L}_2, \mathscr{L}^{\prime}_3).$$
\end{proof}

\begin{coro}\label{3coro: criterion}
If a locally analytic representation $\Pi$ of the form (\ref{3special form ext 1}) is contained in $\widehat{S}(U^p, E)^{\rm{an}}[\mathfrak{m}_{\rho}]$ for a certain $U^p$ and $\rho$ as in Theorem~\ref{3theo: main}, then there exists $\mathscr{L}_1, \mathscr{L}_2, \mathscr{L}_3\in E$ uniquely determined by $\Pi$ such that
$$\Pi\hookrightarrow \Sigma^{\rm{min},+}(\lambda, \mathscr{L}_1, \mathscr{L}_2, \mathscr{L}_3).$$
\end{coro}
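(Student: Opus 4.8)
## Proof proposal for Corollary~\ref{3coro: criterion}

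The plan is to deduce the corollary from Theorem~\ref{3theo: main} together with the structural analysis of $\Sigma^{\rm{min}}(\lambda,\mathscr{L}_1,\mathscr{L}_2,\mathscr{L}_3)$ carried out in Section~\ref{3section: exact sequence min}, most notably Lemma~\ref{3lemm: distinguish mult two}, Lemma~\ref{3lemm: preparation for global}, and the identification of $\Sigma^{\mathrm{Ext}^1}(\lambda,\mathscr{L}_1,\mathscr{L}_2,\mathscr{L}_3)$ with the representation of the form (\ref{3special form ext 1}) as in the proof of Theorem~\ref{3theo: main}. First I would observe that $\Pi$, being of the form (\ref{3special form ext 1}) with socle $\overline{L}(\lambda)\otimes_E\mathrm{St}_3^{\infty}$, corresponds to a two-dimensional subspace of $\mathrm{Ext}^1_{\mathrm{GL}_3(\mathbb{Q}_p),\lambda}\big(\Pi^1(\underline{k},\underline{D})\oplus\Pi^2(\underline{k},\underline{D}),~\overline{L}(\lambda)\otimes_E\mathrm{St}_3^{\infty}\big)$, and that the assumption $\Pi\hookrightarrow\widehat{S}(U^p,E)^{\rm{an}}[\mathfrak{m}_{\rho}]$ (after the harmless central twist, via Lemma~\ref{3lemm: det twist}) forces the corresponding element in each factor $\mathrm{Ext}^1_{\mathrm{GL}_3(\mathbb{Q}_p),\lambda}\big(\Pi^i(\underline{k},\underline{D}),~\overline{L}(\lambda)\otimes_E\mathrm{St}_3^{\infty}\big)$ (which is three-dimensional by Theorem~1.2 of \cite{Bre17}) to be non-zero; this is exactly the non-existence-of-obvious-subquotients argument already used in Proposition~\ref{3prop: adjunction} and in \'Etape~3 of Section~6.2 of \cite{Bre17}.

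Next I would invoke Theorem~\ref{3theo: main}: its conclusion, combined with the uniqueness statement in \'Etape~3 of Section~6.2 of \cite{Bre17}, produces a unique triple $\mathscr{L}_1,\mathscr{L}_2,\mathscr{L}_3\in E$ such that $\Sigma^{\mathrm{Ext}^1}(\lambda,\mathscr{L}_1,\mathscr{L}_2,\mathscr{L}_3)\otimes_E(\mathrm{ur}(\alpha)\otimes_E\varepsilon^2)\circ\mathrm{det}$ embeds into $\widehat{S}(U^p,E)^{\rm{an}}[\mathfrak{m}_{\rho}]$; moreover, the same \'etape shows that any two representations of the form (\ref{3special form ext 1}) that both embed into the completed cohomology and share the same constituents must be isomorphic once the invariants match. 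The point is then to show that $\Pi\cong\Sigma^{\mathrm{Ext}^1}(\lambda,\mathscr{L}_1,\mathscr{L}_2,\mathscr{L}_3)$ for precisely these invariants, and that the invariants are determined by $\Pi$ alone and not by the global data. For the first part I would use that $\Sigma^{\mathrm{Ext}^1}(\lambda,\mathscr{L}_1,\mathscr{L}_2,\mathscr{L}_3)$ is uniquely determined by the pair of lines it cuts out inside the two three-dimensional $\mathrm{Ext}^1$-spaces above, and that by Theorem~6.3.1 (and the Jacquet-module computations of Proposition~\ref{3prop: injectivity} and Proposition~\ref{3prop: socle}) the subspace of these $\mathrm{Ext}^1$-groups realized inside $\widehat{S}(U^p,E)^{\rm{an}}[\mathfrak{m}_{\rho}]$ is itself one-dimensional, so the line determined by $\Pi$ and the line determined by $\Sigma^{\mathrm{Ext}^1}(\lambda,\mathscr{L}_1,\mathscr{L}_2,\mathscr{L}_3)$ must coincide. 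Once $\Pi\cong\Sigma^{\mathrm{Ext}^1}(\lambda,\mathscr{L}_1,\mathscr{L}_2,\mathscr{L}_3)$, the embedding $\Pi\hookrightarrow\Sigma^{\rm{min},+}(\lambda,\mathscr{L}_1,\mathscr{L}_2,\mathscr{L}_3)$ is immediate since by construction (cf.\ the discussion before Remark~\ref{3quotient independent of invariants}) $\Sigma^{\mathrm{Ext}^1}(\lambda,\mathscr{L}_1,\mathscr{L}_2,\mathscr{L}_3)$ is a subrepresentation of $\Sigma^{\rm{min},+}(\lambda,\mathscr{L}_1,\mathscr{L}_2,\mathscr{L}_3)$.

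The main obstacle I expect is not the existence of the triple — that is essentially Theorem~\ref{3theo: main} — but rather the claim that $\mathscr{L}_1,\mathscr{L}_2,\mathscr{L}_3$ are \emph{determined by $\Pi$}: a priori the invariants are read off from the global embedding, and one must check that a representation $\Pi$ of the form (\ref{3special form ext 1}) which is abstractly isomorphic to $\Sigma^{\mathrm{Ext}^1}(\lambda,\mathscr{L}_1,\mathscr{L}_2,\mathscr{L}_3)$ cannot also be isomorphic to $\Sigma^{\mathrm{Ext}^1}(\lambda,\mathscr{L}'_1,\mathscr{L}'_2,\mathscr{L}'_3)$ for a different triple. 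This is exactly where the injectivity of the parametrization is needed, and it follows from the last sentence of Theorem~\ref{3theo: construction introduction} (different choices of $\mathscr{L}_1,\mathscr{L}_2,\mathscr{L}_3$ give non-isomorphic $\Sigma^{\rm{min}}$) together with the fact that $\Sigma^{\mathrm{Ext}^1}(\lambda,\mathscr{L}_1,\mathscr{L}_2,\mathscr{L}_3)$ already determines $\Sigma^{\rm{min},+}(\lambda,\mathscr{L}_1,\mathscr{L}_2,\mathscr{L}_3)$ up to isomorphism, as recorded in the proof of Theorem~\ref{3theo: main}. I would finish by assembling these pieces: the triple exists by Theorem~\ref{3theo: main}, it is unique as an invariant of $\Pi$ by the injectivity just discussed, and the desired embedding is the tautological inclusion $\Sigma^{\mathrm{Ext}^1}(\lambda,\mathscr{L}_1,\mathscr{L}_2,\mathscr{L}_3)\hookrightarrow\Sigma^{\rm{min},+}(\lambda,\mathscr{L}_1,\mathscr{L}_2,\mathscr{L}_3)$.
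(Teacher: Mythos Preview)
Your proposal is correct and follows essentially the same route as the paper: restrict the given embedding $\Pi\hookrightarrow\widehat{S}(U^p,E)^{\rm{an}}[\mathfrak{m}_\rho]$ to $\overline{L}(\lambda)\otimes_E\mathrm{St}_3^\infty$, extend it via Theorem~\ref{3theo: main} to an embedding of $\Sigma^{\rm{min},+}(\lambda,\mathscr{L}_1,\mathscr{L}_2,\mathscr{L}_3)$, restrict again to $\Sigma^{\mathrm{Ext}^1}(\lambda,\mathscr{L}_1,\mathscr{L}_2,\mathscr{L}_3)$, and then invoke the uniqueness in Theorem~6.2.1 of \cite{Bre17} to conclude $\Pi\cong\Sigma^{\mathrm{Ext}^1}(\lambda,\mathscr{L}_1,\mathscr{L}_2,\mathscr{L}_3)$. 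The paper's proof is simply a terse version of this; your extra paragraph on why the triple is determined by $\Pi$ alone (via $\Sigma^{\mathrm{Ext}^1}$ determining $\Sigma^{\rm{min},+}$ and the injectivity of the parametrization) makes explicit what the paper leaves implicit, and your opening remarks on the $\mathrm{Ext}^1$-subspaces are correct but not needed for the argument.
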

\begin{proof}
We fix $U^p$ and $\rho$ such that the embedding
\begin{equation}\label{3embedding as assumption}
\Pi\hookrightarrow \widehat{S}(U^p, E)^{\rm{an}}[\mathfrak{m}_{\rho}]
\end{equation}
exists. Then (\ref{3embedding as assumption}) restricts to an embedding
$$
\overline{L}(\lambda)\otimes_E\mathrm{St}_3^{\infty}\hookrightarrow \widehat{S}(U^p, E)^{\rm{an}}[\mathfrak{m}_{\rho}]$$
which extends to an embedding
\begin{equation}\label{3embedding as conclusion}
\Sigma^{\rm{min},+}(\lambda, \mathscr{L}_1, \mathscr{L}_2, \mathscr{L}_3)\hookrightarrow \widehat{S}(U^p, E)^{\rm{an}}[\mathfrak{m}_{\rho}]
\end{equation}
for a unique choice of $\mathscr{L}_1, \mathscr{L}_2, \mathscr{L}_3\in E$ according to Theorem~\ref{3theo: main}. The embedding (\ref{3embedding as conclusion}) induces by restriction an embedding
$$\Sigma^{\mathrm{Ext}^1}(\lambda, \mathscr{L}_1, \mathscr{L}_2, \mathscr{L}_3)\hookrightarrow \widehat{S}(U^p, E)^{\rm{an}}[\mathfrak{m}_{\rho}]$$
and therefore we have
$$\Pi\cong \Sigma^{\mathrm{Ext}^1}(\lambda, \mathscr{L}_1, \mathscr{L}_2, \mathscr{L}_3)$$
by Theorem~6.2.1 of \cite{Bre17}. In particular, we deduce an embedding
$$\Pi\hookrightarrow \Sigma^{\rm{min},+}(\lambda, \mathscr{L}_1, \mathscr{L}_2, \mathscr{L}_3)$$
for certain invariants $\mathscr{L}_1, \mathscr{L}_2, \mathscr{L}_3\in E$ determined by $\Pi$.
\end{proof}

\bibliographystyle{alpha}

\end{document}